\setlist[enumerate]{label*=\arabic*.}
\newtheorem{theorem}{Theorem}[section]
\newtheorem{proposition}{Proposition}[section]
\newtheorem{lemma}[proposition]{Lemma}
\newtheorem{corollary}[proposition]{Corollary}
\theoremstyle{definition}
\newtheorem{definition}{Definition}[section]
\newtheorem{remark}{Remark}[section]
\newtheorem*{nonumberremark}{Remark}
\DeclareMathAlphabet{\mathcalligra}{T1}{calligra}{m}{n}
\DeclareFontShape{T1}{calligra}{m}{n}{<->s*[2.2]callig15}{}
\newcommand{\Ent}{s}
\newcommand{\GradEnt}{S}
\newcommand{\LogDensity}{\uprho}
\newcommand{\vortrenormalized}{\Omega}
\newcommand{\Speed}{c}
\newcommand{\VortVort}{\mathcal{C}}
\newcommand{\DivGradEnt}{\mathcal{D}}
\newcommand{\dive}{\mbox{\upshape div}}
\newcommand{\curl}{\mbox{\upshape curl}}
\newcommand{\Transport}{\mathbf{B}}
\newcommand{\Tboot}{T_*}
\newcommand{\RescaledTboot}{T_{*;(\uplambda)}}
\newcommand{\RescaledFoliationparameter}{w_{*;(\uplambda)}}
\newcommand{\Sob}{N}
\newcommand{\gensmoothfunction}{\mathrm{f}}
\newcommand{\quadsmoothfunction}{\mathscr{Q}}
\newcommand{\linsmoothfunction}{\mathscr{L}}
\newcommand{\contfunction}{F}
\newcommand{\enmomem}{\mathbf{Q}}
\newcommand{\Ricfour}{\mathbf{Ric}}
\newcommand{\Riemfour}{\mathbf{Riem}}
\newcommand{\spherenormal}{N}
\newcommand{\nulllapse}{b}
\newcommand{\spheresecondfund}{\uptheta}
\newcommand{\Jen}[1]{^{(#1)} \mkern-3mu \mathbf{J}}
\newcommand{\Jenarg}[2]{{^{(#1)} \mkern-3mu \mathbf{J}^{#2}}}
\newcommand{\Jenwithlowerarg}[2]{{^{(#1)} \mkern-3mu \widetilde{\mathbf{J}}^{#2}}}
\newcommand{\deformarg}[3]{{^{(#1)} \mkern-1mu \pmb{\pi}_{#2 #3}}}
\newcommand{\gfour}{\mathbf{g}}
\newcommand{\gsphere}{g \mkern-8.5mu / }
\newcommand{\stgsphere}{e \mkern-9mu / }
\newcommand{\congsphere}{\widetilde{g} \mkern-8.5mu / }
\newcommand{\Chfour}{\pmb{\Gamma}}
\newcommand{\Flatdiv}{\mbox{\upshape div}\mkern 1mu}
\newcommand{\Flatcurl}{\mbox{\upshape curl}\mkern 1mu}
\newcommand{\tvol}{\varpi_g}
\newcommand{\tvolarg}[3]{\varpi_{g(#1,#2,#3)}}
\newcommand{\spherevol}{\varpi_{\gsphere}}
\newcommand{\spherevolarg}[3]{\varpi_{\gsphere(#1,#2,#3)}}
\newcommand{\flatspherevolarg}[1]{\varpi_{\stgsphere(#1)}}
\newcommand{\sphereproject}{{\Pi \mkern-12mu / } \, }
\newcommand{\Sigmatproject}{\underline{\Pi}}
\newcommand{\inhom}{\mathfrak{F}}
\newcommand{\remainder}{\mathfrak{R}}
\newcommand{\anotherremainder}{\mathfrak{R}}
\newcommand{\mytr}{{\mbox{\upshape tr}}}
\newcommand{\controlling}{Q}
\newcommand{\Lunit}{L}
\newcommand{\uLunit}{\underline{L}}
\newcommand{\Dfour}{\mathbf{D}}
\newcommand{\angprojD}{{\mathbf{D} \mkern-13mu / \,}}
\newcommand{\angprojDarg}[1]{{{\mathbf{D} \mkern-13mu / \,}_{#1}}}
\newcommand{\angD}{ {\nabla \mkern-14mu / \,} }
\newcommand{\angDarg}[1]{{\angD_{\mkern-3mu #1}}}
\newcommand{\angDsquaredarg}[2]{ {\angD_{\mkern-3mu #1 #2}^2} }
\newcommand{\angdiv}{\mbox{\upshape{div} $\mkern-17mu /$\,}}
\newcommand{\angcurl}{\mbox{\upshape{curl} $\mkern-17mu /$\,}}
\newcommand{\angLap}{ {\Delta \mkern-12mu / \, } }
\newcommand{\angupmu}{ { {\upmu \mkern-10mu /} \, } }
\newcommand{\rgeo}{\tilde{r}}
\newcommand{\aff}{A}
\newcommand{\Tranchar}{\upgamma}
\newcommand{\weight}{W}
\newcommand{\uweight}{\underline{W}}
\newcommand{\Lie}{\mathcal{L}}
\newcommand{\SigmatLie}{\underline{\mathcal{L}}}
\newcommand{\angLie}{ { \mathcal{L} \mkern-10mu / } }
\newcommand{\volrat}{\upsilon}
\newcommand{\leb}{Q}
\begin{document}
\title{Rough sound waves in $3D$ compressible Euler flow with vorticity}
\author[MMD,CL,GM,JS]{Marcelo M. Disconzi$^{\# *}$, Chenyun Luo$^{**}$, 
Giusy Mazzone$^{\dagger *** }$, Jared Speck$^{\bowtie ****}$}
	
\thanks{$^{\#}$MMD gratefully acknowledges support from NSF grant \# 1812826,
from NSF grant \# 2107701,
from a Sloan Research Fellowship provided by the Alfred P. Sloan foundation,
from a Discovery grant administered by Vanderbilt University, and 
from a Dean's Faculty Fellowship.
}

\thanks{$^{\dagger}$GM  gratefully acknowledges support of the Natural Sciences and Engineering Research Council of Canada (NSERC) 
through a Discovery Grant.
}

\thanks{$^{\bowtie}$JS gratefully acknowledges support from NSF grant \# 2054184,
from NSF CAREER grant \# 1914537,
and from a Sloan Research Fellowship provided by the Alfred P. Sloan foundation.
}

\thanks{$^{*}$Department of Mathematics, Vanderbilt University, Nashville, TN, USA.
\texttt{marcelo.disconzi@vanderbilt.edu}}

\thanks{$^{**}$Department of Mathematics, The Chinese University of Hong Kong, Shatin, NT, Hong Kong.
\texttt{cluo@math.cuhk.edu.hk}}

\thanks{$^{***}$Department of Mathematics  and Statistics, Queen's University, Kingston, ON, Canada.
\texttt{giusy.mazzone@queensu.ca}}

\thanks{$^{****}$Department of Mathematics, Vanderbilt University, Nashville, TN, USA
\texttt{jared.speck@vanderbilt.edu}}

\begin{abstract}
We prove a series of intimately related results tied to the regularity and geometry of solutions to
the $3D$ compressible Euler equations. The results concern ``general'' solutions, which can
have nontrivial vorticity and entropy. 
Our geo-analytic framework exploits and reveals additional virtues of a recent new formulation of the equations, which 
decomposed the flow into a geometric ``(sound) wave-part'' coupled to a ``transport-div-curl-part''
(transport-part for short),
with both parts exhibiting remarkable properties.
Our main result is that the time of existence
can be controlled in terms of the $H^{2^+}(\mathbb{R}^3)$-norm of the wave-part of the initial data
and various Sobolev and H\"{o}lder norms of the transport-part of the initial data,
the latter comprising the initial vorticity and entropy.
The wave-part regularity assumptions are optimal in the scale of Sobolev spaces: 
Lindblad \cite{hL1998} showed that shock singularities 
can instantly form if one only assumes a bound for the 
$H^2(\mathbb{R}^3)$-norm of the wave-part of the initial data.
Our proof relies on the assumption that the transport-part of the initial data is more regular than the wave-part,
and we show that the additional regularity is propagated by the flow, 
even though the transport-part of the flow is deeply coupled
to the rougher wave-part. To implement our approach,
we derive several results of independent interest:
\textbf{i)} sharp estimates for the acoustic geometry, 
which in particular capture how the vorticity and entropy affect the Ricci curvature of the acoustical metric
and therefore, via Raychaudhuri's equation, influence the evolution of the geometry of  acoustic null hypersurfaces, i.e., sound cones;
\textbf{ii)} Strichartz estimates for quasilinear sound waves coupled to vorticity and entropy;
and \textbf{iii)} Schauder estimates for the transport-div-curl-part.
Compared to previous works on low regularity, 
the main new features of the paper are that the quasilinear PDE
systems under study exhibit multiple speeds of propagation
and that elliptic estimates for various components of the fluid 
are needed, both to avoid loss of regularity and to gain space-time integrability.

\bigskip

\noindent \textbf{Keywords}: eikonal equation, eikonal function, low regularity, null geometry, Raychaudhuri's equation, 
shocks, Schauder estimate, Strichartz estimate, vectorfield method

\bigskip

\noindent \textbf{Mathematics Subject Classification (2020):} 
Primary: 35Q31; 
Secondary: 	
35Q35, 
35L10, 
35L67 

\end{abstract}

\maketitle

\centerline{\today}

\tableofcontents

\newpage

\section{Introduction and overview of the main results}
\label{S:INTRO}
In this paper, we study the compressible Euler equations in three spatial dimensions:
\begin{subequations}{\label{E:EULERSYSTEMSTANDARD}}
\begin{align} \label{E:TRANSPORTDENSRENORMALIZEDRELATIVETORECTANGULAR}
	\Transport \varrho
	& = - \varrho\, \Flatdiv v,
	&&
		\\
	\Transport v^i 
	& = -\varrho^{-1} \updelta^{ia} \partial_a p,
	\label{E:TRANSPORTVELOCITYRELATIVETORECTANGULAR}
	&
	&
	(i=1,2,3),
		\\
	\Transport \Ent
	& = 0,
	\label{E:ENTROPYTRANSPORT}
\end{align}
\end{subequations}
where $\varrho:\mathbb{R}^{1+3} \rightarrow [0,\infty)$,
$v:\mathbb{R}^{1+3} \rightarrow \mathbb{R}^3$, 
and $\Ent:\mathbb{R}^{1+3} \rightarrow \mathbb{R}$
are the fluid's density, velocity, and entropy, respectively; 
$p$ is the fluid's pressure, which is a given smooth function of $\varrho$ and $\Ent$ known as the \emph{equation of state}
-- whose choice reflects one's assumptions about the properties of the fluid --;
\begin{align} \label{E:MATERIALVECTORVIELDRELATIVETORECTANGULAR}
	\Transport 
	& := 
	\partial_t 
	+  
	v^a \partial_a
\end{align} 
is the material derivative vectorfield;
$\mathbf{X} f := \mathbf{X}^{\alpha} \partial_{\alpha} f$
denotes the derivative of the scalar function $f$ in the direction of the vectorfield $\mathbf{X}$;
$\updelta^{ab}$ is the standard Kronecker delta; and 
$\Flatdiv v := \partial_a v^a$ is the standard (three-dimensional) Euclidean divergence of $v$.
Equations \eqref{E:EULERSYSTEMSTANDARD} are expressed relative to
Cartesian coordinates 
$\lbrace x^{\alpha} \rbrace_{\alpha=0,1,2,3}$ on $\mathbb{R}^{1+3}$,
where here and throughout, 
$\lbrace \partial_{\alpha} \rbrace_{\alpha=0,1,2,3}$ denotes the corresponding partial derivative vectorfields,
$x^0 :=t$ denotes time, $\partial_0 := \partial_t$, 
$\lbrace x^a \rbrace_{a=1,2,3}$ are the spatial coordinates, 
and repeated indices are summed over their relevant ranges, with
lowercase Greek indices ranging from $0$ to $3$ 
and lowercase Latin indices from from $1$ to $3$.
We assume that $\inf_{t=0}\varrho > 0$, which allows us to avoid 
the well-known difficulty that the hyperbolicity of the equations can degenerate 
along fluid-vacuum boundaries.

Our main goal in this paper is to prove a series of intimately related results tied to the regularity and geometry of solutions.
We study ``general\footnote{As we mentioned above, the solutions that we study have strictly positive density, 
i.e., we avoid studying fluid-vacuum boundaries.} solutions,''  
which can have non-vanishing vorticity (i.e., $\curl v \neq 0$) and non-constant entropy. 
We allow for any\footnote{We assume that the equation of state is sufficiently smooth.}
equation of state\footnote{In practice, instead of the density $\varrho$,
we work with the logarithmic density, defined in Subsect.\,\ref{SS:NEWFORMULATIONEULER}.} 
$p = p(\varrho,\Ent)$ with positive sound speed $\Speed := \sqrt{\frac{\partial p(\varrho,\Ent)}{\partial \varrho}}$.
The central theme of the paper is that under low regularity assumptions on the initial data, 
it is possible to avoid, at least for short times, the formation of shocks,
which are singularities caused by sound wave compression.
These issues are fundamental for the Cauchy problem: 
for sufficiently rough initial data, 
\emph{ill-posedness occurs \cites{hL1998,rG2018} due to instantaneous 
shock formation, which is precipitated by the degeneration of the acoustic geometry, including the
intersection of the acoustic characteristics}. 
Shocks are of particular interest because they are the only
singularities that have been shown, through constructive methods
\cites{dC2007,dCsM2014,jLjS2018,jLjS2021,tBsSvV2019a,tBsSvV2020}, to develop for open sets\footnote{We also mention 
here the spectacular
work \cite{fMpRiRjS2019b} on the existence of implosion singularities in spherical symmetry
under an adiabatic equation of state $p = \varrho^{\upgamma}$ with $\upgamma > 1$.
These are singularities in which the density and velocity blow up at the center of symmetry
in finite time. The methods of \cite{fMpRiRjS2019b} suggest that the
implosion singularities might enjoy co-dimension stability without symmetry assumptions,
though perhaps not full stability for an open set of data.} 
of regular initial data. 
This motivates our main result: 
controlling the time of existence under optimal Sobolev regularity assumptions 
on the data of ``the part of the flow that blows up'' in \cites{dC2007,dCsM2014,jLjS2018,jLjS2021}. 
See Theorem~\ref{E:TIMEOFCLASSICALEXISTENCEVERYROUGH} for a heuristic statement of the main result
and Theorem~\ref{T:MAINTHEOREMROUGHVERSION} for the precise version.
The proof relies on a deep analysis of the geometry of solutions
that \emph{exploits hidden structures} in the equations.
We remark that, in the language of the present paper, the formation
of a shock would correspond to the vanishing of the null lapse $\nulllapse$
defined in \eqref{E:NULLLAPSE}, more precisely to the following singular behavior:
$\| \nulllapse^{-1} \|_{L_t^1 L_x^{\infty}} = \infty$. To avoid this singular scenario for short times,
we prove the estimates stated in \eqref{E:L2INTIMEESTIMATESFORNULLLAPSEALONGCONES}.

\begin{nonumberremark}[Remarks on different versions of the article]
This version of our article closely matches the published version \cite{mDcLgMjS2022}, 
which features some corrections and an abbreviated introduction compared to the first version we posted on
arXiv.org on September 5, 2019 \cite{mDcLgMjS2019}. 
As in the published version, here we refer readers to the first arXiv version 
for an extended introduction that features some additional background material. 
\end{nonumberremark}

\begin{theorem}[Control of the time of classical existence (heuristic version)]
\label{E:TIMEOFCLASSICALEXISTENCEVERYROUGH}
The time of classical existence of a solution to the $3D$ compressible Euler equations 
can be controlled in terms of the $H^{2^+}(\Sigma_0)$-norm
of the ``wave-part'' of the data (which is tied to sound waves, i.e., the part of the solution that is prone to shock formation)
and additional Sobolev and H\"{o}lder norms of the ``transport-part'' of the data (which is tied to the transporting of vorticity and entropy),
where $\Sigma_0 := \lbrace 0 \rbrace \times \mathbb{R}^3$ is the initial Cauchy hypersurface. 
\end{theorem}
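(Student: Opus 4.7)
The plan is to run a continuity/bootstrap argument for a high-norm controlling quantity of the schematic form
\[
\controlling(t) \sim \|\Psi(t)\|_{H^{2^+}(\Sigma_t)} + \|(\vort, \GradEnt)(t)\|_{H^{2^{++}} \cap C^{0,\alpha}(\Sigma_t)},
\]
where $\Psi = (\LogDensity, v)$ denotes the wave-part and $(\vort,\GradEnt)$ the transport-part. Using the Disconzi--Speck formulation, the $3D$ compressible Euler system rewrites as a covariant wave equation $\Box_{\gfour(\Psi)} \Psi = \mathcal{N}(\Psi, \Dfour \Psi; \vort, \GradEnt)$ coupled to transport equations $\Transport \vort = \mathcal{T}_1$ and $\Transport \GradEnt = \mathcal{T}_2$, supplemented by div-curl identities; here $\gfour$ is the acoustical Lorentzian metric and $\Transport = \partial_t + v \cdot \nabla$. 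Standard energy estimates alone lose a derivative at this level of regularity, so the core of the argument is to produce (i) Strichartz-type estimates $\|\Dfour \Psi\|_{L^2_t L^\infty_x} \lesssim F(\controlling(0))$ for the wave-part and (ii) Schauder-type gains for the transport-part, and then to close the bootstrap on a uniform interval $[0, \Tboot]$ with $\Tboot$ depending only on $\controlling(0)$.

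\textbf{Geometric setup.} To obtain the Strichartz bound I would construct an acoustic eikonal function $u$ solving $(\gfour^{-1})^{\alpha \beta} \partial_\alpha u \, \partial_\beta u = 0$, with level sets $\Charhypersurface_u$ forming the acoustic null cones, and set up the associated null frame $\{\Lunit, \uLunit, e_A\}$, the null lapse $\nulllapse$, and the null second fundamental forms along $\Charhypersurface_u$. The task is then to prove sharp mixed-norm control of the Ricci coefficients of this frame in a manner commensurate with the $H^{2^+}$ regularity of $\Psi$. Given such geometric control, one runs a Lax-type parametrix / conformal-change argument in the spirit of Smith--Tataru and Klainerman--Rodnianski--Szeftel, adapted to multiple propagation speeds, to transfer geometric estimates into Strichartz estimates for $\Box_{\gfour}$.

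\textbf{Main obstacle: the acoustic geometry.} The hardest step, and the one most sensitive to the vorticity/entropy coupling, is controlling the null geometry with only $H^{2^+}$ on $\Psi$. Raychaudhuri's equation along $\Lunit$, schematically
\[
\Lunit(\mytr \chi) + \tfrac{1}{2} (\mytr \chi)^2 = - |\hat{\chi}|^2 - \Ricfour(\Lunit, \Lunit),
\]
demands pointwise control of $\Ricfour(\Lunit, \Lunit)$; for the acoustical metric this splits as $\Ricfour(\Lunit, \Lunit) = \Lunit\!\left(f_1(\Psi) \Dfour \Psi\right) + f_2(\Psi, \vort, \GradEnt)$. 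The apparent second-derivative obstruction from $\Dfour^2 \Psi$ must be removed by introducing a renormalized trace $\trchimod := \mytr \chi - f_1(\Psi) \Lunit \Psi$, whose evolution equation is driven only by quadratic wave-quantities and by source terms in $(\vort, \GradEnt)$. This is exactly where the hypothesis that the transport-part is strictly more regular than the wave-part pays off: those sources become subcritical, and they can be controlled in the Sobolev/H\"older norms appearing in $\controlling$.

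\textbf{Closing the loop.} In parallel, one must verify that the transport-part \emph{retains} its additional regularity even though $\Transport$ has coefficients depending on the rougher $\Psi$. I would combine Hodge-div-curl elliptic estimates on each $\Sigma_t$ --- recovering a full derivative of $\vort$ (resp.\ $\GradEnt$) from its divergence and curl, which are governed by the Disconzi--Speck transport-div-curl equations --- with Schauder estimates in H\"older spaces to compensate for the failure of Sobolev embedding at this exponent, and an $L^1_t$-integrability of $\Dfour \Psi$ supplied by the Strichartz step above. Inserting these bounds into the transport equations gives a Gr\"onwall-type inequality for $\controlling(t)$ that does not blow up on $[0, \Tboot]$. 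The main conceptual obstacle throughout is maintaining the wave/transport regularity hierarchy without loss under their tight mutual coupling; the elliptic and Schauder gains at each time slice, together with the renormalization in Raychaudhuri's equation, are what make this possible.
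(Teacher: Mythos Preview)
Your proposal matches the paper's architecture: bootstrap on mixed spacetime norms, Strichartz via an eikonal-function construction and null-frame Ricci-coefficient control, renormalization in Raychaudhuri's equation, and Schauder/div-curl estimates to propagate the transport-part's extra regularity.

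There is one structural point you gloss over, which the paper flags as essential and non-obvious. You write that the vorticity/entropy sources in the renormalized Raychaudhuri equation ``become subcritical'' because of the regularity gap. That is not quite the mechanism. Controlling the null geometry requires $L^2$ bounds on these sources \emph{along the sound cones} $\mathcal{C}_u$, at what is effectively top order for the transport part. A generic top-order spatial derivative of $\vort$ or $\GradEnt$ is accessible only through the elliptic Hodge estimate on $\Sigma_t$; there is no analogue along $\mathcal{C}_u$, and extra Sobolev regularity by itself does not manufacture one. What makes the argument close is that the geometric formulation puts on the right-hand side of the wave equation---and hence into $\Ricfour_{\Lunit\Lunit}$---not generic first derivatives of $(\vort,\GradEnt)$ but the specific modified quantities $\VortVort$ and $\DivGradEnt$, which themselves satisfy $\Transport$-transport equations. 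Since $\gfour(\Transport,\Transport)=-1$, the vectorfield $\Transport$ is transversal to every $\gfour$-null hypersurface, and the standard transport-energy identity then yields $L^2(\mathcal{C}_u)$ control of $(\VortVort,\DivGradEnt)$ up to top order. The paper isolates this cone estimate as the principal new ingredient relative to the pure quasilinear-wave case. Your outline goes through once this step is made explicit; without it the geometric estimates would not close.
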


We now highlight three features of our work:
\begin{itemize}
	\item Our results are optimal in that \emph{$H^{2^+}(\Sigma_0)$ cannot be replaced with $H^2(\Sigma_0)$}. 
	More precisely, even in the irrotational and isentropic case (i.e., $\curl v \equiv 0$ and $\Ent \equiv \mbox{const}$, and thus
	the transport-part of the solution is trivial),
	the works \cites{hL1998,rG2018} imply that
	ill-posedness occurs\footnote{The Cartesian coordinate partial derivatives 
	of the solution blow up, but in principle, it could remain smooth in different coordinates; 
	e.g., Einstein's equations are well-posed in $H^2$ \cite{sKiRjS2015},
	even though they are $H^2$-ill-posed in wave coordinates \cite{bEhL2017}.}
	if one assumes only an $H^2(\Sigma_0)$-bound
	on $v$ and $\varrho$, due to the instantaneous formation of shocks.
	\item Our results appear to be the first of their kind for a quasilinear system featuring
	\emph{multiple characteristic speeds}, i.e., sound waves coupled to the transporting of vorticity and entropy. 
	\item In the irrotational and isentropic case, 
		where the Euler equations reduce to a quasilinear wave equation for a potential,
		Theorem~\ref{E:TIMEOFCLASSICALEXISTENCEVERYROUGH} recovers the low regularity well-posedness results for quasilinear wave equations 
		proved in \cites{hSdT2005,qW2017}.
		However, much like in the works \cites{jLjS2018,jLjS2021} on shocks, the following theme permeates our paper: 
		(especially) at low regularity levels, general compressible Euler solutions
		are \emph{not} ``perturbations of waves;''
		the presence of even the tiniest amount of vorticity or non-trivial entropy is a ``game changer''
		requiring substantial new insights, particularly for controlling the acoustic geometry.
		This is because \underline{the vorticity and entropy are deeply and subtly coupled to the sound waves}.
\end{itemize}

In proving Theorem~\ref{E:TIMEOFCLASSICALEXISTENCEVERYROUGH}, 
we derive several companion results of independent interest, including: 
\begin{itemize}
\item \emph{Control of the acoustic geometry} in the presence of vorticity and entropy.
	By ``acoustic geometry,'' we mean an \emph{acoustical eikonal function} $u$,
	that is, a solution to the acoustical eikonal equation
	$(\gfour^{-1})^{\alpha \beta} \partial_{\alpha} u \partial_{\beta} u =  0$,
	where the \emph{acoustical metric}\footnote{In practice, when constructing $u$,
	we work with a rescaled version of the acoustical metric; see Subsect.\,\ref{SS:EIKONAL}. \label{FN:RESCALEDEIKONAL}} 
	$\gfour = \gfour(\varrho,v,\Ent)$ is a Lorentzian metric 
	(see Def.\,\ref{D:ACOUSTICALMETRIC}) depending on the fluid solution. 
	Acoustical eikonal functions are adapted to the characteristics of the ``(sound) wave-part'' 
	of the solution and are fundamentally connected to shock waves.
	The regularity properties of $u$ are highly (and tensorially) tied to those of the fluid, and the intersection of the level sets of $u$
	would signify the formation of a shock.
\item \emph{Strichartz estimates} for (quasilinear) sound waves coupled to vorticity and entropy. 
\item \emph{Schauder estimates} for the vorticity and entropy, which solve transport-div-curl equations.
\end{itemize}

All aspects of our paper are fundamentally based on a new formulation of the compressible Euler 
equations as a system of wave and transport-div-curl equations,
derived in \cite{jS2019c} and stated in condensed form in 
Prop.\,\ref{P:GEOMETRICWAVETRANSPORT}. This new formulation exhibits remarkable geo-analytic properties 
that are crucial for our results. See also \cite{jLjS2020a} for the case of a barotropic
equation of state and \cite{mDjS2019} for a similar formulation of the relativistic Euler equations.

Standard proofs of local well-posedness for the compressible Euler flow
are based on applying only energy estimates and Sobolev embedding to a first-order formulation of the equations,
such as  \eqref{E:EULERSYSTEMSTANDARD}.
Such proofs
 require $(\varrho - \bar{\varrho},v,\Ent) \in H^{(5/2)^+}(\Sigma_0)$,
where $\bar{\varrho} > 0$ is a fixed constant background density.
Compared to such standard proofs, 
Theorem~\ref{E:TIMEOFCLASSICALEXISTENCEVERYROUGH} reduces the required Sobolev regularity of 
the wave-part of the data (i.e., the data of $\varrho$ and $\Flatdiv v$) by\footnote{Here, when discussing the regularity of $v$, $\Flatdiv v$, and $\curl v$,
we are implicitly referring to the Hodge estimate \eqref{E:STANDARDL2DIVCURLESTIMATES}.} half of a derivative, 
but requires additional smoothness on the transport-part of the data (i.e., of $\curl v$ and $\Ent$); see Theorem~\ref{T:MAINTHEOREMROUGHVERSION}. It is important to point out that one should not think that 
this additional smoothness of the transport-part of the data leads to an oversimplification of the problem.
This is because, to the best of our knowledge, one cannot propagate the extra smoothness using \emph{solely}
equations \eqref{E:EULERSYSTEMSTANDARD} (or other equivalent first-order formulation), i.e., 
without appealing to a non-standard formulation of the equations such as
the one given in Prop.\,\ref{P:GEOMETRICWAVETRANSPORT} and employed here
(see also \cite{mDdE2017} for another type of propagation of extra smoothness for the Euler equations
that also involves reformulating the equations).
Moreover, such propagation of extra
regularity does not hold for general first-order symmetric hyperbolic systems, which is one of the standard
frameworks used in the study of the compressible Euler equations. Furthermore, even when employing
the formulation of Prop.\,\ref{P:GEOMETRICWAVETRANSPORT}, the propagation of extra smoothness
for the transport part of the system is very delicate in that the transport- and wave-parts are
coupled in a highly non-trivial way (in particular through the acoustic geometry).
In this regard, a remarkable aspect of our work is:
\begin{quote}
We propagate the regularity of the ``smoother'' transport-part of the compressible Euler flow,
even though it is deeply coupled to the rougher wave-part.
\end{quote}
To propagate the extra smoothness, we exploit the full nonlinear structure of the aforementioned
new formulation of the equations and carry out a delicate analysis of the interaction of the wave- and transport-
parts of the system as well as the acoustic geometry.\footnote{Readers less familiar with
Strichartz and acoustic geometry estimates can consult the arXiv version of this paper \cite{mDcLgMjS2019},
wherein we provide a longer introduction with further background.}

\subsection{New formulation of the Euler equations\label{SS:NEWFORMULATIONEULER}}
In Section~\ref{SSS:STATEMENTOFEQUATIONSOFMOTION}, we provide the new formulation of the equations
that we use in our analysis. 	We first introduce some notation and define
some additional quantities that play a role in the new formulation.

Recall that we assume that the pressure $p$ is a given smooth function of $\varrho$ and $\Ent$,
and that the speed of sound $\Speed$ is defined by
$
\displaystyle
\Speed := \sqrt{\frac{\partial p}{\partial \varrho}\left|\right._{\Ent}}$, 
where $\displaystyle
\frac{\partial p}{\partial \varrho}\left|\right._{\Ent}$ is the partial derivative 
of $p$ with respect to $\varrho$ at fixed $\Ent$.
From now on, 
we view $p$ and $\Speed$ as smooth functions of the \emph{logarithmic density} 
\begin{align} \label{E:LOGDENSITY}
 \LogDensity 
	& := \ln \left(\frac{\varrho}{\bar{\varrho}} \right) : \mathbb{R}^{1+3} \rightarrow \mathbb{R},
\end{align}
(as opposed to the standard density) and $\Ent$, where we recall that
$\bar{\varrho} > 0$ is a fixed constant background density. That is,
we view $p = p(\LogDensity,\Ent)$ and $\Speed = \Speed(\LogDensity,\Ent)$.
If $f = f(\LogDensity,\Ent)$ is a scalar function, then
	we use the following notation to denote partial differentiation with respect to
	$\LogDensity$ and $\Ent$:
	$
	\displaystyle
	f_{;\LogDensity} 
	:= \frac{\partial f}{\partial \LogDensity}
	$
	and
	$
	\displaystyle
	f_{;\Ent} 
	:= \frac{\partial f}{\partial \Ent}
	$.

\subsubsection{Additional fluid variables}
\label{SS:ADDITIONALFLUIDVARIABLES}
We first recall that the fluid vorticity is the $\Sigma_t$-tangent vectorfield 
$\omega:\mathbb{R}^{1+3} \rightarrow \mathbb{R}^3$, 
where $\Sigma_t := \{ (\tau,x^1,x^2,x^3) \in \mathbb{R}^{1+3} \, | \, \tau = t \}$, 
with the following Cartesian spatial components:
\begin{align} \label{E:VORTICITYDEFINITION}
	\omega^i 
	& := (\Flatcurl v)^i
	:= \upepsilon^{iab} \partial_a v_b,
\end{align}
where throughout, $\upepsilon^{iab}$ denotes the fully antisymmetric symbol normalized by $\upepsilon^{123}=1$.

We will derive estimates for the \emph{specific vorticity} and \emph{entropy gradient}, which are vectorfields
featured in the next definition. These variables solve equations with a favorable structure
and thus play a key role in our analysis.

\begin{definition}[Specific vorticity and entropy gradient]
\label{D:SPECIFICVORTICITYANDENTROPYGRADIENT}
We define the specific vorticity $\vortrenormalized: \mathbb{R}^{1+3} \rightarrow \mathbb{R}^3$
and the entropy gradient $\GradEnt : \mathbb{R}^{1+3} \rightarrow \mathbb{R}^3$
to be the $\Sigma_t$-tangent vectorfields with the following Cartesian components:
\begin{align} \label{E:SPECIFICVORT}
	\vortrenormalized^i
	& := \frac{\omega^i}{(\varrho/\bar{\varrho})}
		= \frac{(\Flatcurl v)^i}{\exp \LogDensity},
	&
	\GradEnt^i
	& := \updelta^{ia} \partial_a \Ent.
\end{align}
\end{definition}

The ``modified'' fluid variables featured in the next definition solve equations with 
remarkable structures. In total, such structures allow us to prove that these variables
exhibit a gain in regularity compared to standard estimates.
We stress that this gain of regularity is crucial for showing that the different solution variables
have enough regularity to be compatible with our approach.

\begin{definition}[Modified fluid variables]
	\label{D:MODIFIEDFLUIDVARIABLES}
	We define the Cartesian components of the $\Sigma_t$-tangent vectorfield $\VortVort$ 
	and the scalar function $\DivGradEnt$ as follows:
	\begin{subequations}
	\begin{align} \label{E:RENORMALIZEDCURLOFSPECIFICVORTICITY}
		 \VortVort^i
		& :=
			\exp(-\LogDensity) (\Flatcurl \vortrenormalized)^i
			+
			\exp(-3\LogDensity) \Speed^{-2} \frac{p_{;\Ent}}{\bar{\varrho}} \GradEnt^a \partial_a v^i
			-
			\exp(-3\LogDensity) \Speed^{-2} \frac{p_{;\Ent}}{\bar{\varrho}} (\partial_a v^a) \GradEnt^i,
				\\
		\DivGradEnt
		& := 
			\exp(-2 \LogDensity) \Flatdiv \GradEnt 
			-
			\exp(-2 \LogDensity) \GradEnt^a \partial_a \LogDensity.
			\label{E:RENORMALIZEDDIVOFENTROPY}
	\end{align}
	\end{subequations}
\end{definition}

The following definitions are primarily for notational convenience.

\begin{definition}[The wave variables]
	\label{D:WAVEVARIABLES}
		We define the wave variables $\Psi_{\iota}$, ($\iota=0,1,2,3,4$), and the 
		array $\vec{\Psi}$ of wave variables, as follows:
		\begin{subequations}
		\begin{align} 
			\Psi_0
			& := \LogDensity,
			&
			\Psi_i 
			& := v^i,
			\qquad
			(i=1,2,3),
			&
			\Psi_4
			& := \Ent,
				\label{E:PSIDEFS} \\
			\vec{\Psi} & := (\Psi_0,\Psi_1,\Psi_2,\Psi_3,\Psi_4).
			&&
			&&
			\label{E:WAVEARRAY}
		\end{align}
		\end{subequations}
\end{definition}

\begin{definition}[Arrays of Cartesian component functions]
	\label{D:ARRAYSOFCARTESIANCOMPONENTFUNCTIONS}
	We define the following arrays:
	\begin{align}
		\vec{v}
		& := (v^1,v^2,v^3),
		&
		\vec{\vortrenormalized}
		& := (\vortrenormalized^1,\vortrenormalized^2,\vortrenormalized^3),
		&
		\vec{\GradEnt}
		& := (\GradEnt^1,\GradEnt^2,\GradEnt^3),
		&
		\vec{\VortVort}
		& := (\VortVort^1,\VortVort^2,\VortVort^3).
		\end{align}
\end{definition}

Throughout, we use the following notation for Cartesian partial derivative operators:
\begin{itemize} 
	\item $\partial$ denotes a spatial derivative with respect to the Cartesian coordinates.
	\item $\pmb{\partial} = (\partial_t,\partial)$ denotes a spacetime derivative with respect to the Cartesian coordinates.
\end{itemize}
Moreover,
$\pmb{\partial} \vec{\Psi}$ denotes the array of scalar functions 
$\pmb{\partial} \vec{\Psi} := (\partial_{\alpha} \Psi_{\iota})_{\alpha = 0,1,2,3, \iota = 0,1,2,3,4}$
(recall that $\partial_0 = \partial_t$),
and  
$\partial \vec{\Psi}$ denotes the array of scalar functions 
$\partial \vec{\Psi} := (\partial_a \Psi_{\iota})_{a = 1,2,3, \iota = 0,1,2,3,4}$.
Arrays such as 
$\pmb{\partial} \vec{v}$,
$\partial \vec{v}$,
$\pmb{\partial} \vec{\vortrenormalized}$, 
$\partial \vec{\vortrenormalized}$, 
$\pmb{\partial}^2 \vec{\Psi}$,
etc., are defined analogously.
Moreover, 
$
\partial^{\leq 1} \vec{\Psi}
$
denotes the array whose entries are those of $\vec{\Psi}$ together with those of $\partial \vec{\Psi}$,
and arrays such as
$\partial^{\leq 1} \vec{\vortrenormalized}$,
$\partial^{\leq 1} \vec{\GradEnt}$, 
etc., are defined analogously.

\subsubsection{Acoustical metric and wave operators}
\label{SSS:ACOUSTICALMETRICANDWAVEOPERATORS}
Our analysis of the wave-part of the system is fundamentally tied to the 
acoustical metric $\gfour$ and related geometric tensors.

\begin{definition}[The acoustical metric and first fundamental form] 
\label{D:ACOUSTICALMETRIC}
We define the \emph{acoustical metric} $\gfour = \gfour(\LogDensity,v,\Ent)$ relative
to the Cartesian coordinates as follows:
\begin{align}
		\gfour
		& := 
		-  dt \otimes dt
			+ 
			\Speed^{-2} \sum_{a=1}^3(dx^a - v^a dt) \otimes (dx^a - v^a dt).
				\label{E:ACOUSTICALMETRIC} 
	\end{align}
We define\footnote{As we describe in Subsubsect.\,\ref{SSS:NULLFRAME},
$g$ can be extended to a $\Sigma_t$-tangent spacetime tensor.
By definition, the extended version of $g$ agrees with the original version when acting on $\Sigma_t$-tangent vectors
and vanishes upon any contraction with $\Transport$.
The extended $g$ satisfies the identity
$g = \Speed^{-2} \sum_{a=1}^3(dx^a - v^a dt) \otimes (dx^a - v^a dt)$.
\label{FN:FIRSTFUNDAMENTALFORMEXTENDEDTOSPACETIMETENSOR}} 
the \emph{first fundamental form} $g = g(\LogDensity,v,\Ent)$ of $\Sigma_t$
and the corresponding \emph{inverse first fundamental form} $g^{-1} = g^{-1}(\LogDensity,v,\Ent)$
relative to the Cartesian coordinates as follows:
\begin{align} \label{E:FIRSTFUNDAMENTALFORM} 
		g
		& := 
			\Speed^{-2} \sum_{a=1}^3 dx^a \otimes dx^a,
		&
		g^{-1} 
		& := 
			\Speed^2 \sum_{a=1}^3 \partial_a \otimes \partial_a.
\end{align}
\end{definition}

It is straightforward to check that relative to the Cartesian coordinates, we have
\begin{align} \label{E:INVERSEACOUSTICALMETRIC}
	\gfour^{-1} 
		& = 
			- \Transport \otimes \Transport
			+ \Speed^2 \sum_{a=1}^3 \partial_a \otimes \partial_a,
		&
	\mbox{\upshape det} \gfour
	& = - \Speed^{-6}.
\end{align}
It is also straightforward to verify the following facts, which we will use throughout:
$\Transport$ is $\gfour$-orthogonal to $\Sigma_t$
and normalized by
\begin{align} \label{E:TRANSPORTISLENGTHONE}
	\gfour(\Transport,\Transport)
	& = -1.
\end{align}

\begin{remark}
	Note that $\gfour_{\alpha \beta} = \gfour_{\alpha \beta}(\vec{\Psi})$ and $\Transport^{\alpha} = \Transport^{\alpha}(\vec{\Psi})$.
	Note also that $(\gfour^{-1})^{00} = - 1$. We will sometimes silently use this basic fact.
\end{remark}

The following wave operators arise in our analysis of solutions.

\begin{definition}[Covariant and reduced wave operators]
$\square_{\gfour}$ denotes the covariant wave operator of $\gfour$, 
which acts on scalar functions $\varphi$ by the coordinate invariant formula
$\square_{\gfour} \varphi := \frac{1}{\sqrt{|\mbox{\upshape det $\gfour$}|}} \partial_\alpha \left(\sqrt{|\mbox{\upshape det} \gfour|} (\gfour^{-1})^{\alpha\beta} 
\partial_\beta \varphi \right)$. 
$\hat{\square}_{\gfour}$ denotes the reduced wave operator of $\gfour$, 
and it acts on scalar functions $\varphi$ by the following formula (relative to Cartesian coordinates):
$\hat{\square}_{\gfour} \varphi := (\gfour^{-1})^{\alpha \beta} \partial_{\alpha} \partial_\beta \varphi$.
\end{definition}

\subsubsection{Statement of the geometric wave-transport formulation of the compressible Euler equations}
\label{SSS:STATEMENTOFEQUATIONSOFMOTION}
We now provide the geometric formulation of the compressible Euler equations that we use
to study the regularity of solutions. Detailed versions of the equations were derived in \cite{jS2019c}*{Theorem~1},
but for our purposes here, it suffices to work with the schematic version stated
in Prop.\,\ref{P:GEOMETRICWAVETRANSPORT}. 

We will use the following schematic notation,
which captures the essential structures that are relevant for our analysis.
Later in the article, we will introduce additional schematic notation.
\begin{itemize}
	\item $\linsmoothfunction(A)[B]$ denotes any scalar-valued function that is linear in $B$ with coefficients that are a (possibly nonlinear) function of $A$,
		i.e., a term of the form $\gensmoothfunction(A) \cdot B$,
		where $\gensmoothfunction$ denotes a generic smooth function 
		that is free to vary from line to line.
	\item $\quadsmoothfunction(A)[B,C]$ denotes any scalar-valued function that 
		is quadratic in $B$ and $C$ 
			with coefficients that are a (possibly nonlinear) function of $A$,
			i.e., a term of the form 
			$\gensmoothfunction(A) \cdot B \cdot C$.
\end{itemize}

\begin{proposition}\cite{jS2019c}*{The geometric wave-transport formulation of the compressible Euler equations}
\label{P:GEOMETRICWAVETRANSPORT}
Smooth solutions to the compressible Euler equations 
\eqref{E:TRANSPORTDENSRENORMALIZEDRELATIVETORECTANGULAR}--\eqref{E:ENTROPYTRANSPORT}
also verify the following system of equations, where all terms on the RHSs
are displayed schematically:\footnote{The precise form of the schematic terms in equation \eqref{E:COVARIANTWAVE}
depends on $\Psi$, but the details are not important for our analysis. Similar remarks apply to the remaining equations. \label{FN:SUPPRESSIONOFCARTESIANINDEX}} 

\medskip

\noindent \underline{Wave equations:}
For $\Psi \in \lbrace \LogDensity, v^1, v^2, v^3, \Ent \rbrace$, we have
\begin{align} \label{E:COVARIANTWAVE}
\hat{\square}_{\gfour(\vec{\Psi})} \Psi 
& = 
	\mathfrak{F}_{(\Psi)}
	:=
	\linsmoothfunction(\vec{\Psi})[\vec{\VortVort},\DivGradEnt]
	+ 
	\quadsmoothfunction(\vec{\Psi})[\pmb{\partial} \vec{\Psi},\pmb{\partial} \vec{\Psi}].
 \end{align}
Moreover, replacing $\hat{\square}_{\gfour(\vec{\Psi})}$ on LHS~\eqref{E:COVARIANTWAVE} 
with the covariant wave operator $\square_{\gfour(\vec{\Psi})}$ leads to a wave equation
whose RHS has the same schematic form as RHS~\eqref{E:COVARIANTWAVE}.

\medskip

\noindent \underline{Transport equations:}
The Cartesian component functions $\lbrace \vortrenormalized^i \rbrace_{i=1,2,3}$ and $\lbrace \GradEnt^i \rbrace_{i=1,2,3}$ verify the following equations:
\begin{align} \label{E:TRANSPORTEQNMAIN}
\Transport \vortrenormalized^i
& 
= \linsmoothfunction(\vec{\Psi},\vec{\vortrenormalized},\vec{\GradEnt})[\pmb{\partial} \vec{\Psi}],
&
\Transport \GradEnt^i
& = \linsmoothfunction(\vec{\Psi},\vec{\GradEnt})[\pmb{\partial} \vec{\Psi}].
\end{align}

\medskip

\noindent \underline{Transport div-curl system for the specific vorticity:}
The scalar function $\dive \vortrenormalized$ 
and the Cartesian component functions $\lbrace \VortVort^i \rbrace_{i=1,2,3}$ verify the following equations:
\begin{subequations}
\begin{align}
\dive \vortrenormalized 
	& = 
		\mathfrak{F}_{(\dive \vortrenormalized)}
		:=
		\linsmoothfunction(\vec{\vortrenormalized})[\pmb{\partial} \vec{\Psi}],
\label{E:DIVVORTICITY}
\\
\Transport \VortVort^i 
	& = 
		\mathfrak{F}_{(\VortVort^i)}
		:=
		\quadsmoothfunction(\vec{\Psi})[\pmb{\partial} \vec{\Psi},\partial \vec{\vortrenormalized}]
		+
		\quadsmoothfunction(\vec{\Psi})[\pmb{\partial} \vec{\Psi},\partial \vec{\GradEnt}]
		+
		\quadsmoothfunction(\vec{\Psi},\vec{\GradEnt})[\pmb{\partial} \vec{\Psi},\pmb{\partial} \vec{\Psi}]
		+
		\linsmoothfunction(\vec{\Psi},\vec{\vortrenormalized},\vec{\GradEnt})[\pmb{\partial} \vec{\Psi}].
\label{E:TRANSPORTVORTICITYVORTICITY}
\end{align}
\end{subequations}

\medskip

\noindent \underline{Transport div-curl system for the entropy gradient:}
The scalar function $\DivGradEnt$ and the Cartesian component functions $\lbrace \GradEnt^i \rbrace_{i=1,2,3}$ verify the following equations:
\begin{subequations}
\begin{align}
	\Transport \DivGradEnt & 
		=
		\mathfrak{F}_{(\DivGradEnt)}
		:=
		\quadsmoothfunction(\vec{\Psi})[\pmb{\partial} \vec{\Psi},\partial \vec{\GradEnt}]
		+
		\quadsmoothfunction(\vec{\Psi},\vec{\GradEnt})[\pmb{\partial} \vec{\Psi},\pmb{\partial} \vec{\Psi}]
		+
		\linsmoothfunction(\vec{\Psi},\vec{\GradEnt})[\partial \vec{\vortrenormalized}],
		\label{E:TRANSPORTDIVGRADENTROPY}
\\
(\curl \GradEnt)^i & = 0.
\label{E:CURLGRADENT}
\end{align}
\end{subequations}

\end{proposition}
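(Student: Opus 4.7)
The proposition restates \cite{jS2017a}*{Theorem~1}, so my plan is to reproduce its derivation from the first-order system \eqref{E:TRANSPORTDENSRENORMALIZEDRELATIVETORECTANGULAR}--\eqref{E:ENTROPYTRANSPORT} by a sequence of direct computations, all built on the basic identity
\[
\hat{\square}_{\gfour} \Psi = -\Transport^{2} \Psi + (\Transport v^{a}) \partial_{a} \Psi + \Speed^{2} \Delta \Psi,
\]
which follows from $\gfour^{-1} = - \Transport \otimes \Transport + \Speed^{2} \sum_{a} \partial_{a} \otimes \partial_{a}$. The $(\Transport v^{a}) \partial_{a} \Psi$ piece is automatically of type $\quadsmoothfunction(\vec{\Psi})[\pmb{\partial}\vec{\Psi},\pmb{\partial}\vec{\Psi}]$ once \eqref{E:TRANSPORTVELOCITYRELATIVETORECTANGULAR} is substituted, so the content of \eqref{E:COVARIANTWAVE} lies in rewriting $-\Transport^{2} \Psi + \Speed^{2} \Delta \Psi$.

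For $\Psi = \LogDensity$, I apply $\Transport$ to \eqref{E:TRANSPORTDENSRENORMALIZEDRELATIVETORECTANGULAR} and commute $[\Transport,\partial_{a}] = -(\partial_{a} v^{b})\partial_{b}$, so that $\Transport^{2} \LogDensity = -\Flatdiv(\Transport v) + \quadsmoothfunction(\vec{\Psi})[\pmb{\partial}\vec{\Psi},\pmb{\partial}\vec{\Psi}]$; substituting \eqref{E:TRANSPORTVELOCITYRELATIVETORECTANGULAR} produces a term $\Speed^{2} \Delta \LogDensity$ that cancels the one in $\hat{\square}_{\gfour}\LogDensity$, leaving only $\Delta \Ent$, which I rewrite via \eqref{E:RENORMALIZEDDIVOFENTROPY} as $\exp(2\LogDensity) \DivGradEnt + \GradEnt^{a}\partial_{a}\LogDensity$ to match the claimed schematic form. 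For $\Psi = v^{i}$, the same strategy plus the vector-calculus identity $\Delta v^{i} = \partial^{i}(\Flatdiv v) - (\Flatcurl \omega)^{i}$ reduces matters to $-\Speed^{2} \exp(\LogDensity)(\Flatcurl \vortrenormalized)^{i}$ modulo quadratic terms; using the definition \eqref{E:RENORMALIZEDCURLOFSPECIFICVORTICITY} to trade $\Flatcurl \vortrenormalized$ for $\exp(\LogDensity) \VortVort^{i}$ plus lower-order pieces yields the $\linsmoothfunction(\vec{\Psi})[\vec{\VortVort},\DivGradEnt]$ structure. For $\Psi = \Ent$, one simply uses $\Transport^{2} \Ent = 0$ and \eqref{E:RENORMALIZEDDIVOFENTROPY} again. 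The transport equations \eqref{E:TRANSPORTEQNMAIN} follow by standard manipulations: $\partial^{i}$ of \eqref{E:ENTROPYTRANSPORT} plus commutation gives $\Transport \GradEnt^{i} = -(\partial^{i} v^{a}) \GradEnt_{a}$; the classical vorticity identity (obtained by taking $\Flatcurl$ of \eqref{E:TRANSPORTVELOCITYRELATIVETORECTANGULAR}, where the symmetric part of $\partial^{2}$ kills $\Flatcurl(\Speed^{2}\partial\LogDensity)$ modulo baroclinic terms in $\GradEnt$) combined with \eqref{E:TRANSPORTDENSRENORMALIZEDRELATIVETORECTANGULAR} to eliminate $\Flatdiv v$ in $\Transport(\exp(-\LogDensity)\omega^{i})$ gives \eqref{E:TRANSPORTEQNMAIN} for $\vortrenormalized^{i}$. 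The divergence equation \eqref{E:DIVVORTICITY} is immediate from $\Flatdiv \omega = 0$ applied to $\vortrenormalized^{i} = \exp(-\LogDensity) \omega^{i}$, and \eqref{E:CURLGRADENT} is trivial since $\GradEnt$ is a Euclidean gradient.

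The main obstacle, and the whole structural content of the formulation, is the derivation of \eqref{E:TRANSPORTVORTICITYVORTICITY} and \eqref{E:TRANSPORTDIVGRADENTROPY}. The definitions \eqref{E:RENORMALIZEDCURLOFSPECIFICVORTICITY}--\eqref{E:RENORMALIZEDDIVOFENTROPY} are engineered so that when one applies $\Transport$ to $\exp(-\LogDensity) \Flatcurl \vortrenormalized$ or $\exp(-2\LogDensity) \Flatdiv \GradEnt$, the dangerous commutators $[\Transport,\Flatcurl]\vortrenormalized$ and $[\Transport,\Flatdiv]\GradEnt$ --- which a priori produce second spatial derivatives of $v$ acting on $\vortrenormalized$ or $\GradEnt$ that would spoil any gain in regularity --- are exactly cancelled by the $\Transport$-derivatives of the correction summands involving $\exp(-3\LogDensity)\Speed^{-2}(p_{;\Ent}/\bar{\varrho}) \GradEnt \cdot \partial v$ in \eqref{E:RENORMALIZEDCURLOFSPECIFICVORTICITY} and $\exp(-2\LogDensity)\GradEnt^{a}\partial_{a}\LogDensity$ in \eqref{E:RENORMALIZEDDIVOFENTROPY}. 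Concretely, I would expand $\Transport \VortVort^{i}$ by using \eqref{E:TRANSPORTEQNMAIN} to replace $\Transport \vortrenormalized$ and $\Transport \GradEnt$, then use the velocity equation \eqref{E:TRANSPORTVELOCITYRELATIVETORECTANGULAR} to replace any surviving $\Transport v^{a}$ factors by $-\Speed^{2}\partial^{a}\LogDensity - \exp(-\LogDensity)(p_{;\Ent}/\bar{\varrho}) \partial^{a}\Ent$, and then verify that the remaining second-order derivative terms in $v$ and $\Ent$ reorganize into antisymmetric combinations under the $\upepsilon^{iab}$ that is implicit in $\Flatcurl$, and hence vanish. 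What survives is precisely a sum of the schematic forms claimed on RHS~\eqref{E:TRANSPORTVORTICITYVORTICITY}, namely products of a single spatial derivative of $(\vec{\vortrenormalized},\vec{\GradEnt})$ with a first derivative of $\vec{\Psi}$, plus purely first-order products --- with the coefficients being smooth functions of the equation-of-state-dependent quantities $\Speed^{2}$ and $p_{;\Ent}$. The analogous computation for $\DivGradEnt$ is simpler because only one $\upepsilon$-free identity is needed: $\Flatdiv(\Transport \GradEnt) = -\partial_{a}((\partial^{a}v^{b})\GradEnt_{b})$ introduces $\Delta v^{b} \GradEnt_{b}$, which I replace by $\partial^{b}(\Flatdiv v) \GradEnt_{b} - (\Flatcurl \omega)^{b}\GradEnt_{b}$ and then by $-\partial^{b}(\Transport \LogDensity)\GradEnt_{b}$ and a $\partial \vec{\vortrenormalized}$ term, producing exactly the $\linsmoothfunction(\vec{\Psi},\vec{\GradEnt})[\partial \vec{\vortrenormalized}]$ contribution on RHS~\eqref{E:TRANSPORTDIVGRADENTROPY}. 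These two identities are the ones that justify the terminology ``modified variables'' and embody the regularity gain central to the paper.
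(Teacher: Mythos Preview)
Your outline is correct and follows the only available route: direct computation from the first-order system \eqref{E:TRANSPORTDENSRENORMALIZEDRELATIVETORECTANGULAR}--\eqref{E:ENTROPYTRANSPORT}, with the crucial cancellation of second-order terms in $\Transport \VortVort^{i}$ and $\Transport \DivGradEnt$ arising precisely from the specific correction terms built into Definitions~\eqref{E:RENORMALIZEDCURLOFSPECIFICVORTICITY}--\eqref{E:RENORMALIZEDDIVOFENTROPY}. The paper itself does not supply a proof; it simply cites \cite{jS2017a}*{Theorem~1} and records the equations in schematic form (see the sentence preceding the statement of the proposition), so there is nothing further to compare against.
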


\begin{remark}
	We emphasize that for our main results,
	it is crucial that \emph{generic} first derivatives of $\vortrenormalized$ and $\GradEnt$ do \emph{not} appear on
	RHS~\eqref{E:COVARIANTWAVE}; rather, only the special combinations $\vec{\VortVort}$ and $\DivGradEnt$ appear.
\end{remark}

\begin{remark}
	\label{R:SIMPLEWAYTOTHINKOFVORTICITYANDENTROPYGRADIENT}
	In obtaining the form of the equations of Prop.\,\ref{P:GEOMETRICWAVETRANSPORT}
	as a consequence of the equations presented in \cite{jS2019c}, 
	we have used the simple relations
	$\vortrenormalized^i = \linsmoothfunction(\vec{\Psi})[\partial \vec{\Psi}]$
	and
	$\GradEnt^i = \updelta^{ic} \partial_c \Ent = \linsmoothfunction[\partial \vec{\Psi}]$.
\end{remark}

\begin{remark}
	In the equations of \cite{jS2019c}, all derivative-quadratic inhomogeneous terms are null forms.
	However, using Remark~\ref{R:SIMPLEWAYTOTHINKOFVORTICITYANDENTROPYGRADIENT},
	we have rewritten, for example, terms of type $\GradEnt \cdot \GradEnt$,
	as $\quadsmoothfunction[\pmb{\partial} \vec{\Psi},\pmb{\partial} \vec{\Psi}]$,
	where $\quadsmoothfunction[\pmb{\partial} \vec{\Psi},\pmb{\partial} \vec{\Psi}]$ is not necessarily a null form.
	That is, the quadratic terms $\quadsmoothfunction(\cdot)[\cdot,\cdot]$ in Prop.\,\ref{P:GEOMETRICWAVETRANSPORT} 
	are not necessarily null forms. While the presence of null form structures is crucial for the study of the formation of shocks,
	such null form structures are not important for the results of this article.
\end{remark}

Proposition \ref{P:GEOMETRICWAVETRANSPORT} justifies our use of the terminology
``wave-parts'' and ``transport-parts'' to refer to different parts of the system.
In particular, it shows that the Cartesian velocity components $v^i$ 
and $\LogDensity$
satisfy covariant wave equations of the form $\square_{\gfour} (v^i,\LogDensity) = \cdots$,
and we therefore refer to $\varrho$ and $v^i$ as the ``wave-part'' of the compressible Euler flow.
In contrast, $\Ent$, $\partial \Ent$, and the specific vorticity $\vortrenormalized$
satisfy transport equations along the integral curves of the material derivative
vectorfield $\Transport := \partial_t + v^a \partial_a$, and we therefore refer to these as the ``transport-part'' of the compressible Euler flow.
Moreover, the variables $\VortVort$ and $\DivGradEnt$
satisfy transport-div-curl subsystems and, therefore, we also consider these to be part of the ``transport-part'' of the flow.

\subsection{Statement of the main result concerning control of the time of classical existence}
\label{SS:CONTROLOFTIMEOFEXISTENCE}
We now precisely state the theorem on the time of classical existence. 
We recall that $\bar{\varrho} > 0$ is a fixed constant background density.

\begin{theorem}[Control of the time of classical existence under low regularity assumptions on the wave-part of the data]
	\label{T:MAINTHEOREMROUGHVERSION}
	Consider a smooth\footnote{For convenience, in this paper, we will assume that the solutions are as many times differentiable as necessary.
	Thus, ``smooth'' means ``as smooth as necessary for the \emph{qualitative} arguments (such as integration by parts) 
	to go through.'' However, all of our \emph{quantitative} estimates depend only on the Sobolev and H\"{o}lder norms mentioned in
	Theorem~\ref{T:MAINTHEOREMROUGHVERSION}. \label{FN:SMOOTHNESS}} 
	solution to the compressible Euler equations in $3D$
	whose initial data obey the following three assumptions\footnote{
	We note that since assumption 3 implies that $\varrho|_{\Sigma_0}$ is strictly positive,
	we have
	$\| \varrho - \bar{\varrho} \|_{H^{\Sob}(\Sigma_0)} 
	\approx
	\| \LogDensity \|_{H^{\Sob}(\Sigma_0)} 
	$,
	where $\LogDensity$ is the logarithmic density
	defined in \eqref{E:LOGDENSITY};
	this standard estimate can be proved using the
	product estimates of Lemma~\ref{L:PRELIMINARYPRODUCTANDCOMMUTATORESTIMATES}.
	\label{FN:DATAFORLOGDENSITY} }
	for some real numbers\footnote{Similar results can be proved for $\Sob > 5/2$ 
	using only energy estimates and Sobolev embedding.} 
	$2 < \Sob \leq 5/2$, $0 < \upalpha < 1$, $0 \leq D_{\Sob;\upalpha} < \infty$, $0 < c_1 < c_2$, and $0 < c_3$:
	\begin{enumerate}
		\item
			$\| (\varrho - \bar{\varrho},v,\curl v) \|_{H^{\Sob}(\Sigma_0)} 
			+
			\| \Ent \|_{H^{\Sob+1}(\Sigma_0)} \leq D_{\Sob;\upalpha}$,
			where $\bar{\varrho} > 0$ is a constant background density.
	\item The modified fluid variables $\VortVort$ and $\DivGradEnt$
		from Def.\,\ref{D:MODIFIEDFLUIDVARIABLES}
		(which vanish for irrotational and isentropic solutions),
		verify the H\"{o}lder-norm bound
		$\| (\VortVort,\DivGradEnt) \|_{C^{0,\upalpha}(\Sigma_0)} \leq D_{\Sob;\upalpha}$.
	\item Along $\Sigma_0$,
		the data functions are contained in the interior of 
		a compact subset $\mathfrak{K}$ of state-space 
		in which $\varrho \geq c_3$
		and the speed of sound is bounded from below by 
		$c_1$ and above by $c_2$.
	\end{enumerate}
		Then the solution's time of classical existence $T$ depends only on $D_{\Sob;\upalpha}$ and $\mathfrak{K}$,
		i.e., $T = T(D_{\Sob;\upalpha},\mathfrak{K}) > 0$. Moreover, the Sobolev regularity of the data is propagated
		by the solution for $t \in [0,T]$, as is H\"{o}lder regularity.\footnote{Prop.\,\ref{P:TOPORDERENERGYESTIMATES} 
		allows us to propagate all of the Sobolev regularity of the initial data,
		while \eqref{E:HOLDERREGULARITYOFMODIFIEDVARIABLESPROPAGATED} allows us to propagate
		some H\"{o}lder regularity for $(\vec{\VortVort},\DivGradEnt)$; the H\"{o}lder norm that we can control has an exponent 
		that is controllable in terms of $\Sob - 2$, but the exponent is possibly smaller than $\upalpha$.
		Moreover, the norms that we can control are uniformly bounded by functions of $(D_{\Sob;\upalpha},\mathfrak{K})$ for $t \in [0,T]$.}
\end{theorem}

\begin{remark}[Regularity needed for Strichartz estimates and differences from the irrotational and isentropic case]	
	\label{R:REGULRITYNEEDEDFORSTRICHARTZ}
	In Theorem~\ref{T:MAINTHEOREMROUGHVERSION},
	we have assumed additional Sobolev regularity on the transport-part of the flow (specifically $\curl v$ and $\Ent$)
	compared to the classical local well-posedness regime $(\varrho - \bar{\varrho},v,\Ent) \in H^{(5/2)^+}(\Sigma_0)$.
	This is because our approach to controlling
	$\int_0^T
			\| 
				\pmb{\partial} (\varrho,v,\Ent)
			\|_{L^{\infty}(\Sigma_{\uptau})}
		\, d \uptau
	$
	(which, as we mention below \eqref{E:MIXEDSPACETIMEESTIMATENEEDEDFORMAINTHEOREM}, is crucial for the proof of Theorem~\ref{T:MAINTHEOREMROUGHVERSION})
	relies on deriving Strichartz estimates for the nonlinear wave equations of Prop.\,\ref{P:GEOMETRICWAVETRANSPORT},
	which in turn requires the transport-part of the system to be more regular than the wave part. That is,
	at the classical local well-posedness regularity level 
	(which is such that the transport-part does not generically enjoy any relative gain in regularity),
	\emph{the approach of treating the compressible Euler equations as a coupled wave-div-curl-transport system
	fails,\footnote{At the classical local well-posedness level, one can treat the compressible Euler equations
	as a first-order symmetric hyperbolic system and
	obtain control over
	$\int_0^T
			\| 
				\pmb{\partial} (\varrho,v,\Ent)
			\|_{L^{\infty}(\Sigma_{\uptau})}
		\, d \uptau
	$
	as a consequence of Sobolev embedding and symmetric hyperbolic energy estimates.
	However, symmetric hyperbolic formulations of the equations do not exhibit 
	the intricate structures that we exploit in proving Theorem~\ref{T:MAINTHEOREMROUGHVERSION}.}}
	except in the irrotational and isentropic case \cites{hSdT2005,qW2017}
	(where the compressible Euler equations reduce to a quasilinear wave equation for a potential function).
	The failure comes from the wave equation source terms\footnote{See Def.\,\ref{D:ARRAYSOFCARTESIANCOMPONENTFUNCTIONS}	
	regarding the notation ``$\vec{\VortVort}$.''} 
	$\vec{\VortVort}$ and $\DivGradEnt$ on RHS~\eqref{E:COVARIANTWAVE},
	which are the modified fluid variables from Def.\,\ref{D:MODIFIEDFLUIDVARIABLES}.
	For general solutions (i.e., solutions with vorticity and non-trivial entropy), 
	from the point of view of regularity, $\vec{\VortVort}$ and $\DivGradEnt$ scale, 
	in a naive sense, like $\partial^2 v$ and $\partial^2 \Ent$. Therefore, 
	at the classical local well-posedness threshold, 
	$\vec{\VortVort}$ and $\DivGradEnt$ are elements of $H^{(1/2)^+}(\Sigma_t)$.
	This level of source-term regularity is insufficient for using a Duhamel argument to justify the desired Strichartz estimate
	for the nonlinear wave equation \eqref{E:COVARIANTWAVE}; see the proof of
	Theorem~\ref{T:IMPROVEMENTOFSTRICHARTZBOOTSTRAPASSUMPTION} for details on how the source terms
	enter into the proof of Strichartz estimates. 
	This is one key reason why, throughout the paper, we assume the transport-part 
	data regularity $\| \curl v \|_{H^{\Sob}(\Sigma_0)} \leq D_{\Sob;\upalpha}$ and $\| \Ent \|_{H^{\Sob+1}(\Sigma_0)} \leq D_{\Sob;\upalpha}$
	(these inequalities are automatically satisfied in the irrotational and isentropic\footnote{Technically, $\Ent$ could be a non-zero
	constant in the isentropic case, leading to $\| \Ent \|_{L^2(\Sigma_0)} = \infty$. However, this infinite norm would be irrelevant in that $\Ent$
	would be constant throughout the evolution and thus trivial to control.} 
	case).
\end{remark}

Given the estimates we derive in Sects.\,\ref{S:MODELPROBLEM}--\ref{S:ELLIPTICESTIMATESINHOLDERSPACES},
it is known
that Theorem~\ref{T:MAINTHEOREMROUGHVERSION} 
essentially
follows from
the following a priori estimate,
where $\pmb{\partial} f := (\partial_t f, \partial_1 f, \partial_2 f, \partial_3 f)$,
$\Sigma_{\uptau}$ is the standard flat hypersurface of constant time,
and $T$ is as in the statement of the theorem:
\begin{align} \label{E:MIXEDSPACETIMEESTIMATENEEDEDFORMAINTHEOREM}
		\int_0^T
			\| 
				\pmb{\partial} (\varrho,v,\Ent)
			\|_{L^{\infty}(\Sigma_{\uptau})}
		\, d \uptau
		& \lesssim 1.
\end{align}
That is, we will not provide the details 
on how Theorem~\ref{T:MAINTHEOREMROUGHVERSION}
follows from \eqref{E:MIXEDSPACETIMEESTIMATENEEDEDFORMAINTHEOREM}
via a continuity argument and persistence of regularity
 (see, e.g., \cite{aM1984}*{Section~2.2, Corollary 2} or \cite{hR2009b}*{Lemma~9.14} for the main ideas behind the proof), but will instead
focus our efforts on justifying the a priori estimate\footnote{Actually, under our framework, the bound
$
\int_0^T
			\| 
				\pmb{\partial} \Ent
			\|_{L^{\infty}(\Sigma_{\uptau})}
		\, d \uptau
		\lesssim 1
$ will be trivial to justify since we will prove the stronger result $\Ent \in  L^{\infty}\left([0,T],H^{\Sob+1}(\mathbb{R}^3) \right)$.}  
\eqref{E:MIXEDSPACETIMEESTIMATENEEDEDFORMAINTHEOREM}
for $T > 0$ sufficiently small (where the required smallness depends only the norms of the data and the set $\mathfrak{K}$ 
mentioned in Theorem~\ref{T:MAINTHEOREMROUGHVERSION}).
More precisely, our approach requires us to prove a stronger result,
namely Theorem~\ref{T:IMPROVEMENTOFSTRICHARTZBOOTSTRAPASSUMPTION},
whose proof in turn is coupled to all of the other ingredients mentioned above.
We also remark that, as we explain in 
Sects.\,\ref{S:STRICHARTZESTIMATESFORWAVEUPGRADEDTOHOLDER}--\ref{S:REDUCTIONSOFSTRICHARTZ},
most of the arguments needed for the proof of Theorem~\ref{T:IMPROVEMENTOFSTRICHARTZBOOTSTRAPASSUMPTION}, 
including a series of technical-but-known
reductions, are supplied by other papers cited in Sects.\ 
\ref{S:STRICHARTZESTIMATESFORWAVEUPGRADEDTOHOLDER}--\ref{S:REDUCTIONSOFSTRICHARTZ}.
In this paper, our main focus will be showing how to control the vorticity and entropy in norms that 
allow to use the machinery from these other papers.
Our proof relies on norms of the vorticity and entropy
on constant-time hypersurfaces and sound cones,
and the main novelties of our work are:
\textbf{i)} we can propagate
substantial smoothness for the vorticity, entropy, and modified fluid variables
$\VortVort$ and $\DivGradEnt$ from Def.\,\ref{D:MODIFIEDFLUIDVARIABLES}, 
even though these variables are intimately coupled to the rougher wave part of the solution;
\textbf{ii)} we can obtain suitable estimates for the acoustic geometry by exploiting
the precise structure of the new formulation of compressible Euler flow  
provided by Prop.\,\ref{P:GEOMETRICWAVETRANSPORT},
which allows us to show that the main top-order vorticity/entropy-dependent terms driving the evolution
of the acoustic geometry are\footnote{See, for example, the first terms
on RHSs~\eqref{E:MODIFIEDRAYCHAUDHURI} and \eqref{E:ANGDCOMMUTEDMODIFIEDRAYCHAUDHURI}.} 
$\VortVort$ and $\DivGradEnt$ -- as opposed to 
generic first-order derivatives of $\vortrenormalized$ and $\GradEnt$.

\begin{remark}[Remarks on local well-posedness]
	\label{R:UPGRADETHEPROOF}
	Theorem~\ref{T:MAINTHEOREMROUGHVERSION} provides the main ingredient, 
	namely a priori estimates for smooth solutions,
	needed for a full proof of local well-posedness, 
	including existence in the regularity spaces featured in the theorem and uniqueness in related spaces.
	We anticipate that the remaining aspects of a full proof of local well-posedness 
	could be shown by deriving, using the ideas that we use to prove Theorem~\ref{T:MAINTHEOREMROUGHVERSION}, 
	uniform estimates for sequences of smooth solutions and their differences. For ideas on how to proceed,
	readers can consult \cite{hSdT2005}, in which existence and uniqueness were proved at low regularity levels 
	for quasilinear wave equations.
\end{remark}

We now further describe some ingredients of independent interest
that we use in the proof of Theorem~\ref{T:MAINTHEOREMROUGHVERSION}.
\begin{enumerate}
	\renewcommand{\labelenumi}{\textbf{\Roman{enumi})}}
	\item \textbf{Control of the acoustic geometry}.
		For quasilinear wave systems with a single wave operator,
		there has been remarkable progress
		on obtaining control of the acoustic geometry 
		and applications to low regularity local well-posedness, see 
		\cites{sKiR2003,sKiR2005d,sKiR2006a,sKiR2005b,sKiR2005c,sKiR2008,hSdT2005,sKiRjS2015,qW2017}.
		A fundamental new aspect of the present work 
		is that \emph{the vorticity and entropy
		appear as source terms in the acoustic geometry estimates},
		signifying a coupling between the geometry of sound cones and transport phenomena.
		The coupling enters in particular through the 
		Ricci curvature of the acoustical metric $\gfour$
		(see Def.\,\ref{D:ACOUSTICALMETRIC}), which, by virtue of the compressible Euler equations, 
		can be expressed in terms of quantities involving the vorticity and entropy;
		see Lemma~\ref{L:CURVATUREDECOMPOSITIONS}.
		We also exploit some remarkable
		consequences of the compressible Euler formulation provided by Prop.\,\ref{P:GEOMETRICWAVETRANSPORT}, namely,
		through careful geometric decompositions we show that high order derivatives of vorticity and entropy
		\emph{occur only the special combinations $\VortVort$ and $\DivGradEnt$}; 
		see Prop.\,\ref{P:PDESMODIFIEDACOUSTICALQUANTITIES}.
		The point is that the modified fluid variables $\VortVort$ and $\DivGradEnt$
		-- as opposed to generic first-order derivatives of $\vortrenormalized$ and $\GradEnt$ --
		\emph{enjoy good estimates up to top-order along sound cones},
		and such estimates turn out to be crucial for obtaining control of the acoustic geometry.
		This unexpected-but-critical structure should not be taken for granted since
		\emph{generic} high order derivatives of the vorticity and entropy
		can be controlled \emph{only along constant-time hypersurfaces}.
	\item \textbf{Strichartz estimates for the wave-part of solutions}.
			As in the works cited in \textbf{I}, our derivation of Strichartz estimates
			is fundamentally based on having suitable quantitative control of the acoustic geometry; see Sect.\,\ref{S:REDUCTIONSOFSTRICHARTZ}.
			Therefore, in view of the discussion in \textbf{I}, we see that the Strichartz estimates
			are tied to the delicate regularity properties of the vorticity and entropy along sound cones.
	\item \textbf{New Schauder estimates for the transport-div-curl equations} appearing in the compressible Euler formulation;
		see Sect.\,\ref{S:ELLIPTICESTIMATESINHOLDERSPACES}.
		These provide us with mixed spacetime estimates for the transport-part that complement the Strichartz estimates,
		allowing us to control the new (compared to the previously treated case of irrotational and isentropic solutions) kinds of 
		derivative-quadratic terms that we encounter in the energy and elliptic estimates.
\end{enumerate}

\subsection{Some general remarks and connections with prior work}
\label{SS:GENERALREMARKSCONNECTIONSPREVIOUSWORK}
Much of the remarkable progress that has been obtained for quasilinear hyperbolic PDEs
over the last two decades stems from studying specific systems of 
geometric or physical interest (as opposed to ``general systems''), 
where very delicate structural features of the equations can be exploited 
in combination with a precise understanding
of the regularity of the system's characteristics. Moreover, a common theme in these
developments is that the special structural and/or regularity features of the system become visible
only after one rewrites the equations in some novel way, which might involve a coordinate
system adapted to the problem in question and/or a new formulation of the equations
of motion in the spirit of the equations of Prop.\,\ref{P:GEOMETRICWAVETRANSPORT}.

A primary example is Einstein's equations, where 
the following notable results were obtained in recent years:
the formation of trapped surfaces \cite{dC2009},
the stability of the Kerr Cauchy horizon \cite{mDjL2017}, 
stable curvature blowup \cites{iRjS2018,iRjS2018b,jS2018a},
instability of anti de Sitter space \cites{gM2020,gM2018},
and the proof \cite{sKiRjS2015} of the bounded $L^2$ curvature conjecture.
For the compressible Euler equations, we can cite
Christodoulou's breakthrough works 
\cites{dC2007,dCsM2014} on the formation of shocks in the irrotational and isentropic case,
and, more recently, the works \cites{jLjS2018,jLjS2021,tBsSvV2019a,tBsSvV2020} 
on the formation of shocks for solutions with vorticity and entropy.

Regarding the problem of low regularity, in the case
of an irrotational and isentropic flow, where the compressible Euler equations can be written as a system of quasilinear
wave equations with a single wave speed, our result follows directly from the optimal
low regularity local well-posedness by Smith and Tataru \cite{hSdT2005} or also from 
the more recent physical-space approach to the problem by Wang \cite{qW2017}.
This highlights, once more, that the main novelty of our work is to obtain control of the
fluid flow under optimal regularity assumptions on the wave-part of the system \emph{in the presence
of vorticity and entropy.}

In order to highlight the difference between our result and what can be obtained using solely
techniques from quasilinear wave equations, we now discuss an approach that one could take for 
controlling the wave-part of the system
at sub-$H^{(5/2)^+}(\Sigma_0)$ regularity levels\footnote{Recall that 
$H^{(5/2)^+}(\Sigma_0)$ is what is required for standard local well-posedness based on energy
estimates and Sobolev embedding.}, one that is simpler than the approach that we use here,
but less powerful in that it would \emph{not} allow one to reach the $H^{2^+}(\Sigma_0)$ regularity threshold 
for the wave-part.
Specifically, 
one could control the wave-part of the system at a regularity level below $H^{(5/2)^+}(\Sigma_0)$
by invoking the technology of Strichartz estimates for \emph{linear} wave equations with rough coefficients,
based on Fourier integral parametrix representations,
developed in a series of works by Tataru \cites{dT2000,dT2001,dT2002b}, 
which improved the foundational work \cite{hBjyC1999b}
of Bahouri--Chemin; see also the related work \cite{hS1998}. 
By ``linear,'' we mean in particular that the proofs do not exploit any information about the principal coefficients of the
wave operator besides their pre-specified regularity.
In particular, when combined with the bootstrap-type arguments given in 
Sects.\,\ref{S:DATAANDBOOTSTRAPASSUMPTION}--\ref{S:ELLIPTICESTIMATESINHOLDERSPACES},
the methods of \cites{dT2000,dT2002b} (see in particular \cite{dT2000}*{Theorem~6} 
and \cite{dT2002b}*{Theorem~5.1})
would allow one to prove local well-posedness 
assuming that $(\varrho - \bar{\varrho},v) \in H^{(13/6)^+}(\Sigma_0)$
and that the transport-part of the data enjoys the same relative gain in regularity 
that we assume for our results
(e.g., $\Ent \in H^{(19/6)^+}(\Sigma_0)$ and $\partial^2 \Ent \in C^{0,0^+}(\Sigma_0)$);
see Subsubsect.\,\ref{SSS:MODELWAVESTRICHARTZ} for further discussion.
The work \cite{hSdT2002} shows that without further information about the principal coefficients of the wave equation, 
Tataru's linear Strichartz estimates are optimal.
Thus, since our results further lower the Sobolev regularity threshold by $1/6$,
our analysis \emph{necessarily exploits} the specific nonlinear structure of the equations of Prop.\,\ref{P:GEOMETRICWAVETRANSPORT}.
We also refer to the works (some of which we mentioned earlier)
\cites{hSdT2005,sKiR2003,sKiR2005d,sKiRjS2015,qW2017}
for further low regularity results in which the nonlinear structure of the PDE plays a fundamental role.

\subsection{Paper outline}
\label{SS:PAPEROUTLINE}
The remainder of the paper is organized as follows:
\begin{itemize}
	\item In Sect.\,\ref{S:MODELPROBLEM}, we outline the main ideas of our analysis through
		the study of a model problem.
	\item In Sect.\,\ref{S:DATAANDBOOTSTRAPASSUMPTION}, we recall some standard constructions from Littlewood--Paley theory,
		define the norms that we use until Sect.\,\ref{S:SETUPCONSTRUCTIONOFEIKONAL},
		define the parameters that
		play a role in our analysis, state our assumptions on the data, and formulate bootstrap 
		assumptions. The two key bootstrap assumptions are Strichartz estimates for the wave-part of the solution
		and complementary mixed spacetime estimates for the transport-part.
	\item In Sect.\,\ref{S:PRELIMINARYENERGYANDELLIPTICESTIMATES}, we use the bootstrap assumptions
		to derive preliminary below-top-order energy and elliptic estimates, which are useful
		for controlling simple error terms.
	\item In Sect.\,\ref{S:TOPORDERENERGYESTIMATES}, we use the bootstrap assumptions 
		and the results of Sect.\,\ref{S:PRELIMINARYENERGYANDELLIPTICESTIMATES}
		to derive top-order energy and elliptic estimates along constant-time hypersurfaces.
	\item In Sect.\,\ref{S:ENERGYESTIMATESALONGNULLHYPERSURFACES}, we derive
		energy estimates along acoustic null hypersurfaces, which complement the estimates
		from Sect.\,\ref{S:TOPORDERENERGYESTIMATES}.
		We need these estimates along null hypersurfaces in Sect.\,\ref{S:ESTIMATESFOREIKONALFUNCTION},
		when we control the acoustic geometry.
		Compared to prior works, 
		the main contribution of Sect.\,\ref{S:ENERGYESTIMATESALONGNULLHYPERSURFACES}
		is the estimate \eqref{E:SOUNDCONEENERGYESTIMATESFORMODIFIEDVARIABLES},
		which shows that the modified fluid variables
		$(\vec{\VortVort},\DivGradEnt)$
		can be controlled in $L^2$ up to top-order along acoustic null hypersurfaces, i.e., sound cones;
		as we described in Subsect.\,\ref{SS:CONTROLOFTIMEOFEXISTENCE},
		\emph{such control along sound cones is not available 
		for generic top-order derivatives of the vorticity and entropy}.
		\item In Sect.\,\ref{S:STRICHARTZESTIMATESFORWAVEUPGRADEDTOHOLDER}, we 
		prove Theorem~\ref{T:IMPROVEMENTOFSTRICHARTZBOOTSTRAPASSUMPTION}, which yields Strichartz estimates
		for the wave-part of the solution, thereby improving the first key bootstrap assumption
		and justifying the estimate \eqref{E:MIXEDSPACETIMEESTIMATENEEDEDFORMAINTHEOREM}.
		The proof of Theorem~\ref{T:IMPROVEMENTOFSTRICHARTZBOOTSTRAPASSUMPTION} 
		is conditional on Theorem~\ref{T:FREQUENCYLOCALIZEDSTRICHARTZ},
		whose proof in turn relies on the estimates for the acoustic geometry
		that we derive in Sect.\,\ref{S:ESTIMATESFOREIKONALFUNCTION}.
	\item In Sect.\,\ref{S:ELLIPTICESTIMATESINHOLDERSPACES}, we use Schauder estimates 
		to derive mixed spacetime estimates for the transport-part of the solution,
		thereby improving the second key bootstrap assumption.
		At this point in the paper, to close the bootstrap argument and complete the proof of Theorem~\ref{T:MAINTHEOREMROUGHVERSION},
		it only remains for us to prove Theorem~\ref{T:FREQUENCYLOCALIZEDSTRICHARTZ}.
	\item In Sect.\,\ref{S:SETUPCONSTRUCTIONOFEIKONAL}, 
			in service of proving Theorem~\ref{T:FREQUENCYLOCALIZEDSTRICHARTZ},
			we construct the acoustic geometry on spacetime slabs
			corresponding to a partition of the bootstrap time interval; 
			see Subsect.\,\ref{SS:PARTITIONOFBOOTSTRAPINTERVAL}	for the construction of the partition.
			The acoustic geometry is centered around an acoustical eikonal function.
			We also define corresponding geometric norms.
	\item In Sect.\,\ref{S:ESTIMATESFOREIKONALFUNCTION},
		we derive estimates for the acoustic geometry.
		The main result is Prop.\,\ref{P:MAINESTIMATESFOREIKONALFUNCTIONQUANTITIES}.
	\item In Sect.\,\ref{S:REDUCTIONSOFSTRICHARTZ}, we review some results derived in
		\cite{qW2017}, which in total show that the
		results of Sect.\,\ref{S:ESTIMATESFOREIKONALFUNCTION}
		imply Theorem~\ref{T:FREQUENCYLOCALIZEDSTRICHARTZ}.
		This closes the bootstrap argument, justifies the estimate \eqref{E:MIXEDSPACETIMEESTIMATENEEDEDFORMAINTHEOREM}, 
		and completes the proof of Theorem~\ref{T:MAINTHEOREMROUGHVERSION}.
\end{itemize}

\noindent \emph{Note added.}
After the completion of this manuscript, the 
work \cite{qWang2019} became available, 
in which the author considers the compressible Euler equations under a barotropic equation of state $p = p(\varrho)$
(and thus the variable $\Ent$ is absent from the analysis).
In this case, the author was able to lower the regularity of $\curl v|_{\Sigma_0}$
compared to Theorem \ref{T:MAINTHEOREMROUGHVERSION} by eliminating the
H\"{o}lder-norm bound assumption and showing that it suffices to assume
$\curl v \in H^{\Sob'}(\Sigma_0)$, where $2 < \Sob' < \left(\frac{\Sob - 2}{5} \right)^2$.
Moreover, in the wake of \cite{qWang2019}, there also appeared
\cite{hZ2020}, where a $2D$ local well-posedness
result is established in the barotropic case such that the density, velocity, and specific vorticity 
are in $H^2$, and \cite{hZ2021}, which provides an alternative proof of the results of \cite{qWang2019}.

\section{A model problem}
\label{S:MODELPROBLEM}
In this section, we discuss a model problem that serves as a blueprint for the rest of the paper.
The purpose of this section is to provide insight into the analysis and is entirely independent of the rest of the paper.
Readers not interested in a schematic guide to the main ideas of the paper can skip this section.

\subsection{Overview of the analysis via a model problem}
\label{SS:ANALYSISOVERVIEW}
In this subsection, we exhibit some of the main ideas behind our analysis by discussing a model problem.

\subsubsection{Statement of the model system}
\label{SSS:MODELSYSTEM}
We will study the following schematically depicted model system
in the scalar unknown $\Psi$ and the $\Sigma_t$-tangent
unknown vectorfield $W$ on $\mathbb{R}^{1+3}$:
\begin{subequations}
\begin{align}
	\hat{\square}_{\gfour(\Psi)} \Psi
	& = \Flatcurl W + \pmb{\partial} \Psi \cdot \pmb{\partial} \Psi,
		\label{E:MODELWAVE} \\
	\Flatdiv W 
	& = \partial \Psi,
		\label{E:MODELDIV} \\
	\left\lbrace
	\partial_t 
	+
	\Psi
	\partial_1
	\right\rbrace
	\Flatcurl W
	& = \partial \Psi \cdot \partial W.
	\label{E:MODELCURLEVOLUTION}
\end{align}
\end{subequations}
We intend for the system \eqref{E:MODELWAVE}--\eqref{E:MODELCURLEVOLUTION} to be a caricature of the equations of Prop.\,\ref{P:GEOMETRICWAVETRANSPORT}.
Above, $\gfour_{\alpha \beta}(\Psi)$ are given Cartesian component functions
(assumed to depend smoothly on $\Psi$) of the Lorentzian metric $\gfour$,
and $\hat{\square}_{\gfour(\Psi)} := (\gfour^{-1})^{\alpha \beta} \partial_{\alpha} \partial_{\beta}$.
$\Psi$ may be thought of as a model for the wave-part of the compressible Euler equations,
while $W$ may be thought of as a model for the transport-part (e.g., the vorticity and entropy gradient),
with $\partial_t 
	+
	\Psi
	\partial_1
$
a model quasilinear transport operator (the fact that it involves only $\partial_t$ and $\partial_1$ is
not important).
That is, from the point of view of regularity, 
we can think that $\Psi \sim (\LogDensity,v)$ and $W \sim (\curl v,\partial \Ent)$.
We intend for the reader to interpret the inhomogeneous terms schematically
(especially, since, for example, LHS~\eqref{E:MODELWAVE} is a scalar 
while the first term on RHS~\eqref{E:MODELWAVE} appears to be a vector).

We will outline how to control the time of existence  
for solutions to the model system \eqref{E:MODELWAVE}--\eqref{E:MODELCURLEVOLUTION} 
assuming the data-bound
\begin{align*}
\| (\Psi,\partial_t \Psi) \|_{H^{\Sob}(\Sigma_0) \times H^{\Sob-1}(\Sigma_0)}
	+
	\| \partial W \|_{H^{\Sob-1}(\Sigma_0)}
< \infty,
\end{align*}
where $2 < \Sob \leq 5/2$ is a fixed real number.
In Subsubsect.\,\ref{SSS:MODELMIXEDSPACETIMEFORTRANSPORT},
we will find that we need to make the further H\"{o}lder regularity assumption
$\| \curl W \|_{C^{0,\upalpha}(\Sigma_0)} < \infty$ for some $\upalpha > 0$,
much like we did in Theorem~\ref{T:MAINTHEOREMROUGHVERSION}.
In the rest of Subsect.\,\ref{SS:ANALYSISOVERVIEW},
``$\mbox{\upshape data}$''
schematically denotes any quantity depending on 
$
\| (\Psi,\partial_t \Psi) \|_{H^{\Sob}(\Sigma_0) \times H^{\Sob-1}(\Sigma_0)}
	+
	\| \partial W \|_{H^{\Sob-1}(\Sigma_0)}
$.

\subsubsection{A priori energy and elliptic estimates along $\Sigma_t$ for the model system}
\label{SSS:MODELENERGYESTIMATE}
The most fundamental step in controlling the time of existence
is to derive a priori energy and elliptic estimates along $\Sigma_t$.
In the context of the compressible Euler equations, 
we provide the analog of this step in
Prop.\,\ref{P:TOPORDERENERGYESTIMATES} below.
To obtain the desired a priori estimate for the model system, 
we first note that 
equation \eqref{E:MODELDIV} and the standard elliptic Hodge estimate 
\begin{align} \label{E:ELLIPTICHODGE}
	\| \partial W \|_{H^{\Sob-1}(\Sigma_t)} 
	& \lesssim 
	\| \Flatdiv W \|_{H^{\Sob-1}(\Sigma_t)} 
	+ 
	\| \Flatcurl W \|_{H^{\Sob-1}(\Sigma_t)}
\end{align}
together imply the following bound:
\begin{align}\label{E:PARTIALWHODGE}
	\| \partial W \|_{H^{\Sob-1}(\Sigma_t)}
	&
	\lesssim
	 \| \partial \Psi \|_{H^{\Sob-1}(\Sigma_t)}
	+
	\| \Flatcurl W \|_{H^{\Sob-1}(\Sigma_t)}. 
\end{align}
Next, by combining standard estimates for the wave equation \eqref{E:MODELWAVE}, 
based on energy estimates and the Littlewood--Paley calculus,
we deduce (where we ignore all numerical constants ``$C$'') that
\begin{align} \label{E:MODELWAVEEQUATIONBASICENERGYSTIMATE}
	&
	\| (\Psi,\partial_t \Psi) \|_{H^{\Sob}(\Sigma_t) \times H^{\Sob-1}(\Sigma_t)}^2
		\\
	& \leq
		\mbox{\upshape data}
		+
		\int_0^t
			\left\lbrace
				1
				+
				\| \pmb{\partial} \Psi \|_{L_x^{\infty}(\Sigma_{\uptau})}
			\right\rbrace
			\left\lbrace
				\| (\Psi,\partial_t \Psi) \|_{H^{\Sob}(\Sigma_{\uptau}) \times H^{\Sob-1}(\Sigma_{\uptau})}^2
				+
				\| \Flatcurl W \|_{H^{\Sob-1}(\Sigma_{\uptau})}^2
			\right\rbrace
		\, d \uptau.
		\notag
\end{align}
Similarly, 
with the help of the Littlewood--Paley calculus,
we can derive energy estimates for the transport equation \eqref{E:MODELCURLEVOLUTION}
and use \eqref{E:PARTIALWHODGE} to control the top-order derivatives of the factor $\partial W$
on RHS~\eqref{E:MODELCURLEVOLUTION},
thereby obtaining the following bound:
\begin{align} \label{E:MODELTRANSPORTENERGYESTIMATE}
&
\| \Flatcurl W \|_{H^{\Sob-1}(\Sigma_t)}^2
	\\ 
& \leq
	\mbox{\upshape data}
	+
	\int_0^t
			\left\lbrace
				1
				+
				\| \pmb{\partial} \Psi \|_{L^{\infty}(\Sigma_{\uptau})}
				+
				\|
					\partial W
				\|_{L^{\infty}(\Sigma_{\uptau})}
			\right\rbrace
			\left\lbrace
				\| (\Psi,\partial_t \Psi) \|_{H^{\Sob}(\Sigma_{\uptau}) \times H^{\Sob-1}(\Sigma_{\uptau})}^2
				+
				\| \Flatcurl W \|_{H^{\Sob-1}(\Sigma_{\uptau})}^2
			\right\rbrace
		\, d \uptau.
		\notag
\end{align}
Adding \eqref{E:MODELWAVEEQUATIONBASICENERGYSTIMATE} and \eqref{E:MODELTRANSPORTENERGYESTIMATE},
applying Gr\"{o}nwall's inequality,
and finally again using \eqref{E:PARTIALWHODGE},
we obtain (again ignoring all numerical constants ``$C$'')
the following estimate:
\begin{align} \label{E:MODELGRONWALL}
	\| (\Psi,\partial_t \Psi) \|_{H^{\Sob}(\Sigma_t) \times H^{\Sob-1}(\Sigma_t)}
	+
	\| \partial W \|_{H^{\Sob-1}(\Sigma_t)}
	& \leq
		\mbox{\upshape data}
		\times \exp\left(1 + \| \pmb{\partial} \Psi \|_{L^1([0,t])L_x^{\infty}} + \| \partial W \|_{L^1([0,t])L_x^{\infty}} \right).
\end{align}
Thus, \eqref{E:MODELGRONWALL} would immediately imply the desired a priori estimate
if we were able to simultaneously show that for $T > 0$ sufficiently small, we have the following key bounds
for some $\updelta >0 $ and $\updelta_1 >0$ with $0 < \updelta_1 \leq \upalpha$:
\begin{align} \label{E:MODELSPACETIMEBOUNDS}
\| \pmb{\partial} \Psi \|_{L^2([0,T])L_x^{\infty}},
	\,
\| \partial W \|_{L^2([0,T])L_x^{\infty}}
\lesssim 
T^{\updelta}
\mbox{\upshape data}
+
T^{\updelta}
\| \Flatcurl W \|_{C^{0,\updelta_1}(\Sigma_0)}.
\end{align}
The rest of the discussion in Subsect.\,\ref{SS:ANALYSISOVERVIEW} concerns
the proof of \eqref{E:MODELSPACETIMEBOUNDS}.

\subsubsection{Strichartz estimates and acoustic geometry for the model system}
\label{SSS:MODELWAVESTRICHARTZ}
We now discuss how to establish \eqref{E:MODELSPACETIMEBOUNDS} for the 
term $\| \pmb{\partial} \Psi \|_{L^2([0,T])L_x^{\infty}}$ using Strichartz estimates.
In practice, this can be accomplished by first making a bootstrap assumption
that is weaker than \eqref{E:MODELSPACETIMEBOUNDS}, 
then combining it with \eqref{E:MODELGRONWALL} to deduce the energy bound
\begin{align*}
\| (\Psi,\partial_t \Psi) \|_{H^{\Sob}(\Sigma_t) \times H^{\Sob-1}(\Sigma_t)}
	+
	\| \partial W \|_{H^{\Sob-1}(\Sigma_t)}
\leq
		\mbox{\upshape data},
\end{align*}
and then finally proving Strichartz estimates that imply the ``improved'' estimate 
\begin{align*}
\| \pmb{\partial} \Psi \|_{L^2([0,T])L_x^{\infty}} + \| \partial W \|_{L^2([0,T])L_x^{\infty}}
\lesssim 
T^{\updelta}
\mbox{\upshape data}
+
T^{\updelta}
\| \Flatcurl W \|_{C^{0,\updelta_1}(\Sigma_0)}.
\end{align*}
Thus, to illustrate the main ideas, we will assume the energy bound
and sketch how to prove
$\| \pmb{\partial} \Psi \|_{L^2([0,T])L_x^{\infty}} \lesssim 1$,
where, for convenience, we will ignore the small power of $T^{\updelta}$ (which in reality is important for gaining smallness in various estimates)
and also ignore term ``$\mbox{\upshape data}$'' by considering it to be $\lesssim 1$.
At this point in our discussion of the model system,
we will also ignore the following important technical point: 
to close some estimates,
one must achieve control of not only
$\| \pmb{\partial} \Psi \|_{L^2([0,T])L_x^{\infty}}$,
but also
$
\sum_{\upnu \geq 2}
		\upnu^{2 \updelta_1}
		\|
			P_{\upnu} \pmb{\partial} \vec{\Psi}
		\|_{L^2([0,T])L_x^{\infty}}^2
$
and
$
\|
		\pmb{\partial} \vec{\Psi}
	\|_{L^2([0,T])C_x^{0,\updelta_1}}
$,
where $P_{\upnu}$ are standard dyadic Littlewood--Paley projections and $\updelta_1 > 0$ is a small H\"{o}lder exponent;
see Theorem~\ref{T:IMPROVEMENTOFSTRICHARTZBOOTSTRAPASSUMPTION} and Cor.\,\ref{C:HOLDERTYPESTRICHARTZESTIMATEFORWAVEVARIABLES} for the details.
We will elaborate on the importance of controlling 
$
\|
		\pmb{\partial} \vec{\Psi}
	\|_{L^2([0,T])C_x^{0,\updelta_1}}
$
in Subsubsect.\,\ref{SSS:MODELMIXEDSPACETIMEFORTRANSPORT},
when we explain how to control $\| \partial W \|_{L^2([0,T])L_x^{\infty}}$.
As we describe starting two paragraphs below, 
our approach to deriving the Strichartz estimates 
is fundamentally connected to the geometry of $\gfour$-null hypersurfaces,
i.e., hypersurfaces whose normals $V$ verify $\gfour(V,V)=0$,
and in order to control the geometry of null hypersurfaces,
we use arguments that rely on having a bound for
$
\sum_{\upnu \geq 2}
		\upnu^{2 \updelta_1}
		\|
			P_{\upnu} \pmb{\partial} \vec{\Psi}
		\|_{L^2([0,T])L_x^{\infty}}^2
$.

The basic idea behind obtaining the desired bound for $\| \pmb{\partial} \Psi \|_{L^2([0,T])L_x^{\infty}}$
is to establish an appropriate Strichartz estimate for
the wave equation \eqref{E:MODELWAVE}. The analog estimate in the context of the standard flat linear wave equation
$- \partial_t^2 \varphi + \Delta \varphi = 0$ on $\mathbb{R}^{1+3}$ is the well-known Strichartz estimate
$\| \pmb{\partial} \varphi \|_{L_t^2([0,1]) L_x^{\infty}} \lesssim \| \pmb{\partial} \varphi \|_{H^{1+\varepsilon}(\Sigma_0)}$,
valid for any $\varepsilon > 0$.
As we mentioned in Subsect.\,\ref{SS:GENERALREMARKSCONNECTIONSPREVIOUSWORK},
the important work of Tataru \cites{dT2000,dT2002b} 
(see in particular \cite{dT2000}*{Theorem~6} 
and \cite{dT2002b}*{Theorem~5.1}), which
provided Strichartz estimates for linear wave equations with \emph{rough} coefficients, would in fact yield
the desired bound $\| \pmb{\partial} \Psi \|_{L^2([0,T])L_x^{\infty}} \lesssim 1$
\emph{under the \underline{stronger} assumption $\Sob > 13/6$},
\emph{provided one can simultaneously bound RHS~\eqref{E:MODELWAVE} in $\| \cdot \|_{L^{\infty}([0,T])H_x^{\Sob-1}}$},
i.e, provided one can control
$\| \Flatcurl W + \pmb{\partial} \Psi \cdot \pmb{\partial} \Psi\|_{L^{\infty}([0,T])H_x^{\Sob-1}}$.
For the model system, there is no difficulty in extending the estimate \eqref{E:MODELGRONWALL} to the
case $\Sob > 13/6$. Thus, assuming that one can also control the term
$\| \partial W \|_{L^2([0,t])L_x^{\infty}}$
on RHS~\eqref{E:MODELGRONWALL},
we obtain (using Tataru's framework) the desired bound
$\| \pmb{\partial} \Psi \|_{L^2([0,T])L_x^{\infty}} \lesssim 1$
under this stronger assumption $\Sob > 13/6$.
We stress that in the case of the compressible Euler equations, controlling the analog of the term
$\| \Flatcurl W \|_{L^{\infty}([0,T])H_x^{\Sob-1}}$ 
\emph{is} possible (see Prop.\,\ref{P:TOPORDERENERGYESTIMATES}), 
but only by exploiting the special structures of the equations of Prop.\,\ref{P:GEOMETRICWAVETRANSPORT}.
Moreover, it is not possible to achieve such control
at the classical local well-posedness level $(\varrho - \bar{\varrho},v,\Ent) \in H^{(5/2)^+}(\Sigma_0)$;
see Remark~\ref{R:REGULRITYNEEDEDFORSTRICHARTZ}.

It is known \cite{hSdT2002} that without further information about 
the principal coefficients $(\gfour^{-1})^{\alpha \beta}$ of the wave operator $\hat{\square}_{\gfour}$, 
Tataru's linear Strichartz estimates are optimal.
Thus, to achieve the goal of lowering the Sobolev regularity threshold to $\Sob > 2$,
we must exploit the specific structure of the system \eqref{E:MODELWAVE}--\eqref{E:MODELCURLEVOLUTION}.
Over the last two decades, 
a robust framework for achieving this goal
for quasilinear wave systems with a \emph{single wave speed}\footnote{By this, we mean wave equation systems featuring only one Lorentzian metric.} 
has emerged, 
starting with \cite{sKiR2003},
progressing through the results
\cites{sKiR2005d,sKiR2006a,sKiR2005b,sKiR2005c,sKiR2008,hSdT2005,qW2017},
and, in the case of the Einstein-vacuum equations,
culminating in the proof \cite{sKiRjS2015} of the bounded $L^2$ curvature conjecture.
As we will further explain below, the most significant difference between the case of single-speed quasilinear wave systems 
and the model system \eqref{E:MODELWAVE}--\eqref{E:MODELCURLEVOLUTION} is the presence of the terms on RHSs~\eqref{E:MODELWAVE}--\eqref{E:MODELCURLEVOLUTION}
that depend on one derivative of $W$.
Despite the presence of these terms, our approach here allows us to initiate the derivation of Strichartz estimates for the model system 
starting from the same crucial ingredient found in the works cited above on single-speed quasilinear wave systems: 
an outgoing acoustical eikonal function $u$, which is a solution to the following eikonal equation 
(Footnote~\ref{FN:RESCALEDEIKONAL} also applies here, i.e., 
as we describe in Subsect.\,\ref{SS:EIKONAL},
when constructing $u$, we work with a rescaled version of the acoustical metric):
\begin{align} \label{E:EIKONALINTRO}
	(\gfour^{-1})^{\alpha \beta} \partial_{\alpha} u \partial_{\beta} u =  0
\end{align}
such that $\partial_t u > 0$. 

A glaring point is that \emph{the regularity properties of $u$ are tied to those of the solution of \eqref{E:MODELWAVE}--\eqref{E:MODELCURLEVOLUTION}
through the dependence of the coefficients $(\gfour^{-1})^{\alpha \beta}$ of the eikonal equation \eqref{E:EIKONALINTRO} on $\Psi$.} 
Thus, if one studies solutions of \eqref{E:MODELWAVE}--\eqref{E:MODELCURLEVOLUTION}
using arguments that rely on estimates for $u$ and its derivatives,
one must carefully confirm that the regularity of $u$ needed for the arguments 
is compatible with that of $\Psi$.
This serious technical issue, which we further discuss below,
was first handled by Christodoulou--Klainerman \cite{dCsK1993} in their proof of the stability of Minkowski spacetime
as a solution to the Einstein--vacuum equations.
In our study of compressible Euler flow, 
we dedicate the entirety of Sect.\,\ref{S:SETUPCONSTRUCTIONOFEIKONAL}
towards the construction of an appropriate $u$ (where the role of $\gfour$ is played by the acoustical metric of Def.\,\ref{D:ACOUSTICALMETRIC}) 
and related geometric quantities,
while in Sect.\,\ref{S:ESTIMATESFOREIKONALFUNCTION}, we derive the
difficult, tensorial regularity properties of these quantities.

The level sets of $u$, denoted by $\mathcal{C}_u$,
are $\gfour$-null hypersurfaces, and in this paper, we will construct $u$ 
so that the $\mathcal{C}_u$ are outgoing sound cones; see Fig.\,\ref{F:DOMAINS}.
Through a long series of reductions, originating in \cites{dT2001,dT2002b}
and with further insights provided by \cites{sK2001,sKiR2003,hSdT2005,qW2017},
it is known that the desired Strichartz estimate 
$\| \pmb{\partial} \Psi \|_{L^2([0,T])L_x^{\infty}} \lesssim 1$
for solutions to equation \eqref{E:MODELWAVE} can be proved for $\Sob > 2$, 
thanks in part to the availability of the bound \eqref{E:MODELGRONWALL},
\emph{provided one can derive complementary, highly tensorial, Sobolev estimates for the derivatives of $u$ up to top-order, 
both along $\Sigma_t$ and along null hypersurfaces $\mathcal{C}_u$}.
We refer to this task as ``controlling the acoustic geometry,''
and our above remarks make clear that the regularity of the
acoustic geometry depends on that of $\Psi$ and $W$; 
see the discussion surrounding equation \eqref{E:RAYCHINTRO}
for further clarification of this point.
In Sect.\,\ref{S:REDUCTIONSOFSTRICHARTZ}, we review the main ideas behind deriving
the Strichartz estimate as a consequence of control of the acoustic geometry.
The basic chain of logic\footnote{In our detailed proof, we partition $[0,T]$ into appropriate subintervals
and derive estimates on each subinterval; see Subsect.\,\ref{SS:PARTITIONOFBOOTSTRAPINTERVAL}. This strategy is 
part of the series of reductions mentioned above. Here we are ignoring this technical aspect of the proof.} 
is: control over the acoustic geometry
$\implies$
an estimate for an $L^2$-type (weighted) conformal energy for solutions to $\square_{\gfour} \varphi = 0$
$\implies$
dispersive decay estimates for $\varphi$
$\implies$
(via a $\mathcal{T} \mathcal{T}^*$ argument) 
linear Strichartz estimates
$\implies$
(by Duhamel's principle, the energy estimates, and the Schauder estimates for the transport-part of the system
discussed in Subsubsect.\,\ref{SSS:MODELMIXEDSPACETIMEFORTRANSPORT}) Strichartz estimates for the quasilinear wave equation \eqref{E:COVARIANTWAVE}.

The task of controlling the acoustic geometry is quite involved and occupies the second half of the paper;
see Prop.\,\ref{P:MAINESTIMATESFOREIKONALFUNCTIONQUANTITIES} for a lengthy list of estimates that we use to
control the acoustic geometry.
In the case of quasilinear wave equations, 
many of the ideas for how to control $u$ originated in 
\cites{dCsK1993,sKiR2003,sKiR2005d,sKiR2006a,sKiR2005b,sKiR2005c,sKiR2008,qW2017}.
For the model system, the main new difficulty is the presence of
the term $\Flatcurl W$ on the right-hand side of the wave equation \eqref{E:MODELWAVE},
whose regularity properties strongly influence those of $u$; below we will elaborate on this issue.
In this subsubsection, 
we cannot hope to discuss all of the technical difficulties that
arise when controlling $u$, so we will mainly highlight a few key points that are new compared to earlier works.
Readers can consult the introduction to \cite{qW2017} for an overview 
of many of the technical difficulties that arise in the case of quasilinear
wave equations and for how they can be overcome. At the end of this subsubsection,
we will mention some of these difficulties since they occur in the present work as well.

As is standard in the theory of wave equations, 
our analysis relies on a $\gfour$-null frame $\lbrace \Lunit, \uLunit, e_1, e_2 \rbrace$
adapted to $u$, where the vectorfield $\Lunit$ is rescaled version of the gradient vectorfield of $u$,
normalized by $\Lunit t = 1$; see \eqref{E:LUNITISRESCALEDGRADIENTOFEIKONAL}.
Thus, by \eqref{E:EIKONALINTRO}, 
$\Lunit$ is null (i.e., $\gfour(\Lunit,\Lunit) = 0$),
tangent to $\mathcal{C}_u$, and orthogonal to the spheres $S_{t,u} := \mathcal{C}_u \cap \Sigma_t$.
Moreover, 
$\uLunit$ is null, 
transversal to $\mathcal{C}_u$,
orthogonal to $S_{t,u}$,
and normalized by $\uLunit t = 1$,
and $\lbrace e_A \rbrace_{A=1,2}$ are a $\gfour$-orthonormal
frame tangent to $S_{t,u}$; see Fig.\,\ref{F:NULLFRAME},
and see Sect.\,\ref{S:SETUPCONSTRUCTIONOFEIKONAL} for
details on the construction of the objects depicted in the figure.

\begin{figure}[!ht]
\centering
\includegraphics[scale=.25]{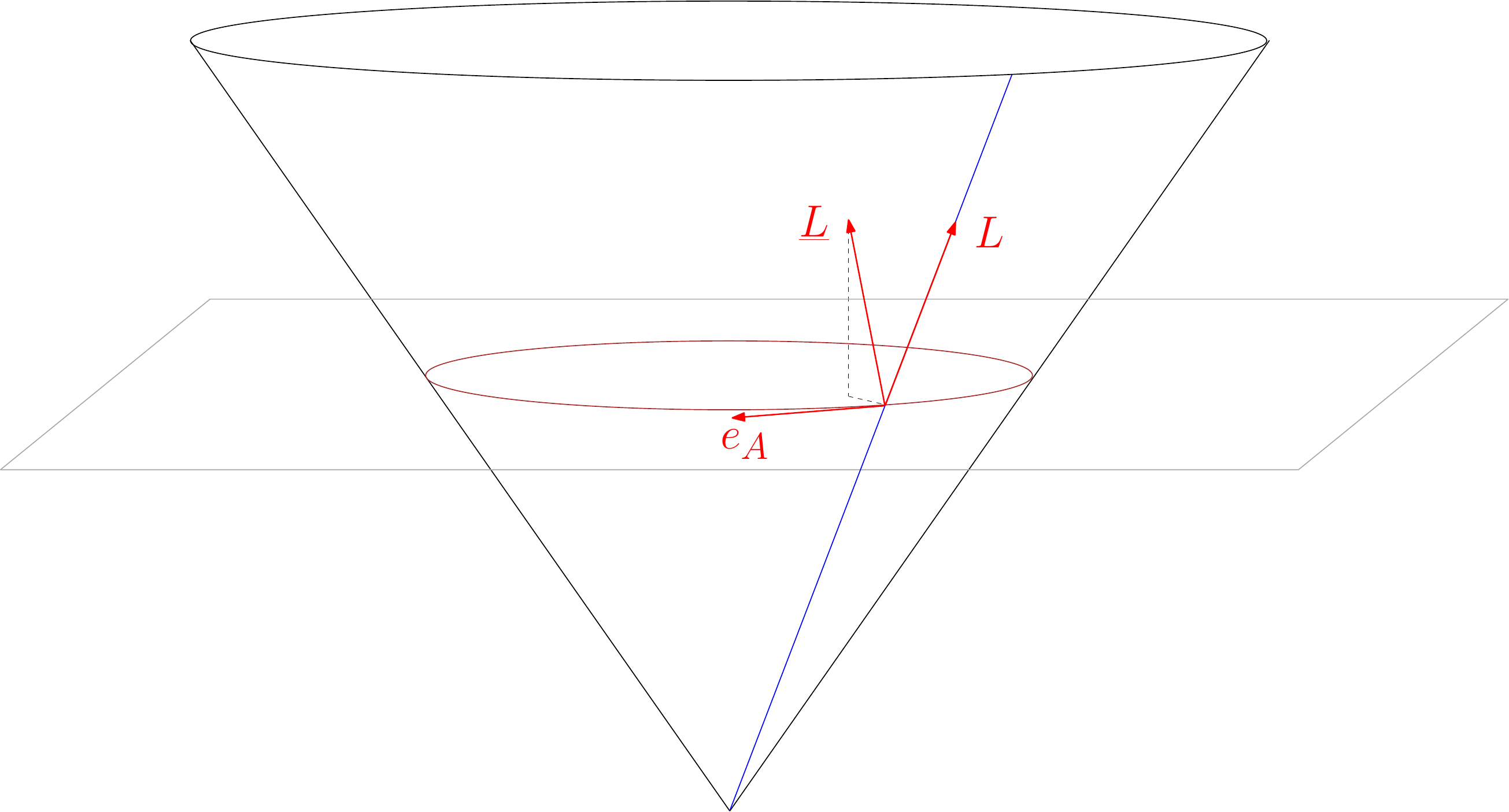}
\caption{The null frame}
\label{F:NULLFRAME}
\end{figure}

Controlling the acoustic geometry means, essentially,
deriving estimates for various connection
coefficients\footnote{These are, roughly, first $\gfour$-covariant derivatives of the frame in the directions of the frame.} 
of the null frame and their derivatives. 
There are many quantities that we need to estimate, but for brevity, 
in our discussion of the model problem, we will discuss only one of them.
Specifically, of primary importance for applications to Strichartz estimates
is the null mean curvature of the level sets of $u$ 
(i.e., of sound cones in the context of compressible Euler flow),
denoted by $\mytr_{\gsphere} \upchi$ and defined by $\mytr_{\gsphere} \upchi = \sum_{A=1}^2 \gfour(\Dfour_{e_A} \Lunit, e_A)$,
with $\Dfour$ the Levi-Civita connection of $\gfour$.
Analytically, $\mytr_{\gsphere} \upchi$ corresponds to a special combination of up-to-second-order
derivatives of $u$ with coefficients that depend, relative to Cartesian coordinates, 
on the up-to-first-order derivatives 
of $\gfour$. To bound $\mytr_{\gsphere} \upchi$, one exploits that it 
verifies \emph{Raychaudhuri's equation} (see \eqref{E:RAYCHAUDHURI} and \eqref{E:MODIFIEDRAYCHAUDHURI}),
which is an evolution equation with source terms depending on the Ricci curvature of $\gfour$.
A careful decomposition of the Ricci curvature (see Lemma~\ref{L:CURVATUREDECOMPOSITIONS})
allows one to express Raychaudhuri's equation in the form\footnote{Some of the terms denoted by ``$\cdots$'' on RHS~\eqref{E:RAYCHINTRO} 
	are important from the point of view of their $L^{\infty}$-size; we are ignoring those terms in the present discussion because we are focusing on issues
	tied to regularity.}
\begin{align} \label{E:RAYCHINTRO}
	\Lunit (\mytr_{\gsphere} \upchi + \Chfour_{\Lunit})
	& = \frac{1}{2} \Lunit^{\alpha} \Lunit^{\beta} \hat{\square}_{\gfour} (\gfour_{\alpha \beta}(\Psi))
		+ 
		\cdots,
\end{align}
where $\Chfour_{\Lunit} := \Lunit^{\alpha} \Chfour_{\alpha}$, and
$\Chfour^{\alpha} \sim (\gfour^{-1})^2 \cdot \pmb{\partial} \gfour$ is a contracted Cartesian Christoffel symbol of $\gfour$.
Here we emphasize that the regularity properties of
$\mytr_{\gsphere} \upchi + \Chfour_{\Lunit}$ are tied to those
of the source terms in the wave equation \eqref{E:MODELWAVE},
since the first term on RHS~\eqref{E:RAYCHINTRO} can be expressed via \eqref{E:MODELWAVE} and the chain rule.
It turns out that in order to obtain enough control of the acoustic geometry to prove the Strichartz estimates, 
one needs to control, among other terms, the $\mathcal{C}_u$-tangential derivatives, namely $\Lunit$ and $\angD$, of $\mytr_{\gsphere} \upchi$
in various norms along $\mathcal{C}_u$,
where $\angD$ is the Levi-Civita connection
of the Riemannian metric $\gsphere$ induced on the spheres $S_{t,u}$ by $\gfour$;
see, for example, the estimate \eqref{E:ONEANGULARDERIVATIVEOFTRCHIMODANDTRFREECHIL2INTIMEESTIMATES}.
This suggests, in view of equation \eqref{E:MODELWAVE} and the presence of the product 
$\frac{1}{2} \Lunit^{\alpha} \Lunit^{\beta} \hat{\square}_{\gfour} (\gfour_{\alpha \beta}(\Psi))$ 
on RHS~\eqref{E:RAYCHINTRO},
that we in particular have to control
$\| \angD \Flatcurl W \|_{L^2(\mathcal{C}_u)}$. In fact, one needs control of a slightly higher Lebesgue exponent
than $2$ in the angular variables to close the proof, though we will downplay this technical issue
in our simplified discussion here. For the compressible Euler equations, 
see Prop.\,\ref{P:ENERGYESTIMATESALONGNULLHYPERSURFACES} for the precise estimates 
that we need for the fluid variables along null hypersurfaces. 
We emphasize that in reality, the needed control of $\angD \Flatcurl W$ is at the top-order level 
(i.e., it relies on the assumption $\| \partial W \|_{H^{\Sob-1}(\Sigma_0)} < \infty$).
To achieve the desired control, we use two crucial structural features 
of the equations.
\begin{enumerate}
	\item $\Flatcurl W$ satisfies the transport equation \eqref{E:MODELCURLEVOLUTION}.
		Therefore, using standard energy estimates for transport equations and the energy estimate \eqref{E:MODELGRONWALL} along $\Sigma_t$
		(which can be used to obtain spacetime control of the source terms in the transport equation), 
		one can control,
		roughly,\footnote{The precise norm that we need to control
		along null hypersurfaces 
		is the one on LHS~\eqref{E:SOUNDCONEENERGYESTIMATESFORMODIFIEDVARIABLES},
		which involves Littlewood--Paley projections adapted to $\Sigma_t$.}
		$\Flatcurl W$ in $\| \cdot \|_{H^{\Sob-1}(\mathcal{H})}$ 
		along \emph{any hypersurface} $\mathcal{H}$
		that is transversal to the transport operator 
		$\partial_t 
	+
	\Psi
	\partial_1$ on LHS~\eqref{E:MODELCURLEVOLUTION}. Note that \emph{the needed estimate along $\mathcal{H}$ 
	would not be available if, instead of
	$\Flatcurl W$ on RHS~\eqref{E:MODELWAVE}, we had a generic spatial derivative $\partial W$};
	we can control \emph{generic} top-order spatial derivatives of $W$ in $L^2$ 
	\emph{only along the hypersurfaces $\Sigma_t$},
	since elliptic Hodge estimates of type \eqref{E:ELLIPTICHODGE} hold only along such hypersurfaces.
	In the compressible Euler equations, this miraculous structural feature is manifested by the fact that the principal
	transport terms on RHS~\eqref{E:COVARIANTWAVE}
	are precisely $\vec{\VortVort}$ and $\DivGradEnt$,
	which satisfy the transport equations
	\eqref{E:TRANSPORTVORTICITYVORTICITY} and \eqref{E:TRANSPORTDIVGRADENTROPY}.
	We again refer to Prop.\,\ref{P:ENERGYESTIMATESALONGNULLHYPERSURFACES} for the precise estimates
	that we derive for $\vec{\VortVort}$ and $\DivGradEnt$ along null hypersurfaces.
	\item To control the acoustic geometry, one must consider the case $\mathcal{H} := \mathcal{C}_u$,
		and thus one needs to know that
		$\partial_t 
	+
	\Psi
	\partial_1$
	is transversal to the sound cones $\mathcal{C}_u$. For the model system, 
	the transversality could be guaranteed only by making 
	assumptions on the structure of the component functions $(\gfour^{-1})^{\alpha \beta}(\Psi)$.
	However, for the compressible Euler equations, the needed transversality is guaranteed by a crucial geometric fact:
	the relevant transport vectorfield operator is $\Transport$, and it enjoys the timelike property $\gfour(\Transport,\Transport) = -1$
	(see \eqref{E:TRANSPORTISLENGTHONE}),
	thus ensuring that $\Transport$ is transversal to \emph{any} $\gfour$-null hypersurface.
\end{enumerate}

We close this subsubsection by highlighting a few key technical issues that were also present in \cite{qW2017} and related works.
\begin{itemize}
	\item
		To close our bootstrap argument, we find it convenient to partition
		the bootstrap interval
		and to work with a rescaled version of the solution adapted 
		to the partition. We define the partitioning in Subsect.\,\ref{SS:PARTITIONOFBOOTSTRAPINTERVAL}
		and the rescaling in Subsect.\,\ref{SS:RESCALEDSOLUTION}.
		Moreover, for each partition and corresponding rescaled solution, we construct
		an eikonal function adapted to that specific partition; we will ignore this 
		technical issue for the rest of this subsubsection.
	\item It turns out that the connection coefficients of the null frame
	do not satisfy PDEs that allow us to derive the desired estimates.
	Thus, one must instead work with
	a collection of ``modified'' connection coefficients that satisfy 
	better PDEs, for which we can derive the desired estimates.
	This is already apparent from equation \eqref{E:RAYCHINTRO},
	which suggests that
	$\mytr_{\gsphere} \upchi + \Chfour_{\Lunit}$
	is the ``correct'' quantity to study from the point of view of PDE analysis.
	We define these modified quantities in Subsect.\,\ref{SS:MODIFIEDQUANTITIES}.
	\item To close the proof, we need to control
		$\| \mytr_{\gsphere} \upchi + \Chfour_{\Lunit} \|_{L_t^{\infty} L_x^{\infty}}$
		via the transport equation \eqref{E:RAYCHINTRO}; see, for example,
		the estimate\footnote{The actual estimates that we need involve $\rgeo$ weights, where $\rgeo$ is defined in \eqref{E:GEOMETRICRADIAL}.
		We also note that in the bulk of the article,
		we denote $\mytr_{\gsphere} \upchi + \Chfour_{\Lunit}$ by $\mytr_{\congsphere} \widetilde{\upchi}$; 
		see \eqref{E:TRACEDRELATIONBETWEENNULLSECONDFUNDANDCONFORMALNULLSECONDFUND}.} \eqref{E:TRCHILINFINITYESTIMATES}.
		However, given the low regularity, it is not automatic that we have quantitative control of the ``data-term''
		$\| \mytr_{\gsphere} \upchi + \Chfour_{\Lunit} \|_{L^{\infty}(\Sigma_0)}$,
		as such control depends on the initial condition for $u$ (which we are free to choose).
		In Prop.\,\ref{P:INITIALFOLIATION}, we recall a result of \cite{qW2017}, 
		which shows that there exists a foliation of $\Sigma_0$ that can be used to define an initial condition for $u$
		with many good properties, leading in particular to the desired quantitative control of 
		$\| \mytr_{\gsphere} \upchi + \Chfour_{\Lunit} \|_{L^{\infty}(\Sigma_0)}$.
	\item In the proof of the conformal energy estimate from \cite{qW2017} 
		(the results of which we quote in our proof of the Strichartz estimate), 
		there is a technical part of the argument in which one needs to work with a conformally rescaled metric $e^{2 \upsigma} \gfour$, 
		constructed such that its null second 
		fundamental form has a trace equal to the quantity $\mytr_{\gsphere} \upchi + \Chfour_{\Lunit}$ highlighted above;
		we refer readers to \cite{qW2017}*{Section~1.4.1} for further discussion on this issue.
		In Subsubsect.\,\ref{SSS:CONFORMALMETRIC}, we construct the conformally rescaled metric.
		To close the conformal energy estimate, we must derive estimates for various geometric derivatives of $\upsigma$
		up to second order; see Prop.\,\ref{P:MAINESTIMATESFOREIKONALFUNCTIONQUANTITIES}.
\end{itemize}	

\subsubsection{Mixed spacetime estimates for the transport variable}
\label{SSS:MODELMIXEDSPACETIMEFORTRANSPORT}
We now discuss how to establish \eqref{E:MODELSPACETIMEBOUNDS} for the term $\| \partial W \|_{L^2([0,T])L_x^{\infty}}$
on the left-hand side.
As in Subsubsect.\,\ref{SSS:MODELWAVESTRICHARTZ}, we will 
assume the energy bound
\begin{align*}
\| (\Psi,\partial_t \Psi) \|_{H^{\Sob}(\Sigma_t) \times H^{\Sob-1}(\Sigma_t)}
	+
	\| \partial W \|_{H^{\Sob-1}(\Sigma_t)}
\lesssim 1,
\end{align*}
we will ignore the small power of $T^{\updelta}$,
and, imagining that we are carrying out a bootstrap argument, we will
assume the results of that subsubsection, i.e., we assume the bound $\| \pmb{\partial} \Psi \|_{L^2([0,T])L_x^{\infty}} \lesssim 1$.
The main idea of controlling $\| \partial W \|_{L^2([0,T])L_x^{\infty}}$
is to in fact control, for some small constant $\updelta_1 > 0$, the stronger norm\footnote{One might be tempted to 
avoid using the H\"{o}lder-based norms $\| \cdot \|_{L^2([0,T]) C_x^{0,\updelta_1}}$ and to instead
use elliptic theory to obtain control of $\| \partial W \|_{L^2([0,T])BMO_x}$.
The difficulty is that control of 
$\| \partial W \|_{L^2([0,T])BMO_x}$ is insufficient for controlling the nonlinear term
$\partial \Psi \cdot \partial W$ on RHS~\eqref{E:MODELCURLEVOLUTION} in the norm $\| \cdot \|_{L_t^2([0,T]) H_x^{\Sob}}$,
which in turn would obstruct closure of the energy estimates.}
$\| \partial W \|_{L^2([0,T]) C_x^{0,\updelta_1}}$
by combining estimates for the transport-div-curl system
\eqref{E:MODELDIV}--\eqref{E:MODELCURLEVOLUTION}
with the following standard elliptic Schauder-type estimate (see Lemma~\ref{L:SCHAUDERESTIMATESFORDIVCURLSYSTEMS}):
\begin{align} \label{E:INTROSCHAUDERESTIMATESFORDIVCURLSYSTEMS}
	\| \partial W \|_{C^{0,\updelta_1}(\mathbb{R}^3)}
	& \lesssim 
		\| \dive W \|_{C^{0,\updelta_1}(\mathbb{R}^3)}
		+
		\| \curl W \|_{C^{0,\updelta_1}(\mathbb{R}^3)}
		+
		\| \partial W \|_{L^2(\mathbb{R}^3)}.
\end{align}
It is well-known that \eqref{E:INTROSCHAUDERESTIMATESFORDIVCURLSYSTEMS} is false
when the space $C^{0,\updelta_1}(\mathbb{R}^3)$ is replaced (on both sides) 
with $L^{\infty}(\mathbb{R}^3)$; this explains our reliance on H\"{o}lder norms. 
To control RHS~\eqref{E:INTROSCHAUDERESTIMATESFORDIVCURLSYSTEMS}, 
we will use the following important fact, mentioned already in the first paragraph of Subsubsect.\,\ref{SSS:MODELWAVESTRICHARTZ}: 
\emph{the Strichartz estimate
$\| \pmb{\partial} \Psi \|_{L^2([0,T])L_x^{\infty}} \lesssim 1$
can be slightly strengthened, under the scope of our approach,
to $\| \pmb{\partial} \Psi \|_{L^2([0,T])C_x^{0,\updelta_1}} \lesssim 1$}; 
see Cor.\,\ref{C:HOLDERTYPESTRICHARTZESTIMATEFORWAVEVARIABLES}.
To proceed, we take the norm $\| \cdot \|_{C^{0,\updelta_1}(\Sigma_t)}$ of the transport equation \eqref{E:MODELCURLEVOLUTION}
and integrate in time,
use \eqref{E:INTROSCHAUDERESTIMATESFORDIVCURLSYSTEMS}
to bound the source term factor $\partial W$ on RHS~\eqref{E:MODELCURLEVOLUTION},
use \eqref{E:MODELDIV} to substitute for the first term on RHS~\eqref{E:INTROSCHAUDERESTIMATESFORDIVCURLSYSTEMS},
and use the strengthened Strichartz estimate $\| \pmb{\partial} \Psi \|_{L^2([0,T])C_x^{0,\updelta_1}} \lesssim 1$
(which in particular, as the arguments of Lemma~\ref{L:ESTIMATESFORTRANSPORTCHARACTERISTICS} show, 
yields control of the integral curves of the transport operator
$
\partial_t 
	+
	\Psi
	\partial_1
$
on LHS~\eqref{E:MODELCURLEVOLUTION})
to obtain the following estimate 
(see Subsect.\,\ref{SS:PROOFOFPROPMIXEDSPACETIMEHOLDERESTIMATESFORVORTICITYANDENTROPY} for the details):
\begin{align} \label{E:MODELHOLDERBOUND}
	\| \Flatcurl W \|_{C^{0,\updelta_1}(\Sigma_t)}
	& \lesssim
		\| \Flatcurl W \|_{C^{0,\updelta_1}(\Sigma_0)}
		+
		\mbox{\upshape data}
		+
		\int_0^t
			\| \pmb{\partial} \Psi \|_{C^{0,\updelta_1}(\Sigma_{\uptau})}
			\| \Flatcurl W \|_{C^{0,\updelta_1}(\Sigma_{\uptau})}
		\, d \uptau.
\end{align}
To control the first term on RHS~\eqref{E:MODELHOLDERBOUND},
we need to assume that
$\| \partial W \|_{C^{0,\upalpha}(\Sigma_0)} < \infty$,
for some $\upalpha  > 0$ (and then $\updelta_1 > 0$ is chosen to be $\leq \upalpha$).
There seems to be no way to avoid this assumption by the method we are using since
transport equation solutions do not gain regularity or satisfy Strichartz estimates (which are tied to dispersion).
From \eqref{E:MODELHOLDERBOUND}, 
Gr\"{o}nwall's inequality,
and the bound
$\| \pmb{\partial} \Psi \|_{L^2([0,T])C_x^{0,\updelta_1}} \lesssim 1$,
we find that
\begin{align*}
\| \Flatcurl W \|_{C^{0,\updelta_1}(\Sigma_t)}
\lesssim
\| \Flatcurl W \|_{C^{0,\updelta_1}(\Sigma_0)}
+ 
\mbox{\upshape data}.
\end{align*}
From this bound,
\eqref{E:INTROSCHAUDERESTIMATESFORDIVCURLSYSTEMS},
equation \eqref{E:MODELDIV} (which we again use to substitute for the first term on RHS~\eqref{E:INTROSCHAUDERESTIMATESFORDIVCURLSYSTEMS}),
and the assumed energy bound,
we find that
\begin{align*}
\| \partial W \|_{C^{0,\updelta_1}(\Sigma_t)}
\lesssim
\| \Flatcurl W \|_{C^{0,\updelta_1}(\Sigma_0)}
+
\mbox{\upshape data}
+
\| \pmb{\partial} \Psi \|_{C^{0,\updelta_1}(\Sigma_t)}.
\end{align*}
Finally, squaring this estimate,
integrating in time, 
and again using the bound $\| \pmb{\partial} \Psi \|_{L^2([0,T])C_x^{0,\updelta_1}} \lesssim 1$,
we obtain the desired bound
$\| \partial W \|_{L^2([0,T])C_x^{0,\updelta_1}} 
\lesssim \| \Flatcurl W \|_{C^{0,\updelta_1}(\Sigma_0)}
+
\mbox{\upshape data}$.

We have therefore sketched how to establish \eqref{E:MODELSPACETIMEBOUNDS} which, in view of \eqref{E:MODELGRONWALL}, 
justifies (for $t$ sufficiently small) the fundamental estimate
$
\| (\Psi,\partial_t \Psi) \|_{H^{\Sob}(\Sigma_t) \times H^{\Sob-1}(\Sigma_t)}
	+
	\| \partial W \|_{H^{\Sob-1}(\Sigma_t)}
\leq
		\mbox{\upshape data}
$.

\section{Littlewood--Paley projections, standard norms, parameters, assumptions on the initial data, bootstrap assumptions, and notation regarding constants}
\label{S:DATAANDBOOTSTRAPASSUMPTION}
In this section, we define the standard Littlewood--Paley projections,
define various norms and parameters that we use in our analysis,
state our assumption on the initial data,
formulate the bootstrap assumptions 
that we use in proving Theorem~\ref{T:MAINTHEOREMROUGHVERSION},
and state our conventions for constants $C$.

\subsection{Littlewood--Paley projections}
\label{SS:LITTLEWOODPALEY}
We fix a smooth function $\upeta:\mathbb{R}^3 \rightarrow [0,1]$ supported on the frequency-space annulus 
$\lbrace \xi \in \mathbb{R}^3 \ | \ 1/2 \leq |\xi| \leq 2 \rbrace$
such that for $\xi \neq 0$, we have $\sum_{k \in \mathbb{Z}} \upeta(2^k \xi) = 1$. For dyadic frequencies
$\uplambda = 2^k$ with $k \in \mathbb{Z}$, we define the standard Littlewood--Paley projection $P_{\uplambda}$, which
acts on scalar functions $F:\mathbb{R}^3 \rightarrow \mathbb{C}$, as follows:
\begin{align} \label{E:LITTLEWOODPALEYPROJECTION}
	P_{\uplambda} F(x)
	& := 
	\frac{1}{(2 \pi)^3}
	\int_{\mathbb{R}^3}
		e^{i x \cdot \xi} \upeta(\uplambda^{-1} \xi) \hat{F}(\xi)
	\, d \xi,
\end{align}
where
$
\hat{F}(\xi)
:=
	\int_{\mathbb{R}^3}
		e^{-i x \cdot \xi} F(x)
	\, d x
$ 
(with $dx := dx^1 dx^2 dx^3$)
is the Fourier transform of $F$.
If $F$ is an array-valued function, then $P_{\uplambda} F$ denotes the array of
projections of its components.
If $I \subset 2^{\mathbb{Z}}$ is an interval of dyadic frequencies, then
$P_I F := \sum_{\nu \in I} P_{\upnu} F$, and 
$P_{\leq \uplambda} F := P_{(-\infty,\uplambda]} F$.

If $F$ is a function on $\Sigma_t$, then 
$P_{\uplambda} F(t,x) := P_{\uplambda} G(x)$, where $G(x) := F(t,x)$,
and similarly for $P_I F(t,x)$ and $P_{\leq \uplambda} F(t,x)$.

\subsection{Norms and seminorms}
\label{SS:NORMS}
In this subsection, we define some standard norms and seminorms
that we will use in the first part of the paper,
before we control the acoustic geometry.
To control the acoustic geometry,
we will use additional norms, defined in
Subsect.\,\ref{SS:GEOMETRICNORMS}.

For scalar- or array-valued functions $F$  
and
$1 \leq q < \infty$,
$
\| F \|_{L^q(\Sigma_t)} 
:= 
\left\lbrace
\int_{\Sigma_t} |F(t,x)|^q \, dx
\right\rbrace^{1/q}
$
and
$
\| F \|_{L^{\infty}(\Sigma_t)}
:=
\mbox{ess sup}_{x \in \mathbb{R}^3}
|F(t,x)|
$
are standard Lebesgue norms of $F$,
where we recall that $\Sigma_t$ is the standard constant-time slice.
Lebesgue norms on subsets $D \subset \Sigma_t$
are defined in an analogous fashion, e.g.,
if $D = \lbrace t \rbrace \times D'$,
then
$
\| F \|_{L^q(D)} 
:= 
\left\lbrace
\int_{D'} |F(t,x)|^q \, dx
\right\rbrace^{1/q}
$.
Similarly, if $\lbrace A_{\uplambda} \rbrace_{\uplambda \in 2^{\mathbb{N}}}$
is a dyadic-indexed sequence of real numbers and $1 \leq q < \infty$, 
then
$
\|
	A_{\upnu}
\|_{\ell_{\upnu}^q}
:=
\left\lbrace
\sum_{\upnu \geq 1}
	A_{\upnu}^q
\right\rbrace^{1/q}
$.

We will rely on the following
family of seminorms, parameterized by real numbers $M$ 
(where we will have $M > 0$ in our applications below):
\begin{align} \label{E:LAMBDAL2NORM}
	\| 
		\Lambda^M F
	\|_{L^2(\Sigma_t)}
	& := 
			\sqrt{
			\sum_{\upnu \geq 2} 
			\upnu^{2M} \| P_{\upnu} F \|_{L^2(\Sigma_t)}^2},
\end{align}
where on RHS~\eqref{E:LAMBDAL2NORM} and throughout, 
sums involving Littlewood--Paley projections are understood to be dyadic sums.

For real numbers $M \geq 0$, we define the following standard Sobolev norm 
for functions $F$ on $\Sigma_t$:
\begin{align}	 \label{E:STANDARDSOB}
		\| F \|_{H^M(\Sigma_t)}
		& :=
			\left\lbrace
			\| P_{\leq 1} F \|_{L^2(\Sigma_t)}^2
		+
		\| 
			\Lambda^M F
		\|_{L^2(\Sigma_t)}^2
		\right\rbrace^{1/2}.
\end{align}
Throughout, we will rely on the standard fact that when $M$ is an integer,
the norm defined in \eqref{E:STANDARDSOB}
is equivalent to
$
\sum_{|\vec{I}| \leq M} \| \partial_{\vec{I}} F \|_{L^2(\Sigma_t)}
$,
where $\vec{I}$ are spatial derivative multi-indices.

If $F$ is a function defined on a subset $D \subset \mathbb{R}^3$ and $\upbeta \geq 0$, 
then we define the H\"{o}lder norm $\| \cdot \|_{C^{0,\upbeta}(D)}$ of $F$ as follows:
\begin{align} \label{E:HOLDERNORMDEF}
	\| F \|_{C^{0,\upbeta}(D)}
	& := 
		\| F \|_{L^{\infty}(D)}
		+
		\sup_{x,y \in D, 0 < |x - y|}
		\frac{|F(x) - F(y)|}{|x-y|^{\upbeta}}.
\end{align}
Similarly, if $F$ is a function defined on a subset $D \subset \Sigma_t$
of the form $D = \lbrace t \rbrace \times D'$, then
$\| F \|_{C^{0,\upbeta}(D)} := \| G \|_{C^{0,\upbeta}(D')}$,
where $G(x) := F(t,x)$.

We will also use the following mixed norms for functions $F$ defined on $\mathbb{R}^{1+3}$, 
where $1 \leq q_1 < \infty$, $1 \leq q_2 \leq \infty$,
and $I$ is an interval of time:
\begin{subequations}
\begin{align}
	\| F \|_{L^{q_1}(I) L_x^{q_2}}
	& := 
	\left\lbrace
		\int_I \| F \|_{L^{q_2}(\Sigma_{\uptau})}^{q_1} \, d \uptau
	\right\rbrace^{1/q_1},
	&
	\| F \|_{L^{\infty}(I) L_x^{q_2}}
	& := 
		\mbox{ess sup}_{\uptau \in I} \| F \|_{L^{q_2}(\Sigma_{\uptau})},
			\label{E:MIXEDLEBESGUENORMS} \\
\| F \|_{L^{q_1}(I) C_x^{0,\upbeta}}
	& := 
	\left\lbrace
		\int_I \| F \|_{C^{0,\upbeta}(\Sigma_{\uptau})}^{q_1} \, d \uptau
	\right\rbrace^{1/q_1},
	&
	\| F \|_{L^{\infty}(I) C_x^{0,\upbeta}}
	& := 
		\mbox{ess sup}_{\uptau \in I} \| F \|_{C^{0,\upbeta}(\Sigma_{\uptau})}.
		\label{E:MIXEDHOLDERNORM}
\end{align}
\end{subequations}
Similarly, if $\lbrace F_{\uplambda} \rbrace_{\uplambda \in 2^{\mathbb{N}}}$
is a dyadic-indexed sequence of functions $F_{\uplambda}$ on $\Sigma_t$,
then
\begin{align} \label{E:MIXEDBIGLITTLEL2NORM}
	\|
		F_{\upnu}
	\|_{\ell_{\upnu}^2 L^2(\Sigma_t)}
	& 
	:=
	\left\lbrace
	\sum_{\upnu \geq 1}
	\|
		F_{\upnu}
	\|_{L^2(\Sigma_t)}^2
	\right\rbrace^{1/2}.
\end{align}

\subsection{Choice of parameters}
\label{SS:PARAMETERS}
In this subsection, we introduce the parameters that will play a role in our analysis.
We recall that $2 < \Sob \leq 5/2$ and $0 < \upalpha < 1$ denote given real numbers
corresponding, respectively, to the assumed Sobolev regularity of the data and
the assumed H\"{o}lder regularity of the transport
part of the data; see \eqref{E:FINITEWAVEDATANORM}--\eqref{E:FINITETRANSPORTDATANORM}.
We then choose positive numbers
$q$, $\upepsilon_0$, $\updelta_0$, $\updelta$, 
and
$\updelta_1$ 
that satisfy the following conditions:
\begin{subequations}
\begin{align}
	2 & < q < \infty,
		\label{E:QCONSTRAINT} \\
	0 & < \upepsilon_0 := \frac{\Sob - 2}{10} < \frac{1}{10},
		\label{E:EPSILON0INEQUALITY} \\
	\updelta_0 & := \min\left\lbrace \upepsilon_0^2, \frac{\upalpha}{10} \right\rbrace,
		\label{E:DELTA0DEF} \\
	0 & < \updelta := \frac{1}{2} - \frac{1}{q}
		< \upepsilon_0,
			\label{E:DELTADEFINITIONANDPOSITIVITY} \\
	\updelta_1 
	& := \min\left\lbrace \Sob - 2 - 4 \upepsilon_0 - \updelta(1 - 8 \upepsilon_0), \upalpha \right\rbrace
		>  
		8 \updelta_0 > 0.
		\label{E:DELTA1DEFINITIONANDPOSITIVITY}
\end{align}
\end{subequations}
More precisely, we consider $\Sob$, $\upalpha$,
$\upepsilon_0$,
and
$\updelta_0$
to be fixed throughout the paper, 
while
in some of our arguments below, we will treat $q$, $\updelta$, and $\updelta_1$ as parameters,
where $q > 2$ will need to be chosen to be sufficiently close to $2$ 
(i.e., $\updelta > 0$ will need to be chosen to be sufficiently small).

\subsection{Assumptions on the initial data}
\label{SS:DATAASSUMPTIONS}
The following definition captures the subset of solution space
in which the compressible Euler equations are hyperbolic
in a non-degenerate sense.

\begin{definition}[Regime of hyperbolicity] \label{D:REGIMEOFHYPERBOLICITY}
	We define $\mathcal{K}$ as follows, where $\Speed(\LogDensity,\Ent)$ is the speed of sound:
	\begin{align} \label{E:REGIMEOFHYPERBOLICITY}
		\mathcal{K}
		& := 
		\left\lbrace
			(\LogDensity,\Ent,\vec{v},\vec{\vortrenormalized},\vec{\GradEnt})
			\in
			\mathbb{R} \times \mathbb{R} \times \mathbb{R}^3 \times \mathbb{R}^3 \times \mathbb{R}^3
			\ | \
			0 < \Speed(\LogDensity,\Ent) < \infty
		\right\rbrace.
	\end{align}
\end{definition}

We set 
\begin{align} \label{E:DATAFUNCTIONS}
	(\mathring{\LogDensity},\mathring{\Ent},\mathring{\vec{v}},\mathring{\vec{\vortrenormalized}},\mathring{\vec{\GradEnt}},\mathring{\vec{\VortVort}},
	\mathring{\DivGradEnt}) 
	& := (\LogDensity,\Ent,\vec{v},\vec{\vortrenormalized},\vec{\GradEnt},\vec{\VortVort},\DivGradEnt)|_{\Sigma_0}.
\end{align}

With $\Sob$ and $\upalpha$ as in Subsect.\,\ref{SS:PARAMETERS},
 we assume that
\begin{subequations}
\begin{align} \label{E:FINITEWAVEDATANORM}
	\| \mathring{\LogDensity} \|_{H^{\Sob}(\Sigma_0)}
	+
	\| \mathring{\vec{v}} \|_{H^{\Sob}(\Sigma_0)}
		< \infty,
		\\
	\| \mathring{\vec{\vortrenormalized}} \|_{H^{\Sob}(\Sigma_0)}
	+
	\| \mathring{\Ent} \|_{H^{\Sob+1}(\Sigma_0)}
	+
	\| \mathring{\vec{\VortVort}} \|_{C^{0,\upalpha}(\Sigma_0)}
	+
	\| \mathring{\DivGradEnt} \|_{C^{0,\upalpha}(\Sigma_0)}
	<
	\infty.
	\label{E:FINITETRANSPORTDATANORM}
\end{align}
\end{subequations}
\eqref{E:FINITEWAVEDATANORM} corresponds to ``rough'' regularity assumptions on the wave-part of the data,
while \eqref{E:FINITETRANSPORTDATANORM}
corresponds to regularity assumptions on the transport-part of the data.

Let $\mbox{\upshape int} U$ denote the interior of the set $U$.
We assume that there are compact subsets $\mathring{\mathfrak{K}}$
and 
$\mathfrak{K}$
of
$\mbox{\upshape int} \mathcal{K}$ such that
\begin{align} \label{E:SETSDESCRIBINGINTETERIOROFREGIMEOFHYPERBOLICITY}
	(\mathring{\LogDensity},\mathring{\Ent}, \mathring{\vec{v}},\mathring{\vec{\vortrenormalized}},\mathring{\vec{\GradEnt}})(\mathbb{R}^3)
	\subset 
	\mbox{\upshape int} \mathring{\mathfrak{K}}
	\subset
	\mathring{\mathfrak{K}}
	\subset
	\mbox{\upshape int} \mathfrak{K}
	\subset
	\mathfrak{K}
	\subset 
	\mbox{\upshape int} \mathcal{K}.
\end{align}

\subsection{Bootstrap assumptions}
\label{SS:BOOT}
For the rest of the article, 
 $0 < \Tboot \ll 1$ denotes a ``bootstrap time''
that we will choose to be sufficiently small in a manner that depends only on
the quantities introduced in Subsect.\,\ref{SS:DATAASSUMPTIONS}.
We assume that
$(\LogDensity,\Ent,\vec{v},\vec{\vortrenormalized},\vec{\GradEnt})$
is a smooth (see Footnote~\ref{FN:SMOOTHNESS}) solution
to the equations of Prop.\,\ref{P:GEOMETRICWAVETRANSPORT}
on the ``bootstrap slab'' $[0,\Tboot] \times \mathbb{R}^3$.

\subsubsection{Bootstrap assumptions tied to $\mathcal{K}$.}
\label{SSS:HYPERBOLICITYBOOTSTRAP}
Let $\mathfrak{K}$ be the subset from Subsect.\,\ref{SS:DATAASSUMPTIONS}.
We assume that 
\begin{align} \label{E:BOOTSOLUTIONDOESNOTESCAPEREGIMEOFHYPERBOLICITY}
	(\LogDensity,\Ent,\vec{v},\vec{\vortrenormalized},\vec{\GradEnt})([0,\Tboot] \times \mathbb{R}^3)
	\subset \mathfrak{K}.
\end{align}
In Cor.\,\ref{C:IMPROVEMENTOFLINFINITYBOOTSTRAP}, we derive a strict improvement of \eqref{E:BOOTSOLUTIONDOESNOTESCAPEREGIMEOFHYPERBOLICITY}.

\begin{remark}[Uniform $L^{\infty}(\Sigma_t)$ bounds]
\label{R:UNIFORMLINFTYBOUNDS}
Note that the bootstrap assumption \eqref{E:BOOTSOLUTIONDOESNOTESCAPEREGIMEOFHYPERBOLICITY} implies, 
in particular,
uniform $L^{\infty}(\Sigma_t)$ bounds, depending on $\mathfrak{K}$, 
for
$\LogDensity$, $\Ent$, $\vec{v}$, $\vec{\vortrenormalized}$,
and $\vec{\GradEnt} \sim \partial \Ent$. Throughout the article,
we will often use these simple $L^{\infty}(\Sigma_t)$ bounds
without explicitly mentioning that we are doing so.
\end{remark}

\subsubsection{Mixed spacetime norm bootstrap assumptions}
\label{SSS:MIXEDSPACETIMENORMBOOTSTRAP}
We assume that the following estimates hold:
\begin{subequations}
\begin{align} \label{E:BOOTSTRICHARTZ}
		\|
			\pmb{\partial} \vec{\Psi}
		\|_{L^2_t([0,\Tboot])L_x^{\infty}}^2
		+
		\sum_{\upnu \geq 2}
		\upnu^{2 \updelta_0}
		\|
			P_{\upnu} \pmb{\partial} \vec{\Psi}
		\|_{L^2_t([0,\Tboot])L_x^{\infty}}^2
	& \leq 1,
	\\
	\|
		\partial (\vec{\vortrenormalized},\vec{\GradEnt})
	\|_{L^2_t([0,\Tboot])L_x^{\infty}}^2
	+
	\sum_{\upnu \geq 2}
	\upnu^{2 \updelta_0}
	\|
		P_{\upnu} \partial (\vec{\vortrenormalized},\vec{\GradEnt})
	\|_{L^2_t([0,\Tboot])L_x^{\infty}}^2
	& \leq 1.
	\label{E:BOOTL2LINFINITYFIRSTDERIVATIVESOFVORTICITYBOOTANDNENTROPYGRADIENT}
\end{align}
\end{subequations}
In Theorem~\ref{T:IMPROVEMENTOFSTRICHARTZBOOTSTRAPASSUMPTION}, 
we derive a strict improvement of \eqref{E:BOOTSTRICHARTZ}.
In Theorem~\ref{T:MIXEDSPACETIMEHOLDERESTIMATESFORVORTICITYANDENTROPY},
we derive a strict improvement of \eqref{E:BOOTL2LINFINITYFIRSTDERIVATIVESOFVORTICITYBOOTANDNENTROPYGRADIENT}.

\begin{remark}
	When deriving the energy estimates, we will only use the bounds for 
	$
	\|
			\pmb{\partial} \vec{\Psi}
		\|_{L^2_t([0,\Tboot])L_x^{\infty}}
	$
	and
	$
	\|
		\partial (\vec{\vortrenormalized},\vec{\GradEnt})
	\|_{L^2_t([0,\Tboot])L_x^{\infty}}
	$.
	We use the bounds for the two sums in \eqref{E:BOOTSTRICHARTZ}--\eqref{E:BOOTL2LINFINITYFIRSTDERIVATIVESOFVORTICITYBOOTANDNENTROPYGRADIENT}
	to obtain control over the acoustic geometry, that is, for proving Prop.\,\ref{P:MAINESTIMATESFOREIKONALFUNCTIONQUANTITIES}. 
In turn, such control over the acoustic geometry will allow us to prove a frequency-localized Strichartz estimate 
(Theorem~\ref{T:FREQUENCYLOCALIZEDSTRICHARTZ}), and then to improve the Strichartz-type assumption for the wave variables 
(i.e., to prove Theorem~\ref{T:IMPROVEMENTOFSTRICHARTZBOOTSTRAPASSUMPTION}). For more details about this strategy, 
we refer to Subsubsect.\,\ref{SSS:MODELWAVESTRICHARTZ}. 
\end{remark}

\subsection{Notation regarding constants}
\label{SS:NOTATION}
In the rest of the paper, $C > 0$ denotes a constant that is free to vary from line to line.
$C$ is allowed to depend on
$\Sob$, $\upalpha$, the parameters from Subsect.\,\ref{SS:PARAMETERS},
the norms of the data from Subsect.\,\ref{SS:DATAASSUMPTIONS},
and the set $\mathfrak{K}$ from Subsect.\,\ref{SS:DATAASSUMPTIONS}.
We often bound explicit functions of $t$ by $\leq C$ since $t \leq \Tboot \ll 1$.
For given quantities $A,B \geq 0$,
write $A \lesssim B$ to mean that there exists a $C > 0$ such that $A \leq C B$.
We write $A \approx B$ to mean that $A \lesssim B$ and $B \lesssim A$.

\section{Preliminary energy and elliptic estimates}
\label{S:PRELIMINARYENERGYANDELLIPTICESTIMATES}
Our main goal in this section is to prove preliminary energy and elliptic
estimates that yield $H^2(\Sigma_t)$-control of the velocity, density, and specific vorticity, and $H^3(\Sigma_t)$-control of the entropy. 
The main result is provided by Prop.\,\ref{P:PRELIMINARYENERGYANDELLIPTICESTIMATES}.
These preliminary below-top-order estimates are useful, 
in the context of controlling the solution's top-order derivatives,
for handling all but the most difficult error terms.
The proof of Prop.\,\ref{P:PRELIMINARYENERGYANDELLIPTICESTIMATES} is located in
Subsect.\,\ref{SS:PROOFOFPRELIMINARYENERGYANDELLIPTICESTIMATES}.
Before proving the proposition, we first provide two standard ingredients:
the geometric energy method for wave equations and transport equations,
and estimates in $L^2(\Sigma_t)$-based spaces for $\dive$-$\curl$ systems.

\begin{proposition}[Preliminary energy and elliptic estimates]
\label{P:PRELIMINARYENERGYANDELLIPTICESTIMATES} 
There exists a continuous strictly increasing function $\contfunction: [0,\infty) \rightarrow [0,\infty)$
such that under the initial data and bootstrap assumptions of Sect.\,\ref{S:DATAANDBOOTSTRAPASSUMPTION}, 
smooth solutions to the compressible Euler equations
satisfy the following estimates for $t \in [0,\Tboot]$:
\begin{align}
	&
	\sum_{k=0}^2
	\| \partial_t^k (\LogDensity,\vec{v},\vec{\vortrenormalized}) \|_{H^{2-k}(\Sigma_t)}
	+
	\sum_{k=0}^2
	\| \partial_t^k \Ent \|_{H^{3-k}(\Sigma_t)}
	+
	\sum_{k=0}^1
	\| \partial_t^k (\vec{\VortVort},\DivGradEnt) \|_{H^{1-k}(\Sigma_t)}
		\label{E:BASICH2ENERGYESTIMATE} \\
	& \leq
	\contfunction\left(
	\| (\LogDensity, \vec{v}, \vec{\vortrenormalized}) \|_{H^2(\Sigma_0)}
	+
	\| \Ent \|_{H^3(\Sigma_0)}
	\right).
	\notag
\end{align}

Moreover, for any $a$ and $b$ with $0 \leq a \leq b \leq \Tboot$, 
solutions $\varphi$ to the inhomogeneous wave equation 
\begin{align} \label{E:INHOMOGENEOUSLINEARWAVE}
\square_{g(\vec{\Psi})} \varphi 
& = \mathfrak{F}
\end{align}
satisfy the following estimate:
\begin{align} \label{E:PRELIMINARYH1DOTHOMOGENEOUSWAVEEQUATIONINHOMOGENEOUSESTIMATE}
	\| \pmb{\partial} \varphi \|_{L^2(\Sigma_b)}
	& \lesssim \| \pmb{\partial} \varphi \|_{L^2(\Sigma_a)}
		+
		\| \mathfrak{F} \|_{L^1([a,b]) L_x^2}.
\end{align}

\end{proposition}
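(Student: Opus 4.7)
My plan is to prove the two estimates in sequence: first the standalone wave estimate \eqref{E:PRELIMINARYH1DOTHOMOGENEOUSWAVEEQUATIONINHOMOGENEOUSESTIMATE}, and then combine it with transport-type estimates and $L^2$-Hodge-type elliptic estimates in a coupled Gronwall argument to obtain \eqref{E:BASICH2ENERGYESTIMATE}. Throughout, the bootstrap bounds of Subsect.\,\ref{SSS:HYPERBOLICITYBOOTSTRAP}--\ref{SSS:MIXEDSPACETIMENORMBOOTSTRAP} will be used to control the $L^1_t$-integrals of the Gronwall multipliers.

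For \eqref{E:PRELIMINARYH1DOTHOMOGENEOUSWAVEEQUATIONINHOMOGENEOUSESTIMATE}, I will run the standard vectorfield multiplier method with $\Transport$ as the multiplier. By \eqref{E:TRANSPORTISLENGTHONE}, $\Transport$ is everywhere $\gfour$-timelike of unit length, so the energy current associated to $\varphi$ contracted with $\Transport$ produces a flux whose integral over $\Sigma_t$ is coercive and equivalent to $\|\pmb{\partial}\varphi\|_{L^2(\Sigma_t)}^2$, with constants depending on $\mathfrak{K}$ via \eqref{E:BOOTSOLUTIONDOESNOTESCAPEREGIMEOFHYPERBOLICITY}. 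The spacetime divergence of this current splits into a bulk term contracted with the deformation tensor $\deform{\Transport}$ (whose Cartesian components are smooth functions of $(\vec{\Psi},\pmb{\partial}\vec{\Psi})$) and a source term $\mathfrak{F}\cdot\Transport\varphi$. By Remark~\ref{R:UNIFORMLINFTYBOUNDS} and the bootstrap assumption \eqref{E:BOOTSTRICHARTZ} (which, after Cauchy--Schwarz in time, gives $L^1_t L^\infty_x$ control of $\pmb{\partial}\vec{\Psi}$), the bulk coefficient is integrable in $t$. Integrating over the slab $[a,b]\times\mathbb{R}^3$, applying Cauchy--Schwarz to the source-term integral, and applying Gronwall in time yields \eqref{E:PRELIMINARYH1DOTHOMOGENEOUSWAVEEQUATIONINHOMOGENEOUSESTIMATE}.

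For \eqref{E:BASICH2ENERGYESTIMATE}, I will set up a simultaneous energy
\[
\mathcal{E}(t) := \sum_{k=0}^2\|\partial_t^k(\LogDensity,\vec{v},\vec{\vortrenormalized})\|_{H^{2-k}(\Sigma_t)}^2 + \sum_{k=0}^2\|\partial_t^k\Ent\|_{H^{3-k}(\Sigma_t)}^2 + \sum_{k=0}^1\|\partial_t^k(\vec{\VortVort},\DivGradEnt)\|_{H^{1-k}(\Sigma_t)}^2
\]
and bound each piece using the natural equation it satisfies. For the wave variables, \eqref{E:COVARIANTWAVE} (and its analog with the covariant wave operator, to which \eqref{E:PRELIMINARYH1DOTHOMOGENEOUSWAVEEQUATIONINHOMOGENEOUSESTIMATE} applies) commuted with $\partial_{\vec{I}}$ for $|\vec{I}|\leq 1$ (resp.\ $|\vec{I}|\leq 2$ for $\Ent$) controls the $\pmb{\partial}^{\leq 2}$ derivatives of $(\LogDensity,\vec{v})$ (resp.\ $\pmb{\partial}^{\leq 3}\Ent$), with the commutator and inhomogeneous terms schematically of the form $\linsmoothfunction(\vec{\Psi})[\vec{\VortVort},\DivGradEnt]+\quadsmoothfunction(\vec{\Psi})[\pmb{\partial}\vec{\Psi},\pmb{\partial}\vec{\Psi}]$. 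For $\vec{\vortrenormalized}$ and $\vec{\GradEnt}$ at $H^2$, I combine standard transport energy estimates for \eqref{E:TRANSPORTEQNMAIN} (likewise commuted) with the $L^2$-Hodge identity $\|\partial\upxi\|_{L^2(\Sigma_t)}\lesssim \|\Flatdiv\upxi\|_{L^2(\Sigma_t)}+\|\Flatcurl\upxi\|_{L^2(\Sigma_t)}$ applied to $\upxi=\vec{\vortrenormalized},\partial\vec{\vortrenormalized},\vec{\GradEnt},\partial\vec{\GradEnt}$: by \eqref{E:DIVVORTICITY}, $\Flatdiv\vec{\vortrenormalized}$ is lower-order, while by \eqref{E:RENORMALIZEDCURLOFSPECIFICVORTICITY} $\Flatcurl\vec{\vortrenormalized}$ equals $\vec{\VortVort}$ modulo explicit lower-order products; analogously \eqref{E:CURLGRADENT} and \eqref{E:RENORMALIZEDDIVOFENTROPY} reduce control of $\vec{\GradEnt}$ to control of $\DivGradEnt$ and $\vec{\Psi}$. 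Finally, the transport equations \eqref{E:TRANSPORTVORTICITYVORTICITY}--\eqref{E:TRANSPORTDIVGRADENTROPY}, commuted with at most one Cartesian spatial derivative, yield $H^1(\Sigma_t)$-control of $(\vec{\VortVort},\DivGradEnt)$. All $\partial_t^k$ contributions in $\mathcal{E}(t)$ are reduced to spatial ones by repeatedly substituting the evolution equations of Prop.\,\ref{P:GEOMETRICWAVETRANSPORT}.

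The main obstacle is the interlocking dependency: the top-order wave estimate needs $(\vec{\VortVort},\DivGradEnt)$ in $L^\infty_t H^1_x$, the modified-variable transport estimate needs $\pmb{\partial}^{\leq 2}(\vec{\vortrenormalized},\vec{\GradEnt})$ in $L^2$, and the Hodge estimate for $\pmb{\partial}^{\leq 2}(\vec{\vortrenormalized},\vec{\GradEnt})$ brings us back to the top-order $(\vec{\VortVort},\DivGradEnt)$. The circularity closes precisely because the defining identities \eqref{E:RENORMALIZEDCURLOFSPECIFICVORTICITY}--\eqref{E:RENORMALIZEDDIVOFENTROPY} express the top curl/divergence of $(\vec{\vortrenormalized},\vec{\GradEnt})$ as the modified variable plus products that are schematically lower-order (and hence controllable by $L^\infty$-bootstrap bounds combined with $\mathcal{E}$), so the four families of inequalities close at matched derivative counts without any loss. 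Summing them and absorbing lower-order products via Cauchy--Schwarz and Sobolev embedding $H^2(\Sigma_t)\hookrightarrow L^\infty$ leads to an integral inequality
\[
\mathcal{E}(t)\leq \mathcal{E}(0) + C\int_0^t \big(1 + \|\pmb{\partial}\vec{\Psi}\|_{L^\infty(\Sigma_\tau)} + \|\partial(\vec{\vortrenormalized},\vec{\GradEnt})\|_{L^\infty(\Sigma_\tau)}\big)\mathcal{E}(\tau)\,d\tau,
\]
whose Gronwall multiplier is in $L^1_t([0,\Tboot])$ by Cauchy--Schwarz applied to the bootstrap assumptions \eqref{E:BOOTSTRICHARTZ}--\eqref{E:BOOTL2LINFINITYFIRSTDERIVATIVESOFVORTICITYBOOTANDNENTROPYGRADIENT}. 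Gronwall's lemma then yields \eqref{E:BASICH2ENERGYESTIMATE} with $\contfunction$ determined by the bootstrap constants and $\mathfrak{K}$.
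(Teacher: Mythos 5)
Your overall strategy---a $\Transport$-multiplier energy identity for the wave estimate, and a coupled Gronwall argument combining wave energy estimates, transport energy estimates, and $L^2$ Hodge estimates for the $H^2$ bound, with the bootstrap assumptions providing the $L^1_t$-integrability of the Gronwall multiplier---is exactly the paper's approach (see the discussion around $\controlling_2$ in \eqref{E:H2CONTROLLINGQUANTITY}, Lemmas~\ref{L:BASICENERGYINEQUALITYFORWAVEEQUATIONS}, \ref{L:BASICENERGYINEQUALITYFORTRANSPORTEQUATIONS}, \ref{L:ELLIPTICDIVCURLESTIMATES}, and the estimates \eqref{E:H2CONTROLLINGQUANTITYGRONWALLREADY}--\eqref{E:NONLINEARH2DERIVATIVESOFVORTICITYANDGRADENTINTERMSOFH2CONTROLLINGQUANTITY}). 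The key algebraic reductions you point to---that $\Flatcurl\vec{\vortrenormalized}$ and $\Flatdiv\vec{\GradEnt}$ equal $\vec{\VortVort}$ and $\DivGradEnt$ modulo lower-order products---are indeed what makes the circularity close.

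However, one step as you phrase it would fail. You claim that commuting the wave equation \eqref{E:COVARIANTWAVE} for $\Ent$ with $\partial_{\vec I}$, $|\vec I|\le 2$, leaves source terms ``schematically of the form $\linsmoothfunction(\vec{\Psi})[\vec{\VortVort},\DivGradEnt]+\quadsmoothfunction(\vec{\Psi})[\pmb{\partial}\vec{\Psi},\pmb{\partial}\vec{\Psi}]$'' and thereby directly controls $\pmb{\partial}^{\le 3}\Ent$. This is not correct: two commutations produce terms such as $\linsmoothfunction(\vec{\Psi})[\partial^2\vec{\VortVort},\partial^2\DivGradEnt]$, and $(\vec{\VortVort},\DivGradEnt)$ is only controlled in $H^1(\Sigma_t)$, not $H^2(\Sigma_t)$, under $\eqref{E:BASICH2ENERGYESTIMATE}$. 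At this regularity level there is a genuine loss, and commuting the wave equation for $\Ent$ with two spatial derivatives does not close. The paper commutes \emph{all} wave equations with at most one spatial derivative (see \eqref{E:ONCECOMMUTEDWAVEQUATIONS}), and recovers $\Ent\in H^3(\Sigma_t)$ solely through the route you yourself also mention: the Hodge estimate \eqref{E:STANDARDL2DIVCURLESTIMATES} applied to $\partial\vec{\GradEnt}$, with $\Flatdiv\vec{\GradEnt}$ re-expressed via \eqref{E:RENORMALIZEDDIVOFENTROPY} in terms of $\DivGradEnt$ and lower-order products (equations \eqref{E:POINTWISEONCECOMMUTEDCURLVORTANDDIVGRADENTINTERMSOFMODVARIABLES}, \eqref{E:H2DERIVATIVESOFVORTICITYANDGRADENTINTERMSOFH2CONTROLLINGQUANTITY}, \eqref{E:NONLINEARH2DERIVATIVESOFVORTICITYANDGRADENTINTERMSOFH2CONTROLLINGQUANTITY}). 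Since you propose both routes, your proof would become correct simply by dropping the $|\vec I|\le 2$ commutation claim for $\Ent$ and relying only on the Hodge route; but the $|\vec I|\le 2$ claim as written is an error, precisely because the modified variables feed back into the wave source with a one-derivative mismatch.

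A minor structural remark: the paper keeps $\partial^2(\vec{\vortrenormalized},\vec{\GradEnt})$ out of the Gronwall-controlled quantity $\controlling_2$ and bounds it by a nonlinear \emph{pointwise-in-time} elliptic inequality, applied after Gronwall. Your $\mathcal{E}(t)$ includes it directly. This can be made to work, but care is needed: the Hodge step is algebraic at each fixed $t$, not a time-integral inequality, so it should be used to eliminate $\partial^2(\vec{\vortrenormalized},\vec{\GradEnt})$ from the sources \emph{before} invoking Gronwall (or one must track the corresponding nonlinear dependence), rather than being naively ``summed'' with the time-integral estimates. The paper's two-step ordering avoids any ambiguity here.
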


\subsection{The geometric energy method for wave equations}
\label{SS:VECTORFIELDMULTIPLIERMETHOD}
To derive energy estimates for solutions to the wave equations in \eqref{E:COVARIANTWAVE},
we will use the well-known vectorfield multiplier method.
In this subsection, we set up this geometric energy method.
Throughout this subsection, we lower and raise Greek indices with 
the acoustical metric $\gfour=\gfour(\vec{\Psi})$ from Def.\,\ref{D:ACOUSTICALMETRIC} and its inverse.
Moreover, we recall that $\Dfour$ denotes the Levi-Civita connection of $\gfour$
and $\square_{\gfour} := (\gfour^{-1})^{\alpha \beta} \Dfour_{\alpha} \Dfour_{\beta}$
denotes the corresponding covariant wave operator.

\subsubsection{Energy-momentum tensor, energy current, and deformation tensor}
\label{SSS:ENERGYMOMENTUM}
We define the energy-momentum tensor associated to a scalar function $\varphi$
to be the following symmetric type $\binom{0}{2}$ tensorfield:
\begin{align} \label{E:ENMOMENTUMTENSOR}
	\enmomem_{\alpha \beta}[\varphi]
	& := \partial_{\alpha} \varphi \partial_{\beta} \varphi
		- 
		\frac{1}{2} \gfour_{\alpha \beta} (\gfour^{-1})^{\kappa \lambda} \partial_{\kappa} \varphi \partial_{\lambda} \varphi.
\end{align}
Given $\varphi$ and any ``multiplier'' vectorfield $\mathbf{X}$,
we define the corresponding energy current
$\Jenarg{\mathbf{X}}{\alpha}[\varphi]$ vectorfield:
\begin{align} \label{E:MULTIPLIERVECTORFIELD}
	\Jenarg{\mathbf{X}}{\alpha}[\varphi]
	& := \enmomem^{\alpha \beta}[\varphi] 
		\mathbf{X}_{\beta}.
\end{align}
We define the deformation tensor of $\mathbf{X}$ to be the following
symmetric type $\binom{0}{2}$ tensorfield:
\begin{align} \label{E:DEFORMATIONTENSOR}
\deformarg{\mathbf{X}}{\alpha}{\beta}
& := \Dfour_{\alpha} \mathbf{X}_{\beta}	
		+
		\Dfour_{\beta} \mathbf{X}_{\alpha}.
\end{align}

A straightforward computation yields the following
identity, which will form the starting point for our energy estimates for 
the wave equations:
\begin{align} \label{E:DIVERGENCEOFENERGYCURRENT}
	\Dfour_{\kappa} \Jenarg{\mathbf{X}}{\kappa}[\varphi]
	& = (\square_{\gfour} \varphi) \mathbf{X} \varphi
			+
			\frac{1}{2} 
			\enmomem^{\kappa \lambda} \deformarg{\mathbf{X}}{\kappa}{\lambda}.
\end{align}

\subsubsection{The basic energy along $\Sigma_t$}
\label{SSS:ENERGYALONGSIGMAT}
To derive energy estimates for solutions $\varphi$ to wave equations $\square_{\gfour} \varphi = \mathfrak{F}$,
we will rely on the following energy $\mathbb{E}[\varphi](t)$,
where $\Transport = \partial_t + v^a \partial_a$ is the material derivative vectorfield:
\begin{align} \label{E:BASICENERGYDEF}
	\mathbb{E}[\varphi](t)
	& := \int_{\Sigma_t}
					\left\lbrace
						\Jenarg{\Transport}{\kappa}[\varphi] \Transport_{\kappa}
						+ 
						\varphi^2
					\right\rbrace
			 \, d \varpi_g
			= \int_{\Sigma_t}
					\left\lbrace
						\enmomem^{00}[\varphi] 
						+
						\varphi^2
					\right\rbrace
			 \, d \varpi_g.
\end{align}
In \eqref{E:BASICENERGYDEF} and throughout,
$d \varpi_g$ is the volume form induced on $\Sigma_t$ by the first fundamental form $g$
of $\gfour$. A straightforward computation yields that relative to the Cartesian coordinates, 
we have 
\begin{align} \label{E:SPACEVOLUMEFORMRELATIVETOCARTESIAN}
	d \varpi_g 
	& 
	= \sqrt{\mbox{\upshape det} g} dx^1 dx^2 dx^3
	=
	\Speed^{-3} dx^1 dx^2 dx^3.
\end{align}
Also, \eqref{E:TRANSPORTISLENGTHONE} implies that $\Transport$ is timelike with respect to $\gfour$.
This leads to the coercivity of $\mathbb{E}[\varphi](t)$, as we show in the next lemma.

\begin{lemma}[Coerciveness of {$\mathbb{E}[\varphi](t)$}]
	\label{L:CORECIVENESSOFBASICENERGY}
	Under the bootstrap assumptions of Sect.\,\ref{S:DATAANDBOOTSTRAPASSUMPTION}, 
	the following estimate holds for $t \in [0,\Tboot]$:
	\begin{align} \label{E:CORECIVENESSOFBASICENERGY}
		\mathbb{E}[\varphi](t)
		& \approx \| (\varphi, \partial_t \varphi) \|_{H^1(\Sigma_t) \times L^2(\Sigma_t)}^2.
	\end{align}
\end{lemma}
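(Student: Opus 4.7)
The plan is a direct pointwise algebraic calculation. First I would compute the integrand of $\mathbb{E}[\varphi](t)$ explicitly in Cartesian coordinates using the available formulas, and then use the uniform $L^\infty$ bounds on $\Speed$ and $v$ furnished by the bootstrap assumption \eqref{E:BOOTSOLUTIONDOESNOTESCAPEREGIMEOFHYPERBOLICITY} (together with the containment \eqref{E:SETSDESCRIBINGINTETERIOROFREGIMEOFHYPERBOLICITY} of $\mathfrak{K}$ in the interior of the hyperbolicity region $\mathcal{H}$ defined by \eqref{E:REGIMEOFHYPERBOLICITY}) to convert this expression into something comparable with $\|(\varphi,\partial_t\varphi)\|_{H^1(\Sigma_t)\times L^2(\Sigma_t)}^2$. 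The crucial input from the latter assumptions is that $\Speed$ is bounded uniformly \emph{away from zero}, which is exactly what renders the spatial-gradient piece of the energy coercive.

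Concretely, from \eqref{E:INVERSEACOUSTICALMETRIC} one reads off $(\gfour^{-1})^{0\alpha}=-\Transport^{\alpha}$ (since $\Transport^0=1$), so that $\enmomem^{00}[\varphi]=(\gfour^{-1})^{0\alpha}(\gfour^{-1})^{0\beta}\enmomem_{\alpha\beta}[\varphi]=\enmomem_{\alpha\beta}[\varphi]\,\Transport^{\alpha}\Transport^{\beta}$. Substituting \eqref{E:ENMOMENTUMTENSOR} and invoking the normalization \eqref{E:TRANSPORTISLENGTHONE}, namely $\gfour(\Transport,\Transport)=-1$, yields
\begin{equation*}
\enmomem^{00}[\varphi] \;=\; (\Transport\varphi)^{2} + \tfrac{1}{2}(\gfour^{-1})^{\kappa\lambda}\partial_{\kappa}\varphi\,\partial_{\lambda}\varphi \;=\; \tfrac{1}{2}(\Transport\varphi)^{2} + \tfrac{\Speed^{2}}{2}\sum_{a=1}^{3}(\partial_{a}\varphi)^{2}.
\end{equation*}
Combined with the volume-form identity \eqref{E:SPACEVOLUMEFORMRELATIVETOCARTESIAN}, this identifies
\begin{equation*}
\mathbb{E}[\varphi](t) \;=\; \int_{\mathbb{R}^{3}} \Bigl\{ \tfrac{1}{2}(\Transport\varphi)^{2} + \tfrac{\Speed^{2}}{2}|\partial\varphi|^{2} + \varphi^{2} \Bigr\}\, \Speed^{-3}\, dx,
\end{equation*}
where $|\partial\varphi|^{2}:=\sum_{a}(\partial_{a}\varphi)^{2}$.

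For the upper bound I would expand $(\Transport\varphi)^{2}=(\partial_{t}\varphi+v^{a}\partial_{a}\varphi)^{2}\le 2(\partial_{t}\varphi)^{2}+2|v|^{2}|\partial\varphi|^{2}$ and use the uniform bounds $\Speed^{\pm 1},|v|\lesssim 1$ (see Remark~\ref{R:UNIFORMLINFTYBOUNDS}) to conclude that the integrand is $\lesssim(\partial_{t}\varphi)^{2}+|\partial\varphi|^{2}+\varphi^{2}$. For the lower bound I would reciprocally write $(\partial_{t}\varphi)^{2}=(\Transport\varphi-v^{a}\partial_{a}\varphi)^{2}\le 2(\Transport\varphi)^{2}+2|v|^{2}|\partial\varphi|^{2}$ and combine with the explicit $\Speed^{2}|\partial\varphi|^{2}$ term, again using the uniform bounds. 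No substantive obstacle is anticipated; the only conceptual point worth highlighting is the role of the lower bound on $\Speed$, without which one could extract $\Transport\varphi$ from the energy but no control of the full spatial gradient $\partial\varphi$.
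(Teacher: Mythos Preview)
Your proposal is correct and follows essentially the same route as the paper: both compute the integrand $\enmomem^{00}[\varphi]=\tfrac{1}{2}(\Transport\varphi)^2+\tfrac{1}{2}\Speed^2|\partial\varphi|^2$ (the paper phrases this as $\Jenarg{\Transport}{\kappa}[\varphi]\Transport_\kappa$, which is the same thing by \eqref{E:BASICENERGYDEF}), invoke $\Speed\approx 1$ and $d\varpi_g\approx dx$ from the bootstrap containment in $\mathfrak{K}$, and then handle the cross term $2(\partial_t\varphi)(v^a\partial_a\varphi)$ via Young's inequality (equivalently, your inequalities $(\Transport\varphi)^2\le 2(\partial_t\varphi)^2+2|v|^2|\partial\varphi|^2$ and vice versa). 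Your emphasis on the lower bound for $\Speed$ as the conceptual hinge is exactly right.
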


\begin{proof}
	Since the bootstrap assumption \eqref{E:BOOTSOLUTIONDOESNOTESCAPEREGIMEOFHYPERBOLICITY} guarantees that the
	solution is contained in $\mathfrak{K}$, we have $\Speed \approx 1$
	and thus, by \eqref{E:SPACEVOLUMEFORMRELATIVETOCARTESIAN}, 
	$d \varpi_g = \Speed^{-3} dx^1 dx^2 dx^3 \approx dx^1 dx^2 dx^3$.
	Next, using \eqref{E:INVERSEACOUSTICALMETRIC}, 
	\eqref{E:TRANSPORTISLENGTHONE},
	\eqref{E:ENMOMENTUMTENSOR},
	and
	\eqref{E:MULTIPLIERVECTORFIELD},
	we compute that
	$\Jenarg{\Transport}{\kappa}[\varphi] \Transport_{\kappa}
	= \frac{1}{2}(\Transport \varphi)^2 + \frac{1}{2} \Speed^2 \updelta^{ab} \partial_a \varphi \partial_b \varphi 
	$. Using that $\Transport \varphi = \partial_t \varphi + v^a \partial_a \varphi$,
	that $|\vec{v}|$ is uniformly bounded for solutions contained in $\mathfrak{K}$,
	and that $\Speed \approx 1$,
	and applying Young's inequality to the cross term $2 (\partial_t \varphi)(v^a \partial_a \varphi)$
	in $(\Transport \varphi)^2$,
	we deduce 
	$
	\Jenarg{\Transport}{\kappa}[\varphi] \Transport_{\kappa}
	\approx |\pmb{\partial} \varphi|^2
	$.
	From these estimates and definition \eqref{E:BASICENERGYDEF}, 
	the desired estimate \eqref{E:CORECIVENESSOFBASICENERGY} easily follows.
	
\end{proof}

In the next lemma, we provide the basic energy inequality that we will
use when deriving energy estimates for solutions to the wave equations.

\begin{lemma}[Basic energy inequality for the wave equations]
	\label{L:BASICENERGYINEQUALITYFORWAVEEQUATIONS}
	Let $\varphi$ be smooth on $[0,\Tboot] \times \mathbb{R}^3$. 
	Under the bootstrap assumptions of Sect.\,\ref{S:DATAANDBOOTSTRAPASSUMPTION}, 
	the following inequality holds for $t \in [0,\Tboot]$:
	\begin{align} \label{E:BASICENERGYINEQUALITYFORWAVEEQUATIONS}
			\| (\varphi, \partial_t \varphi) \|_{H^1(\Sigma_t) \times L^2(\Sigma_t)}^2
			& \lesssim 
				\| (\varphi, \partial_t \varphi) \|_{H^1(\Sigma_0) \times L^2(\Sigma_0)}^2
				+
				\int_0^t
						\|
							\pmb{\partial} \vec{\Psi} 
						\|_{L^{\infty}(\Sigma_{\uptau})}
					\| (\varphi, \partial_t \varphi) \|_{H^1(\Sigma_{\uptau}) \times L^2(\Sigma_{\uptau})}^2
				\, d \uptau
					\\
			& \ \
				+
				\int_0^t
					\|\hat{\square}_{\gfour} \varphi \|_{L^2(\Sigma_{\uptau})} \|\pmb{\partial} \varphi \|_{L^2(\Sigma_{\uptau})}
				\, d \uptau.
				\notag
	\end{align}

\end{lemma}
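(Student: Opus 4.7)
\medskip

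\noindent\textbf{Proof plan.} The strategy is the standard geometric energy method based on the divergence identity \eqref{E:DIVERGENCEOFENERGYCURRENT} with the multiplier vectorfield $\mathbf{X} := \Transport$, together with control of the lower-order $\varphi^2$ term in $\mathbb{E}[\varphi]$. I would begin by integrating \eqref{E:DIVERGENCEOFENERGYCURRENT} over the slab $[0,t] \times \mathbb{R}^3$ with respect to the spacetime volume form of $\gfour$, applying the divergence theorem. Because $\Transport$ is $\gfour$-orthogonal to $\Sigma_\tau$ (with $\gfour(\Transport,\Transport) = -1$ by \eqref{E:TRANSPORTISLENGTHONE}), the boundary terms produce exactly the $\Jenarg{\Transport}{\kappa}[\varphi]\Transport_\kappa$ integrand of \eqref{E:BASICENERGYDEF}, so the boundary contribution is $\mathbb{E}[\varphi](t) - \mathbb{E}[\varphi](0)$ up to the easily-handled $\int \varphi^2 \, d\varpi_g$ piece.

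\medskip

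\noindent The next step is to bound the two bulk terms on RHS~\eqref{E:DIVERGENCEOFENERGYCURRENT}. For the deformation tensor contribution $\frac{1}{2}\enmomem^{\kappa\lambda}[\varphi]\deformarg{\Transport}{\kappa}{\lambda}$, I observe that since $\Transport^\alpha = \Transport^\alpha(\vec{\Psi})$ and $\gfour_{\alpha\beta} = \gfour_{\alpha\beta}(\vec{\Psi})$ depend smoothly on $\vec{\Psi}$, and the bootstrap assumption \eqref{E:BOOTSOLUTIONDOESNOTESCAPEREGIMEOFHYPERBOLICITY} provides uniform $L^\infty$ bounds on $\vec{\Psi}$, we have $|\deformarg{\Transport}{\alpha}{\beta}| \lesssim |\pmb{\partial}\vec{\Psi}|$ pointwise. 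Since $|\enmomem^{\kappa\lambda}[\varphi]| \lesssim |\pmb{\partial}\varphi|^2$, the resulting spacetime integral is controlled by
\[
\int_0^t \|\pmb{\partial}\vec{\Psi}\|_{L^\infty(\Sigma_\tau)} \|\pmb{\partial}\varphi\|_{L^2(\Sigma_\tau)}^2 \, d\tau,
\]
which fits the first integral on the RHS of \eqref{E:BASICENERGYINEQUALITYFORWAVEEQUATIONS} after invoking Lemma~\ref{L:CORECIVENESSOFBASICENERGY}.

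\medskip

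\noindent For the source term $(\square_\gfour\varphi)\,\Transport\varphi$, the key point — and the part requiring a little care — is that the conclusion is stated in terms of the \emph{reduced} wave operator $\hat{\square}_\gfour$, not the covariant one $\square_\gfour$. I would write $\square_\gfour\varphi = \hat{\square}_\gfour\varphi + R[\varphi]$, where a direct computation from the coordinate formulae (see the definitions in Subsect.\,\ref{SSS:ACOUSTICALMETRICANDWAVEOPERATORS}) shows that $R[\varphi]$ is linear in $\pmb{\partial}\varphi$ with coefficients of the schematic form $\gensmoothfunction(\vec{\Psi})\cdot \pmb{\partial}\vec{\Psi}$, hence $|R[\varphi]| \lesssim |\pmb{\partial}\vec{\Psi}|\,|\pmb{\partial}\varphi|$. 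Applying Cauchy–Schwarz in space to the $\hat{\square}_\gfour\varphi$ piece produces precisely the last integral on RHS~\eqref{E:BASICENERGYINEQUALITYFORWAVEEQUATIONS}, while the $R[\varphi]$ piece contributes a term bounded by $\int_0^t \|\pmb{\partial}\vec{\Psi}\|_{L^\infty(\Sigma_\tau)}\|\pmb{\partial}\varphi\|_{L^2(\Sigma_\tau)}^2\,d\tau$, absorbed into the first RHS integral.

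\medskip

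\noindent Finally, for the $\int\varphi^2\,d\varpi_g$ term in the energy, I compute its $t$-derivative directly. Using \eqref{E:SPACEVOLUMEFORMRELATIVETOCARTESIAN} and the chain rule, $\partial_t(\Speed^{-3})$ is a smooth function of $\vec{\Psi}$ times $\pmb{\partial}\vec{\Psi}$; therefore
\[
\frac{d}{dt}\int_{\Sigma_t}\varphi^2\,d\varpi_g \lesssim \|\varphi\|_{L^2(\Sigma_t)}\|\partial_t\varphi\|_{L^2(\Sigma_t)} + \|\pmb{\partial}\vec{\Psi}\|_{L^\infty(\Sigma_t)}\|\varphi\|_{L^2(\Sigma_t)}^2,
\]
and Young's inequality converts the cross term into $\|\varphi\|_{H^1}^2 + \|\partial_t\varphi\|_{L^2}^2$, which accounts for the ``$+1$'' in the curly braces on RHS~\eqref{E:BASICENERGYINEQUALITYFORWAVEEQUATIONS}. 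Combining these bounds with the coercivity estimate \eqref{E:CORECIVENESSOFBASICENERGY} yields \eqref{E:BASICENERGYINEQUALITYFORWAVEEQUATIONS}. The only mildly subtle step is the reduction from $\square_\gfour$ to $\hat{\square}_\gfour$; everything else is routine application of the vectorfield multiplier method.
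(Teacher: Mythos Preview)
Your proposal is correct and follows essentially the same approach as the paper. The only organizational difference is that the paper bundles the $\varphi^2$ contribution into a modified current $\Jenwithlowerarg{\Transport}{\alpha}[\varphi] := \Jenarg{\Transport}{\alpha}[\varphi] - \varphi^2\Transport^{\alpha}$ and applies the divergence theorem once, whereas you handle the $\int\varphi^2\,d\varpi_g$ term separately by direct $t$-differentiation; the resulting error terms and their treatment (in particular the reduction $\square_{\gfour}\varphi = \hat{\square}_{\gfour}\varphi + \mathcal{O}(|\pmb{\partial}\vec{\Psi}|)|\pmb{\partial}\varphi|$) are identical.
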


\begin{proof}
	Let $\Jenwithlowerarg{\Transport}{\alpha}[\varphi] := \Jenarg{\Transport}{\alpha}[\varphi] - \varphi^2 \Transport^{\alpha}$,
	where $\Jenarg{\Transport}{\alpha}[\varphi]$ is defined by \eqref{E:MULTIPLIERVECTORFIELD}.	
	Note that \eqref{E:TRANSPORTISLENGTHONE} implies
	$\Jenwithlowerarg{\Transport}{\kappa}[\varphi] \Transport_{\kappa}
	= \Jenarg{\Transport}{\kappa}[\varphi] \Transport_{\kappa} 
	+ \varphi^2
	$ 
	and thus
	$\Jenwithlowerarg{\Transport}{\kappa}[\varphi] \Transport_{\kappa}$
	is equal to the integrand in the middle term in \eqref{E:BASICENERGYDEF}.
	Next, taking into account definition \eqref{E:DEFORMATIONTENSOR}, 
	we compute that 
	$\Dfour_{\kappa} \Jenwithlowerarg{\Transport}{\kappa}[\varphi] = \Dfour_{\kappa} \Jenarg{\Transport}{\kappa}[\varphi]
	- 2 \varphi \Transport \varphi - 
	\frac{1}{2}
	\varphi^2 (\gfour^{-1})^{\kappa \lambda} \deformarg{\Transport}{\kappa}{\lambda}$.
	Applying the divergence theorem on the spacetime region $[0,t] \times \mathbb{R}^3$ relative to the volume form
	$d \varpi_{\gfour} = \sqrt{|\mbox{\upshape det} \gfour|}  dx^1 dx^2 dx^3 d \uptau = d \varpi_g d \uptau$
	(where the last equality follows from \eqref{E:FIRSTFUNDAMENTALFORM}--\eqref{E:INVERSEACOUSTICALMETRIC} and 
	\eqref{E:SPACEVOLUMEFORMRELATIVETOCARTESIAN}),
	recalling that $\Transport$ is the future-directed $\gfour$-unit normal to $\Sigma_t$,
	appealing to definition \eqref{E:BASICENERGYDEF}, 
	and using equation \eqref{E:DIVERGENCEOFENERGYCURRENT} with $\mathbf{X} := \Transport$, 
	we deduce
	\begin{align} \label{E:WAVEEQUATIONENERGYIDUSEDINPROOF}
		\mathbb{E}[\varphi](t)
		& = \mathbb{E}[\varphi](0)
			-
			\int_0^t
				\int_{\Sigma_{\uptau}}
					(\square_{\gfour} \varphi) 
					\Transport \varphi
				\, d \varpi_g
			\, d \uptau
			+
			2
			\int_0^t
				\int_{\Sigma_{\uptau}}
					\varphi \Transport \varphi
					\, d \varpi_g
			\, d \uptau
				\\
		& \ \
			+
			\frac{1}{2}
			\int_0^t
				\int_{\Sigma_{\uptau}}
					(\gfour^{-1})^{\kappa \lambda} \deformarg{\Transport}{\kappa}{\lambda}
					\varphi^2 
					\, d \varpi_g
			\, d \uptau
			-
			\frac{1}{2} 
			\int_0^t
				\int_{\Sigma_{\uptau}}
				\enmomem^{\kappa \lambda}[\varphi] \deformarg{\Transport}{\kappa}{\lambda}
				\, d \varpi_g
			\, d \uptau.
			\notag
	\end{align}
	Next, we note that since the bootstrap assumption \eqref{E:BOOTSOLUTIONDOESNOTESCAPEREGIMEOFHYPERBOLICITY} guarantees that the
	compressible Euler solution is contained in $\mathfrak{K}$, 
	we have the following estimates for $\alpha,\beta = 0,1,2,3$:
	$|\Transport^{\alpha}| \lesssim 1$,
	$|\gfour_{\alpha \beta}| \lesssim 1$, 
	$|(\gfour^{-1})^{\alpha \beta}| \lesssim 1$, 
	and $|\pmb{\partial} \gfour_{\alpha \beta}| \lesssim |\pmb{\partial} \vec{\Psi}|$.
	It follows that
	$\square_{\gfour} \varphi = \hat{\square}_{\gfour} \varphi 
	+ 
	\mathcal{O}({|\pmb{\partial} \vec{\Psi}|}) |\pmb{\partial} \varphi|$,
	$|\Transport \varphi| \lesssim |\pmb{\partial} \varphi|$,	
	$\enmomem[\varphi] \lesssim |\pmb{\partial} \varphi|^2$,
	and $|\deformarg{\Transport}{\kappa}{\lambda}| \lesssim |\pmb{\partial} \vec{\Psi}|$.
	From these estimates, 
	the identity \eqref{E:WAVEEQUATIONENERGYIDUSEDINPROOF},
	the coercivity estimate \eqref{E:CORECIVENESSOFBASICENERGY}, 
	and the Cauchy--Schwarz inequality along $\Sigma_{\uptau}$,
	we conclude \eqref{E:BASICENERGYINEQUALITYFORWAVEEQUATIONS}.
\end{proof}

\subsection{The energy method for transport equations}
\label{SS:ENERGYESTIMATESFORTRANSPORT}
In this subsection, we provide a simple lemma that yields 
a basic energy inequality for solutions to transport equations.

\begin{lemma}[Energy estimates for transport equations]
	Let $\varphi$ be smooth on $[0,\Tboot] \times \mathbb{R}^3$. 
	Under the bootstrap assumptions of Sect.\,\ref{S:DATAANDBOOTSTRAPASSUMPTION}, 
	the following inequality holds for $t \in [0,\Tboot]$:
\label{L:BASICENERGYINEQUALITYFORTRANSPORTEQUATIONS}
	\begin{align} \label{E:BASICENERGYINEQUALITYFORTRANSPORTEQUATIONS}
			\| \varphi \|_{L^2(\Sigma_t)}^2
			& \lesssim 
				\| \varphi \|_{L^2(\Sigma_0)}^2
				+
				\int_0^t
					\| \pmb{\partial} \vec{\Psi} \|_{L^{\infty}(\Sigma_{\uptau})} \| \varphi \|_{L^2(\Sigma_{\uptau})}^2
				\, d \uptau
				+
				\int_0^t
					\| \varphi \|_{L^2(\Sigma_{\uptau})} \| \Transport \varphi \|_{L^2(\Sigma_{\uptau})}
				\, d \uptau.
	\end{align}
\end{lemma}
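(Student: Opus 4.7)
The plan is to prove a differential form of the stated integral inequality, then integrate in time. Since $\|\varphi\|_{L^2(\Sigma_t)}^2 = \int_{\mathbb{R}^3}\varphi^2(t,x)\,dx$, I would first differentiate under the integral sign and use the material derivative decomposition $\partial_t = \Transport - v^a\partial_a$ from \eqref{E:MATERIALVECTORVIELDRELATIVETORECTANGULAR} to get
\begin{align*}
\frac{d}{dt}\|\varphi\|_{L^2(\Sigma_t)}^2
= 2\int_{\Sigma_t}\varphi\,\partial_t\varphi\,dx
= 2\int_{\Sigma_t}\varphi\,\Transport\varphi\,dx - \int_{\Sigma_t}v^a\,\partial_a(\varphi^2)\,dx.
\end{align*}
Next, I would integrate by parts in the second term on the right (boundary contributions at spatial infinity vanish for the smooth solutions under consideration, justified by a standard cutoff approximation if needed) to rewrite it as $\int_{\Sigma_t}(\partial_a v^a)\varphi^2\,dx$.

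I would then bound each resulting term separately. Cauchy--Schwarz on $\Sigma_t$ controls the first integral by $2\|\varphi\|_{L^2(\Sigma_t)}\|\Transport\varphi\|_{L^2(\Sigma_t)}$, while the second is at most $\|\partial_a v^a\|_{L^\infty(\Sigma_t)}\|\varphi\|_{L^2(\Sigma_t)}^2$. Since $v^i = \Psi_i$ for $i = 1,2,3$ by Def.\,\ref{D:WAVEVARIABLES}, each $\partial_a v^a$ is an entry of $\pmb{\partial}\vec{\Psi}$, so $|\partial_a v^a| \leq C|\pmb{\partial}\vec{\Psi}|$ pointwise. Combining these gives the differential inequality
\begin{align*}
\frac{d}{dt}\|\varphi\|_{L^2(\Sigma_t)}^2
\lesssim \|\pmb{\partial}\vec{\Psi}\|_{L^\infty(\Sigma_t)}\|\varphi\|_{L^2(\Sigma_t)}^2
+ \|\varphi\|_{L^2(\Sigma_t)}\|\Transport\varphi\|_{L^2(\Sigma_t)},
\end{align*}
and integrating over $[0,t]$ delivers \eqref{E:BASICENERGYINEQUALITYFORTRANSPORTEQUATIONS}.

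No genuine obstacle arises: this is a textbook transport-equation energy estimate, essentially simpler than and structurally parallel to the wave energy inequality of Lemma \ref{L:BASICENERGYINEQUALITYFORWAVEEQUATIONS}, with the material vectorfield $\Transport$ playing the role of the multiplier. The only mild subtlety is the integration by parts, and even that requires nothing beyond the smoothness of $v$ (guaranteed by the bootstrap assumption \eqref{E:BOOTSOLUTIONDOESNOTESCAPEREGIMEOFHYPERBOLICITY}, which ensures $v$ is uniformly bounded in $L^\infty_{t,x}$) together with the decay implicit in working with $L^2$-finite $\varphi$.
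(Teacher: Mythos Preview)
Your proof is correct and essentially identical to the paper's. The paper phrases the same computation via the spacetime divergence theorem applied to the current $\mathbf{J}^{\alpha} := \varphi^2 \Transport^{\alpha}$, whose divergence $\partial_{\alpha}\mathbf{J}^{\alpha} = 2\varphi\Transport\varphi + (\partial_a v^a)\varphi^2$ is exactly what you obtain after time-differentiating and integrating by parts; the two presentations are equivalent.
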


\begin{proof}
	Let $\mathbf{J}^{\alpha} := \varphi^2 \Transport^{\alpha}$.
	Then $\partial_{\alpha} \mathbf{J}^{\alpha} = 2 \varphi \Transport \varphi 
	+ (\partial_a v^a) \varphi^2$.
	Thus, we have
	$|\partial_{\alpha} \mathbf{J}^{\alpha}| \lesssim |\varphi| |\Transport \varphi|
	+ |\pmb{\partial} \vec{\Psi}| \varphi^2$. From this estimate,
	a routine application of the divergence theorem on the spacetime region $[0,t] \times \mathbb{R}^3$ relative to the Cartesian coordinates
	that exploits the positivity of $\mathbf{J}^0 = \varphi^2$,
	and the Cauchy--Schwarz inequality along $\Sigma_{\uptau}$,
	we conclude the desired estimate \eqref{E:BASICENERGYINEQUALITYFORTRANSPORTEQUATIONS}.
\end{proof}

\subsection{\texorpdfstring{The standard elliptic div-curl identity in $L^2$ spaces}{The standard elliptic div-curl identity in L2 spaces}}
\label{SS:STANDARDELLIPTICL2IDENTITY}
To control the top-order spatial derivatives of the specific vorticity and entropy,
we will rely on the following standard elliptic identity.

\begin{lemma}[Elliptic div-curl identity in $L^2$ spaces]
\label{L:ELLIPTICDIVCURLESTIMATES}
For vectorfields $V \in H^1(\mathbb{R}^3;\mathbb{R}^3)$, the following identity holds:
\begin{align} \label{E:STANDARDL2DIVCURLESTIMATES}
	\sum_{a,b=1}^3 \|\partial_a V^b \|_{L^2(\mathbb{R}^3)}^2
	& = 
			\| \dive V \|_{L^2(\mathbb{R}^3)}^2
			+
			\| \curl V \|_{L^2(\mathbb{R}^3)}^2.
\end{align}

\end{lemma}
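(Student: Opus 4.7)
The plan is to establish the identity pointwise up to a pure divergence, and then integrate. I will write $\dive V = \partial_a V^a$ and use the component formula $(\curl V)^i = \upepsilon^{iab} \partial_a V_b$ combined with the contracted epsilon identity $\upepsilon^{iab}\upepsilon^{icd} = \updelta^{ac}\updelta^{bd} - \updelta^{ad}\updelta^{bc}$ to expand $|\curl V|^2$ pointwise. This yields
\begin{align*}
(\dive V)^2 + |\curl V|^2
&= (\partial_a V^a)(\partial_b V^b) + (\partial_a V^b)(\partial_a V^b) - (\partial_a V^b)(\partial_b V^a) \\
&= \sum_{a,b=1}^3 (\partial_a V^b)^2 + \bigl\{(\partial_a V^a)(\partial_b V^b) - (\partial_a V^b)(\partial_b V^a)\bigr\}.
\end{align*}
Thus the desired identity \eqref{E:STANDARDL2DIVCURLESTIMATES} reduces to showing that the integral over $\mathbb{R}^3$ of the bracketed commutator term vanishes.

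The second step is to verify the pointwise divergence identity
\begin{align*}
(\partial_a V^a)(\partial_b V^b) - (\partial_a V^b)(\partial_b V^a)
&= \partial_a\bigl(V^a \partial_b V^b - V^b \partial_b V^a\bigr),
\end{align*}
which follows by direct expansion using the symmetry $\partial_a\partial_b = \partial_b \partial_a$ to cancel the second-order-derivative terms after relabeling dummy indices. Hence the bracketed expression is a total Cartesian divergence.

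The third step is to justify that this total divergence integrates to zero on $\mathbb{R}^3$ when $V \in H^1(\mathbb{R}^3;\mathbb{R}^3)$. For $V \in C_c^\infty(\mathbb{R}^3;\mathbb{R}^3)$ this is immediate from the divergence theorem. For general $V \in H^1$, I approximate $V$ by a sequence $V_{(n)} \in C_c^\infty(\mathbb{R}^3;\mathbb{R}^3)$ converging to $V$ in $H^1$, note that both sides of \eqref{E:STANDARDL2DIVCURLESTIMATES} are continuous under $H^1$-convergence (each side is a sum of squared $L^2$-norms of first-order derivatives, all of which are continuous linear operators $H^1 \to L^2$), and pass to the limit.

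The only real subtlety is this density argument, since one must ensure that the $H^1$-regularity is genuinely enough to justify the integration by parts; but this is entirely standard, and an alternative which avoids the density argument altogether is the Plancherel/Fourier-transform proof: the identity is equivalent to the pointwise algebraic relation $|\xi \cdot \widehat{V}(\xi)|^2 + |\xi \times \widehat{V}(\xi)|^2 = |\xi|^2 |\widehat{V}(\xi)|^2$ on the Fourier side, which is just the orthogonal decomposition of $\widehat{V}$ relative to $\xi$. Either route completes the proof; I would record the computation in the direct form above since it is self-contained and uses nothing beyond Cartesian calculus.
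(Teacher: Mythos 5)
Your proof is correct and follows essentially the same route as the paper's: both derive the pointwise identity expressing $(\dive V)^2 + |\curl V|^2$ as $\sum_{a,b}(\partial_a V^b)^2$ plus a total Cartesian divergence, integrate over $\mathbb{R}^3$, and handle the $H^1$-regularity by density of $C_c^\infty$. Your parenthetical Plancherel argument, which reads off the identity from the orthogonal decomposition $|\xi\cdot\widehat V|^2 + |\xi\times\widehat V|^2 = |\xi|^2|\widehat V|^2$, is a genuinely distinct and cleaner route that sidesteps the density step entirely, though either is fine here.
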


\begin{proof}
It suffices to prove the desired identity for smooth, compactly supported vectorfields, since
these are dense in $H^1(\mathbb{R}^3;\mathbb{R}^3)$.
For smooth, compactly supported vectorfields, the desired identity follows from
integrating the divergence identity 
$
\sum_{a,b=1}^3
		(\partial_a V_b)^2
 	= 	
	(\dive V)^2
	+
	|\curl V|^2
	+
		\partial_a
		\left\lbrace
			V^b 
			\partial_b V^a
		\right\rbrace
		-
		\partial_a
		\left\lbrace
			V^a 
			\dive V
		\right\rbrace
$
over $\mathbb{R}^3$ with respect to volume form of the standard Euclidean metric on $\mathbb{R}^3$.

\end{proof}

\subsection{\texorpdfstring{Proof of Proposition~\ref{P:PRELIMINARYENERGYANDELLIPTICESTIMATES}}{Proof of Proposition ref P:PRELIMINARYENERGYANDELLIPTICESTIMATES}}
\label{SS:PROOFOFPRELIMINARYENERGYANDELLIPTICESTIMATES}
We first note that the estimates for the terms 
\begin{align}
\notag
\| \partial_t^2 (\LogDensity,\vec{v}) \|_{L^2(\Sigma_t)},
\,
\sum_{k=1}^2
\| \partial_t^k \vec{\vortrenormalized} \|_{H^{2-k}(\Sigma_t)},
\,
\sum_{k=1}^2
	\| \partial_t^k \Ent \|_{H^{3-k}(\Sigma_t)},
\mbox{ and }
	\| \partial_t (\vec{\VortVort},\DivGradEnt) \|_{L^2(\Sigma_t)}
\end{align}
on LHS~\eqref{E:BASICH2ENERGYESTIMATE} follow once we have obtained
the desired estimates for the remaining terms on LHS~\eqref{E:BASICH2ENERGYESTIMATE}.
The reason is that 
these time-derivative-involving terms 
can be bounded by $\lesssim$ the sum of products of the other terms
on LHS~\eqref{E:BASICH2ENERGYESTIMATE}
by using the equations of Prop.\,\ref{P:GEOMETRICWAVETRANSPORT} 
to solve for the relevant time derivatives 
in terms of spatial derivatives and then using standard product estimates
as well as our bootstrap assumption that the compressible Euler solution is contained in $\mathfrak{K}$ 
(i.e., \eqref{E:BOOTSOLUTIONDOESNOTESCAPEREGIMEOFHYPERBOLICITY});
we omit these straightforward details.
Thus, it suffices for us to bound the remaining terms on LHS~\eqref{E:BASICH2ENERGYESTIMATE}.

To proceed, we commute the equations of Prop.\,\ref{P:GEOMETRICWAVETRANSPORT} 
with up to one spatial derivative,
appeal to Def.\,\ref{D:MODIFIEDFLUIDVARIABLES},
consider Remark~\ref{R:SIMPLEWAYTOTHINKOFVORTICITYANDENTROPYGRADIENT},
and use the bootstrap assumption \eqref{E:BOOTSOLUTIONDOESNOTESCAPEREGIMEOFHYPERBOLICITY},
thereby deducing that for $\Psi \in \lbrace \LogDensity, v^1, v^2, v^3, \Ent \rbrace$, 
we have the following pointwise estimates:
\begin{align} \label{E:ONCECOMMUTEDWAVEQUATIONS}
	|\hat{\square}_{\gfour} \partial^{\leq 1} \Psi|
	& \lesssim
			|\partial (\vec{\VortVort},\DivGradEnt)|
			+
			\left\lbrace
				|\pmb{\partial} \vec{\Psi}|
				+
				1
			\right\rbrace
			|\partial \pmb{\partial} \vec{\Psi}|
			+
			\sum_{P=1}^3
			|\pmb{\partial} \vec{\Psi}|^P,
				\\
	|\Transport \partial^{\leq 1} (\vec{\vortrenormalized},\vec{\GradEnt})|
	& \lesssim 
			|\partial \pmb{\partial} \vec{\Psi}|
			+
		\left\lbrace
			|(\pmb{\partial} \vec{\Psi},\partial \vec{\vortrenormalized},\partial \vec{\GradEnt})|
			+
			1
		\right\rbrace
		|\pmb{\partial} \vec{\Psi}|,
			\label{E:POINTWISEBELOWTOPORDERCOMMUTEDVORCITITYANDENTROPY}
			\\
	|\partial(\dive \vortrenormalized,\curl \GradEnt)|
	& \lesssim |\partial \pmb{\partial} \vec{\Psi}|
			+
			|\partial \vec{\vortrenormalized}| |\pmb{\partial} \vec{\Psi}|,
		 \label{E:ONECOMMUTEDNONEVOLUTIONEQUATIONS} \\
|\partial(\curl \vortrenormalized,\dive \GradEnt)|
	& \lesssim
		|\partial (\vec{\VortVort},\DivGradEnt)|
		+
		|(\pmb{\partial} \vec{\Psi},\partial \vec{\vortrenormalized},\partial \vec{\GradEnt})|
		+
		\left\lbrace
			|(\pmb{\partial} \vec{\Psi},\partial \vec{\vortrenormalized},\partial \vec{\GradEnt})|
			+
			1
		\right\rbrace
		|\pmb{\partial} \vec{\Psi}|,
			\label{E:POINTWISEONCECOMMUTEDCURLVORTANDDIVGRADENTINTERMSOFMODVARIABLES}  \\
|\Transport \partial (\vec{\VortVort}, \DivGradEnt)|
	& 
		\lesssim
		\left\lbrace
			|\pmb{\partial} \vec{\Psi}| 
			+
			1
		\right\rbrace
		|\partial^2 (\vec{\vortrenormalized},\vec{\GradEnt})|
		+
		\left\lbrace
			|(\pmb{\partial} \vec{\Psi},\partial \vec{\vortrenormalized},\partial \vec{\GradEnt})|
			+
			1
		\right\rbrace
		|\partial \pmb{\partial} \vec{\Psi}|
		+
		|\pmb{\partial} \vec{\Psi}|^2
		|(\partial \vec{\vortrenormalized},\partial \vec{\GradEnt})|
			\label{E:ONCECOMMUTEDTOPORDERTRANSPORT} 
			\\
	& \ \	
		+
		\sum_{P=1}^3
		|\pmb{\partial} \vec{\Psi}|^P.
		\notag
\end{align}
We clarify that in deriving \eqref{E:POINTWISEONCECOMMUTEDCURLVORTANDDIVGRADENTINTERMSOFMODVARIABLES},
we used Def.\,\ref{D:MODIFIEDFLUIDVARIABLES}
to algebraically solve for $\curl \vortrenormalized$ and $\dive \GradEnt$.

Using the estimates \eqref{E:ONCECOMMUTEDWAVEQUATIONS}--\eqref{E:ONCECOMMUTEDTOPORDERTRANSPORT}, 
we will derive estimates for the ``controlling quantity'' $\controlling_2(t)$ defined by
\begin{align} \label{E:H2CONTROLLINGQUANTITY}
	\controlling_2(t)
	& := 
			\| (\vec{\Psi}, \pmb{\partial} \vec{\Psi}) \|_{H^2(\Sigma_t) \times H^1(\Sigma_t)}^2
			+
			\| \partial (\vec{\VortVort},\DivGradEnt) \|_{L^2(\Sigma_t)}^2
			+
			\| (\vec{\vortrenormalized},\vec{\GradEnt}) \|_{H^1(\Sigma_t)}^2.
\end{align}
We will prove the following two estimates:
\begin{align} \label{E:H2CONTROLLINGQUANTITYGRONWALLREADY}
	\controlling_2(t)
	& \lesssim 
		\controlling_2(0)
		+
		\int_0^t
			\left\lbrace
				\| \pmb{\partial} \vec{\Psi} \|_{L^{\infty}(\Sigma_{\uptau})}^2
				+
				\|
					\partial (\vec{\vortrenormalized}, \vec{\GradEnt})
				\|_{L^{\infty}(\Sigma_{\uptau})}
				+
				1
			\right\rbrace
			\controlling_2(\uptau)
		\, d \uptau,
			\\
	\| \partial^2 (\vec{\vortrenormalized},\vec{\GradEnt}) \|_{L^2(\Sigma_t)}^2
	&  
	\lesssim 
	\controlling_2(t) + \controlling_2^3(t).
	\label{E:NONLINEARH2DERIVATIVESOFVORTICITYANDGRADENTINTERMSOFH2CONTROLLINGQUANTITY}
\end{align}
Then from 
the bootstrap assumptions \eqref{E:BOOTSTRICHARTZ}--\eqref{E:BOOTL2LINFINITYFIRSTDERIVATIVESOFVORTICITYBOOTANDNENTROPYGRADIENT},
\eqref{E:H2CONTROLLINGQUANTITYGRONWALLREADY},
and Gr\"{o}nwall's inequality,
we deduce that for $t \in [0,\Tboot]$, we have 
$\controlling_2(t) \lesssim \controlling_2(0)$.
From this estimate,
\eqref{E:NONLINEARH2DERIVATIVESOFVORTICITYANDGRADENTINTERMSOFH2CONTROLLINGQUANTITY},
and the remarks made at the beginning the proof,
we arrive at the desired estimate \eqref{E:BASICH2ENERGYESTIMATE}.

It remains for us to prove \eqref{E:H2CONTROLLINGQUANTITYGRONWALLREADY}
and \eqref{E:NONLINEARH2DERIVATIVESOFVORTICITYANDGRADENTINTERMSOFH2CONTROLLINGQUANTITY}.
We start with the elliptic estimates needed to control
$\partial^2 \vec{\vortrenormalized}$ and $\partial^2 \vec{\GradEnt}$ in $\| \cdot \|_{L^2(\Sigma_t)}$.
From \eqref{E:STANDARDL2DIVCURLESTIMATES} with $\partial \vortrenormalized$ and $\partial \GradEnt$ in the role of $V$,
\eqref{E:ONECOMMUTEDNONEVOLUTIONEQUATIONS},
and
\eqref{E:POINTWISEONCECOMMUTEDCURLVORTANDDIVGRADENTINTERMSOFMODVARIABLES},
we find that
\begin{align}
	\| \partial^2 (\vec{\vortrenormalized}, \vec{\GradEnt}) \|_{L^2(\Sigma_t)}^2& 
	\lesssim 
	\| \partial (\vec{\VortVort}, \DivGradEnt) \|_{L^2(\Sigma_t)}^2
	+
	\| \partial \pmb{\partial} \vec{\Psi} \|_{L^2(\Sigma_t)}^2
		\\
& \ \
	+
	\left\lbrace
		\| \pmb{\partial} \vec{\Psi} \|_{L^{\infty}(\Sigma_t)}^2
		+
		1
	\right\rbrace
	\left\lbrace
		\| \partial (\vec{\vortrenormalized}, \vec{\GradEnt}) \|_{L^2(\Sigma_t)}^2 
		+
		\| \pmb{\partial} \vec{\Psi} \|_{L^2(\Sigma_t)}^2
	\right\rbrace,
	\notag
\end{align}
which, in view of definition \eqref{E:H2CONTROLLINGQUANTITY}, implies that
\begin{align} \label{E:H2DERIVATIVESOFVORTICITYANDGRADENTINTERMSOFH2CONTROLLINGQUANTITY}
	\| \partial^2 (\vec{\vortrenormalized}, \vec{\GradEnt}) \|_{L^2(\Sigma_t)}^2
	&  
	\lesssim 
	\left\lbrace
		\| \pmb{\partial} \vec{\Psi} \|_{L^{\infty}(\Sigma_t)}^2
		+
		1
	\right\rbrace
	\controlling_2(t).
\end{align}
Moreover, through an argument similar to the one we used to derive 
\eqref{E:H2DERIVATIVESOFVORTICITYANDGRADENTINTERMSOFH2CONTROLLINGQUANTITY}, 
based on \eqref{E:STANDARDL2DIVCURLESTIMATES},
\eqref{E:ONECOMMUTEDNONEVOLUTIONEQUATIONS},
and
\eqref{E:POINTWISEONCECOMMUTEDCURLVORTANDDIVGRADENTINTERMSOFMODVARIABLES},
but modified in that we now use the interpolation-product estimate\footnote{This standard estimate can be obtained by using H\"older's inequality, Sobolev embedding, and interpolation estimates. For a more detailed proof, we refer to the proof of  \eqref{E:INTERPOLATIONPRODUCTOFTWOH1FUNCTIONSINL2}.}
$$
\|G_1 \cdot G_2 \|_{L^2(\Sigma_t)} \lesssim \| G_1 \|_{L^2(\Sigma_t)}^{1/2} \| G_1 \|_{H^1(\Sigma_t)}^{1/2} \| G_2 \|_{H^1(\Sigma_t)},$$
to derive the bound
$$
\|  
	|(\pmb{\partial} \vec{\Psi},\partial \vec{\vortrenormalized},\partial \vec{\GradEnt})|
	|\pmb{\partial} \vec{\Psi}|
\|_{L^2(\Sigma_t)}^2
\lesssim
\|
	\partial (\vec{\vortrenormalized},\vec{\GradEnt})
\|_{L^2(\Sigma_t)}
\|
	\partial (\vec{\vortrenormalized},\vec{\GradEnt})
\|_{H^1(\Sigma_t)}
\|  
	\pmb{\partial} \vec{\Psi}
\|_{H^1(\Sigma_t)}^2
+
\|  
	\pmb{\partial} \vec{\Psi}
\|_{H^1(\Sigma_t)}^4,
$$
we deduce that 
$$
\| \partial^2 (\vec{\vortrenormalized}, \vec{\GradEnt}) \|_{L^2(\Sigma_t)}^2
	\lesssim 
	\| \partial^2 (\vec{\vortrenormalized}, \vec{\GradEnt}) \|_{L^2(\Sigma_t)}
	\controlling_2^{3/2}(t)
	+
	\controlling_2(t)
	+
	\controlling_2^2(t)
	\leq
	\frac{1}{2} \| \partial^2 (\vec{\vortrenormalized}, \vec{\GradEnt}) \|_{L^2(\Sigma_t)}^2
	+
	C \controlling_2(t)
	+
	C \controlling_2^3(t),
$$
from which the desired bound \eqref{E:NONLINEARH2DERIVATIVESOFVORTICITYANDGRADENTINTERMSOFH2CONTROLLINGQUANTITY}
readily follows.

We now derive energy estimates for the evolution equations.
From \eqref{E:BASICENERGYINEQUALITYFORWAVEEQUATIONS} with $\partial^{\leq 1} \vec{\Psi}$ 
in the role of $\varphi$,
\eqref{E:ONCECOMMUTEDWAVEQUATIONS}, 
the Cauchy--Schwarz inequality along $\Sigma_{\uptau}$, Young's inequality,
and definition \eqref{E:H2CONTROLLINGQUANTITY},
we deduce (occasionally using the non-optimal bound $|\pmb{\partial} \vec{\Psi}| \lesssim |\pmb{\partial} \vec{\Psi}|^2 + 1$)
that
	\begin{align} \label{E:WAVEEQUATIONENERGYINEQUALITYREADYTOBEGRONWALLED}
			\| (\vec{\Psi}, \pmb{\partial} \vec{\Psi}) \|_{H^2(\Sigma_t) \times H^1(\Sigma_t)}^2
			& \lesssim 
				\controlling_2(0)
					+
				\int_0^t
					\left\lbrace
						\|
							\pmb{\partial} \vec{\Psi} 
						\|_{L^{\infty}(\Sigma_{\uptau})}^2
						+
						1
					\right\rbrace
					\controlling_2(\uptau)
				\, d \uptau.
\end{align}
Using a similar argument based on
\eqref{E:BASICENERGYINEQUALITYFORTRANSPORTEQUATIONS} with
$\partial^{\leq 1} \vec{\vortrenormalized}$ 
and
$\partial^{\leq 1} \vec{\GradEnt}$
in
the role of $\varphi$
and equation \eqref{E:POINTWISEBELOWTOPORDERCOMMUTEDVORCITITYANDENTROPY},
we deduce 
\begin{align} \label{E:BELOWTOPORDERDERIVATIVESOFVORTICITYANDENTROPYREADYTOBEGRONWALLED}
	\| (\vec{\vortrenormalized},\vec{\GradEnt}) \|_{H^1(\Sigma_t)}^2
	& \lesssim 
		\controlling_2(0)
		+
		\int_0^t
			\left\lbrace
				\| \pmb{\partial} \vec{\Psi} \|_{L^{\infty}(\Sigma_{\uptau})}^2
					+
				1
			\right\rbrace
			\controlling_2(\uptau)
		\, d \uptau.
\end{align}
Using a similar argument based on
\eqref{E:BASICENERGYINEQUALITYFORTRANSPORTEQUATIONS} with
$\partial \vec{\VortVort}$ and $\partial \DivGradEnt$ in
the role of $\varphi$
and equation \eqref{E:ONCECOMMUTEDTOPORDERTRANSPORT},
and using the elliptic estimate \eqref{E:H2DERIVATIVESOFVORTICITYANDGRADENTINTERMSOFH2CONTROLLINGQUANTITY} to control
the norm $\| \cdot \|_{L^2(\Sigma_t)}$ of the (linear) factor of $\partial^2 (\vec{\vortrenormalized}, \vec{\GradEnt})$ 
on RHS~\eqref{E:ONCECOMMUTEDTOPORDERTRANSPORT}, 
we deduce
\begin{align} \label{E:ONCECOMMUTEDMODIFIEDVARIABLESENERGYINEQUALITYREADYTOBEGRONWALLED}
			\| \partial (\vec{\VortVort},\DivGradEnt) \|_{L^2(\Sigma_t)}^2
			& \lesssim 
				\controlling_2(0)
				+
				\int_0^t
					\left\lbrace
						\| \pmb{\partial} \vec{\Psi} \|_{L^{\infty}(\Sigma_{\uptau})}^2
						+
						\|
							\partial (\vec{\vortrenormalized}, \vec{\GradEnt})
						\|_{L^{\infty}(\Sigma_{\uptau})}
						+
						1
					\right\rbrace
					\controlling_2(\uptau)
				\, d \uptau.
	\end{align}
	Adding 
	\eqref{E:WAVEEQUATIONENERGYINEQUALITYREADYTOBEGRONWALLED},
	\eqref{E:BELOWTOPORDERDERIVATIVESOFVORTICITYANDENTROPYREADYTOBEGRONWALLED},
	and
	\eqref{E:ONCECOMMUTEDMODIFIEDVARIABLESENERGYINEQUALITYREADYTOBEGRONWALLED},
	we conclude, in view of definition \eqref{E:H2CONTROLLINGQUANTITY},
	the desired bound \eqref{E:H2CONTROLLINGQUANTITYGRONWALLREADY}.
	\hfill $\qed$

\section{Energy and elliptic estimates along constant-time hypersurfaces up to top-order}
\label{S:TOPORDERENERGYESTIMATES}
Our main goal in this section is to use the bootstrap assumptions
to prove energy and elliptic estimates along $\Sigma_t$ up to top-order. The main result is Prop.\,~\ref{P:TOPORDERENERGYESTIMATES},
which we prove in Subsect.\,\ref{SS:PROOFOFTOPORDERENERGYESTIMATES} after providing some
preliminary technical estimates. 

\begin{proposition}[Energy and elliptic estimates up to top-order]
\label{P:TOPORDERENERGYESTIMATES}
There exists a continuous strictly increasing function $\contfunction : [0,\infty) \rightarrow [0,\infty)$
such that under the initial data and bootstrap assumptions of Sect.\,\ref{S:DATAANDBOOTSTRAPASSUMPTION}, 
the following estimate holds for $t \in [0,\Tboot]$:
\begin{align} \label{E:TOPORDERENERGYESTIMATES}
	&
	\sum_{k=0}^2
	\| \partial_t^k (\LogDensity,\vec{v},\vec{\vortrenormalized}) \|_{H^{\Sob-k}(\Sigma_t)}
	+
	\sum_{k=0}^2
	\| \partial_t^k \Ent \|_{H^{\Sob+1-k}(\Sigma_t)}
	+
	\sum_{k=0}^1
	\| \partial_t^k (\vec{\VortVort},\DivGradEnt) \|_{H^{\Sob-1-k}(\Sigma_t)}
		\\
	& \leq
		\contfunction
		\left(
		\| (\LogDensity,\vec{v}, \vec{\vortrenormalized}) \|_{H^{\Sob}(\Sigma_0)}
		+
		\| \Ent \|_{H^{\Sob+1}(\Sigma_0)}
		\right).
		\notag
\end{align}

\end{proposition}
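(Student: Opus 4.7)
The plan is to extend the $H^2$ controlling-quantity argument from the proof of Proposition~\ref{P:PRELIMINARYENERGYANDELLIPTICESTIMATES} to the fractional Sobolev level $H^{\Sob}$ by working with Littlewood--Paley-based norms. Exactly as in that earlier proof, the time-derivative terms on LHS~\eqref{E:TOPORDERENERGYESTIMATES} are recovered from the zero-time-derivative ones by algebraically solving for $\partial_t$ via the equations of Proposition~\ref{P:GEOMETRICWAVETRANSPORT}, so it suffices to control
\begin{align*}
\controlling_{\Sob}(t)
& :=
\| (\vec{\Psi}, \pmb{\partial} \vec{\Psi}) \|_{H^{\Sob}(\Sigma_t) \times H^{\Sob-1}(\Sigma_t)}^2
+ \| (\vec{\vortrenormalized}, \vec{\GradEnt}) \|_{H^{\Sob}(\Sigma_t)}^2
+ \| (\vec{\VortVort}, \DivGradEnt) \|_{H^{\Sob-1}(\Sigma_t)}^2.
\end{align*}
The target is the Gronwall-ready inequality
\begin{align*}
\controlling_{\Sob}(t)
\leq
\contfunction(\controlling_{\Sob}(0))
+ C \int_0^t \!\!
\left\{
\| \pmb{\partial} \vec{\Psi} \|_{L^{\infty}(\Sigma_{\tau})}^2
+ \| \partial(\vec{\vortrenormalized},\vec{\GradEnt}) \|_{L^{\infty}(\Sigma_{\tau})}^2
+ 1
\right\} \controlling_{\Sob}(\tau) \, d\tau,
\end{align*}
which, combined with the mixed-spacetime bootstrap assumptions \eqref{E:BOOTSTRICHARTZ}--\eqref{E:BOOTL2LINFINITYFIRSTDERIVATIVESOFVORTICITYBOOTANDNENTROPYGRADIENT} (whose $L^1_t$ bounds via Cauchy--Schwarz are $\lesssim \Tboot^{1/2} \lesssim 1$), will yield \eqref{E:TOPORDERENERGYESTIMATES} after applying Gronwall's inequality.

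To derive this inequality, I would apply the Littlewood--Paley projection $P_{\upnu}$ (with $\upnu \geq 2$) to each evolution equation of Proposition~\ref{P:GEOMETRICWAVETRANSPORT}, use Lemmas~\ref{L:BASICENERGYINEQUALITYFORWAVEEQUATIONS} and \ref{L:BASICENERGYINEQUALITYFORTRANSPORTEQUATIONS} on the resulting frequency-localized equations, multiply by $\upnu^{2(\Sob-1)}$ (respectively $\upnu^{2(\Sob-2)}$ for the $(\vec{\VortVort},\DivGradEnt)$ subsystem), and square-sum dyadically in $\upnu$. The wave-equation source $\mathfrak{F}_{(\Psi)}$ from \eqref{E:COVARIANTWAVE} is \emph{linear} in $(\vec{\VortVort}, \DivGradEnt)$, so the $H^{\Sob-1}$-control of these modified variables exactly matches the regularity needed in $\| \Lambda^{\Sob-1} \mathfrak{F}_{(\Psi)} \|_{L^2(\Sigma_t)}$; this is the structural feature that permits the approach to work below the classical threshold. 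Each derivative-quadratic term of the schematic form $\quadsmoothfunction(\vec{\Psi})[\pmb{\partial}\vec{\Psi}, \pmb{\partial}\vec{\Psi}]$ is handled by the fractional product rule, placing one factor in $L^{\infty}(\Sigma_t)$ (controlled by the bootstrap) and the other at top order in $L^2(\Sigma_t)$; Littlewood--Paley commutator estimates absorb the distribution of $P_{\upnu}$ onto one of the two factors, with residual losses borne by the $H^{(3/2)^+} \hookrightarrow L^\infty$ embedding made available by $\Sob > 2$.

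Top-order spatial control of $\vec{\vortrenormalized}$ and $\vec{\GradEnt}$ is recovered from the elliptic Hodge identity \eqref{E:STANDARDL2DIVCURLESTIMATES} applied to each Littlewood--Paley piece and then summed dyadically. For the vorticity, $\dive \vortrenormalized$ is schematically $\linsmoothfunction(\vec{\vortrenormalized})[\pmb{\partial}\vec{\Psi}]$ by \eqref{E:DIVVORTICITY}, whereas $\curl \vortrenormalized$ can be algebraically re-expressed from Definition~\ref{D:RENORMALIZEDCURLOFSPECIFICVORTICITY} as $\exp(\LogDensity)\vec{\VortVort}$ plus derivative-quadratic products. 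Since $\vec{\VortVort} \in H^{\Sob-1}$ by the transport estimate, the Hodge identity promotes this to $\|\partial\vec{\vortrenormalized}\|_{H^{\Sob-1}(\Sigma_t)}$-control, i.e., $\vec{\vortrenormalized} \in H^{\Sob}(\Sigma_t)$. For the entropy gradient, \eqref{E:CURLGRADENT} gives $\curl \GradEnt = 0$, while $\dive \GradEnt$ is algebraically determined by $\DivGradEnt$ (already controlled in $H^{\Sob-1}$) modulo lower-order products, so the Hodge estimate promotes $\vec{\GradEnt}$ to $H^{\Sob}$, equivalently $\Ent \in H^{\Sob+1}$.

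The hard part will be the top-order transport estimates for the modified variables, namely commuting \eqref{E:TRANSPORTVORTICITYVORTICITY} and \eqref{E:TRANSPORTDIVGRADENTROPY} with $\Lambda^{\Sob-1}$. The sources contain products $\pmb{\partial}\vec{\Psi} \cdot \partial(\vec{\vortrenormalized},\vec{\GradEnt})$ in which \emph{both factors} carry one derivative, and the $H^{\Sob-1}$-norm of $\partial(\vec{\vortrenormalized},\vec{\GradEnt})$ appearing in such products is at the top order of $\controlling_{\Sob}$ just established via the Hodge estimate. This produces a genuine circularity whose closure rests on three ingredients used in tandem: \textbf{(i)} the Littlewood--Paley fractional product rule, which allows us to place either factor at top order in $L^2$ with the complementary factor in $L^{\infty}$; \textbf{(ii)} the bootstrap integrability bounds \eqref{E:BOOTSTRICHARTZ}--\eqref{E:BOOTL2LINFINITYFIRSTDERIVATIVESOFVORTICITYBOOTANDNENTROPYGRADIENT}, which render the Gronwall factor integrable in $t$; and \textbf{(iii)} a careful sequencing in which the $H^{\Sob-1}$-control of $(\vec{\VortVort},\DivGradEnt)$ is established \emph{first} via its own transport equation, then used to upgrade $(\vec{\vortrenormalized},\vec{\GradEnt})$ via Hodge, all inside a single coupled Gronwall argument with a controlling quantity $\controlling_{\Sob}$ that packages the entire hierarchy.
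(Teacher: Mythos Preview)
Your proposal is correct and follows essentially the same approach as the paper: frequency-project the evolution equations, apply the basic wave/transport energy inequalities at each dyadic scale, sum with weights $\upnu^{2(\Sob-1)}$, control the inhomogeneous terms via the product/commutator lemma (Lemma~\ref{L:PRODUCTANDCOMMUTATORFORCOMPRESSIBLEEULER}), recover $\|\partial(\vec{\vortrenormalized},\vec{\GradEnt})\|_{H^{\Sob-1}}$ via the Hodge identity, and close by Gronwall using the bootstrap bounds~\eqref{E:BOOTSTRICHARTZ}--\eqref{E:BOOTL2LINFINITYFIRSTDERIVATIVESOFVORTICITYBOOTANDNENTROPYGRADIENT}.

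The only noteworthy packaging difference is that the paper works with the leaner controlling quantity
\[
\controlling_{\Sob}(t) = \|\pmb{\partial}\vec{\Psi}\|_{H^{\Sob-1}(\Sigma_t)}^2 + \|\partial(\vec{\VortVort},\DivGradEnt)\|_{H^{\Sob-2}(\Sigma_t)}^2,
\]
treating $\|\partial(\vec{\vortrenormalized},\vec{\GradEnt})\|_{H^{\Sob-1}}$ as a \emph{separate} elliptic consequence~\eqref{E:TOPORDERDELLIPTIC} rather than folding it into $\controlling_{\Sob}$, and relying on Proposition~\ref{P:PRELIMINARYENERGYANDELLIPTICESTIMATES} for all below-top-order contributions. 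Your larger $\controlling_{\Sob}$ is harmless but redundant, since the $\|(\vec{\vortrenormalized},\vec{\GradEnt})\|_{H^{\Sob}}$ piece is determined by the others via Hodge at the same time $t$ and does not need its own evolution estimate. The paper also uses a linear Gronwall factor $\|\pmb{\partial}\vec{\Psi}\|_{L^{\infty}} + \|\partial(\vec{\vortrenormalized},\vec{\GradEnt})\|_{L^{\infty}} + 1$ rather than your squared version; both are integrable under the bootstrap, so this is cosmetic. (Minor slip: for the $(\vec{\VortVort},\DivGradEnt)$ subsystem the paper multiplies the frequency-localized transport equations~\eqref{E:FREQUENCYPROJECTEDTRANSPORTVORTICITYVORTICITY}--\eqref{E:FREQUENCYPROJECTEDTRANSPORTDIVGRADENTROPY} by $\upnu^{2(\Sob-1)}$, not $\upnu^{2(\Sob-2)}$.)
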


\subsection{Equations satisfied by the frequency-projected solution variables}
\label{SS:FREQUENCYPROJECTEDEQUATIONS}
In proving Prop.\,\ref{P:TOPORDERENERGYESTIMATES}, we will derive
energy and elliptic estimates for projections of the solution variables onto 
dyadic frequencies $\upnu \in 2^{\mathbb{N}}$. 
In the next lemma, as a preliminary step in deriving these estimates,
we derive the equations satisfied by the frequency-projected solution variables.

\begin{lemma}[Equations satisfied by the frequency-projected solution variables]
\label{L:FREQUENCYPROJECTEDEVOLUTION}	
Let $\upnu \in 2^{\mathbb{N}}$.
For solutions to the equations of Prop.\,\ref{P:GEOMETRICWAVETRANSPORT},
the following equations hold, 
where
$\gfour = \gfour(\vec{\Psi})$,
$\Psi \in \lbrace \LogDensity, v^1, v^2, v^3, \Ent \rbrace$,
and the terms 
$\inhom_{(\Psi)}, \cdots, \mathfrak{F}_{(\DivGradEnt)}$
on RHSs~\eqref{E:REMAINDERTERMFREQUENCYPROJECTEDCOVARIANTWAVEVE}--\eqref{E:REMAINDERTERMFREQUENCYPROJECTEDTRANSPORTDIVGRADENTROPY}
are defined in Prop.\,\ref{P:GEOMETRICWAVETRANSPORT}:
\begin{subequations}
\begin{align} \label{E:FREQUENCYPROJECTEDWAVE}
\hat{\square}_{\gfour} P_{\upnu} \Psi 
& = 
\hat{\remainder}_{(\Psi);\upnu},
	\\
\square_{\gfour} P_{\upnu} \Psi 
& = 
\remainder_{(\Psi);\upnu},
	\label{E:COVARIANTFREQUENCYPROJECTEDWAVE} 
\end{align}
\end{subequations}

\begin{subequations}
\begin{align}
\dive P_{\upnu} \vortrenormalized 
	& = 
		\remainder_{(\dive \vortrenormalized);\upnu},
\label{E:FREQUENCYPROJECTEDDIVVORTICITY}
\\
\Transport P_{\upnu} \VortVort^i 
	& = 
		\remainder_{(\VortVort^i);\upnu},
\label{E:FREQUENCYPROJECTEDTRANSPORTVORTICITYVORTICITY}
\end{align}
\end{subequations}

\begin{subequations}
\begin{align}
	\Transport P_{\upnu} \DivGradEnt 
		& = \remainder_{(\DivGradEnt);\upnu},
		\label{E:FREQUENCYPROJECTEDTRANSPORTDIVGRADENTROPY}
\\
(\curl P_{\upnu} \GradEnt)^i & = 0,
\label{E:FREQUENCYPROJECTEDCURLGRADENT}
\end{align}	
\end{subequations}
where the inhomogeneous terms take the following form:
\begin{subequations}
\begin{align} \label{E:REMAINDERTERMFREQUENCYPROJECTEDCOVARIANTWAVEVE} 
\hat{\remainder}_{(\Psi);\upnu}
& = P_{\upnu} \inhom_{(\Psi)}
		+
		\sum_{(\alpha,\beta) \neq (0,0)}
		\left\lbrace
			(\gfour^{-1})^{\alpha \beta}
			-
			P_{\leq \upnu} (\gfour^{-1})^{\alpha \beta}
		\right\rbrace
		P_{\upnu} \partial_{\alpha} \partial_{\beta} \Psi
			\\
	& \ \
		+
		\sum_{(\alpha,\beta) \neq (0,0)}
		\left\lbrace
		\left(P_{\leq \upnu} (\gfour^{-1})^{\alpha \beta} 
		\right)
		P_{\upnu}
		\partial_{\alpha} \partial_{\beta} \Psi
		-
		P_{\upnu} 
		\left[(\gfour^{-1})^{\alpha \beta} \partial_{\alpha} \partial_{\beta} \Psi
		\right]
		\right\rbrace,
		\notag
			\\
\remainder_{(\Psi);\upnu}
& = \hat{\remainder}_{(\Psi);\upnu}
		-	
		\Chfour^{\alpha} 
		P_{\upnu} \partial_{\alpha} \Psi,
		\label{E:COVARIANTREMAINDERTERMFREQUENCYPROJECTEDCOVARIANTWAVEVE}  
	\end{align}
	\end{subequations}
	$\Chfour^{\alpha} 
	= 
	(\gfour^{-1})^{\alpha \beta} (\gfour^{-1})^{\gamma \delta} \partial_{\gamma} \gfour_{\beta \delta}
	-
	\frac{1}{2}
	(\gfour^{-1})^{\alpha \beta} (\gfour^{-1})^{\gamma \delta} \partial_{\beta} \gfour_{\gamma \delta}
	= \mathscr{L}(\vec{\Psi})[\pmb{\partial} \vec{\Psi}] 
	$
	are the contracted Cartesian Christoffel symbols of $\gfour(\vec{\Psi})$,
	and
	\begin{subequations}
		\begin{align}
				\remainder_{(\dive \vortrenormalized);\upnu}
				& = P_{\upnu} \mathfrak{F}_{(\dive \vortrenormalized)},
					\label{E:REMAINDERTERMFREQUENCYPROJECTEDDIVVORTICITY} \\
		\remainder_{(\VortVort^i);\upnu}
		& = P_{\upnu} \mathfrak{F}_{(\VortVort^i)}
				+ 
				\left\lbrace
					v^a 
					-
					P_{\leq \upnu} v^a
				\right\rbrace
				 P_{\upnu} \partial_a \VortVort^i
				+ 
				\left\lbrace
					(P_{\leq \upnu} v^a)  P_{\upnu} \partial_a \VortVort^i
					-
					P_{\upnu} [v^a \partial_a \VortVort^i]
				\right\rbrace,
				\label{E:REMAINDERTERMFREQUENCYPROJECTEDTRANSPORTVORTICITYVORTICITY}
		\end{align}
	\end{subequations}
	
	\begin{align} \label{E:REMAINDERTERMFREQUENCYPROJECTEDTRANSPORTDIVGRADENTROPY}
		\remainder_{(\DivGradEnt);\upnu}
		& = P_{\upnu} \mathfrak{F}_{(\DivGradEnt)}
				+ 
				\left\lbrace
					v^a 
					-
					P_{\leq \upnu} v^a
				\right\rbrace
				 P_{\upnu} \partial_a \DivGradEnt
				+ 
				\left\lbrace
					(P_{\leq \upnu} v^a)  P_{\upnu} \partial_a \DivGradEnt
					-
					P_{\upnu} [v^a \partial_a \DivGradEnt]
				\right\rbrace.
	\end{align}

Moreover,
\begin{subequations}
\begin{align}
\square_{\gfour} P_{\upnu} \pmb{\partial} \Psi 
& = 
\anotherremainder_{(\pmb{\partial} \Psi);\upnu},
	\label{E:TIMEDERIVATIVECOMMUTEDWAVEFREQUENCYPROJECTED} 
		\\
\Transport P_{\upnu} \pmb{\partial} \VortVort^i
& = \anotherremainder_{(\pmb{\partial} \VortVort^i);\upnu},
	\label{E:TIMEDERIVATIVECOMMUTEDMODIFIEDVORTFREQUENCYPROJECTED} 
	\\
\Transport P_{\upnu} \pmb{\partial} \DivGradEnt 
& =  \anotherremainder_{(\pmb{\partial} \DivGradEnt);\upnu},
\label{E:TIMEDERIVATIVECOMMUTEDMODIFIEDENTFREQUENCYPROJECTED}
\end{align}
\end{subequations}	
where
\begin{subequations}
\begin{align}
	\anotherremainder_{(\pmb{\partial} \Psi);\upnu}
	& = 
	P_{\upnu} \pmb{\partial} \inhom_{(\Psi)}
		-
		\sum_{(\alpha,\beta) \neq (0,0)}
		P_{\upnu}
		\left\lbrace
			\left(\pmb{\partial} (\gfour^{-1})^{\alpha \beta}
			\right)
			\partial_{\alpha} \partial_{\beta} \Psi
		\right\rbrace
		-	
		\Chfour^{\alpha} 
		P_{\upnu} \partial_{\alpha} \pmb{\partial} \Psi
		\label{E:INHOMTERMTIMEDERIVATIVECOMMUTEDWAVEFREQUENCYPROJECTED}  \\
	& \ \
		+
		\sum_{(\alpha,\beta) \neq (0,0)}
		\left\lbrace
			(\gfour^{-1})^{\alpha \beta}
			-
			P_{\leq \upnu} (\gfour^{-1})^{\alpha \beta}
		\right\rbrace
		P_{\upnu} \partial_{\alpha} \partial_{\beta} \pmb{\partial} \Psi
			\notag \\
	& \ \
		+
		\sum_{(\alpha,\beta) \neq (0,0)}
		\left\lbrace
		\left(P_{\leq \upnu} (\gfour^{-1})^{\alpha \beta} 
		\right)
		P_{\upnu}
		\partial_{\alpha} \partial_{\beta} \pmb{\partial} \Psi
		-
		P_{\upnu} 
		\left[(\gfour^{-1})^{\alpha \beta} \partial_{\alpha} \partial_{\beta} \pmb{\partial} \Psi
		\right]
		\right\rbrace,
			\notag \\
	\anotherremainder_{(\pmb{\partial} \VortVort^i);\upnu} 
	& = 	P_{\upnu} \pmb{\partial} \mathfrak{F}_{(\VortVort^i)}
				-
				P_{\upnu}[(\pmb{\partial} v^a) \partial_a \VortVort^i]
			\label{E:INHOMTERMTIMEDERIVATIVECOMMUTEDMODIFIEDVORTFREQUENCYPROJECTED}		
				\\
	& \ \
			+ 
				\left\lbrace
					v^a 
					-
					P_{\leq \upnu} v^a
				\right\rbrace
				 P_{\upnu} \partial_a \pmb{\partial} \VortVort^i
				+ 
				\left\lbrace
					(P_{\leq \upnu} v^a)  P_{\upnu} \partial_a \pmb{\partial} \VortVort^i
					-
					P_{\upnu} [v^a \partial_a \pmb{\partial} \VortVort^i]
				\right\rbrace,	
		\notag \\
	\anotherremainder_{(\pmb{\partial} \DivGradEnt);\upnu}  
	& = P_{\upnu} \pmb{\partial} \mathfrak{F}_{(\DivGradEnt)}
			-
			P_{\upnu} [(\pmb{\partial} v^a) \partial_a \DivGradEnt]
				\label{E:INHOMTERMTIMEDERIVATIVECOMMUTEDMODIFIEDDIVGRADFREQUENCYPROJECTED} \\
		& \ \	
				+ 
				\left\lbrace
					v^a 
					-
					P_{\leq \upnu} v^a
				\right\rbrace
				 P_{\upnu} \partial_a \pmb{\partial} \DivGradEnt
				+ 
				\left\lbrace
					(P_{\leq \upnu} v^a)  P_{\upnu} \partial_a \pmb{\partial} \DivGradEnt
					-
					P_{\upnu} [v^a \partial_a \pmb{\partial} \DivGradEnt]
				\right\rbrace.
	\notag
\end{align}	
\end{subequations}	
	
\end{lemma}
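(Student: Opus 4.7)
The plan is to derive each displayed equation by applying the Fourier multiplier $P_{\upnu}$ to the corresponding equation of Proposition~\ref{P:GEOMETRICWAVETRANSPORT} and then organizing the resulting commutator terms into the form stated. The decisive observation throughout is that $P_{\upnu}$ is a purely spatial Fourier multiplier, so it commutes exactly with every constant-coefficient differential operator on $\mathbb{R}^{1+3}$; in particular it commutes with $\pmb{\partial}$, with $\dive$, and with $\curl$. Only the variable-coefficient pieces of the operators---namely the spatial components of $\gfour^{-1}(\vec{\Psi})$ appearing in $\hat{\square}_{\gfour}$ and the spatial components $v^a$ appearing in $\Transport$---produce genuine commutator contributions.

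For the wave equation \eqref{E:FREQUENCYPROJECTEDWAVE}, I would start from $\hat{\square}_{\gfour}\Psi = \inhom_{(\Psi)}$ and note, from the formula \eqref{E:INVERSEACOUSTICALMETRIC}, that $(\gfour^{-1})^{00} \equiv -1$ is a constant; hence the $(0,0)$ term of $\hat{\square}_{\gfour}$ commutes with $P_{\upnu}$ without any error. For each $(\alpha,\beta) \neq (0,0)$ I would add and subtract $\bigl(P_{\leq \upnu}(\gfour^{-1})^{\alpha\beta}\bigr) P_{\upnu} \partial_{\alpha}\partial_{\beta}\Psi$, producing the ``high-low'' piece $\{(\gfour^{-1})^{\alpha\beta} - P_{\leq \upnu}(\gfour^{-1})^{\alpha\beta}\} P_{\upnu} \partial_{\alpha}\partial_{\beta}\Psi$ and the commutator $\{(P_{\leq \upnu}(\gfour^{-1})^{\alpha\beta}) P_{\upnu}\partial_{\alpha}\partial_{\beta}\Psi - P_{\upnu}[(\gfour^{-1})^{\alpha\beta}\partial_{\alpha}\partial_{\beta}\Psi]\}$ that appear on RHS~\eqref{E:REMAINDERTERMFREQUENCYPROJECTEDCOVARIANTWAVEVE}; the remaining ``matched'' piece $P_{\upnu}[(\gfour^{-1})^{\alpha\beta}\partial_{\alpha}\partial_{\beta}\Psi]$ combines with the $(0,0)$ contribution to give $P_{\upnu}\hat{\square}_{\gfour}\Psi = P_{\upnu}\inhom_{(\Psi)}$. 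Equation \eqref{E:COVARIANTFREQUENCYPROJECTEDWAVE} then follows by adding the lower-order term $-\Chfour^{\alpha} P_{\upnu}\partial_{\alpha}\Psi$ via the identity $\square_{\gfour} = \hat{\square}_{\gfour} - \Chfour^{\alpha}\partial_{\alpha}$, which is the standard relation between the covariant and reduced wave operators and which also yields the stated expression for $\Chfour^{\alpha}$.

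The identities \eqref{E:FREQUENCYPROJECTEDDIVVORTICITY} and \eqref{E:FREQUENCYPROJECTEDCURLGRADENT} are immediate consequences of the fact that $\dive$ and $\curl$ have constant coefficients and therefore commute with $P_{\upnu}$: one simply obtains $\dive P_{\upnu}\vortrenormalized = P_{\upnu}\mathfrak{F}_{(\dive\vortrenormalized)}$ and $\curl P_{\upnu}\GradEnt = 0$. For the transport equations \eqref{E:FREQUENCYPROJECTEDTRANSPORTVORTICITYVORTICITY} and \eqref{E:FREQUENCYPROJECTEDTRANSPORTDIVGRADENTROPY} I would use $\Transport = \partial_t + v^a\partial_a$ and note that $\partial_t$ commutes with $P_{\upnu}$; the commutator of $v^a\partial_a$ with $P_{\upnu}$ is again split via $v^a = P_{\leq\upnu} v^a + (v^a - P_{\leq\upnu}v^a)$, producing the two correction terms that appear on RHSs~\eqref{E:REMAINDERTERMFREQUENCYPROJECTEDTRANSPORTVORTICITYVORTICITY} and \eqref{E:REMAINDERTERMFREQUENCYPROJECTEDTRANSPORTDIVGRADENTROPY}.

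Finally, for the $\pmb{\partial}$-commuted versions \eqref{E:TIMEDERIVATIVECOMMUTEDWAVEFREQUENCYPROJECTED}--\eqref{E:TIMEDERIVATIVECOMMUTEDMODIFIEDENTFREQUENCYPROJECTED} I would first apply $\pmb{\partial}$ to the original equations before projecting. In the wave case this produces the additional Leibniz term $-\sum_{(\alpha,\beta)\neq (0,0)}(\pmb{\partial}(\gfour^{-1})^{\alpha\beta})\partial_{\alpha}\partial_{\beta}\Psi$ (restricted to $(\alpha,\beta)\neq(0,0)$ since $\pmb{\partial}(\gfour^{-1})^{00}=0$) and replaces $\Psi$ by $\pmb{\partial}\Psi$ in the principal part; in the transport case one similarly picks up $-P_{\upnu}\{(\pmb{\partial} v^a)\partial_a \VortVort^i\}$ and its analog for $\DivGradEnt$. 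Applying $P_{\upnu}$ and repeating the paraproduct decomposition verbatim yields \eqref{E:INHOMTERMTIMEDERIVATIVECOMMUTEDWAVEFREQUENCYPROJECTED}--\eqref{E:INHOMTERMTIMEDERIVATIVECOMMUTEDMODIFIEDDIVGRADFREQUENCYPROJECTED}. The entire proof is algebraic bookkeeping rather than analysis---there is no genuine obstacle---and the only point requiring care is to display the commutators in precisely the paraproduct (low-high) plus matched-frequency form stated, since that is what makes them amenable to the later Bernstein-type and Strichartz estimates used in Sections~\ref{S:ENERGYESTIMATESALONGNULLHYPERSURFACES}--\ref{S:STRICHARTZESTIMATESFORWAVEUPGRADEDTOHOLDER}.
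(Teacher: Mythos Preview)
Your proposal is correct and matches the paper's approach; the paper itself simply states that ``the lemma follows from straightforward computations and we therefore omit the details,'' and you have supplied exactly those computations. Your key observations---that $(\gfour^{-1})^{00}\equiv -1$ so only the $(\alpha,\beta)\neq(0,0)$ terms generate commutators, that $P_{\upnu}$ commutes with constant-coefficient operators like $\dive$, $\curl$, and $\pmb{\partial}$, and that the variable-coefficient pieces are handled by the add-and-subtract paraproduct decomposition---are precisely what is needed.
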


\begin{proof}
	The lemma follows from straightforward computations 
	and the fact that 
	$
	\square_{\gfour} \varphi
	=  
	\hat{\square}_{\gfour} \varphi 
	- 
	\Chfour^{\alpha} \partial_{\alpha} \varphi$ for scalar functions $\varphi$.
	We therefore omit the details.
\end{proof}

\subsection{Product and commutator estimates}
\label{SS:PRODUCTANDCOMMUTATORESTIMATES}
In this subsection, we derive estimates for various norms of the inhomogeneous terms
$\hat{\remainder}_{(\Psi);\upnu},\cdots,\anotherremainder_{(\pmb{\partial} \DivGradEnt);\upnu} $
on RHSs~\eqref{E:REMAINDERTERMFREQUENCYPROJECTEDCOVARIANTWAVEVE}--\eqref{E:REMAINDERTERMFREQUENCYPROJECTEDTRANSPORTDIVGRADENTROPY}.
We provide the main result in Lemma~\ref{L:PRODUCTANDCOMMUTATORFORCOMPRESSIBLEEULER}.

\subsubsection{Preliminary product and commutator estimates}
\label{SSS:PRELIMINARYPRODUCTANDCOMMUTATORESTIMATES}
In the next lemma, we provide some standard product and commutator estimates
that are based on the Littlewood--Paley calculus.

\begin{lemma}[Preliminary product and commutator estimates]
\label{L:PRELIMINARYPRODUCTANDCOMMUTATORESTIMATES}
The following estimates hold, where we assume that $F$, $G_i$, and $\varphi$ are (possibly array-valued) functions
on $\Sigma_t$, that $\gensmoothfunction$ is a smooth function
of its arguments, and that $\gensmoothfunction'$ denotes the derivative of $\gensmoothfunction$ with respect to its arguments.

\medskip

\noindent \underline{\textbf{Product estimates}}:
For any $\varepsilon$ such that $0 < \varepsilon < 1$
(in our forthcoming applications, we will set $\varepsilon := \Sob - 2$), 
the following product estimates hold,
where the implicit constants are allowed to depend on $\varepsilon$,
$\| \gensmoothfunction \circ \varphi \|_{L^{\infty}(\Sigma_t)}$,
and
$\| \gensmoothfunction' \circ \varphi \|_{L^{\infty}(\Sigma_t)}$,
and the projection operators $P_{\upnu}$ on the RHSs of the estimates
are allowed to correspond to a slightly different projection operator, localized at the same 
frequency, than the ones on the LHSs:
\begin{align} \label{E:LAMBDAONEPLUSEPSILONL2NORMEQUIVALENTTOLAMBDAEPSILONL2NORMOFGRADF}
	\| 
		\Lambda^{1 + \varepsilon} F
	\|_{L^2(\Sigma_t)}
	\approx
	\| 
		\Lambda^{\varepsilon} \partial F
	\|_{L^2(\Sigma_t)},
\end{align}

\begin{subequations}
\begin{align}
	\| 
		G_1 \cdot G_2
	\|_{L^2(\Sigma_t)}
	& \lesssim
		\| G_1 \|_{H^1(\Sigma_t)}
		\| G_2 \|_{H^1(\Sigma_t)},
			\label{E:PRODUCTOFTWOH1FUNCTIONSINL2} 
			\\
	\| 
		G_1 \cdot G_2
	\|_{L^2(\Sigma_t)}
	& \lesssim
		\| G_1 \|_{L^2(\Sigma_t)}^{1/2}
		\| G_1 \|_{H^1(\Sigma_t)}^{1/2}
		\| G_2 \|_{H^1(\Sigma_t)},
			\label{E:INTERPOLATIONPRODUCTOFTWOH1FUNCTIONSINL2} 
			\\
	\| 
		G_1 \cdot G_2 \cdot G_3
	\|_{L^2(\Sigma_t)}
	& \lesssim
		\| G_1 \|_{H^1(\Sigma_t)}
		\| G_2 \|_{H^1(\Sigma_t)}
		\| G_3 \|_{H^1(\Sigma_t)}.
		\label{E:PRODUCTOFTHREEH1FUNCTIONSINL2}
\end{align}
\end{subequations}

In addition, for dyadic frequencies $\upnu \geq 1$, we have:
\begin{align} \label{E:FREQUENCYPROJECTEDLINFINITYSMOOTHFUNCTIONPRODUCTESTIMATE} 
	\| 
		P_{\upnu} (\gensmoothfunction \circ \varphi \cdot G)
	\|_{L^{\infty}(\Sigma_t)}
	& \lesssim 
		\upnu^{-1/2}
		\| \partial \varphi \|_{L^{\infty}(\Sigma_t)} \| G \|_{H^1(\Sigma_t)}
		+
		\| P_{\upnu} G \|_{L^{\infty}(\Sigma_t)}.
\end{align}	

Moreover,
\begin{subequations}
\begin{align}
	\| 
		\Lambda^{\varepsilon} (\gensmoothfunction \circ \varphi \cdot G)
	\|_{L^2(\Sigma_t)}
	& \lesssim 
		\| \Lambda^{\varepsilon} G \|_{L^2(\Sigma_t)}
		+
		\| \partial \varphi \|_{H^1(\Sigma_t)}
		\| G \|_{H^{\varepsilon}(\Sigma_t)},
			\label{E:LAMBDAEPSILONFGPRODUCTESTIMATEINVOLVINGNOL2NORMOFF} \\
		\| 
		\Lambda^{\varepsilon} (F \cdot G)
	\|_{L^2(\Sigma_t)}
	& \lesssim 
		\| F \|_{H^{1/2 + \varepsilon}(\Sigma_t)} \| G \|_{H^1(\Sigma_t)}
		+
		\| G \|_{H^{1/2 + \varepsilon}(\Sigma_t)} \| F \|_{H^1(\Sigma_t)},
			\label{E:LAMBDAEPSILONFGPRODUCTESTIMATEINVOLVINGONLYSOBOLEV} \\
	\| 
		\Lambda^{\varepsilon} (F \cdot \partial G)
	\|_{L^2(\Sigma_t)}
	& \lesssim 
		\| F \|_{L^{\infty}(\Sigma_t)} \| \partial G \|_{H^{\varepsilon}(\Sigma_t)}
		+
		\| G \|_{L^{\infty}(\Sigma_t)} \| \partial F \|_{H^{\varepsilon}(\Sigma_t)},
		\label{E:LAMBDAEPSILONFGRADGPRODUCTESTIMATEINVOLVINGLINFINITY}
			\\
	\| 
		\Lambda^{\varepsilon} (G_1 \cdot G_2 \cdot G_3)
	\|_{L^2(\Sigma_t)}
	& \lesssim
		\sum_{j=1}^3
		\| G_j \|_{H^{1 + \varepsilon}(\Sigma_t)} \prod_{k \neq j} \| G_k \|_{H^1(\Sigma_t)}.
		\label{E:LAMBDAEPSILONPRODOFTHREETERMSL2ESTIMATEINTERMSOFH1PLUSEPSILONANDH1NORMS}
	\end{align}
	\end{subequations}

\medskip	
	
\noindent \underline{\textbf{Commutator estimates}}:
The following commutator estimates hold for dyadic frequencies $\upnu \geq 1$:
\begin{subequations}
\begin{align} \label{E:SUBTRACTLOWFREQUENCIESPRODUCTESTIMATE}
	\| [\gensmoothfunction \circ \varphi - P_{\leq \upnu} (\gensmoothfunction \circ \varphi)] \cdot P_{\upnu} G \|_{L^2(\Sigma_t)}
	& \lesssim \upnu^{-1} \| \partial \varphi \|_{L^{\infty}(\Sigma_t)} \| P_{\upnu} G \|_{L^2(\Sigma_t)},
		\\
	\| P_{\upnu} [\gensmoothfunction \circ \varphi \cdot \partial G] 
			- 
		P_{\leq \upnu} (\gensmoothfunction \circ \varphi) \cdot P_{\upnu} \partial G \|_{L^2(\Sigma_t)}
	& \lesssim 
				\| \partial \varphi \|_{L^{\infty}(\Sigma_t)} \| P_{\upnu} G \|_{L^2(\Sigma_t)}
					\label{E:FREQUENCYOFAPRODUCTMINUSLOWTIMESFREQUENCYESTIMATE}
						\\
	& \ \
				+
				\| G \|_{L^{\infty} (\Sigma_t)} 
				\| 
					P_{\upnu} [\gensmoothfunction' \circ \varphi \cdot \partial \varphi] 
				\|_{L^2(\Sigma_t)} 
					\notag \\
	& \ \
				+
				\sum_{\uplambda > \upnu}
				\uplambda^{-1}
				\| \partial \varphi \|_{L^{\infty}(\Sigma_t)} \| P_{\uplambda} \partial G \|_{L^2(\Sigma_t)}.
				\notag
\end{align}
\end{subequations}

\medskip

\noindent \underline{\textbf{Convolution-type estimate for dyadic-indexed sums}}:
If $\lbrace A_{\uplambda} \rbrace_{\uplambda \in 2^{\mathbb{N}}}$
is a dyadic-indexed sequence of real numbers, then 
\begin{align} \label{E:CONVOLUTIONESTIMATE}
	\left\|
	\upnu^{1 + \varepsilon}
	\sum_{\uplambda > \upnu}
	\uplambda^{-1}
	A_{\uplambda}
	\right\|_{\ell_{\upnu}^2}
	\lesssim
	\left\|
	\upnu^{\varepsilon}
	A_{\upnu}
	\right\|_{\ell_{\upnu}^2}.
\end{align}
\end{lemma}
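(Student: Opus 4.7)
The plan is to establish each item by standard Littlewood--Paley calculus, organized into three blocks: Bernstein-based reductions, Bony-paraproduct product estimates, and telescoping-based commutator estimates. Throughout I would use the following recurring tools: \textbf{(i)} Bernstein's inequality $\|P_\upnu F\|_{L^\infty} \lesssim \upnu^{3/2}\|P_\upnu F\|_{L^2}$ and $\|P_\upnu \partial F\|_{L^2} \approx \upnu\|P_\upnu F\|_{L^2}$; \textbf{(ii)} the square-function characterization defining $\|\Lambda^M F\|_{L^2(\Sigma_t)}^2 \approx \sum_{\upnu \geq 2}\upnu^{2M}\|P_\upnu F\|_{L^2(\Sigma_t)}^2$; \textbf{(iii)} Bony's paraproduct decomposition $FG = T_F G + T_G F + R(F,G)$ with $T_F G := \sum_{\upnu} (P_{\leq \upnu/8} F)(P_{\upnu} G)$; and \textbf{(iv)} the three-dimensional Sobolev embedding $H^1(\mathbb{R}^3)\hookrightarrow L^6(\mathbb{R}^3)$ with the interpolation chain $L^2 \cap L^6 \subset L^p$ for $p\in[2,6]$.

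For the first block, the equivalence \eqref{E:LAMBDAONEPLUSEPSILONL2NORMEQUIVALENTTOLAMBDAEPSILONL2NORMOFGRADF} is immediate from Bernstein: at each dyadic frequency $\upnu$, one has $\|P_\upnu \partial F\|_{L^2}\approx \upnu\|P_\upnu F\|_{L^2}$, and summing against $\upnu^{2\varepsilon}$ weights gives the claim. Estimate \eqref{E:PRODUCTOFTWOH1FUNCTIONSINL2} follows from H\"older $\|G_1 G_2\|_{L^2}\leq \|G_1\|_{L^4}\|G_2\|_{L^4}$ and $H^1\hookrightarrow L^4$ by interpolation. Estimate \eqref{E:INTERPOLATIONPRODUCTOFTWOH1FUNCTIONSINL2} uses $\|G_1 G_2\|_{L^2}\leq\|G_1\|_{L^3}\|G_2\|_{L^6}$ together with the interpolation $\|G_1\|_{L^3}\leq\|G_1\|_{L^2}^{1/2}\|G_1\|_{L^6}^{1/2}$. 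Estimate \eqref{E:PRODUCTOFTHREEH1FUNCTIONSINL2} follows by triple H\"older, since $1/6+1/6+1/6=1/2$.

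For the second block, I would apply Bony's decomposition and handle the paraproducts $T_{\gensmoothfunction\circ\varphi} G$, $T_G (\gensmoothfunction\circ\varphi)$ and the diagonal remainder $R$ separately. The technical lemma needed throughout is a standard Moser-type composition estimate: since $\|\gensmoothfunction\circ\varphi\|_{L^\infty}$ is bounded and $\partial(\gensmoothfunction\circ\varphi)=(\gensmoothfunction'\circ\varphi)\partial\varphi$, one has $\|P_\upnu(\gensmoothfunction\circ\varphi)\|_{L^\infty}\lesssim \upnu^{-1}\|\partial\varphi\|_{L^\infty}$ and analogous $L^2$-bounds. For \eqref{E:FREQUENCYPROJECTEDLINFINITYSMOOTHFUNCTIONPRODUCTESTIMATE}, the high-low paraproduct $T_{\gensmoothfunction\circ\varphi}G$ produces the unweighted $\|P_\upnu G\|_{L^\infty}$ piece, while the low-high and diagonal remainders, after Bernstein-converting $L^\infty$ to $L^2$ on the $\upnu$-scale, produce the factor $\upnu^{-1/2}\|\partial\varphi\|_{L^\infty}\|G\|_{H^1}$. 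Estimates \eqref{E:LAMBDAEPSILONFGPRODUCTESTIMATEINVOLVINGNOL2NORMOFF}--\eqref{E:LAMBDAEPSILONPRODOFTHREETERMSL2ESTIMATEINTERMSOFH1PLUSEPSILONANDH1NORMS} follow by the same paraproduct template, placing the $\Lambda^\varepsilon$-derivative on each factor in turn and bounding the complementary factor in $L^\infty$ via Bernstein combined with $H^{1/2+\varepsilon}\hookrightarrow L^\infty\cdot(\ldots)$, with the crucial control of $\gensmoothfunction\circ\varphi$ reducing to that of $\varphi$ thanks to the Moser-type estimate.

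The third block contains the commutator estimates \eqref{E:SUBTRACTLOWFREQUENCIESPRODUCTESTIMATE}--\eqref{E:FREQUENCYOFAPRODUCTMINUSLOWTIMESFREQUENCYESTIMATE}; the key identity is the telescoping
\begin{align*}
	\gensmoothfunction\circ\varphi - P_{\leq \upnu}(\gensmoothfunction\circ\varphi) = \sum_{\uplambda > \upnu} P_\uplambda(\gensmoothfunction\circ\varphi),
\end{align*}
combined with $\|P_\uplambda(\gensmoothfunction\circ\varphi)\|_{L^\infty}\lesssim \uplambda^{-1}\|\partial\varphi\|_{L^\infty}$ from the Moser-type bound. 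For \eqref{E:SUBTRACTLOWFREQUENCIESPRODUCTESTIMATE}, the sum $\sum_{\uplambda>\upnu}\uplambda^{-1}$ is geometric and controlled by $\upnu^{-1}$. For \eqref{E:FREQUENCYOFAPRODUCTMINUSLOWTIMESFREQUENCYESTIMATE}, decompose $\gensmoothfunction\circ\varphi\cdot\partial G = T_{\gensmoothfunction\circ\varphi}\partial G + T_{\partial G}(\gensmoothfunction\circ\varphi) + R$ and observe that the quantity $P_\upnu[\gensmoothfunction\circ\varphi\cdot\partial G] - P_{\leq\upnu}(\gensmoothfunction\circ\varphi)\cdot P_\upnu\partial G$ retains precisely the off-diagonal pieces, which split into (a) the high-high piece $\sum_{\uplambda>\upnu}P_\upnu[P_\uplambda(\gensmoothfunction\circ\varphi)\cdot P_\uplambda\partial G]$ giving the tail sum on the third line of RHS~\eqref{E:FREQUENCYOFAPRODUCTMINUSLOWTIMESFREQUENCYESTIMATE}, (b) the paraproduct $T_{\partial G}(\gensmoothfunction\circ\varphi)$ restricted to frequency $\upnu$ giving the second term (after recognizing $\partial(\gensmoothfunction\circ\varphi)=\gensmoothfunction'\circ\varphi\cdot\partial\varphi$ and integrating by parts at the Littlewood--Paley level), and (c) a boundary term bounded by $\|\partial\varphi\|_{L^\infty}\|P_\upnu G\|_{L^2}$. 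The convolution bound \eqref{E:CONVOLUTIONESTIMATE} is Young's inequality for discrete convolutions on $\ell^2$ against the summable kernel $k_j := 2^{-\varepsilon j}\mathbf{1}_{j>0}$. The main bookkeeping obstacle in this whole lemma is tracking the Moser-type composition bounds uniformly in frequency; all the analytic input is otherwise classical.
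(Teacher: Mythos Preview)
Your proposal is correct and follows the same standard Littlewood--Paley/paraproduct toolkit that underlies the paper's proof; the paper itself simply cites \cite{qW2014b} and \cite{qW2017} for most estimates and gives exactly your H\"older-plus-Sobolev arguments for \eqref{E:PRODUCTOFTWOH1FUNCTIONSINL2}--\eqref{E:PRODUCTOFTHREEH1FUNCTIONSINL2} and your Young's-convolution argument for \eqref{E:CONVOLUTIONESTIMATE}. One minor bookkeeping slip: in \eqref{E:CONVOLUTIONESTIMATE} the convolution kernel in logarithmic coordinates is $2^{-(1+\varepsilon)j}\mathbf{1}_{j>0}$ rather than $2^{-\varepsilon j}\mathbf{1}_{j>0}$, but both are $\ell^1$ so the conclusion is unaffected.
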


\begin{proof}
\eqref{E:LAMBDAONEPLUSEPSILONL2NORMEQUIVALENTTOLAMBDAEPSILONL2NORMOFGRADF} is a basic result in harmonic analysis; see, e.g., 
\cite{hBjyDrD2011}*{Chapter~2}.
\eqref{E:LAMBDAEPSILONFGPRODUCTESTIMATEINVOLVINGONLYSOBOLEV} is proved in \cite{qW2014b}*{Lemma~17}.
\eqref{E:LAMBDAEPSILONFGRADGPRODUCTESTIMATEINVOLVINGLINFINITY} follows from the proof of
\cite{qW2014b}*{Lemma~19}, which yielded a similar estimate, differing only in the following minor fashion:
the terms
$\| \partial G \|_{H^{\varepsilon}(\Sigma_t)}$
and
$\| \partial F \|_{H^{\varepsilon}(\Sigma_t)}$
on the right-hand side were replaced, respectively, with
$\| G \|_{H^{1+\varepsilon}(\Sigma_t)}$
and
$\| F \|_{H^{1+\varepsilon}(\Sigma_t)}$.
\eqref{E:LAMBDAEPSILONPRODOFTHREETERMSL2ESTIMATEINTERMSOFH1PLUSEPSILONANDH1NORMS} is proved as \cite{qW2014b}*{Lemma~18}.
\eqref{E:FREQUENCYPROJECTEDLINFINITYSMOOTHFUNCTIONPRODUCTESTIMATE} follows from
the proof of \cite{qW2017}*{Equation~(8.2)}
and the standard Sobolev embedding estimate 
$\| G \|_{L^6(\Sigma_t)} \lesssim \| G \|_{H^1(\Sigma_t)}$.
\eqref{E:PRODUCTOFTHREEH1FUNCTIONSINL2}
follows from the H\"{o}lder estimate
$
\| 
		G_1 \cdot G_2 \cdot G_3
	\|_{L^2(\Sigma_t)}
\leq
\| G_1 \|_{L^6(\Sigma_t)}
		\| G_2 \|_{L^6(\Sigma_t)}
		\| G_3 \|_{L^6(\Sigma_t)}
$
and the Sobolev embedding estimate 
$\| G_i \|_{L^6(\Sigma_t)} \lesssim \| G_i \|_{H^1(\Sigma_t)}$,
while \eqref{E:PRODUCTOFTWOH1FUNCTIONSINL2}
follows from the H\"{o}lder estimate
$
\| 
		G_1 \cdot G_2 
	\|_{L^2(\Sigma_t)}
\leq
\| G_1 \|_{L^4(\Sigma_t)}
\| G_2 \|_{L^4(\Sigma_t)}
$
and the Sobolev embedding estimate 
$\| G_i \|_{L^4(\Sigma_t)} \lesssim \| G_i \|_{H^1(\Sigma_t)}$.
Similarly \eqref{E:INTERPOLATIONPRODUCTOFTWOH1FUNCTIONSINL2},
follows from the H\"{o}lder estimate
$
\| 
		G_1 \cdot G_2 
\|_{L^2(\Sigma_t)}
\leq
\| G_1 \|_{L^3(\Sigma_t)}
\| G_2 \|_{L^6(\Sigma_t)}
$,
the Sobolev embedding estimate 
$\| G_2 \|_{L^6(\Sigma_t)} \lesssim \| G_2\|_{H^1(\Sigma_t)}$,
and the Sobolev interpolation estimate
$
\| G_1 \|_{L^3(\Sigma_t)}
\lesssim
\| G_1 \|_{L^2(\Sigma_t)}^{1/2}
\| G_1 \|_{H^1(\Sigma_t)}^{1/2}
$.
With the help of the Sobolev embedding result
$
\| \partial \varphi \|_{L^6(\Sigma_t)}
\lesssim
\| \partial \varphi \|_{H^1(\Sigma_t)}$, 
the estimate \eqref{E:LAMBDAEPSILONFGPRODUCTESTIMATEINVOLVINGNOL2NORMOFF}
follows from a straightforward adaptation of the proof of
\cite{qW2017}*{Equation~(8.1)}, which provided a similar estimate in the case $0 < \varepsilon < 1/2$.
The estimates \eqref{E:SUBTRACTLOWFREQUENCIESPRODUCTESTIMATE} and \eqref{E:FREQUENCYOFAPRODUCTMINUSLOWTIMESFREQUENCYESTIMATE}
follow from the proof of \cite{qW2017}*{Lemma~2.4}. To obtain \eqref{E:CONVOLUTIONESTIMATE}, we first observe that 
	$
	\upnu^{1 + \varepsilon}
	\sum_{\uplambda > \upnu}
	\uplambda^{-1}
	A_{\uplambda}
	=
	\sum_{\uplambda > \upnu}
	\left(\frac{\uplambda}{\upnu}\right)^{-(1 + \varepsilon)}
	\uplambda^{\varepsilon}
	A_{\uplambda}
	=
	(\widetilde{A}*B)_{\upnu}
	$,
	where $\widetilde{A}$ denotes the dyadic sequence $\widetilde{A}_{\uplambda} := \uplambda^{\varepsilon} A_{\uplambda}$,  
	$B$ denotes the dyadic sequence $B_{\uplambda} := \mathbf{1}_{[1,\infty)}(\uplambda) \uplambda^{-(1+\varepsilon)}$,
	$\mathbf{1}_{[1,\infty)}(\uplambda)$ denotes the characteristic function of the dyadic interval $[1,\infty)$,
	and $(\widetilde{A}*B)_{\upnu}$ denotes the convolution of $\widetilde{A}$ and $B$, viewed as a function of $\upnu$.
	Thus, from Young's $L^2 * L^1 \rightarrow L^2$ convolution inequality
	and the bound $\| B_{\uplambda} \|_{\ell_{\uplambda}^1} \lesssim 1$,
	we deduce that
	$\| \widetilde{A} * B\|_{\ell_{\upnu}^2} \lesssim \| \widetilde{A}_{\upnu} \|_{\ell_{\upnu}^2}$,
	which is the desired bound.
\end{proof}

\subsubsection{Product and commutator estimates estimates for the compressible Euler equations}
\label{SSS:PRODUCTANDCOMMUTATORFORCOMPRESSIBLEEULER}
In the next lemma, we derive bounds that are sufficient for controlling 
the error terms in the top-order energy-elliptic estimates of Prop.\,\ref{P:TOPORDERENERGYESTIMATES}
and the top-order energy estimates along null hypersurfaces
of Prop.\,\ref{P:ENERGYESTIMATESALONGNULLHYPERSURFACES}.

\begin{lemma}[Product and commutator estimates estimates for the compressible Euler equations]
	\label{L:PRODUCTANDCOMMUTATORFORCOMPRESSIBLEEULER}
	Under the bootstrap assumptions of Sect.\,\ref{S:DATAANDBOOTSTRAPASSUMPTION}
	and the $H^2(\Sigma_t)$ energy estimates of Prop.\,\ref{P:PRELIMINARYENERGYANDELLIPTICESTIMATES},
	for solutions to the equations of Prop.\,\ref{P:GEOMETRICWAVETRANSPORT},
	the inhomogeneous terms from the equations of Lemma~\ref{L:FREQUENCYPROJECTEDEVOLUTION}	
	verify the following estimates for $t \in [0,\Tboot]$,
	where the implicit constants are allowed to depend in a continuous increasing fashion on
	the data norms
	$
	\| (\LogDensity,\vec{v},\vec{\vortrenormalized}) \|_{H^{\Sob}(\Sigma_0)}
	+
	\| \Ent \|_{H^{\Sob+1}(\Sigma_0)}
	$.
	
	\medskip
	
	\noindent \underline{\textbf{Frequency-summed control of the inhomogeneous terms}}:
	The following estimates hold, where in 
	$
	\|
		\cdot
	\|_{\ell_{\upnu}^2 L^2(\Sigma_t)}
	$,
	the $\ell_{\upnu}^2$-seminorm is taken over dyadic frequencies $\upnu \geq 1$:
	\begin{align} \label{E:PRODUCTANDCOMMUTATORESTIMATESFORWAVEEQUATIONS}
		&
		\|
			\upnu^{\Sob-1} \hat{\remainder}_{(\Psi);\upnu}
		\|_{\ell_{\upnu}^2 L^2(\Sigma_t)},
			\,
		\|
			\upnu^{\Sob-1} \remainder_{(\Psi);\upnu}
		\|_{\ell_{\upnu}^2 L^2(\Sigma_t)},
			\\
		&
		\|
			\upnu^{\Sob-2} \partial \hat{\remainder}_{(\Psi);\upnu}
		\|_{\ell_{\upnu}^2 L^2(\Sigma_t)},
			\,
		\|
			\upnu^{\Sob-2} \partial \remainder_{(\Psi);\upnu}
		\|_{\ell_{\upnu}^2 L^2(\Sigma_t)},
		\,
		\|
			\upnu^{\Sob-2} \anotherremainder_{(\pmb{\partial} \Psi);\upnu}
		\|_{\ell_{\upnu}^2 L^2(\Sigma_t)}
			\notag \\
		& \lesssim 
			\| \partial (\vec{\VortVort},\DivGradEnt) \|_{H^{\Sob-2}(\Sigma_t)}
			+
			\left\lbrace
				\| \pmb{\partial} \vec{\Psi} \|_{L^{\infty}(\Sigma_t)} 
				+
				1
			\right\rbrace
			\| \pmb{\partial} \vec{\Psi} \|_{H^{\Sob-1}(\Sigma_t)}
			+
				\| \pmb{\partial} \vec{\Psi} \|_{L^{\infty}(\Sigma_t)} 
				+
				1,
				\notag
		\end{align}

	\begin{align} \label{E:PRODUCTANDCOMMUTATORESTIMATESFORMODVORTVORTANDMODDIVGRADENT}
		&
		\|
			\upnu^{\Sob-1} \remainder_{(\VortVort^i);\upnu}
		\|_{\ell_{\upnu}^2 L^2(\Sigma_t)},
			\,
		\|
			\upnu^{\Sob-1} \remainder_{(\DivGradEnt);\upnu}
		\|_{\ell_{\upnu}^2 L^2(\Sigma_t)},
			\\
		& \|
			\upnu^{\Sob-2} \partial \remainder_{(\VortVort^i);\upnu}
		\|_{\ell_{\upnu}^2 L^2(\Sigma_t)},
			\,
		\|
			\upnu^{\Sob-2} \partial \remainder_{(\DivGradEnt);\upnu}
		\|_{\ell_{\upnu}^2 L^2(\Sigma_t)},
			\notag \\
		&
		\|
			\upnu^{\Sob-2} \anotherremainder_{(\pmb{\partial} \VortVort^i);\upnu}
		\|_{\ell_{\upnu}^2 L^2(\Sigma_t)},
			\,
		\|
			\upnu^{\Sob-2} \anotherremainder_{(\pmb{\partial} \DivGradEnt);\upnu}
		\|_{\ell_{\upnu}^2 L^2(\Sigma_t)}
			\notag \\
		& \lesssim 
			\left\lbrace
				\| \pmb{\partial} \vec{\Psi} \|_{L^{\infty}(\Sigma_t)} 
				+
				1
			\right\rbrace
			\| \partial (\vec{\vortrenormalized}, \vec{\GradEnt}) \|_{H^{\Sob-1}(\Sigma_t)}
			\notag \\
		& \ \
			+
			\left\lbrace
				\| \pmb{\partial} \vec{\Psi} \|_{L^{\infty}(\Sigma_t)} 
				+
				\| \partial (\vec{\vortrenormalized}, \vec{\GradEnt})\|_{L^{\infty}(\Sigma_t)} 
				+
				1
			\right\rbrace
			\| \pmb{\partial} \vec{\Psi} \|_{H^{\Sob-1}(\Sigma_t)}
				\notag \\
		& \ \	
			+
			\| \pmb{\partial} \vec{\Psi} \|_{L^{\infty}(\Sigma_t)}
			+
			1,
			\notag
	\end{align}

	\begin{align} \label{E:PRODUCTANDCOMMUTATORESTIMATESFORMODDIVVORT}
		\|
			\upnu^{\Sob-1} \remainder_{(\dive \vortrenormalized);\upnu}
		\|_{\ell_{\upnu}^2 L^2(\Sigma_t)},
			\,
		\|
			\upnu^{\Sob-2} \partial \remainder_{(\dive \vortrenormalized);\upnu}
		\|_{\ell_{\upnu}^2 L^2(\Sigma_t)}
		& \lesssim 
			\| \pmb{\partial} \vec{\Psi} \|_{H^{\Sob-1}(\Sigma_t)}
			+
			1.
	\end{align}

\noindent \underline{\textbf{Control of $\curl \vortrenormalized$ and $\dive \GradEnt$ in terms of the modified fluid variables}}:
The following estimates hold, where the modified fluid variables 
$\VortVort$ and $\DivGradEnt$ are as in Def.\,\ref{D:MODIFIEDFLUIDVARIABLES}:
	
	\begin{subequations}
	\begin{align} \label{E:TOPORDERESTIMATEFORCURLVORTINTERMSOFMODVORT}
		\| \Lambda^{\Sob-1} \curl \vortrenormalized \|_{L^2(\Sigma_t)} 
		& \lesssim 
		\| \partial \vec{\VortVort} \|_{H^{\Sob-2}(\Sigma_t)} 
		+
		\| \partial \vec{\Psi} \|_{H^{\Sob-1}(\Sigma_t)} 
		+
		1,	
		\\
		\| \Lambda^{\Sob-1} \dive \GradEnt \|_{L^2(\Sigma_t)} 
		& \lesssim 
		\| \partial \DivGradEnt \|_{H^{\Sob-2}(\Sigma_t)} 
		+
		\| \partial \vec{\Psi} \|_{H^{\Sob-1}(\Sigma_t)} 
		+
		1.
		\label{E:TOPORDERESTIMATEFORDIVGRADENTINTERMSOFMODENT}
	\end{align}
	\end{subequations}
\end{lemma}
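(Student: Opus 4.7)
\medskip

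\noindent\textbf{Proof proposal.} The plan is to treat the four groups of estimates in the order in which they are used, relying throughout on the preliminary product and commutator estimates of Lemma~\ref{L:PRELIMINARYPRODUCTANDCOMMUTATORESTIMATES} (with the choice $\varepsilon := \Sob - 2 \in (0,1)$), the $H^{2}$-bound of Prop.~\ref{P:PRELIMINARYENERGYANDELLIPTICESTIMATES}, and the fact that by the bootstrap assumption \eqref{E:BOOTSOLUTIONDOESNOTESCAPEREGIMEOFHYPERBOLICITY} all smooth functions $\gensmoothfunction(\vec{\Psi})$, $\gensmoothfunction(\vec{\Psi},\vec{\vortrenormalized},\vec{\GradEnt})$, together with their state-space derivatives, are uniformly bounded on $[0,\Tboot]\times \mathbb{R}^3$.

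First I would dispose of the elliptic identities \eqref{E:TOPORDERESTIMATEFORCURLVORTINTERMSOFMODVORT}-\eqref{E:TOPORDERESTIMATEFORDIVGRADENTINTERMSOFMODENT}. Solving the algebraic relations in Def.~\ref{D:RENORMALIZEDCURLOFSPECIFICVORTICITY} for $\Flatcurl \vec{\vortrenormalized}$ and $\Flatdiv \vec{\GradEnt}$ produces schematic expressions of the form $\linsmoothfunction(\vec{\Psi})[\vec{\VortVort}] + \quadsmoothfunction(\vec{\Psi},\vec{\GradEnt})[\pmb{\partial}\vec{\Psi},\pmb{\partial}\vec{\Psi}] + \linsmoothfunction(\vec{\Psi},\vec{\GradEnt})[\pmb{\partial}\vec{\Psi}]$ and similarly for $\Flatdiv \vec{\GradEnt}$. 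Applying $\Lambda^{\Sob-1}$ and using the equivalence \eqref{E:LAMBDAONEPLUSEPSILONL2NORMEQUIVALENTTOLAMBDAEPSILONL2NORMOFGRADF} together with the product estimates \eqref{E:LAMBDAEPSILONFGPRODUCTESTIMATEINVOLVINGNOL2NORMOFF}-\eqref{E:LAMBDAEPSILONPRODOFTHREETERMSL2ESTIMATEINTERMSOFH1PLUSEPSILONANDH1NORMS} yields the stated bounds, since the factors $\vec{\GradEnt}$ and $\vec{\vortrenormalized}$ are controlled at the $H^{2}$-level by Prop.~\ref{P:PRELIMINARYENERGYANDELLIPTICESTIMATES}.

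Next I would handle \eqref{E:PRODUCTANDCOMMUTATORESTIMATESFORWAVEEQUATIONS}. Writing $\hat{\remainder}_{(\Psi);\upnu}$ as in \eqref{E:REMAINDERTERMFREQUENCYPROJECTEDCOVARIANTWAVEVE}, the leading piece $P_{\upnu}\inhom_{(\Psi)}$ involves the schematic sum $\linsmoothfunction(\vec{\Psi})[\vec{\VortVort},\DivGradEnt] + \quadsmoothfunction(\vec{\Psi})[\pmb{\partial}\vec{\Psi},\pmb{\partial}\vec{\Psi}]$, whose $\Lambda^{\Sob-1}$-norm is bounded using \eqref{E:LAMBDAEPSILONFGPRODUCTESTIMATEINVOLVINGNOL2NORMOFF} (for the $\vec{\VortVort},\DivGradEnt$ pieces) and \eqref{E:LAMBDAEPSILONFGPRODUCTESTIMATEINVOLVINGONLYSOBOLEV}-\eqref{E:LAMBDAEPSILONFGRADGPRODUCTESTIMATEINVOLVINGLINFINITY} (for the derivative-quadratic pieces). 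The two correction terms in \eqref{E:REMAINDERTERMFREQUENCYPROJECTEDCOVARIANTWAVEVE} — one the ``high-frequency part of the metric times $P_{\upnu}\partial^2\Psi$'' and the other the Kato--Ponce-type commutator — are bounded by \eqref{E:SUBTRACTLOWFREQUENCIESPRODUCTESTIMATE} and \eqref{E:FREQUENCYOFAPRODUCTMINUSLOWTIMESFREQUENCYESTIMATE} applied with $\gensmoothfunction\circ\varphi = (\gfour^{-1})^{\alpha\beta}(\vec{\Psi})$ and $G = \partial\Psi$; the dyadic tail appearing in \eqref{E:FREQUENCYOFAPRODUCTMINUSLOWTIMESFREQUENCYESTIMATE} is summed in $\upnu$ via the convolution estimate \eqref{E:CONVOLUTIONESTIMATE}. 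The term $\Chfour^{\alpha} P_{\upnu}\partial_{\alpha}\Psi$ contributing to $\remainder_{(\Psi);\upnu}$ is treated by the same family of product estimates, using that $\Chfour \sim \linsmoothfunction(\vec{\Psi})[\pmb{\partial}\vec{\Psi}]$. For the $\partial$-differentiated and $\anotherremainder_{(\pmb{\partial}\Psi);\upnu}$ versions, one uses the same scheme, but now with weights $\upnu^{\Sob-2}$; the only extra contribution comes from the $P_{\upnu}\{(\pmb{\partial}(\gfour^{-1})^{\alpha\beta})\partial^2\Psi\}$ term in \eqref{E:INHOMTERMTIMEDERIVATIVECOMMUTEDWAVEFREQUENCYPROJECTED}, which is a cubic-in-derivatives expression estimated via \eqref{E:LAMBDAEPSILONPRODOFTHREETERMSL2ESTIMATEINTERMSOFH1PLUSEPSILONANDH1NORMS}.

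Finally, the transport-div-curl remainders in \eqref{E:PRODUCTANDCOMMUTATORESTIMATESFORMODVORTVORTANDMODDIVGRADENT} and \eqref{E:PRODUCTANDCOMMUTATORESTIMATESFORMODDIVVORT} are handled analogously: the ``source'' parts $P_{\upnu}\mathfrak{F}_{(\VortVort^i)}$, $P_{\upnu}\mathfrak{F}_{(\DivGradEnt)}$, $P_{\upnu}\mathfrak{F}_{(\dive\vortrenormalized)}$ are decomposed according to the schematic form of these inhomogeneous terms in Prop.~\ref{P:GEOMETRICWAVETRANSPORT} and estimated using \eqref{E:LAMBDAEPSILONFGPRODUCTESTIMATEINVOLVINGNOL2NORMOFF}-\eqref{E:LAMBDAEPSILONPRODOFTHREETERMSL2ESTIMATEINTERMSOFH1PLUSEPSILONANDH1NORMS}; the commutator pieces are Kato--Ponce commutators of $v^{a}$ with $\partial_{a}\VortVort^i$ (or $\partial_{a}\DivGradEnt$), to which \eqref{E:SUBTRACTLOWFREQUENCIESPRODUCTESTIMATE}-\eqref{E:FREQUENCYOFAPRODUCTMINUSLOWTIMESFREQUENCYESTIMATE} apply with $\gensmoothfunction\circ\varphi = v^a$ and $G = \VortVort^i$ or $\DivGradEnt$. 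The $\pmb{\partial}$-differentiated variants in \eqref{E:INHOMTERMTIMEDERIVATIVECOMMUTEDMODIFIEDVORTFREQUENCYPROJECTED}-\eqref{E:INHOMTERMTIMEDERIVATIVECOMMUTEDMODIFIEDDIVGRADFREQUENCYPROJECTED} add a new $P_{\upnu}\{(\pmb{\partial}v^a)\partial_a\VortVort^i\}$-type term, but this is again a product of two $L^{\infty}$-controlled factors and an $H^{\Sob-2}$-controlled one.

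The technical crux — and the step that will require the most care — is the commutator estimate \eqref{E:FREQUENCYOFAPRODUCTMINUSLOWTIMESFREQUENCYESTIMATE} applied at top order: its right-hand side produces a dyadic tail $\sum_{\uplambda>\upnu}\uplambda^{-1}\|P_{\uplambda}\partial G\|_{L^2}$ which, after multiplying by the weight $\upnu^{\Sob-1}$ or $\upnu^{\Sob-2}$ and taking $\ell^{2}_{\upnu}$, must be absorbed by $\|\partial G\|_{H^{\Sob-1}}$ or $\|\partial G\|_{H^{\Sob-2}}$; this is precisely what \eqref{E:CONVOLUTIONESTIMATE} delivers, provided we have kept track of an $L^{\infty}$ factor of $\partial\vec{\Psi}$ to spare. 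All other error terms are strictly lower-order from the point of view of Sobolev counting and are absorbed into the ``$+1$'' on the right-hand sides, using the crude bounds already furnished by Prop.~\ref{P:PRELIMINARYENERGYANDELLIPTICESTIMATES} together with the inclusion $(\vec{\Psi},\vec{\vortrenormalized},\vec{\GradEnt})([0,\Tboot]\times\mathbb{R}^3)\subset\mathfrak{K}$.
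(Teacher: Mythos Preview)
Your proposal is correct and follows essentially the same approach as the paper: reduce everything to the product and commutator estimates of Lemma~\ref{L:PRELIMINARYPRODUCTANDCOMMUTATORESTIMATES} (with $\varepsilon = \Sob - 2$), control lower-order factors via Prop.~\ref{P:PRELIMINARYENERGYANDELLIPTICESTIMATES} and the bootstrap \eqref{E:BOOTSOLUTIONDOESNOTESCAPEREGIMEOFHYPERBOLICITY}, and sum the dyadic tail from \eqref{E:FREQUENCYOFAPRODUCTMINUSLOWTIMESFREQUENCYESTIMATE} via the convolution estimate \eqref{E:CONVOLUTIONESTIMATE}. The one point the paper makes explicit that you leave implicit is that the $\pmb{\partial}$-commuted remainders $\anotherremainder_{(\pmb{\partial}\Psi);\upnu}$, $\anotherremainder_{(\pmb{\partial}\VortVort^i);\upnu}$, $\anotherremainder_{(\pmb{\partial}\DivGradEnt);\upnu}$ produce time-derivative factors such as $\partial_t^2\vec{\Psi}$, $\partial_t\vec{\VortVort}$, $\partial_t\vec{\vortrenormalized}$, which must first be converted to spatial derivatives via the equations of Prop.~\ref{P:GEOMETRICWAVETRANSPORT} before the product estimates apply.
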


\begin{proof}
	All of these estimates are standard consequences of Lemma~\ref{L:PRELIMINARYPRODUCTANDCOMMUTATORESTIMATES}
	and we therefore prove only one representative estimate;
	we refer to \cite{qW2017}*{Lemmas~2.2, 2.3, 2.4, and 2.7} for the proof of very similar
	estimates. Specifically, we will prove \eqref{E:PRODUCTANDCOMMUTATORESTIMATESFORWAVEEQUATIONS}.
	Throughout the proof, we use the convention for implicit constants stated in the lemma.
	We will silently use our bootstrap assumption that the compressible Euler solution is contained in $\mathfrak{K}$ 
	(i.e., \eqref{E:BOOTSOLUTIONDOESNOTESCAPEREGIMEOFHYPERBOLICITY}).
	We will also silently use the estimate \eqref{E:LAMBDAONEPLUSEPSILONL2NORMEQUIVALENTTOLAMBDAEPSILONL2NORMOFGRADF},
	the estimates of Prop.\,\ref{P:PRELIMINARYENERGYANDELLIPTICESTIMATES},
	and simple estimates of the type
	$\| \vec{\Psi} \|_{H^{\Sob}(\Sigma_t)} 
	\lesssim 
	\| \vec{\Psi} \|_{H^2(\Sigma_t)} + \| \partial \vec{\Psi} \|_{H^{\Sob - 1}(\Sigma_t)}
	\lesssim 
	1 + \| \partial \vec{\Psi} \|_{H^{\Sob - 1}(\Sigma_t)}
	$,
	the point being that by Prop.\,\ref{P:PRELIMINARYENERGYANDELLIPTICESTIMATES},
	we have already shown that $\| \vec{\Psi} \|_{H^2(\Sigma_t)} \lesssim 1$
	(and similarly for the variables $\vec{\vortrenormalized}$ and $\vec{\GradEnt}$).
	
	In proving \eqref{E:PRODUCTANDCOMMUTATORESTIMATESFORWAVEEQUATIONS},
	we will show only how to obtain the desired bound for the term
	$
	\|
		\upnu^{\Sob-1} \remainder_{(\Psi);\upnu}
	\|_{\ell_{\upnu}^2 L^2(\Sigma_t)}
	$;
	the remaining terms on LHS~\eqref{E:PRODUCTANDCOMMUTATORESTIMATESFORWAVEEQUATIONS}
	can be bounded using nearly identical arguments.
	To proceed, we start by bounding the first term $P_{\upnu} \inhom_{(\Psi)}$
	on RHS~\eqref{E:REMAINDERTERMFREQUENCYPROJECTEDCOVARIANTWAVEVE}. 
	That is, we must bound 
	$\| \upnu^{\Sob-1} P_{\upnu} \mbox{\upshape RHS}~\eqref{E:COVARIANTWAVE} \|_{\ell_{\upnu}^2 L^2(\Sigma_t)}$.
	We begin by bounding the first product on RHS~\eqref{E:COVARIANTWAVE},
	which is of the form $\mathrm{f}(\vec{\Psi}) (\vec{\VortVort},\DivGradEnt)$.
	Repeatedly using the product estimates of Lemma~\ref{L:PRELIMINARYPRODUCTANDCOMMUTATORESTIMATES}
	and appealing to Def.\,\ref{D:MODIFIEDFLUIDVARIABLES},
	we deduce (where throughout, we allow $\mathrm{f}$ to vary from line to line, 
	in particular denoting the derivatives of $\mathrm{f}$ also by $\mathrm{f}$),
	with $\mathscr{P}$ a polynomial with bounded coefficients 
	that is allowed to vary from line to line,
	that
	\begin{align} \label{E:FIRSTSTEPPRODUCTANDCOMMUTATORESTIMATESFORWAVEEQUATIONS}
	\|
		\Lambda^{\Sob-1} [\mathrm{f}(\vec{\Psi}) (\vec{\VortVort},\DivGradEnt)]
	\|_{L^2(\Sigma_t)}
	& 
	\lesssim
	\|
		\Lambda^{\Sob-2}  \partial [\mathrm{f}(\vec{\Psi}) (\vec{\VortVort},\DivGradEnt)]
	\|_{L^2(\Sigma_t)}
			\\
& 
	\lesssim
	\|
		\Lambda^{\Sob-2} [\mathrm{f}(\vec{\Psi}) \partial (\vec{\VortVort},\DivGradEnt)]
	\|_{L^2(\Sigma_t)}
	+
	\|
		\Lambda^{\Sob-2}  [\mathrm{f}(\vec{\Psi}) \partial \vec{\Psi} \cdot (\vec{\VortVort},\DivGradEnt)]
	\|_{L^2(\Sigma_t)}
		\notag	\\
	\notag	\\
& 	\lesssim	
		\left\lbrace
			\| \partial (\vec{\VortVort},\DivGradEnt) \|_{H^{\Sob-2}(\Sigma_t)}
			+
			1
		\right\rbrace
		\mathscr{P}
			\left( 
			\|
				\partial \vec{\Psi} 
			\|_{H^1(\Sigma_t)},
			\|
				(\vec{\VortVort},\DivGradEnt)
			\|_{H^1(\Sigma_t)}
			\right)	
		\notag
			\\
& 	\lesssim	
		\| \partial (\vec{\VortVort},\DivGradEnt) \|_{H^{\Sob-2}(\Sigma_t)}
		+
		1
		\notag
	\end{align}
	as desired. The second product on RHS~\eqref{E:COVARIANTWAVE}
	is of the form $\mathrm{f}(\vec{\Psi}) \cdot \pmb{\partial} \vec{\Psi} \cdot \pmb{\partial} \vec{\Psi}$.
	Thus, using the product estimates of Lemma~\ref{L:PRELIMINARYPRODUCTANDCOMMUTATORESTIMATES}
	and the bound $\| \vec{\Psi} \|_{H^2(\Sigma_t)} \lesssim 1$,
	we deduce that
		\begin{align} \label{E:SECONDSTEPPRODUCTANDCOMMUTATORESTIMATESFORWAVEEQUATIONS}
	\|
		\Lambda^{\Sob-1} [\mathrm{f}(\vec{\Psi}) \cdot \pmb{\partial} \vec{\Psi} \cdot \pmb{\partial} \vec{\Psi}]
	\|_{L^2(\Sigma_t)}
	& 
	\lesssim
	\|
		\Lambda^{\Sob-2}  \partial [\mathrm{f}(\vec{\Psi}) \cdot \pmb{\partial} \vec{\Psi} \cdot \pmb{\partial} \vec{\Psi}]
	\|_{L^2(\Sigma_t)}
			\\
& 
	\lesssim
	\|
		\Lambda^{\Sob-2} [\mathrm{f}(\vec{\Psi}) \pmb{\partial} \vec{\Psi} \cdot \partial \pmb{\partial} \vec{\Psi}]
	\|_{L^2(\Sigma_t)}
	+
	\|
		\Lambda^{\Sob-2}  [\mathrm{f}(\vec{\Psi}) \partial \vec{\Psi} \cdot \pmb{\partial} \vec{\Psi} \cdot \pmb{\partial} \vec{\Psi}]
	\|_{L^2(\Sigma_t)}
		\notag	\\
& 	\lesssim	
		\|
			\mathrm{f}(\vec{\Psi}) \pmb{\partial} \vec{\Psi}
		\|_{L^{\infty}(\Sigma_t)}
		\|
			\pmb{\partial} \vec{\Psi}
		\|_{H^{\Sob-1}(\Sigma_t)}
		+
		\|
			\pmb{\partial} \vec{\Psi} 
		\|_{L^{\infty}(\Sigma_t)}
		\|
			\partial [\mathrm{f}(\vec{\Psi}) \pmb{\partial} \vec{\Psi}]
		\|_{H^{\Sob-2}(\Sigma_t)}
		\notag
			\\
	& \ \
		+
		\|
			\mathrm{f}(\vec{\Psi}) \partial \vec{\Psi} 
		\|_{H^{\Sob-1}(\Sigma_t)}
		\|
			\pmb{\partial} \vec{\Psi} 
		\|_{H^1(\Sigma_t)}^2
		\notag
		    \\
	& \ \
		+
		\|
			\pmb{\partial} \vec{\Psi}
		\|_{H^{\Sob-1}(\Sigma_t)}
		\|
			\mathrm{f}(\vec{\Psi}) \partial \vec{\Psi} 
		\|_{H^1(\Sigma_t)}
		\|
			\pmb{\partial} \vec{\Psi} 
		\|_{H^1(\Sigma_t)}
		\notag
			\\
	& \lesssim
		\|
			\pmb{\partial} \vec{\Psi} 
		\|_{H^{\Sob-1}(\Sigma_t)}
		\left\lbrace
			\|
				\pmb{\partial} \vec{\Psi}
			\|_{L^{\infty}(\Sigma_t)}
			+
			\mathscr{P}
			\left( 
			\|
				\pmb{\partial} \vec{\Psi} 
			\|_{H^1(\Sigma_t)}
			\right)	
		\right\rbrace
			\notag \\
	& \ \
		+
		\left\lbrace
			\|
				\pmb{\partial} \vec{\Psi}
			\|_{L^{\infty}(\Sigma_t)}
			+
			1
		\right\rbrace
		\mathscr{P}
			\left( 
			\|
				\pmb{\partial} \vec{\Psi}
			\|_{H^1(\Sigma_t)}
			\right)	
		\notag
			\\
	& \lesssim
		\|
			\pmb{\partial} \vec{\Psi}
		\|_{H^{\Sob-1}(\Sigma_t)}
		\left\lbrace
			\|
				\pmb{\partial} \vec{\Psi}
			\|_{L^{\infty}(\Sigma_t)}
			+
			1	
		\right\rbrace
		+
		\|
			\pmb{\partial} \vec{\Psi}
		\|_{L^{\infty}(\Sigma_t)}
		+
		1
		\notag
	\end{align}
	as desired.
	It remains for us to bound the two sums on RHS~\eqref{E:REMAINDERTERMFREQUENCYPROJECTEDCOVARIANTWAVEVE} 
	in the norm $\| \upnu^{\Sob-1} \cdot \|_{\ell_{\upnu}^2 L^2(\Sigma_t)}$.
	To handle the first sum, 
	we use \eqref{E:SUBTRACTLOWFREQUENCIESPRODUCTESTIMATE}
	with $\vec{\Psi}$ in the role of $\varphi$ and $\partial \pmb{\partial} \vec{\Psi}$ in the role of $G$
	to deduce
	\begin{align} \label{E:FOURTHSTEPPRODUCTANDCOMMUTATORESTIMATESFORWAVEEQUATIONS}
		\sum_{(\alpha,\beta) \neq (0,0)}
		\|
		\upnu^{\Sob-1}
		\left\lbrace
			(\gfour^{-1})^{\alpha \beta}
			-
			P_{\leq \upnu} (\gfour^{-1})^{\alpha \beta}
		\right\rbrace
		P_{\upnu} \partial_{\alpha} \partial_{\beta} \Psi
		\|_{\ell_{\upnu}^2 L^2(\Sigma_t)}
		& \lesssim
		\| \partial \vec{\Psi} \|_{L^{\infty}(\Sigma_t)} 
		\| 
			\Lambda^{\Sob-2}
			\partial \pmb{\partial} \vec{\Psi} 
		\|_{L^2(\Sigma_t)}
			\\
		& \lesssim
		\| \partial \vec{\Psi} \|_{L^{\infty}(\Sigma_t)}
		\| 
			\pmb{\partial} \vec{\Psi} 
		\|_{H^{\Sob-1}(\Sigma_t)}
		\notag
		\end{align}	
		as desired.
		To bound the last sum on RHS~\eqref{E:REMAINDERTERMFREQUENCYPROJECTEDCOVARIANTWAVEVE}
	in the norm $\| \upnu^{\Sob-1} \cdot \|_{\ell_{\upnu}^2 L^2(\Sigma_t)}$,
	we use \eqref{E:FREQUENCYOFAPRODUCTMINUSLOWTIMESFREQUENCYESTIMATE}
	with $\vec{\Psi}$ in the role of $\varphi$ and $\pmb{\partial} \vec{\Psi}$ in the role of $G$,
	the bound
	$
	\| 
			\Lambda^{\Sob-1} [\gensmoothfunction(\vec{\Psi}) \cdot \pmb{\partial} \vec{\Psi}] 
		\|_{L^2(\Sigma_t)}
		\lesssim
		\|
			\pmb{\partial} \vec{\Psi} 
		\|_{H^{\Sob-1}(\Sigma_t)}
		+
		1
	$
	(which follows from the product estimates of Lemma~\ref{L:PRELIMINARYPRODUCTANDCOMMUTATORESTIMATES}
	and the bound $\| \vec{\Psi} \|_{H^2(\Sigma_t)} \lesssim 1$),
	and the convolution estimate \eqref{E:CONVOLUTIONESTIMATE}
	with 
	$
	\| P_{\uplambda} \partial \pmb{\partial} \vec{\Psi} \|_{L^2(\Sigma_t)}
	$
	in the role of $A_{\uplambda}$
	to deduce
	\begin{align} \label{E:LASTSTEPPRODUCTANDCOMMUTATORESTIMATESFORWAVEEQUATIONS}
		&
		\sum_{(\alpha,\beta) \neq (0,0)}
		\|
		\upnu^{\Sob-1}
		\left\lbrace
		\left(P_{\leq \upnu} (\gfour^{-1})^{\alpha \beta} 
		\right)
		P_{\upnu}
		\partial_{\alpha} \partial_{\beta} \Psi
		-
		P_{\upnu} 
		\left[(\gfour^{-1})^{\alpha \beta} \partial_{\alpha} \partial_{\beta} \Psi
		\right]
		\right\rbrace
		\|_{\ell_{\upnu}^2 L^2(\Sigma_t)}
			\\
		& \lesssim
		\| \pmb{\partial} \vec{\Psi} \|_{L^{\infty}(\Sigma_t)} 
		\| 
			\Lambda^{\Sob-1} [\gensmoothfunction(\vec{\Psi}) \cdot \pmb{\partial} \vec{\Psi}] 
		\|_{L^2(\Sigma_t)}
		\notag \\
	& \ \
				+
				\| \partial \vec{\Psi} \|_{L^{\infty}(\Sigma_t)} 
				\left\|
				\upnu^{\Sob-1}
				\sum_{\uplambda > \upnu}
				\uplambda^{-1}
				\| P_{\uplambda} \partial \pmb{\partial} \vec{\Psi} \|_{L^2(\Sigma_t)}
				\right\|_{\ell_{\upnu}^2}
				\notag
					\\
	& \lesssim
	\| \pmb{\partial} \vec{\Psi} \|_{L^{\infty} (\Sigma_t)} 
	\| 
		\pmb{\partial} \vec{\Psi} 
	\|_{H^{\Sob-1}(\Sigma_t)}
	+
	\| \pmb{\partial} \vec{\Psi} \|_{L^{\infty} (\Sigma_t)} 
		\notag
		\end{align}	
		as desired.
		
		The remaining estimates in the lemma can be proved using similar arguments,
		and we omit the details.
		We clarify that 
		\textbf{i)}
		to derive some of the estimates in their stated form,
		one must use Def.~\ref{D:MODIFIEDFLUIDVARIABLES}
		to express $\vec{\VortVort}$ and $\DivGradEnt$
		in terms of the other solution variables
		and \textbf{ii)}
		in order to bound the term
		$
		\|
			\upnu^{\Sob-2} \anotherremainder_{(\pmb{\partial} \Psi);\upnu}
		\|_{\ell_{\upnu}^2 L^2(\Sigma_t)}
		$
	on LHS~\eqref{E:PRODUCTANDCOMMUTATORESTIMATESFORWAVEEQUATIONS} 
	and the terms
	$
	\|
			\upnu^{\Sob-2} \anotherremainder_{(\pmb{\partial} \VortVort^i);\upnu}
		\|_{\ell_{\upnu}^2 L^2(\Sigma_t)}	
	$
	and
	$
		\|
			\upnu^{\Sob-2} \anotherremainder_{(\pmb{\partial} \DivGradEnt);\upnu}
		\|_{\ell_{\upnu}^2 L^2(\Sigma_t)}
	$
	on LHS~\eqref{E:PRODUCTANDCOMMUTATORESTIMATESFORMODVORTVORTANDMODDIVGRADENT} using arguments of the type given above,
	one must derive Sobolev estimates for products featuring
	the time-derivative-involving terms
	$
	\partial_t^2 \vec{\Psi}
	$,
	$
	\partial_t \vec{\VortVort}
	$,
	$
	\partial_t \DivGradEnt
	$,
	$
	\partial_t \vec{\vortrenormalized}
	$,
	and
	$
	\partial_t \vec{\GradEnt}
	$.
	These time-derivative-involving terms can be handled 
	by first using the equations of Prop.\,\ref{P:GEOMETRICWAVETRANSPORT} 
	to solve for the relevant time derivatives 
	in terms of spatial derivatives and then using 
	the estimates of Lemma~\ref{L:PRELIMINARYPRODUCTANDCOMMUTATORESTIMATES},
	as we did above.
		
\end{proof}

\subsection{ \texorpdfstring{Proof of Proposition~\ref{P:TOPORDERENERGYESTIMATES}}{Proof of Proposition ref  P:TOPORDERENERGYESTIMATES}}
\label{SS:PROOFOFTOPORDERENERGYESTIMATES}
Throughout the proof, we rely on the remarks made in the first paragraph of the proof of
Lemma~\ref{L:PRODUCTANDCOMMUTATORFORCOMPRESSIBLEEULER}. 
In particular, we silently use the already proven below-top-order estimates \eqref{E:BASICH2ENERGYESTIMATE}.
Moreover, we use the convention that our implicit constants are allowed to depend on
functions $F$ of the norms of the data of the type stated on RHS~\eqref{E:TOPORDERENERGYESTIMATES};
in particular, we consider such functions of the norms of the data to be bounded by $\lesssim 1$.
Finally, whenever convenient, we consider factors of $t$ to be bounded by $\lesssim 1$.

We first note that,
for the same reasons stated at the beginning of the proof of Prop.\,\ref{P:PRELIMINARYENERGYANDELLIPTICESTIMATES}, 
the estimates for the terms 
$
\| \partial_t^2 (\LogDensity,\vec{v}) \|_{H^{\Sob-2}(\Sigma_t)}
$,
$
\sum_{k=1}^2
\| \partial_t^k \vec{\vortrenormalized} \|_{H^{\Sob-k}(\Sigma_t)}
$,
$
\sum_{k=1}^2
	\| \partial_t^k \Ent \|_{H^{\Sob+1-k}(\Sigma_t)}
$,
and
$
\| \partial_t (\vec{\VortVort},\DivGradEnt) \|_{H^{\Sob-2}(\Sigma_t)}
$ 
on LHS~\eqref{E:TOPORDERENERGYESTIMATES} follow from straightforward arguments once we have obtained
the desired estimates for the remaining terms on LHS~\eqref{E:TOPORDERENERGYESTIMATES};
we therefore omit the details for bounding these terms.

To prove the desired estimates for the remaining terms on LHS~\eqref{E:TOPORDERENERGYESTIMATES}, 
we will derive energy and elliptic estimates 
for the solution variables at fixed frequency, which satisfy the equations of Lemma~\ref{L:FREQUENCYPROJECTEDEVOLUTION}.
After summing over dyadic frequencies, this will allow us to obtain estimates for the ``controlling quantity'' $\controlling_{\Sob}(t)$ defined by
\begin{align} \label{E:TOPORDERCONTROLLINGQUANTITY}
	\controlling_{\Sob}(t)
	& := 
			\| \pmb{\partial} \vec{\Psi} \|_{H^{\Sob-1}(\Sigma_t)}^2
			+
			\| \partial (\vec{\VortVort}, \DivGradEnt) \|_{H^{\Sob-2}(\Sigma_t)}^2.
\end{align}
Our assumptions on the initial data imply that $\controlling_{\Sob}(0) \lesssim 1$, and we will use this fact throughout the proof.

The main steps in deriving a bound for $\controlling_{\Sob}(t)$ are proving the following two bounds:
\begin{align} \label{E:TOPORDERCONTROLLINGQUANTITYGRONWALLREADY}
	\controlling_{\Sob}(t)
	& \lesssim 
		1
		+
		\int_0^t
			\left\lbrace
				\| \pmb{\partial} \vec{\Psi} \|_{L^{\infty}(\Sigma_{\uptau})}
				+
				\|
					\partial (\vec{\vortrenormalized},\vec{\GradEnt})
				\|_{L^{\infty}(\Sigma_{\uptau})}
				+
				1
			\right\rbrace
			\controlling_{\Sob}(\uptau)
		\, d \uptau,
			\\
	\| \partial  (\vec{\vortrenormalized},\vec{\GradEnt}) \|_{H^{\Sob-1}(\Sigma_t)}^2
	& \lesssim
			\controlling_{\Sob}(t)
			+
			1.
			\label{E:TOPORDERDELLIPTIC}
\end{align}
Then from the bootstrap assumptions \eqref{E:BOOTSTRICHARTZ}--\eqref{E:BOOTL2LINFINITYFIRSTDERIVATIVESOFVORTICITYBOOTANDNENTROPYGRADIENT},
\eqref{E:TOPORDERCONTROLLINGQUANTITYGRONWALLREADY},
and Gr\"{o}nwall's inequality,
we deduce that for $t \in [0,\Tboot]$, we have 
$\controlling_{\Sob}(t) \lesssim 1$.
From this estimate, \eqref{E:TOPORDERDELLIPTIC},
and the below-top-order energy estimates \eqref{E:BASICH2ENERGYESTIMATE},
we conclude, in view of the remarks made above, the desired bound \eqref{E:TOPORDERENERGYESTIMATES}.

It remains for us to prove \eqref{E:TOPORDERCONTROLLINGQUANTITYGRONWALLREADY} and \eqref{E:TOPORDERDELLIPTIC}.
To prove \eqref{E:TOPORDERDELLIPTIC}, we first use the elliptic identity \eqref{E:STANDARDL2DIVCURLESTIMATES}
with $P_{\upnu} \vec{\vortrenormalized}$ and $P_{\upnu} \vec{\GradEnt}$ in the role of $V$
and equations \eqref{E:FREQUENCYPROJECTEDDIVVORTICITY} and \eqref{E:FREQUENCYPROJECTEDCURLGRADENT}
to deduce, after multiplying by $\upnu^{2(\Sob-1)}$ and summing over $\upnu \geq 1$, that
\begin{align} \label{E:FREQUENCYSUMMEDELLIPTIC}
		\| \Lambda^{\Sob-1} \partial (\vec{\vortrenormalized},\vec{\GradEnt}) \|_{L^2(\Sigma_t)}^2
		& =
			\| \upnu^{\Sob-1} \remainder_{(\dive \vortrenormalized);\upnu} \|_{\ell_{\upnu}^2 L^2(\Sigma_t)}^2
			+
			\| \Lambda^{\Sob-1} (\curl \vortrenormalized, \dive \GradEnt) \|_{L^2(\Sigma_t)}^2.
	\end{align}
	Using \eqref{E:PRODUCTANDCOMMUTATORESTIMATESFORMODDIVVORT},
	\eqref{E:TOPORDERESTIMATEFORCURLVORTINTERMSOFMODVORT},
	and \eqref{E:TOPORDERESTIMATEFORDIVGRADENTINTERMSOFMODENT},
	and appealing to definition \eqref{E:TOPORDERCONTROLLINGQUANTITY},
	we find that
	$\mbox{RHS~\eqref{E:FREQUENCYSUMMEDELLIPTIC}} \lesssim \mbox{RHS~\eqref{E:TOPORDERDELLIPTIC}}$.
	Also using Prop.\,\ref{P:PRELIMINARYENERGYANDELLIPTICESTIMATES} to deduce that
	$
	\| P_{\leq 1} \partial (\vec{\vortrenormalized},\vec{\GradEnt}) \|_{L^2(\Sigma_t)}^2 \lesssim 1$,
	we conclude the desired estimate \eqref{E:TOPORDERDELLIPTIC}.

We now derive energy estimates for the evolution equations.
To proceed, we first use equation \eqref{E:FREQUENCYPROJECTEDWAVE}
and \eqref{E:BASICENERGYINEQUALITYFORWAVEEQUATIONS} with $P_{\upnu} \vec{\Psi}$ in the role of $\varphi$
to deduce that
\begin{align} \label{E:WAVEEQUATIONFREQUENCYPROJECTEDENERGYESTIMATE}
			\| (P_{\upnu} \vec{\Psi},P_{\upnu} \partial_t \vec{\Psi}) \|_{H^1(\Sigma_t) \times L^2(\Sigma_t)}^2
			& \lesssim 
				\| (P_{\upnu} \vec{\Psi},P_{\upnu} \partial_t \vec{\Psi}) \|_{H^1(\Sigma_0) \times L^2(\Sigma_0)}^2
					\\
			& \ \
				+
				\int_0^t
						\|
							\pmb{\partial} \vec{\Psi} 
						\|_{L^{\infty}(\Sigma_{\uptau})}
						\| (P_{\upnu} \vec{\Psi},P_{\upnu} \partial_t \vec{\Psi}) \|_{H^1(\Sigma_{\uptau}) \times L^2(\Sigma_{\uptau})}^2
				\, d \uptau
				\notag	\\
			& \ \
				+
				\sum_{\iota=0}^4
				\int_0^t
					\| \hat{\remainder}_{(\Psi_{\iota});\upnu} \|_{L^2(\Sigma_{\uptau})} 
					\|\pmb{\partial} P_{\upnu} \vec{\Psi} \|_{L^2(\Sigma_{\uptau})}
				\, d \uptau.
				\notag
	\end{align}
	Multiplying \eqref{E:WAVEEQUATIONFREQUENCYPROJECTEDENERGYESTIMATE} by $\upnu^{2(\Sob-1)}$, summing over dyadic frequencies $\upnu \geq 1$, 
	using the Cauchy--Schwarz inequality for
	$\ell_{\upnu}^2$, using \eqref{E:PRODUCTANDCOMMUTATORESTIMATESFORWAVEEQUATIONS}, 
	and using Young's inequality,
	we deduce, in view of definition \eqref{E:TOPORDERCONTROLLINGQUANTITY}, that
	\begin{align} \label{E:WAVEEQUATIONFREQUENCYSUMMEDENERGYESTIMATE}
			\| \pmb{\partial} \vec{\Psi} \|_{H^{\Sob-1}(\Sigma_t)}^2
			& \lesssim 
				1 
				+
				\int_0^t
					\|
						\pmb{\partial} \vec{\Psi} 
					\|_{L^{\infty}(\Sigma_{\uptau})}
				\, d \uptau
				+
				\int_0^t
					\left\lbrace
						\|
							\pmb{\partial} \vec{\Psi} 
						\|_{L^{\infty}(\Sigma_{\uptau})}
						+
						1
					\right\rbrace
					\controlling_{\Sob}(\uptau) \, d\uptau.
		\end{align}
Similarly, using equations \eqref{E:FREQUENCYPROJECTEDTRANSPORTVORTICITYVORTICITY}, \eqref{E:FREQUENCYPROJECTEDTRANSPORTDIVGRADENTROPY},
and \eqref{E:BASICENERGYINEQUALITYFORTRANSPORTEQUATIONS} with $P_{\upnu} \vec{\VortVort}$ and $P_{\upnu} \DivGradEnt$
in the role of $\varphi$, we deduce that
\begin{align} \label{E:ENERGYINEQUALITYFORFREQUENCYPROJECTEDTRANSPORTEQUATIONS}
			\| (P_{\upnu} \vec{\VortVort},P_{\upnu} \DivGradEnt) \|_{L^2(\Sigma_t)}^2
			& \lesssim 
				\| (P_{\upnu} \vec{\VortVort},P_{\upnu} \DivGradEnt) \|_{L^2(\Sigma_0)}^2
					\\
			& \ \
				+
				\int_0^t
					\| \pmb{\partial} \vec{\Psi} \|_{L^{\infty}(\Sigma_{\uptau})} 
					\| (P_{\upnu} \vec{\VortVort},P_{\upnu} \DivGradEnt) \|_{L^2(\Sigma_{\uptau})}^2
				\, d \uptau
					\notag \\
			& \ \
				+
				\sum_{i=1}^3
				\int_0^t
					\|  P_{\upnu} \VortVort^i \|_{L^2(\Sigma_{\uptau})} \| \remainder_{(\VortVort^i);\upnu}\|_{L^2(\Sigma_{\uptau})}
				\, d \uptau
					\notag \\
			& \ \
				+
				\int_0^t
					\|  P_{\upnu} \DivGradEnt \|_{L^2(\Sigma_{\uptau})} \| \remainder_{(\DivGradEnt);\upnu}\|_{L^2(\Sigma_{\uptau})}
				\, d \uptau.
				\notag
	\end{align}
	Multiplying \eqref{E:ENERGYINEQUALITYFORFREQUENCYPROJECTEDTRANSPORTEQUATIONS} by $\upnu^{2(\Sob-1)}$, 
	summing over dyadic frequencies $\upnu \geq 1$,
	using the Cauchy--Schwarz inequality for
	$\ell_{\upnu}^2$, using \eqref{E:PRODUCTANDCOMMUTATORESTIMATESFORMODVORTVORTANDMODDIVGRADENT}, 
	using \eqref{E:TOPORDERDELLIPTIC} to bound the factor
	$
	\| \partial  (\vec{\vortrenormalized},\vec{\GradEnt}) \|_{H^{\Sob-1}(\Sigma_t)}
	$
	on RHS~\eqref{E:PRODUCTANDCOMMUTATORESTIMATESFORMODVORTVORTANDMODDIVGRADENT},
	and using Young's inequality,
	we deduce, in view of definition \eqref{E:TOPORDERCONTROLLINGQUANTITY}, that
	\begin{align} \label{E:TOPORDERENERGYINEQUALITYFORMODVARS}
			\| \partial (\vec{\VortVort}, \DivGradEnt) \|_{H^{\Sob-2}(\Sigma_t)}^2
			& \lesssim 
				1
				+
				\int_0^t	
					\| \pmb{\partial} \vec{\Psi} \|_{L^{\infty}(\Sigma_{\uptau})}
				\, d \uptau \\
		& \ \
				+
				\int_0^t
			\left\lbrace
				\| \pmb{\partial} \vec{\Psi} \|_{L^{\infty}(\Sigma_{\uptau})}
				+
				\|
					\partial (\vec{\vortrenormalized}, \vec{\GradEnt})
				\|_{L^{\infty}(\Sigma_{\uptau})}
				+
				1
			\right\rbrace
			\controlling_{\Sob}(\uptau)
			\, d \uptau.
			\notag
	\end{align}
	Finally, adding \eqref{E:WAVEEQUATIONFREQUENCYSUMMEDENERGYESTIMATE} and \eqref{E:TOPORDERENERGYINEQUALITYFORMODVARS},
	and controlling the second term on RHS~\eqref{E:TOPORDERENERGYINEQUALITYFORMODVARS} by
	using the bootstrap assumption \eqref{E:BOOTSTRICHARTZ} to infer that
	$
	\int_0^t	
					\| \pmb{\partial} \vec{\Psi} \|_{L^{\infty}(\Sigma_{\uptau})}
				\, d \uptau
	\lesssim 1
	$,
	we conclude \eqref{E:TOPORDERCONTROLLINGQUANTITYGRONWALLREADY}. 
	We have therefore proved the proposition.
	
	\hfill $\qed$

\section{Energy estimates along acoustic null hypersurfaces}
\label{S:ENERGYESTIMATESALONGNULLHYPERSURFACES}
Our main goal in this section is to derive energy estimates
for the fluid variables along acoustic null hypersurfaces 
(which we sometimes refer to as ``$\gfour$-null hypersurfaces'' to clarify their tie to the acoustical metric,
 or simply ``null hypersurfaces'' for short). 
We will use these estimates in Sect.\,\ref{S:ESTIMATESFOREIKONALFUNCTION},
when we derive quantitative control of the acoustic geometry
(for example, in the proof of Prop.\,\ref{P:ESTIMATESFORFLUIDVARIABLES}).
Compared to prior works, the main contribution of the present section is the estimate
\eqref{E:SOUNDCONEENERGYESTIMATESFORMODIFIEDVARIABLES},
which shows that the modified fluid variables
$(\vec{\VortVort},\DivGradEnt)$
can be controlled in $L^2$ up to top-order along acoustic null hypersurfaces;
as we described in point \textbf{I} of Subsect.\,\ref{SS:CONTROLOFTIMEOFEXISTENCE},
\emph{such control along acoustic null hypersurfaces is not available 
for generic top-order derivatives of the vorticity and entropy}.

\subsection{Geometric ingredients}
\label{SS:GEOMETRICINGREDIENTS}
We assume that in some subset of $[0,\Tboot] \times \mathbb{R}^3$ equal to the closure of an open set, 
$U$ is an acoustical eikonal function. More precisely, we assume that 
$U$ is a solution to the eikonal equation $(\gfour^{-1})^{\alpha \beta} \partial_{\alpha} U \partial_{\beta} U = 0$
such that $\partial_t U > 0$ and such that $U$ is smooth and non-degenerate (i.e. $|\pmb{\partial} U| \neq 0$) 
away from the integral curve of $\Transport$ emanating
from a point ${\bf{z}} \in \Sigma_T$ for some $T \in [0,\Tboot]$; see Subsect.~\ref{SS:EIKONAL} and Subsubsect.~\ref{SSS:EIKONALINTERIOR} for discussion of our choice of ${\bf{z}}$ and the integral curve.

In Sect.\,\ref{S:SETUPCONSTRUCTIONOFEIKONAL}, 
we will construct a related eikonal function, one that is equivalent to the eikonal functions considered here,
differing only in that we work with rescaled solution
variables starting in Sect.\,\ref{S:SETUPCONSTRUCTIONOFEIKONAL} (see Subsect.\,\ref{SS:RESCALEDSOLUTION} for their definition).
We let $l :=  \frac{-1}{(\gfour^{-1})^{\alpha \beta} \partial_{\alpha} U \partial_{\beta} t} > 0$
denote the null lapse\footnote{We use the symbol ``$\nulllapse$'' to denote the null lapse of the eikonal function
constructed in Sect.\,\ref{S:SETUPCONSTRUCTIONOFEIKONAL}. Moreover, starting in Sect.\,\ref{S:SETUPCONSTRUCTIONOFEIKONAL}, 
we use the symbol ``$\Lunit$'' to denote the analog of the vectorfield denoted by ``$V$'' in the present subsection. \label{FN:CHANGEOFNOTATION}}, 
and we define $V^{\alpha} := - l (\gfour^{-1})^{\alpha \beta} \partial_{\beta} U$.
Thus, $\gfour(V,V)= 0$ and $V t = 1$. We assume that the hypersurface $\mathcal{N}$ is equal to some portion of a level set of $U$.
Note that $V$ is normal to $\mathcal{N}$ and thus $\mathcal{N}$ is a $\gfour$-null hypersurface. 
We define the two-dimensional spacelike surfaces $\mathcal{S}_t := \Sigma_t \cap \mathcal{N}$.
We let $\gsphere$ denote the Riemannian metric induced by $\gfour$ on $\mathcal{S}_t$, 
we let $\angD$ denote the corresponding Levi-Civita connection, and we let
$d \varpi_{\gsphere}$ denote the volume form on $\mathcal{S}_t$ induced by $\gsphere$.

We now define acoustic null fluxes along $\mathcal{N}$.

\begin{definition}[Acoustic null fluxes]
\label{D:SOUNDCONEFLUX}
For scalar functions $\varphi$ defined on $\mathcal{N}$, 
we define the acoustic null fluxes $\mathbb{F}_{(Wave)}[\varphi;\mathcal{N}]$ 
and $\mathbb{F}_{(Transport)}[\varphi;\mathcal{N}]$
as follows, where relative to 
arbitrary coordinates on $\mathcal{S}_t$, $|\angD \varphi|_{\gsphere}^2 := (\gsphere^{-1})^{AB} \angDarg{A} \varphi \angDarg{B} \varphi$:
\begin{align} \label{E:SOUNDCONEFLUXES}
	\mathbb{F}_{(Wave)}[\varphi;\mathcal{N}]
	& := 
		\int_{\mathcal{N}}
			\left\lbrace
				(V \varphi)^2
				+
				|\angD \varphi|_{\gsphere}^2
			\right\rbrace
			\, d \varpi_{\gsphere} d t,
	&
	\mathbb{F}_{(Transport)}[\varphi;\mathcal{N}]
	& := 
		\int_{\mathcal{N}}
			\varphi^2
		 \, d \varpi_{\gsphere} d t.
\end{align}
\end{definition}

\subsection{Energy estimates along acoustic null hypersurfaces}
\label{SS:ENERGYESTIMATESALONGNULLL}
In this subsection, we establish the main energy estimate for the fluid solution variables along null hypersurfaces.
As we mentioned at the start of Sect.\,\ref{S:ENERGYESTIMATESALONGNULLHYPERSURFACES},
the main new ingredient of interest is \eqref{E:SOUNDCONEENERGYESTIMATESFORMODIFIEDVARIABLES},
whose proof relies on the special structure of the equations of Prop.\,\ref{P:GEOMETRICWAVETRANSPORT}.
In Sect.\,\ref{S:ESTIMATESFOREIKONALFUNCTION}, we will apply
Prop.\,\ref{P:ENERGYESTIMATESALONGNULLHYPERSURFACES} 
along a family of null hypersurfaces that are equal to the level sets of an acoustical eikonal function
that we construct in Subsect.\,\ref{SS:EIKONAL} 
(we denote the acoustical eikonal function by ``$u$'' starting in Sect.\,\ref{S:SETUPCONSTRUCTIONOFEIKONAL}).

\begin{proposition}[Energy estimates along acoustic null hypersurfaces]
	\label{P:ENERGYESTIMATESALONGNULLHYPERSURFACES}	
		Let $\mathcal{N}$ be any of the null hypersurface portions from Subsect.\,\ref{SS:GEOMETRICINGREDIENTS}.
		Assume that for some pair of times $0 \leq t_I < t_F \leq \Tboot$, $\mathcal{N}$ and some subsets of
		$\Sigma_{t_I}$ and $\Sigma_{t_F}$ collectively form the boundary a compact subset of $[0,\Tboot] \times \mathbb{R}^3$.
		Then under the initial data and bootstrap assumptions of Sect.\,\ref{S:DATAANDBOOTSTRAPASSUMPTION}
		and the conclusions of Prop.\,\ref{P:TOPORDERENERGYESTIMATES}, 
		the following estimates hold for $\Psi \in \lbrace \LogDensity, v^1, v^2, v^3, \Ent \rbrace$:
		\begin{align}
			\mathbb{F}_{(Wave)}[\pmb{\partial} \Psi;\mathcal{N}]
			+
			\sum_{\upnu > 1}
			\upnu^{2(\Sob-2)}
			\mathbb{F}_{(Wave)}[P_{\upnu} \pmb{\partial} \Psi;\mathcal{N}]
			& \lesssim 1.
			\label{E:SOUNDCONEENERGYESTIMATESFORWAVEVARIABLES}
		\end{align}
		
		Moreover, 
		\begin{align} \label{E:SOUNDCONEENERGYESTIMATESFORMODIFIEDVARIABLES}
			\mathbb{F}_{(Transport)}[\pmb{\partial}(\vec{\VortVort},\DivGradEnt);\mathcal{N}]
			+
			\sum_{\upnu > 1}
			\upnu^{2(\Sob-2)}
			\mathbb{F}_{(Transport)}[P_{\upnu} \pmb{\partial} (\vec{\VortVort},\DivGradEnt);\mathcal{N}]
			& \lesssim 1.
		\end{align}
\end{proposition}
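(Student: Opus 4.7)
The plan is to establish both estimates via the geometric energy method, using the material derivative vectorfield $\Transport$ as a multiplier and integrating over the compact spacetime region whose boundary is formed by $\mathcal{N}$ and subsets of $\Sigma_{t_I}$ and $\Sigma_{t_F}$. The key geometric fact, which I will use repeatedly, is that $\Transport$ is $\gfour$-timelike with $\gfour(\Transport,\Transport) = -1$, while $V$ is null with $V t = 1$, so $\gfour(\Transport,V) = -1$; thus $\Transport$ is uniformly transversal to every null hypersurface $\mathcal{N}$, which is what makes null-hypersurface fluxes coercive and what makes null boundaries suitable targets for transport-equation energy identities.

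For the wave-variable estimate \eqref{E:SOUNDCONEENERGYESTIMATESFORWAVEVARIABLES}, I would apply the multiplier identity \eqref{E:DIVERGENCEOFENERGYCURRENT} with $\mathbf{X} := \Transport$ to each frequency-projected wave $P_{\upnu} \pmb{\partial} \Psi$, which satisfies equation \eqref{E:TIMEDERIVATIVECOMMUTEDWAVEFREQUENCYPROJECTED} with source $\anotherremainder_{(\pmb{\partial}\Psi);\upnu}$. Integrating over the spacetime region bounded by the two spacelike portions and $\mathcal{N}$ and applying the divergence theorem, one obtains, modulo lower-order modifications, the identity
\begin{align*}
	\mathbb{F}_{(Wave)}[P_{\upnu}\pmb{\partial}\Psi;\mathcal{N}]
	+ \mathbb{E}[P_{\upnu}\pmb{\partial}\Psi](t_F)
	& \lesssim \mathbb{E}[P_{\upnu}\pmb{\partial}\Psi](t_I)
		+ \int_{t_I}^{t_F}\!\!\!\int_{\Sigma_\tau}
		\left(|\anotherremainder_{(\pmb{\partial}\Psi);\upnu}| + |\deform{\Transport}||\pmb{\partial} P_{\upnu}\pmb{\partial}\Psi|\right) |\pmb{\partial} P_{\upnu}\pmb{\partial}\Psi|\, d\varpi_g\, d\tau.
\end{align*}
The coercivity of $\mathbb{F}_{(Wave)}[P_{\upnu}\pmb{\partial}\Psi;\mathcal{N}]$ (controlling both $(V P_{\upnu}\pmb{\partial}\Psi)^2$ and $|\angD P_{\upnu}\pmb{\partial}\Psi|_{\gsphere}^2$) is standard given $\gfour(\Transport,V) = -1$ and $\Transport$-timelike. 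Multiplying by $\upnu^{2(\Sob-2)}$, summing in $\upnu$, invoking the commutator/product estimates \eqref{E:PRODUCTANDCOMMUTATORESTIMATESFORWAVEEQUATIONS} to bound $\anotherremainder_{(\pmb{\partial}\Psi);\upnu}$, using Cauchy--Schwarz in $\ell^2_{\upnu}$, and absorbing spacetime integrals via the already-proved top-order a priori bound from Prop.\,\ref{P:TOPORDERENERGYESTIMATES} (together with the $L^2_t L^\infty_x$ bootstrap assumption \eqref{E:BOOTSTRICHARTZ}) yields the desired estimate.

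For the transport estimate \eqref{E:SOUNDCONEENERGYESTIMATESFORMODIFIEDVARIABLES}, which is the structurally new ingredient, I would apply the Cartesian divergence theorem to the current $\mathbf{J}^{\alpha} := \varphi^2 \Transport^{\alpha}$, where $\varphi$ is, in turn, each Cartesian component of $P_{\upnu}\pmb{\partial}\vec{\VortVort}$ or $P_{\upnu}\pmb{\partial}\DivGradEnt$. These quantities satisfy the frequency-projected transport equations \eqref{E:TIMEDERIVATIVECOMMUTEDMODIFIEDVORTFREQUENCYPROJECTED}-\eqref{E:TIMEDERIVATIVECOMMUTEDMODIFIEDENTFREQUENCYPROJECTED} along $\Transport$. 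Computing $\partial_{\alpha}\mathbf{J}^{\alpha} = 2\varphi\Transport\varphi + (\partial_a v^a)\varphi^2$ and integrating over the region, the boundary integral on $\mathcal{N}$ produces an integrand proportional to $-\gfour(\Transport,V)\varphi^2 = \varphi^2$ against the geometric surface measure on $\mathcal{N}$, which (modulo uniform positive factors coming from $\Speed, l$, and the bootstrap bound \eqref{E:BOOTSOLUTIONDOESNOTESCAPEREGIMEOFHYPERBOLICITY}) is comparable to $\mathbb{F}_{(Transport)}[\varphi;\mathcal{N}]$. The $\Sigma_{t_I}$ boundary term is controlled by $\|\pmb{\partial}(\vec{\VortVort},\DivGradEnt)\|_{H^{\Sob-2}(\Sigma_{t_I})}$ from Prop.\,\ref{P:TOPORDERENERGYESTIMATES}, and the bulk error $\int\!\!\int |\varphi||\Transport\varphi|\,d\varpi_g d\tau$ is controlled via Cauchy--Schwarz followed by the commutator/product estimate \eqref{E:PRODUCTANDCOMMUTATORESTIMATESFORMODVORTVORTANDMODDIVGRADENT} and the already-known spacetime bound from Prop.\,\ref{P:TOPORDERENERGYESTIMATES} combined with \eqref{E:BOOTL2LINFINITYFIRSTDERIVATIVESOFVORTICITYBOOTANDNENTROPYGRADIENT}. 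Multiplying by $\upnu^{2(\Sob-2)}$ and summing in $\upnu$ gives the claim.

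The main obstacle is conceptual rather than computational: for \emph{generic} top-order spatial derivatives of $(\vec{\vortrenormalized},\vec{\GradEnt})$, $L^2$-control along null hypersurfaces is not available (since elliptic Hodge estimates of the form \eqref{E:STANDARDL2DIVCURLESTIMATES} hold only along the $\Sigma_t$). What saves us is the miraculous structural feature of the formulation in Prop.\,\ref{P:GEOMETRICWAVETRANSPORT}: the only top-order vorticity/entropy quantities appearing as source terms on RHS~\eqref{E:COVARIANTWAVE} are the \emph{modified} variables $\vec{\VortVort}$ and $\DivGradEnt$, and these satisfy transport equations along $\Transport$, whose $\gfour$-timelike character renders it transversal to every acoustic null hypersurface. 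This transversality, combined with the regularity gain for $(\vec{\VortVort},\DivGradEnt)$ encoded in Prop.\,\ref{P:TOPORDERENERGYESTIMATES}, is exactly what allows the transport flux $\mathbb{F}_{(Transport)}$ on $\mathcal{N}$ to be controlled without any need for tangential elliptic structure.
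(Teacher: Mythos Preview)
Your proposal is correct and follows essentially the same approach as the paper: apply the divergence theorem to the $\Transport$-based energy current over the compact region bounded by $\mathcal{N}$ and portions of $\Sigma_{t_I}$, $\Sigma_{t_F}$, exploit $\gfour(\Transport,V)=-1$ for coercivity on $\mathcal{N}$, and close with the commutator estimates \eqref{E:PRODUCTANDCOMMUTATORESTIMATESFORWAVEEQUATIONS}, \eqref{E:PRODUCTANDCOMMUTATORESTIMATESFORMODVORTVORTANDMODDIVGRADENT} together with Prop.\,\ref{P:TOPORDERENERGYESTIMATES} and the bootstrap assumptions. Two minor remarks: first, the paper uses the \emph{geometric} divergence theorem (with $\Dfour_\alpha \mathbf{J}^\alpha$), which introduces an extra harmless bulk term $\Chfour_{\alpha\ \beta}^{\ \alpha}\Transport^\beta\varphi^2 = \gensmoothfunction(\vec{\Psi})\pmb{\partial}\vec{\Psi}\cdot\varphi^2$ but has the virtue that the surface measure on $\mathcal{N}$ is exactly $d\varpi_{\gsphere}\,dt$ (thanks to $Vt=1$); your Cartesian version works too but requires the measure-comparison you alluded to. Second, in the transport case you should also note that the $\Sigma_{t_F}$ boundary term is bounded by Prop.\,\ref{P:TOPORDERENERGYESTIMATES} (it does not have a favorable sign in the inequality direction you need).
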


\begin{proof}
	We first prove \eqref{E:SOUNDCONEENERGYESTIMATESFORMODIFIEDVARIABLES} for $\pmb{\partial} \VortVort^i$.
	We set $\mathbf{J}^{\alpha} := |\pmb{\partial} \VortVort^i|^2 \Transport^{\alpha}$ and compute, 
	relative to the Cartesian coordinates, that 
	$\Dfour_{\alpha} \mathbf{J}^{\alpha} = 2 (\pmb{\partial} \VortVort^i) \cdot \Transport \pmb{\partial} \VortVort^i 
	+ 
	(\partial_a v^a) |\pmb{\partial} \VortVort^i|^2
	+ 
	\Chfour_{\alpha \ \beta}^{\ \alpha} \Transport^{\beta} |\pmb{\partial} \VortVort^i|^2
	$,
	where 
	$
	\Chfour_{\alpha \ \beta}^{\ \gamma}
	=
	\frac{1}{2} (\gfour^{-1})^{\gamma \sigma}
	\left\lbrace\partial_{\alpha} \gfour_{\sigma \beta} 
	+ \partial_{\beta} \gfour_{\alpha \sigma}
	-
	\partial_{\sigma} \gfour_{\alpha \beta}
	\right\rbrace
	$
	are the Cartesian Christoffel symbols of $\gfour$.
	From the constructions carried out Subsect.\,\ref{SS:GEOMETRICINGREDIENTS}, we find that
	$\gfour(\Transport,V)
	=
	- Vt
	=
	- 1
	$
	and thus
	$\gfour(\mathbf{J},V)
	=
	- 	 |\pmb{\partial} \VortVort^i|^2 
	$.
	Note also that since $\gfour(\Transport,\Transport) = -1$, we have
	$\gfour(\mathbf{J},\Transport)
	=
	- 	 |\pmb{\partial} \VortVort^i|^2 
	$.
	We now apply the divergence theorem (where the Riemannian volume forms are induced by $\gfour$) using the vectorfield $\mathbf{J}^{\alpha}$
	on the compact spacetime region 
	bounded by $\Sigma_{t_I}$, $\Sigma_{t_F}$, and $\mathcal{N}$.
	Considering also the fact that $\Chfour_{\alpha \ \beta}^{\ \alpha} = \gensmoothfunction(\vec{\Psi}) \pmb{\partial} \vec{\Psi}$,
	we arrive at the following inequality for $\pmb{\partial} \VortVort^i$:
	\begin{align}
		\int_{\mathcal{N}}
			|\pmb{\partial} \VortVort^i|^2
		\, d \varpi_{\gsphere} d t
		& =
		-
		\int_{\mathcal{N}}
			\gfour(\mathbf{J},V)
		\, d \varpi_{\gsphere} d t
			\label{E:ENERGYESTIMATEALONGSOUNDCONES} \\
		& \lesssim
		\int_{\Sigma_{t_I}}
			|\gfour(\mathbf{J},\Transport)|
		\, d \varpi_g
		+
		\int_{\Sigma_{t_F}}
			|\gfour(\mathbf{J},\Transport)|
		\, d \varpi_g
			\notag \\
		& \ \
		+
		\int_{t_{I}}^{t_F}
		\int_{\Sigma_{\uptau}}
			|\pmb{\partial} \VortVort^i| 
			|\Transport \pmb{\partial} \VortVort^i|
		\, d \varpi_g
		\, d \uptau
		+
		\int_{t_{I}}^{t_F}
		\int_{\Sigma_{\uptau}}
			\|\pmb{\partial} \vec{\Psi} \|_{L^{\infty}(\Sigma_{\uptau})}
			|\pmb{\partial} \VortVort^i |^2
		\, d \varpi_g
		\, d \uptau,
		\notag
	\end{align}
	where $d \varpi_g$ is the volume form induced on constant-time hypersurfaces by their first fundamental form $g$.
	Here we clarify that the normalization condition $V t = 1$ has the following virtue:
	it guarantees that the volume element on $\mathcal{N}$ appearing in the divergence theorem
	is precisely $d \varpi_{\gsphere} d t$.
	From the energy estimates of Prop.\,\ref{P:TOPORDERENERGYESTIMATES},
	we deduce that the two integrals $\int_{\Sigma_{t_I}} \cdots$ and $\int_{\Sigma_{t_F}} \cdots$ on RHS~\eqref{E:ENERGYESTIMATEALONGSOUNDCONES}
	are $\lesssim 1$. 
	Next, commuting the evolution equation \eqref{E:TRANSPORTVORTICITYVORTICITY} with $\pmb{\partial}$,
	using the resulting expression to substitute for the factor $\Transport \pmb{\partial} \VortVort^i$ on RHS~\eqref{E:ENERGYESTIMATEALONGSOUNDCONES},
	using the bootstrap assumptions
	and the energy estimates of Prop.\,\ref{P:TOPORDERENERGYESTIMATES},
	and using the Cauchy--Schwarz and Young's inequalities,
	we deduce that the two integrals 
	$\int_{\Sigma_{\uptau}}
			\cdots
	$
	on $\mbox{\upshape RHS~\eqref{E:ENERGYESTIMATEALONGSOUNDCONES}}$
	are 
	$\lesssim 
		1 + \| \pmb{\partial} \vec{\Psi} \|_{L^{\infty}(\Sigma_{\uptau})}^2
		+
		 \|\partial (\vec{\vortrenormalized},\vec{\GradEnt}) \|_{L^{\infty}(\Sigma_{\uptau})}^2
	$.
	Also using the bootstrap assumptions \eqref{E:BOOTSTRICHARTZ}--\eqref{E:BOOTL2LINFINITYFIRSTDERIVATIVESOFVORTICITYBOOTANDNENTROPYGRADIENT}, 
	we see that $\mbox{\upshape RHS~\eqref{E:ENERGYESTIMATEALONGSOUNDCONES}} \lesssim 1$,
	which, in view of definition \eqref{E:SOUNDCONEFLUXES}, 
	yields the desired bound
	$
	\mathbb{F}_{(Transport)}[\pmb{\partial} \vec{\VortVort};\mathcal{N}]
	\lesssim 1
	$.
	
	To obtain the desired bound for the sum 
	on LHS~\eqref{E:SOUNDCONEENERGYESTIMATESFORMODIFIEDVARIABLES}
	involving the terms $P_{\upnu} \pmb{\partial} \VortVort$,
	we repeat the above argument with $P_{\upnu} \pmb{\partial} \VortVort^i$ in the role of $\pmb{\partial} \VortVort^i$.
	Considering also the evolution equation \eqref{E:TIMEDERIVATIVECOMMUTEDMODIFIEDVORTFREQUENCYPROJECTED},
	we obtain the following bound:
	\begin{align}
		\mathbb{F}_{(Transport)}[P_{\upnu} \pmb{\partial} \VortVort^i;\mathcal{N}]
		& \lesssim
		\int_{\Sigma_{t_I}}
			|P_{\upnu} \pmb{\partial}  \VortVort^i|^2
		\, d \varpi_g
		+
		\int_{\Sigma_{t_F}}
			|P_{\upnu} \pmb{\partial} \VortVort^i|^2
		\, d \varpi_g
			\label{E:FIXEDFREQUENCYENERGYESTIMATEALONGSOUNDCONES} \\
		& \ \
		+
		\int_{t_{I}}^{t_F}
		\int_{\Sigma_{\uptau}}
			|P_{\upnu} \pmb{\partial} \VortVort^i| 
			|\remainder_{(\pmb{\partial} \VortVort^i);\upnu}|
		\, d \varpi_g
		\, d \uptau
		+
		\int_{t_{I}}^{t_F}
		\int_{\Sigma_{\uptau}}
			\|
				\pmb{\partial} \vec{\Psi} 
			\|_{L^{\infty}(\Sigma_{\uptau})}
			|P_{\upnu} \pmb{\partial}  \VortVort^i |^2
		\, d \varpi_g
		\, d \uptau.
		\notag
	\end{align}
	Multiplying \eqref{E:FIXEDFREQUENCYENERGYESTIMATEALONGSOUNDCONES} by $\upnu^{2(\Sob-2)}$,
	summing over $\upnu > 1$,
	using the estimate \eqref{E:PRODUCTANDCOMMUTATORESTIMATESFORMODVORTVORTANDMODDIVGRADENT}
	and the Cauchy--Schwarz inequality for $L^2(\Sigma_{\uptau})$ and $\ell_{\upnu}^2$,
	and using the energy estimates of Prop.\,\ref{P:TOPORDERENERGYESTIMATES}
	and the bootstrap assumptions \eqref{E:BOOTSTRICHARTZ}--\eqref{E:BOOTL2LINFINITYFIRSTDERIVATIVESOFVORTICITYBOOTANDNENTROPYGRADIENT},
	we conclude that $\mbox{RHS}~\eqref{E:FIXEDFREQUENCYENERGYESTIMATEALONGSOUNDCONES} \lesssim 1$ as desired.
	
	The estimate \eqref{E:SOUNDCONEENERGYESTIMATESFORMODIFIEDVARIABLES} for the terms involving $\DivGradEnt$
	can be obtained in a similar fashion
	with the help of the evolution equations \eqref{E:TRANSPORTDIVGRADENTROPY} and \eqref{E:TIMEDERIVATIVECOMMUTEDMODIFIEDENTFREQUENCYPROJECTED},
	and we omit the details.
	
	The estimate \eqref{E:SOUNDCONEENERGYESTIMATESFORWAVEVARIABLES} can be obtained using similar arguments,
	with a few minor adjustments that we now describe.
	To bound the first term on LHS~\eqref{E:SOUNDCONEENERGYESTIMATESFORWAVEVARIABLES},
	we apply the divergence theorem with the vectorfield $\Jenarg{\Transport}{\alpha}[\pmb{\partial} \Psi]$ defined by \eqref{E:MULTIPLIERVECTORFIELD}.
	The integrand appearing on the analog of LHS~\eqref{E:ENERGYESTIMATEALONGSOUNDCONES} is $\gfour(\Jen{\Transport},V)$,
	which through standard arguments (for example, using a null frame as in Subsubsect.\,\ref{SSS:NULLFRAME}) can be shown to be equal to
	$
	\frac{1}{2}
	\left\lbrace
	|V \pmb{\partial} \Psi|^2
	+
	|\angD \pmb{\partial} \Psi|_{\gsphere}^2
	\right\rbrace$,
	that is, equal to the integrand in the definition \eqref{E:SOUNDCONEFLUXES}
	of $\mathbb{F}_{(Wave)}[\pmb{\partial} \Psi;\mathcal{N}]$ (aside from the factor of $1/2$).
	The spacetime error integrals appearing on the analog of RHS~\eqref{E:ENERGYESTIMATEALONGSOUNDCONES}
	have integrands equal to RHS~\eqref{E:DIVERGENCEOFENERGYCURRENT} 
	(with $\Transport$ in the role of $\mathbf{X}$),
	where one commutes the wave equation \eqref{E:COVARIANTWAVE}
	with $\pmb{\partial}$ to obtain algebraic expressions for
	$\square_{\gfour} \pmb{\partial} \Psi$. One can then argue as we did above to show that the error integrals are $\lesssim 1$ as desired.
	To bound the sum on LHS~\eqref{E:SOUNDCONEENERGYESTIMATESFORWAVEVARIABLES},
	we can use a similar argument based on the wave equation
	\eqref{E:TIMEDERIVATIVECOMMUTEDWAVEFREQUENCYPROJECTED}
	and the estimate
	\eqref{E:PRODUCTANDCOMMUTATORESTIMATESFORWAVEEQUATIONS}.

\end{proof}

\section{Strichartz estimates for the wave equation and control of H\"{o}lder norms of the wave variables}
\label{S:STRICHARTZESTIMATESFORWAVEUPGRADEDTOHOLDER}
The main results of this section are Theorem~\ref{T:IMPROVEMENTOFSTRICHARTZBOOTSTRAPASSUMPTION},
which yields a strict improvement of the Strichartz-type bootstrap assumption \eqref{E:BOOTSTRICHARTZ} for the wave variables,
and Corollary~\ref{C:HOLDERTYPESTRICHARTZESTIMATEFORWAVEVARIABLES}.
Our proof of Theorem~\ref{T:IMPROVEMENTOFSTRICHARTZBOOTSTRAPASSUMPTION}
relies on a frequency-localized Strichartz estimate provided by 
Theorem~\ref{T:FREQUENCYLOCALIZEDSTRICHARTZ}.
We outline the proof of Theorem~\ref{T:FREQUENCYLOCALIZEDSTRICHARTZ} in
Sect.\,\ref{S:REDUCTIONSOFSTRICHARTZ}; given the estimates for the acoustic geometry that we derive in Sect.\,\ref{S:ESTIMATESFOREIKONALFUNCTION},
the proof of Theorem~\ref{T:FREQUENCYLOCALIZEDSTRICHARTZ} 
is essentially the same as the proof of an analogous frequency-localized Strichartz estimate 
featured in \cite{qW2017}.

\begin{remark}[Reminder concerning the various parameters]
	Our analysis in this section extensively refers to the collection of parameters from Subsect.\,\ref{SS:PARAMETERS}.
\end{remark}

\subsection{\texorpdfstring{Statement of Theorem~\ref{T:IMPROVEMENTOFSTRICHARTZBOOTSTRAPASSUMPTION} and proof of Corollary~\ref{C:HOLDERTYPESTRICHARTZESTIMATEFORWAVEVARIABLES}}{Statement of Theorem ref T:IMPROVEMENTOFSTRICHARTZBOOTSTRAPASSUMPTION and proof of Corollary ref C:HOLDERTYPESTRICHARTZESTIMATEFORWAVEVARIABLES}}
We now provide the main results of Sect.\,\ref{S:STRICHARTZESTIMATESFORWAVEUPGRADEDTOHOLDER}, 
starting with Theorem~\ref{T:IMPROVEMENTOFSTRICHARTZBOOTSTRAPASSUMPTION}.
The proof of the theorem is located in Subsect.\,\ref{SS:PROOFOFMAINSTRICHARTZWAVEESTIMATES}.

\begin{theorem}[Improvement of the Strichartz-type bootstrap assumption for the wave variables]
\label{T:IMPROVEMENTOFSTRICHARTZBOOTSTRAPASSUMPTION}
If $\updelta > 0$ is sufficiently small, then
under the initial data and bootstrap assumptions of Sect.\,\ref{S:DATAANDBOOTSTRAPASSUMPTION}, 
the following estimate for the wave variables
$\vec{\Psi} = (\LogDensity,v^1,v^2,v^3,\Ent)$
holds, where $\updelta_1$ is defined by \eqref{E:DELTA1DEFINITIONANDPOSITIVITY}:
\begin{align} \label{E:IMPROVEMENTOFSTRICHARTZBOOTSTRAPASSUMPTION}
		\|
			\pmb{\partial} \vec{\Psi}
		\|_{L^2([0,\Tboot]) L_x^{\infty}}^2
		+
		\sum_{\upnu \geq 2}
		\upnu^{2 \updelta_1}
		\|
			P_{\upnu} \pmb{\partial} \vec{\Psi}
		\|_{L^2([0,\Tboot])L_x^{\infty}}^2
	& \lesssim
	\Tboot^{2 \updelta}.
\end{align}

\end{theorem}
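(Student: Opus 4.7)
The plan is to decompose $\pmb{\partial} \vec{\Psi}$ via Littlewood--Paley projections and estimate each frequency piece separately,
then reassemble by summing over dyadic frequencies with the weights $\upnu^{2\updelta_1}$.
The low-frequency piece $P_{\leq 1}\pmb{\partial} \vec{\Psi}$ is handled by Bernstein's inequality combined with
the energy estimate of Proposition~\ref{P:TOPORDERENERGYESTIMATES}, which gives
$\|P_{\leq 1}\pmb{\partial}\vec{\Psi}\|_{L^{\infty}(\Sigma_t)} \lesssim \|\pmb{\partial}\vec{\Psi}\|_{L^2(\Sigma_t)} \lesssim 1$,
and hence an $L^2_t$ bound of size $\lesssim \Tboot^{1/2}$ on $[0,\Tboot]$, which absorbs into $\Tboot^{\updelta}$ for $\updelta \leq 1/2$.

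For the high-frequency dyadic pieces $\upnu \geq 2$, I would apply the frequency-localized Strichartz estimate
of Theorem~\ref{T:FREQUENCYLOCALIZEDSTRICHARTZ} to the functions $P_{\upnu}\Psi$, which satisfy the wave equation
\eqref{E:FREQUENCYPROJECTEDWAVE} with source $\hat{\remainder}_{(\Psi);\upnu}$.
By a standard Duhamel argument, this yields a bound of schematic form
\begin{align*}
\|P_{\upnu}\pmb{\partial}\Psi\|_{L^2([0,\Tboot])L_x^{\infty}}
& \lesssim \Tboot^{\updelta}\upnu^{1-\Sob+4\upepsilon_0}\bigl(\|\pmb{\partial}P_{\upnu}\Psi\|_{L^{\infty}_tL^2_x} + \|\hat{\remainder}_{(\Psi);\upnu}\|_{L^1_tL^2_x}\bigr),
\end{align*}
where the exponent of $\upnu$ encodes the admissible loss in the frequency-localized estimate relative to the scale-invariant one.
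The first factor on the right is controlled via Proposition~\ref{P:TOPORDERENERGYESTIMATES} (giving the $H^{\Sob}$
regularity of $\Psi$), while the source-term factor is controlled via
the commutator-product estimate \eqref{E:PRODUCTANDCOMMUTATORESTIMATESFORWAVEEQUATIONS}. Crucially, that estimate controls the
dangerous summand $P_{\upnu}\inhom_{(\Psi)}$ by a combination of
$\|\partial(\vec{\VortVort},\DivGradEnt)\|_{H^{\Sob-2}(\Sigma_t)}$ and quadratic terms in $\pmb{\partial}\vec{\Psi}$;
the former is bounded by Proposition~\ref{P:TOPORDERENERGYESTIMATES}, while the latter is integrable in time
thanks to the bootstrap assumption \eqref{E:BOOTSTRICHARTZ}.

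After squaring, multiplying by $\upnu^{2\updelta_1}$, and summing over $\upnu$, I would use the $\ell_{\upnu}^2$
Cauchy--Schwarz inequality and the definitions \eqref{E:EPSILON0INEQUALITY}--\eqref{E:DELTA1DEFINITIONANDPOSITIVITY} of the
parameters, which are engineered precisely so that $\updelta_1 + (1-\Sob+4\upepsilon_0) + \updelta(1-8\upepsilon_0) \leq 0$,
i.e., so that summing $\upnu^{2\updelta_1}\upnu^{2(1-\Sob+4\upepsilon_0)}\upnu^{2\updelta(1-8\upepsilon_0)} \cdot \upnu^{2(\Sob-1)}$
against the bounded $\ell^2_\upnu$ sequence $\{\upnu^{\Sob-1}\|P_\upnu\pmb{\partial}\vec{\Psi}\|_{L^2(\Sigma_t)}\}_\upnu$ is convergent.
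This is where the sharp choice of $\updelta_1$ in \eqref{E:DELTA1DEFINITIONANDPOSITIVITY} enters and forces us to take
$\updelta > 0$ sufficiently small.

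The main obstacle, and where essentially all of the genuinely new analytical work of the paper gets absorbed,
is Theorem~\ref{T:FREQUENCYLOCALIZEDSTRICHARTZ} itself: establishing the frequency-localized Strichartz estimate requires
the full machinery of acoustic eikonal functions and the conformal energy estimate, and the estimate for the source term
$P_{\upnu}\inhom_{(\Psi)}$ succeeds only because the dangerous top-order transport terms $\vec{\VortVort}$ and $\DivGradEnt$
appear in \eqref{E:COVARIANTWAVE}, and these enjoy the gain of regularity from
Proposition~\ref{P:TOPORDERENERGYESTIMATES} together with the control along sound cones from
\eqref{E:SOUNDCONEENERGYESTIMATESFORMODIFIEDVARIABLES} in Proposition~\ref{P:ENERGYESTIMATESALONGNULLHYPERSURFACES}.
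Once Theorem~\ref{T:FREQUENCYLOCALIZEDSTRICHARTZ} is in hand, the summation argument described above is routine;
the conceptual difficulty has been entirely pushed into the proof of that frequency-localized estimate and, through it,
into the acoustic geometry estimates of Proposition~\ref{P:MAINESTIMATESFOREIKONALFUNCTIONQUANTITIES}.
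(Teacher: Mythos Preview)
Your approach matches the paper's: Duhamel for \eqref{E:COVARIANTFREQUENCYPROJECTEDWAVE}, Theorem~\ref{T:FREQUENCYLOCALIZEDSTRICHARTZ} applied on each sub-interval $[t_k,t_{k+1}]$ of the partition from Subsect.~\ref{SS:PARTITIONOFBOOTSTRAPINTERVAL}, the source estimate \eqref{E:PRODUCTANDCOMMUTATORESTIMATESFORWAVEEQUATIONS}, and then a weighted $\ell^2_{\upnu}$ sum with the low-frequency piece handled by Bernstein plus Proposition~\ref{P:TOPORDERENERGYESTIMATES}. One minor correction to your exponent bookkeeping: the paper obtains $\uplambda^{1+\updelta(1-8\upepsilon_0)}$ on each sub-interval and picks up an extra $\uplambda^{4\upepsilon_0}$ from summing the squares over the $\lesssim\uplambda^{8\upepsilon_0}$ sub-intervals, so the precise parameter balance is $\updelta_1 + 4\upepsilon_0 + 1 + \updelta(1-8\upepsilon_0) = \Sob-1$ (your schematic bound $\upnu^{1-\Sob+4\upepsilon_0}$ is off by a factor of $\upnu$), which is exactly the definition \eqref{E:DELTA1DEFINITIONANDPOSITIVITY}; but the structural argument is correct.
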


The second main result of this section is the following corollary, 
which is a simple consequence of Theorem~\ref{T:IMPROVEMENTOFSTRICHARTZBOOTSTRAPASSUMPTION}.
It plays a fundamental role in Sect.\,\ref{S:ELLIPTICESTIMATESINHOLDERSPACES},
when we derive Schauder estimates for $\vortrenormalized$ and $\GradEnt$.

\begin{corollary}[Strichartz-type estimate with a H\"{o}lder spatial norm for the wave variables] 
	\label{C:HOLDERTYPESTRICHARTZESTIMATEFORWAVEVARIABLES}
	Under the assumptions and conclusions of Theorem~\ref{T:IMPROVEMENTOFSTRICHARTZBOOTSTRAPASSUMPTION},
	the following estimate holds for the wave variable array
	$\vec{\Psi} = (\LogDensity,v^1,v^2,v^3,\Ent)$:
	\begin{align} \label{E:HOLDERTYPESTRICHARTZESTIMATEFORWAVEVARIABLES}
	\|
		\pmb{\partial} \vec{\Psi}
	\|_{L^2([0,\Tboot])C_x^{0,\updelta_1}}^2
		+
		\sum_{\upnu \geq 2}
		\|
			P_{\upnu} \pmb{\partial} \vec{\Psi}
		\|_{L^2([0,\Tboot])C_x^{0,\updelta_1}}^2
	& \lesssim
	\Tboot^{2 \updelta}.
\end{align}
\end{corollary}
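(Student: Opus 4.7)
The plan is to derive Corollary~\ref{C:HOLDERTYPESTRICHARTZESTIMATEFORWAVEVARIABLES} as an essentially mechanical consequence of Theorem~\ref{T:IMPROVEMENTOFSTRICHARTZBOOTSTRAPASSUMPTION}, by invoking the standard Besov/Littlewood--Paley characterization of H\"older norms together with Bernstein's inequality. Since $0 < \updelta_1 < 1$ (which follows from \eqref{E:DELTA1DEFINITIONANDPOSITIVITY} and the fact that $\upalpha<1$, together with $\Sob \leq 5/2$), we have the equivalence $C^{0,\updelta_1}(\mathbb{R}^3) \approx B_{\infty,\infty}^{\updelta_1}(\mathbb{R}^3)$ in the sense of seminorms, up to the low-frequency contribution which can be absorbed into an $L^\infty$ term. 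Concretely, for any tempered distribution $F$ on $\mathbb{R}^3$, one has
\begin{align}
\| F \|_{C^{0,\updelta_1}(\mathbb{R}^3)}
\lesssim
\| P_{\leq 1} F \|_{L^\infty(\mathbb{R}^3)}
+
\sup_{\upnu \geq 2} \upnu^{\updelta_1}\| P_\upnu F \|_{L^\infty(\mathbb{R}^3)}.
\end{align}

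First, I would apply this pointwise in $t$ to $F := \pmb{\partial} \vec{\Psi}(t,\cdot)$ on each slice $\Sigma_t$, square, bound the supremum by the (nonnegative) $\ell^2_\upnu$ sum, and integrate over $[0,\Tboot]$ to obtain
\begin{align}
\|\pmb{\partial} \vec{\Psi}\|_{L^2([0,\Tboot])C_x^{0,\updelta_1}}^2
\lesssim
\|P_{\leq 1} \pmb{\partial} \vec{\Psi}\|_{L^2([0,\Tboot])L^\infty_x}^2
+
\sum_{\upnu \geq 2} \upnu^{2\updelta_1}\|P_\upnu \pmb{\partial} \vec{\Psi}\|_{L^2([0,\Tboot])L^\infty_x}^2.
\end{align}
The first term is dominated by $\|\pmb{\partial} \vec{\Psi}\|_{L^2([0,\Tboot])L_x^\infty}^2$ (since $P_{\leq 1}$ is a bounded Fourier multiplier on $L^\infty$), and the second term is exactly the weighted sum appearing on the left-hand side of \eqref{E:IMPROVEMENTOFSTRICHARTZBOOTSTRAPASSUMPTION}. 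Thus Theorem~\ref{T:IMPROVEMENTOFSTRICHARTZBOOTSTRAPASSUMPTION} yields the bound $\lesssim \Tboot^{2\updelta}$ for the first summand on LHS~\eqref{E:HOLDERTYPESTRICHARTZESTIMATEFORWAVEVARIABLES}.

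Second, for each dyadic piece $P_\upnu \pmb{\partial} \vec{\Psi}$ with $\upnu \geq 2$, Bernstein's inequality gives $\|\partial P_\upnu F\|_{L^\infty} \lesssim \upnu \|P_\upnu F\|_{L^\infty}$, so interpolation between $L^\infty$ and $W^{1,\infty}$ (or direct estimation of H\"older differences using the mean value theorem for small separations and the triangle inequality for large separations) yields
\begin{align}
\|P_\upnu F\|_{C^{0,\updelta_1}(\mathbb{R}^3)} \lesssim \upnu^{\updelta_1}\|P_\upnu F\|_{L^\infty(\mathbb{R}^3)}.
\end{align}
Applying this slicewise, squaring, integrating in time, and summing over $\upnu \geq 2$ produces
\begin{align}
\sum_{\upnu \geq 2}\|P_\upnu \pmb{\partial} \vec{\Psi}\|_{L^2([0,\Tboot])C_x^{0,\updelta_1}}^2
\lesssim
\sum_{\upnu \geq 2}\upnu^{2\updelta_1}\|P_\upnu \pmb{\partial} \vec{\Psi}\|_{L^2([0,\Tboot])L^\infty_x}^2,
\end{align}
which is again bounded by $\Tboot^{2\updelta}$ via Theorem~\ref{T:IMPROVEMENTOFSTRICHARTZBOOTSTRAPASSUMPTION}. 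Combining both pieces yields \eqref{E:HOLDERTYPESTRICHARTZESTIMATEFORWAVEVARIABLES}.

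I do not anticipate a serious obstacle: the corollary is a standard harmonic-analytic upgrade that trades a $\upnu^{2\updelta_1}$ frequency weight on $L^\infty$ for a H\"older norm on the corresponding dyadic block. The only mild care point is ensuring $\updelta_1 \in (0,1)$ so that the Besov-H\"older equivalence is genuine, but this is guaranteed by the choice \eqref{E:DELTA1DEFINITIONANDPOSITIVITY}.
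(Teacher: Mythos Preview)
Your proposal is correct and is precisely the standard harmonic-analysis argument the paper invokes by reference: the paper simply cites the Besov characterization of H\"older spaces (specifically \cite{MT1991}*{Equation~(A.1.5)}) without spelling out the details, and you have supplied exactly those details via the $B_{\infty,\infty}^{\updelta_1}$ characterization for the first term and Bernstein's inequality for the dyadic pieces.
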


\begin{proof}[Discussion of proof]
Given Theorem~\ref{T:IMPROVEMENTOFSTRICHARTZBOOTSTRAPASSUMPTION}, 
Cor.\,\ref{C:HOLDERTYPESTRICHARTZESTIMATEFORWAVEVARIABLES} 
follows from standard results in harmonic analysis;
see, for example,
\cite{MT1991}*{Equation~(A.1.5)} and the discussion surrounding it.
\end{proof}

\subsection{Partitioning of the bootstrap time interval}
\label{SS:PARTITIONOFBOOTSTRAPINTERVAL}
In proving Theorem~\ref{T:IMPROVEMENTOFSTRICHARTZBOOTSTRAPASSUMPTION},
we will follow the strategy of \cite{qW2017} by constructing an appropriate partition of the bootstrap time interval $[0,\Tboot]$.
The partition refers to a parameter $\Lambda_0$,
where in the rest of the paper, $\Lambda_0 \gg 1$ denotes a dyadic frequency that 
is chosen to be sufficiently large (we adjust the largeness of $\Lambda_0$ as needed throughout the course of the analysis).
In view of the bootstrap assumptions 
\eqref{E:BOOTSTRICHARTZ}--\eqref{E:BOOTL2LINFINITYFIRSTDERIVATIVESOFVORTICITYBOOTANDNENTROPYGRADIENT},
it is straightforward to see
that for $\uplambda \geq \Lambda_0$,
we can partition $[0,\Tboot]$ into intervals $[t_k,t_{k+1}]$ of length
$|t_{k+1} - t_k| \leq \uplambda^{- 8 \upepsilon_0} \Tboot$
such that the total number of intervals is $\approx \uplambda^{8 \upepsilon_0}$
and such that
\begin{subequations}
\begin{align} \label{E:PARTITIONEDBOOTSTRICHARTZ}
		\|
			\pmb{\partial} \vec{\Psi}
		\|_{L^2([t_k,t_{k+1}])L_x^{\infty}}^2
		+
		\sum_{\upnu \geq 2}
		\upnu^{2 \updelta_0}
		\|
			P_{\upnu} \pmb{\partial} \vec{\Psi}
		\|_{L^2([t_k,t_{k+1}])L_x^{\infty}}^2
	& \leq \uplambda^{- 8 \upepsilon_0},
		\\
	\|
		(\partial \vec{\vortrenormalized},\partial \vec{\GradEnt})
	\|_{L^2([t_k,t_{k+1}])L_x^{\infty}}^2
	+
	\sum_{\upnu \geq 2}
	\upnu^{2 \updelta_0}
	\|
		(P_{\upnu} \partial \vec{\vortrenormalized},P_{\upnu} \partial \vec{\GradEnt})
	\|_{L^2([t_k,t_{k+1}])L_x^{\infty}}^2
	& \leq \uplambda^{- 8 \upepsilon_0}.
	\label{E:PARTITIONEDBOOTL2LINFINITYFIRSTDERIVATIVESOFVORTICITYBOOTANDNENTROPYGRADIENT}
\end{align}
\end{subequations}
We refer readers to
\cite{sKiR2003}*{Remark~1.3} for more details on the construction of a partition 
of $[0,\Tboot]$ such that
\eqref{E:PARTITIONEDBOOTSTRICHARTZ}--\eqref{E:PARTITIONEDBOOTL2LINFINITYFIRSTDERIVATIVESOFVORTICITYBOOTANDNENTROPYGRADIENT}
hold.

\subsection{Frequency-localized Strichartz estimate}
\label{SS:FREQUENCYLOCALIZEDSTRICHARTZ}
The main step in the proof of Theorem~\ref{T:IMPROVEMENTOFSTRICHARTZBOOTSTRAPASSUMPTION}
is proving a frequency-localized version, specifically Theorem~\ref{T:FREQUENCYLOCALIZEDSTRICHARTZ};
see Sect.\,\ref{S:REDUCTIONSOFSTRICHARTZ} for an outline of its proof, which relies on
estimates for the acoustic geometry that we derive in Sect.\,\ref{S:ESTIMATESFOREIKONALFUNCTION}.

\begin{theorem}[Frequency-localized Strichartz estimate]
\label{T:FREQUENCYLOCALIZEDSTRICHARTZ}
Fix $\uplambda \geq \Lambda_0$, and
let $\varphi$ be a solution to the following covariant linear wave equation
on the slab $[t_k,t_{k+1}] \times \mathbb{R}^3$,
where $\lbrace [t_k,t_{k+1}] \rbrace_{k=1,\cdots}$ denotes the finite collection of
time intervals constructed in Subsect.\,\ref{SS:PARTITIONOFBOOTSTRAPINTERVAL}:
\begin{align} \label{E:LINEARWAVEFORFREQUENCYLOCALIZEDSTRICHARTZ}
	\square_{\gfour(\vec{\Psi})} \varphi
	& = 0.
\end{align}
Under the initial data and bootstrap assumptions of Sect.\,\ref{S:DATAANDBOOTSTRAPASSUMPTION}, 
if $\Lambda_0$ is sufficiently large, then for any $q > 2$ sufficiently close to $2$
and any $\uptau \in [t_k,t_{k+1}]$, 
we have the following estimate:
\begin{align} \label{E:FREQUENCYLOCALIZEDSTRICHARTZ}
		\|
			P_{\uplambda} \pmb{\partial} \varphi
		\|_{L^q([\uptau,t_{k+1}]) L_x^{\infty}}
	& \lesssim
		\uplambda^{\frac{3}{2} - \frac{1}{q}}
		\| \pmb{\partial} \varphi \|_{L^2(\Sigma_{\uptau})}.
\end{align}
\end{theorem}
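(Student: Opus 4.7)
The plan is to follow the well-established strategy for deriving frequency localized Strichartz estimates for quasilinear wave equations. Since, as the paper notes, the proof is essentially as in \cite{qW2017} once the acoustic geometry is under control (this control being the content of Sect.\,\ref{S:ESTIMATESFOREIKONALFUNCTION}), I would only sketch the main reductions in the proof, indicating how they depend on the compressible Euler structure.

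First I would rescale the problem to near-unit frequency. Introduce rescaled fluid variables $\vec{\Psi}_{(\uplambda)}$ (see Subsect.\,\ref{SS:RESCALEDSOLUTION}) by parabolic scaling centered at $t_k$, together with a rescaled scalar $\varphi_{(\uplambda)}$. The rescaled $\varphi_{(\uplambda)}$ then solves $\square_{\gfour_{(\uplambda)}} \varphi_{(\uplambda)} = 0$ on a slab $[0, \RescaledTboot] \times \mathbb{R}^3$, where $\RescaledTboot \sim \uplambda (t_{k+1} - t_k) \lesssim \uplambda^{1 - 8\upepsilon_0}$ by \eqref{E:PARTITIONEDBOOTSTRICHARTZ}. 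The projection $P_{\uplambda}$ becomes a standard near-unit-frequency projection $P$, and, after undoing the scaling, \eqref{E:FREQUENCYLOCALIZEDSTRICHARTZ} reduces to the unit-frequency estimate
\begin{align*}
\bigl\| P \pmb{\partial} \varphi_{(\uplambda)} \bigr\|_{L^q([0, \RescaledTboot]) L_x^{\infty}}
& \lesssim \bigl\| \pmb{\partial} \varphi_{(\uplambda)} \bigr\|_{L^2(\Sigma_0)}.
\end{align*}

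Next I would apply a $\mathcal{T}\mathcal{T}^*$ duality argument, which, in view of Bernstein's inequality at near-unit frequency, reduces the display above to a dispersive decay estimate of the schematic form
\begin{align*}
\bigl\| P \Transport \varphi_{(\uplambda)} \bigr\|_{L^{\infty}(\Sigma_t)}
& \lesssim \Bigl( \frac{1}{(1+t)^{1-}} + d(t) \Bigr) \Bigl( \bigl\| \pmb{\partial} \varphi_{(\uplambda)} \bigr\|_{L^2(\Sigma_0)} + \bigl\| \varphi_{(\uplambda)} \bigr\|_{L^2(\Sigma_0)} \Bigr),
\end{align*}
valid for $t \in [0, \RescaledTboot]$, where $d(t)$ depends on the acoustic geometry but enjoys the same $t$-integrability as the leading term. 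The condition $q > 2$ (equivalently $\updelta > 0$) is precisely what is needed to absorb the rescaled time length $\RescaledTboot^{\updelta}$ by this decay. The dispersive estimate in turn would be obtained by establishing a weighted conformal energy estimate for $\varphi_{(\uplambda)}$, via the vectorfield multiplier method using $\Transport$ (timelike by \eqref{E:TRANSPORTISLENGTHONE}) together with Morawetz-type multipliers of the form $f(\rgeo) \spherenormal$ in the interior and $\rgeo^m \Lunit$ in the exterior, where $\rgeo = t - u$ and $u$ is the acoustic eikonal function constructed in Subsect.\,\ref{SS:EIKONAL}.

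The main obstacle, and the step where the compressible Euler equations interact nontrivially with the framework of \cite{qW2017}, is closing the weighted energy identity. After integration by parts with the multipliers above, the error terms involve the connection coefficients of the null frame $\{\Lunit, \uLunit, e_1, e_2\}$ and their derivatives, most notably the modified trace $\mytr_{\congsphere} \widetilde{\upchi}$ of the null second fundamental form. These satisfy transport equations along the sound cones $\mathcal{C}_u$ (see Prop.\,\ref{P:PDESMODIFIEDACOUSTICALQUANTITIES}) with Ricci-curvature-type source terms depending on $\hat{\square}_{\gfour}\gfour$, hence on RHS~\eqref{E:COVARIANTWAVE}, which features the modified variables $\vec{\VortVort}$ and $\DivGradEnt$. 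The crucial inputs are therefore: (i) the tensorial estimates for the acoustic geometry collected in Prop.\,\ref{P:MAINESTIMATESFOREIKONALFUNCTIONQUANTITIES}, and (ii) the fact that $\vec{\VortVort}$ and $\DivGradEnt$ themselves satisfy transport equations whose characteristics are transversal to $\mathcal{C}_u$ (by $\gfour(\Transport,\Transport) = -1$), which allows the $L^2$-along-cones control \eqref{E:SOUNDCONEENERGYESTIMATESFORMODIFIEDVARIABLES} to be fed into the Raychaudhuri-type equation for $\mytr_{\congsphere}\widetilde{\upchi}$. Modulo these inputs, the remaining chain (weighted energy $\Rightarrow$ dispersive decay $\Rightarrow$ $\mathcal{T}\mathcal{T}^*$ $\Rightarrow$ Strichartz) is essentially as in \cite{qW2017}, and I would cite those arguments rather than reproduce them.
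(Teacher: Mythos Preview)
Your proposal is correct and matches the paper's own outline in Sect.\,\ref{S:REDUCTIONSOFSTRICHARTZ}: rescale to near-unit frequency (Theorem~\ref{T:RESCALEDVERSIONOFTHEOREMFREQUENCYLOCALIZEDSTRICHARTZ}), reduce via $\mathcal{T}\mathcal{T}^*$ to a dispersive decay estimate (Theorem~\ref{T:DECAYESTIMATE}), and prove the latter through a conformal energy bound (Theorem~\ref{T:BOUNDEDNESSOFCONFORMALENERGY}) obtained from the Morawetz and $\rgeo^m \Lunit$ multipliers, with the acoustic geometry controlled by Prop.\,\ref{P:MAINESTIMATESFOREIKONALFUNCTIONQUANTITIES} and the cone estimates \eqref{E:SOUNDCONEENERGYESTIMATESFORMODIFIEDVARIABLES} for $(\vec{\VortVort},\DivGradEnt)$. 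One minor terminological slip: the rescaling is hyperbolic (both $t$ and $x$ scale by $\uplambda^{-1}$), not parabolic.
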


\subsection{Proof of Theorem~\ref{T:IMPROVEMENTOFSTRICHARTZBOOTSTRAPASSUMPTION} given Theorem~\ref{T:FREQUENCYLOCALIZEDSTRICHARTZ}}
\label{SS:PROOFOFMAINSTRICHARTZWAVEESTIMATES}
In this proof, we often suppress the $x$-dependence of functions, 
and we use the remarks made in the first paragraph of Subsect.\,\ref{SS:PROOFOFTOPORDERENERGYESTIMATES}.
Let $W(t,\uptau)[f,f_0]$ be the solution at time $t$ to the covariant linear wave equation
$\square_{\gfour(\vec{\Psi})} \left(W(t,\uptau)[f,f_0] \right) = 0$ whose data at time $\uptau$ are 
$W(\uptau,\uptau)[f,f_0] := f$ and $\partial_t W(\uptau,\uptau)[f,f_0] := f_0$.
We assume that $\uplambda \geq \Lambda_0$, as in Theorem~\ref{T:FREQUENCYLOCALIZEDSTRICHARTZ}.
Let $\widetilde{P}_{\uplambda} := \sum_{1/2 \leq \frac{\upmu}{\uplambda} \leq 2} P_{\upmu}$,
so that in particular, $P_{\uplambda} = \widetilde{P}_{\uplambda} P_{\uplambda}$.
Then from equation \eqref{E:COVARIANTFREQUENCYPROJECTEDWAVE} and Duhamel's principle, 
for $\Psi \in \lbrace \LogDensity, v^1, v^2, v^3, \Ent \rbrace$
and $t \in [t_k,t_{k+1}]$, 
we have 
\begin{align} \label{E:DUHAMELFORFREQUENCYPROJECTEDWAVEEQUATION}
	P_{\uplambda} \Psi(t)
	& = W(t,t_k)[P_{\uplambda} \Psi(t_k),P_{\uplambda} \partial_t \Psi(t_k)]
		+
		\int_{t_k}^t
			W(t,\uptau)[0,\remainder_{(\Psi);\uplambda}(\uptau)]
		\, d \uptau.
\end{align}
Differentiating \eqref{E:DUHAMELFORFREQUENCYPROJECTEDWAVEEQUATION} with
$\pmb{\partial}$ and applying $\widetilde{P}_{\uplambda}$,
and letting $\mathbf{1}_{[t_k,t]}(\cdot)$ denote the characteristic function of the interval $[t_k,t]$,
we find that
\begin{align} \label{E:DIFFERENTIATEDANDPROJECTEDDUHAMELFORFREQUENCYPROJECTEDWAVEEQUATION}
	P_{\uplambda} \pmb{\partial} \Psi(t)
	& = 
		\widetilde{P}_{\uplambda}
		\left\lbrace
			\pmb{\partial} W(t,t_k)[P_{\uplambda} \Psi(t_k),P_{\uplambda} \partial_t \Psi(t_k)]
		\right\rbrace
		+
		\int_{t_k}^{t_{k+1}}
			\mathbf{1}_{[t_k,t]}(\uptau) \widetilde{P}_{\uplambda} \pmb{\partial} W(t,\uptau)[0,\remainder_{(\Psi);\uplambda}(\uptau)]
		\, d \uptau
			\\
	& := I_{\uplambda}(t) + II_{\uplambda}(t).
	\notag
\end{align}

We now recall that $\updelta = \frac{1}{2} - \frac{1}{q} > 0$ (see \eqref{E:DELTADEFINITIONANDPOSITIVITY}),
where $q > 2$ is any number for which Theorem~\ref{T:FREQUENCYLOCALIZEDSTRICHARTZ} holds.
Then from \eqref{E:FREQUENCYLOCALIZEDSTRICHARTZ} with $\widetilde{P}_{\uplambda}$ in the role of $P_{\uplambda}$,
H\"{o}lder's inequality, 
the covariant wave equation \eqref{E:COVARIANTFREQUENCYPROJECTEDWAVE} satisfied by $P_{\uplambda} \Psi$,
and the energy estimate \eqref{E:PRELIMINARYH1DOTHOMOGENEOUSWAVEEQUATIONINHOMOGENEOUSESTIMATE},
we find that
\begin{align} \label{E:BOUNDFORHOMOGENEOUSTERMINDIFFERENTIATEDANDPROJECTEDDUHAMELFORFREQUENCYPROJECTEDWAVEEQUATION}
		\|
			I_{\uplambda}
		\|_{L^2([t_k,t_{k+1}])L_x^{\infty}}
		& \lesssim 
			|t_{k+1} - t_k|^{\updelta}
			\|
				I_{\uplambda}
			\|_{L^q([t_k,t_{k+1}])L_x^{\infty}}
				\\
	& \lesssim 
		|t_{k+1} - t_k|^{\updelta}
		\uplambda^{\frac{3}{2} - \frac{1}{q}}
		\| \pmb{\partial} P_{\uplambda} \Psi \|_{L^2(\Sigma_{t_k})}
		\notag 
		\\
	& \lesssim 
		|t_{k+1} - t_k|^{\updelta}
		\uplambda^{1 + \updelta}
		\left\lbrace
			\| \pmb{\partial} P_{\uplambda} \Psi \|_{L^2(\Sigma_0)}
			+
			\| \remainder_{(\Psi);\uplambda} \|_{L^1([0,\Tboot])L_x^2}
		\right\rbrace.
		\notag
\end{align}

Similarly, using \eqref{E:FREQUENCYLOCALIZEDSTRICHARTZ} (again with $\widetilde{P}_{\uplambda}$ in the role of $P_{\uplambda}$) 
and Minkowski's inequality for integrals, 
we find that
\begin{align} \label{E:BOUNDFORINHOMOGENEOUSTERMINDIFFERENTIATEDANDPROJECTEDDUHAMELFORFREQUENCYPROJECTEDWAVEEQUATION}
		\|
			II_{\uplambda}
		\|_{L^2([t_k,t_{k+1}])L_x^{\infty}}
		& \lesssim 
			\int_{t_k}^{t_{k+1}}
				\|
					\mathbf{1}_{[t_k,t]}(\uptau) P_{\uplambda} \pmb{\partial} W(t,\uptau)[0,\remainder_{(\Psi);\uplambda}(\uptau)]
				\|_{L_t^2([\uptau,t_{k+1}])L_x^{\infty}}
			\, d \uptau
				\\
		& \lesssim 
			\int_{t_k}^{t_{k+1}}
				|t_{k+1} - \uptau|^{\updelta}
				\|
					P_{\uplambda} \pmb{\partial} W(t,\uptau)[0,\remainder_{(\Psi);\uplambda}(\uptau)]
				\|_{L_t^q([\uptau,t_{k+1}])L_x^{\infty}}
			\, d \uptau
			\notag \\
		& \lesssim 
			|t_{k+1} - t_k|^{\updelta}
			\uplambda^{1 + \updelta}
			\| \remainder_{(\Psi);\uplambda} \|_{L^1([t_k,t_{k+1}])L_x^2}.
				\notag
\end{align}
Using \eqref{E:DIFFERENTIATEDANDPROJECTEDDUHAMELFORFREQUENCYPROJECTEDWAVEEQUATION},
\eqref{E:BOUNDFORHOMOGENEOUSTERMINDIFFERENTIATEDANDPROJECTEDDUHAMELFORFREQUENCYPROJECTEDWAVEEQUATION},
and \eqref{E:BOUNDFORINHOMOGENEOUSTERMINDIFFERENTIATEDANDPROJECTEDDUHAMELFORFREQUENCYPROJECTEDWAVEEQUATION},
and recalling that $|t_{k+1} - t_k| \lesssim \uplambda^{-8 \upepsilon_0} \Tboot$,
we find that
\begin{align} \label{E:FREQUENCYDOUBLEPROJECTEDKEYSTRICHARTESTIMATE}
	\|
		P_{\uplambda} \pmb{\partial} \Psi
	\|_{L^2([t_k,t_{k+1}])L_x^{\infty}}
	& \lesssim 
		\uplambda^{1 + \updelta(1 - 8 \upepsilon_0)}
		\Tboot^{\updelta}
		\left\lbrace
			\| \pmb{\partial} P_{\uplambda} \Psi \|_{L^2(\Sigma_0)}
			+
			\| \remainder_{(\Psi);\uplambda} \|_{L^1([0,\Tboot])L_x^2}
		\right\rbrace.
\end{align}

Next, we square \eqref{E:FREQUENCYDOUBLEPROJECTEDKEYSTRICHARTESTIMATE},
sum over all intervals $[t_k,t_{k+1}]$,
recall that there are $\lesssim \uplambda^{8 \upepsilon_0}$ such intervals,
and multiply the resulting inequality by $\uplambda^{2 \updelta_1}$
(where $\updelta_1 > 0$ is defined in \eqref{E:DELTA1DEFINITIONANDPOSITIVITY}),
thereby obtaining:
\begin{align} \label{E:SECONDFREQUENCYDOUBLEPROJECTEDKEYSTRICHARTESTIMATE}
	\uplambda^{2 \updelta_1}
	\|
		P_{\uplambda} \pmb{\partial} \Psi
	\|_{L^2([0,\Tboot])L_x^{\infty}}^2
	& \lesssim 
		\uplambda^{2 \updelta_1}
		\uplambda^{8 \upepsilon_0}
		\uplambda^{2 + 2 \updelta(1 - 8 \upepsilon_0)}
		\Tboot^{2 \updelta}
		\left\lbrace
			\| \pmb{\partial} P_{\uplambda} \Psi \|_{L^2(\Sigma_0)}^2
			+
			\| \remainder_{(\Psi);\uplambda} \|_{L^1([0,\Tboot])L_x^2}^2
		\right\rbrace
			\\
		&
		\lesssim
		\Tboot^{2 \updelta}
		\left\lbrace
			\| \uplambda^{\Sob-1} \pmb{\partial} P_{\uplambda} \Psi \|_{L^2(\Sigma_0)}^2
			+
			\| \uplambda^{\Sob-1} \remainder_{(\Psi);\uplambda} \|_{L^1([0,\Tboot])L_x^2}^2
		\right\rbrace.
		\notag
\end{align}

We now sum \eqref{E:SECONDFREQUENCYDOUBLEPROJECTEDKEYSTRICHARTESTIMATE} over 
dyadic frequencies $\uplambda \geq \Lambda_0$
and use the H\"{o}lder-in-time estimate
\begin{align}
\notag
\| \uplambda^{\Sob-1} \remainder_{(\Psi);\uplambda} \|_{L^1([0,\Tboot])L_x^2}^2
\lesssim
\Tboot
\| \uplambda^{\Sob-1} \remainder_{(\Psi);\uplambda} \|_{L^2([0,\Tboot])L_x^2}^2
\end{align}
to deduce that
\begin{align} \label{E:SUMMEDFREQUENCYDOUBLEPROJECTEDKEYSTRICHARTESTIMATE}
	&
	\sum_{\upnu \geq \Lambda_0} 
	\upnu^{2 \updelta_1}
	\|
		P_{\upnu} \pmb{\partial} \Psi
	\|_{L^2([0,\Tboot])L_x^{\infty}}^2
		\\
& \lesssim 
		\Tboot^{2 \updelta}
		\left\lbrace
			\| (\Psi, \partial_t \Psi) \|_{H^{\Sob}(\Sigma_0) \times H^{\Sob-1}(\Sigma_0)}^2
			+
			\Tboot
			\| \upnu^{\Sob-1} \remainder_{(\Psi);\upnu} \|_{L^2([0,\Tboot])\ell_{\upnu}^2 L_x^2}^2
		\right\rbrace.
		\notag
\end{align}
Using the estimate \eqref{E:PRODUCTANDCOMMUTATORESTIMATESFORWAVEEQUATIONS},
the Strichartz-type bootstrap assumption \eqref{E:BOOTSTRICHARTZ},
and the top-order energy estimate \eqref{E:TOPORDERENERGYESTIMATES},
we deduce that 
$\| \upnu^{\Sob-1} \remainder_{(\Psi);\upnu} \|_{L^2([0,\Tboot])\ell_{\upnu}^2 L_x^2}^2 \lesssim 1$.
Inserting this estimate and the trivial 
bound $\| (\Psi, \partial_t \Psi) \|_{H^{\Sob}(\Sigma_0) \times H^{\Sob-1}(\Sigma_0)}^2 \lesssim 1$
into RHS~\eqref{E:SUMMEDFREQUENCYDOUBLEPROJECTEDKEYSTRICHARTESTIMATE},
we find that
\begin{align} \label{E:SECONDSUMMEDFREQUENCYDOUBLEPROJECTEDKEYSTRICHARTESTIMATE}
	\sum_{\upnu \geq \Lambda_0} 
	\upnu^{2 \updelta_1}
	\|
		P_{\upnu} \pmb{\partial} \Psi
	\|_{L^2([0,\Tboot])L_x^{\infty}}^2
	\lesssim 
		\Tboot^{2 \updelta}.
\end{align}

Next, we note that Sobolev embedding and the energy estimate \eqref{E:TOPORDERENERGYESTIMATES}
yield that
$
\|
	P_{\leq \Lambda_0} \pmb{\partial} \Psi
\|_{L_x^{\infty}(\Sigma_t)}
\lesssim 
\|
	P_{\leq \Lambda_0} \pmb{\partial} \Psi
\|_{H^2(\Sigma_t)}
\lesssim
\|
	\pmb{\partial} \Psi
\|_{L^2(\Sigma_t)}
\lesssim
1
$
(where the implicit constants are allowed to depend on $\Lambda_0$)
and thus
\begin{align} \label{E:LOWFREQUENCIESSUMMEDFREQUENCYDOUBLEPROJECTEDKEYSTRICHARTESTIMATE}
\|
		P_{\leq \Lambda_0} \pmb{\partial} \Psi
	\|_{L^2([0,\Tboot])L_x^{\infty}}^2
\lesssim \Tboot 
\lesssim
\Tboot^{2 \updelta}.
\end{align}

We are now ready to bound the term 
$
\|
			\pmb{\partial} \vec{\Psi}
		\|_{L^2([0,\Tboot]) L_x^{\infty}}^2
$
on LHS~\eqref{E:IMPROVEMENTOFSTRICHARTZBOOTSTRAPASSUMPTION}.
To proceed, we use the triangle inequality, the Cauchy--Schwarz inequality, 
and the fact that $\sum_{\upnu \geq \Lambda_0} \upnu^{-2 \updelta_1} < \infty$
to deduce that 
\begin{align} \label{E:MAINWAVESTRICHARTZESTIMATEPROOF}
\| \pmb{\partial} \Psi \|_{L^{\infty}(\Sigma_t)}
& \lesssim 
\| P_{\leq \Lambda_0} \pmb{\partial} \Psi \|_{L^{\infty}(\Sigma_t)}
+
\sum_{\upnu \geq \Lambda_0}  
\upnu^{- \updelta_1}
\| \upnu^{\updelta_1} P_{\upnu} \pmb{\partial} \Psi \|_{L^{\infty}(\Sigma_t)}
	\\
& 
\lesssim 
\| P_{\leq \Lambda_0} \pmb{\partial} \Psi \|_{L^{\infty}(\Sigma_t)}
+
\sqrt{
\sum_{\upnu \geq \Lambda_0}  
\upnu^{2 \updelta_1}
\| P_{\upnu} \pmb{\partial} \Psi \|_{L^{\infty}(\Sigma_t)}^2}.
\notag
\end{align}
Squaring \eqref{E:MAINWAVESTRICHARTZESTIMATEPROOF},
integrating the resulting inequality over the interval $[0,\Tboot]$, 
and using \eqref{E:SECONDSUMMEDFREQUENCYDOUBLEPROJECTEDKEYSTRICHARTESTIMATE}
and \eqref{E:LOWFREQUENCIESSUMMEDFREQUENCYDOUBLEPROJECTEDKEYSTRICHARTESTIMATE},
we conclude the desired bound
for the term 
$
\|
			\pmb{\partial} \vec{\Psi}
		\|_{L^2([0,\Tboot]) L_x^{\infty}}^2
$
on LHS~\eqref{E:IMPROVEMENTOFSTRICHARTZBOOTSTRAPASSUMPTION}.
From this bound, 
\eqref{E:SECONDSUMMEDFREQUENCYDOUBLEPROJECTEDKEYSTRICHARTESTIMATE},
and the basic inequality
$
\|
		P_{\upnu} \pmb{\partial} \Psi
	\|_{L^{\infty}(\Sigma_t)}
\lesssim
\|
	\pmb{\partial} \Psi
\|_{L^{\infty}(\Sigma_t)}
$,
the desired bound for the sum on LHS~\eqref{E:IMPROVEMENTOFSTRICHARTZBOOTSTRAPASSUMPTION}
readily follows. This completes the proof of Theorem~\ref{T:IMPROVEMENTOFSTRICHARTZBOOTSTRAPASSUMPTION}.

\hfill $\qed$

\section{Schauder-transport estimates in H\"{o}lder spaces for the first derivatives of the specific vorticity and the second derivatives of the entropy}
\label{S:ELLIPTICESTIMATESINHOLDERSPACES}
	Our main goal in this section is to derive improvements of the mixed spacetime norm bootstrap assumptions
	\eqref{E:BOOTL2LINFINITYFIRSTDERIVATIVESOFVORTICITYBOOTANDNENTROPYGRADIENT}
	for $\partial \vec{\vortrenormalized}$ and $\partial \vec{\GradEnt}$.
	The main result is Theorem~\ref{T:MIXEDSPACETIMEHOLDERESTIMATESFORVORTICITYANDENTROPY}.
	We also derive a strict improvement of the bootstrap assumption \eqref{E:BOOTSOLUTIONDOESNOTESCAPEREGIMEOFHYPERBOLICITY}.
	Before proving the theorem, we first derive two fundamentally important precursor results: 
	\textbf{i)} Schauder estimates for div-curl systems;
	\textbf{ii)} Estimates that yield control of the characteristics of the transport operator $\Transport$
	(i.e., over the integral curves of $\Transport$);
	and 
	\textbf{ii)'} With the help of \textbf{ii}, we derive a priori estimates in H\"{o}lder
	spaces for solutions $\varphi$ to transport equations $\Transport \varphi = \mathfrak{F}$
	with $\mathfrak{F} \in L_t^1 C_x^{0,\updelta_1}$
	(see Lemma~\ref{L:TRANSPORTESTIMATESINHOLDERSPACES}).
	Thanks to these three preliminary ingredients,
	Theorem~\ref{T:MIXEDSPACETIMEHOLDERESTIMATESFORVORTICITYANDENTROPY} 
	will follow from a Gr\"{o}nwall inequality estimate.
	
	\subsection{\texorpdfstring{Statement of Theorem~\ref{T:MIXEDSPACETIMEHOLDERESTIMATESFORVORTICITYANDENTROPY} and proof of an improvement of the basic $L^{\infty}$-type bootstrap assumption}{Statement of Theorem  ref T:MIXEDSPACETIMEHOLDERESTIMATESFORVORTICITYANDENTROPY and proof of an improvement of the basic Linfty-type bootstrap assumption}}
	We now state the main theorem of this section. Its proof is located in Subsect.\,\ref{SS:PROOFOFPROPMIXEDSPACETIMEHOLDERESTIMATESFORVORTICITYANDENTROPY}.
	
	\begin{theorem}[Lebesgue-H\"{o}lder norm estimates for the specific vorticity and entropy gradient and improvements of the bootstrap assumptions]
	\label{T:MIXEDSPACETIMEHOLDERESTIMATESFORVORTICITYANDENTROPY}
	Under the initial data and bootstrap assumptions of Sect.\,\ref{S:DATAANDBOOTSTRAPASSUMPTION}, 
	the following estimates hold:
	\begin{align} 
		\|
			(\vec{\VortVort},\DivGradEnt)
		\|_{L^{\infty}([0,\Tboot])C_x^{0,\updelta_1}}
		& \lesssim 1,
			\label{E:HOLDERREGULARITYOFMODIFIEDVARIABLESPROPAGATED} \\
		\|
			\partial (\vec{\vortrenormalized},\vec{\GradEnt})
		\|_{L^2([0,\Tboot])C_x^{0,\updelta_1}}^2
		& \lesssim \Tboot^{2 \updelta}.
		\label{E:MIXEDSPACETIMEHOLDERESTIMATESFORVORTICITYANDENTROPY}
	\end{align}
	Moreover,
	\begin{align} \label{E:SUPPEDMIXEDSPACETIMEHOLDERESTIMATESFORVORTICITYANDENTROPY}
	\sum_{\upnu \geq 1}
	\upnu^{\updelta_1}
	\|
		 P_{\upnu} \partial (\vec{\vortrenormalized},\vec{\GradEnt})
	\|_{L^2([0,\Tboot]) L_x^{\infty}}^2
	& \lesssim \Tboot^{2 \updelta}.
	\end{align}
	
	\end{theorem}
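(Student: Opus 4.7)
The plan is to couple an H\"older transport-equation estimate for the modified variables $(\vec{\VortVort}, \DivGradEnt)$ to an elliptic Schauder estimate for the div-curl systems satisfied by $(\vec{\vortrenormalized}, \vec{\GradEnt})$, and to close the coupling via Gronwall's inequality. The critical time-integrable input is the H\"older-Strichartz bound $\|\pmb{\partial}\vec{\Psi}\|_{L^2([0,\Tboot]) C_x^{0,\updelta_1}}^2 \lesssim \Tboot^{2\updelta}$ provided by Corollary~\ref{C:HOLDERTYPESTRICHARTZESTIMATEFORWAVEVARIABLES}. Set $\Phi(t) := \|(\vec{\VortVort}, \DivGradEnt)(t)\|_{C^{0,\updelta_1}(\Sigma_t)}$. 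My first step is to apply the precursor H\"older transport estimate (Lemma~\ref{L:TRANSPORTESTIMATESINHOLDERSPACES}) to the transport equations \eqref{E:TRANSPORTVORTICITYVORTICITY} and \eqref{E:TRANSPORTDIVGRADENTROPY}, obtaining
\begin{align*}
\Phi(t) \lesssim 1 + \int_0^t \| (\mathfrak{F}_{(\VortVort^i)}, \mathfrak{F}_{(\DivGradEnt)}) \|_{C^{0,\updelta_1}(\Sigma_\tau)} \, d\tau,
\end{align*}
where the data contribution is $\lesssim 1$ by \eqref{E:FINITETRANSPORTDATANORM} and the choice $\updelta_1 \leq \upalpha$ from \eqref{E:DELTA1DEFINITIONANDPOSITIVITY}.

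The second step invokes the elliptic Schauder estimate for div-curl systems (Lemma~\ref{L:SCHAUDERESTIMATESFORDIVCURLSYSTEMS}, i.e., the H\"older analog of Lemma~\ref{L:ELLIPTICDIVCURLESTIMATES}), applied separately to $\vec{\vortrenormalized}$ and $\vec{\GradEnt}$. Using \eqref{E:DIVVORTICITY}, \eqref{E:CURLGRADENT}, and Def.~\ref{D:RENORMALIZEDCURLOFSPECIFICVORTICITY} to algebraically substitute for $\dive\vortrenormalized, \curl\vortrenormalized, \dive\GradEnt, \curl\GradEnt$ in terms of $(\vec{\VortVort}, \DivGradEnt)$ and smooth products $\gensmoothfunction(\vec{\Psi}) \pmb{\partial}\vec{\Psi}$, and controlling the $L^2$-remainder via Prop.~\ref{P:TOPORDERENERGYESTIMATES}, I obtain
\begin{align*}
\|\partial(\vec{\vortrenormalized}, \vec{\GradEnt})\|_{C^{0,\updelta_1}(\Sigma_t)} \lesssim \Phi(t) + \|\pmb{\partial}\vec{\Psi}\|_{C^{0,\updelta_1}(\Sigma_t)} + 1.
\end{align*}
Substituting this into the schematic source terms $\mathfrak{F}_{(\VortVort^i)}, \mathfrak{F}_{(\DivGradEnt)}$ from Prop.~\ref{P:GEOMETRICWAVETRANSPORT} (each of whose derivative-quadratic top-order parts carries at least one factor of $\pmb{\partial}\vec{\Psi}$) and splitting with the H\"older product inequality $\|FG\|_{C^{0,\updelta_1}} \lesssim \|F\|_{C^{0,\updelta_1}}\|G\|_{L^\infty} + \|F\|_{L^\infty}\|G\|_{C^{0,\updelta_1}}$, followed by Cauchy--Schwarz in time, yields
\begin{align*}
\Phi(t) \lesssim 1 + C \Tboot^{2\updelta} + \int_0^t \bigl\lbrace \|\pmb{\partial}\vec{\Psi}\|_{L^\infty(\Sigma_\tau)} + \|\partial(\vec{\vortrenormalized}, \vec{\GradEnt})\|_{L^\infty(\Sigma_\tau)} + 1 \bigr\rbrace \Phi(\tau) \, d\tau.
\end{align*}
The Gronwall multiplier is time-integrable by Theorem~\ref{T:IMPROVEMENTOFSTRICHARTZBOOTSTRAPASSUMPTION} and the bootstrap assumption \eqref{E:BOOTL2LINFINITYFIRSTDERIVATIVESOFVORTICITYBOOTANDNENTROPYGRADIENT}, so Gronwall delivers \eqref{E:HOLDERREGULARITYOFMODIFIEDVARIABLESPROPAGATED}. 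Feeding this $L^\infty_t$ bound for $\Phi$ back into the Schauder estimate, squaring in space, integrating in $t$, and invoking Corollary~\ref{C:HOLDERTYPESTRICHARTZESTIMATEFORWAVEVARIABLES} once more gives \eqref{E:MIXEDSPACETIMEHOLDERESTIMATESFORVORTICITYANDENTROPY}.

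The hard part will be the H\"older-product bookkeeping in the source terms: unlike in $L^2$ theory I cannot freely trade derivatives, so the splitting must always place one factor of every nonlinear product against a time-integrable multiplier (controlled by the wave-part Strichartz estimate or the bootstrap on the transport-part) and the other against the Gronwall unknown $\Phi$. That this balancing is even possible is \emph{not automatic}; it hinges on the structural property, visible in Prop.~\ref{P:GEOMETRICWAVETRANSPORT}, that the top-order factors $\partial\vec{\vortrenormalized}$ and $\partial\vec{\GradEnt}$ always appear multiplied by $\pmb{\partial}\vec{\Psi}$ in the modified-variable source terms, and that the H\"older-Strichartz estimate of Corollary~\ref{C:HOLDERTYPESTRICHARTZESTIMATEFORWAVEVARIABLES} supplies precisely the correct $t$-integrable norm for that factor. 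Finally, \eqref{E:SUPPEDMIXEDSPACETIMEHOLDERESTIMATESFORVORTICITYANDENTROPY} follows from \eqref{E:MIXEDSPACETIMEHOLDERESTIMATESFORVORTICITYANDENTROPY} by a dyadic Bernstein argument: for $\upnu \geq 1$ one has $\|P_\upnu G\|_{L^\infty_x} \lesssim \upnu^{-\updelta_1}\|G\|_{C^{0,\updelta_1}_x}$, so that $\sum_{\upnu \geq 1} \upnu^{\updelta_1} \|P_\upnu G\|_{L^\infty_x}^2 \lesssim \bigl(\sum_{\upnu \geq 1} \upnu^{-\updelta_1}\bigr) \|G\|_{C^{0,\updelta_1}_x}^2 \lesssim \|G\|_{C^{0,\updelta_1}_x}^2$ since the geometric series over dyadic $\upnu$ converges; integrating in $t$ and invoking \eqref{E:MIXEDSPACETIMEHOLDERESTIMATESFORVORTICITYANDENTROPY} concludes the proof.
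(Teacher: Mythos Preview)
Your proposal is correct and follows essentially the same approach as the paper: couple the H\"older transport estimate (Lemma~\ref{L:TRANSPORTESTIMATESINHOLDERSPACES}) for $(\vec{\VortVort},\DivGradEnt)$ to the Schauder div-curl estimate (Lemma~\ref{L:SCHAUDERESTIMATESFORDIVCURLSYSTEMS}) for $(\vec{\vortrenormalized},\vec{\GradEnt})$, close via Gronwall using the H\"older-Strichartz bound from Corollary~\ref{C:HOLDERTYPESTRICHARTZESTIMATEFORWAVEVARIABLES}, and then pass to the dyadic sum via the standard Bernstein-type bound $\|P_\upnu G\|_{L^\infty_x}\lesssim \upnu^{-\updelta_1}\|G\|_{C^{0,\updelta_1}_x}$. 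The only cosmetic difference is in the Gronwall integrand: the paper arrives at the multiplier $\|\pmb{\partial}\vec{\Psi}\|_{C^{0,\updelta_1}(\Sigma_\tau)}+1$, whereas your version carries the extra term $\|\partial(\vec{\vortrenormalized},\vec{\GradEnt})\|_{L^\infty(\Sigma_\tau)}$, which is harmless since the bootstrap assumption \eqref{E:BOOTL2LINFINITYFIRSTDERIVATIVESOFVORTICITYBOOTANDNENTROPYGRADIENT} makes it time-integrable.
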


Before initiating the proof of Theorem~\ref{T:MIXEDSPACETIMEHOLDERESTIMATESFORVORTICITYANDENTROPY}, 
we first use it as an ingredient in deriving a strict improvement of
the bootstrap assumption \eqref{E:BOOTSOLUTIONDOESNOTESCAPEREGIMEOFHYPERBOLICITY}.

\begin{corollary}[Improvement of the basic $L^{\infty}$-type bootstrap assumption]
	\label{C:IMPROVEMENTOFLINFINITYBOOTSTRAP}
	Let $\mathfrak{K}$ be the compact set appearing in the bootstrap assumption \eqref{E:BOOTSOLUTIONDOESNOTESCAPEREGIMEOFHYPERBOLICITY}.
	Under the initial data and bootstrap assumptions of Sect.\,\ref{S:DATAANDBOOTSTRAPASSUMPTION},  
	the following containment holds
	whenever $\Tboot$ is sufficiently small:
	\begin{align} \label{E:IMPROVEDBOOTSOLUTIONDOESNOTESCAPEREGIMEOFHYPERBOLICITY}
	(\LogDensity,\Ent,\vec{v},\vec{\vortrenormalized},\vec{\GradEnt})([0,\Tboot] \times \mathbb{R}^3)
	\subset \mbox{\upshape int} \mathfrak{K}.
\end{align}
\end{corollary}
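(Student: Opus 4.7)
The plan is to show that each component $U \in \{\LogDensity, \Ent, \vec{v}, \vec{\vortrenormalized}, \vec{\GradEnt}\}$ of the solution satisfies the bound
\begin{align*}
\|U(t,\cdot) - U(0,\cdot)\|_{L^{\infty}(\mathbb{R}^3)} \lesssim \Tboot^{\upbeta}
\end{align*}
for some $\upbeta > 0$ and for all $t \in [0,\Tboot]$. Since by \eqref{E:SETSDESCRIBINGINTETERIOROFREGIMEOFHYPERBOLICITY} the initial data takes values in $\mbox{\upshape int} \mathring{\mathfrak{K}}$ and $\mathring{\mathfrak{K}}$ is a compact subset of $\mbox{\upshape int} \mathfrak{K}$, there is a uniform positive Euclidean distance between $\mathring{\mathfrak{K}}$ and $\partial \mathfrak{K}$; once $\Tboot$ is small enough that $\Tboot^{\upbeta}$ is below this distance, the containment \eqref{E:IMPROVEDBOOTSOLUTIONDOESNOTESCAPEREGIMEOFHYPERBOLICITY} will follow from the fundamental theorem of calculus applied in $t$ at each fixed $x$.

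To bound $\|U(t,\cdot) - U(0,\cdot)\|_{L^{\infty}(\mathbb{R}^3)}$, I would write $U(t,x) - U(0,x) = \int_0^t \partial_t U(\tau,x) \, d \tau$ and use the equations of motion to replace $\partial_t U$ by expressions involving spatial derivatives of the solution variables. Specifically, equations \eqref{E:TRANSPORTDENSRENORMALIZEDRELATIVETORECTANGULAR}--\eqref{E:ENTROPYTRANSPORT} and \eqref{E:TRANSPORTEQNMAIN} yield pointwise bounds of the schematic form
\begin{align*}
|\partial_t \LogDensity|, \ |\partial_t v^i|, \ |\partial_t \Ent|
	& \lesssim 1 + |\pmb{\partial} \vec{\Psi}|,
		\\
|\partial_t \vec{\vortrenormalized}|, \ |\partial_t \vec{\GradEnt}|
	& \lesssim 1 + |\pmb{\partial} \vec{\Psi}| + |\partial (\vec{\vortrenormalized}, \vec{\GradEnt})|,
\end{align*}
where on the right-hand side we have absorbed the uniformly bounded (by $\mathfrak{K}$) factors $v$, $\vec{\vortrenormalized}$, $\vec{\GradEnt}$, etc., via the bootstrap assumption \eqref{E:BOOTSOLUTIONDOESNOTESCAPEREGIMEOFHYPERBOLICITY} and Remark~\ref{R:UNIFORMLINFTYBOUNDS}.

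Taking $L^{\infty}(\Sigma_\tau)$ and integrating in $\tau \in [0,t] \subset [0,\Tboot]$, the contribution of the constant $1$ is $\lesssim \Tboot$, while for the derivative terms I would apply Cauchy--Schwarz in $\tau$:
\begin{align*}
\int_0^{\Tboot} \|\pmb{\partial} \vec{\Psi}\|_{L^{\infty}(\Sigma_\tau)} \, d \tau
	& \leq \Tboot^{1/2} \|\pmb{\partial} \vec{\Psi}\|_{L^2([0,\Tboot]) L_x^{\infty}}
	\lesssim \Tboot^{1/2 + \updelta},
\end{align*}
by the improved Strichartz estimate \eqref{E:IMPROVEMENTOFSTRICHARTZBOOTSTRAPASSUMPTION} from Theorem~\ref{T:IMPROVEMENTOFSTRICHARTZBOOTSTRAPASSUMPTION}, and similarly
\begin{align*}
\int_0^{\Tboot} \|\partial (\vec{\vortrenormalized}, \vec{\GradEnt})\|_{L^{\infty}(\Sigma_\tau)} \, d \tau
	& \leq \Tboot^{1/2} \|\partial (\vec{\vortrenormalized}, \vec{\GradEnt})\|_{L^2([0,\Tboot]) L_x^{\infty}}
	\lesssim \Tboot^{1/2 + \updelta},
\end{align*}
using \eqref{E:MIXEDSPACETIMEHOLDERESTIMATESFORVORTICITYANDENTROPY} from Theorem~\ref{T:MIXEDSPACETIMEHOLDERESTIMATESFORVORTICITYANDENTROPY} together with the embedding $C_x^{0,\updelta_1} \hookrightarrow L_x^{\infty}$. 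Combining these estimates yields $\|U(t,\cdot)-U(0,\cdot)\|_{L^{\infty}(\mathbb{R}^3)} \lesssim \Tboot + \Tboot^{1/2+\updelta}$, which can be made arbitrarily small by shrinking $\Tboot$, completing the proof.

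There is no real obstacle here: the corollary is essentially a direct, soft consequence of the quantitative improvements already established in Theorems~\ref{T:IMPROVEMENTOFSTRICHARTZBOOTSTRAPASSUMPTION} and \ref{T:MIXEDSPACETIMEHOLDERESTIMATESFORVORTICITYANDENTROPY}. The only point requiring care is ensuring that the compact set $\mathring{\mathfrak{K}}$ is chosen (as permitted by \eqref{E:SETSDESCRIBINGINTETERIOROFREGIMEOFHYPERBOLICITY}) so that the gap between $\mathring{\mathfrak{K}}$ and $\partial \mathfrak{K}$ is strictly positive and depends only on the data; this gap together with the smallness of $\Tboot$ (which, consistent with Sect.\,\ref{S:DATAANDBOOTSTRAPASSUMPTION}, is allowed to depend on the data and on $\mathfrak{K}$) closes the argument.
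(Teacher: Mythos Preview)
Your proposal is correct and follows essentially the same approach as the paper's proof: both use the fundamental theorem of calculus in $t$, bound $\partial_t$ of each variable pointwise via the equations of motion and the bootstrap assumption \eqref{E:BOOTSOLUTIONDOESNOTESCAPEREGIMEOFHYPERBOLICITY}, apply Cauchy--Schwarz in time together with the improved estimates \eqref{E:IMPROVEMENTOFSTRICHARTZBOOTSTRAPASSUMPTION} and \eqref{E:MIXEDSPACETIMEHOLDERESTIMATESFORVORTICITYANDENTROPY}, and conclude using the positive gap between $\mathring{\mathfrak{K}}$ and $\partial\mathfrak{K}$ guaranteed by \eqref{E:SETSDESCRIBINGINTETERIOROFREGIMEOFHYPERBOLICITY}. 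Your version is slightly more explicit about the compactness argument, but the logic is identical.
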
	

\begin{proof}
Let $\vec{\varphi}$ denote the following array of scalar functions:
$\vec{\varphi} := (\LogDensity,\Ent,\vec{v},\vec{\vortrenormalized},\vec{\GradEnt})$.
Using \eqref{E:TRANSPORTEQNMAIN}
and the bootstrap assumption
\eqref{E:BOOTSOLUTIONDOESNOTESCAPEREGIMEOFHYPERBOLICITY},
we deduce that
$|\partial_t \vec{\varphi}| \lesssim |\pmb{\partial} \vec{\Psi}| + |\partial \vec{\vortrenormalized}| + |\partial \vec{\GradEnt}| + 1$.
Hence, from the fundamental theorem of calculus, 
the estimates \eqref{E:IMPROVEMENTOFSTRICHARTZBOOTSTRAPASSUMPTION} and \eqref{E:MIXEDSPACETIMEHOLDERESTIMATESFORVORTICITYANDENTROPY},
and the Cauchy--Schwarz inequality with respect to $t$, we deduce that the following estimate holds for $t \in [0,\Tboot]$:
$|
	\vec{\varphi}(t,x)
	-
	\vec{\varphi}(0,x)
|
\lesssim 
\|\pmb{\partial} \vec{\Psi} \|_{L^1([0,t]) L_x^{\infty}} 
+ 
\|\partial \vec{\vortrenormalized} \|_{L^1([0,t]) L_x^{\infty}} 
+ 
\|\partial \vec{\GradEnt} \|_{L^1([0,t]) L_x^{\infty}}
+
t
\lesssim
\Tboot^{1/2 + \updelta}
$.
It follows that we can guarantee that $\vec{\varphi}(t,x)$ 
is arbitrarily close to $\vec{\varphi}(0,x)$
by choosing $\Tboot$ to be sufficiently small.
From this fact and \eqref{E:SETSDESCRIBINGINTETERIOROFREGIMEOFHYPERBOLICITY},
we conclude \eqref{E:IMPROVEDBOOTSOLUTIONDOESNOTESCAPEREGIMEOFHYPERBOLICITY}.
	
\end{proof}

\subsection{Schauder estimates for div-curl systems}
\label{SS:SCHAUDERFORDIVCURL}
In the next lemma, we provide a standard Schauder estimate for div-curl systems on Euclidean space $\mathbb{R}^3$. 

\begin{lemma}[Schauder estimates for div-curl systems]
\label{L:SCHAUDERESTIMATESFORDIVCURLSYSTEMS}
Let $V$ be a vectorfield on $\mathbb{R}^3$ such that $V \in C^2(\mathbb{R}^3) \cap H^2(\mathbb{R}^3)$,
and let $\updelta_1 > 0$ be the parameter from \eqref{E:DELTA1DEFINITIONANDPOSITIVITY}.
Then the following estimate holds:\footnote{Our proof of the estimate \eqref{E:SCHAUDERESTIMATESFORDIVCURLSYSTEMS} goes through
for $\updelta_1 \in (0,1/2)$, but in practice, we need the estimate only for the value of $\updelta_1$ specified in \eqref{E:DELTA1DEFINITIONANDPOSITIVITY}.}
\begin{align} \label{E:SCHAUDERESTIMATESFORDIVCURLSYSTEMS}
	\| \partial V \|_{C^{0,\updelta_1}(\mathbb{R}^3)}
	& \lesssim 
		\| \dive V \|_{C^{0,\updelta_1}(\mathbb{R}^3)}
		+
		\| \curl V \|_{C^{0,\updelta_1}(\mathbb{R}^3)}
		+
		\| V \|_{H^2(\mathbb{R}^3)}.
\end{align}
\end{lemma}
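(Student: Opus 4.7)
The plan is to reduce the estimate to classical Calder\'on--Zygmund theory via the vector identity $\Delta V^i = \partial_i (\dive V) - (\curl \curl V)^i$, valid on $\mathbb{R}^3$. Since $V \in H^2(\mathbb{R}^3)$, no nontrivial harmonic function interferes, so the Laplacian can be inverted to give a representation
\[
\partial_k V^i \;=\; \mathcal{R}_{ki}[\dive V] \;+\; \sum_{a,b} \upepsilon^{iab} \,\mathcal{S}_{ka}[(\curl V)_b],
\]
where $\mathcal{R}_{ki}$ and $\mathcal{S}_{ka}$ are compositions of two Riesz transforms $R_j = \partial_j(-\Delta)^{-1/2}$ (obtained after integrating by parts the Newtonian potential of $\curl \curl V$). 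Concretely, $\mathcal{R}_{ki} = R_k R_i$ and similarly for $\mathcal{S}$. These are standard convolution-type Calder\'on--Zygmund operators whose kernels are homogeneous of degree $-3$ and smooth off the diagonal.

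The H\"older seminorm part of $\|\partial V\|_{C^{0,\updelta_1}(\mathbb{R}^3)}$ is then controlled by the classical fact that such double Riesz transforms are bounded on the homogeneous H\"older space $\dot{C}^{0,\updelta_1}(\mathbb{R}^3)$ for every $\updelta_1 \in (0,1)$ (see, e.g., Stein's \emph{Singular Integrals}). This yields immediately
\[
[\partial V]_{C^{0,\updelta_1}} \;\lesssim\; [\dive V]_{C^{0,\updelta_1}} + [\curl V]_{C^{0,\updelta_1}} \;\leq\; \| \dive V \|_{C^{0,\updelta_1}} + \| \curl V \|_{C^{0,\updelta_1}}.
\]

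For the $L^\infty$ part of $\|\partial V\|_{C^{0,\updelta_1}}$, I split dyadically via Littlewood--Paley: $\partial V = P_{\leq 1}\partial V + P_{>1}\partial V$. The low-frequency piece is handled by Bernstein's inequality together with Lemma~\ref{L:ELLIPTICDIVCURLESTIMATES}:
\[
\|P_{\leq 1}\partial V\|_{L^\infty} \;\lesssim\; \|\partial V\|_{L^2} \;\leq\; \|\dive V\|_{L^2} + \|\curl V\|_{L^2} \;\lesssim\; \|V\|_{H^1} \;\lesssim\; \|V\|_{H^2}.
\]
For the high-frequency piece, the key observation is that the Calder\'on--Zygmund representation above commutes with $P_{>1}$, so that $P_{>1}\partial V$ equals the same double-Riesz-transform operators applied to $P_{>1}\dive V$ and $P_{>1}\curl V$. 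Because these inputs are Fourier-supported away from the origin, the only frequency shells that contribute are those on which the multiplier symbol $\xi_k\xi_i/|\xi|^2$ is smooth and bounded, and a standard atomic/kernel estimate gives $\|P_{>1}\partial V\|_{L^\infty} \lesssim \|P_{>1}\dive V\|_{C^{0,\updelta_1}} + \|P_{>1}\curl V\|_{C^{0,\updelta_1}}$, with the low-frequency remainders $P_{\leq 1}\dive V$ and $P_{\leq 1}\curl V$ absorbed into $\|V\|_{H^2}$ via Bernstein.

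The main obstacle is the well-known failure of Calder\'on--Zygmund operators to be bounded on $L^\infty$; this is exactly why the inhomogeneous H\"older norm cannot be estimated in one stroke by CZ theory, and why the auxiliary $\|V\|_{H^2}$ term on the right-hand side is necessary. The splitting into low and high frequencies, together with the $L^2$ div-curl identity of Lemma~\ref{L:ELLIPTICDIVCURLESTIMATES} to control the low-frequency piece, is the natural way to circumvent this obstruction and close the estimate \eqref{E:SCHAUDERESTIMATESFORDIVCURLSYSTEMS}.
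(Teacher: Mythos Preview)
Your proposal is correct and takes a genuinely different route from the paper. The paper proceeds in physical space: it first proves the estimate for compactly supported $W$ via the Helmholtz representation $W^j = \dive W * \partial_j\Phi - \upepsilon^{jcd}(\curl W)^a * \partial_d\Phi$ (with $\Phi$ the Newtonian potential) and Gilbarg--Trudinger-type potential estimates, then localizes $V$ with cutoffs $\chi_z$ centered at arbitrary $z$, and finally controls the lower-order remainder $\|V\|_{C^{0,\updelta_1}(B_2(z))}$ by the Sobolev embedding $H^2(\mathbb{R}^3)\hookrightarrow C^{0,\updelta_1}(\mathbb{R}^3)$. For the $L^\infty$ part, the paper uses a pointwise trick: since $\|\partial V\|_{L^2(B_1(z))}\leq\|V\|_{H^1}$ and $|B_1|>1$, there is a point $p$ with $|\partial V(p)|\leq\|V\|_{H^1}$, and the already-established local H\"older seminorm propagates this bound throughout $B_1(z)$.

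Your global Riesz-transform plus Littlewood--Paley argument is equally valid and in some ways cleaner: the H\"older-seminorm step is a one-line appeal to Calder\'on--Zygmund boundedness on $\dot C^{0,\updelta_1}$, and your frequency splitting for the $L^\infty$ part avoids the localization machinery. One small point: your high-frequency step would read more convincingly as an explicit dyadic sum, namely $\|P_{>1}\partial V\|_{L^\infty}\leq\sum_{j\geq 1}\|P_j\partial V\|_{L^\infty}\lesssim\sum_{j\geq 1}\|\tilde P_j(\dive V,\curl V)\|_{L^\infty}\lesssim\sum_{j\geq 1}2^{-j\updelta_1}\|(\dive V,\curl V)\|_{C^{0,\updelta_1}}$, rather than the vaguer ``atomic/kernel estimate.'' A pleasant byproduct is that your argument works for any $\updelta_1\in(0,1)$, whereas the paper's use of $H^2\hookrightarrow C^{0,\updelta_1}$ restricts it to $\updelta_1<1/2$ (as the footnote acknowledges).
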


\begin{proof}
	Let $z \in \mathbb{R}^3$ and let $B_2(z)$ be the ball of Euclidean radius $2$ centered at $z$.
	As a first step, we will show that if $W \in C^2(\mathbb{R}^3) \cap H^2(\mathbb{R}^3)$
	is a vectorfield on $\mathbb{R}^3$ that is supported in $B_2(z)$,
	then we have (with implicit constants that are independent of $z$):
\begin{align} \label{E:COMPACTCASESCHAUDERESTIMATESFORDIVCURLSYSTEMS}
	\| \partial W \|_{C^{0,\updelta_1}(B_2(z))}
	& \lesssim 
		\| \dive W \|_{C^{0,\updelta_1}(B_2(z))}
		+
		\| \curl W \|_{C^{0,\updelta_1}(B_2(z))}.
\end{align}
	To prove \eqref{E:COMPACTCASESCHAUDERESTIMATESFORDIVCURLSYSTEMS},
	we let $\Phi(x) :=  \frac{-1}{4\pi |x|}$ denote the fundamental solution of the Euclidean Laplacian on $\mathbb{R}^3$.
	The standard Helmholtz decomposition yields the following identity, where $\upepsilon^{ijk}$ is the fully antisymmetric
	symbol normalized by $\upepsilon^{123} = 1$:
	\begin{align} \label{E:STANDARDHELMHOLTZDECOMP}
		W^j
		& = \dive W * \updelta^{jc} \partial_c \Phi
				-
				\upepsilon^{jcd}\updelta_{ca} (\curl W)^a * \partial_d \Phi.
	\end{align}	
The desired estimate \eqref{E:COMPACTCASESCHAUDERESTIMATESFORDIVCURLSYSTEMS} now follows from
standard estimates for the first derivatives of the convolutions 
on RHS~\eqref{E:STANDARDHELMHOLTZDECOMP}; see, for example, the proofs of \cite{dGnT2001}*{Lemma~4.2} and \cite{dGnT2001}*{Lemma~4.4}.

To prove \eqref{E:SCHAUDERESTIMATESFORDIVCURLSYSTEMS},
let $B_1(z) \subset \mathbb{R}^3$ be the Euclidean ball with radius $1$ centered at $z$.
Let $\chi \geq 0$ be a $C^{\infty}$ spherically symmetric cut-off function on $\mathbb{R}^3$ 
with $\chi(x) = 1$ for $|x| \leq 1$ and $\chi(x) = 0$ for $|x| \geq 2$,
and let $\chi_z(x) := \chi(x-z)$. 
It follows that $\chi_z(x) = 1$ for $x \in B_1(z)$ and thus
$
\| \partial V \|_{C^{0,\updelta_1}(B_1(z))}
=
\| \partial (\chi_z V) \|_{C^{0,\updelta_1}(B_1(z))}
\leq
\| \partial (\chi_z V) \|_{C^{0,\updelta_1}(B_2(z))}
$.
From this estimate,
\eqref{E:COMPACTCASESCHAUDERESTIMATESFORDIVCURLSYSTEMS} with $\chi_z V$ in the role of $W$
(this estimate is valid since $\chi_z V$ is compactly supported in $B_2(z)$), 
the standard estimate
$\| F \cdot G \|_{C^{0,\updelta_1}(B_2(z))}
		\leq
		2
		\| F \|_{C^{0,\updelta_1}(B_2(z))}
		\| G \|_{C^{0,\updelta_1}(B_2(z))}
$,
and the simple estimates (which are uniform in $z$)
$
\| \chi_z \|_{C^{0,\updelta_1}(B_1(z))}
\leq
\| \chi \|_{C^{0,\updelta_1}(\mathbb{R}^3)}
\lesssim 1
$
and
$
\| \partial \chi_z \|_{C^{0,\updelta_1}(B_1(z))}
\leq
\| \partial \chi \|_{C^{0,\updelta_1}(\mathbb{R}^3)}
\lesssim 1
$,
we obtain
\begin{align} \label{E:NEXTSTEPCOMPACTCASESCHAUDERESTIMATESFORDIVCURLSYSTEMS}
	\| \partial V \|_{C^{0,\updelta_1}(B_1(z))}
	& \lesssim 
		\| \dive (\chi_z V) \|_{C^{0,\updelta_1}(B_2(z))}
		+
		\| \curl (\chi_z V) \|_{C^{0,\updelta_1}(B_2(z))}
			\\
	& \lesssim 
		\| \dive V \|_{C^{0,\updelta_1}(B_2(z))}
		+
		\| \curl V \|_{C^{0,\updelta_1}(B_2(z))}
		+
		\| V \|_{C^{0,\updelta_1}(B_2(z))}.
		\notag
\end{align}

From \eqref{E:NEXTSTEPCOMPACTCASESCHAUDERESTIMATESFORDIVCURLSYSTEMS}
and the Sobolev embedding result $H^2(\mathbb{R}^3) \hookrightarrow C^{0,\updelta_1}(\mathbb{R}^3)$
(which is valid since $\updelta_1 < 1/2$),
we deduce that
\begin{align} \label{E:SMALLDISTANCESCHAUDERESTIMATE}
	\sup_{x,y \in B_1(z), 0 < |x-y|}
	\frac{|\partial V(x) - \partial V(y)|}{|x-y|^{\updelta_1}}
	& \lesssim
	\| \dive V \|_{C^{0,\updelta_1}(\mathbb{R}^3)}
	+
	\| \curl V \|_{C^{0,\updelta_1}(\mathbb{R}^3)}
	+
	\| V \|_{H^2(\mathbb{R}^3)}.
\end{align}
Moreover, since $\| \partial V \|_{L^2(B_1(z))} \leq \| V \|_{H^1(\mathbb{R}^3)}$
and since $B_1(z)$ has Euclidean volume greater than $1$,
there must be a point $p \in B_1(z)$ such that
$|\partial V(p)| \leq \| V \|_{H^1(\mathbb{R}^3)}$. From this simple fact
and \eqref{E:SMALLDISTANCESCHAUDERESTIMATE},
we conclude that
\begin{align} \label{E:SUPNORMBOUNDFORPARTIALVONABALL}
	\sup_{x \in B_1(z)}
	|\partial V(x)|
	& \lesssim 
		\| \dive V \|_{C^{0,\updelta_1}(\mathbb{R}^3)}
	+
	\| \curl V \|_{C^{0,\updelta_1}(\mathbb{R}^3)}
	+
	\| V \|_{H^2(\mathbb{R}^3)}.
\end{align}
Since $z$ is arbitrary in \eqref{E:SUPNORMBOUNDFORPARTIALVONABALL},
we conclude that
\begin{align} \label{E:SUPNORMBOUNDFORPARTIALV}
	\|
		\partial V
	\|_{L^{\infty}(\mathbb{R}^3)}
	& \lesssim 
		\| \dive V \|_{C^{0,\updelta_1}(\mathbb{R}^3)}
	+
	\| \curl V \|_{C^{0,\updelta_1}(\mathbb{R}^3)}
	+
	\| V \|_{H^2(\mathbb{R}^3)}.
\end{align}
From \eqref{E:SUPNORMBOUNDFORPARTIALV},
it easily follows that
\begin{align} \label{E:LARGEDISTANCEHOLDERSTIMATE}
\sup_{|x - y| \geq 1}
	\frac{|\partial V(x) - \partial V(y)|}{|x-y|^{\updelta_1}}
& \leq 
	2
	\|
		\partial V
	\|_{L^{\infty}(\mathbb{R}^3)}
	\lesssim 
	\| \dive V \|_{C^{0,\updelta_1}(\mathbb{R}^3)}
	+
	\| \curl V \|_{C^{0,\updelta_1}(\mathbb{R}^3)}
	+
	\| V \|_{H^2(\mathbb{R}^3)}.
\end{align}
Next, if $0 < |x-y| \leq 1$, then $y \in B_1(x)$,
which, in view of \eqref{E:NEXTSTEPCOMPACTCASESCHAUDERESTIMATESFORDIVCURLSYSTEMS} with $x$ in the role of $z$
and the Sobolev embedding result $H^2(\mathbb{R}^3) \hookrightarrow C^{0,\updelta_1}(\mathbb{R}^3)$,
implies that
\begin{align} \label{E:SMALLDISTANCEHOLDERSTIMATE}
\sup_{0 < |x - y| \leq 1}
	\frac{|\partial V(x) - \partial V(y)|}{|x-y|^{\updelta_1}}
	& \lesssim 
	\| \dive V \|_{C^{0,\updelta_1}(\mathbb{R}^3)}
	+
	\| \curl V \|_{C^{0,\updelta_1}(\mathbb{R}^3)}
	+
	\| V \|_{H^2(\mathbb{R}^3)}.
\end{align}
Finally, in view of definition \eqref{E:HOLDERNORMDEF},
we see that the desired estimate \eqref{E:SCHAUDERESTIMATESFORDIVCURLSYSTEMS}
follows from \eqref{E:SUPNORMBOUNDFORPARTIALV}, 
\eqref{E:LARGEDISTANCEHOLDERSTIMATE}, 
and
\eqref{E:SMALLDISTANCEHOLDERSTIMATE}.

\end{proof}

\subsection{Estimates for the flow map of the material derivative vectorfield}
\label{SS:ESTIMATESFORCHARACTERISTICSOFTRANSPORT}
Our proof of Theorem~\ref{T:MIXEDSPACETIMEHOLDERESTIMATESFORVORTICITYANDENTROPY} is through a Gr\"{o}nwall inequality estimate
that relies on having sufficient control of the flow map of the material derivative vectorfield $\Transport$.
In the next lemma, we derive the estimates for the flow map.

\begin{lemma}[Estimates for the flow map of the material derivative vectorfield]
	\label{L:ESTIMATESFORTRANSPORTCHARACTERISTICS}
	Let $\upgamma : [0,\Tboot] \times \mathbb{R}^3 \rightarrow [0,\Tboot] \times \mathbb{R}^3$
	be the flow map of $\Transport$, that is, the solution 
	to the following transport initial value problem
	for the Cartesian component functions $\upgamma^{\alpha}(t;x)$:
	\begin{subequations}
	\begin{align}
		\frac{d}{dt} \upgamma^{\alpha}(t;x)
		& = \Transport^{\alpha} \circ \upgamma(t;x),
		&&
		\label{E:TRANSPORTODESYSTEM} \\
		\upgamma^0(0;x)
		& = 0,
		&
		\upgamma^i(0;x)
		& = x^i.
		\label{E:DATAFORTRANSPORTODESYSTEM}
	\end{align}
	\end{subequations}
	Then under the bootstrap assumptions,
	for every fixed $x \in \mathbb{R}^3$, there exists a unique solution $t \rightarrow \upgamma(t;x)$ to the system 
	\eqref{E:TRANSPORTODESYSTEM}--\eqref{E:DATAFORTRANSPORTODESYSTEM}.
	Moreover, $\upgamma$ is a smooth function of $t$ and $x$.
	In addition, there exists a constant $C > 0$
	such that for $t \in [0,\Tboot]$ and all $x,y \in \mathbb{R}^3$,
	we have
	\begin{subequations}
	\begin{align}
		\upgamma^0(t;x)
		& = t,
			\label{E:GAMMA0IST} \\
		|
			\upgamma^i(t;x)
			- 
			x^i
		|
		& \leq C,
			\label{E:CHARACTERISTICNEARDATA} \\
		\sum_{i=1}^3	
		|
			\upgamma^i(t;x)
			-
			\upgamma^i(t;y)
		|
		& \approx |x-y|.
		\label{E:CHARACTERISTICSDIFFERENCEESTIMATE}
	\end{align}
	\end{subequations}
	In particular, for each fixed $t \in [0,\Tboot]$, the map
	$x \rightarrow \left(\upgamma^1(t,x),\upgamma^2(t,x),\upgamma^3(t,x)\right)$ is a smooth global diffeomorphism 
	from $\mathbb{R}^3$ to $\mathbb{R}^3$.
\end{lemma}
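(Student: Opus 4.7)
\textbf{Proof plan for Lemma~\ref{L:ESTIMATESFORTRANSPORTCHARACTERISTICS}.} Since $\Transport^0 \equiv 1$, the $\alpha = 0$ equation in \eqref{E:TRANSPORTODESYSTEM} reduces to $\frac{d}{dt}\upgamma^0(t;x) = 1$ with $\upgamma^0(0;x) = 0$, immediately giving \eqref{E:GAMMA0IST}. The remaining system for the spatial components decouples into
\begin{align*}
\frac{d}{dt}\upgamma^i(t;x) = v^i(t,\upgamma^1(t;x),\upgamma^2(t;x),\upgamma^3(t;x)), \qquad \upgamma^i(0;x) = x^i.
\end{align*}
By the standing smoothness assumption (see Footnote~\ref{FN:SMOOTHNESS}) on the compressible Euler solution, the vectorfield $\vec{v}$ is smooth on $[0,\Tboot]\times\mathbb{R}^3$. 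Moreover, the improved $L^\infty$ bootstrap \eqref{E:IMPROVEDBOOTSOLUTIONDOESNOTESCAPEREGIMEOFHYPERBOLICITY} (already in hand by the time we invoke this lemma) gives $\|\vec{v}\|_{L^\infty([0,\Tboot]\times\mathbb{R}^3)} \lesssim 1$. Standard ODE theory (Cauchy--Lipschitz applied locally and extended via the a priori $L^\infty$ bound, which precludes finite-time escape) therefore yields, for each $x \in \mathbb{R}^3$, a unique smooth solution $t \mapsto \upgamma(t;x)$ defined on all of $[0,\Tboot]$; joint smoothness in $(t,x)$ follows from smooth dependence on initial data. Integrating the spatial ODE yields $\upgamma^i(t;x) - x^i = \int_0^t v^i\circ\upgamma(\tau;x)\,d\tau$, which has absolute value $\leq \Tboot\,\|\vec{v}\|_{L^\infty} \lesssim 1$, establishing \eqref{E:CHARACTERISTICNEARDATA}.

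For the bilipschitz estimate \eqref{E:CHARACTERISTICSDIFFERENCEESTIMATE}, set $D^i(t) := \upgamma^i(t;x) - \upgamma^i(t;y)$. Subtracting the ODEs and using the mean value theorem along the Euclidean segment joining $\upgamma(t;x)$ to $\upgamma(t;y)$ in $\Sigma_t$ yields the pointwise bound
\begin{align*}
\left|\frac{d}{dt}D^i(t)\right| \leq \|\partial v^i\|_{L^\infty(\Sigma_t)}\,|D(t)|,
\end{align*}
where $|D(t)| := \sum_{j=1}^3|D^j(t)|$. The key input is then that $t\mapsto \|\pmb\partial\vec\Psi\|_{L^\infty(\Sigma_t)}$ is integrable in a quantitative way: by the Strichartz improvement of Theorem~\ref{T:IMPROVEMENTOFSTRICHARTZBOOTSTRAPASSUMPTION} and Cauchy--Schwarz in $t$,
\begin{align*}
\int_0^{\Tboot}\|\partial\vec v\|_{L^\infty(\Sigma_\tau)}\,d\tau \leq \Tboot^{1/2}\|\pmb\partial\vec\Psi\|_{L^2([0,\Tboot])L^\infty_x} \lesssim \Tboot^{1/2+\updelta} \lesssim 1.
\end{align*}
Gronwall's inequality, applied forward and then backward in $t$, gives $|D(t)| \approx |D(0)| = |x-y|$, which is \eqref{E:CHARACTERISTICSDIFFERENCEESTIMATE}.

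Finally, for each fixed $t \in [0,\Tboot]$, the map $\Phi_t: x \mapsto (\upgamma^1,\upgamma^2,\upgamma^3)(t;x)$ is smooth (by joint smoothness of $\upgamma$) and injective (by \eqref{E:CHARACTERISTICSDIFFERENCEESTIMATE}), while surjectivity follows from the standard observation that integrating the ODE backward from time $t$ to time $0$ with any prescribed terminal point in $\mathbb{R}^3$ produces a preimage in $\mathbb{R}^3$. Smoothness of the inverse $\Phi_t^{-1}$ follows either from the inverse function theorem (the Jacobian $\det(\partial \Phi_t/\partial x)$ is nonzero since by \eqref{E:CHARACTERISTICSDIFFERENCEESTIMATE} the derivative is uniformly invertible) or, equivalently, by recognizing $\Phi_t^{-1}$ as the time-$t$-to-time-$0$ flow of the same smooth vectorfield, hence smooth by the same ODE regularity theory. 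The main quantitative obstacle is the bilipschitz bound \eqref{E:CHARACTERISTICSDIFFERENCEESTIMATE}, which is where the Strichartz-based time-integrability of $\|\partial\vec v\|_{L^\infty}$ is indispensable; without the improvement in Theorem~\ref{T:IMPROVEMENTOFSTRICHARTZBOOTSTRAPASSUMPTION}, the classical regularity hypothesis on $v$ would not suffice at the Sobolev level $\Sob > 2$ under consideration.
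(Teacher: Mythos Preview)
Your approach is essentially the paper's: the $\alpha=0$ equation gives \eqref{E:GAMMA0IST} trivially, existence and smoothness come from standard ODE theory, \eqref{E:CHARACTERISTICNEARDATA} follows by integrating the ODE and using the $L^\infty$ bound on $\vec v$, and \eqref{E:CHARACTERISTICSDIFFERENCEESTIMATE} follows from the mean value theorem plus two-sided Gronwall, fed by the $L^1_t L^\infty_x$ bound on $\partial\vec v$.

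However, two of your citations are problematic. First, you invoke Corollary~\ref{C:IMPROVEMENTOFLINFINITYBOOTSTRAP} for the $L^\infty$ bound on $\vec v$, but that corollary depends on Theorem~\ref{T:MIXEDSPACETIMEHOLDERESTIMATESFORVORTICITYANDENTROPY}, whose proof (Subsect.~\ref{SS:PROOFOFPROPMIXEDSPACETIMEHOLDERESTIMATESFORVORTICITYANDENTROPY}) uses Lemma~\ref{L:TRANSPORTESTIMATESINHOLDERSPACES}, which in turn uses the present lemma --- a circularity. You should use the raw bootstrap assumption \eqref{E:BOOTSOLUTIONDOESNOTESCAPEREGIMEOFHYPERBOLICITY} instead (see Remark~\ref{R:UNIFORMLINFTYBOUNDS}), which already gives $\|\vec v\|_{L^\infty}\lesssim 1$. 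Second, and relatedly, your closing remark that ``without the improvement in Theorem~\ref{T:IMPROVEMENTOFSTRICHARTZBOOTSTRAPASSUMPTION} the classical regularity hypothesis would not suffice'' is incorrect: the lemma is stated under the bootstrap assumptions, and the paper's proof uses only \eqref{E:BOOTSTRICHARTZ} (which already gives $\|\pmb\partial\vec\Psi\|_{L^2_t L^\infty_x}\leq 1$, hence $\int_0^{\Tboot}\|\partial\vec v\|_{L^\infty(\Sigma_\tau)}\,d\tau\lesssim 1$ by Cauchy--Schwarz). The improved Strichartz estimate is not needed here.
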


\begin{proof} 
	The identity \eqref{E:GAMMA0IST} follows easily from 
	considering the $0$ component of \eqref{E:TRANSPORTODESYSTEM}--\eqref{E:DATAFORTRANSPORTODESYSTEM}.
	
	Since the components $\Transport^{\alpha}$ are smooth on $[0,\Tboot] \times \mathbb{R}^3$
	and satisfy\footnote{Here, we are only using the \emph{qualitative} finiteness property
	$\sup_{t \in [0,\Tboot]} \| \mathbf{\partial} \Transport^{\alpha} \|_{L^{\infty}(\Sigma_t)} < \infty$
	to guarantee the existence and uniqueness of the solution to \eqref{E:TRANSPORTODESYSTEM}--\eqref{E:DATAFORTRANSPORTODESYSTEM}.
	In contrast, the constants in \eqref{E:GAMMA0IST}--\eqref{E:CHARACTERISTICSDIFFERENCEESTIMATE} are controlled
	by the bootstrap assumptions, such as \eqref{E:BOOTSTRICHARTZ}.} 
	$\sup_{t \in [0,\Tboot]} \| \mathbf{\partial}^{\leq 1} \Transport^{\alpha} \|_{L^{\infty}(\Sigma_t)} < \infty$,
	the existence and uniqueness of solutions $\upgamma(t;x)$
	to \eqref{E:TRANSPORTODESYSTEM}--\eqref{E:DATAFORTRANSPORTODESYSTEM}
	that depend smoothly on $t$ and $x$ is a standard result from ODE theory,
	as is the fact that	the map $x \rightarrow \left(\upgamma^1(t,x),\upgamma^2(t,x),\upgamma^3(t,x)\right)$ is a smooth global diffeomorphism 
	from $\mathbb{R}^3$ to $\mathbb{R}^3$ for each fixed $t \in [0,\Tboot]$.
	Next, we use the fundamental theorem of calculus 
	and the fact that $\Transport^i = v^i$ (see \eqref{E:MATERIALVECTORVIELDRELATIVETORECTANGULAR})
	to deduce
	\begin{align}  \label{E:FTCDIFFERENCECHARACTERISTICS}
		\upgamma^i(t;x)
		-
		\upgamma^i(t;y)
		& = x^i - y^i
			+
			\int_0^t
				\left\lbrace
					v^i \circ \upgamma(\uptau;x)
					-
					v^i \circ \upgamma(\uptau;y)
				\right\rbrace
			\, d\uptau.
	\end{align}
	Let $\underline{\upgamma}(t,x) := \left(\upgamma^1(t,x),\upgamma^2(t,x),\upgamma^3(t,x) \right)$.
	Since $\partial v$ and $\upgamma$ are smooth, we deduce 
	from \eqref{E:FTCDIFFERENCECHARACTERISTICS} and the mean value theorem that
	\begin{align}  \label{E:DIFFERENCECHARACTERISTICSFIRSTESTIMATE}
		|
			\left(\underline{\upgamma}(t;x)
			-
			\underline{\upgamma}(t;y)
			\right)
			-
			(x - y)
		|
		& \leq
			C
			\int_0^t
				\| \partial \vec{v} \|_{L^{\infty}(\Sigma_{\uptau})}
				|
					\underline{\upgamma}(\uptau;x)
					-
					\underline{\upgamma}(\uptau;y)
				|
			\, d\uptau.
	\end{align}
	From \eqref{E:DIFFERENCECHARACTERISTICSFIRSTESTIMATE} and Gr\"{o}nwall's inequality 
	(more precisely, a straightforward extension of the standard Gr\"{o}nwall inequality to yield upper and lower bounds),
	we deduce that
	\begin{align} \label{E:DIFFERENCECHARACTERISTICSGRONWALLEDUPPERANDLOWER}
		\exp \left(- C \int_0^t \| \partial \vec{v} \|_{L^{\infty}(\Sigma_{\uptau})} \, d \uptau \right)
		&
		\leq
		\frac{
		|
			\underline{\upgamma}(t;x)
			-
			\underline{\upgamma}(t;y)
		|}{|x - y|}
		\leq \exp \left(C \int_0^t \| \partial \vec{v} \|_{L^{\infty}(\Sigma_{\uptau})} \, d \uptau \right).
	\end{align}
	From \eqref{E:DIFFERENCECHARACTERISTICSGRONWALLEDUPPERANDLOWER} and the bootstrap assumption \eqref{E:BOOTSTRICHARTZ},
	we conclude the desired bounds \eqref{E:CHARACTERISTICSDIFFERENCEESTIMATE}.
	
	The estimate \eqref{E:CHARACTERISTICNEARDATA} follows from a similar argument
	based on the simple bound 
	$\| \vec{v} \|_{L^1([0,\Tboot])L_x^{\infty}} \lesssim 1$;
	we omit the details.
	
	\end{proof}

\subsection{Estimates for transport equations in H\"{o}lder spaces}
\label{SS:ESTIAMTESFORTRANSPORTINHOLDER}
With the help of Lemma~\ref{L:ESTIMATESFORTRANSPORTCHARACTERISTICS},
we now derive estimates for transport equations with
H\"{o}lder-class initial data and source terms.

	\begin{lemma}[Estimates for transport equations in H\"{o}lder spaces]
		\label{L:TRANSPORTESTIMATESINHOLDERSPACES}
		Let $\mathfrak{F}$ be a smooth function on $[0,\Tboot] \times \mathbb{R}^3$ 
		and let $\mathring{\varphi}$ be a smooth function on $\mathbb{R}^3$. 
		Let $\varphi$ be a smooth solution to the following inhomogeneous transport 
		equation initial value problem:
		\begin{subequations}
		\begin{align}
			\Transport^{\alpha} \partial_{\alpha} \varphi
			& = \mathfrak{F},
				\label{E:INHOMOGENEOUSTRANSPORT} \\
			\varphi|_{\Sigma_0} & = \mathring{\varphi}.
			\label{E:DATAINHOMOGENEOUSTRANSPORT}
		\end{align}
		\end{subequations}
		Then the following estimate holds for $t \in [0,\Tboot]$,
		where $\updelta_1 > 0$ is the parameter from \eqref{E:DELTA1DEFINITIONANDPOSITIVITY}:
		\begin{align} \label{E:HOLDERESTIMATEFORSOLUTIONTOINHOMOGENEOUSTRANSPORT}
			\| \varphi \|_{C^{0,\updelta_1}(\Sigma_t)}
			& \lesssim 
				\| \mathring{\varphi} \|_{C^{0,\updelta_1}(\Sigma_0)}
				+
				\int_0^t
					\| \mathfrak{F} \|_{C^{0,\updelta_1}(\Sigma_{\uptau})}
				\, d \uptau.
		\end{align}
	\end{lemma}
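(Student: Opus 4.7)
The plan is to use the method of characteristics together with the flow-map estimates from Lemma~\ref{L:ESTIMATESFORTRANSPORTCHARACTERISTICS}. Let $\upgamma(t;x)$ be the flow map of $\Transport$ and write $\underline{\upgamma}(t;x) := (\upgamma^1,\upgamma^2,\upgamma^3)(t;x)$. Since \eqref{E:GAMMA0IST} gives $\upgamma^0(t;x)=t$, the transport equation \eqref{E:INHOMOGENEOUSTRANSPORT} integrates along characteristics to yield
\begin{align*}
\varphi(t,\underline{\upgamma}(t;x))
& = \mathring{\varphi}(x) + \int_0^t \mathfrak{F}(\tau,\underline{\upgamma}(\tau;x))\, d\tau.
\end{align*}
By Lemma~\ref{L:ESTIMATESFORTRANSPORTCHARACTERISTICS}, for each fixed $t$ the map $x\mapsto \underline{\upgamma}(t;x)$ is a smooth diffeomorphism of $\mathbb{R}^3$, so given any $y\in\mathbb{R}^3$ we may write $y=\underline{\upgamma}(t;x)$ for a unique $x=x(t,y)$, and the displayed identity furnishes an explicit representation formula for $\varphi(t,y)$.

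The $L^\infty$ part of the estimate is immediate: taking the supremum over $x$ gives $\|\varphi\|_{L^\infty(\Sigma_t)} \leq \|\mathring{\varphi}\|_{L^\infty(\Sigma_0)} + \int_0^t \|\mathfrak{F}\|_{L^\infty(\Sigma_\tau)}\,d\tau$, which is controlled by $\mbox{RHS}~\eqref{E:HOLDERESTIMATEFORSOLUTIONTOINHOMOGENEOUSTRANSPORT}$. For the H\"older seminorm, I would fix $y_1,y_2\in\mathbb{R}^3$, let $x_i:=x(t,y_i)$ so that $\underline{\upgamma}(t;x_i)=y_i$, and subtract the representation formulas at $y_1$ and $y_2$ to obtain
\begin{align*}
|\varphi(t,y_1)-\varphi(t,y_2)|
& \leq |\mathring{\varphi}(x_1)-\mathring{\varphi}(x_2)|
+ \int_0^t |\mathfrak{F}(\tau,\underline{\upgamma}(\tau;x_1))-\mathfrak{F}(\tau,\underline{\upgamma}(\tau;x_2))|\, d\tau.
\end{align*}
Estimating each term by the definition \eqref{E:HOLDERNORMDEF} of the H\"older norm along $\Sigma_0$ and $\Sigma_\tau$ respectively produces factors of $|x_1-x_2|^{\updelta_1}$ and $|\underline{\upgamma}(\tau;x_1)-\underline{\upgamma}(\tau;x_2)|^{\updelta_1}$.

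The key step is then to invoke the bi-Lipschitz bound \eqref{E:CHARACTERISTICSDIFFERENCEESTIMATE}, which yields both
$
|x_1-x_2|\approx |\underline{\upgamma}(t;x_1)-\underline{\upgamma}(t;x_2)| = |y_1-y_2|
$
and
$
|\underline{\upgamma}(\tau;x_1)-\underline{\upgamma}(\tau;x_2)|\lesssim |x_1-x_2|\approx |y_1-y_2|
$,
with implicit constants independent of $\tau\in[0,\Tboot]$. Inserting these bounds, dividing by $|y_1-y_2|^{\updelta_1}$, taking the supremum over $y_1\neq y_2$, and combining with the $L^\infty$ bound above produces \eqref{E:HOLDERESTIMATEFORSOLUTIONTOINHOMOGENEOUSTRANSPORT}. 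There is no real obstacle here beyond invoking Lemma~\ref{L:ESTIMATESFORTRANSPORTCHARACTERISTICS}: the only thing to be careful about is that the constant in \eqref{E:CHARACTERISTICSDIFFERENCEESTIMATE} is genuinely uniform in $\tau\in[0,\Tboot]$, which the lemma guarantees because it was derived from the Strichartz-type bootstrap bound for $\|\partial\vec v\|_{L^1([0,\Tboot])L^\infty_x}$ and thus holds on the whole bootstrap slab.
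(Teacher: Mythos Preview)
Your proof is correct and follows essentially the same approach as the paper: integrate along the characteristics of $\Transport$, then use the bi-Lipschitz estimate \eqref{E:CHARACTERISTICSDIFFERENCEESTIMATE} from Lemma~\ref{L:ESTIMATESFORTRANSPORTCHARACTERISTICS} to compare distances at different times. Your write-up is in fact slightly cleaner, since you correctly record the integrand as $\mathfrak{F}(\tau,\underline{\upgamma}(\tau;x))$ rather than $\mathfrak{F}(\tau,x)$.
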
	
	
	\begin{proof} 
		Let $\upgamma(t;x)$ be the flow map of $\Transport$, as in Lemma~\ref{L:ESTIMATESFORTRANSPORTCHARACTERISTICS}.
		Then equation \eqref{E:INHOMOGENEOUSTRANSPORT} can be rewritten as
		$
		\frac{d}{dt} \left(\varphi \circ \upgamma(t;x) \right)
		= \mathfrak{F} 
		$.
		Integrating in time and using \eqref{E:DATAFORTRANSPORTODESYSTEM},
		we find that
		\begin{align} \label{E:TRANSPORTSOLUTIONALONGCHARACTERISTICSFTC}
		\varphi \circ \upgamma(t;x)
		-
		\varphi \circ \upgamma(t;y)
		& 
		=
		\mathring{\varphi}(x)
		-
		\mathring{\varphi}(y)
		+
		\int_0^t
			\left\lbrace
				\mathfrak{F}(\uptau,x) 
				- 
				\mathfrak{F}(\uptau,y)
			\right\rbrace
		\, d \uptau,
		\end{align}
		from which it easily follows that 
		\begin{align} \label{E:TRANSPORTSOLUTIONALONGCHARACTERISTICSEASYESTIMATE}
			|
				\varphi \circ \upgamma(t;x)
				-
				\varphi \circ \upgamma(t;y)
			|	
			& \leq \| \mathring{\varphi} \|_{C_x^{0,\updelta_1}}|x-y|^{\updelta_1}
				+
				|x-y|^{\updelta_1}
				\int_0^t
					\| \mathfrak{F} \|_{C^{0,\updelta_1}(\Sigma_{\uptau})} 
				\, d \uptau.
		\end{align}
		From \eqref{E:CHARACTERISTICSDIFFERENCEESTIMATE} and \eqref{E:TRANSPORTSOLUTIONALONGCHARACTERISTICSEASYESTIMATE},
		we deduce that
		\begin{align}  \label{E:TRANSPORTSOLUTIONALONGCHARACTERISTICSESTIMATEBASEDONCHARACTERISTICSDIFFERENCEESTIMATE}
				|
				\varphi \circ \upgamma(t;x)
				-
				\varphi \circ \upgamma(t;y)
			|	
				&
				\lesssim
				\| \mathring{\varphi} \|_{C_x^{0,\updelta_1}}
				| 
				\upgamma(t;x)
				-
				\upgamma(t;y)
				|^{\updelta_1}
				\\
			& \ \
				+
				| 
				\upgamma(t;x)
				-
				\upgamma(t;y)
				|^{\updelta_1}
				\int_0^t
					\| \mathfrak{F} \|_{C^{0,\updelta_1}(\Sigma_{\uptau})} 
				\, d \uptau.
			\notag
		\end{align}
		Since Lemma~\ref{L:ESTIMATESFORTRANSPORTCHARACTERISTICS} guarantees that
		the map $x \rightarrow \left(\upgamma^1(t,x),\upgamma^2(t,x),\upgamma^3(t,x)\right)$ is a smooth global diffeomorphism 
		from $\mathbb{R}^3$ to $\mathbb{R}^3$
		for each fixed $t \in [0,\Tboot]$, we conclude from \eqref{E:TRANSPORTSOLUTIONALONGCHARACTERISTICSESTIMATEBASEDONCHARACTERISTICSDIFFERENCEESTIMATE}
		that 
		\begin{align} \label{E:LARGEDISTANCEHOLDERESTIMATEFORSOLUTIONTOINHOMOGENEOUSTRANSPORT}
		\sup_{0 < |x - y|}
		\frac{|\varphi(t,x) - \varphi(t,y)|}{|x-y|^{\updelta_1}}
		& \leq 
		\| \mathring{\varphi} \|_{C_x^{0,\updelta_1}}
				+
				\| \mathfrak{F} \|_{L^1([0,t]) C_x^{0,\updelta_1}}.
\end{align}
Using a similar but simpler argument, based on the fundamental theorem of calculus,
we find that
\begin{align}
\notag
\| \varphi \|_{L^{\infty}(\Sigma_t)}
\lesssim 
\| \mathring{\varphi} \|_{L^{\infty}(\Sigma_0)}
+
\| \mathfrak{F} \|_{L^1([0,t]) L_x^{\infty}}
\end{align}
which, in view of definition \eqref{E:HOLDERNORMDEF}
and \eqref{E:LARGEDISTANCEHOLDERESTIMATEFORSOLUTIONTOINHOMOGENEOUSTRANSPORT}, yields
\eqref{E:HOLDERESTIMATEFORSOLUTIONTOINHOMOGENEOUSTRANSPORT}.

\end{proof}
	
	\subsection{Proof of Theorem~\ref{T:MIXEDSPACETIMEHOLDERESTIMATESFORVORTICITYANDENTROPY}}
		\label{SS:PROOFOFPROPMIXEDSPACETIMEHOLDERESTIMATESFORVORTICITYANDENTROPY}
		From equations \eqref{E:DIVVORTICITY}--\eqref{E:CURLGRADENT},
		the bootstrap assumption \eqref{E:BOOTSOLUTIONDOESNOTESCAPEREGIMEOFHYPERBOLICITY}, 
		the energy-elliptic estimate \eqref{E:TOPORDERENERGYESTIMATES},
		the standard estimates
\begin{align}
\notag
		\| F \cdot G \|_{C^{0,\updelta_1}(\Sigma_t)}
		\lesssim
		\| F \|_{C^{0,\updelta_1}(\Sigma_t)}
		\| G \|_{C^{0,\updelta_1}(\Sigma_t)}
\mbox{ and }
		\| [\gensmoothfunction \circ \vec{\varphi}] \cdot G \|_{C^{0,\updelta_1}(\Sigma_t)}
		\lesssim 
		\| \vec{\varphi} \|_{C^{0,\updelta_1}(\Sigma_t)}
		\| G \|_{C^{0,\updelta_1}(\Sigma_t)}
\end{align}
		(where the latter estimate is valid for any fluid variable array $\vec{\varphi}$ comprised of elements of 
		$\lbrace \LogDensity,\Ent,\vec{v},\vec{\vortrenormalized},\vec{\GradEnt} \rbrace$ 
		and any function $\gensmoothfunction$ 
		that is smooth on the domain of $\vec{\varphi}$ values corresponding to the set $\mathfrak{K}$
		from \eqref{E:BOOTSOLUTIONDOESNOTESCAPEREGIMEOFHYPERBOLICITY}),
		the standard embedding result
		$
		H^2(\Sigma_t) \hookrightarrow C^{0,\updelta_1}(\Sigma_t)
		$
		(which is valid since $\updelta_1 < 1/2$),
		and Young's inequality,
		we deduce that 
		\begin{align} \label{E:POINTWISEESTIMATESFORINHOMOGTERMSFORMODIFIEDVARIABLESEVOLUTIONEQUATIONS} 
			\|
				\Transport \vec{\VortVort}
			\|_{C^{0,\updelta_1}(\Sigma_t)}
			+
			\|
				\Transport \DivGradEnt
			\|_{C^{0,\updelta_1}(\Sigma_t)}
			& \lesssim 
					\| \pmb{\partial} \vec{\Psi} \|_{C^{0,\updelta_1}(\Sigma_t)}^2
					+
					\| \pmb{\partial} \vec{\Psi} \|_{C^{0,\updelta_1}(\Sigma_t)}
					\| \partial (\vec{\vortrenormalized}, \vec{\GradEnt}) \|_{C^{0,\updelta_1}(\Sigma_t)}
					+
					\| \partial \vec{\vortrenormalized} \|_{C^{0,\updelta_1}(\Sigma_t)}
					+
					1,
						\\
			\|
				\dive \vortrenormalized
			\|_{C^{0,\updelta_1}(\Sigma_t)}
			+
			\|
				\curl \GradEnt
			\|_{C^{0,\updelta_1}(\Sigma_t)}
			& \lesssim
				\| \pmb{\partial} \vec{\Psi} \|_{C^{0,\updelta_1}(\Sigma_t)}.
				\label{E:POINTWISEESTIMATESFORINHOMOGTERMSFORMODIFIEDVARIABLESNONEVOLUTIONEQUATIONS}
		\end{align}
		Using Def.\,\ref{D:MODIFIEDFLUIDVARIABLES} to algebraically solve for $\curl \vortrenormalized$ and $\dive \GradEnt$
		and using a similar argument,
		we deduce that
		\begin{align} \label{E:POINTWISEBOUNDSFORCURLVORTICITYANDDIVENTROPYGRADIENTINTERMSOFMODIFIEDVARIABLES}
				\|
					\curl \vortrenormalized
				\|_{C^{0,\updelta_1}(\Sigma_t)}
				+
				\|
					\dive \GradEnt
				\|_{C^{0,\updelta_1}(\Sigma_t)}
				& \lesssim
					\| 
						(\vec{\VortVort}, \DivGradEnt)
					\|_{C^{0,\updelta_1}(\Sigma_t)}
					+
					\| \pmb{\partial} \vec{\Psi} \|_{C^{0,\updelta_1}(\Sigma_t)}.
		\end{align}
		Next, from \eqref{E:HOLDERESTIMATEFORSOLUTIONTOINHOMOGENEOUSTRANSPORT} with
		$(\vec{\VortVort}, \DivGradEnt)$ in the role of $\varphi$,
		the H\"{o}lder bounds
		\eqref{E:POINTWISEESTIMATESFORINHOMOGTERMSFORMODIFIEDVARIABLESEVOLUTIONEQUATIONS}--\eqref{E:POINTWISEESTIMATESFORINHOMOGTERMSFORMODIFIEDVARIABLESNONEVOLUTIONEQUATIONS},
		and the data-bound 
\begin{align}
\notag
		\| (\vec{\VortVort}, \DivGradEnt) \|_{C^{0,\updelta_1}(\Sigma_0)} \lesssim 1,
\end{align}
		(which follows from \eqref{E:DELTA1DEFINITIONANDPOSITIVITY} and \eqref{E:FINITETRANSPORTDATANORM}),
		we deduce
		\begin{align} \label{E:ELLIPTICHOLDERESTIMATEFORTANSPORTVARIABLESGRONWALLREADY}
			\| (\vec{\VortVort}, \DivGradEnt) \|_{C^{0,\updelta_1}(\Sigma_t)}
			& \lesssim
				1
				+
				\int_0^t
					\| \pmb{\partial} \vec{\Psi} \|_{C^{0,\updelta_1}(\Sigma_t)}^2
				\, d \uptau
				+
				\int_0^t
					\left\lbrace
						\| \pmb{\partial} \vec{\Psi} \|_{C^{0,\updelta_1}(\Sigma_t)}
						+
						1
					\right\rbrace
					\| \partial (\vec{\vortrenormalized}, \vec{\GradEnt}) \|_{C^{0,\updelta_1}(\Sigma_t)}
				\, d \uptau.
		\end{align}
		Next, using the elliptic estimate \eqref{E:SCHAUDERESTIMATESFORDIVCURLSYSTEMS} with $\vortrenormalized$ and $\GradEnt$ in the role of $V$,
		\eqref{E:POINTWISEESTIMATESFORINHOMOGTERMSFORMODIFIEDVARIABLESNONEVOLUTIONEQUATIONS}--\eqref{E:POINTWISEBOUNDSFORCURLVORTICITYANDDIVENTROPYGRADIENTINTERMSOFMODIFIEDVARIABLES},
	and the energy estimate \eqref{E:TOPORDERENERGYESTIMATES},
	we find that the following estimate holds for $t \in [0,\Tboot]$:
	\begin{align} \label{E:SCHAUDERESTIMATESFORVORTICITYANDENTOPYGRADIENTDIVCURLSYSTEMS}
	\| \partial (\vec{\vortrenormalized}, \vec{\GradEnt}) \|_{C^{0,\updelta_1}(\Sigma_t)}
	& \lesssim 
					\| (\vec{\VortVort}, \DivGradEnt) \|_{C^{0,\updelta_1}(\Sigma_t)}
					+
					\| \pmb{\partial} \vec{\Psi} \|_{C^{0,\updelta_1}(\Sigma_t)}
					+ 
					1.
\end{align}
Using \eqref{E:SCHAUDERESTIMATESFORVORTICITYANDENTOPYGRADIENTDIVCURLSYSTEMS} to bound the
factor $\| \partial (\vec{\vortrenormalized}, \vec{\GradEnt}) \|_{C^{0,\updelta_1}(\Sigma_{\uptau})}$
on RHS~\eqref{E:ELLIPTICHOLDERESTIMATEFORTANSPORTVARIABLESGRONWALLREADY},
applying Gr\"{o}nwall's inequality in the term $\| (\vec{\VortVort}, \DivGradEnt) \|_{C^{0,\updelta_1}(\Sigma_t)}$,
and using \eqref{E:HOLDERTYPESTRICHARTZESTIMATEFORWAVEVARIABLES},
we find that
\begin{align} \label{E:TRANSPORTVARIABLESGRONWALLED}
	\| (\vec{\VortVort}, \DivGradEnt) \|_{C^{0,\updelta_1}(\Sigma_t)}
		& \lesssim 1.
\end{align}
We have therefore proved \eqref{E:HOLDERREGULARITYOFMODIFIEDVARIABLESPROPAGATED}.
Then, using \eqref{E:TRANSPORTVARIABLESGRONWALLED} to bound the first term on RHS~\eqref{E:SCHAUDERESTIMATESFORVORTICITYANDENTOPYGRADIENTDIVCURLSYSTEMS},
squaring the resulting inequality and integrating it in time,
and using \eqref{E:HOLDERTYPESTRICHARTZESTIMATEFORWAVEVARIABLES},
we arrive at the desired estimate \eqref{E:MIXEDSPACETIMEHOLDERESTIMATESFORVORTICITYANDENTROPY}.

	\eqref{E:SUPPEDMIXEDSPACETIMEHOLDERESTIMATESFORVORTICITYANDENTROPY}
	then follows from \eqref{E:MIXEDSPACETIMEHOLDERESTIMATESFORVORTICITYANDENTROPY},
	the following well-known estimate (see, for example, \cite{MT1991}*{Equation~(A.1.2)} and the discussion surrounding it), 
	valid for scalar functions $f$:
	$
	\sup_{\upnu \geq 1}
	\upnu^{\updelta_1}
	\|
		P_{\upnu} f
	\|_{L^{\infty}(\Sigma_t)}
	\lesssim 
	\|
		f
	\|_{C^{0,\updelta_1}(\Sigma_t)}
	$,
	and the fact that the dyadic sum $\sum_{\upnu \geq 1} \upnu^{-\updelta_1}$ is finite.
	This completes the proof of Theorem~\ref{T:MIXEDSPACETIMEHOLDERESTIMATESFORVORTICITYANDENTROPY}.
	
	\hfill $\qed$

\section{The setup of the proof of Theorem~\ref{T:FREQUENCYLOCALIZEDSTRICHARTZ}: the rescaled solution and construction of the eikonal function}
\label{S:SETUPCONSTRUCTIONOFEIKONAL}
To complete our bootstrap argument and finish the proof of Theorem~\ref{T:MAINTHEOREMROUGHVERSION},
we have one remaining arduous task: proving Theorem~\ref{T:FREQUENCYLOCALIZEDSTRICHARTZ}.
We accomplish this in Sects.\,\ref{S:SETUPCONSTRUCTIONOFEIKONAL}--\ref{S:REDUCTIONSOFSTRICHARTZ}.
In this section, we set up the geometric and analytic framework
that we use in the rest of the paper.
As in the works \cites{sKiR2003,sKiR2005d,qW2017}, the
main ingredients are an appropriate rescaling of the solution,\footnote{In \cite{qW2017}, instead of rescaling the solution, the author
worked with rescaled coordinates. These two approaches are equivalent. \label{FN:RESCALING}}
an eikonal function $u$ with suitable initial conditions,
and a collection of geometric tensorfields constructed out of $u$.
Compared to previous works,
the main new contribution of the present section
is located in Subsubsect.\,\ref{SSS:PDESMODIFIEDACOUSTICALQUANTITIES},
where we derive various PDEs satisfied by the geometric tensorfields;
there, one explicitly sees how the source terms in these geometric PDEs 
depend on the vorticity and entropy, 
and some of the precise structures in these PDEs are crucial for our analysis.

\subsection{\texorpdfstring{The rescaled quantities and the radius $R$}{The rescaled quantities and the radius R}}
\label{SS:RESCALEDSOLUTION}

\subsubsection{The rescaled quantities}
\label{SSS:RESCALEDQUANTITIES}
Let $\lbrace [t_k,t_{k+1}] \rbrace_{k=1,2,\cdots}$ be the (finite collection of) time intervals introduced
in Subsect.\,\ref{SS:PARTITIONOFBOOTSTRAPINTERVAL}, and let $\Lambda_0 > 0$ be the large parameter introduced there.
For any fixed dyadic frequency $\uplambda \geq \Lambda_0$, let 
\begin{align} \label{E:RESCALEDTBOOT}
	\RescaledTboot & := \uplambda(t_{k+1} - t_k).
\end{align}
Note that since (by construction) $|t_{k+1} - t_k| \leq \uplambda^{- 8 \upepsilon_0} \Tboot$,
it follows that 
\begin{align} \label{E:RESCALEDBOOTBOUNDS}
	0 \leq \RescaledTboot \leq \uplambda^{1-8 \upepsilon_0} \Tboot.
\end{align}

We now define the ``rescaled'' solution variables that we will analyze in the rest of the paper.

\begin{definition}[Rescaled quantities]
	\label{D:RESCALEDRENORMALIZEDCURLOFSPECIFICVORTICITY}
We define the array of scalar functions
\begin{align}
\notag
\vec{\Psi}_{(\uplambda)} = (\LogDensity_{(\uplambda)},v_{(\uplambda)}^1,v_{(\uplambda)}^2,v_{(\uplambda)}^3,\Ent_{(\uplambda)})
\end{align}
and the Cartesian components of the $\Sigma_t$-tangent vectorfields
$\vortrenormalized_{(\uplambda)}$
and
$\GradEnt_{(\uplambda)}$
as follows, 
($i=1,2,3$):
\begin{align} \label{E:RESCALEDSOLUTIONVARIABLES}
	\vec{\Psi}_{(\uplambda)}(t,x) 
	& := \vec{\Psi}(t_k + \uplambda^{-1} t,\uplambda^{-1} x),
	&
	\vortrenormalized_{(\uplambda)}^i(t,x) 
	& := \vortrenormalized^i(t_k + \uplambda^{-1} t,\uplambda^{-1} x),
	&
	\\
	\notag
	\GradEnt_{(\uplambda)}^i(t,x) 
	& := \GradEnt^i(t_k + \uplambda^{-1} t,\uplambda^{-1} x).
\end{align}
	
	Similarly, we define the Cartesian components of the $\Sigma_t$-tangent vectorfield $\VortVort_{(\uplambda)}$ 
	and the scalar function $\DivGradEnt_{(\uplambda)}$ as follows:
	\begin{subequations}
	\begin{align} \label{E:RESCALEDRENORMALIZEDCURLOFSPECIFICVORTICITY}
		 \VortVort_{(\uplambda)}^i
		& :=
			\exp(-\LogDensity_{(\uplambda)}) (\Flatcurl \vortrenormalized_{(\uplambda)})^i
			+
			\exp(-3\LogDensity_{(\uplambda)}) \Speed^{-2}(\vec{\Psi}_{(\uplambda)}) \frac{p_{;\Ent}(\vec{\Psi}_{(\uplambda)})}{\bar{\varrho}}
			\GradEnt_{(\uplambda)}^a \partial_a v_{(\uplambda)}^i
				\\
		& \ \
			-
			\exp(-3\LogDensity_{(\uplambda)}) \Speed^{-2}(\vec{\Psi}_{(\uplambda)}) \frac{p_{;\Ent}(\vec{\Psi}_{(\uplambda)})}{\bar{\varrho}} 
			(\partial_a v_{(\uplambda)}^a) \GradEnt_{(\uplambda)}^i,
				\notag \\
		\DivGradEnt_{(\uplambda)}
		& := 
			\exp(-2 \LogDensity_{(\uplambda)}) \Flatdiv \GradEnt_{(\uplambda)} 
			-
			\exp(-2 \LogDensity_{(\uplambda)}) \GradEnt_{(\uplambda)}^a \partial_a \LogDensity_{(\uplambda)}.
			\label{E:RESCALEDRENORMALIZEDDIVOFENTROPY}
	\end{align}
	\end{subequations}
	
Finally, we let
$\gfour_{(\uplambda)}$,
$g_{(\uplambda)}$,
and
$\Transport_{(\uplambda)}$
be the ``rescaled'' tensorfields whose Cartesian components are as follows,
($\alpha, \beta = 0,1,2,3$ and $i,j=1,2,3$):
\begin{subequations}
\begin{align} \label{E:RESCALEDMETRICTENSORS}
(\gfour_{(\uplambda)})_{\alpha \beta}(t,x) 
&:= \gfour_{\alpha \beta}\left(\vec{\Psi}(t_k + \uplambda^{-1} t,\uplambda^{-1} x)\right),
&
(g_{(\uplambda)})_{ij}(t,x) 
&
:= g_{ij}\left(\vec{\Psi}(t_k + \uplambda^{-1} t,\uplambda^{-1} x)\right),
	\\
\Transport_{(\uplambda)}^{\alpha}(t,x) 
&
:= \Transport^{\alpha}\left(\vec{\Psi}(t_k + \uplambda^{-1} t,\uplambda^{-1} x)\right).
&&
\label{E:RESCALEDMATERIALVECTORFIELD}
\end{align} 
\end{subequations}
\end{definition}

\begin{remark}[Remarks on the rescaling]
\label{R:REMARKSONRESCALING}
Note that the slab $[0,\RescaledTboot] \times \mathbb{R}^3$ for $\vec{\Psi}_{(\uplambda)}(t,x)$
corresponds to the slab $[t_k,t_{k+1}] \times \mathbb{R}^3$ for $\vec{\Psi}(t,x)$.
The same remark applies for the other rescaled quantities.

Note also that when we are controlling the rescaled quantities such as
$\vec{\Psi}_{(\uplambda)}$,
the hypersurface that we denote by ``$\Sigma_t$'' in 
Sects.\,\ref{S:SETUPCONSTRUCTIONOFEIKONAL}--\ref{S:REDUCTIONSOFSTRICHARTZ}
corresponds to the hypersurface $\Sigma_{t_k + \uplambda^{-1} t}$
for the non-rescaled quantities,
which appear throughout
Sects.\,\ref{S:DATAANDBOOTSTRAPASSUMPTION}--\ref{S:ELLIPTICESTIMATESINHOLDERSPACES}.
\end{remark}

\begin{remark}
	Note that $\GradEnt_{(\uplambda)}^i \neq \partial_i \Ent_{(\uplambda)}$, but rather 
	$\GradEnt_{(\uplambda)}^i = \uplambda \partial_i \Ent_{(\uplambda)}$.
	This is merely a reflection of our choice of how to keep track of powers of $\uplambda$ in the equations and estimates. 
	Similarly, we have
	$
	\displaystyle
	\vortrenormalized_{(\uplambda)}^i = \uplambda \frac{(\Flatcurl v_{(\uplambda)})^i}{\exp \LogDensity_{(\uplambda)}}
	$.
	We clarify that although we use the
	relationships $\GradEnt_{(\uplambda)}^i = \uplambda \partial_i \Ent_{(\uplambda)}$
	and
	$
	\displaystyle
	\vortrenormalized_{(\uplambda)}^i = \uplambda \frac{(\Flatcurl v_{(\uplambda)})^i}{\exp \LogDensity_{(\uplambda)}}$
	to derive the equations of Prop.\,\ref{P:RESCALEDEULER},
	when we derive PDE estimates for solutions to these equations, 
	we generally do not need these relationships;
	that is, for estimates, 
	we generally treat
	$\GradEnt_{(\uplambda)}$,
	$\Ent_{(\uplambda)}$, 
	$\vortrenormalized_{(\uplambda)}$,
	$v_{(\uplambda)}$,
	and
	$\LogDensity_{(\uplambda)}$
	as if they were independent quantities.
\end{remark}

\subsubsection{The radius $R$}
For any $t \in [0,\RescaledTboot]$, $p \in \Sigma_t$, and $r > 0$,
let $B_r(p)$ denote the Euclidean ball of radius $r$ in $\Sigma_t$ centered at $p$
and let $B_{r;g_{(\uplambda)}(t,\cdot)}(p)$ denote the metric ball,
with respect to the rescaled Riemannian metric $g_{(\uplambda)}(t,\cdot)$,
of radius $r$ in $\Sigma_t$ centered at $p$.
The statement of Theorem~\ref{T:BOUNDEDNESSOFCONFORMALENERGY} refers 
to a Euclidean radius $R$, which we now define. Specifically, 
in the rest of the article, 
$R$ denotes a fixed number chosen such that 
\begin{subequations}
\begin{align} 
	0 & < R < 1, &&
		\label{E:RSMALLERTHANONE} \\
	B_R(p) 
	& \subset B_{1/2;g_{(\uplambda)}(t,\cdot)}(p),
	&&
	\forall
	t \in [0,\RescaledTboot] \mbox{ and } \forall p \in \Sigma_t.
	\label{E:EUCLIDEANBALLCONTAINEDINMETRICBALL}
\end{align}
\end{subequations}
The existence of such an $R$ (one that is independent of $\uplambda$) is guaranteed by the
formula \eqref{E:FIRSTFUNDAMENTALFORM}
(which in particular shows that $g_{(\uplambda)}(t,\cdot)$ is equal to $\Speed^{-2}$ times
the Euclidean metric on $\Sigma_t$, with $\Speed$ the speed of sound)
and the fact that,
by virtue of the bootstrap assumption \eqref{E:BOOTSOLUTIONDOESNOTESCAPEREGIMEOFHYPERBOLICITY},
$\Speed$ is uniformly bounded from above and below by positive constants.

\subsection{The rescaled compressible Euler equations}
\label{SS:COMPRESSIBLEEULERRESCALEDGEOMETRIC}
In the next proposition, we provide the equations verified by the rescaled quantities.
We omit the simple proof, which follows from scaling considerations.

\begin{proposition}[The rescaled geometric wave-transport formulation of the compressible Euler equations]
\label{P:RESCALEDEULER}
		For solutions to Prop.\,\ref{P:GEOMETRICWAVETRANSPORT},
		the rescaled quantities defined in Subsect.\,\ref{SS:RESCALEDSOLUTION}
		verify the following equations.
		
		\medskip
		
	\noindent \underline{\textbf{Wave equations}}:
For rescaled wave variables 
$\Psi_{(\uplambda)} \in \lbrace \LogDensity_{(\uplambda)}, v_{(\uplambda)}^1, v_{(\uplambda)}^2, v_{(\uplambda)}^3, \Ent_{(\uplambda)} \rbrace$, 
we have:
\begin{align} \label{E:RESCALEDCOVARIANTWAVE} 
\hat{\square}_{\gfour_{(\uplambda)}} \Psi_{(\uplambda)} 
& = 
	\uplambda^{-1}
	\linsmoothfunction(\vec{\Psi}_{(\uplambda)})[\vec{\VortVort}_{(\uplambda)},\DivGradEnt_{(\uplambda)}]
	+ 
	\quadsmoothfunction(\vec{\Psi}_{(\uplambda)})[\pmb{\partial} \vec{\Psi}_{(\uplambda)},\pmb{\partial} \vec{\Psi}_{(\uplambda)}].
\end{align}

\medskip

\noindent  \underline{\textbf{Transport equations}}:
\begin{align} \label{E:RESCALEDTRANSPORT}
\Transport_{(\uplambda)} \vortrenormalized_{(\uplambda)}^i 
& 
= \linsmoothfunction(\vec{\Psi}_{(\uplambda)},\vec{\vortrenormalized}_{(\uplambda)},\vec{\GradEnt}_{(\uplambda)})[\pmb{\partial} \vec{\Psi}_{(\uplambda)}],
&
\Transport_{(\uplambda)} \GradEnt_{(\uplambda)}^i 
& = \linsmoothfunction(\vec{\Psi}_{(\uplambda)},\vec{\GradEnt}_{(\uplambda)})[\pmb{\partial} \vec{\Psi}_{(\uplambda)}].
\end{align}

\medskip

\noindent \underline{\textbf{Transport div-curl system for the specific vorticity}}:
\begin{subequations}
\begin{align}
\dive \vortrenormalized_{(\uplambda)} 
	& = 
		\linsmoothfunction(\vec{\vortrenormalized}_{(\uplambda)})[\pmb{\partial} \vec{\Psi}_{(\uplambda)}],
\label{E:RESCALEDDIVVORTICITY}
\\
\Transport_{(\uplambda)} \VortVort_{(\uplambda)}^i 
	& = 
		\uplambda
		\quadsmoothfunction(\vec{\Psi}_{(\uplambda)})[\pmb{\partial} \vec{\Psi}_{(\uplambda)},\partial \vec{\vortrenormalized}_{(\uplambda)}]
		+
		\uplambda
		\quadsmoothfunction(\vec{\Psi}_{(\uplambda)})[\pmb{\partial} \vec{\Psi}_{(\uplambda)},\partial \vec{\GradEnt}_{(\uplambda)}]
		+
		\uplambda
		\quadsmoothfunction(\vec{\Psi}_{(\uplambda)},\vec{\GradEnt}_{(\uplambda)})[\pmb{\partial} \vec{\Psi}_{(\uplambda)},\pmb{\partial} \vec{\Psi}_{(\uplambda)}]
			\label{E:RESCALEDTRANSPORTVORTICITYVORTICITY}
			\\
	& \ \
		+
		\linsmoothfunction(\vec{\Psi}_{(\uplambda)},\vec{\vortrenormalized}_{(\uplambda)},\vec{\GradEnt}_{(\uplambda)})[\pmb{\partial} \vec{\Psi}_{(\uplambda)}].
	\notag
\end{align}
\end{subequations}

\medskip

\noindent \underline{\textbf{Transport div-curl system for the entropy gradient}}:
\begin{subequations}
\begin{align}
	\Transport_{(\uplambda)} \DivGradEnt_{(\uplambda)} & 
		=
		\uplambda
		\quadsmoothfunction(\vec{\Psi}_{(\uplambda)})[\pmb{\partial} \vec{\Psi}_{(\uplambda)},\partial \vec{\GradEnt}_{(\uplambda)}]
		+
		\uplambda
		\quadsmoothfunction(\vec{\Psi}_{(\uplambda)},\vec{\GradEnt}_{(\uplambda)})[\pmb{\partial} \vec{\Psi}_{(\uplambda)},\pmb{\partial} \vec{\Psi}_{(\uplambda)}]
		+
		\linsmoothfunction(\vec{\Psi}_{(\uplambda)},\vec{\GradEnt}_{(\uplambda)})[\partial \vec{\vortrenormalized}_{(\uplambda)}],
		\label{E:RESCALEDTRANSPORTDIVGRADENTROPY}
\\
(\curl \GradEnt_{(\uplambda)})^i & = 0.
\label{E:RESCALEDCURLGRADENT}
\end{align}
\end{subequations}
\end{proposition}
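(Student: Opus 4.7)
The plan is to prove Prop.\,\ref{P:RESCALEDEULER} by a direct chain-rule computation: I evaluate each equation of Prop.\,\ref{P:GEOMETRICWAVETRANSPORT} at the rescaled point $(t_k+\uplambda^{-1}t,\uplambda^{-1}x)$, abbreviated $|_{rs}$ in what follows, and then re-express both sides in terms of the rescaled quantities of Def.\,\ref{D:RESCALEDRENORMALIZEDCURLOFSPECIFICVORTICITY}. The only work is bookkeeping powers of $\uplambda$.

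The first step is to record how the basic objects transform. If $f_{(\uplambda)}(t,x):=f|_{rs}$, then every Cartesian derivative produces a factor $\uplambda^{-1}$, so $\pmb{\partial}^k f_{(\uplambda)}=\uplambda^{-k}(\pmb{\partial}^k f)|_{rs}$. Since $\gfour$ and $\Transport$ depend on the fluid variables only algebraically, $(\gfour_{(\uplambda)}^{-1})^{\alpha\beta}=(\gfour^{-1})^{\alpha\beta}|_{rs}$ and $\Transport_{(\uplambda)}^\alpha=\Transport^\alpha|_{rs}$, whence
\begin{align*}
\hat{\square}_{\gfour_{(\uplambda)}}\Psi_{(\uplambda)} &= \uplambda^{-2}(\hat{\square}_{\gfour}\Psi)|_{rs},
&
\Transport_{(\uplambda)}\varphi_{(\uplambda)} &= \uplambda^{-1}(\Transport\varphi)|_{rs},
\end{align*}
and similarly $\dive\vortrenormalized_{(\uplambda)}=\uplambda^{-1}(\dive\vortrenormalized)|_{rs}$ and $(\curl\GradEnt_{(\uplambda)})^i=\uplambda^{-1}(\curl\GradEnt)^i|_{rs}$. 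Next, comparing the defining formulas \eqref{E:RENORMALIZEDCURLOFSPECIFICVORTICITY}-\eqref{E:RENORMALIZEDDIVOFENTROPY} with their rescaled counterparts \eqref{E:RESCALEDRENORMALIZEDCURLOFSPECIFICVORTICITY}-\eqref{E:RESCALEDRENORMALIZEDDIVOFENTROPY}, and observing that every term in those formulas contains exactly one Cartesian derivative of an undifferentiated fluid variable, one reads off
\begin{align*}
\vec{\VortVort}_{(\uplambda)}(t,x)=\uplambda^{-1}\vec{\VortVort}|_{rs},
\qquad
\DivGradEnt_{(\uplambda)}(t,x)=\uplambda^{-1}\DivGradEnt|_{rs}.
\end{align*}

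The proof then reduces to substitution. For instance, in the wave equation \eqref{E:COVARIANTWAVE}, evaluating both sides at $|_{rs}$ gives
\begin{align*}
\uplambda^{-2}(\hat{\square}_{\gfour}\Psi)|_{rs}
=\uplambda^{-2}\bigl(\linsmoothfunction(\vec{\Psi})[\vec{\VortVort},\DivGradEnt]+\quadsmoothfunction(\vec{\Psi})[\pmb{\partial}\vec{\Psi},\pmb{\partial}\vec{\Psi}]\bigr)\big|_{rs};
\end{align*}
substituting $\vec{\VortVort}|_{rs}=\uplambda\vec{\VortVort}_{(\uplambda)}$, $\DivGradEnt|_{rs}=\uplambda\DivGradEnt_{(\uplambda)}$, and $\pmb{\partial}\vec{\Psi}|_{rs}=\uplambda\pmb{\partial}\vec{\Psi}_{(\uplambda)}$, and absorbing the smooth coefficients via $\mathrm{f}(\vec{\Psi}|_{rs})=\mathrm{f}(\vec{\Psi}_{(\uplambda)})$, yields \eqref{E:RESCALEDCOVARIANTWAVE}. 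The transport equations \eqref{E:RESCALEDTRANSPORT} and the div/curl identities \eqref{E:RESCALEDDIVVORTICITY}, \eqref{E:RESCALEDCURLGRADENT} follow by the same substitution, using in addition that $\vec{\vortrenormalized}_{(\uplambda)}=\vec{\vortrenormalized}|_{rs}$ and $\vec{\GradEnt}_{(\uplambda)}=\vec{\GradEnt}|_{rs}$ carry no extra $\uplambda$ weight. The transport equations \eqref{E:RESCALEDTRANSPORTVORTICITYVORTICITY}, \eqref{E:RESCALEDTRANSPORTDIVGRADENTROPY} for the modified variables follow analogously, using further the identity $\Transport_{(\uplambda)}\VortVort_{(\uplambda)}^i=\uplambda^{-2}(\Transport\VortVort^i)|_{rs}$ (and its analogue for $\DivGradEnt$) that combines the scaling of $\Transport$ with that of $\vec{\VortVort}_{(\uplambda)}$; in each case the remaining explicit powers of $\uplambda$ on the RHSs of Prop.\,\ref{P:RESCALEDEULER} are fixed by matching the weight of the LHS against the weights of the individual source terms on the RHS.

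There is no analytic obstacle; the entire argument is algebraic. The only possible source of error is mistracking the cumulative powers of $\uplambda$ — in particular keeping straight that $\vec{\vortrenormalized}_{(\uplambda)}$ and $\vec{\GradEnt}_{(\uplambda)}$ are $O(1)$ under the rescaling while $\vec{\VortVort}_{(\uplambda)}$ and $\DivGradEnt_{(\uplambda)}$ carry an extra factor of $\uplambda^{-1}$ coming from the single Cartesian derivative built into their defining formulas.
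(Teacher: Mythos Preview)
Your proposal is correct and takes exactly the approach the paper itself indicates: the paper omits the proof, saying only that it ``follows from scaling considerations,'' which is precisely the chain-rule bookkeeping you carry out. Your key observations---that each Cartesian derivative brings a factor $\uplambda^{-1}$, that $\gfour_{(\uplambda)}$ and $\Transport_{(\uplambda)}$ depend only algebraically on the rescaled variables, and that $(\vec{\VortVort}_{(\uplambda)},\DivGradEnt_{(\uplambda)})=\uplambda^{-1}(\vec{\VortVort},\DivGradEnt)|_{rs}$ while $(\vec{\vortrenormalized}_{(\uplambda)},\vec{\GradEnt}_{(\uplambda)})$ carry no extra weight---are exactly what is needed, and your worked example for the wave equation is correct.
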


\subsection{Key notational remark and the mixed spacetime norm bootstrap assumptions for the rescaled quantities}
\label{SS:NOMORELAMBDA}
For notational convenience, in the remainder of the article, we drop the sub- and super-scripts ``$(\uplambda)$''
introduced in Subsect.\,\ref{SS:RESCALEDSOLUTION}, except for the rescaled time $T_{*,(\uplambda)}$. That is,
we write $\vec{\Psi}$ in place of $\vec{\Psi}_{(\uplambda)}$,
$\gfour$ in place of $\gfour_{(\uplambda)}$,
$\gfour_{\alpha \beta}(t,x)$ in place of 
$\gfour_{\alpha \beta}\left(\vec{\Psi}(t_k + \uplambda^{-1} t,\uplambda^{-1} x)\right)$,
etc. Nonetheless, our analysis
will properly take into account the explicit factors of $\uplambda$
on the RHSs of the equations of Prop.\,\ref{P:RESCALEDEULER}.

\subsection{\texorpdfstring{$\mathcal{M}$, the point $\bf{z}$, the eikonal function, and construction of the geometric coordinates}{mathcal M, the point bf z, the eikonal function, and construction of the geometric coordinates}}
\label{SS:EIKONAL}
Let $\mathcal{M} := [0,\RescaledTboot] \times \mathbb{R}^3 \subset \mathbb{R}^{1+3}$
denote the slab on which the rescaled quantities of Subsubsect.\,\ref{SSS:RESCALEDQUANTITIES}
are defined. In the rest of the paper, 
we will construct various geometric quantities and derive estimates on various subsets of $\mathcal{M}$.

The proof of Theorem~\ref{T:BOUNDEDNESSOFCONFORMALENERGY} fundamentally relies on the \emph{acoustic geometry},
that is, a solution $u$ to the eikonal equation 
(where under the conventions of Subsect.\,\ref{SS:NOMORELAMBDA}, ``$\gfour$'' denotes the rescaled metric):
\begin{align} \label{E:EIKONALEQUATIONFORRESCALEDMETRIC}
	(\gfour^{-1})^{\alpha \beta} 
	\partial_{\alpha} u 
	\partial_{\beta} u
	& = 0.
\end{align}
Following the setup used in \cite{qW2017}, we will construct
$u$ by patching an ``interior solution'' with an ``exterior solution.''
More precisely, the results of Subsubsects.\,\ref{SSS:EIKONALINTERIOR}--\ref{SSS:EIKONALEXTERIOR} 
will yield an eikonal function $u$
defined in subsets $\widetilde{\mathcal{M}} \subset \mathcal{M}$,
which we will define to be the union of an interior region and an exterior region:
$\widetilde{\mathcal{M}} := \widetilde{\mathcal{M}}^{(Int)} \cup \widetilde{\mathcal{M}}^{(Ext)}$. Moreover, an exercise in Taylor expansions, omitted here, yields that the solution $u$ is smooth in $\widetilde{\mathcal{M}}$ away from the cone-tip axis 
(which is a curve in $\widetilde{\mathcal{M}}^{(Int)}$ that we define in Subsubsect.\,\ref{SSS:EIKONALINTERIOR}). 

Throughout Sects.\,\ref{S:SETUPCONSTRUCTIONOFEIKONAL} and \ref{S:ESTIMATESFOREIKONALFUNCTION},
$\bf{z}$ denotes a fixed (but arbitrary) point in $\Sigma_0$ 
(where here, ``$\Sigma_0$'' corresponds to the hypersurface that we denoted by 
``$\Sigma_{t_k}$'' in Sects.\,\ref{S:DATAANDBOOTSTRAPASSUMPTION}--\ref{S:SETUPCONSTRUCTIONOFEIKONAL})
that forms the bottom tip of $\widetilde{\mathcal{M}}^{(Int)}$.  
The point $\bf{z}$ will vary when one carries out the partition of unity argument
that allows for a reduction of the proof of the desired Strichartz estimate 
(more precisely, the frequency localized estimates provided by Theorem~\ref{T:FREQUENCYLOCALIZEDSTRICHARTZ})
to that of Prop.\,\ref{P:SPATIALLYLOCALIZEDREDUCTIONOFPROOFOFTHEOREMDECAYESTIMATE}.
More precisely, the proof of Theorem~\ref{T:FREQUENCYLOCALIZEDSTRICHARTZ}
relies on partitioning the full slab $\mathcal{M}$ into
various ``localized'' subsets of type $\widetilde{\mathcal{M}}$
and proving dispersive decay estimates on subsets of the $\widetilde{\mathcal{M}}$
for solutions $\varphi$ to the linear wave equation 
$\square_{\gfour(\vec{\Psi})} \varphi = 0$.
The spatially localized dispersive decay estimates
(which correspond to a fixed $\widetilde{\mathcal{M}}$ and thus a fixed $\bf{z}$)
are provided by
Prop.\,\ref{P:SPATIALLYLOCALIZEDREDUCTIONOFPROOFOFTHEOREMDECAYESTIMATE}.
We refer readers to
Subsect.\,\ref{SS:SPATIALLYLOCALIZEDREDUCTIONOFPROOFOFTHEOREMDECAYESTIMATE} for further discussion
on the various standard reductions of the proof of the Strichartz estimates to spatially localized dispersive
estimates 
(and ultimately to the proof of control over the growth rate of a conformal energy,
provided by Theorem~\ref{T:BOUNDEDNESSOFCONFORMALENERGY}). 
We also remark that the
varying of $\bf{z}$ during the partition of unity argument
is a minor issue in the sense that
estimates that we derive in Sects.\,\ref{S:SETUPCONSTRUCTIONOFEIKONAL} and \ref{S:ESTIMATESFOREIKONALFUNCTION}
are independent of $\bf{z}$, and all of the constants and parameters in our analysis
can be chosen to be independent of $\bf{z}$.

We provide a figure, Fig.\,\ref{F:DOMAINS}, that exhibits many 
of the geometric objects that we will construct in Subsect.\,\ref{SS:EIKONAL}.
In the figure, for convenience, we have set $\bf{z}$ to be equal to the origin in $\Sigma_0$.

\begin{figure}[h!]
\centering
\includegraphics[width=6.5in,scale=.5]{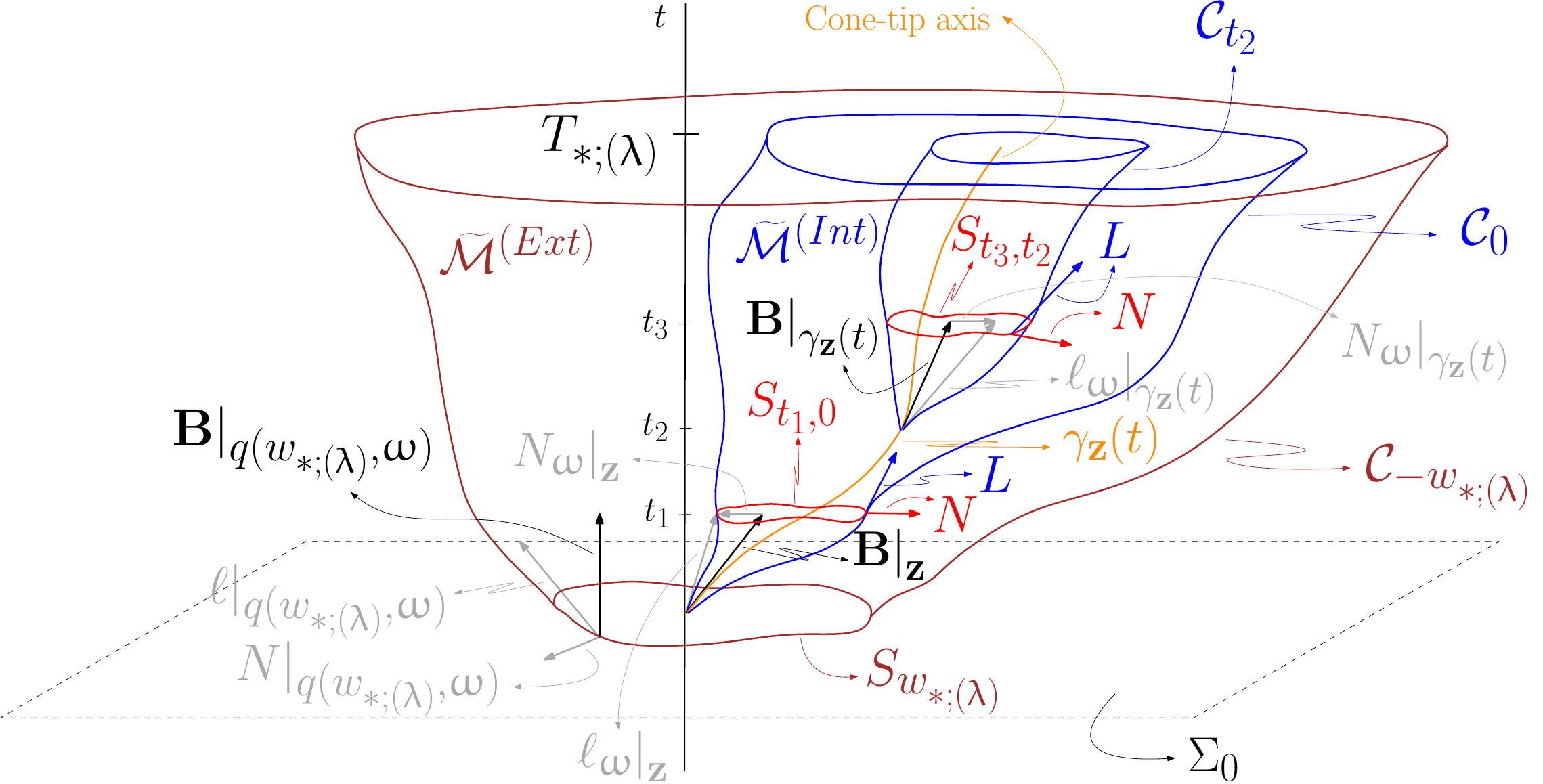}
\caption{The interior and exterior regions and related geometric constructions in the case ${\bf{z}} := {\bf{0}}$}
\label{F:DOMAINS}
\end{figure}

\subsubsection{The interior solution emanating from the cone-tip axis and the region $\widetilde{\mathcal{M}}^{(Int)}$}
\label{SSS:EIKONALINTERIOR}
We let $\Tranchar_{\bf{z}} = \Tranchar_{\bf{z}}(t)$ denote the future-directed integral curve of
the vectorfield\footnote{We again stress that
by the conventions of Subsect.\,\ref{SS:NOMORELAMBDA}, in the rest of the paper, we use the notation
$\Transport^{\alpha}(t,x)$ to denote $\Transport^{\alpha}\left(\vec{\Psi}(t_k + \uplambda^{-1} t,\uplambda^{-1} x)\right)$
and $\gfour_{\alpha \beta}(t,x)$ to denote $\gfour_{\alpha \beta}\left(\vec{\Psi}(t_k + \uplambda^{-1} t,\uplambda^{-1} x)\right)$.}  
$\Transport$ emanating from the point $\bf{z}$, i.e., $\Tranchar_{\bf{z}}(0) = {\bf{z}} \in \Sigma_0$. 
We refer to $\lbrace \Tranchar_{\bf{z}}(t) \rbrace_{t \in [0,\RescaledTboot]}$ as the \emph{cone-tip axis}.
Let $q = q(t) := \Tranchar_{\bf{z}}(t)$ be a point on the cone-tip axis. Let $\ell \in T_q \mathcal{M}$
be a null vector normalized by $\gfour|_q(\ell,\Transport|_q) = - 1$.
We denote the set of all of these normalized null vectors $\ell \in T_q \mathcal{M}$ by $\mathscr{N}_q$.
We now consider the case $q = {\bf{z}} \in \Sigma_0$. It is straightforward to see that $\mathscr{N}_{\bf{z}}$ is diffeomorphic to $\mathbb{S}^2$;
we therefore fix a diffeomorphism from $\mathbb{S}^2$ onto $\mathscr{N}_{\bf{z}}$.
For each $\upomega \in \mathbb{S}^2$, we let $\ell_{\upomega} \in \mathscr{N}_{\bf{z}}$ 
denote the corresponding (via the diffeomorphism) null vector. 
We will use parallel transport 
to construct a diffeomorphism from 
$\mathscr{N}_{\bf{z}}$ onto $\mathscr{N}_{\Tranchar_{\bf{z}}(t)}$.
Ultimately, this diffeomorphism will allow us, upon pre-composing it with the fixed diffeomorphism $\upomega \rightarrow \ell_{\upomega}$
from $\mathbb{S}^2$ onto $\mathscr{N}_{\bf{z}}$ and post-composing it with a null geodesic flow,\footnote{The null curves, whose Cartesian components are solutions to the ODE system \eqref{E:TIPNULLGEODESICS}, are not affine-parameterized. \label{FN:AFFINE}}
to construct angular coordinates $\upomega$ that are defined\footnote{More precisely,
the angular coordinate functions $(\upomega^1,\upomega^2)$ are uniquely defined away from the cone-tip axis, 
while each point on the cone-tip axis
is associated with an entire $\mathbb{S}^2$ manifold worth of angles
(i.e., the same degeneracy that occurs at the origin in $\mathbb{R}^3$ under the standard Euclidean spherical coordinates).}
in $\widetilde{\mathcal{M}}^{(Int)}$; 
see just below equation \eqref{E:INITIALCONDITIONSTIPNULLGEODESICS}.

To initiate the construction of the diffeomorphism from $\mathscr{N}_{\bf{z}}$ onto $\mathscr{N}_{\Tranchar_{\bf{z}}(t)}$, 
for each $\upomega \in \mathbb{S}^2$,
we define the vector $\spherenormal_{\upomega} \in T_{\bf{z}} \mathcal{M}$ as follows: 
$\spherenormal_{\upomega} := \ell_{\upomega} - \Transport|_{\bf{z}}$.
Considering the relations $\gfour|_{\bf{z}}(\ell_{\upomega},\Transport|_{\bf{z}}) = - 1$
and
$\gfour|_{\bf{z}}(\Transport|_{\bf{z}},\Transport|_{\bf{z}}) = - 1$,
we find that
$\gfour|_{\bf{z}}(\Transport|_{\bf{z}},\spherenormal_{\upomega}) = 0$.
Considering also that $\gfour|_{\bf{z}}(\ell_{\upomega},\ell_{\upomega}) = 0$, we
find that $\gfour|_{\bf{z}}(\spherenormal_{\upomega},\spherenormal_{\upomega}) = 1$.
Thus, $\spherenormal_{\upomega} \in UT_{\bf{z}} \Sigma_0$,
where $UT_{\bf{z}} \Sigma_0$ denotes the $g$-unit tangent bundle of $\Sigma_0$ at $\bf{z}$,
and $g$ is the rescaled first fundamental form of $\Sigma_0$.
It is straightforward to see that
the map $\ell_{\upomega} \rightarrow \ell_{\upomega} - \Transport|_{\bf{z}}$
defines a diffeomorphism from $\mathscr{N}_{\bf{z}}$ onto $UT_{\bf{z}} \Sigma_0$.
To propagate $\spherenormal_{\upomega}$ along the cone-tip axis, we solve the
parallel transport equation\footnote{This
is in fact parallel transport along geodesics since
$\Dfour_{\Transport} \Transport= 0$; this latter identity
is straightforward to derive using that $\gfour(\Transport,\Transport) = -1$ and
the fact that $[\Transport,Z]$ is $\Sigma_t$-tangent (hence $\gfour$-orthogonal to $\Transport$) whenever $Z$ is $\Sigma_t$-tangent.}
$\Dfour_{\Transport} \spherenormal_{\upomega} 
=0$,
where $\Dfour$ is the Levi-Civita connection of the rescaled spacetime metric $\gfour$.
In Cartesian coordinates,
for each $\spherenormal_{\upomega}^{\alpha}|_{\bf{z}} \in \mathscr{N}_{\bf{z}}$,
the parallel transport equation takes the form of the following transport equation system,
which is \emph{linear} in the scalar Cartesian component functions $\spherenormal_{\upomega}^{\alpha}$:
\begin{align} \label{E:FWTRANSPORT}
	\frac{d}{dt} \spherenormal_{\upomega}^{\alpha} 
	+
	\Chfour_{\kappa \ \lambda}^{\ \alpha}  
	\Transport^{\kappa} 
	\spherenormal_{\upomega}^{\lambda}
	& = 
		0,
\end{align}
where the initial conditions for \eqref{E:FWTRANSPORT} are $\spherenormal_{\upomega}^{\alpha}|_{\bf{z}}$,
$
	\Chfour_{\alpha \ \beta}^{\ \nu}
	=
	\frac{1}{2} (\gfour^{-1})^{\sigma \nu}
	\left\lbrace\partial_{\alpha} \gfour_{\sigma \beta} 
	+ \partial_{\beta} \gfour_{\alpha \sigma}
	-
	\partial_{\sigma} \gfour_{\alpha \beta}
	\right\rbrace
	$
are the Cartesian Christoffel symbols of the rescaled metric $\gfour$,
and it is understood that all quantities are evaluated along $\Tranchar_{\bf{z}}(t)$,
e.g., $\spherenormal_{\upomega}^{\alpha} = \spherenormal_{\upomega}^{\alpha} \circ \Tranchar_{\bf{z}}(t)$
and
$\Transport^{\kappa} = \Transport^{\kappa} \circ \vec{\Psi} \circ \Tranchar_{\bf{z}}(t)$,
with $\vec{\Psi}$ the rescaled solution. It is straightforward to show,
based on the normalization condition $\gfour|_{\Tranchar_{\bf{z}}(t)}(\Transport|_{\Tranchar_{\bf{z}}(t)},\Transport|_{\Tranchar_{\bf{z}}(t)}) = -1$,
\eqref{E:FWTRANSPORT}, and the initial conditions 
$\gfour|_{\bf{z}}(\Transport|_{\bf{z}},\spherenormal_{\upomega}|_{\bf{z}}) = 0$
and $\gfour|_{\bf{z}}(\spherenormal_{\upomega}|_{\bf{z}},\spherenormal_{\upomega}|_{\bf{z}}) = 1$,
that for $t \in [0, \RescaledTboot]$, the solution $\spherenormal_{\upomega}|_{\Tranchar_{\bf{z}}(t)}$ to equation
\eqref{E:FWTRANSPORT} is an element of $UT_{\Tranchar_{\bf{z}}(t)} \Sigma_t$, 
where $UT_{\Tranchar_{\bf{z}}(t)} \Sigma_t$ denotes the $g$-unit tangent bundle of $\Sigma_t$ at $\Tranchar_{\bf{z}}(t)$,
and $g$ is the rescaled first fundamental form of $\Sigma_t$.
That is, we have
$\gfour|_{\Tranchar_{\bf{z}}(t)}(\Transport|_{\Tranchar_{\bf{z}}(t)},\spherenormal_{\upomega}|_{\Tranchar_{\bf{z}}(t)}) = 0$
and $\gfour|_{\Tranchar_{\bf{z}}(t)}(\spherenormal_{\upomega}|_{\Tranchar_{\bf{z}}(t)},\spherenormal_{\upomega}|_{\Tranchar_{\bf{z}}(t)}) = 1$.
In particular, $\spherenormal_{\upomega}|_{\Tranchar_{\bf{z}}(t)}$ is tangent to $\Sigma_t$ at $\Tranchar_{\bf{z}}(t)$.
From these relations and arguments similar to the ones given above, 
we find that
$\ell_{\upomega}|_{\Tranchar_{\bf{z}}(t)} := \Transport|_{\Tranchar_{\bf{z}}(t)} + \spherenormal_{\upomega}|_{\Tranchar_{\bf{z}}(t)}
\in \mathscr{N}_{\Tranchar_{\bf{z}}(t)}
$.
Similar arguments that take into account standard ODE existence and uniqueness theory\footnote{Here we are using our qualitative assumption that the fluid solution
is smooth. \label{FN:SMOOTHFORODE}} 
for the equation \eqref{E:FWTRANSPORT}  
yield that the map 
$\spherenormal_{\upomega}|_{\bf{z}} \rightarrow \spherenormal_{\upomega}|_{\Tranchar_{\bf{z}}(t)}$
is a diffeomorphism from $UT_{{\bf{z}}} \Sigma_0$ onto $UT_{\Tranchar_{\bf{z}}(t)} \Sigma_t$.
Considering also that for each for $t \in [0, \RescaledTboot]$, 
the map 
$\spherenormal_{\upomega}|_{\Tranchar_{\bf{z}}(t)} \rightarrow \Transport|_{\Tranchar_{\bf{z}}(t)} + \spherenormal_{\upomega}|_{\Tranchar_{\bf{z}}(t)}$
(where $\spherenormal_{\upomega}|_{\Tranchar_{\bf{z}}(t)}$ is the solution to \eqref{E:FWTRANSPORT})
defines a diffeomorphism from $UT_{\Tranchar_{\bf{z}}(t)} \Sigma_t$ onto $\mathscr{N}_{\Tranchar_{\bf{z}}(t)}$,
we conclude that the map $\ell_{\upomega}|_{\bf{z}} \rightarrow \Transport|_{\Tranchar_{\bf{z}}(t)} + \spherenormal_{\upomega}|_{\Tranchar_{\bf{z}}(t)}$
is the desired diffeomorphism from 
$\mathscr{N}_{\bf{z}}$ onto $\mathscr{N}_{\Tranchar_{\bf{z}}(t)}$.

Next, for $u \in [0,\RescaledTboot]$, we let $q = q(u) := \Tranchar_{\bf{z}}(u)$ 
be the unique point\footnote{It is unique since $\Transport t = 1$.} 
on the cone-tip axis with Cartesian component $q^0 = u$.
Let $\upomega \in \mathbb{S}^2$,
and let $\ell_{\upomega} := \Transport|_{\Tranchar_{\bf{z}}(u)} + \spherenormal_{\upomega}|_{\Tranchar_{\bf{z}}(u)} \in \mathscr{N}_{\Tranchar_{\bf{z}}(u)}$
denote the corresponding null vector
that we constructed in the previous paragraph.
We now let $\Upsilon_{u;\upomega} = \Upsilon_{u;\upomega}(t)$ be the null geodesic curve 
emanating from $q(u)$ with initial velocity $\ell_{\upomega}$, parameterized by $t$ (see Footnote~\ref{FN:AFFINE}), 
that is, $\Upsilon_{u;\upomega}^0(t) = t$.
Introducing the notation $\dot{\Upsilon}_{u;\upomega}^{\alpha} := \frac{d}{dt} \Upsilon_{u;\upomega}^{\alpha}$ 
and
$\ddot{\Upsilon}_{u;\upomega}^{\alpha} := \frac{d^2}{dt^2} \Upsilon_{u;\upomega}^{\alpha}$,
we note that standard arguments\footnote{\eqref{E:TIPNULLGEODESICS} is 
equivalent to equation \eqref{E:DLUNITLUNIT} for $\Dfour_{\Lunit} \Lunit^{\alpha}$,
where $\dot{\Upsilon}_{u;\upomega}^{\alpha}$
can be identified with $\Lunit^{\alpha}$,
$\dot{\Upsilon}_{u;\upomega}^{\alpha} - \Transport^{\alpha}$
can be identified with $\spherenormal^{\alpha}$,
and
$
\frac{1}{2} [\Lie_{\Transport} \gfour]_{\kappa \lambda} (\dot{\Upsilon}_{u;\upomega}^{\kappa} - \Transport^{\kappa}) 
			(\dot{\Upsilon}_{u;\upomega}^{\lambda} - \Transport^{\lambda})
			\dot{\Upsilon}_{u;\upomega}^{\alpha}$
can be identified with $- k_{\spherenormal \spherenormal} \Lunit^{\alpha}$.}  
yield that the four scalar functions $\lbrace \Upsilon_{u;\upomega}^{\alpha}(t) \rbrace_{\alpha = 0,1,2,3}$ are the solution 
to the following ODE system initial value problem
(Footnote~\ref{FN:SMOOTHFORODE} also applies here)
with data given at $t=u$, where on RHS~\eqref{E:TIPNULLGEODESICS}, 
$\Lie_{\Transport}$ denotes Lie differentiation with respect to $\Transport$:
\begin{subequations}
\begin{align} \label{E:TIPNULLGEODESICS}
	\ddot{\Upsilon}_{u;\upomega}^{\alpha}(t)
	& = -
			\Chfour_{\kappa \ \lambda}^{\ \alpha}|_{\Upsilon_{u;\upomega}(t)}
			\dot{\Upsilon}_{u;\upomega}^{\kappa}(t)
			\dot{\Upsilon}_{u;\upomega}^{\lambda}(t)
				\\
	& \ \
			+
			\frac{1}{2} [\Lie_{\Transport} \gfour]_{\kappa \lambda}|_{\Upsilon_{u;\upomega}(t)} 
			\left(\dot{\Upsilon}_{u;\upomega}^{\kappa}(t) - \Transport^{\kappa}|_{\Upsilon_{u;\upomega}(t)} \right) 
			\left(\dot{\Upsilon}_{u;\upomega}^{\lambda}(t) - \Transport^{\lambda}|_{\Upsilon_{u;\upomega}(t)} \right)
			\dot{\Upsilon}_{u;\upomega}^{\alpha}(t),
				\notag \\
	\Upsilon_{u;\upomega}^{\alpha}(u)
	& = q^{\alpha}(u) = \Tranchar_{\bf{z}}^{\alpha}(u),
	\qquad
	\dot{\Upsilon}_{u;\upomega}^{\alpha}(u)
	= \ell_{\upomega}^{\alpha}.
	\label{E:INITIALCONDITIONSTIPNULLGEODESICS}
	\end{align}
	\end{subequations}
	
	We are now able to extend the angular coordinates by declaring that 
	$\upomega$ is constant along 
	the null geodesic curve $t \rightarrow \Upsilon_{u;\upomega}(t)$.
	Next, given any fixed $t \in [u,\RescaledTboot]$,
	we define the truncated cone
	\begin{align} \label{E:TIPCONE}
		\mathcal{C}_u^t
		& := \bigcup_{\uptau \in [u,t], \upomega \in \mathbb{S}^2} \Upsilon_{u;\upomega}(\uptau).
	\end{align}
	We then define a function $u$ by the requirement that its level sets are precisely the cones \eqref{E:TIPCONE},
	that is, along $\mathcal{C}_{u'}^{\RescaledTboot}$, the function $u$ takes the value $u'$. 
	We then set
	\begin{align} \label{E:INTERIORREGION}
		\widetilde{\mathcal{M}}^{(Int)} 
		& := \bigcup_{u \in [0,\RescaledTboot]} \mathcal{C}_u^{\RescaledTboot}.
	\end{align}
	At times, we will use the alternate notation 
	\begin{align} \label{E:ABBREVIATEDTRUNCATEDCONE}
		\mathcal{C}_u := \mathcal{C}_u^{\RescaledTboot}.
	\end{align}
	As is described, for example, in \cite{dCsK1993}, this construction provides a solution of
	\eqref{E:EIKONALEQUATIONFORRESCALEDMETRIC} in the region $\widetilde{\mathcal{M}}^{(Int)}$ depicted in Fig.\,\ref{F:DOMAINS}.
	Note that by construction, we have 
	\begin{align} \label{E:UISTALONGCONETIPAXIS}
		u(\Tranchar_{\bf{z}}(t)) & = t,
		&
		\Transport [u(\Tranchar_{\bf{z}}(t))]
		& = 1.
	\end{align}
	
	In total, we have constructed geometric coordinates $(t,u,\upomega)$ in $\widetilde{\mathcal{M}}^{(Int)}$.
	More precisely, standard ODE theory yields that the map 
	$(t,u,\upomega) \rightarrow 
	\left(\Upsilon_{u;\upomega}^0(t),\Upsilon_{u;\upomega}^1(t),\Upsilon_{u;\upomega}^2(t),\Upsilon_{u;\upomega}^3(t)\right)$
	is smooth on $\lbrace (t,u,\upomega) \ | \  u \in [0,\RescaledTboot], t \in [u,\RescaledTboot], \upomega \in \mathbb{S}^2 \rbrace$
	and locally injective away from points with $t=u$ (which correspond to the cone-tip axis);
	note that here we are identifying $\Upsilon_{u;\upomega}^{\alpha}(t)$ with the Cartesian coordinate $x^{\alpha}$.
	Moreover, the continuity argument mentioned in Subsect.\,\ref{SS:GEOMETRICSPACETIMESUBSETS} guarantees that in fact, 
	this map is a global diffeomorphism from 
	$\lbrace (t,u,\upomega) \ | \  u \in [0,\RescaledTboot], t \in [u,\RescaledTboot], \upomega \in \mathbb{S}^2 \rbrace 
	\backslash 
	\lbrace (u,u,\upomega) \ | \ u \in [0,\RescaledTboot], \upomega \in \mathbb{S}^2 \rbrace$
	onto its image, i.e., onto $\widetilde{\mathcal{M}}^{(Int)}$ minus the cone-tip axis 
	$\lbrace \Tranchar_{\bf{z}}(t) \rbrace_{t \in [0,\RescaledTboot]}$;
	see also Prop.\,\ref{P:CONTROLOFNULLGEODESICS} for a quantitative proof that the null curves $t \rightarrow \Upsilon_{u;\upomega}(t)$
	corresponding to distinct values of $u$ and $\upomega$
	remain separated.\footnote{By ``separated,'' in $\widetilde{\mathcal{M}}^{(Int)}$, we mean, of course, away from the cone-tip axis.
\label{FN:SEPARATEED}} 
	
\subsubsection{The exterior solution and the region $\widetilde{\mathcal{M}}^{(Ext)}$}
\label{SSS:EIKONALEXTERIOR}
Let $\bf{z}$ be the point in $\Sigma_0$ from Subsubsect.\,\ref{SSS:EIKONALINTERIOR}, i.e., the point 
$\Tranchar_{\bf{z}}(0)$, at which $t = u = 0$.
The same arguments leading to \cite{qW2017}*{Proposition 4.3}
guarantee that for $\Tboot$ sufficiently small, 
there is a neighborhood $\mathscr{O}$ in $\Sigma_0$ contained
in the metric ball $B_{\RescaledTboot}({\bf{z}},g)$ 
(with respect to the rescaled first fundamental form $g$ of $\Sigma_0$)
of radius $\RescaledTboot$ centered at $\bf{z}$
such that $\mathscr{O}$ can be foliated with the level sets of a function
$w$ on $\Sigma_0$, defined for $0 \leq w \leq \RescaledFoliationparameter := \frac{4}{5} \RescaledTboot$,
where, away from $\bf{z}$, $w$ is smooth and has level sets $S_w$ diffeomorphic to $\mathbb{S}^2$,
while $S_0 = \lbrace \bf{z} \rbrace$. To obtain suitable control of the geometry,
we require $w$ to have a variety of crucial properties, especially \eqref{E:EQNOFINITIALFOLIATION}; 
see Prop.\,\ref{P:INITIALFOLIATION} for the existence of a function $w$ with the desired properties.

Let $\upomega \in \mathbb{S}^2$ be as in Subsubsect.\,\ref{SSS:EIKONALINTERIOR},
let $\ell_{\upomega} \in T_{\bf{z}} \mathcal{M}$ be the corresponding null vector,
and let $\spherenormal_{\upomega} = \ell_{\upomega} - \Transport|_{\bf{z}}$ 
be the corresponding element of $UT_{\bf{z}} \Sigma_0$.
Let $\nabla$ denote the Levi-Civita connection of $g$ and let 
$a := |\nabla w|_g^{-1}$ 
denote the lapse,
where $|\nabla w|_g = \sqrt{(g^{-1})^{cd} \partial_c w \partial_d w}$.
In our forthcoming analysis, we will have $a({\bf{z}}) = 1$ and $a \approx 1$;
see Prop.\,\ref{P:INITIALFOLIATION}.
Let $\spherenormal$ be the outward $g$-unit normal to $S_w$ in $\Sigma_0$,
i.e., $\spherenormal^i := a (g^{-1})^{ic} \partial_c w$,
$\spherenormal^0 = 0$,
and $g_{cd}\spherenormal^c \spherenormal^d = 1$.
Each fixed integral curve of $\spherenormal$ can be extended\footnote{In particular, in the proof of Lemma~\ref{L:LUNITIALONGCONETIPAXISISC1INANGLEVARIABLES},
we show that along $\Sigma_0$, for $i=1,2,3$, 
	$\| \frac{\partial}{\partial u} \spherenormal^i \|_{L_u^2 L_{\upomega}^{\infty}} < \infty$,
	where this norm is defined in Subsect.\,\ref{SS:GEOMETRICNORMS}; 
	this implies the extendibility of each integral curve of $\spherenormal$ to $\bf{z}$,
	where $\bf{z}$ is the point at which $u=0$.} 
to a smooth curve emanating from $\bf{z}$.
More precisely, for each vector $\spherenormal_{\upomega} \in UT_{\bf{z}} \Sigma_0$, 
there is a unique integral curve $\Phi_{\upomega} : [0,\RescaledFoliationparameter] \rightarrow \Sigma_0$ 
of $a \spherenormal$ parameterized by $w$
 (i.e., $\dot{\Phi}_{\upomega}^i(w) = [a\spherenormal^i] \circ \Phi_{\upomega}(w)$, with $a$ the lapse, where 
	$\dot{\Phi}_{\upomega}^i(w) = \frac{\partial}{\partial w}\Phi_{\upomega}^i(w)$)
that emanates from $\bf{z}$ with $\Phi_{\upomega}(0) = \bf{z}$
and $\dot{\Phi}_{\upomega}(0) = \spherenormal_{\upomega}$
(here we have used that $a({\bf{z}}) = 1$).
This yields a diffeomorphism from $\mathbb{S}^2$ 
to each $S_w$ for $0 < w \leq \RescaledFoliationparameter$,
defined such that $\upomega$ is constant along the integral curve $w \rightarrow \Phi_{\upomega}(w)$.
In particular, if $\lbrace \upomega^A \rbrace_{A=1,2}$ are local angular coordinates on $\mathbb{S}^2$, 
then for each fixed $w$ with $0 < w \leq \RescaledFoliationparameter$, 
the map $\upomega \rightarrow \Phi_{\upomega}(w)$ yields angular coordinates $\lbrace \upomega^A \rbrace_{A=1,2}$
on $S_w$. It is straightforward to see that on $\cup_{0 < w \leq \RescaledFoliationparameter} S_w$, 
we have the vectorfield identity (where $\frac{\partial}{\partial w}$ denotes partial differentiation at fixed $\upomega$)
\begin{align} \label{E:PARTIALPARTIALWALONGSIGMA0}
	\frac{\partial}{\partial w}
	& = a \spherenormal,
\end{align}
and that the rescaled first fundamental form of $\Sigma_0$, denoted by $g$,
can be expressed relative to the coordinates $(w,\upomega)$
as follows:
\begin{align} \label{E:FORMOFFIRSTFUNDAMENTALFORMONSIGMA0}
	g
	& = a^2 dw \otimes dw 
		+ 
		\gsphere\left(\frac{\partial}{\partial \upomega^A},\frac{\partial}{\partial \upomega^B}\right) d \upomega^A \otimes d \upomega^B,
\end{align}
where $\gsphere$ is the Riemannian metric induced on $S_w$ by $g$.

In view of the constructions provided above,
to each point $q \in \cup_{0 < w \leq \RescaledFoliationparameter} S_w \subset \Sigma_0$,
we can associate the geometric coordinates $(0,w,\upomega)$ (where ``$0$'' is the time coordinate).
In particular, these points $q = q(w,\upomega)$ are parameterized by the coordinates $(w,\upomega) \in [0,\RescaledFoliationparameter] \times \mathbb{S}^2$.
We then define the vector $\ell_{q(w,\upomega)} := \Transport|_{q(w,\upomega)} + \spherenormal|_{q(w,\upomega)} \in T_{q(w,\upomega)} \mathcal{M}$.
Since $\gfour|_{q(w,\upomega)}(\Transport|_{q(w,\upomega)},\Transport|_{q(w,\upomega)}) = - 1$,
$\gfour|_{q(w,\upomega)}(\Transport|_{q(w,\upomega)},\spherenormal|_{q(w,\upomega)}) = 0$,
and
$\gfour|_{q(w,\upomega)}(\spherenormal|_{q(w,\upomega)},\spherenormal|_{q(w,\upomega)}) = 1$,
it follows that
$\gfour|_{q(w,\upomega)}(\ell_{q(w,\upomega)},\ell_{q(w,\upomega)}) = 0$,
i.e., $\ell_{q(w,\upomega)}$ is null.
Next, we construct the null geodesic $\Upsilon_{q(w,\upomega)} = \Upsilon_{q(w,\upomega)}(t)$ by solving the ODE \eqref{E:TIPNULLGEODESICS}
with initial conditions
$
\Upsilon_{q(w,\upomega)}^{\alpha}(0) = q^{\alpha}(w,\upomega)
$
and
$\dot{\Upsilon}_{q(w,\upomega)}^{\alpha}(0) = \ell_{q(w,\upomega)}^{\alpha}$.
For each fixed $w \in [0,\RescaledFoliationparameter]$,
the set $\lbrace \Upsilon_{q(w,\upomega)}(t) \ | \ (t,\upomega) \in [0,\RescaledTboot] \times \mathbb{S}^2 \rbrace$
is a portion of a $\gfour$-null cone. We define the function $u$ by declaring that along this null cone portion, 
it takes on the value $-w$. 
Thus, with $\mathcal{C}_u^t$ denoting the level set portion contained in $[0,t] \times \mathbb{R}^3$,
we have $\mathcal{C}_u^t = \lbrace \Upsilon_{q(-u,\upomega)}(\uptau) \ | \ (\uptau,\upomega) \in [0,t] \times \mathbb{S}^2 \rbrace$.
As we do in the interior region, we sometimes use the alternate notation $\mathcal{C}_u := \mathcal{C}_u^{\RescaledTboot}$.
We then set
	\begin{align} \label{E:EXTERIORREGION}
		\widetilde{\mathcal{M}}^{(Ext)} 
		& := \bigcup_{u \in [-\RescaledFoliationparameter,0]} \mathcal{C}_u^{\RescaledTboot}.
	\end{align}
This procedure yields a function $u$ defined in the region $\widetilde{\mathcal{M}}^{(Ext)}$ depicted in Fig.\,\ref{F:DOMAINS}.
It is a standard result that $u$ is a solution to the eikonal equation \eqref{E:EIKONALEQUATIONFORRESCALEDMETRIC}
in $\widetilde{\mathcal{M}}^{(Ext)}$. Finally, we extend the angular coordinates to $\widetilde{\mathcal{M}}^{(Ext)}$
by declaring that $\upomega$ is constant along 
the null geodesic curve $t \rightarrow \Upsilon_{q(w,\upomega)}(t)$.
In total, we have constructed geometric coordinates $(t,u,\upomega)$ in $\widetilde{\mathcal{M}}^{(Ext)}$.

\subsubsection{Acoustical metric and first fundamental forms}
\label{SSS:METRICANDFIRSTFUNDAMENTALFORM}
We refer to Subsubsect.\,\ref{SSS:ACOUSTICALMETRICANDWAVEOPERATORS} for discussion
of the acoustical metric $\gfour$ and the first fundamental form $g$ of $\Sigma_t$.
We now define $\gsphere$ to be the first fundamental form of $S_{t,u} := \mathcal{C}_u \cap \Sigma_t$,
that is, the Riemannian metric induced on $S_{t,u}$ by $\gfour$. 
We again clarify that we are working under the conventions of Subsect.\,\ref{SS:NOMORELAMBDA}.

\subsection{\texorpdfstring{Geometric subsets of spacetime and the containment $B_R(\Tranchar_{\bf{z}}(1)) \subset \mbox{\upshape Int} \widetilde{\Sigma}_1$}{Geometric subsets of spacetime and the containment B R(Tranchar bf z (1)) subset mbox upshape Int widetilde Sigma 1}}
\label{SS:GEOMETRICSPACETIMESUBSETS}
In the rest of the paper, we denote $\widetilde{\mathcal{M}} := \widetilde{\mathcal{M}}^{(Int)} \cup \widetilde{\mathcal{M}}^{(Ext)}$.
From the constructions in Subsubsects.\,\ref{SSS:EIKONALINTERIOR}--\ref{SSS:EIKONALEXTERIOR}, 
it follows that
\begin{align}  \label{E:INTERIORUNIONEXTERIOR}
	\widetilde{\mathcal{M}}
	& = \bigcup_{u \in [-\RescaledFoliationparameter,\RescaledTboot]} \mathcal{C}_u^{\RescaledTboot},
\end{align}
where $\RescaledFoliationparameter = \frac{4}{5} \RescaledTboot$.
We also define a truncated version of $\widetilde{\mathcal{M}}^{(Int)}$, namely $\widetilde{\mathcal{M}}_1^{(Int)}$, as follows:
\begin{align} \label{E:TRUNCATEDSOLIDCONE}
	\widetilde{\mathcal{M}}_1^{(Int)}
	& := \widetilde{\mathcal{M}}^{(Int)} \cap \left([1,\RescaledTboot] \times \mathbb{R}^3 \right).
\end{align}
We also define
\begin{align} \label{E:TRUNCATEDSIGMASANDCONES}
	\widetilde{\Sigma}_t 
	& := \Sigma_t \cap \widetilde{\mathcal{M}},
	&
	\widetilde{\Sigma}_t^{(Int)}
	& := \Sigma_t \cap \widetilde{\mathcal{M}}^{(Int)},
	&
	\widetilde{\mathcal{C}}_u 
	& := \mathcal{C}_u \cap \widetilde{\mathcal{M}}.
\end{align}
See Fig.\,\ref{F:SECTIONS} for a depiction of these sets.

\begin{figure}[h!]
\centering
\includegraphics[width=6.5in,scale=.5]{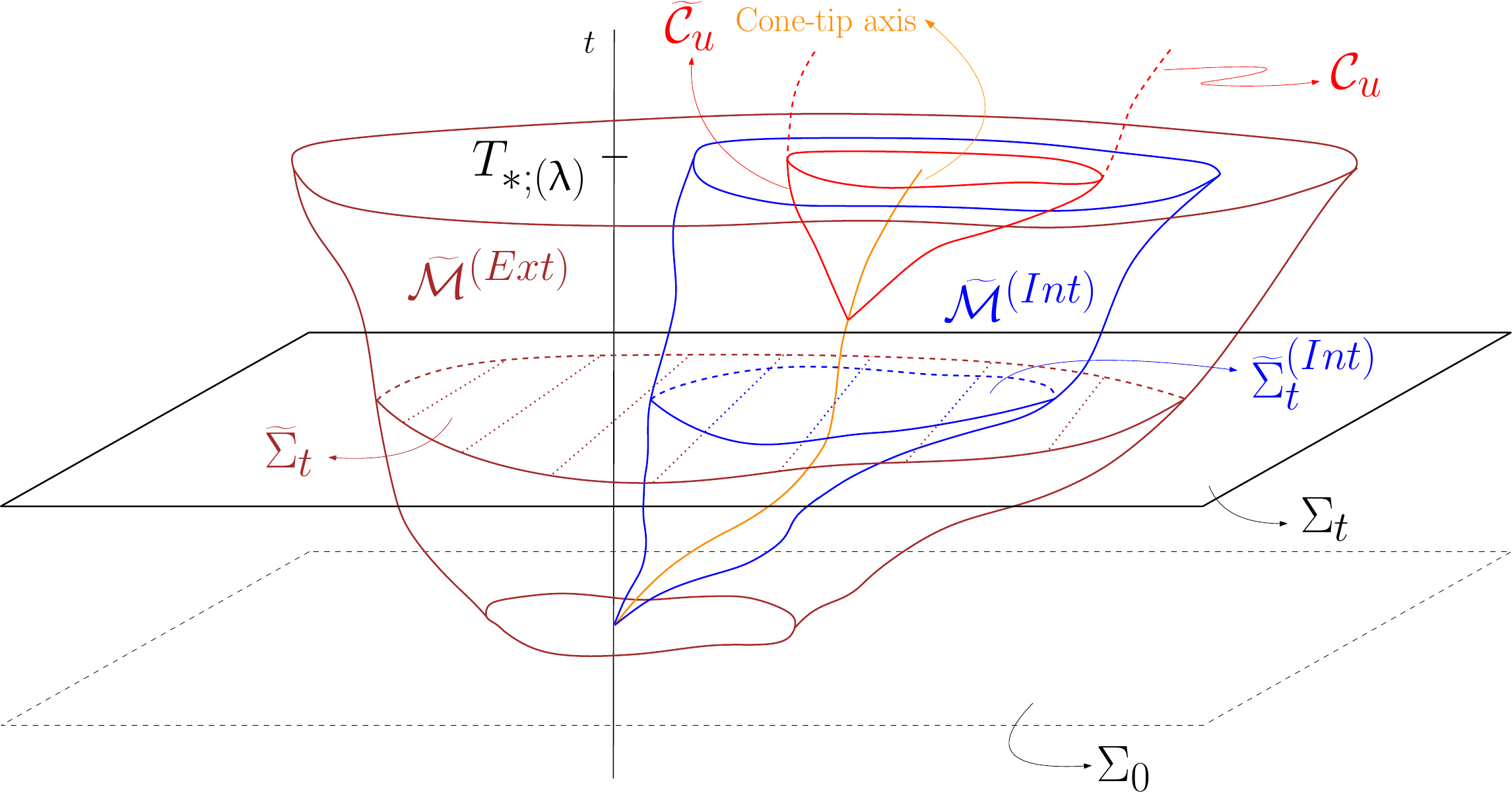}
\caption{Depiction of various subsets of spacetime in the case ${\bf{z}} := {\bf{0}}$}
\label{F:SECTIONS}
\end{figure}

For the same reasons given in \cite{qW2017}*{Section 4}, 
if $\Tboot$ is small\footnote{Note that, as is explained 
on \cite{qW2017}*{pg.~24},
it is only $\Tboot$, and not $\RescaledTboot$, that is required to be small;
once we have fixed $\Tboot$, we have $\RescaledTboot \to \infty$ as $\uplambda \to \infty$.} 
(where the required smallness is controlled by our bootstrap assumptions and our assumptions on the data), 
then the results of Subsubsects.\,\ref{SSS:EIKONALINTERIOR}--\ref{SSS:EIKONALEXTERIOR} 
yield a complete system of \emph{geometric coordinates} $(t,u,\upomega)$, which are
defined on $\widetilde{\mathcal{M}}$ and non-degenerate away from the cone-tip axis;
the proof is based on a continuity argument involving the bootstrap assumptions 
and the bounds
\eqref{E:TRCHILINFINITYESTIMATES}--\eqref{E:NULLLAPSECLOSETOUNITY} proved below; see also the proof of \cite{qW2012}*{Theorem~1.2} 
and \cites{sKiR2005b,sKiR2008} for additional details.
In particular, for $u \in [-\RescaledFoliationparameter,\RescaledTboot]$ and $t \in [[u]_+,\RescaledTboot]$
such that\footnote{Note that for $t \in [0,\RescaledTboot]$,
$S_{t,t}$ is a single point on the cone tip axis.} 
$t \neq u$
(where $[u]_+ := \max \lbrace 0,u \rbrace$),
the sets
\begin{align} \label{S:SPHERES}
	S_{t,u} 
	& := \mathcal{C}_u \cap \Sigma_t
\end{align}
are embedded submanifolds that are diffeomorphic to $\mathbb{S}^2$, equipped with the (local) angular coordinates $(\upomega^1,\upomega^2)$.
We also note that
\begin{subequations}
\begin{align} \label{E:SPACETIMEREGIONOFINTERESESTEXPRESSEASUNIONOFSPHERES}
	\widetilde{\mathcal{M}} 
	& = \bigcup_{u \in [-\RescaledFoliationparameter,\RescaledTboot],t \in [[u]_+,\RescaledTboot]} S_{t,u},
	&&
		\\
	\widetilde{\mathcal{M}}^{(Int)} 
	& = \bigcup_{u \in [0,\RescaledTboot],t \in [u,\RescaledTboot]} S_{t,u},
	&
	\widetilde{\mathcal{M}}^{(Ext)} 
	& = \bigcup_{u \in [-\RescaledFoliationparameter,0],t \in [0,\RescaledTboot]} S_{t,u},
	\label{E:INTERIORANDEXTERIOREGIONSAREUNIONSOFSPHERES}
		\\
	\widetilde{\Sigma}_t^{(Int)}
	& = \bigcup_{u \in [0,\RescaledTboot]} S_{t,u}.
	&&
	\label{E:SIGMATREGIONINTERIORISUNIONSOFSPHERES}
		\end{align}
\end{subequations}

For future use, we also note that for the same reasons given on \cite{qW2017}*{page~25},
based on \eqref{E:EUCLIDEANBALLCONTAINEDINMETRICBALL} and the estimate
\eqref{E:NULLLAPSECLOSETOUNITY} proved below,
we have the following containments,
where $B_R(\Tranchar_{\bf{z}}(1))$ denotes the Euclidean ball of radius $R$ 
centered at $\Tranchar_{\bf{z}}(1)$ in $\widetilde{\Sigma}_1$
(with $R$ is as in Subsect.\,\ref{SS:RESCALEDSOLUTION}),
and
$B_{1/2;g(1,\cdot)}(\Tranchar_{\bf{z}}(1))$ is the metric ball of radius $1/2$ centered at $\Tranchar_{\bf{z}}(1)$ 
in $\widetilde{\Sigma}_1$ corresponding
to the rescaled first fundamental form $g(1,\cdot)$:
\begin{align} \label{E:ALONGSIAMGA1EUCLIDEANBALLCONTAINEDINMETRICBALL}
	B_R(\Tranchar_{\bf{z}}(1)) 
	& \subset B_{1/2;g}(\Tranchar_{\bf{z}}(1))
	\subset
	\bigcup_{\frac{1}{3} \leq u \leq 1} S_{1,u}
	\subset
	\widetilde{\Sigma}_1^{(Int)}.
\end{align}

\subsection{Geometric quantities constructed out of the eikonal function}
\label{SS:GEOMETRICQUANTITIESCONSTRUCTEDFROMEIKONAL}
We now define a collection of geometric quantities constructed out of $u$.

\subsubsection{Geometric radial variable, null lapse, and the unit outward normal}
\label{SSS:GEOMETRICRADIALVARIABLEECT}
We define the \emph{geometric radial variable} $\rgeo$ as follows:
\begin{align} \label{E:GEOMETRICRADIAL}
	\rgeo
	= \rgeo(t,u)
	& := t - u.
\end{align}
Since in $\widetilde{\mathcal{M}}$
we have that $t \in [0,\RescaledTboot]$ and $u \in [-\RescaledFoliationparameter,t]$,
and since $\RescaledFoliationparameter := \frac{4}{5} \RescaledTboot$,
it follows from \eqref{E:RESCALEDBOOTBOUNDS} that
\begin{align} \label{E:RGEOANDUBOUNDS}
	0 & \leq \rgeo < 2 \RescaledTboot = 2 \uplambda^{1-8 \upepsilon_0} \Tboot,
	&
	-\frac{4}{5} \uplambda^{1-8 \upepsilon_0} \Tboot & \leq u \leq \uplambda^{1-8 \upepsilon_0} \Tboot.
\end{align}
Throughout the article, we will often silently use the inequalities in \eqref{E:RGEOANDUBOUNDS}.

We define the \emph{null lapse}
$\nulllapse$ to be the following scalar function,
where $|\nabla u|_g = \sqrt{(g^{-1})^{ab} \partial_a u \partial_b u}$:
\begin{align} \label{E:NULLLAPSE} 
\nulllapse
		& := \frac{1}{|\nabla u|_g}.
\end{align}
From \eqref{E:NULLLAPSE},
\eqref{E:FORMOFFIRSTFUNDAMENTALFORMONSIGMA0}, and the fact that $u = - w$ along $\Sigma_0$,
it follows that $\nulllapse = a$ along $\Sigma_0$.
Moreover, using
\eqref{E:NULLLAPSE}, 
\eqref{E:FIRSTFUNDAMENTALFORM},
\eqref{E:INVERSEACOUSTICALMETRIC}, and \eqref{E:EIKONALEQUATIONFORRESCALEDMETRIC},
we see that
\begin{align} \label{E:ALTERNATENULLLAPSEEXPRESSION}
\nulllapse = \frac{1}{\Transport u}. 
\end{align}
Considering also
\eqref{E:UISTALONGCONETIPAXIS}, we see that for $t \in [0,\RescaledTboot]$, we have
\begin{align} \label{E:NULLLAPSEISUNITYALONGCONETIPAXIS}
	\nulllapse|_{\Tranchar_{\bf{z}}(t)} = 1,
\end{align}
where the curve $t \rightarrow \Tranchar_{\bf{z}}(t)$ is the cone-tip axis introduced in Subsubsect.\,\ref{SSS:EIKONALINTERIOR}.

Let $\spherenormal$ denote the outward unit normal to $S_{t,u}$ in $\Sigma_t$, 
i.e., $\spherenormal$ is $\Sigma_t$-tangent, $g$-orthogonal to $S_{t,u}$,
outward pointing, and normalized by $g(\spherenormal,\spherenormal)=1$.
From \eqref{E:NULLLAPSE}, it follows that
\begin{align} \label{E:SPHEREOUTERNORMAL}
	\spherenormal^i
	& = - \nulllapse (g^{-1})^{ia} \partial_a u,
	&
	\spherenormal u
	& = -  \frac{1}{\nulllapse}.
\end{align}

\subsubsection{Null frame and basic geometric constructions}
\label{SSS:NULLFRAME}
We now define the following vectorfields:
\begin{align} \label{E:NULLVECTORFIELDS}
	\Lunit 
	& := \Transport + \spherenormal,
	&
	\uLunit
	& := \Transport - \spherenormal.
\end{align}
Since $\Transport^0 = 1$ and $\spherenormal^0 = 0$, it follows that
\begin{align} \label{E:NULLVECTORSAPPLIEDTOT}
	\Lunit t
	& = \uLunit t = 1.
\end{align}
Moreover, 
from 
\eqref{E:FIRSTFUNDAMENTALFORM},
\eqref{E:INVERSEACOUSTICALMETRIC},
\eqref{E:NULLLAPSE},
\eqref{E:ALTERNATENULLLAPSEEXPRESSION},
\eqref{E:SPHEREOUTERNORMAL},
and
\eqref{E:NULLVECTORFIELDS},
we see that
\begin{align} \label{E:LUNITISRESCALEDGRADIENTOFEIKONAL}
	\Lunit^{\alpha}
	& = - \nulllapse (\gfour^{-1})^{\alpha \beta} \partial_{\beta} u.
\end{align}
Since $\gfour(\Transport,\Transport) = -1$,
$\gfour(\spherenormal,\spherenormal) = 1$,
and $\gfour(\Transport,\spherenormal) = 0$,
it follows that
\begin{align} \label{E:NULLVECTORFIELDSINNERPRODUCT}
	\gfour(\Lunit,\Lunit) 
	= \gfour(\uLunit,\uLunit) 
	&= 0,
	&
	\gfour(\Lunit,\uLunit)
	& = -2.
\end{align}
In particular, 
\eqref{E:NULLVECTORSAPPLIEDTOT} and \eqref{E:NULLVECTORFIELDSINNERPRODUCT} imply that
$\Lunit$ and $\uLunit$ are future-directed and $\gfour$-null.
Let now $\lbrace e_A \rbrace_{A=1,2}$ be a (locally-defined)
$\gfour$-orthonormal frame on $S_{t,u}$, i.e.,
$\gsphere(e_A,e_B) = \updelta_{AB}$, where $\updelta_{AB}$ is the Kronecker delta.
We note that
since $\Transport$ and $\spherenormal$ are $\gfour$-orthogonal to $S_{t,u}$,
it follows from \eqref{E:NULLVECTORFIELDS} that
$\gfour(\Lunit,e_A) = \gfour(\uLunit,e_B) = 0$.
We refer to 
\begin{align} \label{E:NULLFRAME}
	\lbrace \Lunit, \uLunit, e_1, e_2 \rbrace
\end{align}
as a \emph{null frame}; see Fig.\,\ref{F:NULLFRAME}.

If $\pmb{\upxi}$ is a one-form, then 
$\pmb{\upxi}_{\Lunit} := \pmb{\upxi}_{\alpha} \Lunit^{\alpha}$,
$\pmb{\upxi}_{\uLunit} := \pmb{\upxi}_{\alpha} \uLunit^{\alpha}$,
and $\pmb{\upxi}_A := \pmb{\upxi}_{\alpha} e_A^{\alpha}$ denote contractions against the null frame elements.
Similarly, if $\mathbf{X}$ is a vectorfield, then 
$\mathbf{X}_{\Lunit} := \mathbf{X}_{\alpha} \Lunit^{\alpha}$,
$\mathbf{X}_{\uLunit} := \mathbf{X}_{\alpha} \uLunit^{\alpha}$,
and $\mathbf{X}_A := \mathbf{X}_{\alpha} e_A^{\alpha}$.
We use analogous notation to denote the components of higher order tensorfields
as well as contractions against $\spherenormal$, e.g., 
$\pmb{\upxi}_{A \spherenormal} := \pmb{\upxi}_{\alpha \beta} e_A^{\alpha} \spherenormal^{\beta}$.

It is straightforward to deduce from the above considerations that
\begin{align} \label{E:GINVERSERELATIVETONULLFRAME}
	(\gfour^{-1})^{\alpha \beta}
	& = - \frac{1}{2} \Lunit^{\alpha} \uLunit^{\beta}
		- \frac{1}{2} \uLunit^{\alpha} \Lunit^{\beta}
		+
		(\gsphere^{-1})^{\alpha \beta},
		&
		(\gsphere^{-1})^{\alpha \beta}
		& 
		= \sum_{A=1,2} e_A^{\alpha} e_A^{\beta}.
\end{align}

Next, we define the $\gfour$-orthogonal projection $\sphereproject$ onto $S_{t,u}$ 
and the $\gfour$-orthogonal projection $\Sigmatproject$ onto $\Sigma_t$
to be, respectively, the following type $\binom{1}{1}$ tensorfields,
where $\updelta_{\ \beta}^{\alpha}$ is the Kronecker delta:
\begin{align} \label{E:PROJECTS}
	\sphereproject_{\ \beta}^{\alpha}
	& := 
		\updelta_{\ \beta}^{\alpha}
		+
		\frac{1}{2} \Lunit^{\alpha} \uLunit_{\beta}
		+ 
		\frac{1}{2} \uLunit^{\alpha} \Lunit_{\beta},
	&
	\Sigmatproject_{\ \beta}^{\alpha}
	& := 
		\updelta_{\ \beta}^{\alpha}
		+
		\Transport^{\alpha} \Transport_{\beta}.
\end{align}
It is straightforward to check that
$\sphereproject_{\ \alpha}^0 = \Sigmatproject_{\ \alpha}^0 = 0$ for $\alpha = 0,1,2,3$;
we will silently use this simple fact throughout the article.

If $\pmb{\upxi}$ is a spacetime tensor, then $\sphereproject \pmb{\upxi}$ denotes its $\gfour$-orthogonal 
projection onto $S_{t,u}$, obtained by projecting every component of $\pmb{\upxi}$ onto $S_{t,u}$
via $\sphereproject$. For example, if $\mathbf{X}$ is a vectorfield, then 
$(\sphereproject \mathbf{X})^{\alpha} = \sphereproject_{\ \beta}^{\alpha} \mathbf{X}^{\beta}$,
and if $\pmb{\upxi}$ is a type $\binom{0}{2}$ tensorfield, 
then $(\sphereproject \pmb{\upxi})_{\alpha \beta} = \sphereproject_{\ \alpha}^{\gamma} \sphereproject_{\ \beta}^{\delta} \pmb{\upxi}_{\gamma \delta}$.
We say that a tensor $\pmb{\upxi}$ is $S_{t,u}$-tangent if $\sphereproject \pmb{\upxi} = \pmb{\upxi}$. 
We often denote $S_{t,u}$-tangent tensorfields in non-bold font, i.e., as $X$ or $\upxi$.
We use the notation $|\upxi|_{\gsphere}$ to denote the norm of the $S_{t,u}$-tangent tensorfield $\upxi$
with respect to the rescaled first fundamental form $\gsphere$.
For example, if $\upxi$ is a type $\binom{0}{2}$ $S_{t,u}$-tangent tensorfield, 
then $|\upxi|_{\gsphere} = \sqrt{(\gsphere^{-1})^{\alpha \gamma} (\gsphere^{-1})^{\beta \delta} 
\upxi_{\alpha \beta} \upxi_{\gamma \delta}} = \sqrt{\upxi_{AB} \upxi_{AB}}$,
where the last relation holds relative to the $S_{t,u}$-frame $\lbrace e_A \rbrace_{A=1,2}$.
If $\upxi$ is a symmetric type $\binom{0}{2}$ $S_{t,u}$-tangent tensorfield, then we define its $\gsphere$-trace to be the scalar 
$\mytr_{\gsphere} \upxi := (\gsphere^{-1})^{\alpha \beta} \upxi_{\alpha \beta} = \upxi_{AA}$,
where the last relation holds relative to the $S_{t,u}$-frame $\lbrace e_A \rbrace_{A=1,2}$.
We then define $\hat{\upxi} := \upxi - \frac{1}{2} (\mytr_{\gsphere} \upxi) \gsphere$ to be the trace-free part of $\upxi$.
Given a tensor whose components with respect to $\lbrace e_A \rbrace_{A=1,2}$ are known,
we can extend $\upxi$ to an $S_{t,u}$-tangent spacetime tensor $\upxi$ (i.e., one verifying $\sphereproject \upxi = \upxi$)
by declaring that all contractions of $\upxi$ against elements of $\lbrace \Lunit, \uLunit \rbrace$
vanish; throughout the paper, we will often implicitly assume such an extension.
Similarly, $\Sigmatproject \pmb{\upxi}$ denotes the $\gfour$-orthogonal 
projection of $\pmb{\upxi}$ onto $\Sigma_t$,
we say that
$\pmb{\upxi}$ is $\Sigma_t$-tangent if $\Sigmatproject \pmb{\upxi} = \pmb{\upxi}$,
and we can extend tensors $\upxi$ whose $\Sigma_t$ components are given 
to a $\Sigma_t$-tangent spacetime tensor by declaring that all contractions of $\upxi$ against $\Transport$ vanish. 
We also note that $\sphereproject \Lunit = \sphereproject \uLunit = 0$, and $\Sigmatproject \Transport = 0$. 

\begin{remark}
We remark that we do not attribute a tensorial structure to $\vec{\Psi}$ or $\pmb{\partial} 
\vec{\Psi}$. Therefore, 
whenever $\vec{\Psi}$ or $\pmb{\partial} \vec{\Psi}$ appears under the $|\cdot|_{\gsphere}$ norm,
it should be interpreted as the standard Euclidean norm of the array $\vec{\Psi}$
or $\pmb{\partial} 
\vec{\Psi}$. The only reason
why we occasionally have $\vec{\Psi}$ or $\pmb{\partial} 
\vec{\Psi}$ under $|\cdot|_{\gsphere}$ is because,
in our schematic notation, we sometimes group it with $S_{t,u}$-tangent tensorfields for which pointwise
norms are taken with respect to $|\cdot|_{\gsphere}$, such as, for example, in
\eqref{E:LAMBDAINVERSEQUADRATICTERMTIMEINTEGRALLTINFTTYLUINFTYLOMEGAP}.
\end{remark}

Throughout, if $\mathbf{V}$ is a spacetime vectorfield and $\pmb{\upxi}$ is a spacetime tensorfield,
then we define $\angLie_{\mathbf{V}} \pmb{\upxi} := \sphereproject \Lie_{\mathbf{V}} \pmb{\upxi}$
and
$\SigmatLie_{\mathbf{V}} \pmb{\upxi} := \Sigmatproject \Lie_{\mathbf{V}} \pmb{\upxi}$,
where $\Lie_{\mathbf{V}}$ denotes Lie differentiation with respect to $\mathbf{V}$.

We use the following notation to denote the arrays of the Cartesian components of $\Lunit$, $\uLunit$, $\spherenormal$:
\begin{align} \label{E:ARRAYSOFCARTESIANCOMPONENTSOFVECTORFIELDS}
	\vec{\Lunit}
	& := (1,\Lunit^1,\Lunit^2,\Lunit^3),
	&
	\vec{\uLunit}
	& := (1,\uLunit^1,\uLunit^2,\uLunit^3),
		&
	\vec{\spherenormal}
	& := (0,\spherenormal^1,\spherenormal^2,\spherenormal^3).
\end{align}
From \eqref{E:NULLVECTORFIELDS} and the fact that $\Transport^{\alpha}$ is a smooth function of $\vec{\Psi}$,
it follows that there exist smooth functions, denoted schematically by $\gensmoothfunction$, 
such that
both $\vec{\uLunit}$ and $\vec{\spherenormal}$ are of the form $\vec{\Lunit}-\gensmoothfunction(\vec{\Psi})$. 
In the rest of the paper, we will often use this fact without explicitly mentioning it.

\subsubsection{The metrics and volume forms relative to geometric coordinates, and the ratio $\volrat$}
\label{SSS:METRICSANDVOLUMEFORMSINGEOMETRICCOORDINATES}
From the above considerations, it is straightforward to deduce that
there exists an $S_{t,u}$-tangent vectorfield $Y$ such that
$\gfour$ and $g$ can be expressed as follows relative to the geometric coordinates 
(see \cite{jS2016b}*{Lemma~3.45} for further details):
\begin{subequations}
\begin{align} \label{E:SPACETIMEMETRICRELATIVETOGEOMETRICCOORDINATES}
	\gfour
	& = - 
			\nulllapse
			dt \otimes du
			- 
			\nulllapse
			du \otimes dt
			+
			\nulllapse^2 du \otimes du
			+
			\gsphere\left(\frac{\partial}{\partial \upomega^A},\frac{\partial}{\partial \upomega^B}\right) (d \upomega^A + Y^A du) \otimes (d \upomega^B + Y^B du),
				\\
	g
	& = \nulllapse^2 du \otimes du
			+
			\gsphere\left(\frac{\partial}{\partial \upomega^A},\frac{\partial}{\partial \upomega^B}\right) (d \upomega^A + Y^A du) \otimes (d \upomega^B + Y^B du).
			\label{E:FIRSTFUNDOFSIGMATRELATIVETOGEOMETRICCOORDINATES}
\end{align}
\end{subequations}

The volume form $d \spherevol$ induced on $S_{t,u}$ by $\gsphere$ can be expressed as follows relative to the geometric coordinates:
\begin{align}
	d \spherevolarg{t}{u}{\upomega}
	& = 
	\sqrt{\mbox{\upshape det} \gsphere} \, d \upomega^1 d \upomega^2.
\end{align}
In addition, 
the volume form $d \tvol$ induced on $\Sigma_t$ by $g$,
which in Cartesian coordinates takes the form 
$d \tvol = \Speed^{-3} dx^1 dx^2 dx^3$ (see \eqref{E:FIRSTFUNDAMENTALFORM}),
can be expressed as follows relative to the geometric coordinates:
\begin{align} \label{E:VOLUMEFORMFIRSTFUNDOFSIGMATRELATIVETOVARIOUSCOORDINATES}
	d \tvolarg{t}{u}{\upomega}
	& 
	= \nulllapse(t,u,\upomega) \, du d\spherevolarg{t}{u}{\upomega}.
\end{align}

Let $\stgsphere = \stgsphere(\upomega)$ be the standard round metric on the Euclidean unit sphere $\mathbb{S}^2$,
and let $d \flatspherevolarg{\upomega}$ denote the corresponding volume form.
The following ratio\footnote{Note that RHS~\eqref{E:RATIOOFVOLUMEFORMS} is invariant under arbitrary 
diffeomorphisms on $\mathbb{S}^2$, i.e., diffeomorphisms corresponding to  
the geometric angular coordinates. This ratio is determined by the diffeomorphism from $\mathbb{S}^2$ to $S_{t,u}$
that we constructed in Subsect.\,\ref{SS:EIKONAL} 
(which in particular determine the component functions 
$\gsphere(t,u,\upomega)\left(\frac{\partial}{\partial \upomega^A},\frac{\partial}{\partial \upomega^B}\right)$).
That is, the ratio is determined by our construction of the geometric coordinates.} 
of volume forms will play a role in the ensuing discussion:
\begin{align} \label{E:RATIOOFVOLUMEFORMS}
	\volrat(t,u,\upomega)
	& := 
	\frac{d \spherevolarg{t}{u}{\upomega}}{d \flatspherevolarg{\upomega}}
	=
	\frac{\sqrt{\mbox{\upshape det} \gsphere}(t,u,\upomega)}{\sqrt{\mbox{\upshape det} \stgsphere}(\upomega)}.
\end{align}

\subsubsection{Levi-Civita connections, angular divergence and curl operators, and curvatures}
\label{SSS:CONNECTIONSANDCURVATURE}
We let $\Dfour$ denote the Levi-Civita connection of the rescaled spacetime metric $\gfour$ and $\angD$ denote the Levi-Civita connection
of $\gsphere$. 
Our Christoffel symbol conventions for $\gfour$ are that 
$\Dfour_{\beta} \mathbf{X}^{\alpha} = \partial_{\beta} \mathbf{X}^{\alpha}
+
\Chfour_{\beta \ \gamma}^{\  \alpha} \mathbf{X}^{\gamma}
$,
where 
$
\Chfour_{\beta \ \gamma}^{\  \alpha} 
:=
(\gfour^{-1})^{\alpha \delta}
\Chfour_{\beta \delta \gamma}
$
and
$
\Chfour_{\beta \delta \gamma}
:=
\frac{1}{2}
\left\lbrace
	\partial_{\beta} \gfour_{\delta \gamma}
	+
	\partial_{\gamma} \gfour_{\beta \delta}
	-
	\partial_{\delta} \gfour_{\beta \gamma}
\right\rbrace
$.

If $\pmb{V}$ is a vectorfield and $\pmb{\upxi}$ is a spacetime tensorfield, 
then $\Dfour_{\pmb{V}} \pmb{\upxi} := \pmb{V}^{\alpha} \Dfour_{\alpha} \pmb{\upxi}$
and $\angprojDarg{\pmb{V}} \pmb{\upxi} := \sphereproject \Dfour_{\pmb{V}} \pmb{\upxi}$;
note that $\angprojDarg{V} \upxi := \angDarg{V} \upxi$
when both $V$ and $\upxi$ are $S_{t,u}$-tangent.

If $\upxi$ is an $S_{t,u}$-tangent one-form, then relative to an arbitrary $\gsphere$-orthonormal frame $\lbrace e_{(1)},e_{(2)} \rbrace$,
$\angdiv \upxi := \angDarg{A} \upxi_A$
and
$\angcurl \upxi := \upepsilon^{AB} \angDarg{A} \upxi_B$,
where repeated capital Latin indices are summed from $1$ to $2$
and
$\upepsilon^{AB}$ is fully antisymmetric and normalized by 
$\upepsilon^{12} = 1$. 
If $f$ is a scalar function defined on $S_{t,u}$, then $\angLap f := \angDsquaredarg{A}{A} f$ denotes its covariant angular Laplacian.
We clarify that above and in all of our subsequent formulas, 
frame contractions are taken after covariant differentiation.
For example, relative to arbitrary local coordinates $\lbrace y^1, y^2 \rbrace$ on $S_{t,u}$, we have
$\angDarg{A} \upxi_A := e_A^a e_A^b \angDarg{a} \upxi_{b}$ and
$\angDsquaredarg{A}{A} f := e_A^a e_A^b \angDarg{a} \angDarg{b} f$.
Similarly, if
$\upxi$ is a symmetric type $\binom{0}{2}$ $S_{t,u}$-tangent tensorfield, then 
$\angdiv \upxi_A := \angDarg{B} \upxi_{AB}$
and
$\angcurl \upxi_A := 
\upepsilon^{BC} \angDarg{B} \upxi_{CA}
=
\frac{1}{2}  
\upepsilon^{BC} 
\left\lbrace
	\angDarg{B} \upxi_{CA}
	-
	\angDarg{C} \upxi_{BA}
\right\rbrace
$.

We let $\Riemfour_{\alpha \beta \gamma \delta}$ denote the Riemann curvature of $\gfour$
and $\Ricfour_{\alpha \beta} := (\gfour^{-1})^{\gamma \delta} \Riemfour_{\alpha \gamma \beta \delta}$ denote its Ricci curvature.
We adopt the curvature sign convention
$\gfour(\Dfour_{\mathbf{X} \mathbf{Y}}^2 \mathbf{W} - \Dfour_{\mathbf{Y} \mathbf{X}}^2 \mathbf{W},\mathbf{Z})
	:= - \Riemfour(\mathbf{X},\mathbf{Y},\mathbf{W},\mathbf{Z}),
$
where $\mathbf{X}$, $\mathbf{Y}$, $\mathbf{W}$, and $\mathbf{Z}$ are arbitrary spacetime vectors,
and $\Dfour_{\mathbf{X} \mathbf{Y}}^2 \mathbf{W} := \mathbf{X}^{\alpha} \mathbf{Y}^{\beta} \Dfour_{\alpha} \Dfour_{\beta} \mathbf{W}$.

\subsubsection{Connection coefficients}
\label{SSS:CONNECTIONCOEFFICIENTS}
\begin{definition}[Connection coefficients]
\label{D:DEFSOFCONNECTIONCOEFFICIENTS}
We define the second fundamental form $k$ of $\Sigma_t$
to be the type $\binom{0}{2}$ $\Sigma_t$-tangent tensorfield
such that the following relation holds for all $\Sigma_t$-tangent 
vectorfields $X$ and $Y$:
\begin{align} 
		k(X,Y)
		& :=
		-
		\gfour(\Dfour_X \Transport, Y).
			\label{E:SECONDFUNDOFSIGMAT} 
	\end{align}

We define the second fundamental form $\spheresecondfund$ of $S_{t,u}$,
the null second fundamental form $\upchi$ of $S_{t,u}$, 
and $\underline{\upchi}$ 
to be the following type $\binom{0}{2}$ $S_{t,u}$-tangent tensorfields:
\begin{subequations}
	\begin{align}  \label{E:SECONDFUNDOFSPHERESINSIGMAT}
		\spheresecondfund_{AB}
		& := \gfour(\Dfour_A \spherenormal,e_B),
			&&
			\\
		\upchi_{AB}
		& := \gfour(\Dfour_A \Lunit, e_B),
		&
		\underline{\upchi}_{AB}
		& := \gfour(\Dfour_A \uLunit, e_B).
	\end{align}
\end{subequations}

We define the torsion $\upzeta$ and $\underline{\upzeta}$ to be the following
$S_{t,u}$-tangent one-forms:
\begin{align} \label{E:TORSION}
		\upzeta_A
		& := \frac{1}{2}
			\gfour(\Dfour_{\uLunit} \Lunit, e_A),
		&
		\underline{\upzeta}_A
		& := \frac{1}{2}
			\gfour(\Dfour_{\Lunit} \uLunit, e_A).
	\end{align}

\end{definition}

In the next lemma, we provide some standard decompositions and identities. 
We omit the simple proof and instead refer readers to \cite{sKiR2003} for details.

\begin{lemma}[Connection coefficients and relationships between various tensors]
	\label{L:CONNECTIONCOEFFICIENTS}
$k$, $\spheresecondfund$, $\upchi$, and $\underline{\upchi}$ are symmetric tensorfields.
Moreover, the following relations hold:
\begin{subequations}
\begin{align}
	k & = - \frac{1}{2} \SigmatLie_{\Transport} g
			= - \frac{1}{2} \SigmatLie_{\Transport} \gfour,
		\label{E:SECONDFORMLIEDIFFERENTIATIONDEF} \\
	\upchi & = \frac{1}{2} \angLie_{\Lunit} \gsphere
			= \frac{1}{2} \angLie_{\Lunit} \gfour,
	& 
	\underline{\upchi}
	& = \frac{1}{2} \angLie_{\uLunit} \gsphere
		= \frac{1}{2} \angLie_{\uLunit} \gfour,
		\label{E:NULLSECONDFORMSLIEDIFFERENTIATIONDEF}
\end{align}
\end{subequations}

\begin{align} \label{E:PROJECTEDNORMALDERIVATIVEOFSPHERENORMAL}
		\angprojDarg{\spherenormal} \spherenormal
		& = - \angD \ln \nulllapse,
		&
		\angprojDarg{A} \spherenormal_B
		& = \spheresecondfund_{AB},
\end{align}

\begin{subequations}
	\begin{align}
		\Dfour_A \Lunit
		& = \upchi_{AB} e_B
				-
				k_{A \spherenormal}
				\Lunit,
		&
		\Dfour_A \uLunit
		& = \underline{\upchi}_{AB} e_B
				+
				k_{A \spherenormal}
				\uLunit,
					\label{E:DALUNIT} \\
		\Dfour_{\Lunit} \Lunit
		& = 
		- k_{\spherenormal \spherenormal} 
			\Lunit,
		&
		\Dfour_{\Lunit} \uLunit
		& = 2 \underline{\upzeta}_A e_A 
				+
					k_{\spherenormal \spherenormal} 
					\uLunit,
		 \label{E:DLUNITLUNIT} \\
		\Dfour_{\uLunit} \Lunit
		& = 2 \upzeta_A e_A 
				+
				k_{\spherenormal \spherenormal} 
				\Lunit,
		&
		\Dfour_{\Lunit} e_A
		& = \angprojDarg{\Lunit} e_A
				+
				\underline{\upzeta}_A \Lunit,
					\label{E:DULUNITLUNIT} \\
		\Dfour_B e_A
		& = \angDarg{B} e_A
				+
				\frac{1}{2} \upchi_{AB} \uLunit
				+
				\frac{1}{2} \underline{\upchi}_{AB} \Lunit,
		&
		\Dfour_{\uLunit} \uLunit 
		& = - 2 (\angD_A \ln \nulllapse) e_A
				-
				k_{\spherenormal \spherenormal} 
				\uLunit,
	\end{align}
\end{subequations}

\begin{align} \label{E:CONNECTIONCOEFFICIENT}
	\upchi_{AB} 
	& =  \spheresecondfund_{AB}
			-
			k_{AB},
	&
	\underline{\upchi}_{AB} 
	& =  - 
			\spheresecondfund_{AB}
			-
			k_{AB},
	&
	\underline{\upzeta}_A
	& = - k_{A \spherenormal},
	&
	\upzeta_A
	& = 
		\angD_A \ln \nulllapse
		+
		k_{A \spherenormal}.
\end{align}

\end{lemma}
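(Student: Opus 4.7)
The plan is to reduce the lemma to a sequence of bookkeeping computations built on two recurring tools: the torsion-free identity $(\Lie_V \gfour)(X,Y) = \gfour(\Dfour_X V, Y) + \gfour(\Dfour_Y V, X)$, and the null-frame expansion $W = -\tfrac{1}{2}\gfour(W,\uLunit)\Lunit - \tfrac{1}{2}\gfour(W,\Lunit)\uLunit + \sum_A \gfour(W,e_A)\, e_A$, which follows from the inner products recorded in \eqref{E:NULLVECTORFIELDSINNERPRODUCT}.

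For the symmetries, I first observe that $\Transport^\flat = -dt$ (as a consequence of $\Transport t = 1$, $\gfour(\Transport,\Transport)=-1$, and $(\gfour^{-1})^{00}=-1$), so $\Transport^\flat$ is closed; torsion-freeness then gives $d\Transport^\flat(X,Y) = \gfour(\Dfour_X\Transport,Y) - \gfour(\Dfour_Y\Transport,X) = 0$, yielding symmetry of $k$. For $\spheresecondfund$, I use that $\spherenormal^\flat|_{\Sigma_t} = -\nulllapse\, du|_{\Sigma_t}$ (a restatement of \eqref{E:SPHEREOUTERNORMAL}), so that $d\spherenormal^\flat = -d\nulllapse\wedge du$ vanishes when evaluated on $S_{t,u}$-tangent pairs $(e_A,e_B)$, both of which annihilate $du$. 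Substituting $\Lunit = \Transport + \spherenormal$ and $\uLunit = \Transport - \spherenormal$ into the defining formulas \eqref{E:SECONDFUNDOFSPHERESINSIGMAT} then yields $\upchi_{AB} = \spheresecondfund_{AB} - k_{AB}$ and $\underline\upchi_{AB} = -\spheresecondfund_{AB} - k_{AB}$, which simultaneously establishes the first two identities of \eqref{E:CONNECTIONCOEFFICIENT} together with the symmetries of $\upchi$ and $\underline\upchi$. The Lie-derivative reformulations \eqref{E:SECONDFORMLIEDIFFERENTIATIONDEF}-\eqref{E:NULLSECONDFORMSLIEDIFFERENTIATIONDEF} then follow from the first tool combined with the just-proved symmetries: for example, $(\angLie_\Lunit\gfour)(e_A,e_B) = 2\upchi_{AB}$, and the replacements $\gfour\to\gsphere$ (resp.\ $g$) are justified since these tensors agree as bilinear forms on the relevant tangent bundle.

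I will prove \eqref{E:PROJECTEDNORMALDERIVATIVEOFSPHERENORMAL} by direct computation. The relation $\angprojDarg{A}\spherenormal_B = \spheresecondfund_{AB}$ is immediate from the definitions. For $\angprojDarg{\spherenormal}\spherenormal$, I use the Gauss relation $\Dfour_\spherenormal\spherenormal = \nabla_\spherenormal\spherenormal + k(\spherenormal,\spherenormal)\Transport$ (with $\nabla$ the Levi--Civita connection of $g$) to reduce the $S_{t,u}$-projection to a computation inside $\Sigma_t$. Writing $\spherenormal = -\nulllapse\,\nabla u$, and using the symmetry of the Hessian $\nabla^2 u$ together with $\spherenormal u = -1/\nulllapse$ and $e_A u = 0$, I obtain $g(\nabla_\spherenormal\spherenormal, e_A) = -e_A\ln\nulllapse$, which is the claimed identity. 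The null-frame decompositions \eqref{E:DALUNIT}-\eqref{E:DULUNITLUNIT} will then be derived in a uniform manner by expanding $\Dfour_X Y$ via the second tool for each frame pair $(X,Y)$: the $\uLunit$-component vanishes whenever $Y \in \{\Lunit,e_A\}$ thanks to metric compatibility and $\gfour(Y,\Lunit)=0$, while the $\Lunit$- and $e_A$-components are computed by expanding $\Lunit = \Transport+\spherenormal$, $\uLunit = \Transport-\spherenormal$ and invoking \eqref{E:SECONDFUNDOFSIGMAT}, \eqref{E:PROJECTEDNORMALDERIVATIVEOFSPHERENORMAL}, and the definitions \eqref{E:TORSION}.

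The two remaining relations in \eqref{E:CONNECTIONCOEFFICIENT}, namely $\underline\upzeta_A = -k_{A\spherenormal}$ and $\upzeta_A = \angD_A\ln\nulllapse + k_{A\spherenormal}$, follow by combining the expansions of $\Dfour_\Lunit\uLunit$ and $\Dfour_\uLunit\Lunit$ obtained in the previous step with the defining formulas \eqref{E:TORSION}. The main obstacle is not conceptual but bookkeeping: one must track many frame inner products without sign errors, carefully distinguish the spacetime Levi--Civita connection $\Dfour$ from its $\Sigma_t$- and $S_{t,u}$-projected counterparts, and keep straight the Gauss-type correction terms (proportional to $\Transport$) that arise when converting between ambient and induced differentiation. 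Since each individual computation reduces to a short application of metric compatibility and the Koszul formula, the argument is expected to be routine once the symmetries are in place, in keeping with the authors' remark that the proof is simple.
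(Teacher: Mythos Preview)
Your proposal is correct and follows the standard route for these identities. The paper itself omits the proof entirely, remarking that the results are standard and referring readers to \cite{sKiR2003}; your outline---exploiting closedness of $\Transport^\flat = -dt$ and $\Lunit^\flat = -\nulllapse\,du$ for the symmetries, the Gauss relation for $\angprojDarg{\spherenormal}\spherenormal$, and systematic null-frame expansions for the remaining formulas---is precisely the kind of computation one finds in that reference.
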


\subsection{Modified acoustical quantities}
\label{SS:MODIFIEDQUANTITIES}
As we explained at the end of Subsubsect.\,\ref{SSS:MODELWAVESTRICHARTZ},
to obtain suitable control of the acoustic geometry, we must work with modified quantities
and a metric equal to a conformal rescaling of $\gfour$. 
In this subsection, we define the relevant quantities.

\subsubsection{The conformal metric in $\widetilde{\mathcal{M}}^{(Int)}$}
\label{SSS:CONFORMALMETRIC}

\begin{definition}[The conformal factor and conformal metric in the interior region $\widetilde{\mathcal{M}}^{(Int)}$]
	\label{D:CONFORMALSTUFF}
	We define $\upsigma$ to be the solution to the following transport initial value problem
	(with data given on the cone-tip axis defined in Subsubsect.\,\ref{SSS:EIKONALINTERIOR}):
	\begin{subequations}
	\begin{align} \label{E:SIGMAEVOLUTION}
		\Lunit \upsigma(t,u,\upomega)
		& = \frac{1}{2} [\Chfour_{\Lunit}](t,u,\upomega),
		&&
		u \in [0,\RescaledTboot],
		\,
		t \in [u,\RescaledTboot],
			\,
		\upomega \in \mathbb{S}^2,
			\\
		\upsigma(u,u,\upomega)
		& = 0,
		&&
		u \in [0,\RescaledTboot],
			\,
		\upomega \in \mathbb{S}^2,
		\label{E:SIGMADATA}
	\end{align}
	\end{subequations}
	where $\Chfour_{\Lunit} := \Chfour_{\alpha} \Lunit^{\alpha}$
	and $\Chfour_{\alpha} := (\gfour^{-1})^{\kappa \lambda} \Chfour_{\kappa \alpha \lambda}$
	is a contracted (and lowered) Cartesian Christoffel symbol of $\gfour$.
	
	We define 
	\begin{align} \label{E:CONFORMALMETRIC}
	\widetilde{\gfour} & := e^{2\upsigma} \gfour,
	&
	\congsphere 
		&:= e^{2\upsigma} \gsphere
	\end{align}
	to be, respectively, the conformal spacetime metric 
	and the Riemannian metric that it induces on $S_{t,u}$.
\end{definition}

\begin{definition}[Null second fundamental forms of the conformal metric]
	\label{D:CONFORMALNULLSECONDFUNDAMENTAL}
	We define the null second fundamental forms of the conformal metric to be the following
	symmetric $S_{t,u}$-tangent tensorfields:
	\begin{align} \label{E:CONFORMALNULLSECONDFUND}
		\widetilde{\upchi}
		& := \frac{1}{2} \angLie_{\Lunit} \congsphere,
		&
		\underline{\widetilde{\upchi}}
		& := \frac{1}{2} \angLie_{\uLunit} \congsphere.
	\end{align}
\end{definition}

From straightforward computations, 
taking into consideration 
definition 
\eqref{E:CONFORMALNULLSECONDFUND}
and the PDE \eqref{E:SIGMAEVOLUTION},
we deduce the following relations:
\begin{subequations}
	\begin{align} \label{E:RELATIONBETWEENNULLSECONDFUNDANDCONFORMALNULLSECONDFUND}
		\widetilde{\upchi}
		& = e^{2 \upsigma}
				\left\lbrace
					\upchi + (\Lunit \upsigma) \gsphere)
				\right\rbrace,
		&
		\widetilde{\underline{\upchi}}
		& = e^{2 \upsigma}
				\left\lbrace
					\underline{\upchi} + (\uLunit \upsigma) \gsphere)
				\right\rbrace,
					\\
		\mytr_{\congsphere} \widetilde{\upchi}
		& = \mytr_{\gsphere} \upchi
				+
				2 \Lunit \upsigma
				=
				\mytr_{\gsphere} \upchi
				+ 
				\Chfour_{\Lunit},
		&
		\mytr_{\congsphere} \widetilde{\underline{\upchi}}
		& = \mytr_{\gsphere} \underline{\upchi}
				+
				2 \uLunit \upsigma,
				\label{E:TRACEDRELATIONBETWEENNULLSECONDFUNDANDCONFORMALNULLSECONDFUND}
					\\
	\upchi
	& = 
			\frac{1}{2}
			\left\lbrace 
				\mytr_{\congsphere} \widetilde{\upchi} 
				-
				\Chfour_{\Lunit}
			\right\rbrace
			\gsphere
			+
			\hat{\upchi},
	&
	\underline{\upchi}
	& = 
			\frac{1}{2}
			\left\lbrace 
				\mytr_{\congsphere} \widetilde{\underline{\upchi}}
				-
				2 \uLunit \upsigma
			\right\rbrace
			\gsphere
			+
			\hat{\underline{\upchi}}.
			\label{E:DETAILEDDECOMPOFNULLSECONDFUNDFORMSINTOTRACEANDTRACEFREE}
	\end{align}
\end{subequations}
Moreover, above and throughout, if $\upxi$ is a symmetric type $\binom{0}{2}$ $S_{t,u}$-tangent tensorfield, then
$\mytr_{\congsphere} \upxi := (\congsphere^{-1})^{\alpha \beta} \upxi_{\alpha \beta} 
= e^{-2\upsigma} (\gsphere^{-1})^{\alpha \beta} \upxi_{\alpha \beta}
= e^{-2\upsigma} \mytr_{\gsphere} \upxi$
denotes its trace with respect to $\congsphere$.

\subsubsection{Average values on $S_{t,u}$}
\label{SSS:AVGVALSONSPHERE}
Some of our forthcoming constructions refer to the average values of scalar functions $f$ on $S_{t,u}$.
Specifically, we define the average value of $f$, denoted by $\overline{f}$, 
as follows:
\begin{align} \label{E:AVERAGEVALUEOFSCALARFUNCTION}
	\overline{f}
	=
	\overline{f}(t,u)
	& := 
	\frac{1}{|S_{t,u}|_{\gsphere}}
	\int_{S_{t,u}}
		f
	\, d \spherevol,
	&
	|S_{t,u}|_{\gsphere}
	&
	:= \int_{S_{t,u}}
		1
	\, d \spherevol.
\end{align}

In the next lemma, we connect the evolution equation for $\overline{f}$ along integral curves of $\Lunit$ to that of $f$.
We omit the standard proof, which is based on the identity \eqref{E:EVOLUTIONVOLUMELEMENT} below.

\begin{lemma}[Evolution equation for the average value on $S_{t,u}$]
\label{L:EVOLUTIONEQUATIONFORAVERAGEVALUEONSTU}
For scalar functions $f$, we have
\begin{align} \label{E:EVOLUTIONEQUATIONFORAVERAGEVALUEONSTU}
	\Lunit \overline{f}
	+
	\mytr_{\gsphere} \upchi \overline{f}
	& =
		\left\lbrace
			\mytr_{\gsphere} \upchi
			-
			\overline{\mytr_{\gsphere} \upchi}
		\right\rbrace
		\overline{f}
		+
		\overline{
		\Lunit f
		+
		\mytr_{\gsphere} \upchi f}.
\end{align}	
	
\end{lemma}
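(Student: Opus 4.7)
The plan is to prove the identity by a direct computation that leverages the geometric coordinates $(t,u,\upomega)$ constructed in Subsect.~\ref{SS:EIKONAL} and the referenced evolution identity \eqref{E:EVOLUTIONVOLUMELEMENT}, which we expect to read $\Lunit \volrat = (\mytr_{\gsphere} \upchi)\volrat$ (equivalently, $\Lunit(d\spherevol) = \mytr_{\gsphere}\upchi \, d\spherevol$ at fixed $(u,\upomega)$). The crucial kinematic observations are that $\Lunit u = 0$ (since $u$ is constant on each $\mathcal{C}_u$, to which $\Lunit$ is tangent), $\Lunit t = 1$ by \eqref{E:NULLVECTORSAPPLIEDTOT}, and the angular coordinates $\upomega$ are propagated by $\Lunit$ (by construction in Subsubsects.~\ref{SSS:EIKONALINTERIOR}--\ref{SSS:EIKONALEXTERIOR}). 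Thus, relative to the geometric coordinates, $\Lunit = \partial/\partial t$ at fixed $(u,\upomega)$, and differentiation under the integral sign on $\mathbb{S}^2$ is justified.

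First I would compute $\Lunit |S_{t,u}|_{\gsphere}$ by writing $|S_{t,u}|_{\gsphere} = \int_{\mathbb{S}^2} \volrat(t,u,\upomega) \, d\flatspherevol(\upomega)$, differentiating under the integral sign, and applying \eqref{E:EVOLUTIONVOLUMELEMENT} to obtain
\begin{align} \notag
\Lunit |S_{t,u}|_{\gsphere} = \int_{S_{t,u}} \mytr_{\gsphere}\upchi \, d\spherevol = \overline{\mytr_{\gsphere}\upchi} \cdot |S_{t,u}|_{\gsphere},
\end{align}
where the last equality uses definition \eqref{E:AVERAGEVALUEOFSCALARFUNCTION}. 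By exactly the same argument applied to $f \volrat$ in place of $\volrat$, we obtain
\begin{align} \notag
\Lunit \int_{S_{t,u}} f \, d\spherevol = \int_{S_{t,u}} \left\{ \Lunit f + \mytr_{\gsphere}\upchi \cdot f \right\} d\spherevol = \overline{\Lunit f + \mytr_{\gsphere}\upchi\, f} \cdot |S_{t,u}|_{\gsphere}.
\end{align}

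Next, I would apply the quotient rule to $\overline{f} = |S_{t,u}|_{\gsphere}^{-1}\int_{S_{t,u}} f \, d\spherevol$, yielding
\begin{align} \notag
\Lunit \overline{f} = -\overline{\mytr_{\gsphere}\upchi} \cdot \overline{f} + \overline{\Lunit f + \mytr_{\gsphere}\upchi\, f}.
\end{align}
Finally, adding and subtracting $\mytr_{\gsphere}\upchi \cdot \overline{f}$ on the right-hand side (note this quantity depends on $\upomega$ whereas $\overline{f}$ does not, so this manipulation is sensible as an identity of functions on $S_{t,u}$) yields precisely \eqref{E:EVOLUTIONEQUATIONFORAVERAGEVALUEONSTU}.

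The argument is essentially a one-line computation once the volume-form identity is in hand; there is no substantive obstacle, and the only thing to verify carefully is that the identity \eqref{E:EVOLUTIONVOLUMELEMENT}, which we have assumed above, indeed corresponds to $\Lunit \volrat = (\mytr_{\gsphere}\upchi)\volrat$. This in turn follows from the standard first variation formula together with \eqref{E:NULLSECONDFORMSLIEDIFFERENTIATIONDEF}, since $\upchi = \tfrac{1}{2}\angLie_{\Lunit}\gsphere$ implies $\Lunit \log \sqrt{\det \gsphere} = \tfrac{1}{2}(\gsphere^{-1})^{AB}\angLie_{\Lunit}\gsphere_{AB} = \mytr_{\gsphere}\upchi$, which is precisely what is needed.
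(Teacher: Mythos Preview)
Your proof is correct and follows precisely the standard argument the paper has in mind: the paper omits the proof, stating only that it is based on the identity \eqref{E:EVOLUTIONVOLUMELEMENT}, which is exactly the identity $\Lunit \volrat = (\mytr_{\gsphere}\upchi)\volrat$ you use.
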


\subsubsection{Definitions of the modified acoustical quantities}
\label{SSS:MODACOUSTICAL}

\begin{definition}[Modified acoustical quantities]
\label{D:MODIFIEDQUANTITIES}
In the interior region $\widetilde{\mathcal{M}}^{(Int)}$,
we define $\mytr_{\congsphere} \widetilde{\upchi}^{(Small)}$ to be\footnote{In \cite{qW2017}, $\mytr_{\congsphere} \widetilde{\upchi}^{(Small)}$ was denoted by ``$z$'' 
and $\mytr_{\congsphere} \widetilde{\upchi}$ was denoted by ``$\mytr \widetilde{\upchi}$.''} 
$- \frac{2}{\rgeo}$
plus the trace of
the $S_{t,u}$-tangent tensorfield $\widetilde{\upchi}$ defined in \eqref{E:CONFORMALNULLSECONDFUND}
with respect to the conformal metric $\congsphere$ defined in \eqref{E:CONFORMALMETRIC}.
That is, in view of \eqref{E:TRACEDRELATIONBETWEENNULLSECONDFUNDANDCONFORMALNULLSECONDFUND},
in $\widetilde{\mathcal{M}}^{(Int)}$, we have:
\begin{align} \label{E:MODTRICHISMALL}
		\mytr_{\congsphere} \widetilde{\upchi}^{(Small)}
		 & 	=
				\mytr_{\gsphere} \upchi
				+ 
				\Chfour_{\Lunit}
				-
				\frac{2}{\rgeo}
				=
				\mytr_{\congsphere} \widetilde{\upchi}
				-
				\frac{2}{\rgeo},
	\end{align}
	where $\Chfour_{\Lunit} := \Chfour_{\alpha} \Lunit^{\alpha}$,
	and $\Chfour_{\alpha} := (\gfour^{-1})^{\kappa \lambda} \Chfour_{\kappa \alpha \lambda}$
	is a contracted (and lowered) Cartesian Christoffel symbol of $\gfour$.
	We then extend the definition of $\mytr_{\congsphere} \widetilde{\upchi}^{(Small)}$ 
	to all of $\widetilde{\mathcal{M}}$ by declaring that
	the first equality in \eqref{E:MODTRICHISMALL} holds in all of $\widetilde{\mathcal{M}}$.

	In $\widetilde{\mathcal{M}}$, we define the mass aspect function
	$\upmu$ to be the following scalar function:
\begin{align} \label{E:MASSASPECT}
	\upmu
	& := 
			\uLunit \mytr_{\gsphere} \upchi
			+
			\frac{1}{2} \mytr_{\gsphere} \upchi \mytr_{\gsphere} \underline{\upchi}.
\end{align}

In $\widetilde{\mathcal{M}}^{(Int)}$, 
we define the modified mass aspect function\footnote{The 
	idea of working with quantities in the spirit of the mass aspect function and the modified mass aspect function
	originates in \cite{dCsK1993}. As in \cites{dCsK1993,qW2017}, 
	we use these quantities to avoid the loss of a derivative 
	when controlling the $\uLunit$ derivative of $\mytr_{\gsphere} \upchi$.}  
$\check{\upmu}$ to be the following scalar function:
\begin{align} \label{E:MODMASSASPECT}
	\check{\upmu}
	& := 2 \angLap \upsigma
			+
			\uLunit \mytr_{\gsphere} \upchi
			+
			\frac{1}{2} 
			\mytr_{\gsphere} \upchi 
			\mytr_{\gsphere} \underline{\upchi}
			-
			\mytr_{\gsphere} \upchi 
			k_{\spherenormal \spherenormal} 
			+
			\frac{1}{2} \mytr_{\gsphere} \upchi \Chfour_{\uLunit},
\end{align}
where $\Chfour_{\uLunit} := \Chfour_{\alpha} \uLunit^{\alpha}$.

In $\widetilde{\mathcal{M}}^{(Int)}$,
we define $\angupmu$ to be\footnote{Existence and uniqueness for the system \eqref{E:FURTHERMODOFMASSASPECT} is standard, given the smoothness of the source terms.} 
the $S_{t,u}$-tangent one-form that satisfies the following Hodge system on $S_{t,u}$:
\begin{align} \label{E:FURTHERMODOFMASSASPECT}
	\angdiv \angupmu
	& = \frac{1}{2}(\check{\upmu} - \overline{\check{\upmu}}),
	&
	\angcurl \angupmu
	& = 0.
\end{align}

In $\widetilde{\mathcal{M}}^{(Int)}$, 
we define the modified torsion $\widetilde{\upzeta}$ to be the following $S_{t,u}$-tangent one-form:
\begin{align} \label{E:MODTORSION}
	\widetilde{\upzeta}
	& := \upzeta 
			+
			\angD \upsigma.
\end{align}

\end{definition}

\subsection{PDEs verified by geometric quantities - a preliminary version}
\label{SS:NULLLAPSEANDCONNECTIONCOEFFICIENTPDES}
To control the acoustic geometry, we will derive estimates for the PDEs that various geometric quantities solve.
In the next lemma, we provide a first version of these PDEs. The results
are standard and are independent of the compressible Euler equations.
In Prop.\,\ref{P:PDESMODIFIEDACOUSTICALQUANTITIES}, we  
use the compressible Euler equations 
to re-express various terms in the PDEs,
which will lead to the form of the equations that
we use in our analysis.

\begin{lemma} \cite{sKiR2003}*{PDEs verified by the $S_{t,u}$ volume element ratio, null lapse, and connection coefficients, without
regard for the compressible Euler equations}
	\label{L:NULLLAPSEANDCONNECTIONCOEFFICIENTPDES}
		The following evolution equations hold\footnote{In \cite{qW2017}*{Equation~(5.28)}, the terms in braces on the last line of
		RHS~\eqref{E:ULUNITTRACECHI} were omitted. However, 
		equation \eqref{E:ULUNITTRACECHI} is needed only to derive the evolution equation 
		\eqref{E:MODIFIEDMASSASPECTEVOLUTIONEQUATION} for $\check{\upmu}$,
		and the omitted terms have the same schematic structure as
		other error terms that were bounded in \cite{qW2017}; i.e., the omitted terms are harmless.
		Moreover, in \cite{qW2017},
		the second term on LHS~\eqref{E:LDERIVATIVECHIHAT}
		was listed as
		$\frac{1}{2} (\mytr_{\gsphere} \upchi) \hat{\upchi}_{AB}$.
		Fortunately, correcting the coefficient from $\frac{1}{2}$ to $1$ does not lead to any changes in the estimates,
		as we further explain in the discussion surrounding equation \eqref{E:CHIHATTRANSPORTINEQUALITY}.
		In our statement of Lemma~\ref{L:NULLLAPSEANDCONNECTIONCOEFFICIENTPDES}, 
		we also corrected index-placement/sign errors in some curvature terms, specifically the term
	$\frac{1}{2} \Riemfour_{A \Lunit \Lunit \uLunit}$ on RHS~\eqref{E:LDERIVATIVETORSION},
	the term $\Riemfour_{A \uLunit \Lunit B}$
	on RHS~\eqref{E:ULUNITTRACECHI},
	and the term 
	$
	\frac{1}{2}
			\upepsilon^{AB}
			\Riemfour_{A \Lunit \uLunit B}
	$
	on RHS~\eqref{E:CURLTORSIONNOEULER}.
	These corrections are harmless in the sense that in practice, when deriving
	estimates, we only need to know the schematic structure of the first and third of these curvature terms,
	which is provided by \eqref{E:RICCIANDRIEMANNDECOMPSINVOLVINGVORTANDENT} and
	\eqref{E:RIEMALUNDERLINELBANTISYMMETRICCONTRACTEDDECOMPINVOLVINGVORTANDENT}
	and which is insensitive to signs. In particular, these corrections
	do not affect the schematic form of the equations of Prop.\,\ref{P:PDESMODIFIEDACOUSTICALQUANTITIES},
	which is what we use when deriving estimates for the acoustic geometry.
		\label{FN:CORRECTIONOFTYPOS}} 
		relative to a null frame:
	\begin{subequations}
	\begin{align}
		\Lunit \volrat
		 & = \volrat \mytr_{\gsphere} \upchi, 
			\label{E:EVOLUTIONVOLUMELEMENT} \\
		\Lunit \nulllapse
		& = 	
			- 
			\nulllapse
			k_{\spherenormal \spherenormal},
			\label{E:EVOLUTIONNULLAPSE} \\
		\Lunit \mytr_{\gsphere} \upchi
		+ 
		\frac{1}{2} (\mytr_{\gsphere} \upchi)^2
		& = - |\hat{\upchi}|_{\gsphere}^2
					- 
					k_{\spherenormal \spherenormal} 
					\mytr_{\gsphere} \upchi
					-
					\Ricfour_{\Lunit \Lunit},
						\label{E:RAYCHAUDHURI}  \\
	\angprojDarg{\Lunit} 
	\hat{\upchi}_{AB}
	+ 
	(\mytr_{\gsphere} \upchi)
	\hat{\upchi}_{AB}
	& = 
	- k_{\spherenormal \spherenormal} 
		\hat{\upchi}_{AB}
	-
	\left\lbrace
		\Riemfour_{\Lunit A \Lunit B}
		-
		\frac{1}{2}
		\Ricfour_{\Lunit \Lunit}
		\updelta_{AB}
	\right\rbrace,
		\label{E:LDERIVATIVECHIHAT} \\
	\angprojDarg{\Lunit}
	\upzeta_A
	+
	\frac{1}{2} 
	(\mytr_{\gsphere} \upchi)
	\upzeta_A
	& = - \left\lbrace
					k_{B \spherenormal} 
					+ 
					\upzeta_B 
				\right\rbrace \hat{\upchi}_{AB}
		-
		\frac{1}{2}
		\mytr_{\gsphere} \upchi
		k_{A \spherenormal}
		+
		\frac{1}{2} \Riemfour_{A \Lunit \Lunit \uLunit},
			\label{E:LDERIVATIVETORSION} \\
	\Lunit \mytr_{\gsphere} \underline{\upchi}
	+
	\frac{1}{2}
	(\mytr_{\gsphere} \upchi)
	\mytr_{\gsphere} \underline{\upchi}
	& = 2 \angdiv \underline{\upzeta}
		+
		k_{\spherenormal \spherenormal} 
		\mytr_{\gsphere} \underline{\upchi}
		-
		\hat{\upchi}_{AB} \hat{\underline{\upchi}}_{AB}
		+
		2
		|\underline{\upzeta}|_{\gsphere}^2
		+
		\Riemfour_{A \uLunit \Lunit A},
				\label{E:LUNITTRACEUCHI} \\
	\angprojDarg{\uLunit} 
	\hat{\upchi}_{AB}
	+ 
	\frac{1}{2} 
	(\mytr_{\gsphere} \underline{\upchi})
	\hat{\upchi}_{AB}
	& =
		-
	\frac{1}{2} 
	(\mytr_{\gsphere} \upchi)
	\underline{\hat{\upchi}}_{AB}	
	+
	2 \angDarg{A} \upzeta_B
	-
	\angdiv \upzeta
	\updelta_{AB}
	+
	k_{\spherenormal \spherenormal}  \hat{\upchi}_{AB}
	+
	\left\lbrace
	2 \upzeta_A \upzeta_B
	-
	|\upzeta|_{\gsphere}^2 \updelta_{AB}
	\right\rbrace
		\label{E:ULUNITTRACECHI} \\
	&   
	\ \
	-
	\left\lbrace
		\underline{\hat{\upchi}}_{AC} \hat{\upchi}_{CB}
		-
		\frac{1}{2}
		\underline{\hat{\upchi}}_{CD} \hat{\upchi}_{CD}
		\updelta_{AB}
	\right\rbrace
	+
	\Riemfour_{A \uLunit \Lunit B}
	-
	\frac{1}{2}
	\Riemfour_{C \Lunit \uLunit C}
	\updelta_{AB},
	\notag
	\end{align}
	\end{subequations}
	
	\begin{subequations}
	\begin{align}
		\angdiv \hat{\upchi}_A
		+
		\hat{\upchi}_{AB} 
		k_{B \spherenormal}
		& = 
		\frac{1}{2}
		\left\lbrace
			\angDarg{A} \mytr_{\gsphere} \upchi
			+
				k_{A \spherenormal}
				\mytr_{\gsphere} \upchi
		\right\rbrace
		+
		\Riemfour_{B \Lunit B A},
			\label{E:ANGDIVTRACEFREEPARTOFCHI} \\
		\angdiv \upzeta
		& = \frac{1}{2}
				\left\lbrace
					\upmu 
					- 
					k_{\spherenormal \spherenormal} \mytr_{\gsphere} \upchi
					- 
					2 |\upzeta|_{\gsphere}^2
					-
					|\hat{\upchi}|_{\gsphere}^2
					-
					2
					k_{AB}
					\hat{\upchi}_{AB}
				\right\rbrace
				-
				\frac{1}{2}
				\Riemfour_{A \uLunit \Lunit A},
				\label{E:DIVTORSIONNOEULER} \\
	\angcurl \upzeta
	& = 	\frac{1}{2}
				\upepsilon^{AB}
				\hat{\underline{\upchi}}_{AC}
				\hat{\upchi}_{BC}
			+
			\frac{1}{2}
			\upepsilon^{AB}
			\Riemfour_{A \Lunit \uLunit B}.
			\label{E:CURLTORSIONNOEULER}
	\end{align}
	\end{subequations}
	
\end{lemma}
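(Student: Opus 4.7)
Since the lemma is a purely geometric statement about a null foliation of a Lorentzian manifold, no use of the compressible Euler equations is needed; the entire content follows from the definitions in Def.\,\ref{D:DEFSOFCONNECTIONCOEFFICIENTS}, the connection identities in Lemma~\ref{L:CONNECTIONCOEFFICIENTS}, the Ricci (curvature) identity for $\Dfour$, and the decomposition \eqref{E:GINVERSERELATIVETONULLFRAME} of $\gfour^{-1}$ in the null frame. The plan is to derive each identity by a direct frame computation; all the nontrivial cancellations come from the interplay between $[\Lunit,e_A]$, the Ricci identity, and \eqref{E:DALUNIT}--\eqref{E:DULUNITLUNIT}.

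\medskip

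\noindent \emph{Transport equations \eqref{E:EVOLUTIONVOLUMELEMENT}--\eqref{E:EVOLUTIONNULLAPSE}.} For \eqref{E:EVOLUTIONVOLUMELEMENT}, I will use $\upchi = \tfrac{1}{2}\angLie_{\Lunit}\gsphere$ from \eqref{E:NULLSECONDFORMSLIEDIFFERENTIATIONDEF} together with Jacobi's formula $\Lunit \log\sqrt{\det \gsphere} = \tfrac{1}{2}(\gsphere^{-1})^{AB}\Lunit \gsphere_{AB} = \mytr_{\gsphere}\upchi$; since $\stgsphere$ is $t$-independent, dividing through gives \eqref{E:EVOLUTIONVOLUMELEMENT}. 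For \eqref{E:EVOLUTIONNULLAPSE}, I will start from $\Lunit = -\nulllapse\,(\gfour^{-1})^{\alpha\beta}\partial_\beta u$ and compute $\Dfour_{\Lunit}\Lunit$ in two ways: once via the geodesic--like identity already recorded in \eqref{E:DLUNITLUNIT} (which gives the right-hand side of \eqref{E:EVOLUTIONNULLAPSE} times $\Lunit$), and once by expanding, using $\Dfour\gfour^{-1}=0$ and the eikonal equation $\gfour^{\alpha\beta}\partial_\alpha u\partial_\beta u=0$ to extract $(\Lunit\nulllapse)\nulllapse^{-1}\Lunit$ from the derivative of the prefactor. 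Matching coefficients yields \eqref{E:EVOLUTIONNULLAPSE}.

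\medskip

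\noindent \emph{The $\Lunit$-transport equations \eqref{E:RAYCHAUDHURI}--\eqref{E:LDERIVATIVETORSION} and \eqref{E:LUNITTRACEUCHI}.} These all come from the Ricci identity $\Dfour_{\Lunit}\Dfour_A \mathbf{X} - \Dfour_A\Dfour_{\Lunit}\mathbf{X} - \Dfour_{[\Lunit,e_A]}\mathbf{X} = \Riemfour(\Lunit,e_A)\mathbf{X}$ applied to $\mathbf{X}\in\{\Lunit,\uLunit\}$. Concretely, I will differentiate $\upchi_{AB}=\gfour(\Dfour_A\Lunit,e_B)$ by $\Lunit$, commute covariant derivatives using Ricci, and dispose of the commutator $[\Lunit,e_A]$ via \eqref{E:DULUNITLUNIT}; then use \eqref{E:DLUNITLUNIT} to rewrite $\Dfour_A\Dfour_{\Lunit}\Lunit$. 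The trace gives Raychaudhuri \eqref{E:RAYCHAUDHURI} (the $|\hat\upchi|_{\gsphere}^2$ term arises from $\upchi_{AB}\upchi_{AB}=\tfrac{1}{2}(\mytr_{\gsphere}\upchi)^2+|\hat\upchi|_{\gsphere}^2$), while the trace-free projection produces \eqref{E:LDERIVATIVECHIHAT}. Equation \eqref{E:LDERIVATIVETORSION} follows by the same method starting from $\upzeta_A=\tfrac{1}{2}\gfour(\Dfour_{\uLunit}\Lunit,e_A)$ and commuting $\Dfour_{\Lunit}$ past $\Dfour_{\uLunit}$, again picking up a $\Riemfour(\Lunit,\uLunit,\Lunit,e_A)$--type term from Ricci and lower-order terms from $[\Lunit,\uLunit]$ computed via \eqref{E:DLUNITLUNIT}--\eqref{E:DULUNITLUNIT}. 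Finally, \eqref{E:LUNITTRACEUCHI} is derived identically starting from $\underline\upchi_{AB}=\gfour(\Dfour_A\uLunit,e_B)$; the divergence term $2\angdiv\underline\upzeta$ emerges after using \eqref{E:CONNECTIONCOEFFICIENT} to express $\underline\upzeta=-k_{\cdot\spherenormal}$.

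\medskip

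\noindent \emph{The $\uLunit$-transport equation \eqref{E:ULUNITTRACECHI} and the Codazzi-type identities \eqref{E:ANGDIVTRACEFREEPARTOFCHI}--\eqref{E:CURLTORSIONNOEULER}.} For \eqref{E:ULUNITTRACECHI} I will repeat the Ricci-identity computation, but differentiating $\upchi_{AB}$ by $\uLunit$ rather than $\Lunit$; the new ingredient is the term $2\angDarg{A}\upzeta_B$, which materializes from $\Dfour_A\Dfour_{\uLunit}\Lunit$ upon inserting \eqref{E:DULUNITLUNIT} and projecting. The Codazzi equations \eqref{E:ANGDIVTRACEFREEPARTOFCHI}--\eqref{E:CURLTORSIONNOEULER} are obtained by contracting the Ricci identity against the frame in two different index configurations and exploiting the pair-interchange symmetry of $\Riemfour$. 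For \eqref{E:ANGDIVTRACEFREEPARTOFCHI} I will take $\mathbf{X}=\Lunit$, differentiate $\upchi_{AB}$ by $e_C$ and antisymmetrize in $(A,C)$; the antisymmetry kills the $\Lunit$-transport terms and leaves the stated divergence identity modulo $\Riemfour_{B\Lunit BA}$. For \eqref{E:DIVTORSIONNOEULER}--\eqref{E:CURLTORSIONNOEULER} I will compute $\angDarg{A}\upzeta_B$ using the definition \eqref{E:TORSION} and the formulas \eqref{E:DLUNITLUNIT}, then take the trace and the antisymmetric part; substituting the definition \eqref{E:MASSASPECT} of the mass aspect $\upmu$ in place of $\uLunit\mytr_{\gsphere}\upchi+\tfrac{1}{2}\mytr_{\gsphere}\upchi\mytr_{\gsphere}\underline\upchi$ produces \eqref{E:DIVTORSIONNOEULER}, while the antisymmetric part gives \eqref{E:CURLTORSIONNOEULER}.

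\medskip

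\noindent \emph{Main obstacle.} The computations are algebraically heavy but structurally routine; the only real subtlety is keeping careful track of the terms arising from $[\Lunit,e_A]$, $[\uLunit,e_A]$, and $[\Lunit,\uLunit]$ when commuting covariant derivatives, since these commutators introduce the connection coefficients $\upchi$, $\underline\upchi$, $\upzeta$, $\underline\upzeta$, and $k_{\spherenormal\spherenormal}$, which must then be re-expressed via \eqref{E:CONNECTIONCOEFFICIENT} so that the final inhomogeneities match those stated. I anticipate that the bookkeeping for equations \eqref{E:LUNITTRACEUCHI} and \eqref{E:ULUNITTRACECHI}, which involve both null directions and the full Riemann tensor contracted in two different slots, will be the most error-prone step. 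All identities, however, are standard in the Christodoulou--Klainerman--Rodnianski framework and I will ultimately refer to \cite{sKiR2003} for the detailed verification.
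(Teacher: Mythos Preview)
Your proposal is correct and matches the paper's treatment: the paper does not prove this lemma at all but simply cites \cite{sKiR2003}, introducing it as ``standard and\ldots independent of the compressible Euler equations.'' The frame-computation strategy you outline---Ricci identity, commutator bookkeeping via Lemma~\ref{L:CONNECTIONCOEFFICIENTS}, trace/trace-free decomposition---is exactly the standard Christodoulou--Klainerman--Rodnianski derivation that reference contains, so your plan and the paper's (implicit) proof coincide.
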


\subsection{Main version of the PDEs verified by the acoustical quantities, including the modified ones}
\label{SS:MAINPDESFORACOUSTICALQUANT}
The main result of this subsection is Prop.\,\ref{P:PDESMODIFIEDACOUSTICALQUANTITIES},
in which we derive, with the help of the compressible Euler equations,
the main PDEs that we use to control the acoustic geometry.
The proposition in particular shows how the source terms in the compressible Euler equations
influence the evolution of the acoustic geometry.
Before proving the proposition, we first introduce some additional schematic notation
and, in Lemma~\ref{L:CURVATUREDECOMPOSITIONS}, provide some decompositions of various null components of
the acoustical curvature, that is, the curvature of $\gfour$.

\begin{remark}
Compared to previous works, what is new are the terms in Lemma~\ref{L:CURVATUREDECOMPOSITIONS} 
and Prop.\,\ref{P:PDESMODIFIEDACOUSTICALQUANTITIES}
that are multiplied by $\uplambda^{-1}$;
these terms capture, in particular, how the top-order derivatives of the vorticity and entropy
affect the acoustical curvature.
\end{remark}

\subsubsection{Additional schematic notation and a simple lemma}
\label{SSS:ADDITIONALSCHEMATIC}
Let $U$ and $\upxi$ be scalar functions or $S_{t,u}$-tangent tensorfields.
In the rest of the paper, we will use the schematic notation
\begin{align} \label{E:SCHEMATICNOTATIONCURVATURESECTION}
	U
	=
	\gensmoothfunction_{(\vec{\Lunit})}
	\cdot
	\upxi
\end{align}
to mean that the \emph{Cartesian} components of $U$ can be expressed
as linear combinations of products of the Cartesian components of $\upxi$
and scalar functions of type ``$\gensmoothfunction_{(\vec{\Lunit})}$,'' which by definition are 
linear combinations of
products of
\textbf{i)} smooth functions of $\vec{\Psi}$ 
and \textbf{ii)} the Cartesian components of vectorfields whose Cartesian components
are polynomials in the components of $\vec{\Lunit}$ with coefficients that are
smooth functions of $\vec{\Psi}$.
Expressions such as $\gensmoothfunction_{(\vec{\Lunit})} \cdot \upxi_{(1)} \cdot \upxi_{(2)}$ 
have the obvious analogous meaning. If $\upxi = (\upxi_{(1)},\cdots,\upxi_{(m)})$ is an array 
of scalar functions $S_{t,u}$-tangent tensorfields,
then $\gensmoothfunction_{(\vec{\Lunit})} \cdot \upxi$ means sums of terms of type 
$\gensmoothfunction_{(\vec{\Lunit})} \cdot \upxi_{(i)}$, $1 \leq i \leq m$.
As examples, we note (in view of the discussion below \eqref{E:ARRAYSOFCARTESIANCOMPONENTSOFVECTORFIELDS}) 
that $\spherenormal_a \VortVort^a = \gensmoothfunction_{(\vec{\Lunit})} \cdot \vec{\VortVort}$,
while
$\gensmoothfunction_{(\vec{\Lunit})} \cdot \pmb{\partial} \vec{\Psi} \cdot \pmb{\partial} \vec{\Psi}$
denotes a scalar function or an $S_{t,u}$-tangent tensorfield
whose Cartesian components are products 
of $\gensmoothfunction_{(\vec{\Lunit})}$
and a term that is quadratic in elements the array
$\pmb{\partial} \vec{\Psi}$.
As another example,
we note that \eqref{E:PROJECTS} and the discussion below \eqref{E:ARRAYSOFCARTESIANCOMPONENTSOFVECTORFIELDS} imply that
the $S_{t,u}$-tangent tensorfield 
$\sphereproject$ has Cartesian components of the form $\gensmoothfunction_{(\vec{\Lunit})}$, which we indicate by writing
$\sphereproject = \gensmoothfunction_{(\vec{\Lunit})}$.
Finally, we note that since (by \eqref{E:SECONDFORMLIEDIFFERENTIATIONDEF}) the Cartesian components
of the second fundamental form $k$ of $\Sigma_t$
verify $k_{ij} = \gensmoothfunction(\vec{\Psi}) \cdot \pmb{\partial} \vec{\Psi}$,
it follows that the $S_{t,u}$-tangent tensorfield
with components $k_{A \spherenormal} := k(e_A,\spherenormal)$, $(A=1,2)$,
is of the form
$k_{A \spherenormal} = \gensmoothfunction_{(\vec{\Lunit})} \cdot \pmb{\partial} \vec{\Psi}$.

We will use the following simple lemma in our proof of Prop.\,\ref{P:PDESMODIFIEDACOUSTICALQUANTITIES}.

\begin{lemma}[Identities for the derivatives of some scalar functions] 
	\label{L:ANGULARDERIVATIVESOFSOMESCALARFUNCTIONS}
	With $d f$ denoting the spacetime gradient of the scalar function $f$
	(and thus $\sphereproject \cdot d f = \angD f$), we have the following identities
	(where in \eqref{E:F1TYPETERMSANGULARDERIVATIVE} and \eqref{E:F1TYPEGENERICDERIVATIVEOFVECTORFIELDCOMPONENTS}, 
	the terms ``$\gensmoothfunction_{(\vec{\Lunit})}$''
	on the LHSs are not the same as the terms ``$\gensmoothfunction_{(\vec{\Lunit})}$'' on the RHSs):
	\begin{subequations}
	\begin{align} \label{E:ANGULARDERIVATIVEOFVECTORFIELDCOMPONENTS}
		\sphereproject \cdot d (\vec{\Lunit},\vec{\uLunit},\vec{\spherenormal})
		& =
		\gensmoothfunction_{(\vec{\Lunit})}
		\cdot 
		(\pmb{\partial} \vec{\Psi},\mytr_{\congsphere} \widetilde{\upchi}^{(Small)},\hat{\upchi},\rgeo^{-1}),
			\\
	\sphereproject \cdot d \gensmoothfunction_{(\vec{\Lunit})}
	& =
		\gensmoothfunction_{(\vec{\Lunit})}
		\cdot 
		(\pmb{\partial} \vec{\Psi},\mytr_{\congsphere} \widetilde{\upchi}^{(Small)},\hat{\upchi},\rgeo^{-1}).
		\label{E:F1TYPETERMSANGULARDERIVATIVE}
\end{align}
\end{subequations}	
	
	Moreover,
	\begin{subequations}
	\begin{align} \label{E:GENERICDERIVATIVEOFVECTORFIELDCOMPONENTS}
		d (\vec{\Lunit},\vec{\uLunit},\vec{\spherenormal})
		& =
		\gensmoothfunction_{(\vec{\Lunit})}
		\cdot 
		(\pmb{\partial} \vec{\Psi},\mytr_{\congsphere} \widetilde{\upchi}^{(Small)},\hat{\upchi},\upzeta,\rgeo^{-1}),
			\\
		d \gensmoothfunction_{(\vec{\Lunit})}
		& = 
		\gensmoothfunction_{(\vec{\Lunit})}
		\cdot 
		(\pmb{\partial} \vec{\Psi},\mytr_{\congsphere} \widetilde{\upchi}^{(Small)},\hat{\upchi},\upzeta,\rgeo^{-1}).
		\label{E:F1TYPEGENERICDERIVATIVEOFVECTORFIELDCOMPONENTS} 
\end{align}
\end{subequations}

\end{lemma}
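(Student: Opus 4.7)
The plan is to establish the four identities by decomposing the spacetime gradient along the null frame $\{\Lunit,\uLunit,e_1,e_2\}$, invoking the connection-coefficient formulas of Lemma~\ref{L:CONNECTIONCOEFFICIENTS}, and exploiting the algebraic relations $\uLunit^\alpha = 2\Transport^\alpha - \Lunit^\alpha$ and $\spherenormal^\alpha = \Lunit^\alpha - \Transport^\alpha$ together with the observation that $\Transport^0 \equiv 1$, $\Transport^i = v^i$, so that $d\Transport^\alpha$ contributes only $\pmb{\partial}\vec{\Psi}$-type terms; likewise $\Chfour_{\beta\ \gamma}^{\ \alpha} = \gensmoothfunction(\vec{\Psi})\cdot\pmb{\partial}\vec{\Psi}$ and $k_{A\spherenormal} = \gensmoothfunction_{(\vec{\Lunit})}\cdot\pmb{\partial}\vec{\Psi}$.

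First, I would prove \eqref{E:ANGULARDERIVATIVEOFVECTORFIELDCOMPONENTS} for $\vec{\Lunit}$: expanding $\Dfour_A\Lunit = \upchi_{AB}e_B - k_{A\spherenormal}\Lunit$ (see \eqref{E:DALUNIT}) in Cartesian coordinates yields
\begin{align}
\notag
e_A^\beta \partial_\beta \Lunit^\alpha
& = \upchi_{AB} e_B^\alpha - k_{A\spherenormal}\Lunit^\alpha - \Chfour_{\beta\ \gamma}^{\ \alpha} e_A^\beta \Lunit^\gamma.
\end{align}
The decomposition $\upchi = \hat{\upchi} + \tfrac{1}{2}(\mytr_{\congsphere}\widetilde{\upchi}^{(Small)} + 2\rgeo^{-1} - \Chfour_\Lunit)\gsphere$, which follows from \eqref{E:MODTRICHISMALL}, expresses the first term on the RHS as a $\gensmoothfunction_{(\vec{\Lunit})}$-combination of $\hat{\upchi}$, $\mytr_{\congsphere}\widetilde{\upchi}^{(Small)}$, $\rgeo^{-1}$, and $\pmb{\partial}\vec{\Psi}$, while the remaining two terms are manifestly of $\gensmoothfunction_{(\vec{\Lunit})}\cdot\pmb{\partial}\vec{\Psi}$ type. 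The identities for $\vec{\uLunit}$ and $\spherenormal$ in \eqref{E:ANGULARDERIVATIVEOFVECTORFIELDCOMPONENTS} then follow from the $\vec{\Lunit}$ case via $\sphereproject \cdot d\uLunit^\alpha = 2\sphereproject \cdot d\Transport^\alpha - \sphereproject \cdot d\Lunit^\alpha$ and $\sphereproject \cdot d\spherenormal^\alpha = \sphereproject \cdot d\Lunit^\alpha - \sphereproject \cdot d\Transport^\alpha$, since $\sphereproject \cdot d\Transport^\alpha = \gensmoothfunction_{(\vec{\Lunit})}\cdot\pmb{\partial}\vec{\Psi}$. Finally, \eqref{E:F1TYPETERMSANGULARDERIVATIVE} follows by Leibniz applied to a generic $\gensmoothfunction_{(\vec{\Lunit})}$ (a product of $\gensmoothfunction(\vec{\Psi})$ and polynomials in the Cartesian components of the frame vectorfields), combined with $\sphereproject \cdot d\vec{\Psi} = \gensmoothfunction_{(\vec{\Lunit})}\cdot\pmb{\partial}\vec{\Psi}$ and the already-established \eqref{E:ANGULARDERIVATIVEOFVECTORFIELDCOMPONENTS}.

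For the full-gradient identities \eqref{E:GENERICDERIVATIVEOFVECTORFIELDCOMPONENTS}-\eqref{E:F1TYPEGENERICDERIVATIVEOFVECTORFIELDCOMPONENTS}, I would supplement the angular derivatives with the two null directions: the formula \eqref{E:DLUNITLUNIT} shows that $\Lunit\Lunit^\alpha$ is $\gensmoothfunction_{(\vec{\Lunit})}\cdot\pmb{\partial}\vec{\Psi}$, while $\Dfour_\uLunit\Lunit = 2\upzeta_A e_A + k_{\spherenormal\spherenormal}\Lunit$ from \eqref{E:DULUNITLUNIT} is precisely the mechanism by which the torsion $\upzeta$ appears on the RHS of \eqref{E:GENERICDERIVATIVEOFVECTORFIELDCOMPONENTS} but not of \eqref{E:ANGULARDERIVATIVEOFVECTORFIELDCOMPONENTS}; reducing $\vec{\uLunit}$ and $\spherenormal$ to $\vec{\Lunit}$ via the algebraic identities above and then invoking Leibniz yields \eqref{E:F1TYPEGENERICDERIVATIVEOFVECTORFIELDCOMPONENTS}. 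The work is essentially routine bookkeeping; \emph{the one non-obvious observation} is that reducing $\vec{\uLunit}$ to $\vec{\Lunit}$ via $\uLunit = 2\Transport - \Lunit$ is necessary to avoid invoking $\Dfour_A\uLunit = \underline{\upchi}_{AB}e_B + k_{A\spherenormal}\uLunit$, which would erroneously introduce $\underline{\upchi}$ into the RHS of \eqref{E:ANGULARDERIVATIVEOFVECTORFIELDCOMPONENTS}, where it is absent.
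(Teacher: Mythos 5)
Your proposal is correct and follows essentially the same route as the paper: differentiate the null-frame relations of Lemma~\ref{L:CONNECTIONCOEFFICIENTS}, convert $\Dfour$ to $d$ using $\Chfour = \gensmoothfunction(\vec{\Psi})\cdot\pmb{\partial}\vec{\Psi}$, decompose $\upchi$ via \eqref{E:MODTRICHISMALL}, reduce $\vec{\uLunit}$ and $\vec{\spherenormal}$ to $\vec{\Lunit}$ (the paper phrases this as a chain-rule argument using $\vec{\uLunit},\vec{\spherenormal} = \gensmoothfunction_{(\vec{\Lunit})}$, which is the same content as your explicit substitutions), apply Leibniz for $\gensmoothfunction_{(\vec{\Lunit})}$, and bring in \eqref{E:DLUNITLUNIT} and \eqref{E:DULUNITLUNIT} for the full gradient, the latter being the source of $\upzeta$.

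One small correction to your concluding remark: reducing $\vec{\uLunit}$ to $\vec{\Lunit}$ is \emph{not} strictly necessary, and invoking $\Dfour_A\uLunit = \underline{\upchi}_{AB}e_B + k_{A\spherenormal}\uLunit$ would not be erroneous. By \eqref{E:CONNECTIONCOEFFICIENT} one has $\underline{\upchi} = -\upchi - 2k$, and $k_{AB}$ is of type $\gensmoothfunction_{(\vec{\Lunit})}\cdot\pmb{\partial}\vec{\Psi}$, so $\underline{\upchi}$ reduces to exactly the same schematic inventory as $\upchi$. Both paths yield \eqref{E:ANGULARDERIVATIVEOFVECTORFIELDCOMPONENTS}; your way just makes the reduction visible at the level of the vectorfields rather than at the level of the second fundamental forms.
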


\begin{proof}
	To prove \eqref{E:ANGULARDERIVATIVEOFVECTORFIELDCOMPONENTS},
	we first note the schematic relation
	$\Dfour \Lunit^{\alpha} = d \Lunit^{\alpha} + \Chfour \cdot \Lunit = d \Lunit^{\alpha} + \gensmoothfunction_{(\vec{\Lunit})} \cdot \pmb{\partial} \vec{\Psi}$, where $\Chfour$ denotes a Cartesian Christoffel symbol of the rescaled metric $\gfour$.
	Viewing $\Lunit^{\alpha}$ as a scalar function, we can interpret this relation 
	as an identity in which the term on the left and 
	the two terms on the right are one-forms. Projecting these one-forms onto $S_{t,u}$ with the tensorfield $\sphereproject$,
	and using the first identity in \eqref{E:DALUNIT},
	the fact that $k_{ij} = \gensmoothfunction(\vec{\Psi}) \cdot \pmb{\partial} \vec{\Psi}$,
	and the fact that 
	$\vec{\uLunit} = \gensmoothfunction_{(\vec{\Lunit})}$
	and
	$\vec{\spherenormal} = \gensmoothfunction_{(\vec{\Lunit})}$,
	we deduce that for $\alpha = 0,1,2,3$, we have the following schematic identity for the scalar function $\Lunit^{\alpha}$:
	$\sphereproject d \Lunit^{\alpha} 
	=
	\gensmoothfunction_{(\vec{\Lunit})} \cdot (\pmb{\partial} \vec{\Psi},\upchi)
	$.
	Considering also that 
	$\upchi 
	=
	\gensmoothfunction_{(\vec{\Lunit})}
	\cdot 
	(\pmb{\partial} \vec{\Psi},\mytr_{\congsphere} \widetilde{\upchi}^{(Small)},\hat{\upchi},\rgeo^{-1})$
	(as can be seen by decomposing $\upchi  = \hat{\upchi} + \frac{1}{2} (\mytr_{\gsphere} \upchi)\gsphere$ 
	and using \eqref{E:MODTRICHISMALL}),
	we conclude \eqref{E:ANGULARDERIVATIVEOFVECTORFIELDCOMPONENTS} for $\vec{\Lunit}$.
	In addition, taking into account that $\vec{\uLunit} = \gensmoothfunction_{(\vec{\Lunit})}$
	and
	$\vec{\spherenormal} = \gensmoothfunction_{(\vec{\Lunit})}$,
	and using the chain and product rules,
we also deduce the identity \eqref{E:ANGULARDERIVATIVEOFVECTORFIELDCOMPONENTS}
for $\vec{\uLunit}$ and $\vec{\spherenormal}$.
\eqref{E:F1TYPETERMSANGULARDERIVATIVE} follows from similar arguments, and we omit the details.

The identities \eqref{E:GENERICDERIVATIVEOFVECTORFIELDCOMPONENTS}--\eqref{E:F1TYPEGENERICDERIVATIVEOFVECTORFIELDCOMPONENTS} 
from from a similar argument, but we also take into account
\eqref{E:DLUNITLUNIT}
and
\eqref{E:DULUNITLUNIT}.
Note that the right-hand side of the identity \eqref{E:DULUNITLUNIT} for $\Dfour_{\uLunit} \Lunit$
leads to the presence of $\upzeta$ on 
RHSs~\eqref{E:GENERICDERIVATIVEOFVECTORFIELDCOMPONENTS}--\eqref{E:F1TYPEGENERICDERIVATIVEOFVECTORFIELDCOMPONENTS}.
\end{proof}

\subsubsection{Curvature component decompositions}
\label{SSS:CURVATURECOMPONENTDECOMPOSITIONS}
In the next lemma, we provide some expressions for various components 
of the curvatures of the acoustical metric $\gfour$.
These expressions will be important for controlling the acoustic geometry,
since curvature components appear as source terms in the PDEs that they satisfy; 
see Lemma~\ref{L:NULLLAPSEANDCONNECTIONCOEFFICIENTPDES}.
Moreover, some of the curvature components can be expressed with the help of the equations of Prop.\,\ref{P:RESCALEDEULER}, 
thus tying the evolution of the acoustic geometry to the fluid evolution; 
see Remark~\ref{R:SOMECURVATURECOMPONENTSINVOLVEEULER} and Prop.\,\ref{P:PDESMODIFIEDACOUSTICALQUANTITIES}.

\begin{lemma}[Curvature component decompositions]
\label{L:CURVATUREDECOMPOSITIONS}
Relative to the Cartesian coordinates, the following identity holds,
where on RHS~\eqref{E:RICCIIDENTITY}, the component $\gfour_{\alpha \beta}(\vec{\Psi})$ is treated as a scalar function
under covariant differentiation and $\Chfour_{\alpha} := (\gfour^{-1})^{\kappa \lambda} \gfour_{\alpha \beta} \Chfour_{\kappa \ \lambda}^{\ \beta}$
is treated as a one-form under covariant differentiation:
\begin{align} \label{E:RICCIIDENTITY}
	\Ricfour_{\alpha \beta}
	& = 
		- 
		\frac{1}{2} \square_{\gfour(\vec{\Psi})} \gfour_{\alpha \beta}(\vec{\Psi})
		+
		\frac{1}{2}
		\left\lbrace
			\Dfour_{\alpha} \Chfour_{\beta}
			+
			\Dfour_{\beta} \Chfour_{\alpha}
		\right\rbrace
		+
		\quadsmoothfunction(\vec{\Psi})[\pmb{\partial} \vec{\Psi},\pmb{\partial} \vec{\Psi}].
\end{align}

Moreover,
\begin{align} \label{E:RICLLPERFECTLDERIVATIVE}
	\Ricfour_{\Lunit \Lunit}
	& = \Lunit (\Chfour_{\Lunit})
		+
		k_{\spherenormal \spherenormal}
		\Chfour_{\Lunit}
		+
		\uplambda^{-1} \gensmoothfunction_{(\vec{\Lunit})} \cdot (\vec{\VortVort},\DivGradEnt)
		+ 
		\gensmoothfunction_{(\vec{\Lunit})}
		\cdot
		\pmb{\partial} \vec{\Psi}
		\cdot
		\pmb{\partial} \vec{\Psi}.
\end{align}

Finally, there exist scalar functions on $S_{t,u}$, 
$S_{t,u}$-tangent one-forms, 
and symmetric type $\binom{0}{2}$ $S_{t,u}$-tangent tensorfields,
all schematically denoted by
$
\upxi
$
and verifying 
$
\upxi = \gensmoothfunction_{(\vec{\Lunit})} \cdot \pmb{\partial} \vec{\Psi}
$
(in the sense of Subsubsect.\,\ref{SSS:ADDITIONALSCHEMATIC}),
such that
\begin{align} \label{E:RICCILLDECOMPSINVOLVINGVORTANDENT} 
		\Ricfour_{\Lunit \Lunit}
		-
		\Lunit (\Chfour_{\Lunit})
		& = 
		\uplambda^{-1} \gensmoothfunction_{(\vec{\Lunit})} \cdot (\vec{\VortVort},\DivGradEnt)
		+ 
		\gensmoothfunction_{(\vec{\Lunit})}
		\cdot
		\pmb{\partial} \vec{\Psi}
		\cdot
		\pmb{\partial} \vec{\Psi},
				\\
	\Ricfour_{\Lunit \uLunit}
		-
		\frac{1}{2}
		\left\lbrace
			\Lunit (\Chfour_{\uLunit})
			+
			\uLunit (\Chfour_{\Lunit})
		\right\rbrace
		& = 
		\uplambda^{-1} \gensmoothfunction_{(\vec{\Lunit})} \cdot (\vec{\VortVort},\DivGradEnt)
		+ 
		\gensmoothfunction_{(\vec{\Lunit})}
		\cdot
		(\pmb{\partial} \vec{\Psi},\upzeta)
		\cdot
		\pmb{\partial} \vec{\Psi},
			\label{E:RICCILULDECOMPSINVOLVINGVORTANDENT}
\end{align}

\begin{subequations}
\begin{align}
	\Ricfour_{\Lunit A},
		\,
	\Riemfour_{A \Lunit \Lunit \uLunit}
	& = (\angD,\angprojDarg{\Lunit}) \upxi
			+
			\uplambda^{-1} \gensmoothfunction_{(\vec{\Lunit})} \cdot (\vec{\VortVort},\DivGradEnt)
			+ 
			\gensmoothfunction_{(\vec{\Lunit})}
			\cdot
			(\pmb{\partial} \vec{\Psi},\mytr_{\congsphere} \widetilde{\upchi}^{(Small)},\hat{\upchi},\rgeo^{-1})
			\cdot
			\pmb{\partial} \vec{\Psi},
			\label{E:RICCIANDRIEMANNDECOMPSINVOLVINGVORTANDENT}
					\\
	\Riemfour_{A \Lunit \uLunit A}
	& = 	\angdiv \upxi
			+
			\uplambda^{-1} \gensmoothfunction_{(\vec{\Lunit})} \cdot (\vec{\VortVort},\DivGradEnt)
			+ 
			\gensmoothfunction_{(\vec{\Lunit})}
			\cdot
			(\pmb{\partial} \vec{\Psi},\mytr_{\congsphere} \widetilde{\upchi}^{(Small)},\hat{\upchi},\rgeo^{-1})
			\cdot
			\pmb{\partial} \vec{\Psi},
			\label{E:RIEMALUNDERLINELBCONTRACTEDDECOMPINVOLVINGVORTANDENT}
	\end{align}
\end{subequations}

\begin{subequations}
\begin{align}
		\upepsilon^{AB} \Riemfour_{A \Lunit \uLunit B}
	& = 	\angcurl \upxi
			+ 
			\gensmoothfunction_{(\vec{\Lunit})}
			\cdot
			(\pmb{\partial} \vec{\Psi},\mytr_{\congsphere} \widetilde{\upchi}^{(Small)},\hat{\upchi},\rgeo^{-1})
			\cdot
			\pmb{\partial} \vec{\Psi},
			\label{E:RIEMALUNDERLINELBANTISYMMETRICCONTRACTEDDECOMPINVOLVINGVORTANDENT} 
				\\
	\Riemfour_{\Lunit A \Lunit B}
	& = (\angD,\angprojDarg{\Lunit}) \upxi
			+ 
			\gensmoothfunction_{(\vec{\Lunit})}
			\cdot
			(\pmb{\partial} \vec{\Psi},\mytr_{\congsphere} \widetilde{\upchi}^{(Small)},\hat{\upchi},\rgeo^{-1})
			\cdot
			\pmb{\partial} \vec{\Psi},
			\label{E:RIEMLALBDECOMP}
				\\
	\Riemfour_{A B \Lunit B}
	& = \angdiv \upxi_A
			+
			\gensmoothfunction_{(\vec{\Lunit})}
			\cdot
			(\pmb{\partial} \vec{\Psi},\mytr_{\congsphere} \widetilde{\upchi}^{(Small)},\hat{\upchi},\rgeo^{-1})
			\cdot
			\pmb{\partial} \vec{\Psi},
			\label{E:RIEMCALBCONTRACTEDDECOMP} \\
	\Riemfour_{C A \Lunit B}
	& = \angD \upxi
			+
			\gensmoothfunction_{(\vec{\Lunit})}
			\cdot
			(\pmb{\partial} \vec{\Psi},\mytr_{\congsphere} \widetilde{\upchi}^{(Small)},\hat{\upchi},\rgeo^{-1})
			\cdot
			\pmb{\partial} \vec{\Psi},
				\label{E:RIEMCALBDECOMP}  \\
		\Riemfour_{ABAB}
		& = \angdiv \upxi
				+
				\gensmoothfunction_{(\vec{\Lunit})}
			\cdot
			(\pmb{\partial} \vec{\Psi},\mytr_{\congsphere} \widetilde{\upchi}^{(Small)},\hat{\upchi},\rgeo^{-1})
			\cdot
			\pmb{\partial} \vec{\Psi}.
					\label{E:RIEMABCDCONTRACTEDDECOMP} 
\end{align}
\end{subequations}

\end{lemma}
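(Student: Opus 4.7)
My starting point is the standard Cartesian-coordinate expansion of the Ricci tensor of an arbitrary Lorentzian metric,
\begin{align*}
\Ricfour_{\alpha\beta} = -\frac{1}{2}(\gfour^{-1})^{\kappa\lambda}\partial_\kappa\partial_\lambda\gfour_{\alpha\beta} + \frac{1}{2}\left(\partial_\alpha\Chfour_\beta + \partial_\beta\Chfour_\alpha\right) + \quadsmoothfunction(\vec\Psi)[\pmb{\partial}\vec\Psi,\pmb{\partial}\vec\Psi],
\end{align*}
valid since $\gfour_{\alpha\beta} = \gfour_{\alpha\beta}(\vec\Psi)$ and $\Chfour_\alpha = \linsmoothfunction(\vec\Psi)[\pmb\partial\vec\Psi]$. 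Promoting the coordinate Laplacian $(\gfour^{-1})^{\kappa\lambda}\partial_\kappa\partial_\lambda$ to $\square_{\gfour}$ acting on the scalar function $\gfour_{\alpha\beta}(\vec\Psi)$, and coordinate partials on $\Chfour_\alpha$ to $\Dfour$-covariant derivatives of the one-form $\Chfour$, only generates additional $\quadsmoothfunction(\vec\Psi)[\pmb\partial\vec\Psi,\pmb\partial\vec\Psi]$ corrections, because the Cartesian Christoffel symbols of $\gfour$ are themselves of this type. This yields \eqref{E:RICCIIDENTITY}.

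The crucial ingredient is now the rescaled wave equation \eqref{E:RESCALEDCOVARIANTWAVE}: applying the chain rule, the scalar $\gfour_{\alpha\beta}(\vec\Psi)$ satisfies
\begin{align*}
\square_{\gfour(\vec\Psi)}\gfour_{\alpha\beta}(\vec\Psi) = \uplambda^{-1}\linsmoothfunction(\vec\Psi)[\vec\VortVort,\DivGradEnt] + \quadsmoothfunction(\vec\Psi)[\pmb\partial\vec\Psi,\pmb\partial\vec\Psi].
\end{align*}
Substituting into \eqref{E:RICCIIDENTITY} and contracting with $\Lunit^\alpha\Lunit^\beta$, the remaining principal term is $\Lunit^\alpha\Lunit^\beta\Dfour_\alpha\Chfour_\beta = \Lunit\Chfour_\Lunit - (\Dfour_\Lunit\Lunit)^\gamma\Chfour_\gamma$; invoking \eqref{E:DLUNITLUNIT} to express $\Dfour_\Lunit\Lunit$ as a multiple of $\Lunit$ with coefficient of the form $-k_{\spherenormal\spherenormal} + \gensmoothfunction(\vec\Psi)\cdot\pmb\partial\vec\Psi$ produces \eqref{E:RICLLPERFECTLDERIVATIVE} and, as an immediate consequence, \eqref{E:RICCILLDECOMPSINVOLVINGVORTANDENT}. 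The decomposition \eqref{E:RICCILULDECOMPSINVOLVINGVORTANDENT} follows analogously from the $(\Lunit,\uLunit)$-contraction, with the $2\upzeta^A e_A$ term in $\Dfour_\uLunit\Lunit$ from \eqref{E:DULUNITLUNIT} accounting precisely for the $\upzeta$ factor appearing on its right-hand side.

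For the mixed decompositions \eqref{E:RICCIANDRIEMANNDECOMPSINVOLVINGVORTANDENT}--\eqref{E:RIEMALUNDERLINELBANTISYMMETRICCONTRACTEDDECOMPINVOLVINGVORTANDENT}, I would set $\upxi$ to be the $\sphereproject$-projection of the relevant contraction of $\Chfour$ (so that $\upxi = \gensmoothfunction_{(\vec\Lunit)}\cdot\pmb\partial\vec\Psi$) and project the $(\Lunit,A)$-contraction of \eqref{E:RICCIIDENTITY} onto $S_{t,u}$. The terms $\sphereproject\Dfour_\Lunit\Chfour$ and $\sphereproject\Dfour_A\Chfour_\Lunit$ become, respectively, $\angprojDarg{\Lunit}\upxi$ and $\angD\upxi$ modulo commutators produced when $\sphereproject$ or a null frame vector is slid past $\Dfour$; Lemma~\ref{L:ANGULARDERIVATIVESOFSOMESCALARFUNCTIONS} certifies that these commutator errors are exactly of the asserted schematic type $\gensmoothfunction_{(\vec\Lunit)}\cdot(\pmb\partial\vec\Psi,\mytr_{\congsphere}\widetilde\upchi^{(Small)},\hat\upchi,\rgeo^{-1})\cdot\pmb\partial\vec\Psi$. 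The Riemann components $\Riemfour_{A\Lunit\Lunit\uLunit}$, $\Riemfour_{A\Lunit\uLunit A}$, and $\upepsilon^{AB}\Riemfour_{A\Lunit\uLunit B}$ reduce algebraically to the corresponding Ricci components via trace identities derived from \eqref{E:GINVERSERELATIVETONULLFRAME}; the non-Ricci contributions that appear are null-frame Riemann components of the vorticity/entropy-free type treated below, so no spurious $(\vec\VortVort,\DivGradEnt)$ factors are introduced.

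Finally, the decompositions \eqref{E:RIEMLALBDECOMP}--\eqref{E:RIEMABCDCONTRACTEDDECOMP} are obtained directly from the Cartesian Riemann expansion
\begin{align*}
\Riemfour_{\alpha\beta\gamma\delta} = \frac{1}{2}\left(\partial^2_{\alpha\delta}\gfour_{\beta\gamma} + \partial^2_{\beta\gamma}\gfour_{\alpha\delta} - \partial^2_{\alpha\gamma}\gfour_{\beta\delta} - \partial^2_{\beta\delta}\gfour_{\alpha\gamma}\right) + \quadsmoothfunction(\vec\Psi)[\pmb\partial\vec\Psi,\pmb\partial\vec\Psi].
\end{align*}
None of the frame contractions $(\Lunit,A,\Lunit,B)$, $(A,\Lunit,\uLunit,B)$, $(C,A,\Lunit,B)$, or the double trace $(A,B,A,B)$ produces the full $(\gfour^{-1})^{\kappa\lambda}\partial_\kappa\partial_\lambda\gfour_{\alpha\beta}$ that, via the wave equation, would generate $(\vec\VortVort,\DivGradEnt)$ contributions; only ``angular times null'' or ``purely angular'' second derivatives survive, which the same commutator analysis (rearranging via Lemma~\ref{L:ANGULARDERIVATIVESOFSOMESCALARFUNCTIONS}, together with \eqref{E:DETAILEDDECOMPOFNULLSECONDFUNDFORMSINTOTRACEANDTRACEFREE} for the $\Riemfour_{ABAB}$ case) rewrites as $\angD\upxi$- or $\angdiv\upxi$-type expressions plus permitted schematic remainders. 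The main technical obstacle throughout is precisely this bookkeeping: every commutator of $\sphereproject$, $\angD$, or a null frame vector past a $\Dfour$-derivative on a frame vectorfield must be handled via Lemma~\ref{L:ANGULARDERIVATIVESOFSOMESCALARFUNCTIONS}, and one must carefully verify that \emph{no} $\upzeta$ factor infiltrates \eqref{E:RICCIANDRIEMANNDECOMPSINVOLVINGVORTANDENT}--\eqref{E:RIEMABCDCONTRACTEDDECOMP}, where the right-hand sides are permitted to carry only $(\mytr_{\congsphere}\widetilde\upchi^{(Small)},\hat\upchi,\rgeo^{-1})$-type connection factors.
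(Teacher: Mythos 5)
Your plan tracks the paper's own proof (which delegates the computation to \cite{qW2017}*{Lemma~5.12}, with the only new ingredient being the substitution of the rescaled Euler wave equation \eqref{E:RESCALEDCOVARIANTWAVE} into $\square_{\gfour(\vec\Psi)}\gfour_{\alpha\beta}(\vec\Psi)$ on RHS~\eqref{E:RICCIIDENTITY}, which is where the $\uplambda^{-1}(\vec\VortVort,\DivGradEnt)$ source enters). Your derivation of \eqref{E:RICLLPERFECTLDERIVATIVE}, \eqref{E:RICCILLDECOMPSINVOLVINGVORTANDENT}, \eqref{E:RICCILULDECOMPSINVOLVINGVORTANDENT}, and the origin of the $\upzeta$ factor in $\Dfour_{\uLunit}\Lunit$ are all correct, and your explanation of why \eqref{E:RIEMLALBDECOMP}--\eqref{E:RIEMABCDCONTRACTEDDECOMP} do not require the Euler equation (no $\uLunit$ ever touches a second derivative index) matches Remark~\ref{R:SOMECURVATURECOMPONENTSINVOLVEEULER}.

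One step of your plan would fail if taken literally: the claim that $\Riemfour_{A\Lunit\Lunit\uLunit}$, $\Riemfour_{A\Lunit\uLunit A}$, and $\upepsilon^{AB}\Riemfour_{A\Lunit\uLunit B}$ ``reduce algebraically to the corresponding Ricci components via trace identities derived from \eqref{E:GINVERSERELATIVETONULLFRAME},'' with the leftover non-Ricci piece always among the vorticity-free components \eqref{E:RIEMLALBDECOMP}--\eqref{E:RIEMABCDCONTRACTEDDECOMP}. That is true for $\Riemfour_{A\Lunit\Lunit\uLunit} = -2\Ricfour_{\Lunit A} + 2\Riemfour_{AB\Lunit B}$ and, with the right choice of trace, for $\Riemfour_{A\Lunit\uLunit A} = \Ricfour_{AA} - \Riemfour_{ABAB}$ (taking instead $-\Ricfour_{\Lunit\uLunit} + \frac{1}{2}\Riemfour_{\Lunit\uLunit\Lunit\uLunit}$ would leave a leftover that is not on your list). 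But $\upepsilon^{AB}\Riemfour_{A\Lunit\uLunit B}$ is an \emph{antisymmetrization}, not a trace: no contraction against $(\gfour^{-1})$ produces it, and the only algebraic reduction available, the first Bianchi identity $\upepsilon^{AB}\Riemfour_{A\Lunit\uLunit B} = -\tfrac{1}{2}\upepsilon^{AB}\Riemfour_{AB\Lunit\uLunit}$, still leaves a genuine Riemann component carrying a $\uLunit$ index that is not in the vorticity-free group. This component has to be expanded directly from the raw Cartesian Riemann formula and then repackaged via $\sphereproject$-projections and Lemma~\ref{L:ANGULARDERIVATIVESOFSOMESCALARFUNCTIONS} (exactly as you do for \eqref{E:RIEMLALBDECOMP}--\eqref{E:RIEMABCDCONTRACTEDDECOMP}); the $\uLunit$-derivative second-order terms are absorbed as the \emph{inner} derivative of $\angcurl\upxi$, which is why they never trigger the full wave operator and why the $\uplambda^{-1}$ term in \eqref{E:RIEMALUNDERLINELBANTISYMMETRICCONTRACTEDDECOMPINVOLVINGVORTANDENT} is a harmless overshoot rather than a necessity. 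With that repair your sketch matches the paper's.
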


\begin{remark}
	The curvature identities of Lemma~\ref{L:CURVATUREDECOMPOSITIONS}
	are crucial for the proof of Prop.\,\ref{P:PDESMODIFIEDACOUSTICALQUANTITIES} below.
	In turn, the structure of the equations of
	Prop.\,\ref{P:PDESMODIFIEDACOUSTICALQUANTITIES}
	is crucial for our derivation of estimates for the acoustic geometry.
\end{remark}		

\begin{remark}
	\label{R:SOMECURVATURECOMPONENTSINVOLVEEULER}
	The proofs of the identities
	\eqref{E:RICLLPERFECTLDERIVATIVE}--\eqref{E:RIEMALUNDERLINELBCONTRACTEDDECOMPINVOLVINGVORTANDENT}
	rely on the compressible Euler equations, while the proofs of the remaining identities in Lemma~\ref{L:CURVATUREDECOMPOSITIONS} do not.
	This explains why the former identities feature $\uplambda^{-1}$-dependent source
	terms (which arise from RHS~\eqref{E:RESCALEDCOVARIANTWAVE}).
\end{remark}

\begin{proof}[Discussion of the proofs]
	The identities
	\eqref{E:RICCIIDENTITY}
	and
	\eqref{E:RIEMALUNDERLINELBANTISYMMETRICCONTRACTEDDECOMPINVOLVINGVORTANDENT}--\eqref{E:RIEMABCDCONTRACTEDDECOMP}
	are the same as in \cite{qW2017}*{Lemma~5.12}, 
	whose proofs can be found in \cite{sKiR2005c}. The identities 
	\eqref{E:RICLLPERFECTLDERIVATIVE}--\eqref{E:RIEMALUNDERLINELBCONTRACTEDDECOMPINVOLVINGVORTANDENT}
	also mirror those given in \cite{qW2017}*{Lemma~5.12},
	except here there are new terms 
	of type $\uplambda^{-1} \gensmoothfunction_{(\vec{\Lunit})} \cdot (\vec{\VortVort},\DivGradEnt)$,
	which arise when one uses equation \eqref{E:RESCALEDCOVARIANTWAVE}
	to substitute for the terms $\square_{\gfour(\vec{\Psi})} \Psi$
	that are generated by the term
	$- \frac{1}{2} \square_{\gfour(\vec{\Psi})} \gfour_{\alpha \beta}(\vec{\Psi})$
	on RHS~\eqref{E:RICCIIDENTITY}.
\end{proof}

\subsubsection{Main version of the PDEs verified by the acoustical quantities}
\label{SSS:PDESMODIFIEDACOUSTICALQUANTITIES}
We now provide the main result of Subsect.\,\ref{SS:MAINPDESFORACOUSTICALQUANT}.

\begin{proposition}[PDEs verified by the modified acoustical quantities, assuming a compressible Euler solution]
	\label{P:PDESMODIFIEDACOUSTICALQUANTITIES}
Assume that the Cartesian component functions 
$(\vec{\Psi},\vec{\vortrenormalized},\vec{\GradEnt},\vec{\VortVort},\DivGradEnt)$ 
are solutions to the rescaled compressible Euler equations of
Prop.\,\ref{P:RESCALEDEULER}
(under the conventions of Subsect.\,\ref{SS:NOMORELAMBDA}).
There exist $S_{t,u}$-tangent one-forms and 
symmetric type $\binom{0}{2}$ $S_{t,u}$-tangent tensorfields,
all schematically denoted by
$
\upxi
$
and verifying 
$
\upxi = \gensmoothfunction_{(\vec{\Lunit})} \cdot \pmb{\partial} \vec{\Psi}
$
(see Subsubsect.\,\ref{SSS:ADDITIONALSCHEMATIC} regarding the notation ``$\gensmoothfunction_{(\vec{\Lunit})} \cdot$''),
such that the following schematic identities hold,
where all terms on the left-hand sides are displayed exactly
and terms on the right-hand sides are displayed schematically 
(in particular, we have ignored numerical constants and minus signs on the right-hand sides).

\medskip

\noindent \underline{\textbf{Transport equations involving the Cartesian components} $\Lunit^i$ {\textbf and} $\spherenormal^i$}:
The following evolution equations hold in $\widetilde{\mathcal{M}}$:
\begin{align} \label{E:LUNITIANDNORMALITRANSPORT}
	\Lunit \Lunit^i
	& = \gensmoothfunction_{(\vec{\Lunit})}
			\cdot
			\pmb{\partial} \vec{\Psi},
	&
	\Lunit \spherenormal^i
	& = \gensmoothfunction_{(\vec{\Lunit})}
			\cdot
			\pmb{\partial} \vec{\Psi}.
\end{align}
Moreover, along $\Sigma_0$ (where $w = \rgeo = - u$ and $a = \nulllapse$),
we have
\begin{align} \label{E:LUNITIANDNORMALITRANSPORTALONGSIGMA0}
	\frac{\partial}{\partial w} \Lunit^i
	& = 
			a
			\cdot
			\gensmoothfunction_{(\vec{\Lunit})}
			\cdot
			\pmb{\partial} \vec{\Psi}
			+
			\angD a,
	&
	\frac{\partial}{\partial w} \spherenormal^i
	& = 
			a
			\cdot
			\gensmoothfunction_{(\vec{\Lunit})}
			\cdot
			\pmb{\partial} \vec{\Psi}
			+
			\angD a.
\end{align}

\medskip

	\noindent \underline{\textbf{Transport equations involving the Cartesian components} $\Theta_{(A)}^i$}:
	For $A=1,2$ and $i=1,2,3$, let $\left(\frac{\partial}{\partial \upomega^A} \right)^i$ denote a Cartesian component of 
	$\frac{\partial}{\partial \upomega^A}$ (i.e., $\left(\frac{\partial}{\partial \upomega^A} \right)^i = \frac{\partial}{\partial \upomega^A} x^i$),
	and let $\Theta_{(A)}$ be the $\Sigma_t$-tangent vectorfield with Cartesian components defined by
	\begin{align} \label{E:RESCALED}
		\Theta_{(A)}^i 
		& := \frac{1}{\rgeo} \left(\frac{\partial}{\partial \upomega^A} \right)^i.
	\end{align}
	Then the following evolution equation holds in $\widetilde{\mathcal{M}}$:
	\begin{align} \label{E:EVOLUTIONEQUATIONALONGINTEGRALCURVESOFLUNITFORCARTESIANCOMPONENTSOFTHETAA}
	\Lunit \Theta_{(A)}^i
	& 
	= 
	\gensmoothfunction_{(\vec{\Lunit})}
	\cdot
	(\pmb{\partial} \vec{\Psi},\mytr_{\congsphere} \widetilde{\upchi}^{(Small)},\hat{\upchi})
	\cdot
	\vec{\Theta}_{(A)}.
	\end{align}
	Moreover, along $\Sigma_0$ (where $w = \rgeo = - u$ and $a = \nulllapse$), the following
	evolution equation holds for $(w,\upomega) \in (0,\RescaledFoliationparameter] \times \mathbb{S}^2$:
	\begin{align} \label{E:EVOLUTIONEQUATIONALONGSIGMA0FORCARTESIANCOMPONENTSOFTHETAA}
	\frac{\partial}{\partial w} \Theta_{(A)}^i
	& = 
	a
	\cdot
	\gensmoothfunction_{(\vec{\Lunit})} 
	\cdot 
	(\pmb{\partial} \vec{\Psi},\hat{\upchi})
	\cdot
	\vec{\Theta}_{(A)}
	+
	\gensmoothfunction_{(\vec{\Lunit})}
	\cdot
	\angD a
	\cdot
	\vec{\Theta}_{(A)},
	\end{align}
	where $\vec{\Theta}_{(A)} := (\Theta_{(A)}^1,\Theta_{(A)}^2,\Theta_{(A)}^3)$.
	
\medskip

\noindent \underline{\textbf{Transport equations connected to the trace of} $\upchi$}:
	\begin{subequations}
	\begin{align}
		\Lunit \mytr_{\congsphere} \widetilde{\upchi}^{(Small)}
		+
		\frac{2}{\rgeo}
		\mytr_{\congsphere} \widetilde{\upchi}^{(Small)}
		& = 
			\uplambda^{-1} \gensmoothfunction_{(\vec{\Lunit})} \cdot (\vec{\VortVort},\DivGradEnt)
			\label{E:MODIFIEDRAYCHAUDHURI} \\
		& \ \
			+
			\gensmoothfunction_{(\vec{\Lunit})}
			\cdot
			(\pmb{\partial} \vec{\Psi},\mytr_{\congsphere} \widetilde{\upchi}^{(Small)},\rgeo^{-1})
			\cdot
			\pmb{\partial} \vec{\Psi}
			+
			|\hat{\upchi}|_{\gsphere}^2
			+
			\mytr_{\congsphere} \widetilde{\upchi}^{(Small)} 
			\cdot
			\mytr_{\congsphere} \widetilde{\upchi}^{(Small)},
			\notag
				\\
		\angprojDarg{\Lunit} \angD \mytr_{\congsphere} \widetilde{\upchi}^{(Small)}
		+
		\frac{3}{\rgeo}
		 \angD \mytr_{\gsphere} \widetilde{\upchi}^{(Small)}
		& = 
			\uplambda^{-1} \gensmoothfunction_{(\vec{\Lunit})} \cdot \angD (\vec{\VortVort},\DivGradEnt)
				\label{E:ANGDCOMMUTEDMODIFIEDRAYCHAUDHURI}
				\\
		& \ \
			+
			\uplambda^{-1}
			\gensmoothfunction_{(\vec{\Lunit})}
			\cdot
			(\vec{\GradEnt} \cdot \pmb{\partial} \vec{\Psi},
			\pmb{\partial} \vec{\Psi}, \partial \vec{\vortrenormalized}, \partial \vec{\GradEnt}) 
			\cdot 
			(\pmb{\partial} \vec{\Psi},\mytr_{\congsphere} \widetilde{\upchi}^{(Small)},\hat{\upchi},\rgeo^{-1})	
				\notag \\
		& \ \
			+
			\gensmoothfunction_{(\vec{\Lunit})}
			\cdot
			\angD \pmb{\partial} \vec{\Psi}
			\cdot
			(\pmb{\partial} \vec{\Psi},\mytr_{\congsphere} \widetilde{\upchi}^{(Small)}, \rgeo^{-1})
				\notag \\
		& \ \
			+
			\gensmoothfunction_{(\vec{\Lunit})}
			\cdot
			\angD \hat{\upchi} \cdot \hat{\upchi}
			+
			\gensmoothfunction_{(\vec{\Lunit})}
			\cdot
			\angD \mytr_{\congsphere} \widetilde{\upchi}^{(Small)} 
			\cdot 
			(\pmb{\partial} \vec{\Psi}, \mytr_{\congsphere} \widetilde{\upchi}^{(Small)},\hat{\upchi})
				\notag \\
		& \ \
			+
			\gensmoothfunction_{(\vec{\Lunit})}
			\cdot
			(\pmb{\partial} \vec{\Psi},\mytr_{\congsphere} \widetilde{\upchi}^{(Small)},\hat{\upchi},\rgeo^{-1})		
			\cdot
			(\pmb{\partial} \vec{\Psi},\mytr_{\congsphere} \widetilde{\upchi}^{(Small)},\rgeo^{-1})	
			\cdot
			\pmb{\partial} \vec{\Psi}.
			\notag
	\end{align}
	\end{subequations}
Above and throughout, we use $\vec{\GradEnt} \cdot \pmb{\partial} \vec{\Psi}$ to 
schematically denote terms of the form $\GradEnt^a \partial_{\alpha} \Psi_{\iota}$, 
where $a=1,2,3$, $\alpha=0,1,2,3$ and $\iota=0,1,2,3,4$. 

Moreover,
\begin{align}
		&
		\Lunit 
		\left\lbrace
			\frac{1}{2}
			\mytr_{\congsphere} \widetilde{\upchi}
			\volrat
		\right\rbrace
		-
		\frac{1}{4}
		\left(
			\mytr_{\gsphere} \upchi
		\right)^2
		\volrat
		+
		\frac{1}{2}
		\left\lbrace
			\Lunit \ln \nulllapse
		\right\rbrace
		\mytr_{\congsphere} \widetilde{\upchi}
		\volrat
		-
		|\angD \upsigma|_{\gsphere}^2
		\volrat
			\label{E:ANNOYINGERRORTERMALGEBRAICEXPRESSION} 
				\\
		& = 
			\uplambda^{-1} 
			\gensmoothfunction_{(\vec{\Lunit})} 
			\cdot 
			(\vec{\VortVort},\DivGradEnt)
			\cdot
			\volrat
			+
			\gensmoothfunction_{(\vec{\Lunit})}
			\cdot
			(\pmb{\partial} \vec{\Psi},\mytr_{\congsphere} \widetilde{\upchi}^{(Small)},\rgeo^{-1})
			\cdot
			\pmb{\partial} \vec{\Psi}
			\cdot
			\volrat
			+
			|\hat{\upchi}|_{\gsphere}^2
			\cdot
			\volrat
			+
			|\angD \upsigma|_{\gsphere}^2
			\cdot
			\volrat.
			\notag
\end{align}

\medskip
\noindent \underline{\textbf{PDEs involving} $\hat{\upchi}$}:

\begin{align} \label{E:DIVDIVTRFREECHISCHEMATIC}
		\angdiv \hat{\upchi}
		& = 
			\angD \mytr_{\congsphere} \widetilde{\upchi}^{(Small)}
			+
			\angdiv \upxi
			+
			\gensmoothfunction_{(\vec{\Lunit})}
			\cdot
			(\pmb{\partial} \vec{\Psi},\mytr_{\congsphere} \widetilde{\upchi}^{(Small)},\hat{\upchi},\rgeo^{-1})
			\cdot
			\pmb{\partial} \vec{\Psi},
	\end{align}
	
	\begin{align} \label{E:LDERIVATIVECHIHATAFTERUSINGEULER}
	\angprojDarg{\Lunit} 
	\hat{\upchi}
	+ 
	(\mytr_{\gsphere} \upchi)
	\hat{\upchi}
	& = 
		(\angD,\angprojDarg{\Lunit}) \upxi
		+
		\uplambda^{-1} \gensmoothfunction_{(\vec{\Lunit})} \cdot (\vec{\VortVort},\DivGradEnt)
		+
		\gensmoothfunction_{(\vec{\Lunit})}
		\cdot
		(\pmb{\partial} \vec{\Psi},\mytr_{\congsphere} \widetilde{\upchi}^{(Small)},\hat{\upchi},\rgeo^{-1})		
		\cdot
		\pmb{\partial} \vec{\Psi}.
\end{align}

\noindent \underline{\textbf{The transport equation for} $\upzeta$}:

\begin{align}
	\angprojDarg{\Lunit}
	\upzeta
	+
	\frac{1}{2} 
	(\mytr_{\gsphere} \upchi)
	\upzeta
	& = 
		(\angD,\angprojDarg{\Lunit}) \upxi
		+
		\uplambda^{-1} \gensmoothfunction_{(\vec{\Lunit})} \cdot (\vec{\VortVort},\DivGradEnt)
		+
		\gensmoothfunction_{(\vec{\Lunit})}
		\cdot
		(\pmb{\partial} \vec{\Psi},\mytr_{\congsphere} \widetilde{\upchi}^{(Small)},\hat{\upchi},\rgeo^{-1})		
		\cdot
		\pmb{\partial} \vec{\Psi}
		+
		\gensmoothfunction_{(\vec{\Lunit})}
		\cdot
		\upzeta 
		\cdot 
		\hat{\upchi}.
			\label{E:LDERIVATIVETORSIONAFTERUSINGEULER} 
\end{align}

\noindent \underline{\textbf{The transport equation for} $\nulllapse$}:
	
	\begin{align}
		\Lunit \nulllapse
		& = \nulllapse \cdot \gensmoothfunction_{(\vec{\Lunit})} \cdot \pmb{\partial} \vec{\Psi}.
			\label{E:EVOLUTIONNULLAPSEUSEEULER}
	\end{align}

\medskip
\noindent \underline{\textbf{Transport equation for} $\gsphere$}:
Along the integral curves of $\Lunit$, parameterized by $t$, we have, 
with $\stgsphere$ the standard round metric on the Euclidean unit sphere $\mathbb{S}^2$,
the following identity:
\begin{align} \label{E:EVOLUTIONEQUATIONFORANGULARCOORDINATECOMPONENTSOFGSPHEREMINUSEUCLIDEAN}
&
	\frac{d}{dt}
	\left\lbrace
		\rgeo^{-2}
		\gsphere\left(\frac{\partial}{\partial \upomega^A},\frac{\partial}{\partial \upomega^B} \right) 
			- 
			\stgsphere\left(\frac{\partial}{\partial \upomega^A},\frac{\partial}{\partial \upomega^B} \right)
	\right\rbrace
		\\
	& = \left\lbrace
				\mytr_{\congsphere} \widetilde{\upchi}^{(Small)}
				-
				\Chfour_{\Lunit}
			\right\rbrace
			\left\lbrace
				\rgeo^{-2}
				\gsphere\left(\frac{\partial}{\partial \upomega^A},\frac{\partial}{\partial \upomega^B} \right) 
				- 
				\stgsphere\left(\frac{\partial}{\partial \upomega^A},\frac{\partial}{\partial \upomega^B} \right)
			\right\rbrace
		\notag \\
	& \ \
		+
		\left\lbrace
				\mytr_{\congsphere} \widetilde{\upchi}^{(Small)}
				-
				\Chfour_{\Lunit}
			\right\rbrace
			\stgsphere\left(\frac{\partial}{\partial \upomega^A},\frac{\partial}{\partial \upomega^B} \right)
			+
			\frac{2}{\rgeo^2}
			\hat{\upchi}\left(\frac{\partial}{\partial \upomega^A},\frac{\partial}{\partial \upomega^B} \right).
		\notag
\end{align}

\noindent \underline{\textbf{Transport equations for} $\volrat$ \textbf{and} $\angD \volrat$}:

	\begin{subequations}
	\begin{align} \label{E:LUNITVOLUMEFORMRGEOTOMINUSTWORESCALED}
		\Lunit
		\ln \left(\rgeo^{-2} \volrat \right)
		& = 
			\mytr_{\gsphere} \upchi - \frac{2}{\rgeo}
			=
			\mytr_{\congsphere} \widetilde{\upchi}^{(Small)}
			-
			\Chfour_{\Lunit},
			\\
		\Lunit
		\angD 
		\ln \left(\rgeo^{-2} \volrat \right)
		+
		\frac{1}{2} 
		(\mytr_{\gsphere} \upchi)
		\angD
		\ln \left(\rgeo^{-2} \volrat \right)
		& = 
		\gensmoothfunction_{(\vec{\Lunit})}
		\cdot
		\hat{\upchi} 
		\cdot \angD \ln \left(\rgeo^{-2} \volrat \right)
		+
		\angD \mytr_{\congsphere} \widetilde{\upchi}^{(Small)}
		-
		\angD(\Chfour_{\Lunit}).
		\label{E:LUNITANGDCOMMUTEDVOLUMEFORMRGEOTOMINUSTWORESCALED}
	\end{align}
	\end{subequations}

\medskip
\noindent \underline{\textbf{An algebraic identity for} $\upmu$}:
	The mass aspect function $\upmu$ defined in \eqref{E:MASSASPECT} verifies the following identity:
	\begin{align} \label{E:MASSASPECTDECOMP}
		\upmu
		 & = 
				\uplambda^{-1} \gensmoothfunction_{(\vec{\Lunit})} \cdot (\vec{\VortVort},\DivGradEnt)
				+
				\angdiv \upxi
				+
				\gensmoothfunction_{(\vec{\Lunit})}
				\cdot
				\hat{\upchi} 
				\cdot
				\hat{\upchi}
				+
				\gensmoothfunction_{(\vec{\Lunit})}
				\cdot
				\angD \ln\left( \rgeo^{-2} \volrat \right)
				\cdot
				(\pmb{\partial} \vec{\Psi},\upzeta)
				\\
		& \ \
				+
				\gensmoothfunction_{(\vec{\Lunit})}
				\cdot
				(\pmb{\partial} \vec{\Psi},\mytr_{\congsphere} \widetilde{\upchi}^{(Small)},\hat{\upchi},\rgeo^{-1})
				\cdot
				\pmb{\partial} \vec{\Psi}.
				\notag
	\end{align}

\medskip	
\noindent \underline{\textbf{The transport equation for} $\check{\upmu}$}:
	The modified mass aspect function $\check{\upmu}$ defined by \eqref{E:MODMASSASPECT}
	verifies the following transport equation:
	\begin{align} \label{E:MODIFIEDMASSASPECTEVOLUTIONEQUATION}
		\Lunit \check{\upmu}
		+
		(\mytr_{\gsphere} \upchi) 
		\check{\upmu}
		&
		=	
			\mathfrak{I}_{(1)}
			+
			\mathfrak{I}_{(2)},
	\end{align}
	
	\begin{subequations}
	\begin{align}
	\mathfrak{I}_{(1)}
	& = \rgeo^{-1} \angdiv \upxi
			+
			\rgeo^{-2} \upxi,
		\label{E:FIRSTINHOMTERMMODIFIEDMASSASPECTEVOLUTIONEQUATION} 
			\\
	\mathfrak{I}_{(2)}
	& =
			\uplambda^{-1} \gensmoothfunction_{(\vec{\Lunit})} \cdot \pmb{\partial}(\vec{\VortVort},\DivGradEnt)
			+
			\uplambda^{-1}
			\gensmoothfunction_{(\vec{\Lunit})} 
			\cdot
			(\vec{\GradEnt} \cdot \pmb{\partial} \vec{\Psi},
			\pmb{\partial} \vec{\Psi},\pmb{\partial} \vec{\vortrenormalized},\pmb{\partial} \vec{\GradEnt}) 
			\cdot 
			(\pmb{\partial} \vec{\Psi},\mytr_{\congsphere} \widetilde{\upchi}^{(Small)},\hat{\upchi},\upzeta,\rgeo^{-1})	
				\label{E:SECONDINHOMTERMMODIFIEDMASSASPECTEVOLUTIONEQUATION} 
				\\
		& \ \
				+ 
				\gensmoothfunction_{(\vec{\Lunit})} 
				\cdot
				\angD \widetilde{\upzeta}
				\cdot
				\hat{\upchi} 
				+
				\gensmoothfunction_{(\vec{\Lunit})} 
				\cdot
				\angD \upsigma
				\cdot
				(\angD \pmb{\partial} \vec{\Psi},\angD \mytr_{\congsphere} \widetilde{\upchi}^{(Small)}) 
				+
				\gensmoothfunction_{(\vec{\Lunit})} 
				\cdot
				\angD \upsigma
				\cdot
				(\pmb{\partial} \vec{\Psi},\mytr_{\congsphere} \widetilde{\upchi}^{(Small)},\hat{\upchi},\rgeo^{-1}) 
				\cdot \pmb{\partial} \vec{\Psi}
			\notag \\
		& \ \
			+
			\gensmoothfunction_{(\vec{\Lunit})} 
			\cdot
			\angD \mytr_{\congsphere} \widetilde{\upchi}^{(Small)}
			\cdot
			(\pmb{\partial} \vec{\Psi},\upzeta)
			\notag \\
		& \ \
			+
			\gensmoothfunction_{(\vec{\Lunit})} 
			\cdot
			(\pmb{\partial} \vec{\Psi},\mytr_{\congsphere} \widetilde{\upchi}^{(Small)},\hat{\upchi},\upzeta,\rgeo^{-1})
			\cdot
			(\pmb{\partial} \vec{\Psi},\mytr_{\congsphere} \widetilde{\upchi}^{(Small)},\hat{\upchi},\upzeta)
			\cdot
			(\pmb{\partial} \vec{\Psi},\mytr_{\congsphere} \widetilde{\upchi}^{(Small)},\hat{\upchi})
				\notag \\
		& \ \
			+
			\gensmoothfunction_{(\vec{\Lunit})} 
			\cdot
			(\pmb{\partial} \vec{\Psi},\mytr_{\congsphere} \widetilde{\upchi}^{(Small)},\hat{\upchi},\upzeta) 
			\cdot 
			\pmb{\partial}^2 \vec{\Psi}.
			\notag
	\end{align}
	\end{subequations}

\medskip	
\noindent \underline{\textbf{The Hodge system for} $\upzeta$}:
	The torsion $\upzeta$ defined in \eqref{E:TORSION} satisfies the following Hodge system on $S_{t,u}$:
	\begin{subequations}
	\begin{align} \label{E:TORSIONDIV}
		\angdiv \upzeta
		& = 
				\uplambda^{-1} 
				\gensmoothfunction_{(\vec{\Lunit})} 
				\cdot
				(\vec{\VortVort},\DivGradEnt)
				+
				\angdiv \upxi
				+
				\gensmoothfunction_{(\vec{\Lunit})} 
				\cdot
				\upzeta 
				\cdot 
				\upzeta
				+
				\gensmoothfunction_{(\vec{\Lunit})} 
				\cdot	
				\hat{\upchi}
				\cdot
				\hat{\upchi}
				+
				\gensmoothfunction_{(\vec{\Lunit})} 
				\cdot
				(\pmb{\partial} \vec{\Psi},\mytr_{\congsphere} \widetilde{\upchi}^{(Small)},\hat{\upchi},\rgeo^{-1})
				\cdot
				\pmb{\partial} \vec{\Psi}
					\\
			& \ \
				+
				\gensmoothfunction_{(\vec{\Lunit})} 
				\cdot
				\angD \ln\left( \rgeo^{-2} \volrat \right)
				\cdot
				(\pmb{\partial} \vec{\Psi},\upzeta),
				\notag \\
		\angcurl \upzeta
		& =
			\angcurl \upxi
			+
			\gensmoothfunction_{(\vec{\Lunit})} 
			\cdot
			\hat{\upchi} 
			\cdot
			\hat{\upchi}
			+
			\gensmoothfunction_{(\vec{\Lunit})} 
			\cdot
			(\pmb{\partial} \vec{\Psi},\mytr_{\congsphere} \widetilde{\upchi}^{(Small)},\hat{\upchi},\rgeo^{-1})
			\cdot
			\pmb{\partial} \vec{\Psi}.
			\label{E:TORSIONCURL} 
	\end{align}
	\end{subequations}

\medskip	
\noindent \underline{\textbf{The Hodge system for} $\widetilde{\upzeta}$}:
	The modified torsion $\widetilde{\upzeta}$ defined by \eqref{E:MODTORSION}
	satisfies the following Hodge system on $S_{t,u}$:
	\begin{subequations}
	\begin{align} \label{E:MODIFIEDTORSIONDIV}
		\angdiv \widetilde{\upzeta}
		-
		\frac{1}{2} \check{\upmu}
		& = 
				\angdiv \upxi
				+
				\uplambda^{-1} 
				\gensmoothfunction_{(\vec{\Lunit})} 
				\cdot
				(\vec{\VortVort},\DivGradEnt)
				+
				\gensmoothfunction_{(\vec{\Lunit})} 
				\cdot
				\upzeta 
				\cdot 
				\upzeta
				+
				\gensmoothfunction_{(\vec{\Lunit})} 
				\cdot	
				\hat{\upchi}
				\cdot
				\hat{\upchi}
				+
				\gensmoothfunction_{(\vec{\Lunit})} 
				\cdot
				(\pmb{\partial} \vec{\Psi},\mytr_{\congsphere} \widetilde{\upchi}^{(Small)},\hat{\upchi},\rgeo^{-1})
				\cdot
				\pmb{\partial} \vec{\Psi},
			\\
		\angcurl \widetilde{\upzeta}
		& = 
			\angcurl \upxi
			+
			\gensmoothfunction_{(\vec{\Lunit})} 
			\cdot
			\hat{\upchi} 
			\cdot
			\hat{\upchi}
			+
			\gensmoothfunction_{(\vec{\Lunit})} 
			\cdot
			(\pmb{\partial} \vec{\Psi},\mytr_{\congsphere} \widetilde{\upchi}^{(Small)},\hat{\upchi},\rgeo^{-1})
			\cdot
			\pmb{\partial} \vec{\Psi}.
				\label{E:MODIFIEDTORSIONCURL}
	\end{align}
	\end{subequations}

\noindent \underline{\textbf{The Hodge system for} $\widetilde{\upzeta} - \angupmu$}:	
	The difference $\widetilde{\upzeta} - \angupmu$ 
	(where $\widetilde{\upzeta}$ is defined by \eqref{E:MODTORSION} and $\angupmu$ is defined by \eqref{E:FURTHERMODOFMASSASPECT})
	verifies the following Hodge system on $S_{t,u}$ 
	(see definition \eqref{E:AVERAGEVALUEOFSCALARFUNCTION} regarding ``overline'' notation):
	\begin{subequations}
	\begin{align} \label{E:COMBBINEDMODIFIEDTORSIONMINUSANGMODMASSASPECTDIV}
		\angdiv (\widetilde{\upzeta} - \angupmu)
		& = 
			\angdiv \upxi
				+
				\left\lbrace
				\uplambda^{-1} 
				\gensmoothfunction_{(\vec{\Lunit})} 
				\cdot
				(\vec{\VortVort},\DivGradEnt)
				-
				\uplambda^{-1} 
				\overline{
				\gensmoothfunction_{(\vec{\Lunit})} 
				\cdot
				(\vec{\VortVort},\DivGradEnt)}
				\right\rbrace
					\\
			& \ \
				+
				\left\lbrace
				\gensmoothfunction_{(\vec{\Lunit})} 
				\cdot
				\upzeta 
				\cdot 
				\upzeta
				-
				\overline{	\gensmoothfunction_{(\vec{\Lunit})} 
				\cdot
				\upzeta 
				\cdot 
				\upzeta}
				\right\rbrace
				+
				\left\lbrace
				\gensmoothfunction_{(\vec{\Lunit})} 
				\cdot	
				\hat{\upchi}
				\cdot
				\hat{\upchi}
				-
				\overline{\gensmoothfunction_{(\vec{\Lunit})} 
				\cdot	
				\hat{\upchi}
				\cdot
				\hat{\upchi}}
				\right\rbrace
					\notag \\
			& \ \
				+
				\left\lbrace
				\gensmoothfunction_{(\vec{\Lunit})} 
				\cdot
				(\pmb{\partial} \vec{\Psi},\mytr_{\congsphere} \widetilde{\upchi}^{(Small)},\hat{\upchi},\rgeo^{-1})
				\cdot
				\pmb{\partial} \vec{\Psi}
				-
				\overline{
				\gensmoothfunction_{(\vec{\Lunit})} 
				\cdot
				(\pmb{\partial} \vec{\Psi},\mytr_{\congsphere} \widetilde{\upchi}^{(Small)},\hat{\upchi},\rgeo^{-1})
				\cdot
				\pmb{\partial} \vec{\Psi}}
				\right\rbrace,
				\notag \\
		\angcurl (\widetilde{\upzeta} - \angupmu)
		& = 
		\angcurl \upxi
			+
			\gensmoothfunction_{(\vec{\Lunit})} 
			\cdot
			\hat{\upchi} 
			\cdot
			\hat{\upchi}
			+
			\gensmoothfunction_{(\vec{\Lunit})} 
			\cdot
			(\pmb{\partial} \vec{\Psi},\mytr_{\congsphere} \widetilde{\upchi}^{(Small)},\hat{\upchi},\rgeo^{-1})
			\cdot
			\pmb{\partial} \vec{\Psi}.
			\label{E:COMBBINEDMODIFIEDTORSIONMINUSANGMODMASSASPECTCURL}
	\end{align}
	\end{subequations}

	\noindent \underline{\textbf{A decomposition of} $\angupmu$ and \textbf{a Hodge-transport system for the constituent parts}}:	
	Let $\mathfrak{I}_{(1)}$ and $\mathfrak{I}_{(2)}$ be the inhomogeneous terms from 
	\eqref{E:FIRSTINHOMTERMMODIFIEDMASSASPECTEVOLUTIONEQUATION}--\eqref{E:SECONDINHOMTERMMODIFIEDMASSASPECTEVOLUTIONEQUATION}. 
	Then in $\widetilde{\mathcal{M}}^{(Int)}$ (see \eqref{E:INTERIORANDEXTERIOREGIONSAREUNIONSOFSPHERES}), 
	we can decompose the solution $\angupmu$ to \eqref{E:FURTHERMODOFMASSASPECT} as follows:
	\begin{align} \label{E:KEYANGMUALGEBRAICDECOMPOSITION}
	\angupmu
		& = 
		\angupmu_{(1)}
		+
		\angupmu_{(2)},
	\end{align}
	where $\angupmu_{(1)}$ and $\angupmu_{(2)}$
	verify the following Hodge-transport PDE systems:
	\begin{subequations}
	\begin{align} 
		\angdiv 
		\left\lbrace
			\angprojDarg{\Lunit} \angupmu_{(1)} 
			+
			\frac{1}{2} 
			(\mytr_{\gsphere} \upchi) 
			\angupmu_{(1)}
		\right\rbrace
		& =
				\mathfrak{I}_{(1)} - \overline{\mathfrak{I}_{(1)}},
				\label{E:ANGDIVLDERIVATIVEANGMU1} \\
		\angcurl 
		\left\lbrace
			\angprojDarg{\Lunit} \angupmu_{(1)} 
			+
			\frac{1}{2} 
			(\mytr_{\gsphere} \upchi) 
			\angupmu_{(1)}
		\right\rbrace
		& = 0,
		\label{E:ANGCURLLDERIVATIVEANGMU1}
	\end{align}
	\end{subequations}
	
	\begin{subequations}
	\begin{align} 
		\angdiv 
		\left\lbrace
			\angprojDarg{\Lunit} \angupmu_{(2)} 
			+
			\frac{1}{2} 
			(\mytr_{\gsphere} \upchi)
			\angupmu_{(2)}
		\right\rbrace
		& =
				\mathfrak{I}_{(2)} - \overline{\mathfrak{I}_{(2)}}
				+
				\hat{\upchi} 
				\cdot
				\angD \angupmu
				+
				(\angD \pmb{\partial} \vec{\Psi},\angD \mytr_{\congsphere} \widetilde{\upchi}^{(Small)})
				\cdot
				\angupmu
					\label{E:ANGDIVLDERIVATIVEANGMU2}\\
			& \ \
				+
				(\pmb{\partial} \vec{\Psi},\mytr_{\congsphere} \widetilde{\upchi}^{(Small)},\hat{\upchi},\rgeo^{-1})
				\cdot
				(\pmb{\partial} \vec{\Psi},\mytr_{\congsphere} \widetilde{\upchi}^{(Small)},\hat{\upchi})
				\cdot
				\angupmu
				+
				(\mytr_{\gsphere} \upchi - \overline{\mytr_{\gsphere} \upchi}) \overline{\check{\upmu}},
					\notag \\
		\angcurl 
		\left\lbrace
			\angprojDarg{\Lunit} \angupmu_{(2)} 
			+
			\frac{1}{2} 
			(\mytr_{\gsphere} \upchi) \angupmu_{(2)}
		\right\rbrace
		& = 
				\hat{\upchi} 
				\cdot
				\angD \angupmu
				+
				(\angD \pmb{\partial} \vec{\Psi},\angD \mytr_{\congsphere} \widetilde{\upchi}^{(Small)})
				\cdot
				\angupmu
					\label{E:ANGCURLLDERIVATIVEANGMU2} \\
			& \ \
				+
				(\pmb{\partial} \vec{\Psi},\mytr_{\congsphere} \widetilde{\upchi}^{(Small)},\hat{\upchi},\rgeo^{-1})
				\cdot
				(\pmb{\partial} \vec{\Psi},\mytr_{\congsphere} \widetilde{\upchi}^{(Small)},\hat{\upchi})
				\cdot
				\angupmu,
					\notag 
	\end{align}
	\end{subequations}
	subject to the following initial conditions along the cone-tip axis
	for $u \in [0,\RescaledTboot]$:
	\begin{align} \label{E:CONETIPINITIALCONDITIONSFORANGMUSLASHI1AND2}
		|
		\angupmu_{(1)}
		-
		\angupmu
		|_{\gsphere}
		(t,u,\upomega)
		&
		\rightarrow 0 \mbox{ as } t \downarrow u,
		&
		|
			\angupmu_{(2)}
		|_{\gsphere}(t,u,\upomega)
		\rightarrow 0 \mbox{ as } t \downarrow u.
	\end{align}
\end{proposition}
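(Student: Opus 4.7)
\medskip

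My plan is to derive each family of PDEs in the proposition by combining three kinds of inputs: \textbf{(i)} the purely geometric null-frame identities of Lemma~\ref{L:NULLLAPSEANDCONNECTIONCOEFFICIENTPDES}; \textbf{(ii)} the curvature decompositions of Lemma~\ref{L:CURVATUREDECOMPOSITIONS}, which is where the compressible Euler equations of Prop.\,\ref{P:RESCALEDEULER} enter (producing the $\uplambda^{-1}$ terms on the RHSs); and \textbf{(iii)} the schematic identities of Lemma~\ref{L:ANGULARDERIVATIVESOFSOMESCALARFUNCTIONS}, together with the basic structural relations $\vec{\uLunit}, \vec{\spherenormal} = \gensmoothfunction_{(\vec{\Lunit})}$ and $k_{ij} = \gensmoothfunction(\vec{\Psi})\cdot \pmb{\partial}\vec{\Psi}$, which convert concrete tensorial expressions into the $\gensmoothfunction_{(\vec{\Lunit})}\cdot(\cdots)$ notation. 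The simpler transport equations \eqref{E:LUNITIANDNORMALITRANSPORT}, \eqref{E:EVOLUTIONEQUATIONALONGINTEGRALCURVESOFLUNITFORCARTESIANCOMPONENTSOFTHETAA}, \eqref{E:EVOLUTIONNULLAPSEUSEEULER}, and \eqref{E:EVOLUTIONEQUATIONFORANGULARCOORDINATECOMPONENTSOFGSPHEREMINUSEUCLIDEAN} follow quickly by applying the formulas \eqref{E:DLUNITLUNIT}--\eqref{E:DULUNITLUNIT} (viewing $\Lunit^i, \spherenormal^i$ as scalars), the vanishing bracket $[\Lunit, \partial/\partial \upomega^A]$ computed via \eqref{E:NULLSECONDFORMSLIEDIFFERENTIATIONDEF}, and by using the Euler equation \eqref{E:TRANSPORTVELOCITYRELATIVETORECTANGULAR} to rewrite the $\gfour(\Dfour_{\Transport}\Transport,\Lunit)$ term on RHS~\eqref{E:EVOLUTIONNULLAPSE}, which produces only terms of type $\gensmoothfunction_{(\vec{\Lunit})}\cdot \pmb{\partial}\vec{\Psi}$. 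The $\Sigma_0$ versions, e.g.\ \eqref{E:LUNITIANDNORMALITRANSPORTALONGSIGMA0} and \eqref{E:EVOLUTIONEQUATIONALONGSIGMA0FORCARTESIANCOMPONENTSOFTHETAA}, follow from the vectorfield identity \eqref{E:PARTIALPARTIALWALONGSIGMA0}, together with the decomposition $\partial/\partial w = \Lunit - (1-a)\Transport - \angD a$-type rearrangements.

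Next, for the family of identities tied to $\mytr_{\congsphere}\widetilde{\upchi}^{(Small)}$ and $\hat{\upchi}$, the central point is to exploit the fact, revealed by \eqref{E:RICLLPERFECTLDERIVATIVE}, that $\Ricfour_{\Lunit\Lunit} - \Lunit\Chfour_{\Lunit}$ is of the good schematic type on RHS~\eqref{E:RICCILLDECOMPSINVOLVINGVORTANDENT}. I would start from Raychaudhuri \eqref{E:RAYCHAUDHURI}, add $\Lunit\Chfour_{\Lunit}$ to both sides, and use $\mytr_{\gsphere}\upchi + \Chfour_{\Lunit} = \mytr_{\congsphere}\widetilde{\upchi} = \mytr_{\congsphere}\widetilde{\upchi}^{(Small)} + 2/\rgeo$ from \eqref{E:MODTRICHISMALL} together with $\Lunit \rgeo = 1$ to rewrite the quadratic term $\frac{1}{2}(\mytr_{\gsphere}\upchi)^2$ modulo admissible schematic errors; this yields \eqref{E:MODIFIEDRAYCHAUDHURI}. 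Commuting by $\angD$ and using \eqref{E:F1TYPEGENERICDERIVATIVEOFVECTORFIELDCOMPONENTS} on all coefficients yields \eqref{E:ANGDCOMMUTEDMODIFIEDRAYCHAUDHURI}. The multiplicative identity \eqref{E:ANNOYINGERRORTERMALGEBRAICEXPRESSION} is obtained by multiplying the $\mytr_{\congsphere}\widetilde{\upchi}$ version by $\tfrac{1}{2}\volrat$, using the volume-form evolution \eqref{E:EVOLUTIONVOLUMELEMENT}, and absorbing the $|\angD\upsigma|_{\gsphere}^2$ remainder that appears when one expresses $\angLap\upsigma$ via the PDE \eqref{E:SIGMAEVOLUTION} (which is built precisely so that the $\Chfour_{\Lunit}$ term is cancelled). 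For $\hat{\upchi}$, I would take the trace-free part of \eqref{E:LDERIVATIVECHIHAT} and substitute \eqref{E:RIEMLALBDECOMP}, while for $\angdiv\hat{\upchi}$ in \eqref{E:DIVDIVTRFREECHISCHEMATIC} I would start from the Codazzi-type identity \eqref{E:ANGDIVTRACEFREEPARTOFCHI}, substitute \eqref{E:RIEMCALBCONTRACTEDDECOMP}, and trade the $\angD\mytr_{\gsphere}\upchi$ factor for $\angD\mytr_{\congsphere}\widetilde{\upchi}^{(Small)}$ plus a $\gensmoothfunction_{(\vec{\Lunit})}\cdot\pmb{\partial}\vec{\Psi}$ correction via \eqref{E:F1TYPEGENERICDERIVATIVEOFVECTORFIELDCOMPONENTS}. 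The torsion identity \eqref{E:LDERIVATIVETORSIONAFTERUSINGEULER} is likewise obtained from \eqref{E:LDERIVATIVETORSION} and the key bound \eqref{E:RICCIANDRIEMANNDECOMPSINVOLVINGVORTANDENT} on $\Riemfour_{A\Lunit\Lunit\uLunit}$, and \eqref{E:LUNITVOLUMEFORMRGEOTOMINUSTWORESCALED}--\eqref{E:LUNITANGDCOMMUTEDVOLUMEFORMRGEOTOMINUSTWORESCALED} come directly from \eqref{E:EVOLUTIONVOLUMELEMENT} after taking $\ln$ and commuting with $\angD$ using \eqref{E:NULLSECONDFORMSLIEDIFFERENTIATIONDEF}.

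The \emph{main obstacle} — and the heart of the proposition — is the transport equation \eqref{E:MODIFIEDMASSASPECTEVOLUTIONEQUATION} for the modified mass-aspect function $\check{\upmu}$, because one must identify and isolate all the top-order terms into an $\angdiv\upxi$ plus $\rgeo^{-2}\upxi$ good piece and absorb the remainder into $\mathfrak{I}_{(2)}$. My plan is to apply $\Lunit$ to the algebraic definition \eqref{E:MODMASSASPECT}, commute $\Lunit$ with $\uLunit$ and $\angLap$ using the connection identities of Lemma~\ref{L:CONNECTIONCOEFFICIENTS} (this generates commutator terms involving $\upchi$, $\underline{\upchi}$, $\upzeta$ and derivatives thereof), substitute the evolution equations \eqref{E:RAYCHAUDHURI} and \eqref{E:LUNITTRACEUCHI} for the $\Lunit$-derivatives that appear, and then use \eqref{E:RICCILLDECOMPSINVOLVINGVORTANDENT}--\eqref{E:RIEMALUNDERLINELBCONTRACTEDDECOMPINVOLVINGVORTANDENT} to isolate the $\uplambda^{-1}$ vorticity/entropy terms and the perfect $\angdiv$ structure. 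The crucial cancellations are engineered by the choice of corrections $2\angLap\upsigma$, $-\mytr_{\gsphere}\upchi\, k_{\spherenormal\spherenormal}$, and $\tfrac{1}{2}\mytr_{\gsphere}\upchi\,\Chfour_{\uLunit}$ in \eqref{E:MODMASSASPECT}: expanding $\Lunit \angLap\upsigma = \angLap \Lunit\upsigma + [\Lunit,\angLap]\upsigma$ and using $\Lunit\upsigma = \tfrac{1}{2}\Chfour_{\Lunit}$ from \eqref{E:SIGMAEVOLUTION} cancels the worst term $\tfrac{1}{2}\uLunit\Lunit\Chfour_{\Lunit}$ that would otherwise arise, leaving a controllable $\angD\widetilde{\upzeta}\cdot\hat{\upchi}$ structure. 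The Hodge systems for $\upzeta$, $\widetilde{\upzeta}$, and $\widetilde{\upzeta}-\angupmu$ (\eqref{E:TORSIONDIV}--\eqref{E:COMBBINEDMODIFIEDTORSIONMINUSANGMODMASSASPECTCURL}) follow by substituting \eqref{E:RIEMALUNDERLINELBCONTRACTEDDECOMPINVOLVINGVORTANDENT}--\eqref{E:RIEMALUNDERLINELBANTISYMMETRICCONTRACTEDDECOMPINVOLVINGVORTANDENT} into \eqref{E:DIVTORSIONNOEULER}--\eqref{E:CURLTORSIONNOEULER} and then adding $\angD\upsigma$; the averaging in \eqref{E:COMBBINEDMODIFIEDTORSIONMINUSANGMODMASSASPECTDIV} is forced by \eqref{E:FURTHERMODOFMASSASPECT}. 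Finally, the splitting $\angupmu = \angupmu_{(1)} + \angupmu_{(2)}$ is defined implicitly by \eqref{E:ANGDIVLDERIVATIVEANGMU1}--\eqref{E:CONETIPINITIALCONDITIONSFORANGMUSLASHI1AND2}: applying $\Lunit$ to \eqref{E:FURTHERMODOFMASSASPECT}, commuting $\Lunit$ through $\angdiv,\angcurl$ (which produces lower-order $\hat{\upchi}\cdot\angD\angupmu$ terms and a $(\mytr_{\gsphere}\upchi - \overline{\mytr_{\gsphere}\upchi})\overline{\check\upmu}$ term from Lemma~\ref{L:EVOLUTIONEQUATIONFORAVERAGEVALUEONSTU}), and using \eqref{E:MODIFIEDMASSASPECTEVOLUTIONEQUATION} to substitute $\Lunit\check{\upmu} = -(\mytr_{\gsphere}\upchi)\check{\upmu} + \mathfrak{I}_{(1)} + \mathfrak{I}_{(2)}$, one reads off the Hodge-transport system; the split is then made by assigning the $\mathfrak{I}_{(1)}$ source to $\angupmu_{(1)}$ and the rest to $\angupmu_{(2)}$, with the cone-tip initial condition \eqref{E:CONETIPINITIALCONDITIONSFORANGMUSLASHI1AND2} guaranteeing uniqueness (since $S_{u,u}$ is a single point). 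The main care throughout is purely bookkeeping: tracking which terms may be absorbed into $\gensmoothfunction_{(\vec{\Lunit})}\cdot(\cdots)$ versus which must appear explicitly.
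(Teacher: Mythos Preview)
Your proposal is correct and follows essentially the same approach as the paper's proof sketch: combine the geometric null-structure equations of Lemma~\ref{L:NULLLAPSEANDCONNECTIONCOEFFICIENTPDES} with the curvature decompositions of Lemma~\ref{L:CURVATUREDECOMPOSITIONS} (where the Euler equations enter and produce the $\uplambda^{-1}$ terms), and repeatedly use Lemma~\ref{L:ANGULARDERIVATIVESOFSOMESCALARFUNCTIONS} to put everything into $\gensmoothfunction_{(\vec{\Lunit})}\cdot(\cdots)$ form. Two small imprecisions worth noting: for the $\Sigma_0$ equations you should work directly with $\partial/\partial w = a\spherenormal$ and \eqref{E:PROJECTEDNORMALDERIVATIVEOFSPHERENORMAL} (together with \eqref{E:EQNOFINITIALFOLIATION} for the $\Theta_{(A)}$ case) rather than an ``$\Lunit - (1-a)\Transport - \angD a$'' decomposition; and in the $\check{\upmu}$ computation the top-order terms that survive are of the schematic form $\uLunit\Ricfour_{\Lunit\Lunit}$ and $\Lunit^{\alpha}\square_{\gfour}\Chfour_{\alpha}$, the latter handled by commuting the wave equation \eqref{E:RESCALEDCOVARIANTWAVE} with $\gensmoothfunction(\vec{\Psi})\cdot\pmb{\partial}$, rather than a direct cancellation between $\angLap\Chfour_{\Lunit}$ and $\uLunit\Lunit\Chfour_{\Lunit}$ as you describe.
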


\begin{proof}[Proof sketch]
	Throughout, we will silently use the identities provided by Lemma~\ref{L:ANGULARDERIVATIVESOFSOMESCALARFUNCTIONS}.
	
	The equations in \eqref{E:LUNITIANDNORMALITRANSPORT} are a straightforward consequence of the first equation in \eqref{E:DLUNITLUNIT}
	and the relation $\Lunit^i = \spherenormal^i + \gensmoothfunction(\vec{\Psi})$.
	
	To prove \eqref{E:LUNITIANDNORMALITRANSPORTALONGSIGMA0}, we first note that along $\Sigma_0$,
	we have the vectorfield identity $\frac{\partial}{\partial w} = a \spherenormal$ (see \eqref{E:PARTIALPARTIALWALONGSIGMA0}).
	Also using the identity
	$
		\Lunit^i = \Transport^i + \spherenormal^i
	$
	and the fact that $\Transport^i = \gensmoothfunction(\vec{\Psi})$,
	we deduce that
	$
	\frac{\partial}{\partial w} \Lunit^i
	= 	
			a 
			\cdot
			\gensmoothfunction_{(\vec{\Lunit})}
			\cdot
			\pmb{\partial} \vec{\Psi}
			+
			\frac{\partial}{\partial w} \spherenormal^i
	$.
	Thus, to conclude both equations in \eqref{E:LUNITIANDNORMALITRANSPORTALONGSIGMA0},
	it suffices to derive the equation for $\frac{\partial}{\partial w} \spherenormal^i$ stated in \eqref{E:LUNITIANDNORMALITRANSPORTALONGSIGMA0}.
	The desired result is a straightforward consequence of
	the identity $\frac{\partial}{\partial w} = a \spherenormal$
	and the identity \eqref{E:PROJECTEDNORMALDERIVATIVEOFSPHERENORMAL}.
	
	\eqref{E:MODIFIEDRAYCHAUDHURI} is essentially proved as \cite{qW2017}*{Equation~(5.75)}. 
	The only difference is that in the present work, we have the $\uplambda^{-1}$-multiplied terms
	on RHS~\eqref{E:MODIFIEDRAYCHAUDHURI}, which arise when one uses
	equation \eqref{E:RICCILLDECOMPSINVOLVINGVORTANDENT} to algebraically substitute for the term
	$\Ricfour_{\Lunit \Lunit}$ on RHS~\eqref{E:RAYCHAUDHURI}.
	Similarly, \eqref{E:ANGDCOMMUTEDMODIFIEDRAYCHAUDHURI}
	was essentially proved as \cite{qW2017}*{Equation~(5.76)},
	the only difference being that we take into account Lemma~\ref{L:ANGULARDERIVATIVESOFSOMESCALARFUNCTIONS}
	and the expressions
	\eqref{E:RESCALEDRENORMALIZEDCURLOFSPECIFICVORTICITY} and \eqref{E:RESCALEDRENORMALIZEDDIVOFENTROPY} 
	for the rescaled $\VortVort^i$ and $\DivGradEnt$
	when computing $\angD$ applied to the 
	$\uplambda^{-1}$-multiplied terms
	on RHS~\eqref{E:MODIFIEDRAYCHAUDHURI}.
	
	The identity \eqref{E:ANNOYINGERRORTERMALGEBRAICEXPRESSION}
	follows from the same arguments used to prove \eqref{E:MODIFIEDRAYCHAUDHURI},
	based on
	\eqref{E:RICLLPERFECTLDERIVATIVE},
	\eqref{E:EVOLUTIONVOLUMELEMENT},
	\eqref{E:EVOLUTIONNULLAPSE},
	and
	\eqref{E:RAYCHAUDHURI};
	see the proof of \cite{qW2017}*{Proposition~7.22} for the analogous identity in the context of scalar wave equations.
	
	Based on \eqref{E:RIEMCALBCONTRACTEDDECOMP} and \eqref{E:ANGDIVTRACEFREEPARTOFCHI}
	(and the standard properties of $\Riemfour_{\alpha \beta \gamma \delta}$ under exchanges of indices),
	the identity \eqref{E:DIVDIVTRFREECHISCHEMATIC} was proved as \cite{qW2017}*{Equation~(5.77)}.
	
	The identity \eqref{E:LDERIVATIVECHIHATAFTERUSINGEULER}
	is essentially proved as \cite{qW2017}*{Equation~(5.68)}
	based on Lemma~\ref{L:CURVATUREDECOMPOSITIONS}
	and equation \eqref{E:LDERIVATIVECHIHAT}.
	The only difference (modulo Footnote~\ref{FN:CORRECTIONOFTYPOS}) is that in the present work, we have the $\uplambda^{-1}$-multiplied terms
	on RHS~\eqref{E:LDERIVATIVECHIHATAFTERUSINGEULER}, which arise when one uses
	equation \eqref{E:RICCILLDECOMPSINVOLVINGVORTANDENT} to algebraically substitute for the term
	$\Ricfour_{\Lunit \Lunit}$ on RHS~\eqref{E:LDERIVATIVECHIHAT}.
	Similar remarks apply to equation \eqref{E:LDERIVATIVETORSIONAFTERUSINGEULER},
	which follows from \eqref{E:LDERIVATIVETORSION} and \eqref{E:RICCIANDRIEMANNDECOMPSINVOLVINGVORTANDENT}.
	
	\eqref{E:EVOLUTIONNULLAPSEUSEEULER} follows from \eqref{E:EVOLUTIONNULLAPSE}.
	
	\eqref{E:EVOLUTIONEQUATIONFORANGULARCOORDINATECOMPONENTSOFGSPHEREMINUSEUCLIDEAN}
	was proved just below \cite{qW2017}*{Equation~(5.88)}.
	
	\eqref{E:LUNITVOLUMEFORMRGEOTOMINUSTWORESCALED}
	and
	\eqref{E:LUNITANGDCOMMUTEDVOLUMEFORMRGEOTOMINUSTWORESCALED}
	were derived in the proof of \cite{qW2017}*{Lemma~5.15},
	where $\ln\left(\rgeo^{-2} \volrat \right)$ was denoted by ``$\varphi$.''
	
	\eqref{E:MASSASPECTDECOMP} is essentially proved as \cite{qW2017}*{Equation~(5.92)},
	where $\rgeo^{-2} \volrat$ was denoted by ``$\varphi$.''
	The only difference is that in the present work, we have the $\uplambda^{-1}$-multiplied terms
	on RHS~\eqref{E:MASSASPECTDECOMP}, which arise when one uses
	equation \eqref{E:RIEMALUNDERLINELBCONTRACTEDDECOMPINVOLVINGVORTANDENT} to algebraically substitute for the term
	$\Riemfour_{A \uLunit \Lunit A}$
	on RHS~\eqref{E:LUNITTRACEUCHI}. We remark that equation \eqref{E:LUNITTRACEUCHI} is relevant for the proof
	since the argument relies on deriving an expression for $\Lunit \mytr_{\gsphere} \underline{\upchi} - \uLunit \mytr_{\gsphere} \upchi$.
	
	To prove \eqref{E:TORSIONDIV}--\eqref{E:TORSIONCURL}, we use
	\eqref{E:RIEMALUNDERLINELBCONTRACTEDDECOMPINVOLVINGVORTANDENT}--\eqref{E:RIEMALUNDERLINELBANTISYMMETRICCONTRACTEDDECOMPINVOLVINGVORTANDENT}
	to substitute for the curvature terms on
	RHSs~\eqref{E:DIVTORSIONNOEULER}--\eqref{E:CURLTORSIONNOEULER},
	and we use \eqref{E:MASSASPECTDECOMP} to substitute for the term $\upmu$ on RHS~\eqref{E:DIVTORSIONNOEULER}.
	Similarly, \eqref{E:MODIFIEDTORSIONDIV}--\eqref{E:MODIFIEDTORSIONCURL}
	follow from
	\eqref{E:DIVTORSIONNOEULER}--\eqref{E:CURLTORSIONNOEULER},
	the definitions of $\widetilde{\upzeta}$ and $\check{\upmu}$,
	and the curvature identities
	\eqref{E:RIEMALUNDERLINELBCONTRACTEDDECOMPINVOLVINGVORTANDENT}--\eqref{E:RIEMALUNDERLINELBANTISYMMETRICCONTRACTEDDECOMPINVOLVINGVORTANDENT}.
	\eqref{E:COMBBINEDMODIFIEDTORSIONMINUSANGMODMASSASPECTDIV}--\eqref{E:COMBBINEDMODIFIEDTORSIONMINUSANGMODMASSASPECTCURL}
	then follow easily from 
	\eqref{E:FURTHERMODOFMASSASPECT},
	\eqref{E:MODIFIEDTORSIONDIV}--\eqref{E:MODIFIEDTORSIONCURL},
	and the fact that $\angdiv$ of an $S_{t,u}$-tangent one-form must have vanishing average value on $S_{t,u}$
	(in the sense of \eqref{E:AVERAGEVALUEOFSCALARFUNCTION}).
	
	To prove \eqref{E:KEYANGMUALGEBRAICDECOMPOSITION}--\eqref{E:ANGCURLLDERIVATIVEANGMU2},
	one commutes equation \eqref{E:FURTHERMODOFMASSASPECT} with $\Lunit$
	and uses the same arguments used in the proof of \cite{qW2017}*{Equation~(6.34)},
	which in particular rely on Lemma~\ref{L:EVOLUTIONEQUATIONFORAVERAGEVALUEONSTU} as well as
	equation \eqref{E:MODIFIEDMASSASPECTEVOLUTIONEQUATION}, derived independently below.
	We clarify the following new feature of the present work: in \cite{qW2017}*{Equation~(6.34)},
	the author derived equations of the form
	$
	\angdiv
		\left\lbrace
			\angprojDarg{\Lunit} \angupmu 
			+
			\frac{1}{2} 
			(\mytr_{\gsphere} \upchi) 
			\angupmu
		\right\rbrace
	= \cdots
	$,
	$
	\angcurl 
		\left\lbrace
			\angprojDarg{\Lunit} \angupmu 
			+
			\frac{1}{2} 
			(\mytr_{\gsphere} \upchi) 
			\angupmu
		\right\rbrace
	= \cdots
	$,
	whereas for mathematical convenience, we have split
	these equations into similar equations for $\angupmu_{(1)}$ and $\angupmu_{(2)}$,
	the point being that later, we will use distinct arguments to control the $\angupmu_{(i)}$.
	The splitting is possible since equation \eqref{E:FURTHERMODOFMASSASPECT} is linear in $\angupmu$.
	
	To prove \eqref{E:CONETIPINITIALCONDITIONSFORANGMUSLASHI1AND2}, we first clarify that
	the $\angupmu_{(i)}$ are solved for by first solving their Hodge systems 
	\eqref{E:ANGDIVLDERIVATIVEANGMU1}--\eqref{E:ANGCURLLDERIVATIVEANGMU2}
	to obtain
	$\angprojDarg{\Lunit} \angupmu_{(i)} 
			+
			\frac{1}{2} 
			(\mytr_{\gsphere} \upchi) 
			\angupmu_{(i)}$
	and then integrating the corresponding inhomogeneous transport equations to obtain $\angupmu_{(i)}$. However,
	there is freedom in how we relate the ``initial conditions'' of $\angupmu$ 
	along the cone-tip axis to those of $\angupmu_{(1)}$ and $\angupmu_{(2)}$, 
	where the only constraint is that \eqref{E:KEYANGMUALGEBRAICDECOMPOSITION} must hold.
	Thus, \eqref{E:CONETIPINITIALCONDITIONSFORANGMUSLASHI1AND2}
	merely represents a choice of vanishing initial conditions for $\angupmu_{(2)}$.
	
	To prove \eqref{E:EVOLUTIONEQUATIONALONGSIGMA0FORCARTESIANCOMPONENTSOFTHETAA},
	we first note that since $\frac{\partial}{\partial w}|_{\Sigma_0} = [a \spherenormal]|_{\Sigma_0}$
	and since $\frac{\partial}{\partial w}$ commutes with $\frac{\partial}{\partial \upomega^A}$,
	we have the following evolution equation for the Cartesian components 
	$\left( \frac{\partial}{\partial \upomega^A} \right)^i$:
	$
	\frac{\partial}{\partial w} \left( \frac{\partial}{\partial \upomega^A} \right)^i
	=
	a \frac{\partial}{\partial \upomega^A} \spherenormal^i
	+
	\spherenormal^i
	\frac{\partial a}{\partial \upomega^A} 
	$.
	From this evolution equation and the second equation in \eqref{E:PROJECTEDNORMALDERIVATIVEOFSPHERENORMAL}, 
	we find, after splitting $\spheresecondfund$ into its trace and trace-free parts,
	that the evolution equation can be expressed in the following schematic form:
	\begin{align} \label{E:SCHEMATICEVOLUTIONFORUDERIVATIVEOFPARTIALOMEGAICARTESIANCOMPONENT}
	\frac{\partial}{\partial u} \left( \frac{\partial}{\partial \upomega^A} \right)^i
	& =
	a 
	\cdot
	\angprojDarg{\frac{\partial}{\partial \upomega^A}} \spherenormal^i
	+
	a
	\cdot
	\gensmoothfunction_{(\vec{\Lunit})} 
	\cdot 
	\partial \vec{\Psi}
	\cdot
\left\lbrace \left(\frac{\partial}{\partial \upomega^A}\right)^j \right\rbrace_{j=1,2,3}
	+
	\gensmoothfunction_{(\vec{\Lunit})} 
	\cdot
	\frac{\partial a}{\partial \upomega^A} 
		\\
&
	= 
	\frac{1}{2} a \mytr_{\gsphere} \spheresecondfund \left( \frac{\partial}{\partial \upomega^A} \right)^i
	+
	a
	\cdot
	\gensmoothfunction_{(\vec{\Lunit})} 
	\cdot 
	(\pmb{\partial} \vec{\Psi},\hat{\spheresecondfund})
	\cdot
\left\lbrace \left(\frac{\partial}{\partial \upomega^A}\right)^j \right\rbrace_{j=1,2,3}
		+
	\gensmoothfunction_{(\vec{\Lunit})} 
	\cdot
	\frac{\partial a}{\partial \upomega^A},
	\notag
	\end{align}
	where the first product on RHS~\eqref{E:SCHEMATICEVOLUTIONFORUDERIVATIVEOFPARTIALOMEGAICARTESIANCOMPONENT} 
	is precisely depicted and the last two are schematically depicted.
	Using \eqref{E:EQNOFINITIALFOLIATION} to substitute for the term $\mytr_{\gsphere} \spheresecondfund$
	and using \eqref{E:CONNECTIONCOEFFICIENT},
	we find that \eqref{E:SCHEMATICEVOLUTIONFORUDERIVATIVEOFPARTIALOMEGAICARTESIANCOMPONENT} 
	can be expressed as
	\begin{align} \label{E:SECONDVERSIONSCHEMATICEVOLUTIONFORUDERIVATIVEOFPARTIALOMEGAICARTESIANCOMPONENT}
	\frac{\partial}{\partial w} \left( \frac{\partial}{\partial \upomega^A} \right)^i
	& = 
	\frac{1}{w} \left( \frac{\partial}{\partial \upomega^A} \right)^i
	+
	a
	\cdot
	\gensmoothfunction_{(\vec{\Lunit})} 
	\cdot 
	(\pmb{\partial} \vec{\Psi},\hat{\upchi})
	\cdot
	\left\lbrace \left(\frac{\partial}{\partial \upomega^A}\right)^j \right\rbrace_{j=1,2,3}
	+
	\gensmoothfunction_{(\vec{\Lunit})} 
	\cdot
	\frac{\partial a}{\partial \upomega^A},
	\end{align}
	where the first product on RHS~\eqref{E:SECONDVERSIONSCHEMATICEVOLUTIONFORUDERIVATIVEOFPARTIALOMEGAICARTESIANCOMPONENT} 
	is precisely depicted and the last two are schematically depicted.
	From \eqref{E:SECONDVERSIONSCHEMATICEVOLUTIONFORUDERIVATIVEOFPARTIALOMEGAICARTESIANCOMPONENT} and the fact that
	$\frac{\partial}{\partial w} \rgeo = 1$ (because $\rgeo|_{\Sigma_0} = w$),
	we easily conclude the desired equation \eqref{E:EVOLUTIONEQUATIONALONGSIGMA0FORCARTESIANCOMPONENTSOFTHETAA}.
	
	To prove \eqref{E:EVOLUTIONEQUATIONALONGINTEGRALCURVESOFLUNITFORCARTESIANCOMPONENTSOFTHETAA},
	we first note that since $\frac{\partial}{\partial t} = \Lunit$ relative to the geometric coordinates,
	and since $\frac{\partial}{\partial t}$ commutes with $\frac{\partial}{\partial \upomega^A}$,
	we have the following evolution equation for the Cartesian components 
	$\left( \frac{\partial}{\partial \upomega^A} \right)^i$:
	$
	\frac{\partial}{\partial t} \left( \frac{\partial}{\partial \upomega^A} \right)^i
	=
	\frac{\partial}{\partial \upomega^A} \Lunit^i
	$.
	From this evolution equation and the first equation in \eqref{E:DALUNIT}, 
	we find, after splitting $\upchi$ into its trace and trace-free parts,
	that the evolution equation can be expressed in the following schematic form:
	\begin{align} \label{E:SCHEMATICEVOLUTIONFORTIMEDERIVATIVEOFPARTIALOMEGAICARTESIANCOMPONENT}
	\frac{\partial}{\partial t} \left( \frac{\partial}{\partial \upomega^A} \right)^i
	& =
	\angprojDarg{\frac{\partial}{\partial \upomega^A}} \Lunit^i
	+
	\gensmoothfunction_{(\vec{\Lunit})} 
	\cdot 
	\partial \vec{\Psi}
	\cdot
	\left\lbrace \left(\frac{\partial}{\partial \upomega^A} \right)^j \right\rbrace_{j=1,2,3}
		\\
	& 
	= 
	\frac{1}{2} \mytr_{\gsphere} \upchi \left( \frac{\partial}{\partial \upomega^A} \right)^i
	+
	\gensmoothfunction_{(\vec{\Lunit})} 
	\cdot 
	(\pmb{\partial} \vec{\Psi},\hat{\upchi})
	\cdot
	\left\lbrace \left(\frac{\partial}{\partial \upomega^A} \right)^j \right\rbrace_{j=1,2,3},
	\notag
	\end{align}
	where the first product on RHS~\eqref{E:SCHEMATICEVOLUTIONFORTIMEDERIVATIVEOFPARTIALOMEGAICARTESIANCOMPONENT}
	is precisely depicted and the second one is schematically depicted.
	Using \eqref{E:MODTRICHISMALL} to substitute for the term $\mytr_{\gsphere} \upchi$,
	we find that \eqref{E:SCHEMATICEVOLUTIONFORTIMEDERIVATIVEOFPARTIALOMEGAICARTESIANCOMPONENT} can be expressed as
	\begin{align} \label{E:SECONDVERSIONSCHEMATICEVOLUTIONFORTIMEDERIVATIVEOFPARTIALOMEGAICARTESIANCOMPONENT}
	\frac{\partial}{\partial t} \left( \frac{\partial}{\partial \upomega^A} \right)^i
	&= 
	\frac{1}{\rgeo} \left( \frac{\partial}{\partial \upomega^A} \right)^i
	+
	\gensmoothfunction_{(\vec{\Lunit})} 
	\cdot 
	(\pmb{\partial} \vec{\Psi},\mytr_{\congsphere} \widetilde{\upchi}^{(Small)},\hat{\upchi})
	\cdot
	\left\lbrace \left(\frac{\partial}{\partial \upomega^A} \right)^j \right\rbrace_{j=1,2,3},
	\end{align}
	where the first product on RHS~\eqref{E:SECONDVERSIONSCHEMATICEVOLUTIONFORTIMEDERIVATIVEOFPARTIALOMEGAICARTESIANCOMPONENT} 
	is precisely depicted and the last one is schematically depicted.
	From \eqref{E:SECONDVERSIONSCHEMATICEVOLUTIONFORTIMEDERIVATIVEOFPARTIALOMEGAICARTESIANCOMPONENT} 
	and the fact that
	$\frac{\partial}{\partial t} \rgeo = 1$,
	we easily conclude the desired equation 
	\eqref{E:EVOLUTIONEQUATIONALONGINTEGRALCURVESOFLUNITFORCARTESIANCOMPONENTSOFTHETAA}.
	
Finally, we provide the lengthy derivation of \eqref{E:MODIFIEDMASSASPECTEVOLUTIONEQUATION}.
Throughout the analysis, we will silently use the following identities,
valid for scalar functions $\varphi$:
\begin{align} \label{E:LLBARCOMMUTATORID}
	\Lunit \uLunit \varphi - \uLunit \Lunit \varphi
	& =
	2 (\underline{\upzeta}_A - \upzeta_A) \angDarg{A} \varphi
	+
	k_{\spherenormal \spherenormal} 
	\uLunit \varphi 
	- 	
	k_{\spherenormal \spherenormal} 
	\Lunit \varphi,
		\\
\Lunit \angLap \varphi 
- 
\angLap \Lunit \varphi
& =
	- 
	\mytr_{\gsphere} \upchi \angLap \varphi
	-
	2 \hat{\upchi}_{AB} \angDsquaredarg{A}{B} \varphi
	- 
	(\angdiv \upchi_A) \angDarg{A} \varphi
	+
	\left\lbrace
		\mytr_{\gsphere} \upchi \underline{\upzeta}_B
		-
		\upchi_{AB} \underline{\upzeta}_A
		-
		\Riemfour_{B C \Lunit C}
	\right\rbrace
		\angDarg{B} \varphi,
		\label{E:LANGLAPCOMMUTATORID}
			\\
	\square_{\gfour(\vec{\Psi})} \varphi
	& =
	-
	\Lunit \uLunit \varphi 
	+
	\angLap \varphi 
	-
	\frac{1}{2}\mytr_{\gsphere}{\upchi}\uLunit \varphi
	-
	\frac{1}{2}\mytr_{\gsphere}{\upchi} \Lunit \varphi
	+
	2\underline{\upzeta}_A \angDarg{A} \varphi
	+ 
	k_{\spherenormal \spherenormal}\uLunit \varphi.
	\label{E:WAVE_OPERATOR_RELATIVE_NULL_FRAME}
\end{align}
The identities \eqref{E:LLBARCOMMUTATORID}--\eqref{E:WAVE_OPERATOR_RELATIVE_NULL_FRAME}
follow from Lemma~\ref{L:CONNECTIONCOEFFICIENTS},
\eqref{E:GINVERSERELATIVETONULLFRAME},
and straightforward
calculations.
We will also often silently use the identity
(see \eqref{E:CONNECTIONCOEFFICIENT})
$
\underline{\upchi}_{AB}
 =  	- \upchi_{AB}
			-
			2 k_{AB}
$
to eliminate $\underline{\upchi}_{AB}$ from various equations.

We now apply $\Lunit$ to the definition \eqref{E:MASSASPECT} and 
use the evolution equations 
\eqref{E:RAYCHAUDHURI},
\eqref{E:LUNITTRACEUCHI},
and
\eqref{E:ULUNITTRACECHI},
and Lemma~\ref{L:CONNECTIONCOEFFICIENTS}
to deduce:
\begin{align}
	\Lunit \upmu 
	+
	\mytr_{\gsphere} \upchi \upmu 
	&=
	-
	\uLunit (\Ricfour_{\Lunit \Lunit})
	-
	\frac{1}{2}
	\Ricfour_{\Lunit \Lunit} \mytr_{\gsphere}\underline{\upchi}
	-
	(\uLunit k_{\spherenormal \spherenormal})
	\mytr_{\gsphere}\upchi 
	-
	(\Lunit \mytr_{\gsphere} \upchi) k_{\spherenormal \spherenormal}
	+ 2 
	(\underline{\upzeta}_A - \upzeta_A) \angDarg{A} \mytr_{\gsphere} \upchi
			\label{E:DERIVATION_MU_CHECK_EQUATION_INTERMEDIATE_1} \\
& \ \ 
	+
	\mytr_{\gsphere}\upchi(\angdiv\underline{\upzeta}
	+
	|\underline{\upzeta}|_{\gsphere}^2
	+
	\frac{1}{2}
	\Riemfour_{A \uLunit \Lunit A})
	+
	\frac{1}{2} 
	\left(\mytr_{\gsphere} \upchi \hat{\upchi}_{AB} \hat{\underline{\upchi}}_{AB}
		+
		\mytr_{\gsphere} \underline{\upchi} |\hat{\upchi}|_{\gsphere}^2 
	\right)
\notag \\
&  \ \  
	-2\hat{\upchi}_{AB}
		\left(
			2\angDarg{A} \upzeta_B
			+
			k_{\spherenormal \spherenormal}\hat{\upchi}_{AB}
			+
			2 \upzeta_A\upzeta_B
			-
			\underline{\hat{\upchi}}_{AC} \hat{\upchi}_{CB}
			+
			\Riemfour_{A \Lunit \uLunit B}
		\right).
	\notag
\end{align}
We will now re-express the factor $\uLunit (k_{\spherenormal \spherenormal})$ that appears on 
RHS~\eqref{E:DERIVATION_MU_CHECK_EQUATION_INTERMEDIATE_1}. To this end,
we set $X=Y:=\spherenormal$ in \eqref{E:SECONDFUNDOFSIGMAT}, 
apply $\Dfour_{\Transport}$ to both sides (so that the LHS of the resulting identity is the scalar function 
$\Transport (k_{\spherenormal \spherenormal})$),
commute $\Dfour_{\Transport}$ with $\Dfour_{\spherenormal}$ on the RHS of the resulting identity using
the definition of curvature,
use the relation $\Transport = \frac{1}{2}(\Lunit + \uLunit)$ (see \eqref{E:NULLVECTORFIELDS}),
use the relation $\Dfour_{\Transport} \Transport= 0$ 
(which is straightforward to derive using that $\gfour(\Transport,\Transport) = -1$ and
the fact that $[\Transport,Z]$ is $\Sigma_t$-tangent 
-- hence $\gfour$-orthogonal to $\Transport$ --
whenever $Z$ is $\Sigma_t$-tangent),
and use Lemma~\ref{L:CONNECTIONCOEFFICIENTS} to derive the following ``second variation'' identity:
\begin{align} \label{E:SECONDVARIATIONIDENTITY}
	\uLunit (k_{\spherenormal \spherenormal}) 
	&= 
	-
	\Lunit (k_{\spherenormal \spherenormal})
	+ 
	2 k_{A \spherenormal} k_{A \spherenormal} 
	- 
	2 (k_{\spherenormal \spherenormal})^2 
	+ 
	4 k_{A\spherenormal} \upzeta_A
	+
	\frac{1}{2} \Riemfour_{\Lunit \uLunit \Lunit \uLunit}.
\end{align} 
Since
\begin{align}
	\Ricfour_{\uLunit \Lunit}
	&=
	\uLunit^{\alpha} \Lunit^{\beta} (\gfour^{-1})^{\mu \nu} \Riemfour_{\alpha\mu\beta\nu},
\notag \\
	\Riemfour_{A \uLunit A \Lunit} 
	&= 
	\uLunit^{\alpha} \Lunit^{\beta} (\gsphere^{-1})^{\mu\nu}\Riemfour_{\alpha\mu\beta\nu},
\notag
\end{align}
we have, in view of \eqref{E:GINVERSERELATIVETONULLFRAME},
\begin{align}
	\Ricfour_{\uLunit \Lunit}
	-
	\Riemfour_{A \uLunit A \Lunit}
	&=
	\uLunit^{\alpha} \Lunit^{\beta} \left[(\gfour^{-1})^{\mu \nu}-(\gsphere^{-1})^{\mu\nu} \right]\Riemfour_{\alpha\mu\beta\nu}
\notag \\
	&=
	-\frac{1}{2}\uLunit^{\alpha} \Lunit^{\beta} \Lunit^\mu \uLunit ^\nu \Riemfour_{\alpha\mu\beta\nu}
	-\frac{1}{2}\uLunit^{\alpha} \Lunit^{\beta} \uLunit^\mu \Lunit^\nu \Riemfour_{\alpha\mu\beta\nu}
\notag \\
	&=
	-\frac{1}{2}(\Riemfour_{\uLunit \Lunit \Lunit \uLunit}
	+\Riemfour_{\uLunit \uLunit \Lunit \Lunit})
	=
	\frac{1}{2}\Riemfour_{\Lunit \uLunit \Lunit \uLunit}.
\notag
\end{align}
From this identity, 
the symmetries of the Riemann curvature tensor,
and
\eqref{E:RIEMALUNDERLINELBCONTRACTEDDECOMPINVOLVINGVORTANDENT}, 
we find that
\begin{align}
	\frac{1}{2} \Riemfour_{\Lunit \uLunit \Lunit \uLunit} 
	&= \Ricfour_{\uLunit \Lunit}-\updelta^{AB}\Riemfour_{A\uLunit B \Lunit}
\notag 
\\
	& = \Ricfour_{\uLunit \Lunit} 
	+ \angdiv \upxi
	+\uplambda^{-1} \gensmoothfunction_{(\vec{\Lunit})} \cdot (\vec{\VortVort},\DivGradEnt)
	+ \gensmoothfunction_{(\vec{\Lunit})}
	\cdot
	(
		\pmb{\partial} \vec{\Psi},\mytr_{\congsphere} \widetilde{\upchi}^{(Small)},\hat{\upchi},\rgeo^{-1}
	)
	\cdot
	\pmb{\partial} \vec{\Psi}.
\notag 
\end{align}
Combining the above calculations,
we can rewrite \eqref{E:DERIVATION_MU_CHECK_EQUATION_INTERMEDIATE_1} as follows:
\begin{align}
	\Lunit \upmu +\mytr_{\gsphere} \upchi\upmu 
	&=
	-\uLunit (\Ricfour_{\Lunit \Lunit})
	-\frac{1}{2}\Ricfour_{\Lunit \Lunit} \mytr_{\gsphere}\underline{\upchi}
\notag \\
& \ \ 
	+ \mytr_{\gsphere}\upchi 
	\left\lbrace
		\Lunit (k_{\spherenormal \spherenormal}) 
		- 
		\Ricfour_{\uLunit \Lunit} 
		-
		\angdiv \upxi
		- 
		2 k_{A \spherenormal} k_{A \spherenormal} 
		+ 
		2 (k_{\spherenormal \spherenormal})^2 
		- 
		4 k_{A\spherenormal} \upzeta_A
	\right\rbrace
\notag \\
& \ \ 
	- \mytr_{\gsphere}\upchi 
	\left\lbrace
		\uplambda^{-1} \gensmoothfunction_{(\vec{\Lunit})} \cdot (\vec{\VortVort},\DivGradEnt)
		+ \gensmoothfunction_{(\vec{\Lunit})}
		\cdot
		(
			\pmb{\partial} \vec{\Psi},\mytr_{\congsphere} \widetilde{\upchi}^{(Small)},\hat{\upchi},\rgeo^{-1}
		)
		\cdot
		\pmb{\partial} \vec{\Psi}
	\right\rbrace
\notag \\
& \ \ 
	- (\Lunit\mytr_{\gsphere}  \upchi) k_{\spherenormal \spherenormal}
	+2(
		\underline{\upzeta}_A
		-\upzeta_A
	)
	\angDarg{A} \mytr_{\gsphere} \upchi
\notag \\
& \ \ 
	+ 
	\mytr_{\gsphere}\upchi\left(\angdiv\underline{\upzeta}
	+
	|\underline{\upzeta}|_{\gsphere}^2
	+
	\frac{1}{2}\Riemfour_{A\uLunit  \Lunit A}\right)
	+
	\frac{1}{2}
	\left(
		\mytr_{\gsphere} \upchi \hat{\upchi}_{AB}\hat{\underline{\upchi}}_{AB}
		+\mytr_{\gsphere}\underline{\upchi} |\hat{\upchi}|_{\gsphere}^2
	\right)
\notag \\
&  \ \  
	-2\hat{\upchi}_{AB}
	\left(
		2
			\angDarg{A} \upzeta_B
			+
			k_{\spherenormal \spherenormal} \hat{\upchi}_{AB}
			+
			2 \upzeta_A \upzeta_B
			-
			\underline{\hat{\upchi}}_{AC} \hat{\upchi}_{CB}
			+
			\Riemfour_{A \Lunit \uLunit B}
	\right).
\notag
\end{align}
With the help of \eqref{E:CONNECTIONCOEFFICIENT}, 
we can rearrange the RHS to rewrite this identity as follows:
\begin{align}
	\Lunit \upmu 
	+
	\mytr_{\gsphere} \upchi\upmu 
	&=
	-
	\uLunit (\Ricfour_{\Lunit \Lunit}) 
	-
	\frac{1}{2}\Ricfour_{\Lunit \Lunit}\mytr_{\gsphere}\underline{\upchi}
	-
	\mytr_{\gsphere}\upchi \Ricfour_{\uLunit \Lunit} 
\label{E:DERIVATION_MU_CHECK_EQUATION_INTERMEDIATE_2} \\
& \ \  
	-
	(\Lunit\mytr_{\gsphere} \upchi) k_{\spherenormal \spherenormal}
	+
	2(
	\underline{\upzeta}_A
	-
	\upzeta_A)
	\angDarg{A} \mytr_{\gsphere} \upchi
	+
	\frac{1}{2}
	\left(
		\mytr_{\gsphere} \upchi \hat{\upchi}_{AB}\hat{\underline{\upchi}}_{AB}
		+
		\mytr_{\gsphere}\underline{\upchi} |\hat{\upchi}|_{\gsphere}^2
	\right)
\notag \\
& \ \ 
	+ \mytr_{\gsphere}\upchi 
	\left\lbrace
		\angdiv\underline{\upzeta}
		-
		\angdiv \upxi
		+
		\frac{1}{2}\Riemfour_{A\uLunit \Lunit A}	
		+
		\Lunit (k_{\spherenormal \spherenormal}) 
		- 
		|\underline{\upzeta}|_{\gsphere}^2
		+ 
		2 (k_{\spherenormal \spherenormal})^2
		+ 
		4 \underline{\upzeta}_A \upzeta_A
	\right\rbrace
\notag \\
&  \ \ 
	-2\hat{\upchi}_{AB}
	\left(
		2\angDarg{A} \upzeta_B
		+
		k_{\spherenormal \spherenormal}\hat{\upchi}_{AB}
		+
		2\upzeta_A\upzeta_B
		-
		\underline{\hat{\upchi}}_{AC} \hat{\upchi}_{CB}
		+
		\Riemfour_{A \Lunit \uLunit B}
	\right)
\notag \\
& \ \ 
	- \mytr_{\gsphere}\upchi
	\left\lbrace
		\uplambda^{-1} \gensmoothfunction_{(\vec{\Lunit})} \cdot (\vec{\VortVort},\DivGradEnt)
		+ 
		\gensmoothfunction_{(\vec{\Lunit})}
		\cdot
		(
			\pmb{\partial} \vec{\Psi},\mytr_{\congsphere} \widetilde{\upchi}^{(Small)},\hat{\upchi},\rgeo^{-1}
		)
		\cdot
		\pmb{\partial} \vec{\Psi}
	\right\rbrace.
	\notag
\end{align}

We will now uncover the structure of the terms on RHS~\eqref{E:DERIVATION_MU_CHECK_EQUATION_INTERMEDIATE_2}.
To help the reader navigate the calculations in the remainder of the proof of \eqref{E:MODIFIEDMASSASPECTEVOLUTIONEQUATION}, 
we also recall that we treat
$\Chfour_{\alpha} := (\gfour^{-1})^{\kappa \lambda} \gfour_{\alpha \beta} \Chfour_{\kappa \ \lambda}^{\ \beta}$
as a one-form under covariant differentiation (as in Lemma~\ref{L:CURVATUREDECOMPOSITIONS}),
that $\Chfour_{\uLunit} := \uLunit^{\alpha} \Chfour_{\alpha}$, and that
$\Chfour_A := e_A^{\alpha} \Chfour_{\alpha}$.
First, invoking \eqref{E:RICLLPERFECTLDERIVATIVE} and 
\eqref{E:RICCILULDECOMPSINVOLVINGVORTANDENT}, we find that
\begin{align}
	&
	-\uLunit (\Ricfour_{\Lunit \Lunit}) 
	-\frac{1}{2}\Ricfour_{\Lunit \Lunit}\mytr_{\gsphere}\underline{\upchi}
	-\mytr_{\gsphere}\upchi 	\Ricfour_{\uLunit \Lunit}  
		\label{E:DERIVATION_MU_CHECK_EQUATION_INTERMEDIATE_3} \\
	 & = 
	-\uLunit \Lunit (\Chfour_{\Lunit})
	-\frac{1}{2}\mytr_{\gsphere}\underline{\upchi} \Lunit (\Chfour_{\Lunit})
	-\frac{1}{2} \mytr_{\gsphere}\upchi \Lunit (\Chfour_{\uLunit})
	-\frac{1}{2} \mytr_{\gsphere}\upchi \uLunit (\Chfour_{\Lunit})
		\notag 
	\\
	& \ \
	- (\uLunit (k_{\spherenormal \spherenormal})) \Chfour_{\Lunit} 
	-  k_{\spherenormal \spherenormal} \uLunit (\Chfour_{\Lunit})
	- \frac{1}{2}\mytr_{\gsphere}\underline{\upchi} k_{\spherenormal \spherenormal} \Chfour_{\Lunit} 
	- \uLunit
	 \left\lbrace
	 	\uplambda^{-1} \gensmoothfunction_{(\vec{\Lunit})} \cdot (\vec{\VortVort},\DivGradEnt)
		+ 
		\gensmoothfunction_{(\vec{\Lunit})}
		\cdot
		\pmb{\partial} \vec{\Psi}
		\cdot
		\pmb{\partial} \vec{\Psi} 
	\right\rbrace
		\notag \\
	& \ \
	-
	\mytr_{\gsphere}\underline{\upchi}		 	 
	\left\lbrace
		\uplambda^{-1} \gensmoothfunction_{(\vec{\Lunit})} \cdot (\vec{\VortVort},\DivGradEnt)
		+ \gensmoothfunction_{(\vec{\Lunit})}
		\cdot
		\pmb{\partial} \vec{\Psi}
		\cdot
		\pmb{\partial} \vec{\Psi} 
	\right\rbrace
	-\mytr_{\gsphere}\upchi 
	\left\lbrace
		\uplambda^{-1} \gensmoothfunction_{(\vec{\Lunit})} \cdot (\vec{\VortVort},\DivGradEnt)
		+ \gensmoothfunction_{(\vec{\Lunit})}
		\cdot
		(\pmb{\partial} \vec{\Psi},\upzeta)
		\cdot
		\pmb{\partial} \vec{\Psi}
	\right\rbrace.
	\notag	
\end{align}
A key observation is that the first, second, and fourth terms on 
RHS~\eqref{E:DERIVATION_MU_CHECK_EQUATION_INTERMEDIATE_3}
produce $\square_{\gfour(\vec{\Psi})} (\Chfour_{\Lunit})$  
(up to lower-order terms) 
when added to\footnote{We recall that $\angLap f := \angDsquaredarg{A}{A} f$; 
see the discussion in Section \ref{SSS:CONNECTIONSANDCURVATURE}.}
$\angLap (\Chfour_{\Lunit})$;
this can be seen from the expression \eqref{E:WAVE_OPERATOR_RELATIVE_NULL_FRAME}.
Next, we apply the operator $2 \angLap$ to \eqref{E:SIGMAEVOLUTION} and use 
\eqref{E:LANGLAPCOMMUTATORID}
to commute it through the operator $\Lunit$.
Also using the identity
$	\angdiv \upchi_A
		= 
		-
		\hat{\upchi}_{AB} 
		k_{B \spherenormal}
		+
		\angDarg{A} \mytr_{\gsphere} \upchi
		+
		\frac{1}{2}		
		k_{A \spherenormal}
		\mytr_{\gsphere} \upchi
		+
		\Riemfour_{B \Lunit B A}
	$
(see \eqref{E:ANGDIVTRACEFREEPARTOFCHI})
and \eqref{E:CONNECTIONCOEFFICIENT},
we obtain the following identity:
\begin{align}
	2\Lunit \angLap \upsigma 
	+
	2
	\mytr_{\gsphere}\upchi \angLap \upsigma 
	& = 
	\angLap (\Chfour_{\Lunit})
	-
	4 \hat{\upchi}_{AB} \angDsquaredarg{A}{B} \upsigma 
	-
	2 (\angDarg{A} \mytr_{\gsphere} \upchi) \angDarg{A} \upsigma
	\label{E:DERIVATION_MU_CHECK_EQUATION_INTERMEDIATE_4}	\\
& \ \ 
	-
	4 \Riemfour_{A B \Lunit B} \angDarg{A} \upsigma 
	+ 
	\mytr_{\gsphere}\upchi
	\underline{\upzeta}_A \angDarg{A} \upsigma
	-
	4 \hat{\upchi}_{AB} \underline{\upzeta}_A \angDarg{B} \upsigma.
	\notag
\end{align}
Adding \eqref{E:DERIVATION_MU_CHECK_EQUATION_INTERMEDIATE_2}	
and \eqref{E:DERIVATION_MU_CHECK_EQUATION_INTERMEDIATE_4}, 
using 
\eqref{E:DERIVATION_MU_CHECK_EQUATION_INTERMEDIATE_3} and
\eqref{E:WAVE_OPERATOR_RELATIVE_NULL_FRAME},
and rearranging the terms,
we deduce that
\begin{align}
	&
	\Lunit (\upmu +	2 \angLap \upsigma) 
	+
	\mytr_{\gsphere} \upchi (\upmu + 	2\angLap \upsigma)
		\label{E:DERIVATION_MU_CHECK_EQUATION_INTERMEDIATE_5} 
		\\
	&=
	\square_{\gfour(\vec{\Psi})} (\Chfour_{\Lunit})
	-
	2\underline{\upzeta}_A \angDarg{A} (\Chfour_{\Lunit})
	-
	2 k_{\spherenormal \spherenormal} \uLunit (\Chfour_{\Lunit})
	- 
	(\uLunit (k_{\spherenormal \spherenormal})) \Chfour_{\Lunit} 
		\notag \\
& \ \  
	-
	4 \hat{\upchi}_{AB} \angDsquaredarg{A}{B} \upsigma
	-
	4\hat{\upchi}_{AB}
	\angDarg{A} \upzeta_B
		\notag \\
& \ \
	-
	2 (\angDarg{A} \mytr_{\gsphere}\upchi) \angDarg{A} \upsigma
	+
	2
	(
	\underline{\upzeta}_A
	-
	\upzeta_A)
	\angDarg{A} \mytr_{\gsphere} \upchi
	\notag \\
& \ \
	-
	4 \Riemfour_{A B \Lunit B} \angDarg{A} \upsigma 
\notag \\
& \ \ 
	-(
		k_{A \spherenormal}\mytr_{\gsphere}\upchi
		-2\hat{\upchi}_{AB}k_{B \spherenormal}
	)\angDarg{A} \upsigma
	-2\upchi_{AB}\underline{\upzeta}_A\angDarg{B}\upsigma
\notag \\	
& \ \
	-
	(\Lunit\mytr_{\gsphere} \upchi) k_{\spherenormal \spherenormal}
		\notag \\
& \ \ 
	+ 
	\mytr_{\gsphere}\upchi 
	\left\lbrace
		\angdiv \underline{\upzeta}
		-
		\angdiv \upxi
	\right\rbrace
	+
	\frac{1}{2}
	\mytr_{\gsphere} \upchi 
	\Riemfour_{A \uLunit \Lunit A}
			\notag \\
& \ \
	+
	\mytr_{\gsphere}\upchi \Lunit (k_{\spherenormal \spherenormal}) 
	-
	\frac{1}{2} 
	\mytr_{\gsphere}\upchi
	\Lunit (\Chfour_{\uLunit})
		\notag \\
& \ \
	+
	\mytr_{\gsphere}\upchi 
	\left\lbrace
		- 
		|\underline{\upzeta}|_{\gsphere}^2 
		+ 
		2 (k_{\spherenormal \spherenormal})^2 
		+
		4 \underline{\upzeta}_A \upzeta_A
	\right\rbrace
\notag \\
& \ \ 
	+ 
	\frac{1}{2} \mytr_{\gsphere} \upchi \hat{\upchi}_{AB}\hat{\underline{\upchi}}_{AB}
	+
	\mytr_{\gsphere} \underline{\upchi} |\hat{\upchi}|_{\gsphere}^2
	-
	\frac{1}{2}\mytr_{\gsphere} \underline{\upchi} k_{\spherenormal \spherenormal} \Chfour_{\Lunit} 
\notag \\
&  \ \ 
	- 
	2 \hat{\upchi}_{AB} \Riemfour_{A\Lunit \uLunit B}
	-
	2\hat{\upchi}_{AB}
	\left\lbrace
		k_{\spherenormal \spherenormal} \hat{\upchi}_{AB}
		+	
		2\upzeta_A\upzeta_B
	\right\rbrace
\notag 
	\\
& \ \
	- 
	\mytr_{\gsphere}\upchi \gensmoothfunction_{(\vec{\Lunit})}
	\cdot
	(
		\pmb{\partial} \vec{\Psi},\mytr_{\congsphere} \widetilde{\upchi}^{(Small)},\hat{\upchi},\rgeo^{-1}
	)
	\cdot
	\pmb{\partial} \vec{\Psi}
\notag \\
& \ \ 
	-\uLunit 
	\left\lbrace
		\uplambda^{-1} \gensmoothfunction_{(\vec{\Lunit})} \cdot (\vec{\VortVort},\DivGradEnt)
		+ \gensmoothfunction_{(\vec{\Lunit})}
		\cdot
		\pmb{\partial} \vec{\Psi}
		\cdot
		\pmb{\partial} \vec{\Psi} 
	\right\rbrace
\notag \\
& \ \ 
	-\mytr_{\gsphere}\underline{\upchi}		 	 
	 \left\lbrace
	 	\uplambda^{-1} \gensmoothfunction_{(\vec{\Lunit})} \cdot (\vec{\VortVort},\DivGradEnt)
		+ \gensmoothfunction_{(\vec{\Lunit})}
		\cdot
		\pmb{\partial} \vec{\Psi}
		\cdot
		\pmb{\partial} \vec{\Psi}
	\right\rbrace
\notag \\
& \ \ 
	-\mytr_{\gsphere}\upchi 	
	\left\lbrace
		\uplambda^{-1} \gensmoothfunction_{(\vec{\Lunit})} \cdot (\vec{\VortVort},\DivGradEnt)
		+ \gensmoothfunction_{(\vec{\Lunit})}
		\cdot
		(
			\pmb{\partial} \vec{\Psi},\upzeta
		)
		\cdot
		\pmb{\partial} \vec{\Psi}
	\right\rbrace.
	\notag
\end{align}
We next manipulate \eqref{E:DERIVATION_MU_CHECK_EQUATION_INTERMEDIATE_5} as follows:
we move the terms $	\mytr_{\gsphere}\upchi \Lunit (k_{\spherenormal \spherenormal}) 
	-
	\frac{1}{2} 
	\mytr_{\gsphere}\upchi
	\Lunit (\Chfour_{\uLunit})$
from the RHS to the LHS; subtract 
$(\Lunit \mytr_{\gsphere}\upchi)  k_{\spherenormal \spherenormal}$ from both sides; 
add $\frac{1}{2} (\Lunit  \mytr_{\gsphere} \upchi) \Chfour_{\uLunit}$ to both sides; 
and finally, add 
$\frac{1}{2} (\mytr_{\gsphere}\upchi)^2 \Chfour_{\uLunit}
-
(\mytr_{\gsphere}\upchi)^2 k_{\spherenormal \spherenormal}$ to both sides. 
After these steps, 
in view of definition \eqref{E:MODMASSASPECT}, 
we see that the LHS becomes
\begin{align}
		\Lunit \check{\upmu} +\mytr_{\gsphere} \upchi \check{\upmu}.
		\notag
\end{align}
With the help of definition \eqref{E:MODTORSION},
we now rearrange some other terms on RHS~\eqref{E:DERIVATION_MU_CHECK_EQUATION_INTERMEDIATE_5}
as follows:
\begin{align}
		-4
		\hat{\upchi}_{AB} \angDsquaredarg{A}{B} \upsigma 
		-
		4
		\hat{\upchi}_{AB}\angDarg{A} \upzeta_B 
		& =
		-4\hat{\upchi}_{AB} \angDarg{A} \widetilde{\upzeta}_B,
		\notag
			\\
		-2
		(\angDarg{A} \mytr_{\gsphere} \upchi) \angDarg{A}\upsigma
		+2
		(\underline{\upzeta}_A	
		-\upzeta_A)
		\angDarg{A} \mytr_{\gsphere} \upchi 
		& 
		= 
			2(\underline{\upzeta}_A	- \widetilde{\upzeta}_A)\angDarg{A} \mytr_{\gsphere} \upchi.
			\notag
\end{align}

Combining the above calculations, 
we have thus far obtained the following equation:
\begin{align}
	&
	\Lunit \check{\upmu} 
	+
\mytr_{\gsphere} \upchi \check{\upmu}
		\label{E:DERIVATION_MU_CHECK_EQUATION_INTERMEDIATE_6} 
			\\
	&=
	\square_{\gfour(\vec{\Psi})} (\Chfour_{\Lunit})
	-
	2 \underline{\upzeta}_A \angDarg{A} (\Chfour_{\Lunit})
	-
	2 k_{\spherenormal \spherenormal} \uLunit (\Chfour_{\Lunit})
	- 
	(\uLunit (k_{\spherenormal \spherenormal})) \Chfour_{\Lunit} 
		\notag \\
& \ \
	+
	\frac{1}{2} (\mytr_{\gsphere}\upchi)^2 \Chfour_{\uLunit}
	- 
	(\mytr_{\gsphere} \upchi)^2 k_{\spherenormal \spherenormal}	
		\notag \\
& \ \  
	-4\hat{\upchi}_{AB} \angDarg{A} \widetilde{\upzeta}_B
	+
	2(\underline{\upzeta}_A	- \widetilde{\upzeta}_A)\angDarg{A} \mytr_{\gsphere} \upchi
		\notag \\
& \ \
	-
	4 \Riemfour_{A B \Lunit B} \angDarg{A} \upsigma 
\notag \\
& \ \ 
	-(
		k_{A \spherenormal}\mytr_{\gsphere}\upchi
		-2\hat{\upchi}_{AB}k_{B \spherenormal}
	)\angDarg{A} \upsigma
	-2\upchi_{AB}\underline{\upzeta}_A\angDarg{B}\upsigma
\notag \\	
& \ \
	-
	(2 \Lunit\mytr_{\gsphere} \upchi) k_{\spherenormal \spherenormal}
	+
	\frac{1}{2} (\Lunit\mytr_{\gsphere} \upchi) 
	\Chfour_{\uLunit}
		\notag \\
& \ \ 
	+ 
	\mytr_{\gsphere}\upchi 
	\left\lbrace
		\angdiv \underline{\upzeta}
		-
		\angdiv \upxi
	\right\rbrace
	+
	\frac{1}{2}
	\mytr_{\gsphere} \upchi 
	\Riemfour_{A \uLunit \Lunit A}
			\notag \\
& \ \
	+
	\mytr_{\gsphere}\upchi 
	\left\lbrace
		- 
		|\underline{\upzeta}|_{\gsphere}^2 
		+ 
		2 (k_{\spherenormal \spherenormal})^2 
		+
		4 \underline{\upzeta}_A \upzeta_A
	\right\rbrace
\notag \\
& \ \ 
	+ 
	\frac{1}{2} \mytr_{\gsphere} \upchi \hat{\upchi}_{AB}\hat{\underline{\upchi}}_{AB}
	+
	\mytr_{\gsphere} \underline{\upchi} |\hat{\upchi}|_{\gsphere}^2
	-
	\frac{1}{2}\mytr_{\gsphere} \underline{\upchi} k_{\spherenormal \spherenormal} \Chfour_{\Lunit} 
\notag \\
&  \ \ 
	- 
	2 \hat{\upchi}_{AB} \Riemfour_{A\Lunit \uLunit B}
	-
	2\hat{\upchi}_{AB}
	\left\lbrace
		k_{\spherenormal \spherenormal} \hat{\upchi}_{AB}
		+	
		2\upzeta_A\upzeta_B
	\right\rbrace
\notag 
	\\
& \ \
	- 
	\mytr_{\gsphere}\upchi \gensmoothfunction_{(\vec{\Lunit})}
	\cdot
	(
		\pmb{\partial} \vec{\Psi},\mytr_{\congsphere} \widetilde{\upchi}^{(Small)},\hat{\upchi},\rgeo^{-1}
	)
	\cdot
	\pmb{\partial} \vec{\Psi}
\notag \\
& \ \ 
	-\uLunit 
	\left\lbrace
		\uplambda^{-1} \gensmoothfunction_{(\vec{\Lunit})} \cdot (\vec{\VortVort},\DivGradEnt)
		+ \gensmoothfunction_{(\vec{\Lunit})}
		\cdot
		\pmb{\partial} \vec{\Psi}
		\cdot
		\pmb{\partial} \vec{\Psi} 
	\right\rbrace
\notag \\
& \ \ 
	-\mytr_{\gsphere}\underline{\upchi}		 	 
	 \left\lbrace
	 	\uplambda^{-1} \gensmoothfunction_{(\vec{\Lunit})} \cdot (\vec{\VortVort},\DivGradEnt)
		+ \gensmoothfunction_{(\vec{\Lunit})}
		\cdot
		\pmb{\partial} \vec{\Psi}
		\cdot
		\pmb{\partial} \vec{\Psi}
	\right\rbrace
\notag \\
& \ \ 
	-\mytr_{\gsphere}\upchi 	
	\left\lbrace
		\uplambda^{-1} \gensmoothfunction_{(\vec{\Lunit})} \cdot (\vec{\VortVort},\DivGradEnt)
		+ \gensmoothfunction_{(\vec{\Lunit})}
		\cdot
		(
			\pmb{\partial} \vec{\Psi},\upzeta
		)
		\cdot
		\pmb{\partial} \vec{\Psi}
	\right\rbrace.
	\notag
\end{align}

We proceed to expand the term $\square_{\gfour(\vec{\Psi})} (\Chfour_{\Lunit})$ 
on RHS~\eqref{E:DERIVATION_MU_CHECK_EQUATION_INTERMEDIATE_6}, where we recall
from Def.\,\ref{D:CONFORMALSTUFF} that $\Chfour_{\Lunit} := \Lunit^{\alpha} \Chfour_{\alpha}$.
We therefore compute that\footnote{Since
$\square_{\gfour(\vec{\Psi})}  (\Chfour_{\Lunit}) 
= 
\square_{\gfour(\vec{\Psi})} (\Lunit^{\alpha} \Chfour_\alpha)
= (\gfour^{-1})^{\alpha \beta} \Dfour_{\alpha} \Dfour_{\beta} (\Lunit^{\alpha} \Chfour_\alpha)
$,
to obtain \eqref{E:DERIVATION_MU_CHECK_EQUATION_BOX_GAMMA_L_PRODUCT_RULE}, we have expanded this expression using
the Leibniz rule, where we treat $\Lunit^{\alpha}$ as a vectorfield under covariant differentiation and
we treat $\Chfour_\alpha$ as a one-form under covariant differentiation.}
\begin{align}
	\square_{\gfour(\vec{\Psi})} (\Chfour_{\Lunit})
	&=
	\Chfour_{\alpha} \square_{\gfour(\vec{\Psi})} \Lunit^{\alpha} 
	+
	\Lunit^{\alpha}  \square_{\gfour(\vec{\Psi})} \Chfour_{\alpha}
	+
	2 (\Dfour_\mu \Lunit^{\alpha}) \Dfour^\mu \Chfour_{\alpha}.
	\label{E:DERIVATION_MU_CHECK_EQUATION_BOX_GAMMA_L_PRODUCT_RULE}
\end{align}
We first handle the term $\square_{\gfour(\vec{\Psi})} \Lunit^{\alpha}$ in
\eqref{E:DERIVATION_MU_CHECK_EQUATION_BOX_GAMMA_L_PRODUCT_RULE}. 
Using the decomposition of $\gfour^{-1}$ relative to a null frame
(i.e., \eqref{E:GINVERSERELATIVETONULLFRAME})
and Lemma~\ref{L:CONNECTIONCOEFFICIENTS}, 
we compute that
\begin{align} \label{E:JAREDSTEP}
	\square_{\gfour(\vec{\Psi})} \Lunit^{\alpha} 
	& =
		- 
		\Dfour_{\uLunit} (\Dfour_\Lunit \Lunit^{\alpha})
		+ 
		\Dfour_{e_A} (\Dfour_{e_A} \Lunit^{\alpha})
		+ 
		\Dfour_{\Dfour_{\uLunit} \Lunit} \Lunit^{\alpha}
		- 
		\Dfour_{\Dfour_{e_A} e_A} \Lunit^{\alpha}
		-
		\frac{1}{2} \Riemfour^\alpha_{\  \Lunit \Lunit \uLunit}
			\\
	& = \angdiv \upchi_A e_A^{\alpha}
			+
			(\angdiv \underline{\upzeta}) \Lunit^{\alpha}
			+
			(\uLunit (k_{\spherenormal \spherenormal})) \Lunit^{\alpha}
			+ 
			\frac{1}{2} |\upchi|_{\gsphere}^2 \uLunit^{\alpha}
			+
			\frac{1}{2} \upchi_{AB} \underline{\upchi}_{AB} \Lunit^{\alpha}
				\notag \\
	& \ \ 
			-
			\mytr_{\gsphere} \upchi \upzeta_A e_A^{\alpha}
			-
			\frac{1}{2} \mytr_{\gsphere} \upchi k_{\spherenormal \spherenormal} \Lunit^{\alpha}
			+
			\frac{1}{2} \mytr_{\gsphere} \underline{\upchi} k_{\spherenormal \spherenormal}	\Lunit^{\alpha}
			+
			2 \upzeta_A \upchi_{AB} e_B^{\alpha}
			+
			2 k_{\spherenormal \spherenormal} \upzeta_A e_A^{\alpha}
			+ 
			\underline{\upzeta}_A \upchi_{AB} e_B^{\alpha}
			+
			|\underline{\upzeta}|_{\gsphere}^2 \Lunit^{\alpha}
				\notag \\
		& \ \
			-
			\frac{1}{2} \Riemfour^{\alpha}_{\  \Lunit \Lunit \uLunit}.
		\notag
\end{align}
Contracting \eqref{E:JAREDSTEP} against $\Chfour_{\alpha}$, 
we find that
\begin{align} \label{E:DERIVATION_MU_CHECK_EQUATION_BOX_L}
	\Chfour_{\alpha} \square_{\gfour(\vec{\Psi})} \Lunit^{\alpha} 
	&=
		(\angdiv \upchi_A) \Chfour_A
		+
		(\angdiv \underline{\upzeta}) \Chfour_{\Lunit}
		+
		(\uLunit (k_{\spherenormal \spherenormal})) \Chfour_{\Lunit}
		+ 
		\frac{1}{2} |\upchi|_{\gsphere}^2 \Chfour_{\uLunit}
		+
		\frac{1}{2} \upchi_{AB} \underline{\upchi}_{AB} \Chfour_{\Lunit}
				\\
	& \ \ 
			-
			\mytr_{\gsphere} \upchi \upzeta_A \Chfour_A
			-
			\frac{1}{2} \mytr_{\gsphere} \upchi k_{\spherenormal \spherenormal} \Chfour_{\Lunit}
			+
			\frac{1}{2} \mytr_{\gsphere} \underline{\upchi} k_{\spherenormal \spherenormal}	\Chfour_{\Lunit}
			+
			2 \upzeta_A \upchi_{AB} \Chfour_B
			+
			2 k_{\spherenormal \spherenormal} \upzeta_A \Chfour_A
			+ 
			\underline{\upzeta}_A \upchi_{AB} \Chfour_B
			+
			|\underline{\upzeta}|_{\gsphere}^2 \Chfour_{\Lunit}
				\notag \\
		& \ \
			-
			\frac{1}{2} \Chfour_{\alpha} \Riemfour^{\alpha}_{\  \Lunit \Lunit \uLunit}.
			\notag
\end{align}
Next, we again use \eqref{E:GINVERSERELATIVETONULLFRAME} and Lemma~\ref{L:CONNECTIONCOEFFICIENTS} 
to compute the last product in \eqref{E:DERIVATION_MU_CHECK_EQUATION_BOX_GAMMA_L_PRODUCT_RULE}:
\begin{align} \label{E:DERIVATION_MU_CHECK_EQUATION_CROSS_TERM_BOX}
	2 (\Dfour_\mu \Lunit^{\alpha}) \Dfour^\mu \Chfour_{\alpha}
	&=
	k_{\spherenormal \spherenormal} \Lunit^{\alpha} \Dfour_\Lunit \Chfour_\alpha
	- 
	2\upzeta_A e_A^\alpha \Dfour_\Lunit \Chfour_\alpha
	- 
	k_{\spherenormal \spherenormal} \Lunit^{\alpha} \Dfour_{\uLunit} \Chfour_\alpha
	-
	2 k_{A \spherenormal} \Lunit^{\alpha} \Dfour_A \Chfour_{\alpha}
	+
	2 \upchi_{AB} e_B^\alpha \Dfour_A \Chfour_{\alpha}.
\end{align}

Next we use the decomposition $\upchi_{AB} = \hat{\upchi}_{AB} + \frac{1}{2} \mytr_{\gsphere}\upchi \gsphere_{AB}$,
\eqref{E:MODTRICHISMALL}, 
and \eqref{E:ANGDIVTRACEFREEPARTOFCHI}
to rewrite the first product on RHS~\eqref{E:DERIVATION_MU_CHECK_EQUATION_BOX_L}
as follows:
\begin{align}
	(\angdiv \upchi_A) \Chfour_A
	&
	=	
	(\angDarg{A} \mytr_{\congsphere} \widetilde{\upchi}^{(Small)}) \Chfour_{A} 
	-
	(\angDarg{A} (\Chfour_{\Lunit})) \Chfour_{A} 
	-
	\hat{\upchi}_{AB} k_{B \spherenormal} \Chfour_{A}
	\label{E:DERIVATION_MU_CHECK_EQUATION_INTERMEDIATE_9} \\
& \ \
	+
	\frac{1}{2} \mytr_{\gsphere}\upchi k_{A \spherenormal} \Chfour_{A} 
	+ 
	\Riemfour_{A\Lunit A B} \Chfour_{B}.
	\notag
\end{align}

Moreover, we use the decomposition $\upchi_{AB} = \hat{\upchi}_{AB} + \frac{1}{2} \mytr_{\gsphere}\upchi \gsphere_{AB}$ 
and \eqref{E:MODTRICHISMALL} to rewrite the last product 
on RHS~\eqref{E:DERIVATION_MU_CHECK_EQUATION_CROSS_TERM_BOX} as follows,
where $\upxi$ denotes the $\gfour$-orthogonal projection onto $S_{t,u}$
of the one-form with Cartesian components $2\Chfour_{\alpha}$:
\begin{align}
	2 \upchi_{AB} e_B^\alpha \Dfour_A \Chfour_{\alpha}
	&=
	2 \hat{\upchi}_{AB} e_B^\alpha \angprojD_A \Chfour_{\alpha} 
	+\mytr_{\gsphere}\upchi  e_A^\alpha \angprojD_A \Chfour_{\alpha}
	\label{E:DERIVATION_MU_CHECK_EQUATION_INTERMEDIATE_8} \\
	& =
	\frac{2}{\rgeo} e_A^\alpha \angprojD_A \Chfour_{\alpha}
	+ \mytr_{\congsphere} \widetilde{\upchi}^{(Small)} e_A^\alpha \angprojD_A \Chfour_{\alpha}
	- \Chfour_{\Lunit}  e_A^\alpha \angprojD_A \Chfour_{\alpha}
	+2\hat{\upchi}_{AB}  e_B^\alpha \angprojD_A \Chfour_{\alpha}
\notag \\
	&=
	\frac{1}{\rgeo} \angdiv \upxi
	+ \mytr_{\congsphere} \widetilde{\upchi}^{(Small)} e_A^\alpha \angprojD_A \Chfour_{\alpha}
	- \Chfour_{\Lunit}  e_A^\alpha \angprojD_A \Chfour_{\alpha}
	+2\hat{\upchi}_{AB}  e_B^\alpha \angprojD_A \Chfour_{\alpha}.
	\notag
\end{align}

Moreover, using \eqref{E:RAYCHAUDHURI}, we derive the following
identity for the two $\Lunit \mytr_{\gsphere} \upchi$-involving
products on RHS~\eqref{E:DERIVATION_MU_CHECK_EQUATION_INTERMEDIATE_6}:
\begin{align} \label{E:JAREDRAY1}
	-2 (\Lunit \mytr_{\gsphere} \upchi) k_{\spherenormal \spherenormal}
	& = 
		(\mytr_{\gsphere} \upchi)^2
		k_{\spherenormal \spherenormal}
		+
		2 |\hat{\upchi}|_{\gsphere}^2 
		k_{\spherenormal \spherenormal}
		+ 2 
		\mytr_{\gsphere} \upchi
		(k_{\spherenormal \spherenormal})^2 
		+ 2
		\Ricfour_{\Lunit \Lunit} k_{\spherenormal \spherenormal},
			\\
	\frac{1}{2} (\Lunit \mytr_{\gsphere} \upchi) \Chfour_{\uLunit}
	& = 
		- 
		\frac{1}{4} (\mytr_{\gsphere} \upchi)^2 \Chfour_{\uLunit}
		-
		\frac{1}{2} |\hat{\upchi}|_{\gsphere}^2 \Chfour_{\uLunit}
		-
		\frac{1}{2} 
			\mytr_{\gsphere} \upchi
			k_{\spherenormal \spherenormal} 
			\Chfour_{\uLunit}
			-
			\frac{1}{2} \Ricfour_{\Lunit \Lunit} \Chfour_{\uLunit}.
		\label{E:JAREDRAY2}
\end{align}

We now use \eqref{E:JAREDRAY1}--\eqref{E:JAREDRAY2} to substitute for the relevant products on 
RHS~\eqref{E:DERIVATION_MU_CHECK_EQUATION_INTERMEDIATE_6},
we use \eqref{E:DERIVATION_MU_CHECK_EQUATION_BOX_GAMMA_L_PRODUCT_RULE}
to substitute for the first term
$\square_{\gfour(\vec{\Psi})} (\Chfour_{\Lunit})$ on RHS~\eqref{E:DERIVATION_MU_CHECK_EQUATION_INTERMEDIATE_6},
we use
\eqref{E:DERIVATION_MU_CHECK_EQUATION_BOX_L}--\eqref{E:DERIVATION_MU_CHECK_EQUATION_CROSS_TERM_BOX}
to substitute for the first and third products
on RHS~\eqref{E:DERIVATION_MU_CHECK_EQUATION_BOX_GAMMA_L_PRODUCT_RULE}
(specifically,
$
\Chfour_{\alpha} \square_{\gfour(\vec{\Psi})} \Lunit^{\alpha}$
and
$
	2 (\Dfour_\mu \Lunit^{\alpha}) \Dfour^\mu \Chfour_{\alpha}
$),
and we use
\eqref{E:DERIVATION_MU_CHECK_EQUATION_INTERMEDIATE_9}--\eqref{E:DERIVATION_MU_CHECK_EQUATION_INTERMEDIATE_8}
to substitute for the relevant products on
RHSs~\eqref{E:DERIVATION_MU_CHECK_EQUATION_BOX_L}--\eqref{E:DERIVATION_MU_CHECK_EQUATION_CROSS_TERM_BOX}.
Also using \eqref{E:CONNECTIONCOEFFICIENT}, in total, we compute that the following equation holds:
\begin{align}
	\Lunit \check{\upmu} 
	+
	\mytr_{\gsphere} \upchi \check{\upmu} 
	&
	=
	\Lunit^{\alpha} \square_{\gfour(\vec{\Psi})} \Chfour_{\alpha}
	-
	4 \Riemfour_{A B \Lunit B} \angDarg{A} \upsigma
	-
	2 \hat{\upchi}_{AB}\Riemfour_{A \Lunit \uLunit B} 
	+ 
	\Riemfour_{A \Lunit A B} \Chfour_{B} 
	-
	\frac{1}{2} \Chfour_{\alpha} \Riemfour^\alpha_{\  \Lunit \Lunit \uLunit}
		\label{E:DERIVATION_MU_CHECK_EQUATION_FINAL} \\
& \ \ 
	+ 
	\frac{1}{2}\mytr_{\gsphere}\upchi \Riemfour_{A\uLunit  \Lunit A}	
	+
	2
	\Ricfour_{\Lunit \Lunit} k_{\spherenormal \spherenormal}
	-
	\frac{1}{2} \Ricfour_{\Lunit \Lunit} \Chfour_{\uLunit}
	+	
	\check{\mathsf{Err}},
	\notag
\end{align}
where
\begin{align}
	\check{\mathsf{Err}}
	&=
	\frac{1}{\rgeo} \angdiv \upxi
	+ 
	\mytr_{\gsphere}\upchi 
	\left\lbrace
		\angdiv \underline{\upzeta}
		-
		\angdiv \upxi
	\right\rbrace
	+
	\frac{1}{\rgeo^2} \upxi
	+
	\frac{1}{4} 
	(\mytr_{\gsphere}\upchi)^2
	\Chfour_{\uLunit}
	+ 
	\frac{1}{2} |\upchi|_{\gsphere}^2 \Chfour_{\uLunit}
	+
	\frac{1}{2} \upchi_{AB} \underline{\upchi}_{AB} \Chfour_{\Lunit}
		\label{E:DERIVATION_MU_CHECK_EQUATION_REMAINDER} 
		\\
& \ \
	+	
	(\angDarg{A} \mytr_{\congsphere} \widetilde{\upchi}^{(Small)}) \Chfour_{A} 
	+
	(\angdiv \underline{\upzeta}) \Chfour_{\Lunit}
		\notag \\
	& \ \
	+
	k_{\spherenormal \spherenormal} \Lunit^{\alpha} \Dfour_\Lunit \Chfour_\alpha
	- 
	2\upzeta_A e_A^\alpha \Dfour_\Lunit \Chfour_\alpha
	- 
	k_{\spherenormal \spherenormal} \Lunit^{\alpha} \Dfour_{\uLunit} \Chfour_\alpha
	-
	2 k_{\spherenormal \spherenormal} \uLunit (\Chfour_{\Lunit})
		\notag \\
& \ \
	-
	(\angDarg{A} (\Chfour_{\Lunit})) \Chfour_{A} 
	-
	2 k_{A \spherenormal} \Lunit^{\alpha} \Dfour_A \Chfour_{\alpha}
	+ 
	\mytr_{\congsphere} \widetilde{\upchi}^{(Small)} e_A^\alpha \angprojD_A \Chfour_{\alpha}
	- 
	\Chfour_{\Lunit}  e_A^\alpha \angprojD_A \Chfour_{\alpha}
	+
	2\hat{\upchi}_{AB}  e_B^\alpha \angprojD_A \Chfour_{\alpha}
	-
	2 \underline{\upzeta}_A \angDarg{A} (\Chfour_{\Lunit})
		\notag \\
& \ \  
	-
	4 \hat{\upchi}_{AB} \angDarg{A} \widetilde{\upzeta}_B
	+
	2(\underline{\upzeta}_A	- \widetilde{\upzeta}_A)\angDarg{A} \mytr_{\gsphere} \upchi
	-
	(
		k_{A \spherenormal}\mytr_{\gsphere}\upchi
		-
		2\hat{\upchi}_{AB}k_{B \spherenormal}
	)
	\angDarg{A} \upsigma
	-
	2\upchi_{AB}\underline{\upzeta}_A \angDarg{B}\upsigma
\notag \\	
& \ \ 
			+
			2 \upchi_{AB} \upzeta_A  \Chfour_B
			+ 
			\upchi_{AB} \underline{\upzeta}_A \Chfour_B
				\notag \\
& \ \
	+
	\mytr_{\gsphere}\upchi 
	\left\lbrace
		- 
		|\underline{\upzeta}|_{\gsphere}^2 
		+ 
		4 (k_{\spherenormal \spherenormal})^2 
		+
		4 \underline{\upzeta}_A \upzeta_A
		-
		\frac{1}{2} k_{\spherenormal \spherenormal} \Chfour_{\Lunit}
		+
		\frac{1}{2} k_{A \spherenormal} \Chfour_{A} 
		-
		 \upzeta_A \Chfour_A
		-
		\frac{1}{2} 
		k_{\spherenormal \spherenormal} 
		\Chfour_{\uLunit}
	\right\rbrace
\notag \\
& \ \ 
	+ 
	\frac{1}{2} \mytr_{\gsphere} \upchi \hat{\upchi}_{AB}\hat{\underline{\upchi}}_{AB}
	+
	\mytr_{\gsphere} \underline{\upchi} |\hat{\upchi}|_{\gsphere}^2
\notag \\
&  \ \ 
	+
	\hat{\upchi}_{AB}
	\left\lbrace
		-
		4 \upzeta_A\upzeta_B
		+
		\underline{\upzeta}_A \Chfour_{B}
	\right\rbrace
	-
	\frac{1}{2} 
	|\hat{\upchi}|_{\gsphere}^2 \Chfour_{\uLunit}
\notag 
	\\
& \ \
			+
			2 k_{\spherenormal \spherenormal} \upzeta_A \Chfour_A
			+
			|\underline{\upzeta}|_{\gsphere}^2 \Chfour_{\Lunit}
			\notag \\
& \ \
	- 
	\mytr_{\gsphere} \upchi \gensmoothfunction_{(\vec{\Lunit})}
	\cdot
	(
		\pmb{\partial} \vec{\Psi},\mytr_{\congsphere} \widetilde{\upchi}^{(Small)},\hat{\upchi},\rgeo^{-1}
	)
	\cdot
	\pmb{\partial} \vec{\Psi}
\notag \\
& \ \ 
	-\uLunit 
	\left\lbrace
		\uplambda^{-1} \gensmoothfunction_{(\vec{\Lunit})} \cdot (\vec{\VortVort},\DivGradEnt)
		+ \gensmoothfunction_{(\vec{\Lunit})}
		\cdot
		\pmb{\partial} \vec{\Psi}
		\cdot
		\pmb{\partial} \vec{\Psi} 
	\right\rbrace
\notag \\
& \ \ 
	-\mytr_{\gsphere}\underline{\upchi}		 	 
	 \left\lbrace
	 	\uplambda^{-1} \gensmoothfunction_{(\vec{\Lunit})} \cdot (\vec{\VortVort},\DivGradEnt)
		+ \gensmoothfunction_{(\vec{\Lunit})}
		\cdot
		\pmb{\partial} \vec{\Psi}
		\cdot
		\pmb{\partial} \vec{\Psi}
	\right\rbrace
\notag \\
& \ \ 
	-\mytr_{\gsphere}\upchi 	
	\left\lbrace
		\uplambda^{-1} \gensmoothfunction_{(\vec{\Lunit})} \cdot (\vec{\VortVort},\DivGradEnt)
		+ \gensmoothfunction_{(\vec{\Lunit})}
		\cdot
		(
			\pmb{\partial} \vec{\Psi},\upzeta
		)
		\cdot
		\pmb{\partial} \vec{\Psi}
	\right\rbrace.
	\notag
\end{align}

With the help of 
the decomposition 
$\upchi_{AB} = \hat{\upchi}_{AB} + \frac{1}{2} \mytr_{\gsphere}\upchi \gsphere_{AB}$,
definition \eqref{E:MODTRICHISMALL}
(which implies that schematically, we have
$ \mytr_{\gsphere} \upchi
=
\gensmoothfunction_{(\vec{\Lunit})}
\cdot
		(
			\pmb{\partial} \vec{\Psi},\mytr_{\congsphere} \widetilde{\upchi}^{(Small)},\rgeo^{-1}
		)
$), 
definition \eqref{E:MODTORSION},
the identity $\upchi_{AB} + \underline{\upchi}_{AB} = -2 k_{AB}$
(see \eqref{E:CONNECTIONCOEFFICIENT}),
and Lemma~\ref{L:ANGULARDERIVATIVESOFSOMESCALARFUNCTIONS},
we verify by direct inspection that all terms on RHS~\eqref{E:DERIVATION_MU_CHECK_EQUATION_REMAINDER} 
can be accommodated into 
RHS~\eqref{E:SECONDINHOMTERMMODIFIEDMASSASPECTEVOLUTIONEQUATION},
aside from the terms on the first line of RHS~\eqref{E:DERIVATION_MU_CHECK_EQUATION_REMAINDER},
which split into terms of type 
RHS~\eqref{E:FIRSTINHOMTERMMODIFIEDMASSASPECTEVOLUTIONEQUATION}
and of
type RHS~\eqref{E:SECONDINHOMTERMMODIFIEDMASSASPECTEVOLUTIONEQUATION}.

To finish the proof of \eqref{E:MODIFIEDMASSASPECTEVOLUTIONEQUATION}, it remains
only for us to verify that the remaining terms on RHS~\eqref{E:DERIVATION_MU_CHECK_EQUATION_FINAL}
have the form of terms on 
either RHS~\eqref{E:FIRSTINHOMTERMMODIFIEDMASSASPECTEVOLUTIONEQUATION} or
RHS~\eqref{E:SECONDINHOMTERMMODIFIEDMASSASPECTEVOLUTIONEQUATION}.
First, using \eqref{E:RIEMCALBCONTRACTEDDECOMP}, we see that the term 
$-4 \Riemfour_{A B \Lunit B} \angDarg{A} \upsigma$
can be accommodated into the terms on RHS~\eqref{E:SECONDINHOMTERMMODIFIEDMASSASPECTEVOLUTIONEQUATION}
featuring a factor of $\angD \upsigma$.
Next, to handle the term 
$-
	2 \hat{\upchi}_{AB}\Riemfour_{A \Lunit \uLunit B}$,
we first note that since the Cartesian components $\gfour_{\alpha \beta}$ are of the schematic form $\gfour_{\alpha \beta} = \gensmoothfunction(\vec{\Psi})$,
the standard expression for the components of $\Riemfour$
in terms of the Christoffel symbols of $\gfour$ 
and their first derivatives yields that relative to the Cartesian coordinates, 
we have  
$\Riemfour_{\alpha \beta \gamma \delta} 
= 
\gensmoothfunction(\vec{\Psi}) \cdot \pmb{\partial}^2 \vec{\Psi}
+
\gensmoothfunction(\vec{\Psi}) \cdot (\pmb{\partial} \vec{\Psi})^2$.
It follows that, schematically, we have
$
-
	2 \hat{\upchi}_{AB}\Riemfour_{A \Lunit \uLunit B}
= 	\gensmoothfunction_{(\vec{\Lunit})}  \cdot \hat{\upchi} \cdot \pmb{\partial}^2 \vec{\Psi}
+
\gensmoothfunction_{(\vec{\Lunit})}  \cdot \hat{\upchi} \cdot (\pmb{\partial} \vec{\Psi})^2
$,
which is of the form of the next-to-last and last products 
on RHS~\eqref{E:SECONDINHOMTERMMODIFIEDMASSASPECTEVOLUTIONEQUATION}.
Using the schematic relations 
$\Chfour_{\alpha} = \gensmoothfunction(\vec{\Psi}) \cdot \pmb{\partial} \vec{\Psi}$,
$\Chfour_{\uLunit} = \gensmoothfunction_{(\vec{\Lunit})} \cdot \pmb{\partial} \vec{\Psi}$,
and
$
k_{\spherenormal \spherenormal} = \gensmoothfunction_{(\vec{\Lunit})} \cdot \pmb{\partial} \vec{\Psi}$,
we can handle the terms
$
\Riemfour_{A \Lunit A B} \Chfour_{B}$,
$ -
	\frac{1}{2} \Chfour_{\alpha} \Riemfour^\alpha_{\  \Lunit \Lunit \uLunit}$,
$	
2
	\Ricfour_{\Lunit \Lunit} k_{\spherenormal \spherenormal}
$,
and
$-
	\frac{1}{2} \Ricfour_{\Lunit \Lunit} \Chfour_{\uLunit}
$
using a similar argument,
which allows us to incorporate these error terms into
the next-to-last and last products 
on RHS~\eqref{E:SECONDINHOMTERMMODIFIEDMASSASPECTEVOLUTIONEQUATION}.
To handle
$
\frac{1}{2}\mytr_{\gsphere} \upchi \Riemfour_{A \uLunit  \Lunit A}	
$,
we use \eqref{E:RIEMALUNDERLINELBCONTRACTEDDECOMPINVOLVINGVORTANDENT} 
to substitute for $\Riemfour_{A \uLunit  \Lunit A}$
and \eqref{E:MODTRICHISMALL};
this leads to terms of the form
RHSs~\eqref{E:FIRSTINHOMTERMMODIFIEDMASSASPECTEVOLUTIONEQUATION}--\eqref{E:SECONDINHOMTERMMODIFIEDMASSASPECTEVOLUTIONEQUATION}.
To treat the remaining term $\Lunit^{\alpha} \square_{\gfour} \Chfour_{\alpha}$ on 
RHS~\eqref{E:DERIVATION_MU_CHECK_EQUATION_FINAL}, 
we first recall that $\Chfour_{\alpha} = \gensmoothfunction(\vec{\Psi}) \cdot \pmb{\partial} \vec{\Psi}$.
Thus,
we can commute equation \eqref{E:RESCALEDCOVARIANTWAVE}
with
$
\gensmoothfunction(\vec{\Psi}) \cdot \pmb{\partial}
$
(recall that we have dropped the ``$\uplambda$'' subscripts featured in \eqref{E:RESCALEDCOVARIANTWAVE})
to 
conclude that
$\Lunit^{\alpha} \square_{\gfour} \Chfour_{\alpha}$
can be accommodated into the terms on RHS~\eqref{E:SECONDINHOMTERMMODIFIEDMASSASPECTEVOLUTIONEQUATION}
as desired. We clarify that when one commutes equation \eqref{E:RESCALEDCOVARIANTWAVE}
with
$
\gensmoothfunction(\vec{\Psi}) \cdot \pmb{\partial}
$, a source term appears from the RHS  \eqref{E:RESCALEDCOVARIANTWAVE} that is of the form $\uplambda^{-1} \gensmoothfunction(\vec{\Psi}) \cdot \pmb{\partial}\vec{\Psi} \cdot (\vec{\VortVort},\DivGradEnt)$. One then uses equations \eqref{E:RESCALEDRENORMALIZEDCURLOFSPECIFICVORTICITY} and \eqref{E:RESCALEDRENORMALIZEDDIVOFENTROPY} to express $\vec{\VortVort} = \gensmoothfunction(\vec{\Psi}) \cdot \pmb{\partial} \vec{\vortrenormalized} + \gensmoothfunction(\vec{\Psi}) \cdot \vec{\GradEnt} \cdot \pmb{\partial} \vec{\Psi}$ and $\DivGradEnt = \gensmoothfunction(\vec{\Psi}) \pmb{\partial} \vec{S} + \gensmoothfunction(\vec{\Psi}) \cdot  \vec{S} \cdot \pmb{\partial} \vec{\Psi}$. In particular, 
this leads to the presence of the terms of type $\uplambda^{-1} \gensmoothfunction(\vec{\Psi}) \cdot \pmb{\partial} \vec{\Psi} \cdot \vec{S} \cdot \pmb{\partial} \vec{\Psi}$.
This finishes the proof of \eqref{E:MODIFIEDMASSASPECTEVOLUTIONEQUATION}
and completes our proof sketch of the proposition.

\end{proof}

\subsection{Norms}
\label{SS:GEOMETRICNORMS}
In this subsection, we define the norms that we will use to control the acoustic geometry.
These norms are stated in terms of the volume forms defined in
Subsubsect.\,\ref{SSS:METRICSANDVOLUMEFORMSINGEOMETRICCOORDINATES}.

\begin{definition}[Norms]
	\label{D:GEOMETRICNORMS}
	For $S_{t,u}$-tangent tensorfields $\upxi$ and $q \in [1,\infty)$, we define
	\begin{subequations}
	\begin{align} \label{E:GEOMETRICSTULQNORM}
		\| \upxi \|_{L_{\gsphere}^q(S_{t,u})}
		&
		:=
		\left\lbrace
		\int_{\upomega \in \mathbb{S}^2}
					|\upxi(t,u,\upomega)|_{\gsphere}^q
				\, d \spherevolarg{t}{u}{\upomega}
		\right\rbrace^{1/q}, 
		&&		\\
		\| \upxi \|_{L_{\upomega}^q(S_{t,u})}
		& := 
				\left\lbrace
				\int_{\upomega \in \mathbb{S}^2}
					|\upxi(t,u,\upomega)|_{\gsphere}^q
				\, d \flatspherevolarg{\upomega}
				\right\rbrace^{1/q},
		&
		\| \upxi \|_{L_{\upomega}^{\infty}(S_{t,u})}
		& := \mbox{ess sup}_{\upomega \in \mathbb{S}^2} |\upxi(t,u,\upomega)|_{\gsphere}.
	\end{align}
	\end{subequations}
	
	Moreover, if $q_1 \in [1,\infty)$ and $q_2 \in [1,\infty]$, then 
	with $[u]_+ := \max \lbrace 0,u \rbrace$ denoting the minimum value of $t$
	along $\widetilde{\mathcal{C}}_u$, 
	we define
	\begin{align} \label{E:MIXEDNORMCU}
		\| \upxi \|_{L_t^{q_1} L_{\upomega}^{q_2}(\widetilde{\mathcal{C}}_u)}
		& 
		:=
		\left\lbrace
		\int_{[u]_+}^{\RescaledTboot}
			\| \upxi \|_{L_{\upomega}^{q_2}(S_{\uptau,u})}^{q_1}
		\, d \uptau
		\right\rbrace^{1/q_1}, 
		&
		\| \upxi \|_{L_t^{\infty} L_{\upomega}^{q_2}(\widetilde{\mathcal{C}}_u)} 
		& := 
		\mbox{ess sup}_{\uptau \in [[u]_+,\RescaledTboot]}
		\| \upxi \|_{L_{\upomega}^{q_2}(S_{t,u})}.
	\end{align}
	
	Moreover, if $q_1 \in [1,\infty)$ and $q_2 \in [1,\infty]$, then
	noting that
	$-\frac{4}{5} \RescaledTboot \leq u \leq t$ along $\widetilde{\Sigma}_t$, we define
	\begin{align}\label{E:MIXEDNORMSIGMAT}
		\| \upxi \|_{L_u^{q_1} L_{\upomega}^{q_2}(\widetilde{\Sigma}_t)}
		& :=
		\left\lbrace
		\int_{-\frac{4}{5} \RescaledTboot}^t
			\| \upxi \|_{L_{\upomega}^{q_2}(S_{t,u})}^{q_1}
		\, d u
		\right\rbrace^{1/q_1},
		&
		\| \upxi \|_{L_u^{\infty} L_{\upomega}^{q_2}(\widetilde{\Sigma}_t)} 
		& := 
		\mbox{ess sup}_{u \in [-\frac{4}{5} \RescaledTboot,t]}
		\| \upxi \|_{L_{\upomega}^{q_2}(S_{\uptau,u})}.
		\end{align}
		Similarly, if $q_1, q_2 \in [1,\infty)$, then
		\begin{align}
		\| \upxi \|_{L_u^{q_1} L_{\gsphere}^{q_2}(\widetilde{\Sigma}_t)}
		& :=
		\left\lbrace
		\int_{-\frac{4}{5} \RescaledTboot}^t
			\| \upxi \|_{L_{\gsphere}^{q_2}(S_{t,u})}^{q_1}
		\, d u
		\right\rbrace^{1/q_1},
		&
		\| \upxi \|_{L_u^{\infty} L_{\gsphere}^{q_2}(\widetilde{\Sigma}_t)}
		& :=
		\mbox{ess sup}_{u \in [-\frac{4}{5} \RescaledTboot,t]}
		\| \upxi \|_{L_{\gsphere}^{q_2}(S_{t,u})}^{q_1}.
	\end{align}
		
	Similarly, if $q_1, q_2, q_3 \in [1,\infty)$, 
	then we define
	\begin{subequations}
	\begin{align}
		\| \upxi \|_{L_t^{q_1} L_u^{q_2} L_{\upomega}^{q_3}(\widetilde{\mathcal{M}})}
		& :=
		\left\lbrace
		\int_0^{\RescaledTboot}
			\| \upxi \|_{L_u^{q_2} L_{\upomega}^{q_3}(\widetilde{\Sigma}_{\uptau})}^{q_1}
		\, d \uptau
		\right\rbrace^{1/q_1},
			\label{E:UFIRSTSPACETIMEMIXEDNORMS} \\
	\| \upxi \|_{L_u^{q_1} L_t^{q_2} L_{\upomega}^{q_3}(\widetilde{\mathcal{M}})}
		& :=
		\left\lbrace
		\int_{-\frac{4}{5} \RescaledTboot}^{\RescaledTboot}
			\| \upxi \|_{L_t^{q_2} L_{\upomega}^{q_3}(\widetilde{\mathcal{C}}_u)}^{q_1}
		\, d u
		\right\rbrace^{1/q_1},
			\label{E:TFIRSTSPACETIMEMIXEDNORMS} \\
	\| \upxi \|_{L_t^q L_x^{\infty}(\widetilde{\mathcal{M}})}
	& := 
		\left\lbrace
		\int_0^{\RescaledTboot}
			\| \upxi \|_{L^{\infty}(\widetilde{\Sigma}_{\uptau})}^q
		\, d \uptau
		\right\rbrace^{1/q},
			\label{E:TSECONDSPACETIMEMIXEDNORMS} \\
	\| \upxi \|_{L^{\infty}(\widetilde{\mathcal{M}})}
	& := \mbox{ess sup}_{t \in [0,\RescaledTboot], \, u \in [-\frac{4}{5} \RescaledTboot,t], \, \upomega \in \mathbb{S}^2} |\upxi(t,u,\upomega)|_{\gsphere}.
	\label{E:LINFINITYNORMONWHOLEDOMAIN}
	\end{align}
	\end{subequations}		
	We also extend the definitions \eqref{E:UFIRSTSPACETIMEMIXEDNORMS}--\eqref{E:TFIRSTSPACETIMEMIXEDNORMS} to allow  
	$q_1, q_2, q_3 \in [1,\infty]$ by making the obvious modifications.
	We also define, by making the obvious modifications in \eqref{E:UFIRSTSPACETIMEMIXEDNORMS}--\eqref{E:LINFINITYNORMONWHOLEDOMAIN},
	norms in which the set $\widetilde{\mathcal{M}}$ 
	is replaced with the set $\widetilde{\mathcal{M}}^{(Int)}$ (see \eqref{E:INTERIORANDEXTERIOREGIONSAREUNIONSOFSPHERES}).
	For example, if $q_1, q_2, q_3 \in [1,\infty)$, then 
	$\| \upxi \|_{L_t^{q_1} L_u^{q_2} L_{\upomega}^{q_3}(\widetilde{\mathcal{M}}^{(Int)})}
		:=
		\left\lbrace
		\int_0^{\RescaledTboot}
				\left\lbrace
				\int_0^{\uptau}
					\left\lbrace
					\int_{\upomega \in \mathbb{S}^2}
						|\upxi(t,u,\upomega)|_{\gsphere}^{q_3}
					\, d \flatspherevolarg{\upomega}
					\right\rbrace^{q_2/q_3}
					\, du
				\right\rbrace^{q_1/q_2}
		\, d \uptau
		\right\rbrace^{1/q_1}
	$.
	
	Next, for $q \in [1,\infty)$, we define the following norms, where 
	\eqref{E:WEIGHTEDLTFINITYFIRSTNORM} and \eqref{E:WEIGHTEDLUFINITYFIRSTNORM} involve $\volrat$ 
	(see definition \eqref{E:RATIOOFVOLUMEFORMS}),
	such that an $L^{\infty}$ norm in $t$ or $u$ acts first:
	\begin{subequations}
	\begin{align}
		\| \upxi \|_{L_{\gsphere}^q L_t^{\infty}(\widetilde{\mathcal{C}}_u)}
		& := 
				\left\lbrace
				\int_{\mathbb{S}^2}
					\mbox{ess sup}_{t \in [[u]_+,\RescaledTboot]} 
						\left(
							\volrat(t,u,\upomega)
							|\upxi (t,u,\upomega)|_{\gsphere}^q  
						\right)
				\, d \flatspherevolarg{\upomega}
				\right\rbrace^{1/q},
					\label{E:WEIGHTEDLTFINITYFIRSTNORM} \\
		\| \upxi \|_{L_{\upomega}^q L_t^{\infty}(\widetilde{\mathcal{C}}_u)}
		& := 
				\left\lbrace
				\int_{\mathbb{S}^2}
					\mbox{ess sup}_{t \in [[u]_+,\RescaledTboot]}  |\upxi (t,u,\upomega)|_{\gsphere}^q
				\, d \flatspherevolarg{\upomega}
				\right\rbrace^{1/q},
					\\
		\| \upxi \|_{L_{\gsphere}^q L_u^{\infty}(\widetilde{\Sigma}_t)}^q
		& := 
					\left\lbrace
					\int_{\upomega \in \mathbb{S}^2}
					\mbox{ess sup}_{u \in [-\frac{4}{5} \RescaledTboot,t]}
					\left(
						\volrat(t,u,\upomega)
						|\upxi(t,u,\upomega)|_{\gsphere}^q
				\right)
				\, d \flatspherevolarg{\upomega}
				\right\rbrace^{1/q}.
				\label{E:WEIGHTEDLUFINITYFIRSTNORM}
	\end{align}
	\end{subequations}

\end{definition}

\subsection{The fixed number $p$}
In the rest of the article, $p > 2$ denotes a fixed number with 
\begin{align} \label{E:BOUNDSONLEBESGUEEXPONENTP}
	 0 & < \updelta_0 < 1 - \frac{2}{p} < \Sob - 2,
\end{align}
where $\updelta_0$ is the parameter that we fixed in \eqref{E:DELTA0DEF}.
$p$ will appear in many of our ensuing estimates.

\subsection{H\"{o}lder norms in the geometric angular variables}
\label{SS:HOLDERNORMSINGEOMETRICANGULARVARIABLES}
	Some of our elliptic estimates for $\hat{\upchi}$ involve H\"{o}lder norms in the geometric angular variables,
	which we define in this subsection. We remind the reader that
	$\stgsphere$ denotes the standard round metric on the Euclidean unit sphere $\mathbb{S}^2$.
	In the rest of the paper, for points $\upomega_{(1)}, \upomega_{(2)} \in \mathbb{S}^2$,
	we denote their distance with respect to $\stgsphere$
	by $d_{\stgsphere}(\upomega_{(1)},\upomega_{(2)})$
	In particular, $d_{\stgsphere}(\upomega_{(1)},\upomega_{(2)}) \leq \pi$.
	
	To proceed, for each pair of points $\upomega_{(1)}, \upomega_{(2)} \in \mathbb{S}^2$
	with $d_{\stgsphere}(\upomega_{(1)},\upomega_{(2)}) < \pi$
	and for each pair $m,n$ of non-negative integers,
	let $\Phi_n^m(\upomega_{(1)};\upomega_{(2)}) : (T_n^m)_{\upomega_{(1)}}(\mathbb{S}^2) \rightarrow (T_n^m)_{\upomega_{(2)}}(\mathbb{S}^2)$,
	$\upxi \rightarrow \Phi_n^m(\upomega_{(1)};\upomega_{(2)})[\upxi]$,
	denote the parallel transport operator with respect to $\stgsphere$, 
	where $(T_n^m)_{\upomega}(\mathbb{S}^2)$ denotes the vector space of type $\binom{m}{n}$ tensors at $\upomega \in \mathbb{S}^2$.
	Note that
	$\Phi_n^m(\upomega_{(1)};\upomega_{(2)})$
	provides a linear isomorphism between type $\binom{m}{n}$ tensors $\upxi$ at $\upomega_{(1)}$ 
	and type $\binom{m}{n}$ tensors at $\upomega_{(2)}$ by parallel transport
	along the unique $\stgsphere$-geodesic connecting $\upomega_{(1)}$ and $\upomega_{(2)}$.
	From the basic properties of parallel transport, it follows that $\Phi_n^m$ respects tensor products and contractions.
	That is, if $\upxi_{(1)} \cdot \upxi_{(2)}$ schematically denotes the tensor product of $\upxi_{(1)}$ and $\upxi_{(2)}$
	possibly followed by some contractions, 
	then
	$\Phi_n^m(\upomega_{(1)};\upomega_{(2)})[\upxi_{(1)} \cdot \upxi_{(2)}]  
	= 
	\Phi_n^m(\upomega_{(1)};\upomega_{(2)})[\upxi_{(1)}] \cdot 
	\Phi_n^m(\upomega_{(1)};\upomega_{(2)})[\upxi_{(2)}]$.
	If $\upxi = \upxi(\upomega)$ is a type $\binom{m}{n}$ tensorfield on $\mathbb{S}^2$ and $d_{\stgsphere}(\upomega_{(1)};\upomega_{(2)}) < \pi$, 
	then we define\footnote{For example, if $\upxi = \upxi(\upomega)$ is a scalar function,
	then $\upxi^{\parallel}(\upomega_{(1)};\upomega_{(2)}) = \upxi(\upomega_{(1)})$.
	As a second example, if $\upxi = \upxi(\upomega)$ is a one-form,
	then in a local angular coordinate chart containing the point $\upomega_{(2)}$, 
	we have, for $\upomega_{(1)}$ close to $\upomega_{(2)}$:
	$\upxi^{\parallel}(\upomega_{(1)};\upomega_{(2)})(\frac{\partial}{\partial \upomega^A}|_{\upomega_{(2)}})
	=
	M_A^B(\upomega_{(1)};\upomega_{(2)}) \upxi(\upomega_{(1)})(\frac{\partial}{\partial \upomega^B}|_{\upomega_{(1)}})$,
	where the $M_A^B(\upomega_{(1)};\upomega_{(2)})$ are smooth functions of $\upomega_{(1)}$ and $\upomega_{(2)}$
	such that for $A,B,C = 1,2$, we have $M_A^B(\upomega_{(C)},\upomega_{(C)}) = \updelta_A^B$,
	where $\updelta_A^B$ is the Kronecker delta. That is, for $C=1,2$, $\upxi^{\parallel}(\upomega_{(C)};\upomega_{(C)}) = \upxi(\upomega_{(C)})$.}  
	$\upxi^{\parallel}(\upomega_{(1)};\upomega_{(2)}) 
	:= \Phi_n^m(\upomega_{(1)};\upomega_{(2)})[\upxi(\upomega_{(1)})] \in (T_n^m)_{\upomega_{(2)}}(\mathbb{S}^2)$.
	Note that $(\upxi_{(1)} \cdot \upxi_{(2)})^{\parallel}(\upomega_{(1)};\upomega_{(2)}) 
	=
	\upxi_{(1)}^{\parallel}(\upomega_{(1)};\upomega_{(2)})
	\cdot
	\upxi_{(2)}^{\parallel}(\upomega_{(1)};\upomega_{(2)})
	$.

\begin{definition}[H\"{o}lder norms in the geometric angular variables]
	\label{D:HOLDERNORMSINGEOMETRICANGULARVARIABLES}
	For constants $\upbeta \in (0,1)$, we define
	\begin{subequations}
	\begin{align} \label{E:HOMOGENEOUSHOLDERNORMSINGEOMETRICANGULARVARIABLES}
	\| \upxi \|_{\dot{C}_{\upomega}^{0,\upbeta}(S_{t,u})}
	& := 
			\sup_{0 < d_{\stgsphere}(\upomega_{(2)},\upomega_{(1)}) < \frac{\pi}{2}}	
			\frac{\rgeo^{(m-n)} \left|\upxi(t,u,\upomega_{(1)}) - \upxi^{\parallel}(t,u,\upomega_{(2)};\upomega_{(1)}) \right|_{\stgsphere(\upomega_{(1)})}}{
			d_{\stgsphere}^{\upbeta}(\upomega_{(1)};\upomega_{(2)})},
				\\
	\| \upxi \|_{C_{\upomega}^{0,\upbeta}(S_{t,u})}
	& := 
			\| \upxi \|_{L_{\upomega}^{\infty}(S_{t,u})}
			+
			\| \upxi \|_{\dot{C}_{\upomega}^{0,\upbeta}(S_{t,u})}.
			\label{E:HOLDERNORMSINGEOMETRICANGULARVARIABLES}
\end{align}
\end{subequations}
\end{definition}

Note that our bootstrap assumption \eqref{E:BOOTSTRAPMETRICAPPROXIMATELYROUND} below implies that
if $\upxi$ is type $\binom{m}{n}$, then the denominator on RHS~\eqref{E:HOMOGENEOUSHOLDERNORMSINGEOMETRICANGULARVARIABLES} satisfies
\begin{align} \label{E:STNORMVSGSPHERENORMCOMPARISONWITHPOWERSOFR}
\rgeo^{(m-n)}
\left|\upxi(t,u,\upomega_{(1)}) - \upxi^{\parallel}(t,u,\upomega_{(2)};\upomega_{(1)}) \right|_{\stgsphere(\upomega_{(1)})}
\approx
\left|\upxi(t,u,\upomega_{(1)}) - \upxi^{\parallel}(t,u,\upomega_{(2)};\upomega_{(1)}) \right|_{\gsphere(t,u,\upomega_{(1)})}.
\end{align}

In Sect.\,\ref{S:ESTIMATESFOREIKONALFUNCTION}, 
we will also use mixed norms that are defined
by replacing the $L_{\upomega}^{\infty}$ norm from Subsect.\,\ref{SS:GEOMETRICNORMS} 
with the $C_{\upomega}^{0,\updelta_0}$ norm.
For example, for $q \in [1,\infty)$, we define
\begin{align} \label{E:UFIRSTHOLDERANGULARSPACETIMEMIXEDNORMS}
\| 
	\upxi
\|_{L_t^q L_u^{\infty} C_{\upomega}^{0,\updelta_0}(\widetilde{\mathcal{M}}^{(Int)})}
& :=
		\left\lbrace
		\int_0^{\RescaledTboot}
			\mbox{ess sup}_{u \in [0,\uptau]}
			\| \upxi \|_{C_{\upomega}^{0,\updelta_0}(S_{\uptau,u})}^q
		\, d \uptau
		\right\rbrace^{1/q},
\end{align}
and we extend definition \eqref{E:UFIRSTHOLDERANGULARSPACETIMEMIXEDNORMS} to the case $q=\infty$ by making the obvious modification.

\subsection{\texorpdfstring{The initial foliation on $\Sigma_0$}{The initial foliation on Sigma 0}}
\label{SS:INITIALFOLIATION}
In this subsection, we state Proposition~\ref{P:INITIALFOLIATION}, which 
yields the existence of an initial condition
for the eikonal function $u$ (see Subsect.\,\ref{SS:EIKONAL}) 
featuring a variety of properties that we exploit in our analysis.
More precisely, as we mentioned in Subsubsect.\,\ref{SSS:EIKONALEXTERIOR},
we set $u|_{\Sigma_0} := -w$, where $w$ is the function yielded by the proposition.
The proof of the proposition is the same as in \cite{qW2017} and we therefore omit it. 
In particular, the key equation \eqref{E:EQNOFINITIALFOLIATION} stated below is exactly the same as in \cite{qW2017}. 
The proof of Proposition~\ref{P:INITIALFOLIATION} relies on the regularity of the Ricci curvature of the spatial metric induced on
$\Sigma_0$, and the regularity is exactly the same as in \cite{qW2017}. 
More precisely, 
since the spatial metric $g$ satisfies $g_{ij} = g_{ij}(\vec{\Psi})$, 
the regularity of the spatial Ricci curvature on
$\Sigma_0$ is controlled by the energy estimates\footnote{As we highlighted in Remark~\ref{R:REMARKSONRESCALING}, 
the hypersurface that we denote by ``$\Sigma_0$''
here corresponds to the hypersurface that we denoted by 
``$\Sigma_{t_k}$'' in Sects.\,\ref{S:DATAANDBOOTSTRAPASSUMPTION}--\ref{S:ELLIPTICESTIMATESINHOLDERSPACES}.
Hence, to control the appropriate Sobolev norms of $\vec{\Psi}$ along these hypersurfaces,
we need the energy estimates.
We also point out that Props.\,\ref{P:PRELIMINARYENERGYANDELLIPTICESTIMATES} and \ref{P:TOPORDERENERGYESTIMATES}
yield energy estimates for the non-rescaled solution variables,
while in the expression ``$g_{ij}(\vec{\Psi})$'' in the present section,
$\vec{\Psi}$ denotes the rescaled solution (see Subsect.\,\ref{SS:NOMORELAMBDA}).
Hence, one needs to account for the rescaling
when controlling the size of the $L^2$ norms of the derivatives of 
$g_{ij}(\vec{\Psi})$
(such bounds are needed to prove Proposition~\ref{P:INITIALFOLIATION}
using the arguments given in \cite{arxivqW2014}*{Appendix~C}).
\label{FN:FIRSTVERSIONINITIALFOLIATIONCAREFULABOUTHYPERSURFACES}}    
we already derived in 
Props.\,\ref{P:PRELIMINARYENERGYANDELLIPTICESTIMATES} and \ref{P:TOPORDERENERGYESTIMATES}, 
and we stress that our energy estimates for $\vec{\Psi}$ are the same as the energy estimates derived in \cite{qW2017}.
Proposition~\ref{P:INITIALFOLIATION} provides, in particular, initial conditions
for various tensorfields constructed out of the eikonal function
that are relevant for the study of $\widetilde{\mathcal{M}}^{(Ext)}$.
We emphasize how important the proposition is for the viability of our approach. For example,
if we had instead chosen the ``simpler'' initial condition  $u|_{\Sigma_0} := - r$, where $r$ 
is the standard Euclidean radial variable,
then given the limited regularity of the fluid solution,
the null mean curvature of the spheres $\lbrace r = \mbox{\upshape const} \rbrace$
with respect to the metric induced on them by the acoustical metric $\gfour(\vec{\Psi})$
\emph{would not generally have enjoyed any useful quantitative pointwise boundedness properties}.
This could have led to the instantaneous formation of null focal points\footnote{More precisely, 
this would have led to the possibility that $\| \mytr_{\congsphere} \widetilde{\upchi}^{(Small)} \|_{L^1([0,T])L_x^{\infty}}$
is infinite no matter how small $T$ is; see, for example, the proofs of 
\eqref{E:QUANTITATIVECONTROLOFCARTESIANCOMPONENTSRESCALEDGEOMETRICCOORDINATEANGULARDERIVATIVEVECTORVIELDINALLREGIONS}
and \eqref{E:INTEGRALCURVESOFLUNITSEPARATEDESTIMATEFORSMALLANGLES}
for clarification on the connection between having quantitative control of time integrals of 
$\| \mytr_{\congsphere} \widetilde{\upchi}^{(Small)} \|_{L_x^{\infty}(\Sigma_t)}$
and obtaining control over the local separation of the integral curves of $\Lunit$.
\label{FN:NULLFOCAL}}
and the breakdown of our geometric coordinate system.
In contrast, \eqref{E:EQUIVALENTEQNOFINITIALFOLIATION} and the estimates
of the proposition imply, for example, that 
$\| \rgeo^{1/2} \mytr_{\congsphere} \widetilde{\upchi}^{(Small)} \|_{L^{\infty}(\Sigma_0)} \lesssim \uplambda^{-1/2}$.
Initial condition bounds of this type play a crucial role in the proof of Prop.\,\ref{P:MAINESTIMATESFOREIKONALFUNCTIONQUANTITIES},
which provides the main estimates for the acoustic geometry.

\begin{proposition}[Existence and properties of the initial foliation]
	\label{P:INITIALFOLIATION}
	On the hypersurface\footnote{As we highlighted in Remark~\ref{R:REMARKSONRESCALING}, 
	the hypersurface that we denote by ``$\Sigma_0$''
in this proposition
corresponds to the hypersurface that we denoted by 
``$\Sigma_{t_k}$'' in Sects.\,\ref{S:DATAANDBOOTSTRAPASSUMPTION}--\ref{S:ELLIPTICESTIMATESINHOLDERSPACES}.
\label{FN:INITIALFOLIATIONCAREFULABOUTHYPERSURFACES}} $\Sigma_0$, there exists a function
	$w = w(x)$ on the domain 
	implicitly defined by 
	$0 \leq w \leq \RescaledFoliationparameter := \frac{4}{5} \RescaledTboot$,
	such that $w(\bf{z}) = 0$ (where $\bf{z}$ is the point in $\Sigma_0$ mentioned in Subsect.\,\ref{SS:EIKONAL}), 
	such that $w$ is smooth away from $\bf{z}$,
	such that its levels sets $S_w$ are diffeomorphic to $\mathbb{S}^2$ for $0 < w \leq \RescaledFoliationparameter$,
	such that $\mathscr{O} := \cup_{0 \leq w < \RescaledFoliationparameter} S_w$ is a neighborhood of $\bf{z}$
	contained in the metric ball $B_{\RescaledTboot}({\bf{z}},g)$ 
	(with respect to the rescaled first fundamental form $g$ of $\Sigma_0$)
	of radius $\RescaledTboot$ centered at $\bf{z}$,
	and such that the following relations hold,
	where $\displaystyle a = \frac{1}{\sqrt{(g^{-1})^{cd} \partial_c w \partial_d w}}$ is the lapse,
	$\mytr_g k := (g^{-1})^{cd} k_{cd}$,
	and $\Chfour_{\Lunit} := \Chfour_{\alpha} \Lunit^{\alpha}$
	is a contracted (and lowered) Cartesian Christoffel symbol of the rescaled spacetime metric $\gfour$:
	\begin{align} \label{E:EQNOFINITIALFOLIATION}
		\mytr_{\gsphere} \spheresecondfund
		+
		k_{\spherenormal \spherenormal}
		& = \frac{2}{a w}
			+ 
			\mytr_g k
			-
			\Chfour_{\Lunit},
		&
		a(\bf{z})
		& = 1.
	\end{align}
	Note that by \eqref{E:CONNECTIONCOEFFICIENT},
	\eqref{E:MODTRICHISMALL}, and the relation $\rgeo(0,- u) = w$ (for $- \RescaledFoliationparameter \leq u \leq 0$),
	the first equation in \eqref{E:EQNOFINITIALFOLIATION} is equivalent to
	\begin{align} \label{E:EQUIVALENTEQNOFINITIALFOLIATION}
		\mytr_{\congsphere} \widetilde{\upchi}^{(Small)}|_{\Sigma_0}
		& = \frac{2(1-a)}{a w},
		&&
		\mbox{for } 0 \leq w \leq \RescaledFoliationparameter.
	\end{align}
	
	Let $q_*$ satisfy $0 < 1 - \frac{2}{q_*} < \Sob - 2$
	and let $\stgsphere = \stgsphere(\upomega)$ be the standard round metric on the Euclidean unit sphere $\mathbb{S}^2$,
	where the angular coordinates $\lbrace \upomega^A \rbrace_{A=1,2}$ are as in Subsubsect.\,\ref{SSS:EIKONALEXTERIOR}.
	Then if $q_*$ is sufficiently close to $2$,
	the following estimates hold\footnote{In \cite{qW2017}*{Proposition~4.3}, 
	the author stated the weaker estimate $\| w^{-1/2}(a-1) \|_{L^{\infty}(\Sigma_0^{\RescaledFoliationparameter})}
	\lesssim \uplambda^{-1/2}$
	in place of the stronger estimate
	$\| w^{-1/2}(a-1) \|_{L_w^{\infty}C_{\upomega}^{0,1 - \frac{2}{q_*}}(\Sigma_0^{\RescaledFoliationparameter})}
	\lesssim \uplambda^{-1/2}$ 
	appearing in \eqref{E:INTIALLAPSEANDSPHEREVOLUMEELEMENTESTIMATE}.
	However, the desired stronger estimate follows from the Morrey-type estimate \eqref{E:EUCLIDEANFORMMORREYONSTU}
	and the analysis given just above \cite{arxivqW2014}*{Equation~(10.113)}.
	\label{FN:UPGRADELAPSETOHOLDERESTIMATE}}
	on $\Sigma_0^{\RescaledFoliationparameter} := \cup_{0 \leq w \leq \RescaledFoliationparameter} S_w$,
	where $\upepsilon_0$ is as in Subsect.\,\ref{SS:PARAMETERS}, where the role of $q$ is played by $q_*$:
	\begin{subequations}
	\begin{align}
			|a - 1|
			\lesssim \uplambda^{-4 \upepsilon_0}
			& \leq \frac{1}{4},
			&
			\| w^{-1/2}(a-1) \|_{L_w^{\infty}C_{\upomega}^{0,1 - \frac{2}{q_*}}(\Sigma_0^{\RescaledFoliationparameter})}
			& \lesssim \uplambda^{-1/2},
			&
			\volrat(w,\upomega)
			:=
			\frac{\sqrt{\mbox{\upshape det} \gsphere}(w,\upomega)}{\sqrt{\mbox{\upshape det} \stgsphere}(\upomega)}
			& \approx w^2,
				\label{E:INTIALLAPSEANDSPHEREVOLUMEELEMENTESTIMATE} 
		\end{align}
		
		\begin{align}
			\| w^{\frac{1}{2} - \frac{2}{q_*}} (\hat{\spheresecondfund},\angD \ln a) \|_{L_w^{\infty} L_{\gsphere}^{q_*}(\Sigma_0^{\RescaledFoliationparameter})}
			& \lesssim \uplambda^{-1/2},
			&
			\| \angD \ln a \|_{L_w^2 L_{\upomega}^{\infty}(\Sigma_0^{\RescaledFoliationparameter})},
				\, 
			\| \hat{\upchi} \|_{L_w^2 L_{\upomega}^{\infty}(\Sigma_0^{\RescaledFoliationparameter})}
			& \lesssim \uplambda^{-1/2},
			&&
				\label{E:INTIALDERIVATIVESOFLAPSEANDTRACEFREECHIEST} 
	\end{align}
	\begin{align}
			\max_{A,B=1,2}
			\left\| 
				w^{-2}
				\gsphere\left(\frac{\partial}{\partial \upomega^A},\frac{\partial}{\partial \upomega^B} \right) 
				- 
				\stgsphere\left(\frac{\partial}{\partial \upomega^A},\frac{\partial}{\partial \upomega^B} \right)
				\right\|_{L^{\infty}(\Sigma_0^{\RescaledFoliationparameter})}
			& \lesssim \uplambda^{-4 \upepsilon_0},
				\label{E:INITIALSPHEREMETRICESTIMATE}
				&&
					\\
			\max_{A,B,C=1,2}
			\left\| 
			\frac{\partial}{\partial \upomega^A}
			\left\lbrace 
				w^{-2} 
				\gsphere\left(\frac{\partial}{\partial \upomega^B},\frac{\partial}{\partial \upomega^C} \right) 
				- 
				\stgsphere\left(\frac{\partial}{\partial \upomega^B},\frac{\partial}{\partial \upomega^C} \right)
			\right\rbrace
			\right\|_{L_w^{\infty} L_{\upomega}^{q_*}(\Sigma_0^{\RescaledFoliationparameter})}
			& \lesssim \uplambda^{-4 \upepsilon_0},
			&&
				\label{E:INITIALSPHEREMETRICFIRSTCOORDINATEANGULARDERIVATIVESESTIMATE} \\
			\| w^{\frac{1}{2} - \frac{2}{q_*}} \angD \ln\left( \rgeo^{-2} \volrat \right) \|_{L_w^{\infty} L_{\gsphere}^{q_*}(\Sigma_0^{\RescaledFoliationparameter})}
			& \lesssim \uplambda^{-1/2}.
			&&
			\label{E:INTEGRATEDINTIALSPHERVOLUMEELEMENTESTIMATE}
	\end{align}	
	\end{subequations}
	
	Finally, $\Sigma_0^{\RescaledFoliationparameter}$ is contained in the Euclidean ball of radius $\RescaledTboot$ in $\Sigma_0$
	centered at $\bf{z}$.
	
\end{proposition}

\begin{proof}[Discussion of the proof]
Based on the energy estimates we derived in Props.\,\ref{P:PRELIMINARYENERGYANDELLIPTICESTIMATES} 
and \ref{P:TOPORDERENERGYESTIMATES} (which are estimates for the non-rescaled solution variables),
the proof is the same as the proof of \cite{qW2017}*{Proposition~4.3},
which is given in \cite{arxivqW2014}*{Appendix~C}.
\end{proof}

\subsection{Initial conditions on the cone-tip axis tied to the eikonal function}
\label{SS:INITIALCONDITIONSTIEDTOEIKONAL}
The next lemma complements Prop.\,\ref{P:INITIALFOLIATION} by providing the initial conditions on the cone-tip axis
for various tensorfields tied to the eikonal function, i.e., initial conditions relevant for the study of $\widetilde{\mathcal{M}}^{(Int)}$.

\begin{lemma}[Initial conditions on the cone-tip axis tied to the eikonal function]
	\label{L:INITIALCONDITIONSTIEDTOEIKONAL}
	The following estimates hold on any acoustic null cone $\mathcal{C}_u$ emanating from a point
	on the cone-tip axis with $0 \leq u = t \leq \RescaledTboot$,
	where ``$\upxi = \mathcal{O}(\rgeo) \mbox{as } t \downarrow u$''
	means that\footnote{On RHS~\eqref{E:CONNECTIONCOEFFICIENTS0LIMITSALONGTIP}, 
	the implicit constants are allowed to depend on the $L^{\infty}$ norm of 
	the higher derivatives of the fluid solution. However, these
	constants never enter into our estimates since, in our subsequent analysis, 
	\eqref{E:CONNECTIONCOEFFICIENTS0LIMITSALONGTIP} will be used only to conclude
	that LHS~\eqref{E:CONNECTIONCOEFFICIENTS0LIMITSALONGTIP} is $0$ 
	along the cone-tip axis.} 
	$|\upxi|_{\gsphere} \lesssim (t - u)$
	$\mbox{as } t \downarrow u$:
	\begin{subequations}
		\begin{align} \label{E:CONNECTIONCOEFFICIENTS0LIMITSALONGTIP}
			&
			\mytr_{\gsphere} \upchi - \frac{2}{\rgeo},
				\,
			\rgeo \mytr_{\congsphere} \widetilde{\upchi}^{(Small)},
				\,
			|\hat{\upchi}|_{\gsphere},
				\,
			|\rgeo \sphereproject_j^a \partial_a \Lunit^i - \sphereproject_j^i|,
				\,
			\nulllapse - 1,
				\,
			|\upzeta|_{\gsphere},
				\,
			\upsigma,
				\\
		& \rgeo |\angD \mytr_{\gsphere} \upchi|_{\gsphere},
				\,
			\rgeo^2 |\angD \mytr_{\congsphere} \widetilde{\upchi}^{(Small)}|_{\gsphere},
				\,
			\rgeo |\angD \hat{\upchi}|_{\gsphere},
				\,
			\rgeo |\angD \nulllapse|_{\gsphere},
				\,
			\rgeo |\angD \upzeta|_{\gsphere},
				\,
			\rgeo |\angD \upsigma|_{\gsphere},
				\notag 
				\\
		&  
			\rgeo^2 \angLap \nulllapse,
				\,
			\rgeo^2 \angLap \upsigma,
				\,
			\rgeo^2 \upmu,
					\,
			\rgeo^2 \check{\upmu}
				\notag \\
		& =
			\mathcal{O}(\rgeo) \mbox{as } t \downarrow u,
				\notag \\
			\lim_{t \downarrow u}
			\| 
				(
					\underline{\upzeta},
					k
					)
			\|_{L_{\upomega}^{\infty}(S_{t,u})}
			& < \infty.
			\label{E:CONNECTIONCOEFFICIENTSFINITELIMITSALONGTIP}
		\end{align}
	\end{subequations}
	
	Moreover, with $\stgsphere$ denoting the standard round metric on the Euclidean unit sphere $\mathbb{S}^2$,
	we have
	\begin{subequations}
	\begin{align} \label{E:SPHEREFINITELIMITSALONGTIP}
		\lim_{t \downarrow u}
		\left\lbrace
		\rgeo^{-2}(t,u)
		\gsphere(t,u,\upomega) \left(\frac{\partial}{\partial \upomega^B},\frac{\partial}{\partial \upomega^C} \right) 
		\right\rbrace
		& = \stgsphere(\upomega)\left(\frac{\partial}{\partial \upomega^B},\frac{\partial}{\partial \upomega^C} \right),
			\\
		\lim_{t \downarrow u}
		\left\lbrace
		\rgeo^{-2}(t,u)
		\frac{\partial}{\partial \upomega^C}
		\gsphere(t,u,\upomega)\left(\frac{\partial}{\partial \upomega^B},\frac{\partial}{\partial \upomega^C} \right) 
		\right\rbrace
		& = 
		\frac{\partial}{\partial \upomega^C}
		\left\lbrace
			\stgsphere(\upomega) \left(\frac{\partial}{\partial \upomega^B},\frac{\partial}{\partial \upomega^C} \right)
		\right\rbrace.
		\label{E:ANGULARDERIVATIVESSPHEREFINITELIMITSALONGTIP}
	\end{align}
	\end{subequations}
	
	Moreover, with $\RescaledFoliationparameter := \frac{4}{5} \RescaledTboot$ (as in Prop.\,\ref{P:INITIALFOLIATION}),
	on $\Sigma_0^{\RescaledFoliationparameter} := \cup_{w \in (0,\RescaledFoliationparameter]} S_w$, we have
	(recalling that $w = - u|_{\Sigma_0} \geq 0$):
	\begin{align} \label{E:MODTRCHIESTIAMTESALONGINITIALFOLIATION}
		& \| w \mytr_{\congsphere} \widetilde{\upchi}^{(Small)} \|_{L^{\infty}(\Sigma_0^{\RescaledFoliationparameter})}
		 \lesssim \uplambda^{-4 \upepsilon_0},
		\\
		\notag
		&
		\| w^{3/2} \angD \mytr_{\congsphere} \widetilde{\upchi}^{(Small)} \|_{L_w^{\infty}L_{\upomega}^p(\Sigma_0^{\RescaledFoliationparameter})},
			\,
		\| w^{1/2} \mytr_{\congsphere} \widetilde{\upchi}^{(Small)} \|_{L_w^{\infty}C_{\upomega}^{0,1 - \frac{2}{p}}(\Sigma_0^{\RescaledFoliationparameter})}
		\lesssim \uplambda^{-1/2}.
	\end{align}

	
	Finally, with $\spherenormal$ denoting the unit outward normal to $S_w$ in $\Sigma_0$ and
	$\sphereproject$ denoting the $g$-orthogonal projection tensorfield onto $S_w$
	(where $g$ is the rescaled metric on $\Sigma_0$), we have
	\begin{align} \label{E:ANGULARDERIVATIVEOFSPHERENORMALATINITIALCONETIPAXISPOINT}
		\sum_{i,j=1,2,3}
		|w \sphereproject_j^c \partial_c \spherenormal^i - \sphereproject_j^i|
		& = \mathcal{O}(w) \mbox{as } w \downarrow 0.
	\end{align}
	
\end{lemma}

\begin{proof}[Discussion of the proof]
The lemma follows from the same arguments, based on Taylor expansions, that are
found in \cites{eP2004,qW2006}, and we therefore omit the details.
We refer to \cite{qW2017}*{Lemma~5.1} and \cite{arxivqW2014}*{Appendix~C}
for the analogous results in the context of quasilinear wave equations.
We also remark that there are simpler, alternative proofs available in \cite{Graf2021}*{Appendix~B} and \cite{Shao2010}*{Sect.~3}.
 We further clarify that in \cites{qW2006,eP2004}, the expansions along null cones
were derived not in terms of $\rgeo$, but rather 
in terms of the affine parameter $\aff = \aff(t,u,\upomega)$ of the geodesic null vectorfield $\nulllapse^{-1} \Lunit$ 
(i.e., $\Lunit \aff = \nulllapse$, where $\nulllapse$ is defined in \eqref{E:NULLLAPSE}),
normalized by $\aff(u,u,\upomega) = 0$. However, the same asymptotic expansions hold with $\rgeo$ in place of $\aff$,
thanks in part to the asymptotic relation
$
\lim_{t \downarrow u}
\frac{\aff(t,u,\upomega)}{\rgeo(t,u)}
= 1
$,
which follows from the identities $\Lunit \aff = \nulllapse$
and $\Lunit \rgeo = \Lunit t = 1$,
and the following fact, which can be independently established with the help of \eqref{E:NULLLAPSEISUNITYALONGCONETIPAXIS}:
$\lim_{t \downarrow u}
	\left\lbrace
	\nulllapse(t,u,\upomega) - 1
	\right\rbrace
	= 0
$.
We also clarify that the estimate 
$
\| w^{1/2} \mytr_{\congsphere} \widetilde{\upchi}^{(Small)} \|_{L_w^{\infty}C_{\upomega}^{0,1 - \frac{2}{p}}(\Sigma_0^{\RescaledFoliationparameter})}
\lesssim \uplambda^{-1/2}
$
in \eqref{E:MODTRCHIESTIAMTESALONGINITIALFOLIATION}
is stronger than the analogous estimate 
$
\| w^{1/2} \mytr_{\congsphere} \widetilde{\upchi}^{(Small)} \|_{L^{\infty}(\Sigma_0^{\RescaledFoliationparameter})}
\lesssim \uplambda^{-1/2}
$
stated \cite{qW2017}*{Lemma~5.1}; the desired stronger estimate is a simple consequence of \eqref{E:EQUIVALENTEQNOFINITIALFOLIATION}
and the first and second estimates in \eqref{E:INTIALLAPSEANDSPHEREVOLUMEELEMENTESTIMATE}.


\end{proof}

\section{Estimates for quantities constructed out of the eikonal function}
\label{S:ESTIMATESFOREIKONALFUNCTION}
Our main goal in this section is to prove Prop.\,\ref{P:MAINESTIMATESFOREIKONALFUNCTIONQUANTITIES},
which provides estimates for the acoustic geometry.
As we explain in Sect.\,\ref{S:REDUCTIONSOFSTRICHARTZ}, these estimates are the last new ingredient 
needed to prove the frequency-localized Strichartz estimate of Theorem~\ref{T:FREQUENCYLOCALIZEDSTRICHARTZ}.
The proof of Prop.\,\ref{P:MAINESTIMATESFOREIKONALFUNCTIONQUANTITIES} is based on a bootstrap argument 
and is located in Subsect.\,\ref{SS:PROOFOFPROPMAINESTIMATESFOREIKONALFUNCTIONQUANTITIES}.
Before proving the proposition, we first introduce the bootstrap assumptions (see Subsect.\,\ref{SS:ASSUMPTIONS}) 
and provide a series of preliminary inequalities and estimates.
Many of these preliminary results have been derived in prior works, and we typically do not repeat
the proofs. In Lemma~\ref{L:NEWESTIMATESFORPROPMAINESTIMATESFOREIKONALFUNCTIONQUANTITIES},
we isolate the new estimates that are not found in earlier works;
the results of Lemma~\ref{L:NEWESTIMATESFORPROPMAINESTIMATESFOREIKONALFUNCTIONQUANTITIES} in particular
quantify the effect of the high order derivatives of the vorticity and entropy 
on the evolution of the acoustic geometry; this will become clear during the proof of Prop.\,\ref{P:MAINESTIMATESFOREIKONALFUNCTIONQUANTITIES}.

\begin{remark}
	We remind the reader that in Sect.\,\ref{S:ESTIMATESFOREIKONALFUNCTION},
	we are operating under the conventions of Subsect.\,\ref{SS:NOMORELAMBDA}.
\end{remark}

\subsection{The main estimates for the eikonal function quantities}
\label{SS:MAINESTIMATESFOREIKONALFUNCTIONQUANTITIES}
Recall that $p > 2$ denotes the fixed number satisfying \eqref{E:BOUNDSONLEBESGUEEXPONENTP},
where $\updelta_0$ is the parameter that we fixed in \eqref{E:DELTA0DEF}.
We now state the main result of Sect.\,\ref{S:ESTIMATESFOREIKONALFUNCTION}; 
see Subsect.\,\ref{SS:PROOFOFPROPMAINESTIMATESFOREIKONALFUNCTIONQUANTITIES} for the proof.

\begin{proposition}[The main estimates for the eikonal function quantities]
\label{P:MAINESTIMATESFOREIKONALFUNCTIONQUANTITIES}
Let $p$ be as in \eqref{E:BOUNDSONLEBESGUEEXPONENTP},
assume that $q > 2$ is sufficiently close to $2$,
and recall that we fixed several small parameters, including $\upepsilon_0$, in Subsect.\,\ref{SS:PARAMETERS}.
There exists a large constant $\Lambda_0 > 0$ such that 
under the bootstrap assumptions of Subsect.\,\ref{SS:ASSUMPTIONS},
if $\uplambda \geq \Lambda_0$,
then the following estimates hold on $\widetilde{\mathcal{M}} \subset [0,\RescaledTboot] \times \mathbb{R}^3$,
where the norms referred to below are defined in Subsect.\,\ref{SS:GEOMETRICNORMS},
and the corresponding spacetime regions such as $\widetilde{\mathcal{C}}_u \subset \widetilde{\mathcal{M}}$
are defined in Subsect.\,\ref{SS:GEOMETRICSPACETIMESUBSETS}.

\medskip

\noindent \underline{\textbf{Estimates for connection coefficients}}:
The connection coefficients from Subsubsects.\,\ref{SSS:CONNECTIONCOEFFICIENTS},
\ref{SSS:CONFORMALMETRIC}, and \ref{SSS:MODACOUSTICAL}
verify the following estimates:
\begin{subequations}
\begin{align}
	\|
		(\mytr_{\congsphere} \widetilde{\upchi}^{(Small)},
					\hat{\upchi}, 
					\upzeta
		)
	\|_{L_t^2 L_{\upomega}^p(\widetilde{\mathcal{C}}_u)},
	\,
	\|
		\rgeo 
		\angprojDarg{\Lunit} 
		(\mytr_{\congsphere} \widetilde{\upchi}^{(Small)},
					\hat{\upchi}, 
					\upzeta
		)
	\|_{L_t^2 L_{\upomega}^p(\widetilde{\mathcal{C}}_u)}
	& \lesssim \uplambda^{-1/2},
		\label{E:ACOUSTICALLT2LOMEGAPANDLDERIVATIVESALONGCONES} \\
	\|
		\rgeo^{1/2}
		(\mytr_{\congsphere} \widetilde{\upchi}^{(Small)},
					\hat{\upchi}, 
					\upzeta 
		)
	\|_{L_t^{\infty}L_{\upomega}^p(\widetilde{\mathcal{C}}_u)}
	& \lesssim \uplambda^{-1/2},
		\label{E:ACOUSTICALLTINFTYLOMEGAPALONGCONES} 
			\\
	\|
		\rgeo
		(
					\mytr_{\congsphere} \widetilde{\upchi}^{(Small)},
					\hat{\upchi}, 
					\upzeta
		)
	\|_{L_t^{\infty}L_{\upomega}^p(\widetilde{\mathcal{C}}_u)}
	& \lesssim \uplambda^{- 4 \upepsilon_0},
		\label{E:ASECONDACOUSTICALLTINFTYLOMEGAPALONGCONES} 
\end{align}
\end{subequations}

\begin{subequations}
\begin{align}
	\rgeo \mytr_{\congsphere} \widetilde{\upchi}
	& \approx 1,
		\label{E:TRCHILINFINITYESTIMATES}	 
		\\
	\| 
		\rgeo^{1/2} \mytr_{\congsphere} \widetilde{\upchi}^{(Small)} 
	\|_{L^{\infty}(\widetilde{\mathcal{M}})}
	& \lesssim \uplambda^{-1/2},
	\label{E:TRCHIMODLINFINITYESTIMATES}	
		\\
	\| 
		\rgeo^{3/2} \angD \mytr_{\congsphere} \widetilde{\upchi}^{(Small)} 
	\|_{L_t^{\infty} L_u^{\infty} L_{\upomega}^p (\widetilde{\mathcal{M}})}
	& \lesssim \uplambda^{-1/2}, 
		\label{E:ONEANGULARDERIVATIVEOFTRCHIMODLINFINITYESTIMATES}
		\\
	\| 
		\rgeo (\angD \mytr_{\congsphere} \widetilde{\upchi}^{(Small)}, \angD \hat{\upchi}) 
	\|_{L_t^2 L_{\upomega}^p (\widetilde{\mathcal{C}}_u)}
	& \lesssim \uplambda^{-1/2},
		\label{E:ONEANGULARDERIVATIVEOFTRCHIMODANDTRFREECHIL2INTIMEESTIMATES}
		\\
	\|
		(\mytr_{\congsphere} \widetilde{\upchi}^{(Small)}, \hat{\upchi},\upzeta)
	\|_{L_t^2 C_{\upomega}^{0,\updelta_0}(\widetilde{\mathcal{C}}_u)}
	& \lesssim \uplambda^{-1/2}.
	\label{E:TRICHIMODSMALLTRFREECHIANDTORSIONL2INTIMEALONGSOUNDCONES} 
\end{align}
\end{subequations}

In addition, the null lapse $\nulllapse$ defined in \eqref{E:NULLLAPSE}
verifies the following estimates:
\begin{align} \label{E:L2INTIMEESTIMATESFORNULLLAPSEALONGCONES}
	\left\|
		\frac{\nulllapse^{-1} - 1}{\rgeo}
	\right\|_{L_t^2 L_x^{\infty}(\widetilde{\mathcal{M}})},
		\,
	\left\|
		\frac{\nulllapse^{-1} - 1}{\rgeo^{1/2}}
	\right\|_{L_t^{\infty} L_u^{\infty} L_{\upomega}^{2p}(\widetilde{\mathcal{M}})},
		\,
	\left\|
		\rgeo (\angprojDarg{\Lunit},\angD)
		\left(
			\frac{\nulllapse^{-1} - 1}{\rgeo}
		\right)
	\right\|_{L_t^2 L_{\upomega}^p(\widetilde{\mathcal{C}}_u)}
	& \lesssim \uplambda^{-1/2}.
\end{align}

Furthermore, for any $u \in [-\frac{4}{5}\RescaledTboot,\RescaledTboot]$, 
$t \in [[u]_+,\RescaledTboot]$, and $\upomega \in \mathbb{S}^2$,
the Cartesian spatial components $\Lunit^i$ verify the following estimate:
\begin{align} \label{E:LINFINITYESTIMATESFORRECTANGULARSPATIALCOMPONENTSOFL}
	|
		\Lunit^i(t,u,\upomega)
		-
		\Lunit^i(0,0,\upomega)
	|
	& \lesssim \uplambda^{- 4\upepsilon_0}.
\end{align}

Moreover, for any smooth scalar-valued function of the type described in Subsubsect.\,\ref{SSS:ADDITIONALSCHEMATIC},
we have:
\begin{align} \label{E:HOLDERESTIMATESFORF1TYPEFACTOR}
	\| 
		\gensmoothfunction_{(\vec{\Lunit})}
	\|_{L_t^{\infty} L_u^{\infty} C_{\upomega}^{0,\updelta_0}(\widetilde{\mathcal{M}})}
	& \lesssim 1. 
\end{align}

Furthermore,
\begin{align} \label{E:CONNECTIONCOEFFICIENTESTIMATESNEEDEDTODERIVESPATIALLYLOCALIZEDDECAYFROMCONFORMALENERGYESTIMATE}
\left\| 
	\left(
	\mytr_{\congsphere} \widetilde{\upchi}^{(Small)}, \hat{\upchi}, \mytr_{\gsphere} \upchi - \frac{2}{\rgeo}
	\right)
	\right\|_{L_t^{\frac{q}{2}} L_u^{\infty} C_{\upomega}^{0,\updelta_0}(\widetilde{\mathcal{M}})}
	& \lesssim \uplambda^{\frac{2}{q} - 1 - 4 \upepsilon_0(\frac{4}{q} - 1)},
	&
	\| 
		\upzeta
	\|_{L_t^{\frac{q}{2}} L_x^{\infty}(\widetilde{\mathcal{M}})}
	& \lesssim \uplambda^{\frac{2}{q} - 1 - 4 \upepsilon_0(\frac{4}{q} - 1)}.
\end{align}

\medskip
\noindent \underline{\textbf{Improved estimates in the interior region}}:
We have the following improved\footnote{The most important improvement afforded by
\eqref{E:IMPROVEDININTERIORL2INTIMELINFINITYINSPACECONNECTIONCOFFICIENTS}
is that on the LHSs of the estimates, 
the $L_t^2$ norms are taken \emph{after} a spatial norm along constant-time hypersurfaces.
This is crucial for the proof of Theorem~\ref{T:BOUNDEDNESSOFCONFORMALENERGY} and
contrasts with, for example, 
the estimate \eqref{E:TRICHIMODSMALLTRFREECHIANDTORSIONL2INTIMEALONGSOUNDCONES}, 
in which \emph{only the angular} $C_{\upomega}^{0,\updelta_0}$ norm
is taken before the $L_t^2$ norm.} estimates\footnote{Our estimate \eqref{E:IMPROVEDININTERIORL2INTIMELINFINITYINSPACECONNECTIONCOFFICIENTS} involves H\"{o}lder norms
in the angular variables, while the analogous estimates in \cite{qW2017} involved weaker $L^{\infty}$-norms.
The reason for the discrepancy is that $L_{\upomega}^{\infty}(S_{t,u})$ bound for $\hat{\upchi}$ proved just below 
\cite{qW2017}*{Equation~(5.87)} relies on the invalid Calderon--Zygmund estimate 
$
\| \upxi \|_{L_{\upomega}^{\infty}(S_{t,u})}
	\lesssim
	\sum_{i=1,2}
	\| \mathfrak{F}_{(i)} \|_{L_{\upomega}^{\infty}(S_{t,u})}
	\ln\left(2 +
		\| \rgeo^{3/2} \angD \mathfrak{F}_{(i)} \|_{L_{\upomega}^{\leb}(S_{t,u})}
		\right)
		+
		\| \rgeo \mathfrak{G} \|_{L_{\upomega}^{\leb}(S_{t,u})}
$
for solutions to the elliptic PDE \eqref{E:ANGDIVSYMMETRICTRACEFREESTUTENSORFIELD}.
Unfortunately, this estimate cannot be correct because 
the power of $\rgeo^{3/2}$ on the RHS is not compatible with the natural
scaling of
\eqref{E:ANGDIVSYMMETRICTRACEFREESTUTENSORFIELD}
on Euclidean round spheres of radius $\rgeo$;
the natural scaling coefficient would be $\rgeo$, not $\rgeo^{3/2}$,
and the distinction is especially crucial near $\rgeo = 0$.
In particular, since the correct power is $\rgeo$, one cannot combine the 
correct Calderon--Zygmund estimate with the $\rgeo^{3/2}$-involving bound
\eqref{E:ONEANGULARDERIVATIVEOFTRCHIMODLINFINITYESTIMATES}
to obtain the estimate for $\hat{\upchi}$ stated in \cite{qW2017}*{Equation~(5.11)}.
For this reason, we use an alternate approach in deriving
some of the estimates for $\hat{\upchi}$, one that involves
H\"{o}lder norms in the angular variables and the corresponding Calderon--Zygmund estimate 
\eqref{E:ANGULARHOLDERHODGEESTIMATENODERIVATIVESONLHSANGDIVSYMMETRICTRACEFREESTUTENSORFIELD}.} 
in the interior region:
\begin{align} \label{E:IMPROVEDINTERIORL2INTIMEESTIMATESFORNULLLAPSEALONGCONES}
	\left\|
		\frac{\nulllapse^{-1} - 1}{\rgeo}
	\right\|_{L_t^2 L_x^{\infty}(\widetilde{\mathcal{M}}^{(Int)})}
	& \lesssim \uplambda^{-1/2 - 4 \upepsilon_0},
\end{align}

\begin{align} \label{E:ACOUSTICALLOMEGA2PLTINFTYALONGCONES}
\|
		\rgeo^{1/2}
		(	\mytr_{\congsphere} \widetilde{\upchi}^{(Small)},
					\hat{\upchi}, 
					\upzeta
	)
	\|_{L_{\upomega}^{2p} L_t^{\infty} (\widetilde{\mathcal{C}}_u)}
	& \lesssim \uplambda^{-1/2},
	&&
	\mbox{if } \widetilde{\mathcal{C}}_u \in \widetilde{\mathcal{M}}^{(Int)},
\end{align}

\begin{align} \label{E:IMPROVEDININTERIORL2INTIMELINFINITYINSPACECONNECTIONCOFFICIENTS}
	\| 
		(
			\mytr_{\congsphere} \widetilde{\upchi}^{(Small)}, \mytr_{\gsphere} \upchi - \frac{2}{\rgeo}, \hat{\upchi}
		)
	\|_{L_t^2 L_u^{\infty} C_{\upomega}^{0,\updelta_0}(\widetilde{\mathcal{M}}^{(Int)})}
	& \lesssim \uplambda^{- 1/2 - 3 \upepsilon_0},
	&
	\| 
		\upzeta
	\|_{L_t^2 L_x^{\infty}(\widetilde{\mathcal{M}}^{(Int)})}
	& \lesssim \uplambda^{- 1/2 - 3 \upepsilon_0}.
\end{align}

\medskip
\noindent \underline{\textbf{Estimates for the geometric angular coordinate components of $\gsphere$}}:
With $\stgsphere$ denoting the standard round metric on the Euclidean unit sphere $\mathbb{S}^2$,
we have
\begin{subequations}
\begin{align}
	\max_{A,B=1,2}
		\left\| 
			\left\lbrace\rgeo^{-2} 
				\gsphere\left(\frac{\partial}{\partial \upomega^A},\frac{\partial}{\partial \upomega^B} \right) 
				- 
				\stgsphere\left(\frac{\partial}{\partial \upomega^A},\frac{\partial}{\partial \upomega^B} \right)
			\right\rbrace
			\right\|_{L^{\infty}(\widetilde{\mathcal{M}})}
	& \lesssim \uplambda^{- 4 \upepsilon_0},	
		\label{E:COMPARISONWITHROUNDMETRICLINFINITYBOUNDS} 
			\\
	\max_{A,B,C=1,2}
			\left\| 
			\frac{\partial}{\partial \upomega^A}
			\left\lbrace\rgeo^{-2} 
				\gsphere\left(\frac{\partial}{\partial \upomega^B},\frac{\partial}{\partial \upomega^C} \right) 
				- 
				\stgsphere\left(\frac{\partial}{\partial \upomega^B},\frac{\partial}{\partial \upomega^C} \right)
			\right\rbrace
			\right\|_{L_{\upomega}^p L_t^{\infty}(\widetilde{\mathcal{C}}_u)}
	& \lesssim \uplambda^{- 4 \upepsilon_0}.
	\label{E:ONEANGULARDERIVATIVECOMPARISONWITHROUNDMETRICLPINANGLESLINFINITYINTIMEBOUNDS}
\end{align}
\end{subequations}

\medskip
\noindent \underline{\textbf{Estimates for $\volrat$ and $\nulllapse$}}:
The following estimates hold\footnote{We point out that we prove \eqref{E:STUVOLUMEFORMCOMPARISONWITHUNITROUNDMETRICVOLUMEFORM}--\eqref{E:NULLLAPSECLOSETOUNITY}
	independently in the proof of Prop.\,\ref{P:NORMCOMPARISONTRACEINEQUALITIESANDTRACEINEQUALITIES},
	which in turn plays a role in the proofs of the remaining estimates of Prop.\,\ref{P:MAINESTIMATESFOREIKONALFUNCTIONQUANTITIES}.
	It is only for convenience that we have restated \eqref{E:STUVOLUMEFORMCOMPARISONWITHUNITROUNDMETRICVOLUMEFORM}--\eqref{E:NULLLAPSECLOSETOUNITY} in
	Prop.\,\ref{P:MAINESTIMATESFOREIKONALFUNCTIONQUANTITIES}.} 
for the volume form ratio $\volrat$ defined in \eqref{E:RATIOOFVOLUMEFORMS} and the null lapse $\nulllapse$ defined in \eqref{E:NULLLAPSE}:
\begin{subequations}
\begin{align} 
		\volrat
		:= \frac{\sqrt{\mbox{\upshape det} \gsphere}}{\sqrt{\mbox{\upshape det} \stgsphere}}
		& 
		\approx 
		\rgeo^2,
		\label{E:STUVOLUMEFORMCOMPARISONWITHUNITROUNDMETRICVOLUMEFORM} 
		\\
	\|\nulllapse - 1 \|_{L^{\infty}(\widetilde{\mathcal{M}})} 
	& \lesssim \uplambda^{- 4 \upepsilon_0} 
	<
	\frac{1}{4}.
	\label{E:NULLLAPSECLOSETOUNITY}
\end{align}
\end{subequations}

Furthermore,
\begin{align} \label{E:CONESRATIOOFSPHEREVOLUMEFORMTORGEOSQUAREDTIMESROUNDVOLUMEFORML2INTIMELPINOMEGA}
	\|
		\rgeo^{\frac{1}{2}} \angD \ln\left(\rgeo^{-2} \volrat \right)
	\|_{L_t^{\infty} L_u^{\infty} L_{\upomega}^p(\widetilde{\mathcal{M}})},
		\,
	\|
		\angD \ln\left(\rgeo^{-2} \volrat \right)
	\|_{L_t^2 L_{\upomega}^p(\widetilde{\mathcal{C}}_u)},
		\,
	\|
		\rgeo \Lunit \angD \ln\left(\rgeo^{-2} \volrat \right)
	\|_{L_t^2 L_{\upomega}^p(\widetilde{\mathcal{C}}_u)}
	& \lesssim \uplambda^{-1/2}.
\end{align}

\medskip
\noindent \underline{\textbf{Estimates for $\upmu$ and $\angD \upzeta$}}:
The torsion defined in \eqref{E:TORSION} and the mass aspect function $\upmu$ defined in \eqref{E:MASSASPECT}
verify the following estimates:
\begin{align} \label{E:CONESRWEIGHTEDMASSASPECTANDANGULARDERIVATIVESOFTORSIONL2INTIMELPINOMEGA}
	\| (\rgeo \upmu, \rgeo \angD \upzeta) \|_{L_t^2 L_{\upomega}^p(\widetilde{\mathcal{C}}_u)}
	& \lesssim \uplambda^{-1/2}.
\end{align}
	
\medskip
\noindent \underline{\textbf{Interior region estimates for $\upsigma$}}:
	The conformal factor $\upsigma$ from Def.\,\ref{D:CONFORMALSTUFF}
	verifies the following estimates in the interior region:
	\begin{subequations}
	\begin{align}
		\| \rgeo^{\frac{1}{2}} \Lunit \upsigma \|_{L_t^{\infty} L_{\upomega}^{2p}(\widetilde{\mathcal{C}}_u)},
			\,
		\| \rgeo^{\frac{1}{2} - \frac{2}{p}} \angD \upsigma \|_{L_{\gsphere}^p L_t^{\infty}(\widetilde{\mathcal{C}}_u)},
			\,
		\| \rgeo^{\frac{1}{2}} \angD \upsigma \|_{L_{\upomega}^p L_t^{\infty}(\widetilde{\mathcal{C}}_u)},
			\,
		\| \angD \upsigma \|_{L_t^2 L_{\upomega}^p(\widetilde{\mathcal{C}}_u)}
		& \lesssim \uplambda^{-1/2},
			&&
			\mbox{if } \widetilde{\mathcal{C}}_u \subset \widetilde{\mathcal{M}}^{(Int)},
			\label{E:CONEFIRSTDERIVATIVEBOUNDSFORCONFORMALFACTOR} \\
		\| \upsigma \|_{L^{\infty}(\widetilde{\mathcal{M}}^{(Int)})}
		& \lesssim \uplambda^{-8 \upepsilon_0},
			 &&
			\label{E:LINFTYBOUNDSFORCONFORMALFACTOR}
				\\
		\| \rgeo^{-1/2} \upsigma \|_{L^{\infty}(\widetilde{\mathcal{M}}^{(Int)})}
		& \lesssim \uplambda^{-\frac{1}{2}-4 \upepsilon_0}.
			\label{E:RWEIGHTEDLINFTYBOUNDSFORCONFORMALFACTOR}
	\end{align}
	\end{subequations}
	

	\medskip
	\noindent \underline{\textbf{Interior region estimates for $\upsigma$, $\check{\upmu}$, $\widetilde{\upzeta}$, and $\angupmu$}}:
	The conformal factor $\upsigma$ from Def.\,\ref{D:CONFORMALSTUFF},
	the modified mass aspect function $\check{\upmu}$ defined in \eqref{E:MODMASSASPECT},
	and the modified torsion $\widetilde{\upzeta}$ defined in \eqref{E:MODTORSION}
	verify the following estimates in the interior region:
	\begin{subequations}
	\begin{align}
		\| \angD \upsigma \|_{L_u^2 L_t^2 C_{\upomega}^{0,\updelta_0} (\widetilde{\mathcal{M}}^{(Int)})},	
			\,
		\| (\rgeo \check{\upmu}, \rgeo \angD \widetilde{\upzeta}) \|_{L_u^2 L_t^2 L_{\upomega}^p (\widetilde{\mathcal{M}}^{(Int)})}
		& \lesssim \uplambda^{- 4 \upepsilon_0},
			\label{E:SPACETIMEL2INUANDTLPINOMEGAFORANGDSIGMAANDRGEOWEIGHTEDMODIFIEDMASSASPECTANDANGDMODIFIEDTORSION} 
				\\
		\| \rgeo^{\frac{3}{2}} \check{\upmu} \|_{L_u^2 L_t^{\infty} L_{\upomega}^p (\widetilde{\mathcal{M}}^{(Int)})}
		& \lesssim \uplambda^{- 4 \upepsilon_0}.
			\label{E:SPACETIMEL2INULINFINTYINTLPINOMEGAFORRGEOTHREEHAVESWEIGHTEDMODIFIEDMASSASPECT} 
	\end{align}
	\end{subequations}
	
	In addition, the one-form $\angupmu$, which satisfies the Hodge system \eqref{E:FURTHERMODOFMASSASPECT},
	verifies the following estimates:
	\begin{align} \label{E:PRELIMINARYANGUPMUSPACETIMEBOUNDS}
		\| (\rgeo \angD \angupmu, \angupmu) \|_{L_t^2 L_u^2 L_{\upomega}^p(\widetilde{\mathcal{M}}^{(Int)})},
			\,
		\| \angupmu \|_{L_t^2 L_u^2 L_{\upomega}^{\infty}(\widetilde{\mathcal{M}}^{(Int)})}
		& \lesssim 
			\uplambda^{- 4 \upepsilon_0}.
	\end{align}
	
	\medskip
	\noindent \underline{\textbf{Delicate decomposition of $\angD \upsigma$ and corresponding estimates in the interior region}}:
	Finally, in \allowbreak $\widetilde{\mathcal{M}}^{(Int)}$, we can decompose
	$\angD \upsigma$ into $S_{t,u}$-tangent one-forms as follows:
	\begin{align} \label{E:KEYANGCONFORMALFACTORALGEBRAICDECOMPOSITION}
		\angD \upsigma
		& = - \upzeta
			+
			(\widetilde{\upzeta} - \angupmu)
			+
			\angupmu_{(1)}
			+
			\angupmu_{(2)}.
	\end{align}
	In \eqref{E:KEYANGCONFORMALFACTORALGEBRAICDECOMPOSITION},
	$\upzeta$ is the torsion from \eqref{E:TORSION}, 
	$\widetilde{\upzeta}$ and $\angupmu$ are as in Def.\,\ref{D:MODIFIEDQUANTITIES},
	and $\angupmu_{(1)}$ and $\angupmu_{(2)}$ are as in \eqref{E:KEYANGMUALGEBRAICDECOMPOSITION}
	and are respectively solutions to the Hodge-transport systems
	\eqref{E:ANGDIVLDERIVATIVEANGMU1}--\eqref{E:ANGCURLLDERIVATIVEANGMU1}
	and
	\eqref{E:ANGDIVLDERIVATIVEANGMU2}--\eqref{E:ANGCURLLDERIVATIVEANGMU2} on $S_{t,u}$
	that satisfy the following asymptotic conditions near the cone-tip axis:
	\begin{align} \label{E:ANGUPMU1AND2CONETIPCONDITIONS}
		\rgeo \angupmu_{(1)}(t,u,\upomega),
			\,
		\rgeo \angupmu_{(2)}(t,u,\upomega)
		=
		\mathcal{O}(\rgeo) \mbox{as } t \downarrow u.
	\end{align}
	Moreover, the following bounds hold:
	\begin{subequations}
	\begin{align}
		\| \widetilde{\upzeta} - \angupmu \|_{L_t^2 L_x^{\infty}(\widetilde{\mathcal{M}}^{(Int)})},
			\,
		\| \angupmu_{(1)} \|_{L_t^2 L_x^{\infty}(\widetilde{\mathcal{M}}^{(Int)})}
		& \lesssim \uplambda^{- \frac{1}{2} - 3 \upepsilon_0},
			\label{E:ANGDUPSIGMAL2TLINFINITYSPACEPARTESTIMATE} 
				\\
		\| \angupmu_{(2)} \|_{L_u^2 L_t^{\infty} L_{\upomega}^{\infty}(\widetilde{\mathcal{M}}^{(Int)})}
		& \lesssim \uplambda^{- \frac{1}{2} - 4 \upepsilon_0}.
			\label{E:ANGDUPSIGMAL2ULINFINITYCONEESTIMATE}
	\end{align}
	\end{subequations}
	
\end{proposition}

\subsection{Assumptions, including bootstrap assumptions for the eikonal function quantities}
\label{SS:ASSUMPTIONS}
In this subsection, we recall some important results proved in previous sections
and state some bootstrap assumptions that will play a role in our proof of Prop.\,\ref{P:MAINESTIMATESFOREIKONALFUNCTIONQUANTITIES}.

\subsubsection{Restatement of assumptions and results from prior sections}
\label{SSS:RESTATMENTOFPRIOR}
From scaling considerations, it is straightforward to see that
\eqref{E:PARTITIONEDBOOTSTRICHARTZ}--\eqref{E:PARTITIONEDBOOTL2LINFINITYFIRSTDERIVATIVESOFVORTICITYBOOTANDNENTROPYGRADIENT}
imply that the \emph{rescaled} solution variables (as defined in Subsect.\,\ref{SS:RESCALEDSOLUTION} and under the conventions of Subsect.\,\ref{SS:NOMORELAMBDA}) 
verify the following bootstrap assumptions
(where $\updelta_0$, $\upepsilon_0$, and the other parameters in our analysis are defined in Subsect.\,\ref{SS:PARAMETERS}):
\begin{subequations}
\begin{align} \label{E:RESCALEDSTRICHARTZ}
		\|
			\pmb{\partial} \vec{\Psi}
		\|_{L_t^2 L_x^{\infty}(\widetilde{\mathcal{M}})}
		+
		\uplambda^{\updelta_0}
		\sqrt{
		\sum_{\upnu \geq 2}
		\upnu^{2 \updelta_0}
		\|
			P_{\upnu} \pmb{\partial} \vec{\Psi}
		\|_{L_t^2 L_x^{\infty}(\widetilde{\mathcal{M}})}^2
		}
	& \leq \uplambda^{-1/2 - 4 \upepsilon_0},
		\\
	\|
		\partial (\vec{\vortrenormalized}, \vec{\GradEnt}) 
	\|_{L_t^2 L_x^{\infty}(\widetilde{\mathcal{M}})}
	+
	\uplambda^{\updelta_0}
	\sqrt{
		\sum_{\upnu \geq 2}
		\upnu^{2 \updelta_0}
		\|
			P_{\upnu} \partial (\vec{\vortrenormalized}, \vec{\GradEnt})
		\|_{L_t^2 L_x^{\infty}(\widetilde{\mathcal{M}})}^2
		}
	& \leq \uplambda^{-1/2 - 4 \upepsilon_0}.
	\label{E:RESCALEDBOOTL2LINFINITYFIRSTDERIVATIVESOFVORTICITYBOOTANDNENTROPYGRADIENT}
\end{align}
\end{subequations}
We will use \eqref{E:RESCALEDSTRICHARTZ}--\eqref{E:RESCALEDBOOTL2LINFINITYFIRSTDERIVATIVESOFVORTICITYBOOTANDNENTROPYGRADIENT}
throughout the rest of Sect.\,\ref{S:ESTIMATESFOREIKONALFUNCTION}.
We will also use the bootstrap assumption \eqref{E:BOOTSOLUTIONDOESNOTESCAPEREGIMEOFHYPERBOLICITY}.
We clarify that, although the bootstrap assumption \eqref{E:BOOTSOLUTIONDOESNOTESCAPEREGIMEOFHYPERBOLICITY}
refers to the non-rescaled solution, it also implies that the rescaled solution
is contained in $\mathfrak{K}$ on the spacetime domain $\widetilde{\mathcal{M}}$. 
Moreover, we recall that we will assume that $\uplambda$ is sufficiently large;
that is, there exists a (non-explicit) $\Lambda_0 > 0$ such that all of our estimates hold
whenever $\uplambda \geq \Lambda_0$.
Moreover, throughout Sect.\,\ref{S:ESTIMATESFOREIKONALFUNCTION},
we will use the top-order energy estimates of
Prop.\,\ref{P:TOPORDERENERGYESTIMATES} along constant-time hypersurfaces 
and the energy estimates of Prop.\,\ref{P:ENERGYESTIMATESALONGNULLHYPERSURFACES} along acoustic null hypersurfaces
(both of which concern estimates for the non-rescaled solution variables, from which
estimates for the rescaled variables immediately follow via scaling considerations).

Next, for use throughout the rest of the article, 
we use \eqref{E:RESCALEDSTRICHARTZ}--\eqref{E:RESCALEDBOOTL2LINFINITYFIRSTDERIVATIVESOFVORTICITYBOOTANDNENTROPYGRADIENT},
the product estimate \eqref{E:FREQUENCYPROJECTEDLINFINITYSMOOTHFUNCTIONPRODUCTESTIMATE},
the energy estimates of Prop.\,\ref{P:TOPORDERENERGYESTIMATES},
and the harmonic analysis results mentioned in the proof discussion of Cor.\,\ref{C:HOLDERTYPESTRICHARTZESTIMATEFORWAVEVARIABLES}
to deduce the following estimates for the rescaled solution, 
valid for any smooth function $\gensmoothfunction$:
\begin{subequations}
\begin{align} \label{E:FIRSTDERIVATIVESOFMETRICSIMPLECONSEQUENCEOFRESCALEDSTRICHARTZ}
		\|
			\pmb{\partial} \gfour(\vec{\Psi})
		\|_{L_t^2 L_x^{\infty}(\widetilde{\mathcal{M}})}
	& \lesssim \uplambda^{-1/2 - 4 \upepsilon_0},
\end{align}
\begin{align}
	&
	\|	
		(\pmb{\partial} \vec{\Psi},\pmb{\partial} \vec{\vortrenormalized},\pmb{\partial} \vec{\GradEnt},\vec{\VortVort},\DivGradEnt) 
	\|_{L_t^2 L_x^{\infty}(\widetilde{\mathcal{M}})}
		\label{E:FREQUENCYSQUARESUMMEDSMOOTHFUNCTIONTIMESBASICVARIABLESSIMPLECONSEQUENCEOFRESCALEDSTRICHARTZ} 
		\\
		&
		\ \
		+
		\uplambda^{\updelta_0}
		\sqrt{
		\sum_{\upnu \geq 2}
		\upnu^{2 \updelta_0}
		\left\|
			P_{\upnu} 
			\left\lbrace
				\gensmoothfunction(\vec{\Psi},\vec{\vortrenormalized},\vec{\GradEnt})
				(\pmb{\partial} \vec{\Psi},\pmb{\partial} \vec{\vortrenormalized},\pmb{\partial} \vec{\GradEnt},\vec{\VortVort},\DivGradEnt)
			\right\rbrace
		\right\|_{L_t^2 L_x^{\infty}(\widetilde{\mathcal{M}})}^2
		}
		\lesssim \uplambda^{-1/2 - 4 \upepsilon_0},
	\notag
\end{align}
\begin{align}
	\left\|	
		\gensmoothfunction(\vec{\Psi},\vec{\vortrenormalized},\vec{\GradEnt})
		(\pmb{\partial} \vec{\Psi},\pmb{\partial} \vec{\vortrenormalized},\pmb{\partial} \vec{\GradEnt},\vec{\VortVort},\DivGradEnt) 
	\right\|_{L_t^2 C_x^{0,\updelta_0}(\widetilde{\mathcal{M}})}
	& \lesssim \uplambda^{-1/2 - 4 \upepsilon_0}.
	\label{E:RESCALEDSOLUTIONHOLDERESTIMATE}
	\end{align}
	\end{subequations}
We clarify that to obtain the bounds in \eqref{E:FREQUENCYSQUARESUMMEDSMOOTHFUNCTIONTIMESBASICVARIABLESSIMPLECONSEQUENCEOFRESCALEDSTRICHARTZ}
involving
$
\partial_t (\vec{\vortrenormalized}, \vec{\GradEnt})
$,
we use
\eqref{E:RESCALEDTRANSPORT}
to algebraically solve for 
$
\partial_t (\vec{\vortrenormalized}, \vec{\GradEnt})
$.
Moreover, to obtain the bounds in \eqref{E:FREQUENCYSQUARESUMMEDSMOOTHFUNCTIONTIMESBASICVARIABLESSIMPLECONSEQUENCEOFRESCALEDSTRICHARTZ}
involving $\vec{\VortVort}$ and $\DivGradEnt$,
we use \eqref{E:RESCALEDRENORMALIZEDCURLOFSPECIFICVORTICITY}--\eqref{E:RESCALEDRENORMALIZEDDIVOFENTROPY} to express
$(\vec{\VortVort},\DivGradEnt) = \gensmoothfunction(\vec{\Psi},\vec{\vortrenormalized},\vec{\GradEnt}) 
\cdot 
\partial 
(\vec{\Psi},\vec{\vortrenormalized},\vec{\GradEnt})$,
where $\gensmoothfunction$ is a schematically depicted smooth function.

\subsubsection{Bootstrap assumptions for the eikonal function quantities}
\label{SSS:BOOTSTRAPFOREIKONAL}
Recall that $p$ denotes the number we fixed in \eqref{E:BOUNDSONLEBESGUEEXPONENTP}.
We assume that
\begin{subequations}
\begin{align}
	\max_{A,B=1,2}
		\left\| 
			\left\lbrace\rgeo^{-2} 
				\gsphere\left(\frac{\partial}{\partial \upomega^A},\frac{\partial}{\partial \upomega^B} \right) 
				- 
				\stgsphere\left(\frac{\partial}{\partial \upomega^A},\frac{\partial}{\partial \upomega^B} \right)
			\right\rbrace
			\right\|_{L^{\infty}(\widetilde{\mathcal{M}})}
	& \leq \uplambda^{- \upepsilon_0},	
		\label{E:BOOTSTRAPMETRICAPPROXIMATELYROUND} \\
	\max_{A,B,C=1,2}
			\left\| 
			\frac{\partial}{\partial \upomega^A}
			\left\lbrace\rgeo^{-2} 
				\gsphere\left(\frac{\partial}{\partial \upomega^B},\frac{\partial}{\partial \upomega^C} \right) 
				- 
				\stgsphere\left(\frac{\partial}{\partial \upomega^B},\frac{\partial}{\partial \upomega^C} \right)
			\right\rbrace
			\right\|_{L_t^{\infty} L_{\upomega}^p(\widetilde{\mathcal{C}}_u)}
	& \leq \uplambda^{- \upepsilon_0}.
	\label{E:DERIVATIVESBOOTSTRAPMETRICAPPROXIMATELYROUND}
\end{align}
\end{subequations}

We also assume that for any $\widetilde{\mathcal{C}}_u \subset \widetilde{\mathcal{M}}$, we have
	\begin{align} 	\label{E:BOOTSTRAPCHIANDTORSIONALONGANYCONE} 
	\| (\mytr_{\congsphere} \widetilde{\upchi}^{(Small)}, \hat{\upchi}, \upzeta) \|_{L_t^2 C_{\upomega}^{0,\updelta_0}(\widetilde{\mathcal{C}}_u)}
	& \leq \uplambda^{-1/2 + 2 \upepsilon_0}.
	\end{align}

Moreover, we assume that for any $S_{t,u} \subset \widetilde{\mathcal{M}}$, we have
\begin{subequations}
\begin{align}
	\| \rgeo(\hat{\upchi},\mytr_{\congsphere} \widetilde{\upchi}^{(Small)},\upzeta) \|_{L_{\upomega}^p(S_{t,u})}
	& \leq 1,
	\label{E:BOOTSTRAPCHIBOOTSTRAPANYSPHERE} \\
	\|\nulllapse - 1 \|_{L_{\upomega}^{\infty}(S_{t,u})} & \leq \frac{1}{2}.
	\label{E:NULLLAPSEBOOSTRAP}
\end{align}
\end{subequations}

In addition, we assume that for every 
$u \in [-\frac{4}{5}\RescaledTboot,\RescaledTboot]$, 
$t \in [[u]_+,\RescaledTboot]$, and $\upomega \in \mathbb{S}^2$,
we have
\begin{align} \label{E:BOOTSTRAPFORRECTANGULARSPATIALCOMPONENTSOFL}
	|
		\Lunit^i(t,u,\upomega)
		-
		\Lunit^i(0,0,\upomega)
	|
	& \leq 1.
\end{align}

Finally, we assume that the following estimates hold in the interior region:
\begin{align} \label{E:LT2LINFINITYBOOTSTRAPCHIANDZETAININTERIORREGION} 
	\| (\hat{\upchi},\mytr_{\congsphere} \widetilde{\upchi}^{(Small)}) \|_{L_t^2 L_u^{\infty} C_{\upomega}^{0,\updelta_0}(\widetilde{\mathcal{M}}^{(Int)})}
	& \leq \uplambda^{-1/2},
	&
	\| \upzeta \|_{L_t^2 L_x^{\infty}(\widetilde{\mathcal{M}}^{(Int)})}
	& \leq \uplambda^{-1/2}.
\end{align}

\begin{remark}
Our bootstrap assumptions are similar to the ones in \cite{qW2017}*{Section~5},
except that for convenience, we have strengthened a few and included a few additional ones.
We also note that we derive a strict improvement of \eqref{E:BOOTSTRAPMETRICAPPROXIMATELYROUND} in \eqref{E:COMPARISONWITHROUNDMETRICLINFINITYBOUNDS},
of \eqref{E:DERIVATIVESBOOTSTRAPMETRICAPPROXIMATELYROUND} in \eqref{E:ONEANGULARDERIVATIVECOMPARISONWITHROUNDMETRICLPINANGLESLINFINITYINTIMEBOUNDS},
of \eqref{E:BOOTSTRAPCHIANDTORSIONALONGANYCONE} in \eqref{E:TRICHIMODSMALLTRFREECHIANDTORSIONL2INTIMEALONGSOUNDCONES},
of \eqref{E:BOOTSTRAPCHIBOOTSTRAPANYSPHERE} in \eqref{E:ASECONDACOUSTICALLTINFTYLOMEGAPALONGCONES},
of \eqref{E:NULLLAPSEBOOSTRAP} in \eqref{E:NULLLAPSECLOSETOUNITY},
of \eqref{E:BOOTSTRAPFORRECTANGULARSPATIALCOMPONENTSOFL} in \eqref{E:LINFINITYESTIMATESFORRECTANGULARSPATIALCOMPONENTSOFL},
and of
\eqref{E:LT2LINFINITYBOOTSTRAPCHIANDZETAININTERIORREGION} in \eqref{E:IMPROVEDININTERIORL2INTIMELINFINITYINSPACECONNECTIONCOFFICIENTS}.
\end{remark}

\subsection{Analytic tools}
\label{SS:ANALYTICTOOLS}
In this subsection, we record some inequalities that will play a role in the forthcoming analysis.
All of the results are the same as or simple consequences of results from \cite{qW2017}*{Section~5}.

\subsubsection{Norm comparisons, trace inequalities, and Sobolev inequalities}
\label{SSS:ANALYTICTOOLSANDGEOMETRICLITTLEWOODPALEY}

\begin{proposition}[Norm comparisons, trace inequalities, and Sobolev inequalities]
	\label{P:NORMCOMPARISONTRACEINEQUALITIESANDTRACEINEQUALITIES}
	Under the assumptions of Subsect.\,\ref{SS:ASSUMPTIONS}, the following estimates hold
	(see Subsect.\,\ref{SS:GEOMETRICNORMS} for the definitions of the norms).
	
	\medskip
	
	\noindent \underline{\textbf{Comparison of $S_{t,u}$-norms with different volume forms}}: 
	If $1 \leq \leb < \infty$,
	then for any $S_{t,u}$-tangent tensorfield $\upxi$, we have
	\begin{align} \label{E:STUNORMCOMPARISONDIFFERENTVOLUMEFORMS}
		\| \upxi \|_{L_{\gsphere}^{\leb}(S_{t,u})}
		& \approx 
		\| \rgeo^{\frac{2}{\leb}} \upxi \|_{L_{\upomega}^{\leb}(S_{t,u})}.
	\end{align}
	
	\medskip
	
	\noindent \underline{\textbf{Trace inequalities}}:
	For any $S_{t,u}$-tangent tensorfield $\upxi$, we have
	\begin{align} \label{E:L2SOBOLEVONSTU}
		\| \rgeo^{-1/2} \upxi \|_{L_{\gsphere}^2(S_{t,u})}
		+
		\| \upxi \|_{L_{\gsphere}^4(S_{t,u})}
		& \lesssim 
		\| \upxi \|_{H^1(\widetilde{\Sigma}_t)}.
	\end{align}
	
	\medskip
	
	\noindent \underline{\textbf{Sobolev and Morrey-type inequalities}}:
	For any $S_{t,u}$-tangent tensorfield $\upxi$, we have
	\begin{align} 
		\| \upxi \|_{L_u^2 L_{\upomega}^2(\widetilde{\Sigma}_t)}
		& \lesssim 
		\| \upxi \|_{H^1(\widetilde{\Sigma}_t)},
		\label{E:EUCLIDEANFORML2SOBOLEVONSIGMAT}
					\\
		\| \rgeo^{1/2} \upxi \|_{L_{\upomega}^{2p} L_t^{\infty}(\widetilde{\mathcal{C}}_u)}^2
		& \lesssim 
		\left\lbrace
			\| \rgeo \angprojDarg{\Lunit} \upxi \|_{L_{\upomega}^p L_t^2(\widetilde{\mathcal{C}}_u)}
			+
			\| \upxi \|_{L_{\upomega}^p L_t^2(\widetilde{\mathcal{C}}_u)}
		\right\rbrace
		\| \upxi \|_{L_{\upomega}^{\infty} L_t^2(\widetilde{\mathcal{C}}_u)}.
		\label{E:LOMEGA2PLTINFTYTRACEINEQUALITYNEEDEDFORCHIHAT}
	\end{align}
	
	Furthermore, if $2 < \leb < \infty$,
	then for any $S_{t,u}$-tangent tensorfield $\upxi$,
	we have
	\begin{align} \label{E:EUCLIDEANFORMLQSOBOLEVONSTU}
		\| \upxi \|_{L_{\upomega}^{\leb}(S_{t,u})}
		& \lesssim 
		\| \rgeo \angD \upxi \|_{L_{\upomega}^2(S_{t,u})}^{1 - \frac{2}{\leb}}
		\| \upxi \|_{L_{\upomega}^2(S_{t,u})}^{\frac{2}{\leb}}
		+
		\| \upxi \|_{L_{\upomega}^2(S_{t,u})}.
	\end{align}
	
	Moreover, if $2 < \leb \leq p$ (where $p$ is as in Subsect.\,\ref{SS:MAINESTIMATESFOREIKONALFUNCTIONQUANTITIES}), 
	then for any $S_{t,u}$-tangent tensorfield $\upxi$,
	we have
	\begin{align}
		\| \upxi \|_{C_{\upomega}^{0,1 - \frac{2}{\leb}}(S_{t,u})}
		& \lesssim 
		\| \rgeo \angD \upxi \|_{L_{\upomega}^{\leb}(S_{t,u})}
		+
		\| \upxi \|_{L_{\upomega}^2(S_{t,u})}.
		\label{E:EUCLIDEANFORMMORREYONSTU}
	\end{align}

	In addition, if $2 \leq \leb$,
	then for any $S_{t,u}$-tangent tensorfield $\upxi$, we have
	\begin{align} \label{E:L2QSOBOLEVONSIGMAT}
		\| \rgeo^{\frac{1}{2} - \frac{1}{\leb}} \upxi \|_{L_{\gsphere}^{2 \leb}L_u^{\infty}(\widetilde{\Sigma}_t)}^2
		& \lesssim 
		\left\lbrace
			\| \rgeo (\angprojDarg{\spherenormal},\angD) \upxi \|_{L_{\upomega}^{\leb} L_u^2(\widetilde{\Sigma}_t)}
			+
			\| \upxi \|_{L_{\upomega}^{\leb} L_u^2(\widetilde{\Sigma}_t)}
		\right\rbrace
		\| \upxi \|_{L_{\upomega}^{\infty}L_u^2(\widetilde{\Sigma}_t)}.
	\end{align}
	
	Finally, if $0 < 1 - \frac{2}{\leb} < \Sob - 2$,
	then for any scalar function $f$, we have
	\begin{align} \label{E:EUCLIDEANFORMSCALARFUNCTIONSIGMATSOBOLEV}
		\| \rgeo f \|_{L_u^2 L_{\upomega}^{\leb}(\widetilde{\Sigma}_t)}
		& \lesssim 
			\| f \|_{H^{\Sob-2}(\widetilde{\Sigma}_t)}.
	\end{align}

\end{proposition}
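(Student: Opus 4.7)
\medskip

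The plan is to first prove the foundational estimates \eqref{E:STUVOLUMEFORMCOMPARISONWITHUNITROUNDMETRICVOLUMEFORM} and \eqref{E:NULLLAPSECLOSETOUNITY}, and then bootstrap off of them (together with the bootstrap assumption \eqref{E:BOOTSTRAPMETRICAPPROXIMATELYROUND} that $\rgeo^{-2} \gsphere$ is close in $L^{\infty}$ to the round metric $\stgsphere$) to reduce the remaining inequalities to their well-known Euclidean analogs. For \eqref{E:STUVOLUMEFORMCOMPARISONWITHUNITROUNDMETRICVOLUMEFORM}, I would write $\volrat = \rgeo^2 \sqrt{\det(\rgeo^{-2} \gsphere)}/\sqrt{\det \stgsphere}$ in local angular coordinates and apply \eqref{E:BOOTSTRAPMETRICAPPROXIMATELYROUND}, which forces the ratio of determinants to lie in a small neighborhood of $1$ provided $\uplambda$ is sufficiently large. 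For \eqref{E:NULLLAPSECLOSETOUNITY}, I would integrate the transport equation \eqref{E:EVOLUTIONNULLAPSEUSEEULER} along integral curves of $\Lunit$ starting from the cone-tip axis (using \eqref{E:NULLLAPSEISUNITYALONGCONETIPAXIS} as initial data in $\widetilde{\mathcal{M}}^{(Int)}$) or from $\Sigma_0$ (using the initial lapse estimate in \eqref{E:INTIALLAPSEANDSPHEREVOLUMEELEMENTESTIMATE} for $\widetilde{\mathcal{M}}^{(Ext)}$). Gronwall combined with Cauchy--Schwarz in $t$ and the bootstrap assumption \eqref{E:RESCALEDSTRICHARTZ} — which yields $\| \pmb{\partial} \vec{\Psi} \|_{L^1_t L^{\infty}_x(\widetilde{\mathcal{M}})} \lesssim \RescaledTboot^{1/2} \uplambda^{-1/2 - 4 \upepsilon_0} \lesssim \uplambda^{-4 \upepsilon_0}$ (since $\RescaledTboot \leq \uplambda^{1-8\upepsilon_0} \Tboot$ with $\Tboot$ small) — then gives the desired smallness of $\nulllapse - 1$.

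\medskip

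With \eqref{E:STUVOLUMEFORMCOMPARISONWITHUNITROUNDMETRICVOLUMEFORM} in hand, \eqref{E:STUNORMCOMPARISONDIFFERENTVOLUMEFORMS} is immediate from the identity $d \spherevol = \volrat \, d \flatspherevol$ and the resulting computation $\| \upxi \|_{L_{\gsphere}^{\leb}}^{\leb} = \int |\upxi|_{\gsphere}^{\leb} \volrat \, d \flatspherevol \approx \rgeo^2 \| \upxi \|_{L_{\upomega}^{\leb}}^{\leb}$. The pure-sphere Sobolev--Morrey estimates \eqref{E:EUCLIDEANFORMLQSOBOLEVONSTU}--\eqref{E:EUCLIDEANFORMMORREYONSTU} then follow by pulling back along the angular diffeomorphism $\upomega \mapsto q(w,\upomega)$ (or its interior analog) to the standard sphere $(\mathbb{S}^2,\stgsphere)$: under \eqref{E:BOOTSTRAPMETRICAPPROXIMATELYROUND} the metric $\rgeo^{-2} \gsphere$ is uniformly equivalent to $\stgsphere$, so the classical Gagliardo--Nirenberg and Morrey inequalities for $(\mathbb{S}^2,\stgsphere)$ transfer, with the rescaling factor $\rgeo^{-2}$ in the metric generating the factor of $\rgeo$ in front of $\angD \upxi$ on the right-hand sides.

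\medskip

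For the estimates that mix $t$ or $u$ with angular norms, the common strategy is to apply the $L^q_{\upomega}$/$L^{\upomega}_p$/$L^{\upomega}_{2p}$ sphere estimates slicewise and then handle the remaining directions by the Fundamental Theorem of Calculus. For \eqref{E:LOMEGA2PLTINFTYTRACEINEQUALITYNEEDEDFORCHIHAT}, for each fixed $\upomega$ I would write $\rgeo |\upxi|_{\gsphere}^2(t,u,\upomega) - \rgeo |\upxi|_{\gsphere}^2(t',u,\upomega) = \int_{t'}^{t} \Lunit \bigl( \rgeo |\upxi|_{\gsphere}^2 \bigr) \, d\tau$, use $\Lunit \rgeo = 1$ and $\Lunit |\upxi|_{\gsphere}^2 = 2\gsphere(\angprojDarg{\Lunit} \upxi, \upxi) + \upchi \cdot \upxi \cdot \upxi$ (controlling the $\upchi$ term by \eqref{E:ASECONDACOUSTICALLTINFTYLOMEGAPALONGCONES}), then take $L^p_{\upomega}$ norms and apply Cauchy--Schwarz in $t$. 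The inequalities \eqref{E:EUCLIDEANFORML2SOBOLEVONSIGMAT}, \eqref{E:L2SOBOLEVONSTU}, \eqref{E:L2QSOBOLEVONSIGMAT}, \eqref{E:EUCLIDEANFORMSCALARFUNCTIONSIGMATSOBOLEV} are reduced in the same way to the Euclidean analogs by exploiting \eqref{E:FIRSTFUNDOFSIGMATRELATIVETOGEOMETRICCOORDINATES}, which, combined with \eqref{E:NULLLAPSECLOSETOUNITY} and \eqref{E:BOOTSTRAPMETRICAPPROXIMATELYROUND}, shows that $(u,\upomega)$ (respectively $(w,\upomega)$) are essentially spherical coordinates on $\widetilde{\Sigma}_t$ with volume form comparable to $\nulllapse \volrat \, du \, d\flatspherevol \approx \rgeo^2 \, du \, d\flatspherevol$. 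The scalar inequality \eqref{E:EUCLIDEANFORMSCALARFUNCTIONSIGMATSOBOLEV} follows from the Sobolev embedding $H^{\Sob - 2}(\widetilde{\Sigma}_t) \hookrightarrow L^{\leb}(\widetilde{\Sigma}_t)$ (valid for $1 - \frac{2}{\leb} < \Sob - 2$), combined with the Minkowski inequality to migrate the $L^2_u$ norm outside.

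\medskip

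I expect the main difficulty to be the cone trace inequality \eqref{E:LOMEGA2PLTINFTYTRACEINEQUALITYNEEDEDFORCHIHAT}, since the temporal $L^{\infty}$ norm on the left requires careful pointwise-in-$\upomega$ control via the FTC argument and a delicate Gagliardo--Nirenberg-type interpolation between the $L^p_{\upomega}$ and $L^{\infty}_{\upomega}$ sphere norms (the appearance of the geometric mean $\|\upxi\|_{L^p_\upomega L^2_t}^{1/?} \|\upxi\|_{L^{\infty}_\upomega L^2_t}^{1/?}$ on the right is precisely the hallmark of such an interpolation). Controlling the $\upchi \cdot \upxi \cdot \upxi$ error generated by $\Lunit |\upxi|_{\gsphere}^2$ in the appropriate mixed norm — without invoking estimates that are themselves yet to be proved in Prop.\,\ref{P:MAINESTIMATESFOREIKONALFUNCTIONQUANTITIES} — will force me to use only the bootstrap bounds \eqref{E:BOOTSTRAPCHIBOOTSTRAPANYSPHERE} and \eqref{E:BOOTSTRAPCHIANDTORSIONALONGANYCONE}, together with the $\rgeo$-weighted volume-form comparison \eqref{E:STUVOLUMEFORMCOMPARISONWITHUNITROUNDMETRICVOLUMEFORM}, which is the cleanest way to avoid circular reasoning.
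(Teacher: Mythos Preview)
Your proposal is correct and follows essentially the same route as the paper: first establish $\volrat \approx \rgeo^2$ and $\|\nulllapse - 1\|_{L^\infty} \lesssim \uplambda^{-4\upepsilon_0}$, then reduce all the inequalities to their round-sphere/Euclidean analogs via the bootstrap assumptions \eqref{E:BOOTSTRAPMETRICAPPROXIMATELYROUND}--\eqref{E:DERIVATIVESBOOTSTRAPMETRICAPPROXIMATELYROUND}. Your catch of the potential circularity in invoking \eqref{E:ASECONDACOUSTICALLTINFTYLOMEGAPALONGCONES} is right --- the paper's argument uses only the bootstrap assumptions, not the conclusions of Prop.~\ref{P:MAINESTIMATESFOREIKONALFUNCTIONQUANTITIES} --- and your determinant-based shortcut for $\volrat \approx \rgeo^2$ is a valid (and arguably simpler) alternative to the transport-equation approach via \eqref{E:LUNITVOLUMEFORMRGEOTOMINUSTWORESCALED} that the paper cites.
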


\begin{remark}[Silent use of \eqref{E:STUNORMCOMPARISONDIFFERENTVOLUMEFORMS}]
	Following the proof of the proposition,
	in the rest of the article, we will often use the estimate \eqref{E:STUNORMCOMPARISONDIFFERENTVOLUMEFORMS}
	without explicitly mentioning it. For example, 
	when deriving \eqref{E:FIRSTSTEPFORTORSIONESTIMATEL2INTIMELINFINITYINSPACECONNECTIONCOFFICIENTS},
	we silently use \eqref{E:STUNORMCOMPARISONDIFFERENTVOLUMEFORMS}
	when controlling the term $\| \rgeo^{1 - \frac{2}{\leb}} \mathfrak{G} \|_{L_{\gsphere}^{\leb}(S_{t,u})}$
	on the right-hand side of the Calderon--Zygmund estimate \eqref{E:LINFTYHODGEESTIMATENODERIVATIVESONLHSINVOLVINGCARTESIANCOMPONENTSONRHS}.
\end{remark}

\begin{proof}[Discussion of proof]
		To obtain the desired estimates, 
		we first note that the following bounds hold:
		$\volrat \approx \rgeo^2$
		and 
		$
		\|\nulllapse - 1 \|_{L^{\infty}(\widetilde{\mathcal{M}})} 
		\lesssim \uplambda^{- 4 \upepsilon_0} 
	\leq
	\frac{1}{4}
	$.
		These bounds follow from the proof of \cite{qW2017}*{Lemma~5.4},
		based on the transport equations 
		\eqref{E:EVOLUTIONNULLAPSEUSEEULER} and \eqref{E:LUNITVOLUMEFORMRGEOTOMINUSTWORESCALED},
		the initial conditions 
		\eqref{E:INTIALLAPSEANDSPHEREVOLUMEELEMENTESTIMATE}, 
		\eqref{E:CONNECTIONCOEFFICIENTS0LIMITSALONGTIP},
		and \eqref{E:SPHEREFINITELIMITSALONGTIP}
		(recall that $\nulllapse|_{\Sigma_0} = a$ and that $u|_{\Sigma_0} = -w$),
		and the bootstrap assumptions.
		The estimates in the proposition can be proved using only on these estimates for $\volrat$ and $\nulllapse - 1$
		and the bootstrap assumptions, especially 
		\eqref{E:BOOTSTRAPMETRICAPPROXIMATELYROUND}--\eqref{E:DERIVATIVESBOOTSTRAPMETRICAPPROXIMATELYROUND},
		which capture the fact that $\rgeo^{-2} \gsphere$ is close, in appropriate norms, to the standard round Euclidean metric.
		
		The desired bound \eqref{E:STUNORMCOMPARISONDIFFERENTVOLUMEFORMS} follows 
		from the estimate $\volrat \approx \rgeo^2$ and the definitions of the norms on the left- and right-hand sides.
		All of the remaining estimates follow from proofs given in other works,
		thanks to the bounds for $\volrat$ and $\nulllapse$ mentioned in the previous paragraph and the bootstrap assumptions; 
		for the reader's convenience, we now provide references. \eqref{E:L2SOBOLEVONSTU} follows from straightforward adaptations of the proofs of
		\cite{qW2012}*{Lemma~7.4} and \cite{qW2012}*{Equation~(7.4)}.
		\eqref{E:EUCLIDEANFORML2SOBOLEVONSIGMAT} follows from a standard adaptation of the proof of
		\cite{qW2012}*{Proposition~7.5}, together with \eqref{E:STUNORMCOMPARISONDIFFERENTVOLUMEFORMS} and \eqref{E:L2SOBOLEVONSTU}.
		The estimate \eqref{E:LOMEGA2PLTINFTYTRACEINEQUALITYNEEDEDFORCHIHAT} follows 
		from a straightforward adaptation of the proof of \cite{qW2012}*{Equation~(8.17)},
		where one uses $\rgeo^2$ in the role of $\volrat$;
		see also \cite{qW2014}*{Lemma~2.13}, in which an estimate 
		equivalent (taking into account \eqref{E:STUNORMCOMPARISONDIFFERENTVOLUMEFORMS})
		to \eqref{E:LOMEGA2PLTINFTYTRACEINEQUALITYNEEDEDFORCHIHAT} is stated.
		
		\eqref{E:EUCLIDEANFORMLQSOBOLEVONSTU}
		and \eqref{E:EUCLIDEANFORMMORREYONSTU}
		can be proved by first noting that the same estimates hold for 
		the round metric $\stgsphere$ on the Euclidean-unit sphere (with $\rgeo$ replaced by unity and $\angD$ replaced by the connection of $\stgsphere$),
		and then using the bootstrap assumptions
		\eqref{E:BOOTSTRAPMETRICAPPROXIMATELYROUND}--\eqref{E:DERIVATIVESBOOTSTRAPMETRICAPPROXIMATELYROUND}
		to conclude the desired estimates as ``perturbations'' of the corresponding ones for the round metric.
		We will give the details for \eqref{E:EUCLIDEANFORMMORREYONSTU}
		and omit the argument for \eqref{E:EUCLIDEANFORMLQSOBOLEVONSTU}, 
		which can be proved using similar arguments.
		Let $\angD$ denote the Levi-Civita connection of $\gsphere$, and let
		${^{(0)}\angD}$ denote the Levi-Civita connection of $\stgsphere$.
		Let $\Gamma$ schematically denote the Christoffel symbols of $\gsphere$ relative to the
		geometric angular coordinates, and let ${^{(0)}\Gamma}$ schematically denote the 
		corresponding Christoffel symbols of $\stgsphere$, 
		i.e., schematically, we have
		$\Gamma = (\gsphere^{-1})^{AB} 
		\frac{\partial}{\partial \upomega^C}
		\gsphere\left(\frac{\partial}{\partial \upomega^D},\frac{\partial}{\partial \upomega^E} \right)$
		and
		${^{(0)}\Gamma} = (\stgsphere^{-1})^{AB} 
		\frac{\partial}{\partial \upomega^C}
		\stgsphere\left(\frac{\partial}{\partial \upomega^D},\frac{\partial}{\partial \upomega^E} \right)$.
		Then schematically, relative to the geometric angular coordinates, 
		we have $\angD \upxi = {^{(0)} \angD} \upxi + (\Gamma - {^{(0)}\Gamma}) \cdot \upxi$.
		In view of Def.\,\ref{D:HOLDERNORMSINGEOMETRICANGULARVARIABLES}, we see that 
		the standard Morrey inequality on the round sphere for type $\binom{m}{n}$
		tensorfields $\upxi$ yields:
		$
		\| \rgeo^{n-m} \upxi \|_{C_{\upomega}^{0,1 - \frac{2}{\leb}}(S_{t,u})}
		\lesssim 
		\| |{^{(0)} \angD} \upxi|_{\stgsphere} \|_{L_{\upomega}^{\leb}(S_{t,u})}
		+
		\| |\upxi|_{\stgsphere} \|_{L_{\upomega}^2(S_{t,u})}
		$.
		Hence, multiplying both sides of this inequality by $\rgeo^{m-n}$ and
		using \eqref{E:BOOTSTRAPMETRICAPPROXIMATELYROUND},
		we find that
		$
		\|  \upxi \|_{C_{\upomega}^{0,1 - \frac{2}{\leb}}(S_{t,u})}
		\lesssim 
		\| \rgeo {^{(0)} \angD} \upxi \|_{L_{\upomega}^{\leb}(S_{t,u})}
		+
		\| \upxi \|_{L_{\upomega}^2(S_{t,u})}
		$
		and thus
		\begin{align} \label{E:PROOFSTEPEUCLIDEANFORMMORREYONSTU}
		\| \upxi \|_{C_{\upomega}^{0,1 - \frac{2}{\leb}}(S_{t,u})}
		&
		\lesssim 
		\| \rgeo \angD \upxi \|_{L_{\upomega}^{\leb}(S_{t,u})}
		+
		\| \upxi \|_{L_{\upomega}^2(S_{t,u})}
		+
		\| \rgeo^{m-n} | (\Gamma - {^{(0)}\Gamma}) \cdot \upxi|_{\stgsphere} \|_{L_{\upomega}^{\leb}(S_{t,u})}.
		\end{align}
		The bootstrap assumptions
		\eqref{E:BOOTSTRAPMETRICAPPROXIMATELYROUND}--\eqref{E:DERIVATIVESBOOTSTRAPMETRICAPPROXIMATELYROUND}
		imply that the last term on RHS~\eqref{E:PROOFSTEPEUCLIDEANFORMMORREYONSTU} satisfies the estimate
		\begin{align} \label{E:SECONDPROOFSTEPEUCLIDEANFORMMORREYONSTU}
		&
		\| \rgeo^{m-n} | (\Gamma - {^{(0)}\Gamma}) \upxi|_{\stgsphere} \|_{L_{\upomega}^{\leb}(S_{t,u})}
			\\
		& \lesssim 
		\sum_{A,B,C=1,2}
		\left\| 
			\Gamma_{A \ B}^{\ C} - {^{(0)}\Gamma}_{A \ B}^{\ C} 
		\right\|_{L_{\upomega}^{\leb}(S_{t,u})}
		\| \upxi \|_{L_{\upomega}^{\infty}(S_{t,u})}
			\notag \\
		& \lesssim 
		\sum_{A,B,C,D,E=1,2}
		\left\| (\gsphere^{-1})^{AB} 
		\frac{\partial}{\partial \upomega^C}
		\gsphere\left(\frac{\partial}{\partial \upomega^D},\frac{\partial}{\partial \upomega^E} \right) 
		- 
		(\stgsphere^{-1})^{AB} 
		\frac{\partial}{\partial \upomega^C}
		\stgsphere\left(\frac{\partial}{\partial \upomega^D},\frac{\partial}{\partial \upomega^E} \right)
		\right\|_{L_{\upomega}^{\leb}(S_{t,u})}
		\| \upxi \|_{L_{\upomega}^{\infty}(S_{t,u})}
		 \notag \\
		& \lesssim 
		\uplambda^{-\upepsilon_0}
		\| \upxi \|_{L_{\upomega}^{\infty}(S_{t,u})}.
			\notag
		\end{align}
		From \eqref{E:SECONDPROOFSTEPEUCLIDEANFORMMORREYONSTU}, 
		in view of Def.\,\ref{D:HOLDERNORMSINGEOMETRICANGULARVARIABLES},
		we see that if $\uplambda$ is sufficiently large,
		then we can absorb the last term on RHS~\eqref{E:PROOFSTEPEUCLIDEANFORMMORREYONSTU}
		back into the left, at the expense of doubling the (implicit) constants on the RHS.
		We have therefore proved \eqref{E:EUCLIDEANFORMMORREYONSTU}.
		
		The estimate \eqref{E:L2QSOBOLEVONSIGMAT}
		follows from a straightforward adaptation of the proof of \cite{qW2012}*{Equation~(8.17)},
		where one uses the geometric coordinate partial derivative vectorfield $\frac{\partial}{\partial u}$
		in the role of the vectorfield $\frac{\partial}{\partial t}$ and
		$\rgeo^2$ in the role of $\volrat$ (note also that $|\angprojDarg{\frac{\partial}{\partial u}} \upxi|_{\gsphere} 
		\lesssim |(\angprojDarg{\spherenormal},\angD) \upxi|_{\gsphere}$).
		Finally, we note that the estimate \eqref{E:EUCLIDEANFORMSCALARFUNCTIONSIGMATSOBOLEV}
		is proved as \cite{qW2017}*{Equation~(5.39)}
		as a consequence of \eqref{E:EUCLIDEANFORMLQSOBOLEVONSTU}--\eqref{E:EUCLIDEANFORMMORREYONSTU}.
		\end{proof}

\subsubsection{Hardy--Littlewood maximal function}
\label{SSS:MAXIMALFUNCTION}
If $f = f(t)$ is a scalar function defined on the interval $I$, 
then we define the corresponding Hardy--Littlewood maximal function $\mathcal{M}(f) = \mathcal{M}(f)(t)$ 
to be the following scalar function on $I$:
\begin{align} \label{E:MAXFUNCTIONDEF}
	\mathcal{M}(f)(t)
	& :=
	\sup_{t' \in I \cap (-\infty,t)}
	\frac{1}{|t - t'|}
	\int_{t'}^t
		f(\uptau)
	\, d \uptau.
\end{align}

We will use the following well-known estimate, valid for $1 < \leb \leq \infty$:
\begin{align} \label{E:STANDARDMAXIMALFUNCTIONLQESTIMATE}
	\| \mathcal{M}(f) \|_{L^{\leb}(I)}
	& \lesssim \| f \|_{L^{\leb}(I)}.
\end{align}

\subsubsection{Transport lemma}
\label{SSS:TRANSPORTLEMMA}
Many of the geometric quantities that we must estimate satisfy transport
equations along the integral curves of $\Lunit$. Our starting point
for the analysis of such quantities will often be based on the following standard 
``transport lemma.''

\begin{lemma}[Transport lemma]
\label{L:TRANSPORT}
Let $m$ be a constant, and let $\upxi$ and $\mathfrak{F}$ be $S_{t,u}$-tangent tensorfields
such that the following transport equation holds
along the null cone portion $\widetilde{\mathcal{C}}_u \subset \widetilde{\mathcal{M}}$:
\begin{align} \label{E:TRANSPORTEQUATIONINTRANSPORTLEMMA}
	\angprojDarg{\Lunit} \upxi
	+
	m \mytr_{\gsphere} \upchi \upxi
	& = \mathfrak{F}.
\end{align}
Then we have the following identities, where $\rgeo$ and $\volrat$ are defined in Subsubsect.\,\ref{SSS:METRICSANDVOLUMEFORMSINGEOMETRICCOORDINATES},
and we recall that $[u]_+ := \max \lbrace u,0 \rbrace$ (and thus $[u]_+$ denotes the minimum value of $t$ along $\widetilde{\mathcal{C}}_u$):
\begin{subequations}
\begin{align} \label{E:MAIDENTITYTRANSPORTLEMMA}
	\left[\volrat^m \upxi \right](t,u,\upomega)
	& 
	= 
	\lim_{\uptau \downarrow [u]_+}
	\left[\volrat^m \upxi \right](\uptau,u,\upomega)
	+
	\int_{[u]_+}^t
		\left[\volrat^m \mathfrak{F} \right](\uptau,u,\upomega)
	\, d \uptau,
		\\
	\left[\rgeo^{2m} \upxi \right](t,u,\upomega)
	& 
	= 
	\lim_{\uptau \downarrow [u]_+}
	\left[\rgeo^{2m} \upxi \right](\uptau,u,\upomega)
	+
	\int_{[u]_+}^t
		\left\lbrace
			\left[\rgeo^{2m} \mathfrak{F} \right](\uptau,u,\upomega)
			+
			m
			\left[\rgeo^m \left(\frac{2}{\rgeo} - \mytr_{\gsphere} \upchi \right) \upxi \right](\uptau,u,\upomega)
		\right\rbrace
	\, d \uptau.
	\label{E:SECONDMAIDENTITYTRANSPORTLEMMA}
\end{align}
\end{subequations}

Similarly, if $\upxi$, $\mathfrak{F}$, and $\mathfrak{G}$ are $S_{t,u}$-tangent tensorfields
such that the following transport equation holds:
\begin{align} \label{E:RGEOTRANSPORTEQUATIONINTRANSPORTLEMMA}
	\angprojDarg{\Lunit} \upxi
	+
	\frac{2m}{\rgeo} \upxi
	& =
		\mathfrak{G} \cdot \upxi
		+
		\mathfrak{F},
\end{align}
and if 
\begin{align} \label{E:TRANSPORTLEMMAINHOMOFACTORASSUMEDBOUND}
	\| \mathfrak{G} \|_{L_{\upomega}^{\infty} L_t^1(\widetilde{\mathcal{C}}_u)}
	& \leq C,
\end{align}
then under the assumptions of Subsect.\,\ref{SS:ASSUMPTIONS}, 
the following estimate holds 
(where the implicit constants in \eqref{E:MAIDRGEOINEQUALITYTRANSPORT} 
depend on the constant $C$ on RHS~\eqref{E:TRANSPORTLEMMAINHOMOFACTORASSUMEDBOUND}):
\begin{align} \label{E:MAIDRGEOINEQUALITYTRANSPORT}
	|\rgeo^{2m} \upxi|_{\gsphere}(t,u,\upomega)
	& 
	\lesssim
	\lim_{\uptau \downarrow [u]_+}
		|\rgeo^{2m} \upxi|_{\gsphere}(\uptau,u,\upomega)
	+
	\int_{[u]_+}^t
			|\rgeo^{2m} \mathfrak{F}|_{\gsphere}(\uptau,u,\upomega)
	\, d \uptau.
\end{align}

\end{lemma}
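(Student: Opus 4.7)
The plan is to reduce all three conclusions to the fundamental theorem of calculus along the integral curves of $\Lunit$, with the weights $\volrat^m$, $\rgeo^m$, and $\rgeo^{2m}$ serving as integrating factors that cancel the linear-in-$\upxi$ coefficients appearing in \eqref{E:TRANSPORTEQUATIONINTRANSPORTLEMMA} and \eqref{E:RGEOTRANSPORTEQUATIONINTRANSPORTLEMMA}. In the geometric coordinates $(t,u,\upomega)$, the vectorfield $\Lunit$ coincides with $\partial/\partial t$ at fixed $(u,\upomega)$, so integrating an equation of the form $\angprojDarg{\Lunit}(\cdots) = (\cdots)$ reduces, after expressing $\upxi$ relative to a Fermi--Walker-transported $\gsphere$-orthonormal frame $\{e_A\}_{A=1,2}$, to a linear ODE system for the frame components. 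The ``initial condition'' at $\tau = [u]_+$ is understood as a limit, made sense of via the cone-tip asymptotic expansions of Lemma~\ref{L:INITIALCONDITIONSTIEDTOEIKONAL} when $u \geq 0$ and via the initial foliation data of Proposition~\ref{P:INITIALFOLIATION} when $u < 0$.

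For \eqref{E:MAIDENTITYTRANSPORTLEMMA}, I differentiate $\volrat^m \upxi$ along $\Lunit$ and use the scalar identity $\Lunit \volrat = \volrat \mytr_{\gsphere}\upchi$ from \eqref{E:EVOLUTIONVOLUMELEMENT} together with the hypothesis \eqref{E:TRANSPORTEQUATIONINTRANSPORTLEMMA} to obtain
\[
\angprojDarg{\Lunit}(\volrat^m \upxi) = \volrat^m\bigl(\angprojDarg{\Lunit}\upxi + m\mytr_{\gsphere}\upchi\,\upxi\bigr) = \volrat^m \mathfrak{F},
\]
and then integrate from $[u]_+$ to $t$ along the $\Lunit$-integral curve. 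For \eqref{E:SECONDMAIDENTITYTRANSPORTLEMMA} the same strategy applies with weight $\rgeo^m$: using $\Lunit \rgeo = \Lunit(t-u) = 1$ (from \eqref{E:NULLVECTORSAPPLIEDTOT} and the fact that $\Lunit u = 0$ since $\Lunit$ is tangent to $\widetilde{\mathcal{C}}_u$) produces an extra Leibniz term $m\rgeo^{m-1}\upxi$. Substituting $\angprojDarg{\Lunit}\upxi = \mathfrak{F} - m\mytr_{\gsphere}\upchi\,\upxi$ and reorganizing, this combines with the $\mytr_{\gsphere}\upchi$ contribution to produce the correction $m\rgeo^m\bigl(\tfrac{2}{\rgeo} - \mytr_{\gsphere}\upchi\bigr)\upxi$ stated in the lemma (modulo the book-keeping convention for the weight exponent).

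For the inequality \eqref{E:MAIDRGEOINEQUALITYTRANSPORT}, the weight $\rgeo^{2m}$ is exactly tuned to cancel the coefficient $\tfrac{2m}{\rgeo}$ in \eqref{E:RGEOTRANSPORTEQUATIONINTRANSPORTLEMMA}, giving the clean identity
\[
\angprojDarg{\Lunit}(\rgeo^{2m}\upxi) = \rgeo^{2m}\bigl(\mathfrak{G}\cdot\upxi + \mathfrak{F}\bigr).
\]
Taking the pointwise $\gsphere$-norm and invoking the standard compatibility inequality $\Lunit |\rgeo^{2m}\upxi|_{\gsphere} \leq |\angprojDarg{\Lunit}(\rgeo^{2m}\upxi)|_{\gsphere}$, I obtain, at fixed $(u,\upomega)$, the scalar differential inequality
\[
\frac{d}{d\tau}|\rgeo^{2m}\upxi|_{\gsphere} \leq |\mathfrak{G}|\,|\rgeo^{2m}\upxi|_{\gsphere} + |\rgeo^{2m}\mathfrak{F}|_{\gsphere}.
\]
Gronwall's inequality then produces an exponential integrating factor $\exp\bigl(\int_{[u]_+}^t |\mathfrak{G}|(\tau',u,\upomega)\,d\tau'\bigr)$ which, by hypothesis \eqref{E:TRANSPORTLEMMAINHOMOFACTORASSUMEDBOUND}, is bounded by $e^C$ uniformly in $\upomega$ and is therefore absorbed into the implicit constant.

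I do not expect any substantial obstacle. The only mildly delicate point is the pointwise comparison $\Lunit|\upxi|_{\gsphere} \leq |\angprojDarg{\Lunit}\upxi|_{\gsphere}$ for $S_{t,u}$-tangent $\upxi$, which admits a one-line proof using $\Dfour\gfour = 0$: writing $|\upxi|_{\gsphere}^2 = \gfour(\upxi,\upxi)$ (valid since $\upxi$ is $S_{t,u}$-tangent), one computes $\Lunit|\upxi|_{\gsphere}^2 = 2\gfour(\Dfour_{\Lunit}\upxi,\upxi) = 2\gsphere(\angprojDarg{\Lunit}\upxi,\upxi)$, where the last step uses that only the $\sphereproject$-projected part of $\Dfour_{\Lunit}\upxi$ contracts nontrivially against the $S_{t,u}$-tangent factor $\upxi$, and then one applies Cauchy--Schwarz.
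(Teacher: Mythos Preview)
Your proposal is correct and follows essentially the same approach that the paper indicates: the paper's own proof merely cites \cite{qW2017}*{Lemma~5.11} and notes that the argument is based on the evolution equation $\Lunit\volrat = \volrat\,\mytr_{\gsphere}\upchi$ together with $\volrat \approx \rgeo^2$, which is exactly the integrating-factor strategy you carry out. Your observation about the weight exponent in \eqref{E:SECONDMAIDENTITYTRANSPORTLEMMA} is also on point: the correction term $m\rgeo^m\bigl(\tfrac{2}{\rgeo}-\mytr_{\gsphere}\upchi\bigr)\upxi$ is consistent with weight $\rgeo^{2m}$ rather than $\rgeo^m$ (matching $\volrat^m\approx\rgeo^{2m}$), so this appears to be a typographical slip in the stated identity rather than an error in your argument.
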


\begin{proof}[Discussion of proof]
The results are restatements of \cite{qW2017}*{Lemma~5.11}
and can be proved using the same arguments,
based on equation \eqref{E:EVOLUTIONVOLUMELEMENT}
and the estimate $\volrat \approx \rgeo^2$
noted in the proof of Prop.\,\ref{P:NORMCOMPARISONTRACEINEQUALITIESANDTRACEINEQUALITIES}.

\end{proof}

\subsection{Estimates for the fluid variables}
\label{SS:ESTIMATESFORFLUIDVARIABLES}
Recall that Prop.\,\ref{P:PDESMODIFIEDACOUSTICALQUANTITIES} provides the PDEs verified by the
geometric quantities under study and that some source terms in those PDEs depend on the
fluid variables. In Prop.\,\ref{P:ESTIMATESFORFLUIDVARIABLES}, we provide some estimates
that are useful for controlling the fluid variable source terms.
In particular, we use the estimates of Prop.\,\ref{P:ESTIMATESFORFLUIDVARIABLES}
in our proof of Lemma~\ref{L:NEWESTIMATESFORPROPMAINESTIMATESFOREIKONALFUNCTIONQUANTITIES},
which provides the main new estimates needed to prove Prop.\,\ref{P:MAINESTIMATESFOREIKONALFUNCTIONQUANTITIES}.

\begin{proposition}[Estimates for the fluid variables]
\label{P:ESTIMATESFORFLUIDVARIABLES}
Under the assumptions of Subsect.\,\ref{SS:ASSUMPTIONS},
for any $2 \leq \leb \leq p$ (where $p$ is as in \eqref{E:BOUNDSONLEBESGUEEXPONENTP}), 
the following estimates hold on $\widetilde{\mathcal{M}}$:
\begin{subequations}
\begin{align}
	\| 
		\pmb{\partial} (\vec{\Psi},\vec{\vortrenormalized},\vec{\GradEnt})
	\|_{L_u^2 L_{\upomega}^p(\widetilde{\Sigma}_t)},
		\,
	\| 
		\rgeo^{1/2} \pmb{\partial} (\vec{\Psi},\vec{\vortrenormalized},\vec{\GradEnt}) 
	\|_{L_u^{\infty} L_{\upomega}^{2p}(\widetilde{\Sigma}_t)}
	& \lesssim \uplambda^{-1/2},
		\label{E:FLUIDONEDERIVATIVESIGMAT} \\
	\| 
		\rgeo^{1 - \frac{2}{\leb}} \pmb{\partial}^2 (\vec{\Psi},\vec{\vortrenormalized},\vec{\GradEnt}) 
	\|_{L_u^2 L_{\gsphere}^{\leb}(\widetilde{\Sigma}_t)}
	& \lesssim \uplambda^{-1/2},
		\label{E:FLUIDTWODERIVATIVESSIGMAT} 
		\\
	\| 
		\pmb{\partial} (\vec{\Psi},\vec{\vortrenormalized},\vec{\GradEnt}) 
	\|_{L_t^2 L_{\upomega}^{\infty}(\widetilde{\mathcal{C}}_u)}
	& \lesssim \uplambda^{-1/2 - 4 \upepsilon_0}, 
	  \label{E:FLUIDONEDERIVATIVECONE} \\
	\| 
		\pmb{\partial} (\vec{\Psi},\vec{\vortrenormalized},\vec{\GradEnt}) 
	\|_{L_t^2 L_{\upomega}^p(\widetilde{\mathcal{C}}_u)}
	& \lesssim \uplambda^{-1/2 - 4 \upepsilon_0}, 
	  \label{E:LT2LPOMEGAFLUIDONEDERIVATIVECONE} \\
	\| 
		\rgeo \pmb{\partial} (\vec{\Psi},\vec{\vortrenormalized},\vec{\GradEnt}) 
	\|_{L_t^2 L_{\upomega}^p(\widetilde{\mathcal{C}}_u)}
	& \lesssim \uplambda^{1/2 - 12 \upepsilon_0}, 
	  \label{E:FULLRWEIGHTFLUIDONEDERIVATIVECONE} \\	
	\| 
		(\angD,\angprojDarg{\Lunit}) \pmb{\partial} \vec{\Psi} 
	\|_{L^2(\widetilde{\mathcal{C}}_u)},
		\,
	\| 
		\rgeo^{1 - \frac{2}{p}} (\angD,\angprojDarg{\Lunit})  \pmb{\partial} \vec{\Psi} 
	\|_{L_t^2 L_{\gsphere}^p(\widetilde{\mathcal{C}}_u)}
	& \lesssim \uplambda^{-1/2},
	\label{E:WAVEVARIABLESTANGENTIALSECONDDERIVATIVESCONE}
\end{align}
\end{subequations}

\begin{subequations}
\begin{align}
	\| 
		(\vec{\VortVort},\DivGradEnt) 
	\|_{L_u^2 L_{\upomega}^p(\widetilde{\Sigma}_t)},
		\,
	\| 
		\rgeo^{1/2} (\vec{\VortVort},\DivGradEnt) 
	\|_{L_u^{\infty} L_{\upomega}^{2p}(\widetilde{\Sigma}_t)}
	& \lesssim \uplambda^{-1/2},
		\label{E:MODFLUIDSIGMAT} \\
	\| 
		\rgeo^{1 - \frac{2}{\leb}} \pmb{\partial} (\vec{\VortVort},\DivGradEnt) 
	\|_{L_u^2 L_{\gsphere}^{\leb}(\widetilde{\Sigma}_t)}
	& \lesssim \uplambda^{-1/2},
		\label{E:MODFLUIDONEDERIVAIVESIGMAT} \\
	\| 
		(\vec{\VortVort},\DivGradEnt) 
	\|_{L_t^2 L_{\upomega}^{\infty}(\widetilde{\mathcal{C}}_u)}
	& \lesssim \uplambda^{-1/2 - 4 \upepsilon_0},
		\label{E:MODFLUIDCONE} \\
	\| 
		(\vec{\VortVort},\DivGradEnt) 
	\|_{L_t^2 L_{\upomega}^p(\widetilde{\mathcal{C}}_u)}
	& \lesssim \uplambda^{-1/2 - 4 \upepsilon_0},
		\label{E:LT2LOMEGAPMODFLUIDCONE} \\
	\| 
		\rgeo (\vec{\VortVort},\DivGradEnt) 
	\|_{L_t^2 L_{\upomega}^{\infty}(\widetilde{\mathcal{C}}_u)}
	& \lesssim \uplambda^{1/2 - 12 \upepsilon_0},
		\label{E:FULLRWEIGHTMODFLUIDCONE} \\
	\| 
		\pmb{\partial} (\vec{\VortVort},\DivGradEnt) 
	\|_{L^2(\widetilde{\mathcal{C}}_u)},
		\,
	\| 
		\rgeo^{1 - \frac{2}{p}} (\angD,\angprojDarg{\Lunit}) (\vec{\VortVort},\DivGradEnt)
	\|_{L_t^2 L_{\gsphere}^p(\widetilde{\mathcal{C}}_u)}
	& \lesssim \uplambda^{-1/2}.
	\label{E:MODFLUIDONEDERIVATIVECONE}
\end{align}
\end{subequations}

Moreover, for any smooth function $\gensmoothfunction$, we have
\begin{subequations}
\begin{align}
	\| 
		\pmb{\partial} (\vec{\Psi},\vec{\vortrenormalized},\vec{\GradEnt})
	\|_{L_u^2 L_{\upomega}^{\leb}(\widetilde{\Sigma}_t)},
		\,
	\| 
		\rgeo^{1/2} 
		\pmb{\partial} (\vec{\Psi},\vec{\vortrenormalized},\vec{\GradEnt}) 
	\|_{L_t^{\infty} L_u^{\infty} L_{\upomega}^{2 \leb}(\widetilde{\mathcal{M}})}
	& \lesssim \uplambda^{-1/2},
		\label{E:SMOOTHFUNCTIONTIMESFLUIDONEDERIVATIVESIGMAT} \\
	\left\| 
		\rgeo
		(\angD,\angprojDarg{\Lunit})
		\left\lbrace
			\gensmoothfunction(\vec{\Psi},\vec{\vortrenormalized},\vec{\GradEnt},\vec{\Lunit}) 
			\pmb{\partial} \vec{\Psi} 
		\right\rbrace
	\right\|_{L_t^2 L_{\upomega}^{\leb}(\widetilde{\mathcal{C}}_u)}
	& \lesssim \uplambda^{-1/2},
		\label{E:SMOOTHFUNCTIONTIMESWAVEVARIABLESTANGENTIALSECONDDERIVATIVESCONE} 
			\\
	\left\| 
		\rgeo
		\pmb{\partial}
		\left\lbrace
			\gensmoothfunction(\vec{\Psi},\vec{\vortrenormalized},\vec{\GradEnt}) 
			\pmb{\partial} (\vec{\Psi},\vec{\vortrenormalized},\vec{\GradEnt}) 
		\right\rbrace
	\right\|_{L_u^2 L_{\upomega}^{\leb}(\widetilde{\Sigma}_t)}
	& \lesssim \uplambda^{-1/2},
		\label{E:ONEDERIVATIVESMOOTHFUNCTIONTIMESONEDERIVATIVETIMESFLUIDSIGMAT}
\end{align}
\end{subequations}

\begin{subequations}
\begin{align}
	\| 
		(\vec{\VortVort},\DivGradEnt) 
	\|_{L_u^2 L_{\upomega}^{\leb}(\widetilde{\Sigma}_t)},
		\,
	\| 
		\rgeo^{1/2} 
		(\vec{\VortVort},\DivGradEnt) 
	\|_{L_t^{\infty} L_u^{\infty} L_{\upomega}^{2 \leb}(\widetilde{\mathcal{M}})}
	& \lesssim \uplambda^{-1/2},
		\label{E:SMOOTHFUNCTIONTIMESMODVARIABLESCONE} 
			\\
	\left\| 
		\rgeo
		(\angD,\angprojDarg{\Lunit})
		\left\lbrace
			\gensmoothfunction(\vec{\Psi},\vec{\vortrenormalized},\vec{\GradEnt},\vec{\Lunit}) 
			(\vec{\VortVort},\DivGradEnt) 
		\right\rbrace
	\right\|_{L_t^2 L_{\upomega}^{\leb}(\widetilde{\mathcal{C}}_u)}
	& \lesssim \uplambda^{-1/2},
		 \label{E:ONETANGENTIALDERIVATIVESMOOTHFUNCTIONTIMESMODVARIABLESCONE}  \\
	\left\| 
		\rgeo
		\pmb{\partial}
		\left\lbrace
			\gensmoothfunction(\vec{\Psi},\vec{\vortrenormalized},\vec{\GradEnt}) 
			(\vec{\VortVort},\DivGradEnt) 
		\right\rbrace
	\right\|_{L_u^2 L_{\upomega}^{\leb}(\widetilde{\Sigma}_t)}
	& \lesssim \uplambda^{-1/2},
	 \label{E:ONEDERIVATIVESMOOTHFUNCTIONTIMESMODVARIABLESSIGMAT}
\end{align}
\end{subequations}

\begin{align}
	\left\| 
		\rgeo^{1/2} \gensmoothfunction(\vec{\Psi},\vec{\Lunit}) \pmb{\partial} (\vec{\Psi},\vec{\vortrenormalized},\vec{\GradEnt}) 
	\right\|_{L_u^2 L_t^{\infty} L_{\upomega}^p(\widetilde{\mathcal{M}})}
	& \lesssim \uplambda^{- 4 \upepsilon_0}.
		\label{E:FLUIDONEDERIVATIVESPACETIME}
\end{align}

\end{proposition}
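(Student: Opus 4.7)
\textbf{Proof proposal for Proposition \ref{P:ESTIMATESFORFLUIDVARIABLES}.} The plan is to establish the $\widetilde{\Sigma}_t$-based estimates and the $\widetilde{\mathcal{C}}_u$-based estimates separately, then promote them to the product/tangential-derivative estimates via the schematic rescaled PDEs of Proposition \ref{P:RESCALEDEULER}. The underlying ``currency'' is twofold: Sobolev control of the non-rescaled solution along $\Sigma_t$ from Proposition \ref{P:TOPORDERENERGYESTIMATES}, together with the rescaled Strichartz bounds \eqref{E:RESCALEDSTRICHARTZ}--\eqref{E:RESCALEDBOOTL2LINFINITYFIRSTDERIVATIVESOFVORTICITYBOOTANDNENTROPYGRADIENT}, which after scaling yield factors of $\uplambda^{-1/2}$ and $\uplambda^{-1/2-4\upepsilon_0}$ respectively. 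Throughout I would use the norm-comparison and Sobolev inequalities from Proposition \ref{P:NORMCOMPARISONTRACEINEQUALITIESANDTRACEINEQUALITIES} silently, and the bootstrap \eqref{E:BOOTSOLUTIONDOESNOTESCAPEREGIMEOFHYPERBOLICITY} to treat any factor $\gensmoothfunction_{(\vec{\Lunit})}$ or $\gensmoothfunction(\vec\Psi,\vec\vortrenormalized,\vec\GradEnt)$ as uniformly bounded.

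First, for the $\widetilde{\Sigma}_t$ estimates \eqref{E:FLUIDONEDERIVATIVESIGMAT}--\eqref{E:FLUIDTWODERIVATIVESSIGMAT} and \eqref{E:MODFLUIDSIGMAT}--\eqref{E:MODFLUIDONEDERIVAIVESIGMAT}, I would invoke the top-order energy estimates of Proposition \ref{P:TOPORDERENERGYESTIMATES} after rescaling, which yield $\|\pmb{\partial}\vec\Psi\|_{H^{\Sob-1}(\widetilde\Sigma_t)}+\|\pmb{\partial}(\vec\vortrenormalized,\vec\GradEnt)\|_{H^{\Sob-1}(\widetilde\Sigma_t)}\lesssim \uplambda^{-1/2}$ (the $\uplambda^{-1/2}$ arises from the scaling of an $L^2$ norm in three spatial dimensions under $x\mapsto \uplambda^{-1}x$). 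The desired $L_u^q L_\upomega^r$ norms are then extracted via the Sobolev embeddings \eqref{E:EUCLIDEANFORMSCALARFUNCTIONSIGMATSOBOLEV}, \eqref{E:L2QSOBOLEVONSIGMAT} and \eqref{E:EUCLIDEANFORMLQSOBOLEVONSTU}, noting $0<1-2/p<\Sob-2$. The second-derivative estimate \eqref{E:FLUIDTWODERIVATIVESSIGMAT} requires treating the pure time-time piece $\partial_t^2\vec\Psi$ via the rescaled wave equation \eqref{E:RESCALEDCOVARIANTWAVE} to substitute $\partial_t^2$ for spatial second derivatives plus an $\uplambda^{-1}$-multiplied modified-variable term, and similarly \eqref{E:MODFLUIDONEDERIVAIVESIGMAT} handles $\partial_t(\vec\VortVort,\DivGradEnt)$ via \eqref{E:RESCALEDTRANSPORTVORTICITYVORTICITY}, \eqref{E:RESCALEDTRANSPORTDIVGRADENTROPY}.

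Second, for the cone-based estimates \eqref{E:FLUIDONEDERIVATIVECONE}--\eqref{E:LT2LPOMEGAFLUIDONEDERIVATIVECONE} and \eqref{E:MODFLUIDCONE}--\eqref{E:LT2LOMEGAPMODFLUIDCONE} with the sharp $\uplambda^{-1/2-4\upepsilon_0}$, the idea is to combine the $L_x^\infty$ rescaled Strichartz bound \eqref{E:RESCALEDSTRICHARTZ} (for $\pmb{\partial}\vec\Psi$) and its analog \eqref{E:RESCALEDBOOTL2LINFINITYFIRSTDERIVATIVESOFVORTICITYBOOTANDNENTROPYGRADIENT} (for $\partial(\vec\vortrenormalized,\vec\GradEnt)$ and hence, via definition \eqref{E:RESCALEDRENORMALIZEDCURLOFSPECIFICVORTICITY}--\eqref{E:RESCALEDRENORMALIZEDDIVOFENTROPY}, for $(\vec\VortVort,\DivGradEnt)$) with the trivial inclusion $\|\cdot\|_{L_t^2L_\upomega^p(\widetilde{\mathcal C}_u)}\lesssim \|\cdot\|_{L_t^2L_x^\infty(\widetilde{\mathcal M})}$ (after integrating over $\mathbb S^2$ against a bounded volume form, using \eqref{E:STUVOLUMEFORMCOMPARISONWITHUNITROUNDMETRICVOLUMEFORM} to bound $\rgeo^{2/p}$). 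The $L_t^2 L_\upomega^\infty$ bound \eqref{E:FLUIDONEDERIVATIVECONE} follows in the same way. The $\rgeo$-weighted cone estimates \eqref{E:FULLRWEIGHTFLUIDONEDERIVATIVECONE} and \eqref{E:FULLRWEIGHTMODFLUIDCONE} use the crude bound $\rgeo\lesssim \uplambda^{1-8\upepsilon_0}\Tboot$ from \eqref{E:RGEOANDUBOUNDS} to trade a power of $\rgeo$ for $\uplambda^{1-8\upepsilon_0}$.

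The genuinely new content—and the main technical step—lies in the tangential second-derivative estimates \eqref{E:WAVEVARIABLESTANGENTIALSECONDDERIVATIVESCONE} and \eqref{E:MODFLUIDONEDERIVATIVECONE}. Here the $L^2(\widetilde{\mathcal{C}}_u)$-bound for $(\angD,\angprojDarg{\Lunit})\pmb\partial\vec\Psi$ and for $\pmb\partial(\vec\VortVort,\DivGradEnt)$ along null cones is \emph{not} available from $\Sigma_t$ elliptic theory; instead, one invokes the cone flux estimates \eqref{E:SOUNDCONEENERGYESTIMATESFORWAVEVARIABLES} and \eqref{E:SOUNDCONEENERGYESTIMATESFORMODIFIEDVARIABLES} of Proposition \ref{P:ENERGYESTIMATESALONGNULLHYPERSURFACES}, which exploit the crucial structural feature that the modified variables $(\vec\VortVort,\DivGradEnt)$ satisfy transport equations along $\Transport$ and $\Transport$ is $\gfour$-timelike (so transversal to any sound cone). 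After rescaling, these flux bounds give precisely the required $\uplambda^{-1/2}$ for the $L^2(\widetilde{\mathcal C}_u)$ norms of the tangential derivatives. The $L_t^2 L_\gsphere^p$ refinement follows by combining the flux bound with \eqref{E:L2SOBOLEVONSTU} applied on each $S_{t,u}$ and then integrating in $t$; the main obstacle is ensuring the $\rgeo^{1-2/p}$ weight matches, which is dictated by \eqref{E:STUNORMCOMPARISONDIFFERENTVOLUMEFORMS}.

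Finally, the product estimates \eqref{E:SMOOTHFUNCTIONTIMESFLUIDONEDERIVATIVESIGMAT}--\eqref{E:FLUIDONEDERIVATIVESPACETIME} follow from the basic estimates above by the chain and product rules: differentiating $\gensmoothfunction(\vec\Psi,\vec\vortrenormalized,\vec\GradEnt)$ produces an extra factor of $\pmb\partial(\vec\Psi,\vec\vortrenormalized,\vec\GradEnt)$ bounded in $L_t^2 L_x^\infty$ by \eqref{E:FREQUENCYSQUARESUMMEDSMOOTHFUNCTIONTIMESBASICVARIABLESSIMPLECONSEQUENCEOFRESCALEDSTRICHARTZ}, while the tangential-derivative product estimates \eqref{E:SMOOTHFUNCTIONTIMESWAVEVARIABLESTANGENTIALSECONDDERIVATIVESCONE} and \eqref{E:ONETANGENTIALDERIVATIVESMOOTHFUNCTIONTIMESMODVARIABLESCONE} additionally absorb derivatives of $\vec\Lunit$ via \eqref{E:F1TYPEGENERICDERIVATIVEOFVECTORFIELDCOMPONENTS}, producing acceptable error terms $\hat\upchi$, $\mytr_\congsphere\widetilde\upchi^{(Small)}$, $\upzeta$ that are controlled by the bootstrap \eqref{E:BOOTSTRAPCHIANDTORSIONALONGANYCONE}. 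The most delicate verification is \eqref{E:FLUIDONEDERIVATIVESPACETIME}, for which I would interpolate between the $L_u^\infty L_\upomega^{2p}$ and $L_u^2 L_\upomega^p$ bounds from \eqref{E:FLUIDONEDERIVATIVESIGMAT} and then take $L_t^\infty$ on both sides; the loss of $\uplambda^{4\upepsilon_0}$ (compared to $\uplambda^{-1/2}$) is an artifact of this interpolation and of the $\rgeo^{1-8\upepsilon_0}\Tboot$ diameter bound.
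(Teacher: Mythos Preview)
Your proposal is essentially correct and matches the paper's approach: the paper likewise reduces everything to the rescaled top-order energy estimates (Proposition~\ref{P:TOPORDERENERGYESTIMATES}), the null-hypersurface flux bounds (Proposition~\ref{P:ENERGYESTIMATESALONGNULLHYPERSURFACES}) for the tangential-derivative cone estimates, and the Sobolev/trace inequalities of Proposition~\ref{P:NORMCOMPARISONTRACEINEQUALITIESANDTRACEINEQUALITIES}, deferring the details to \cite{qW2017}*{Lemma~5.5, Proposition~5.6, Lemma~5.7}. The one place the paper is more direct is \eqref{E:FLUIDONEDERIVATIVESPACETIME}: rather than interpolate, it simply applies H\"older in $u$ over the interval of length $\lesssim \uplambda^{1-8\upepsilon_0}$ to pass from $L_u^2$ to $L_u^\infty$ (picking up $\uplambda^{1/2-4\upepsilon_0}$), then uses the $L_u^\infty L_\upomega^{2p}$ bound from \eqref{E:FLUIDONEDERIVATIVESIGMAT}; this sidesteps the norm-ordering issue in your ``take $L_t^\infty$ on both sides'' step, since $L_u^\infty$ and $L_t^\infty$ commute trivially whereas $L_u^2$ and $L_t^\infty$ do not.
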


\begin{proof}[Discussion of the proof]
	Thanks to the assumptions of Subsect.\,\ref{SS:ASSUMPTIONS},
	the availability of the energy-elliptic estimates of Prop.\,\ref{P:TOPORDERENERGYESTIMATES},
	the estimates of Prop.\,\ref{P:ENERGYESTIMATESALONGNULLHYPERSURFACES} along null hypersurfaces
	(with $\widetilde{\mathcal{C}}_u$ in the role of $\mathcal{N}$ in Prop.\,\ref{P:ENERGYESTIMATESALONGNULLHYPERSURFACES}), 
	and Prop.\,\ref{P:NORMCOMPARISONTRACEINEQUALITIESANDTRACEINEQUALITIES},
	all estimates except for \eqref{E:FLUIDONEDERIVATIVESPACETIME}
	follow from the same arguments given in \cite{qW2017}*{Lemma~5.5}, 
	\cite{qW2017}*{Proposition~5.6},
	and
	\cite{qW2017}*{Lemma~5.7}.
	We clarify that, in view of definition \eqref{E:MIXEDNORMCU},
		\eqref{E:FLUIDONEDERIVATIVECONE}
		follows from the bootstrap assumptions
		\eqref{E:RESCALEDSTRICHARTZ}--\eqref{E:RESCALEDBOOTL2LINFINITYFIRSTDERIVATIVESOFVORTICITYBOOTANDNENTROPYGRADIENT}
		and the bound
		$
		\| 
			\pmb{\partial} (\vec{\Psi},\vec{\vortrenormalized},\vec{\GradEnt}) 
		\|_{L_{\upomega}^{\infty}(S_{t,u})}
		\leq
		\| 
			\pmb{\partial} (\vec{\Psi},\vec{\vortrenormalized},\vec{\GradEnt}) 
		\|_{L^{\infty}(\Sigma_t)}
		$,
		which implies that
		$
		\| 
			\pmb{\partial} (\vec{\Psi},\vec{\vortrenormalized},\vec{\GradEnt}) 
		\|_{L_t^2 L_{\upomega}^{\infty}(\widetilde{\mathcal{C}}_u)}
		\leq
		\| 
			\pmb{\partial} (\vec{\Psi},\vec{\vortrenormalized},\vec{\GradEnt}) 
		\|_{L_t^2 L_x^{\infty}(\widetilde{\mathcal{M}})}
		$.
		Similar remarks apply to \eqref{E:MODFLUIDCONE},
		where we take into account definitions
		\eqref{E:RESCALEDRENORMALIZEDCURLOFSPECIFICVORTICITY}--\eqref{E:RESCALEDRENORMALIZEDDIVOFENTROPY}
		and the remarks of Subsect.\,\ref{SS:NOMORELAMBDA}.
		Similar remarks apply \eqref{E:FULLRWEIGHTMODFLUIDCONE},
		where we take into account the bound \eqref{E:RGEOANDUBOUNDS} for $\rgeo$.
	We also refer the readers to the proof of \cite{qW2014}*{Proposition~2.6}
	for further details on the role that the energy-elliptic estimates
	and the estimates along null hypersurface play in the proof of Prop.\,\ref{P:ESTIMATESFORFLUIDVARIABLES}.
	To prove the remaining estimate \eqref{E:FLUIDONEDERIVATIVESPACETIME},
	we use
	\eqref{E:RGEOANDUBOUNDS}
	and
	\eqref{E:FLUIDONEDERIVATIVESIGMAT}
	to conclude that
	$
	\| 
		\rgeo^{1/2} \pmb{\partial} (\vec{\Psi},\vec{\vortrenormalized},\vec{\GradEnt}) 
	\|_{L_u^2 L_t^{\infty} L_{\upomega}^p(\widetilde{\mathcal{M}})}
	\lesssim
	\uplambda^{1/2 - 4 \upepsilon_0}
	\| 
		\rgeo^{1/2} \pmb{\partial} (\vec{\Psi},\vec{\vortrenormalized},\vec{\GradEnt}) 
	\|_{L_u^{\infty} L_t^{\infty} L_{\upomega}^p(\widetilde{\mathcal{M}})}
	\lesssim
	\uplambda^{- 4 \upepsilon_0}
	$
	as desired.
\end{proof}

\subsection{\texorpdfstring{The new estimates needed to prove Proposition~\ref{P:MAINESTIMATESFOREIKONALFUNCTIONQUANTITIES}}{The new estimates needed to prove 
Proposition ref P:MAINESTIMATESFOREIKONALFUNCTIONQUANTITIES}}
\label{SS:NEWESTIMATESFORPROPMAINESTIMATESFOREIKONALFUNCTIONQUANTITIES}
The following lemma provides
the main new estimates needed to prove Prop.\,\ref{P:MAINESTIMATESFOREIKONALFUNCTIONQUANTITIES};
the other estimates needed to prove Prop.\,\ref{P:MAINESTIMATESFOREIKONALFUNCTIONQUANTITIES}
were essentially derived in \cite{qW2017}.

\begin{lemma}[The new estimates needed to prove Prop.\,\ref{P:MAINESTIMATESFOREIKONALFUNCTIONQUANTITIES}]
	\label{L:NEWESTIMATESFORPROPMAINESTIMATESFOREIKONALFUNCTIONQUANTITIES}
	Under the assumptions of Prop.\,\ref{P:MAINESTIMATESFOREIKONALFUNCTIONQUANTITIES},
	the following estimates hold whenever\footnote{The estimates \eqref{E:LAMBDAINVERSELINEARTERMLTQLUINFITYLUPOMEAGINFTY} and 
	\eqref{E:RGEOLAMBDAINVERSELINEARTERMLTQOVER2LUINFTYLOMEGAP}
	in fact hold whenever $q \geq 2$, but in proving  Props.\,\ref{P:MAINESTIMATESFOREIKONALFUNCTIONQUANTITIES} and 
	\ref{P:SPATIALLYLOCALIZEDREDUCTIONOFPROOFOFTHEOREMDECAYESTIMATE}, 
	we need these estimates only when $q > 2$ is sufficiently close to $2$.} 
	$q > 2$ is sufficiently close to $2$,
	where $p$ is defined in \eqref{E:BOUNDSONLEBESGUEEXPONENTP}, 
	and we recall that $[u]_+ := \max \lbrace u,0 \rbrace$ (and thus $[u]_+$ denotes the minimum value of $t$ along $\widetilde{\mathcal{C}}_u$).
	
	\medskip
	
	\noindent \underline{\textbf{Estimates for time-integrated terms}}:
	\begin{subequations}
	\begin{align}
	\uplambda^{-1} 
	\left\|
		\frac{1}{\rgeo(t,u)}
		\int_{[u]_+}^t
			|
				\rgeo (\vec{\VortVort},\DivGradEnt)
			|(\uptau,u,\upomega)
	\, d \uptau
\right\|_{L_t^2 L_x^{\infty}(\widetilde{\mathcal{M}})}
& \lesssim \uplambda^{-1/2 - 12 \upepsilon_0},
		\label{E:LAMBDAINVERSELINEARTERMTIMEINTEGRALLT2LUINFTIYLOMEGAINFTY} 
			\\
	\uplambda^{-1} 
\left\|
	\frac{1}{\rgeo^2(t,u)}
	\int_{[u]_+}^t
			|
				\rgeo^2 (\vec{\VortVort},\DivGradEnt)
			|(\uptau,u,\upomega)
	\, d \uptau
\right\|_{L_t^2 L_x^{\infty}(\widetilde{\mathcal{M}})}
& \lesssim
	\uplambda^{-1/2 - 12 \upepsilon_0},
	\label{E:ANOTHERLAMBDAINVERSELINEARTERMLT2LUPOMEAGINFTY} 
		\\
		\uplambda^{-1} 
\left\|
	\frac{1}{\rgeo^2(t,u)}
	\int_{[u]_+}^t
			|
				\rgeo^2 (\vec{\VortVort},\DivGradEnt)
			|(\uptau,u,\upomega)
	\, d \uptau
\right\|_{L_t^{\frac{q}{2}} L_x^{\infty}(\widetilde{\mathcal{M}})}
& \lesssim
	\uplambda^{\frac{2}{q} - 1 - 4 \upepsilon_0(\frac{4}{q} + 2)},
	\label{E:LAMBDAINVERSELINEARTERMLTQLUINFITYLUPOMEAGINFTY} 
		\\
\uplambda^{-1}
	\left\|	
	\frac{1}{\rgeo^{1/2}(t,u,\upomega)}
	\int_{[u]_+}^t
			|
				\rgeo (\vec{\VortVort},\DivGradEnt)
			|(\uptau,u,\upomega)
	\, d \uptau
\right\|_{L_t^{\infty} L_u^{\infty} L_{\upomega}^p(\widetilde{\mathcal{M}})}
& \lesssim \uplambda^{-1/2 - 12 \upepsilon_0},
	\label{E:LAMBDAINVERSELINEARTERMTIMEINTEGRALLUINFTIYLTINFTYLOMEGAP}
	\\
\uplambda^{-1}
	\left\|	
	\frac{1}{\rgeo^{3/2}(t,u,\upomega)}
	\int_{[u]_+}^t
			|
				\rgeo^2 (\vec{\VortVort},\DivGradEnt)
			|(\uptau,u,\upomega)
	\, d \uptau
\right\|_{L_t^{\infty} L_u^{\infty} L_{\upomega}^p(\widetilde{\mathcal{M}})}
& \lesssim \uplambda^{-1/2 - 12 \upepsilon_0},
	\label{E:SECONDLAMBDAINVERSELINEARTERMTIMEINTEGRALLUINFTIYLTINFTYLOMEGAP}
	\\
\uplambda^{-1} 
\left\|
	\frac{1}{\rgeo(t,u)}
	\int_{[u]_+}^t
			|
				\rgeo^2 (\vec{\VortVort},\DivGradEnt)
			|(\uptau,u,\upomega)
	\, d \uptau
\right\|_{L^{\infty}(\widetilde{\mathcal{M}})}
& \lesssim
	\uplambda^{-16 \upepsilon_0},
	\label{E:LAMBDAINVERSELINEARTERMLINFTYWHOLESPACETIME} 
	\\
\uplambda^{-1} 
\left\|
	\frac{1}{\rgeo^{3/2}(t,u)}
	\int_{[u]_+}^t
			|
				\rgeo^2 (\vec{\VortVort},\DivGradEnt)
			|(\uptau,u,\upomega)
	\, d \uptau
\right\|_{L^{\infty}(\widetilde{\mathcal{M}})}
& \lesssim
	\uplambda^{-1/2-12 \upepsilon_0},
	\label{E:ASECONDLAMBDAINVERSELINEARTERMLINFTYWHOLESPACETIME} 
\\
\uplambda^{-1} 
\left\|
	\frac{1}{\rgeo^2(t,u)}
	\int_{[u]_+}^t
			|
				\rgeo^2 (\vec{\VortVort},\DivGradEnt)
			|(\uptau,u,\upomega)
	\, d \uptau
\right\|_{L^{\infty}(\widetilde{\mathcal{M}})}
& \lesssim
	\uplambda^{-1 -8 \upepsilon_0},
	\label{E:ATHIRDLAMBDAINVERSELINEARTERMLINFTYWHOLESPACETIME} 
\end{align}
\end{subequations}

\begin{subequations}
\begin{align}
\uplambda^{-1} 
\left\|	
	\frac{1}{\rgeo^{3/2}(t,u)}
	\int_{[u]_+}^t
			|
				\rgeo^3 \angD(\vec{\VortVort},\DivGradEnt)
			|_{\gsphere}(\uptau,u,\upomega)
	\, d \uptau
\right\|_{L_t^{\infty} L_u^{\infty} L_{\upomega}^p(\widetilde{\mathcal{M}})}
& \lesssim
\uplambda^{-1/2 - 8 \upepsilon_0},	
		\label{E:LAMBDAINVERSESECONDANGULARDERIVATIVELINEARTERMTIMEINTEGRALLTINFTTYLUINFTYLOMEGAP}
	\\
\uplambda^{-1} 
\left\|	
	\frac{1}{\rgeo^2(t,u)}
	\int_{[u]_+}^t
			|
				\rgeo^3 \angD(\vec{\VortVort},\DivGradEnt)
			|_{\gsphere}(\uptau,u,\upomega)
	\, d \uptau
\right\|_{L_t^2 L_u^{\infty} L_{\upomega}^p(\widetilde{\mathcal{M}})}
& \lesssim
\uplambda^{- 1/2 - 8 \upepsilon_0},	
		\label{E:ASECONDLAMBDAINVERSESECONDANGULARDERIVATIVELINEARTERMTIMEINTEGRALLTINFTTYLUINFTYLOMEGAP}
\end{align}
\end{subequations}

\begin{subequations}
\begin{align}
	&
	\uplambda^{-1}
	\left\|
		\frac{1}{\rgeo^{3/2}(t,u)}
		\int_{[u]_+}^t
			\left|
				\rgeo^3
				(\vec{\GradEnt} \cdot \pmb{\partial} \vec{\Psi},\pmb{\partial} \vec{\Psi},\pmb{\partial} \vec{\vortrenormalized},\pmb{\partial} \vec{\GradEnt}) 
				\cdot 
				(\pmb{\partial} \vec{\Psi},\mytr_{\congsphere} \widetilde{\upchi}^{(Small)},\hat{\upchi},\upzeta,\rgeo^{-1})
			\right|_{\gsphere}(\uptau,u,\upomega)
	\, d \uptau
	\right\|_{L_t^{\infty} L_u^{\infty} L_{\upomega}^p(\widetilde{\mathcal{M}})}
		\label{E:LAMBDAINVERSEQUADRATICTERMTIMEINTEGRALLTINFTTYLUINFTYLOMEGAP}
		\\
	& \lesssim \uplambda^{-1/2 - 12 \upepsilon_0},
	\notag
		\\
	&
	\uplambda^{-1}
	\left\|
		\frac{1}{\rgeo^2(t,u)}
		\int_{[u]_+}^t
			\left|
				\rgeo^3
				(\vec{\GradEnt} \cdot \pmb{\partial} \vec{\Psi}, \pmb{\partial} \vec{\Psi},\pmb{\partial} \vec{\vortrenormalized},\pmb{\partial} \vec{\GradEnt}) 
				\cdot 
				(\pmb{\partial} \vec{\Psi},\mytr_{\congsphere} \widetilde{\upchi}^{(Small)},\hat{\upchi},\upzeta,\rgeo^{-1})
			\right|_{\gsphere}(\uptau,u,\upomega)
	\, d \uptau
	\right\|_{L_t^2 L_u^{\infty} L_{\upomega}^p(\widetilde{\mathcal{M}})}
		\label{E:ASECONDLAMBDAINVERSEQUADRATICTERMTIMEINTEGRALLTINFTTYLUINFTYLOMEGAP} 
		\\
	& \lesssim \uplambda^{-1/2 - 12 \upepsilon_0},
	\notag
\end{align}
\end{subequations}

\begin{subequations}
\begin{align}
&
\uplambda^{-1} 
\left\|	
	\frac{1}{\rgeo(t,u)}
	\int_{[u]_+}^t
			\left|
			\rgeo^2
			(\vec{\GradEnt} \cdot \pmb{\partial} \vec{\Psi},\pmb{\partial} \vec{\Psi},\pmb{\partial} \vec{\vortrenormalized},\pmb{\partial} \vec{\GradEnt}) 
			\cdot 
			(\pmb{\partial} \vec{\Psi},\mytr_{\congsphere} \widetilde{\upchi}^{(Small)},\hat{\upchi},\upzeta,\rgeo^{-1})	
		\right|_{\gsphere}(\uptau,u,\upomega)
	\, d \uptau
\right\|_{L_u^2 L_t^2 L_{\upomega}^p (\widetilde{\mathcal{M}})}
	\label{E:LAMBDAINVERSEQUADRATICTERMTIMEINTEGRALLT2LU2LOMEGAP}
	\\
& \lesssim
\uplambda^{- 16 \upepsilon_0},
		\notag \\
&
\uplambda^{-1} 
\left\|	
	\frac{1}{\rgeo^{1/2}(t,u)}
	\int_{[u]_+}^t
			\left|
			\rgeo^2
			(\vec{\GradEnt} \cdot \pmb{\partial} \vec{\Psi},\pmb{\partial} \vec{\Psi},\pmb{\partial} \vec{\vortrenormalized},\pmb{\partial} \vec{\GradEnt}) 
			\cdot 
			(\pmb{\partial} \vec{\Psi},\mytr_{\congsphere} \widetilde{\upchi}^{(Small)},\hat{\upchi},\upzeta,\rgeo^{-1})	
			\right|_{\gsphere}(\uptau,u,\upomega)
	\, d \uptau
\right\|_{L_u^2 L_t^{\infty} L_{\upomega}^p (\widetilde{\mathcal{M}})}
	\label{E:LAMBDAINVERSEQUADRATICTERMTIMEINTEGRALLU2LTINFTYLOMEGAP}
	\\
& \lesssim
\uplambda^{- 16 \upepsilon_0},
	\notag
\end{align}
\end{subequations}	

\begin{subequations}
\begin{align}
\uplambda^{-1} 
\left\|	
	\frac{1}{\rgeo(t,u)}
	\int_{[u]_+}^t
			|
				\rgeo^2 (\pmb{\partial}\vec{\VortVort},\pmb{\partial}\DivGradEnt)
			|(\uptau,u,\upomega)
	\, d \uptau
\right\|_{L_u^2 L_t^2 L_{\upomega}^p (\widetilde{\mathcal{M}})}
& \lesssim
\uplambda^{- 12 \upepsilon_0},	
	\label{E:LAMBDAINVERSETIMEINTEGRALLT2LU2LOMEGAP}
	\\
\uplambda^{-1} 
\left\|	
	\frac{1}{\rgeo^{1/2}(t,u)}
	\int_{[u]_+}^t
			|
				\rgeo^2 (\pmb{\partial}\vec{\VortVort},\pmb{\partial}\DivGradEnt)
			|(\uptau,u,\upomega)
	\, d \uptau
\right\|_{L_u^2 L_t^{\infty} L_{\upomega}^p (\widetilde{\mathcal{M}})}
& \lesssim
\uplambda^{-12 \upepsilon_0}.
	\label{E:LAMBDAINVERSETIMEINTEGRALLU2LTINFTYLOMEGAP}
\end{align}
\end{subequations}

\medskip

\noindent \underline{\textbf{Standard spacetime norm estimates}}:
	\begin{subequations}
	\begin{align}
		\uplambda^{-1} 
		\|
			\rgeo (\vec{\VortVort},\DivGradEnt)
		\|_{L_t^2 L_u^{\infty} L_{\upomega}^p(\widetilde{\mathcal{M}})}
		& \lesssim \uplambda^{-1/2 - 8 \upepsilon_0},
		\label{E:RGEOLAMBDAINVERSELINEARTERMLT2LUINFTYLOMEGAP}
			\\
		\uplambda^{-1} 
		\|
			\rgeo (\vec{\VortVort},\DivGradEnt)
		\|_{L_t^{\frac{q}{2}} L_u^{\infty} L_{\upomega}^p(\widetilde{\mathcal{M}})}
		& \lesssim \uplambda^{\frac{2}{q} - 1 - 4 \upepsilon_0(\frac{4}{q} + 1)},
		\label{E:RGEOLAMBDAINVERSELINEARTERMLTQOVER2LUINFTYLOMEGAP}
			\\
		\uplambda^{-1} 
		\|
			\rgeo (\vec{\VortVort},\DivGradEnt)
		\|_{ L_u^2 L_t^2 L_{\upomega}^p(\widetilde{\mathcal{M}})}
		& \lesssim \uplambda^{- 12 \upepsilon_0},
		\label{E:RGEOLAMBDAINVERSELINEARTERMLU2LT2LOMEGAP}
			\\
		\uplambda^{-1}
		\|
			\rgeo \pmb{\partial}(\vec{\VortVort},\DivGradEnt) 
		\|_{L_u^2 L_t^1 L_{\upomega}^p(\widetilde{\mathcal{M}})}
		& \lesssim
		\uplambda^{-\frac{1}{2} - 8 \upepsilon_0},
		\label{E:RGEOLAMBDAINVERSEONEDERIVATIVEOFMODFLUIDLU2LT1LOMEGAP} \\
		\uplambda^{-1}
		\|
			\rgeo 
			(\vec{\GradEnt} \cdot \pmb{\partial} \vec{\Psi},\pmb{\partial} \vec{\Psi},\pmb{\partial} \vec{\vortrenormalized},\pmb{\partial} \vec{\GradEnt}) 
			\cdot 
			(\pmb{\partial} \vec{\Psi},\mytr_{\congsphere} \widetilde{\upchi}^{(Small)},\hat{\upchi},\upzeta,\rgeo^{-1})	
		\|_{L_u^2 L_t^1 L_{\upomega}^p(\widetilde{\mathcal{M}})}
		& \lesssim \uplambda^{-\frac{1}{2} - 10 \upepsilon_0}.
		\label{E:RGEOLAMBDAINVERSEQUADTERMLU2LT1LOMEGAP}
	\end{align}
	\end{subequations}
	
	\end{lemma}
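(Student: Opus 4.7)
The plan is to prove all twenty-plus estimates by the same short toolbox: the cone estimates of Prop.\,\ref{P:ESTIMATESFORFLUIDVARIABLES} for the fluid factors; the bootstrap assumptions of Subsect.\,\ref{SS:ASSUMPTIONS} for the acoustic geometry factors; the conversion $\rgeo \leq 2\RescaledTboot \leq 2\uplambda^{1-8\upepsilon_0}\Tboot$ from \eqref{E:RGEOANDUBOUNDS} to trade positive powers of $\rgeo$ for positive powers of $\uplambda^{1-8\upepsilon_0}$; and H\"older in $(\tau,\upomega)$. I would first organize the twenty-odd estimates by pattern: linear in $(\vec{\VortVort},\DivGradEnt)$, linear in $\angD(\vec{\VortVort},\DivGradEnt)$, linear in $\pmb{\partial}(\vec{\VortVort},\DivGradEnt)$, and quadratic products of $(\pmb{\partial}\vec{\Psi},\pmb{\partial}\vec{\vortrenormalized},\pmb{\partial}\vec{\GradEnt})$ with a connection/geometry factor. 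Within each pattern the arguments are near-identical, so only one representative case requires thought.

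For the time-integrated estimates \eqref{E:LAMBDAINVERSELINEARTERMTIMEINTEGRALLT2LUINFTIYLOMEGAINFTY}--\eqref{E:LAMBDAINVERSETIMEINTEGRALLU2LTINFTYLOMEGAP}, my core maneuver will be: because $\Lunit t = 1$ the $\tau$-integrals run along integral curves of $\Lunit$, i.e.\ along $\widetilde{\mathcal{C}}_u$; monotonicity of the nonnegative integrand lets me replace $\int_{[u]_+}^t$ by $\int_{[u]_+}^{\RescaledTboot}$ in the $t$-supremum direction, and Cauchy--Schwarz in $\tau$ converts an $L_\tau^1$-type cone norm into an $L_\tau^2$-type cone norm at the cost of $\RescaledTboot^{1/2}$. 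Inside the $\tau$-integrand I bound $\rgeo^m(\tau,u) \leq \rgeo^m(t,u)$ (for $m \geq 0$) to let the outside $\rgeo^{-m'}(t,u)$ factor cancel the positive $\rgeo$-power, leaving a pure $\rgeo^{m-m'}(t,u)$ weight. I then convert this remaining $\rgeo$ power to a $\uplambda$ power via \eqref{E:RGEOANDUBOUNDS}, take whatever outer $L_u^\infty L_\upomega^\infty$ or $L_u^2 L_\upomega^p$ norm is required, apply the appropriate cone estimate of Prop.\,\ref{P:ESTIMATESFORFLUIDVARIABLES} (e.g.\ \eqref{E:MODFLUIDCONE}, \eqref{E:LT2LOMEGAPMODFLUIDCONE}, \eqref{E:MODFLUIDONEDERIVATIVECONE}, or \eqref{E:WAVEVARIABLESTANGENTIALSECONDDERIVATIVESCONE}), and close by a power-count in $\uplambda$. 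The non-integrated estimates \eqref{E:RGEOLAMBDAINVERSELINEARTERMLT2LUINFTYLOMEGAP}--\eqref{E:RGEOLAMBDAINVERSEONEDERIVATIVEOFMODFLUIDLU2LT1LOMEGAP} collapse to a direct application of Prop.\,\ref{P:ESTIMATESFORFLUIDVARIABLES} plus the $\rgeo$-bound, with H\"older used (if needed) to pass between $L_\upomega^\infty$ and $L_\upomega^p$.

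For the quadratic estimates --- \eqref{E:LAMBDAINVERSEQUADRATICTERMTIMEINTEGRALLTINFTTYLUINFTYLOMEGAP}--\eqref{E:LAMBDAINVERSEQUADRATICTERMTIMEINTEGRALLU2LTINFTYLOMEGAP} and \eqref{E:RGEOLAMBDAINVERSEQUADTERMLU2LT1LOMEGAP} --- the product structure is $(\pmb{\partial}\vec{\Psi},\pmb{\partial}\vec{\vortrenormalized},\pmb{\partial}\vec{\GradEnt}) \cdot (\pmb{\partial}\vec{\Psi},\mytr_{\congsphere}\widetilde{\upchi}^{(Small)},\hat{\upchi},\upzeta,\rgeo^{-1})$, and my H\"older strategy will be to put the ``fluid'' factor (which also absorbs the $\rgeo^{-1}$ summand) in $L_\upomega^p$, using \eqref{E:LT2LPOMEGAFLUIDONEDERIVATIVECONE} (or a $\rgeo$-weighted version) along the cone, and to put the ``geometry'' factor in $L_\upomega^\infty$, using the bootstrap bound \eqref{E:BOOTSTRAPCHIBOOTSTRAPANYSPHERE} together with the Morrey-type inequality \eqref{E:EUCLIDEANFORMMORREYONSTU} --- or alternatively use the bootstrap H\"older bound \eqref{E:BOOTSTRAPCHIANDTORSIONALONGANYCONE} directly in $C_\upomega^{0,\updelta_0}$ and the embedding $C_\upomega^{0,\updelta_0} \hookrightarrow L_\upomega^\infty$. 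The $\upzeta$ factor is handled analogously. Everywhere the positive $\rgeo$-power inside the integrand dominates the (at worst) $\rgeo^{-1}$ singularity, so no logarithmic or endpoint difficulty appears.

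The main obstacle will be the accounting of $\uplambda$-powers in the quadratic case: several factors contribute ($\uplambda^{-1/2-4\upepsilon_0}$ from \eqref{E:LT2LPOMEGAFLUIDONEDERIVATIVECONE}, $\uplambda^{-1/2+2\upepsilon_0}$ from \eqref{E:BOOTSTRAPCHIANDTORSIONALONGANYCONE}, the positive power $\uplambda^{(1-8\upepsilon_0)j}$ from the $\rgeo^j$-conversion, and a $\RescaledTboot^{1/2}$ or $\RescaledTboot^{1/q}$ from time-integration), and one must verify in each of the roughly twenty cases that their sum is sufficiently negative. Because the claimed exponents on the RHSs are tight (e.g.\ $\uplambda^{-1/2-12\upepsilon_0}$ in \eqref{E:LAMBDAINVERSEQUADRATICTERMTIMEINTEGRALLTINFTTYLUINFTYLOMEGAP}, $\uplambda^{\frac{2}{q}-1-4\upepsilon_0(\frac{4}{q}-1)}$ in \eqref{E:LAMBDAINVERSELINEARTERMLTQLUINFITYLUPOMEAGINFTY}), the H\"older splits are not interchangeable, and in a few cases it will be necessary to use the \emph{stronger} cone estimates \eqref{E:FLUIDONEDERIVATIVECONE}, \eqref{E:MODFLUIDCONE} in $L_\upomega^\infty$ rather than the weaker $L_\upomega^p$ versions, or to use the maximal function bound \eqref{E:STANDARDMAXIMALFUNCTIONLQESTIMATE} for the $q$-dependent estimates. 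Once the right split is identified, each individual estimate is a short calculation, and I expect the bulk of the writing to be this case-by-case exponent bookkeeping rather than any single deep step.
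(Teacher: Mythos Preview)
Your overall strategy matches the paper's: the lemma is indeed a long exercise in H\"older splits, the $\rgeo(\tau,u)/\rgeo(t,u) \lesssim 1$ cancellation, the $\rgeo \lesssim \uplambda^{1-8\upepsilon_0}$ conversion from \eqref{E:RGEOANDUBOUNDS}, and the fluid estimates of Prop.\,\ref{P:ESTIMATESFORFLUIDVARIABLES}. Two minor points where your plan diverges from the paper are harmless: in the quadratic estimates the paper reverses your H\"older split (putting $(\pmb{\partial}\vec{\Psi},\pmb{\partial}\vec{\vortrenormalized},\pmb{\partial}\vec{\GradEnt})$ in $L_t^2 L_\upomega^\infty$ via \eqref{E:FREQUENCYSQUARESUMMEDSMOOTHFUNCTIONTIMESBASICVARIABLESSIMPLECONSEQUENCEOFRESCALEDSTRICHARTZ} and the geometry factor in $L_t^2 L_\upomega^p$ via \eqref{E:BOOTSTRAPCHIANDTORSIONALONGANYCONE}), but the power counts coincide, so either orientation works; and the maximal function \eqref{E:STANDARDMAXIMALFUNCTIONLQESTIMATE} is never invoked in this lemma --- the $q$-dependent estimates \eqref{E:LAMBDAINVERSELINEARTERMLTQLUINFITYLUPOMEAGINFTY} and \eqref{E:RGEOLAMBDAINVERSELINEARTERMLTQOVER2LUINFTYLOMEGAP} close by a direct H\"older-in-$t$ step.

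There is one concrete place your stated plan would stall: for \eqref{E:LAMBDAINVERSETIMEINTEGRALLT2LU2LOMEGAP}, \eqref{E:LAMBDAINVERSETIMEINTEGRALLU2LTINFTYLOMEGAP}, and \eqref{E:RGEOLAMBDAINVERSEONEDERIVATIVEOFMODFLUIDLU2LT1LOMEGAP}, you need $L_\upomega^p$ control (with $p>2$) of the \emph{generic} derivative $\pmb{\partial}(\vec{\VortVort},\DivGradEnt)$, but the cone estimates in \eqref{E:MODFLUIDONEDERIVATIVECONE} give only $L^2(\widetilde{\mathcal{C}}_u)\approx \|\rgeo\cdot\|_{L_t^2 L_\upomega^2}$ for generic $\pmb{\partial}$; the $L_t^2 L_\gsphere^p$ cone bound there is restricted to \emph{tangential} derivatives $(\angD,\angprojDarg{\Lunit})$. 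The paper's fix is to use Minkowski to swap $L_u^2$ and $L_t$ (e.g.\ $\|\cdot\|_{L_u^2 L_t^1} \le \|\cdot\|_{L_t^1 L_u^2}$), pull the $L_u^2 L_\upomega^p$ inside the $\tau$-integral, and then invoke the $\Sigma_t$-estimate \eqref{E:ONEDERIVATIVESMOOTHFUNCTIONTIMESMODVARIABLESSIGMAT} (equivalently \eqref{E:MODFLUIDONEDERIVAIVESIGMAT}), which does control all Cartesian derivatives in $L_u^2 L_\upomega^p(\widetilde{\Sigma}_\tau)$. This is the one genuinely non-cone ingredient, and it reflects the structural point emphasized throughout the paper: top-order generic derivatives of the modified fluid variables are controlled only along constant-time slices, not along cones.
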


\begin{proof}
Throughout the proof, we silently use the simple bound $\rgeo(\uptau,u)/\rgeo(t,u) \lesssim 1$ for $\uptau \leq t$.

To prove \eqref{E:LAMBDAINVERSELINEARTERMTIMEINTEGRALLT2LUINFTIYLOMEGAINFTY},
we first use \eqref{E:RESCALEDBOOTBOUNDS} 
and the bound
$
		\| 
			(\vec{\VortVort},\DivGradEnt) 
		\|_{L_{\upomega}^{\infty}(S_{t,u})}
		\leq
		\| 
			(\vec{\VortVort},\DivGradEnt)
		\|_{L^{\infty}(\Sigma_t)}
		$
to deduce that
\begin{align}
\notag
\mbox{LHS~\eqref{E:LAMBDAINVERSELINEARTERMTIMEINTEGRALLT2LUINFTIYLOMEGAINFTY}}
& \lesssim
\uplambda^{-1} 
\left\|	
	\int_{[u]_+}^t
			|
				(\vec{\VortVort},\DivGradEnt)
			|(\uptau,u,\upomega)
	\, d \uptau
\right\|_{L_t^2 L_u^{\infty} L_{\upomega}^{\infty}(\widetilde{\mathcal{M}})}
\\
\notag
&
\lesssim \uplambda^{-1/2 - 4 \upepsilon_0}
\left\|	
	\int_{[u]_+}^t
			\|
				(\vec{\VortVort},\DivGradEnt)
			\|_{L_{\upomega}^{\infty}(S_{t,u})}
	\, d \uptau
\right\|_{L_t^{\infty} L_u^{\infty}}
\\
\notag
&
\lesssim \uplambda^{-1/2 - 4 \upepsilon_0}
\|
	(\vec{\VortVort},\DivGradEnt)
\|_{L_t^1 L_x^{\infty}(\widetilde{\mathcal{M}})}.
\end{align}
Using 
\eqref{E:RESCALEDBOOTBOUNDS}
and \eqref{E:FREQUENCYSQUARESUMMEDSMOOTHFUNCTIONTIMESBASICVARIABLESSIMPLECONSEQUENCEOFRESCALEDSTRICHARTZ},
we bound the RHS of the previous expression by
\begin{align}
\notag
\lesssim 
\uplambda^{- 8 \upepsilon_0}
\|	
	(\vec{\VortVort},\DivGradEnt)
\|_{L_t^2 L_x^{\infty}(\widetilde{\mathcal{M}})}
\lesssim 
\uplambda^{-1/2 - 12 \upepsilon_0}
\end{align}
as desired.

The estimate \eqref{E:ANOTHERLAMBDAINVERSELINEARTERMLT2LUPOMEAGINFTY}
can be proved using an argument that is nearly identical to the one we used to prove 
\eqref{E:LAMBDAINVERSELINEARTERMTIMEINTEGRALLT2LUINFTIYLOMEGAINFTY},
and we therefore omit the details.

To prove \eqref{E:LAMBDAINVERSELINEARTERMLTQLUINFITYLUPOMEAGINFTY},
we argue as above to deduce that
\begin{align}
\notag
\mbox{LHS~\eqref{E:LAMBDAINVERSELINEARTERMLTQLUINFITYLUPOMEAGINFTY}}
& \lesssim
\uplambda^{-1} 
\left\|	
	\int_{[u]_+}^t
			|
				(\vec{\VortVort},\DivGradEnt)
			|(\uptau,u,\upomega)
	\, d \uptau
\right\|_{L_t^{\frac{q}{2}} L_u^{\infty} L_{\upomega}^{\infty}(\widetilde{\mathcal{M}})}
\\
\notag
&
\lesssim
\uplambda^{-1}
(\uplambda^{1-8 \upepsilon_0})^{\frac{2}{q}}
\left\|	
	\int_{[u]_+}^t
			\|
				(\vec{\VortVort},\DivGradEnt)
			\|_{L_{\upomega}^{\infty}(S_{t,u})}
	\, d \uptau
\right\|_{L_t^{\infty} L_u^{\infty}}
\\
\notag
&
\lesssim 
\uplambda^{-1 + \frac{2}{q} - \frac{16}{q} \upepsilon_0}
\|	
	(\vec{\VortVort},\DivGradEnt)
\|_{L_t^1 L_x^{\infty}(\widetilde{\mathcal{M}})}.
\end{align}
Using 
\eqref{E:RESCALEDBOOTBOUNDS}
and \eqref{E:FREQUENCYSQUARESUMMEDSMOOTHFUNCTIONTIMESBASICVARIABLESSIMPLECONSEQUENCEOFRESCALEDSTRICHARTZ},
we bound the RHS of the previous expression by
\begin{align}
\notag
\lesssim 
\uplambda^{-1/2 - 4 \upepsilon_0 + \frac{2}{q} - \frac{16}{q} \upepsilon_0}
\|	
	(\vec{\VortVort},\DivGradEnt)
\|_{L_t^2 L_x^{\infty}(\widetilde{\mathcal{M}})}
\lesssim 
\uplambda^{\frac{2}{q} -1 - \frac{16}{q} \upepsilon_0 - 8 \upepsilon_0}
=
\uplambda^{\frac{2}{q} - 1 - 4 \upepsilon_0(\frac{4}{q} + 2)}
\end{align}
as desired.

The estimates \eqref{E:LAMBDAINVERSELINEARTERMTIMEINTEGRALLUINFTIYLTINFTYLOMEGAP}--\eqref{E:ATHIRDLAMBDAINVERSELINEARTERMLINFTYWHOLESPACETIME} 
can be proved using similar arguments that also take into account the bound \eqref{E:RGEOANDUBOUNDS} for $\rgeo$,
and we therefore omit the straightforward details.

To prove \eqref{E:LAMBDAINVERSESECONDANGULARDERIVATIVELINEARTERMTIMEINTEGRALLTINFTTYLUINFTYLOMEGAP},
we first observe (switching the order of $L_u^{\infty}$ and $L_t^{\infty}$) that it suffices to prove
that for each fixed $u \in [-\frac{4}{5} \uplambda^{1-8 \upepsilon_0} \Tboot, \uplambda^{1-8 \upepsilon_0} \Tboot]$,
we have
\begin{align}
\notag
\uplambda^{-1} 
\left\|	
	\int_{[u]_+}^t
			|
				\rgeo^{3/2} \angD(\vec{\VortVort},\DivGradEnt)
			|_{\gsphere}(\uptau,u,\upomega)
	\, d \uptau
\right\|_{L_t^{\infty} L_{\upomega}^p (\widetilde{\mathcal{C}}_u)}
\lesssim
\uplambda^{-1/2-8 \upepsilon_0}.
\end{align}
Using 
\eqref{E:RESCALEDBOOTBOUNDS},
\eqref{E:RGEOANDUBOUNDS},
and \eqref{E:ONETANGENTIALDERIVATIVESMOOTHFUNCTIONTIMESMODVARIABLESCONE},
we conclude that the LHS of the previous expression is
\begin{align}
\notag
\lesssim
\uplambda^{-1}
\|	
	\rgeo^{3/2} \angD(\vec{\VortVort},\DivGradEnt)
\|_{L_t^1 L_{\upomega}^p (\widetilde{\mathcal{C}}_u)}
	\lesssim
\uplambda^{- 8 \upepsilon_0} 
\|	
	\rgeo \angD (\vec{\VortVort},\DivGradEnt)
\|_{L_t^2 L_{\upomega}^p (\widetilde{\mathcal{C}}_u)}
\lesssim
\uplambda^{-1/2- 8 \upepsilon_0}
\end{align}
as desired.

The estimate \eqref{E:ASECONDLAMBDAINVERSESECONDANGULARDERIVATIVELINEARTERMTIMEINTEGRALLTINFTTYLUINFTYLOMEGAP}
can be proved using a similar argument, and we omit the details.

To prove \eqref{E:LAMBDAINVERSEQUADRATICTERMTIMEINTEGRALLTINFTTYLUINFTYLOMEGAP},
we first observe (switching the order of $L_u^{\infty}$ and $L_t^{\infty}$
and using that $|\vec{\GradEnt}| \lesssim 1$) that it suffices to prove
that for each fixed $u \in [-\frac{4}{5} \uplambda^{1-8 \upepsilon_0} \Tboot, \uplambda^{1-8 \upepsilon_0} \Tboot]$,
we have
\[
\uplambda^{-1}
	\left\|
		\int_{[u]_+}^t
			\left|
				\rgeo^{3/2}
				\pmb{\partial} (\vec{\Psi},\vec{\vortrenormalized},\vec{\GradEnt}) 
				\cdot 
				(\pmb{\partial} \vec{\Psi},\mytr_{\congsphere} \widetilde{\upchi}^{(Small)},\hat{\upchi},\upzeta,\rgeo^{-1})
			\right|_{\gsphere}(\uptau,u,\upomega)
	\, d \uptau
	\right\|_{L_t^{\infty} L_{\upomega}^p (\widetilde{\mathcal{C}}_u)}
\lesssim
\uplambda^{-1/2 - 12 \upepsilon_0}.
\]
Using
\eqref{E:RESCALEDBOOTBOUNDS},
\eqref{E:RGEOANDUBOUNDS},
\eqref{E:FREQUENCYSQUARESUMMEDSMOOTHFUNCTIONTIMESBASICVARIABLESSIMPLECONSEQUENCEOFRESCALEDSTRICHARTZ},
and \eqref{E:BOOTSTRAPCHIANDTORSIONALONGANYCONE},
we deduce that the LHS of the previous expression is
\begin{align}
	& \lesssim
	\uplambda^{-8 \upepsilon_0}
	\|
		\pmb{\partial} (\vec{\Psi},\vec{\vortrenormalized},\vec{\GradEnt}) 
	\|_{L_t^2 L_{\upomega}^{\infty} (\widetilde{\mathcal{C}}_u)}
	+
	\uplambda^{1/2 - 12 \upepsilon_0}
	\|
		\pmb{\partial} (\vec{\Psi},\vec{\vortrenormalized},\vec{\GradEnt}) 
	\|_{L_t^2 L_{\upomega}^{\infty} (\widetilde{\mathcal{C}}_u)}
	\left\|
		(\pmb{\partial} \vec{\Psi},\mytr_{\congsphere} \widetilde{\upchi}^{(Small)},\hat{\upchi},\upzeta)
	\right\|_{L_t^2 L_{\upomega}^p (\widetilde{\mathcal{C}}_u)}
		\label{E:MAINSTEPLAMBDAINVERSEQUADRATICTERMTIMEINTEGRALLTINFTTYLUINFTYLOMEGAP} \\
	& \lesssim
		\uplambda^{-1/2 - 12 \upepsilon_0}
		\notag
\end{align}
as desired.

The estimates \eqref{E:ASECONDLAMBDAINVERSEQUADRATICTERMTIMEINTEGRALLTINFTTYLUINFTYLOMEGAP},
\eqref{E:LAMBDAINVERSEQUADRATICTERMTIMEINTEGRALLT2LU2LOMEGAP},
and
\eqref{E:LAMBDAINVERSEQUADRATICTERMTIMEINTEGRALLU2LTINFTYLOMEGAP}
can be proved using similar arguments, and we omit the details.

To prove \eqref{E:LAMBDAINVERSETIMEINTEGRALLT2LU2LOMEGAP},
we first use
\eqref{E:RESCALEDBOOTBOUNDS}
to deduce 
(switching the order of $L_u^2$ and $L_t^2$) 
that
$$
\mbox{LHS~\eqref{E:LAMBDAINVERSETIMEINTEGRALLT2LU2LOMEGAP}}
\lesssim
\uplambda^{-1/2 - 4 \upepsilon_0} 
\left\|	
	\int_0^t
			\|
				\rgeo (\pmb{\partial}\vec{\VortVort},\pmb{\partial}\DivGradEnt)
			\|_{L_u^2 L_{\upomega}^p(\widetilde{\Sigma}_{\uptau})}
	\, d \uptau
\right\|_{L_t^{\infty}}.
$$
Using 
\eqref{E:RESCALEDBOOTBOUNDS}
and
\eqref{E:ONEDERIVATIVESMOOTHFUNCTIONTIMESMODVARIABLESSIGMAT} with $\leb := p$,
we deduce that the RHS of the previous expression is
$$
\lesssim 
\uplambda^{-1/2 - 4 \upepsilon_0}
\|
	\rgeo (\pmb{\partial}\vec{\VortVort},\pmb{\partial}\DivGradEnt)
\|_{L_t^1 L_u^2 L_{\upomega}^p (\widetilde{\mathcal{M}})}
\lesssim
\uplambda^{1/2 - 12 \upepsilon_0}
\|
	\rgeo (\pmb{\partial}\vec{\VortVort},\pmb{\partial}\DivGradEnt)
\|_{L_t^{\infty} L_u^2 L_{\upomega}^p (\widetilde{\mathcal{M}})}
\lesssim
\uplambda^{- 12 \upepsilon_0}
$$
as desired.

The estimate \eqref{E:LAMBDAINVERSETIMEINTEGRALLU2LTINFTYLOMEGAP}
can be proved using similar arguments that also take into account the bound \eqref{E:RGEOANDUBOUNDS} for $\rgeo$,
and we therefore omit the straightforward details.

To prove \eqref{E:RGEOLAMBDAINVERSELINEARTERMLT2LUINFTYLOMEGAP},
we use 
\eqref{E:RESCALEDBOOTBOUNDS},
\eqref{E:RGEOANDUBOUNDS},
and
\eqref{E:MODFLUIDSIGMAT}
to conclude that
\begin{align}
\mbox{LHS~\eqref{E:RGEOLAMBDAINVERSELINEARTERMLT2LUINFTYLOMEGAP}}
& \lesssim
\uplambda^{-1/2 - 4 \upepsilon_0}
\| \rgeo^{1/2} \|_{L^{\infty}(\widetilde{\mathcal{M}})}
\|
	\rgeo^{1/2} (\vec{\VortVort},\DivGradEnt)
\|_{L_t^{\infty} L_u^{\infty} L_{\upomega}^p(\widetilde{\mathcal{M}})}
\lesssim
\uplambda^{-1/2 - 8 \upepsilon_0}
\end{align}
as desired.

To prove \eqref{E:RGEOLAMBDAINVERSELINEARTERMLTQOVER2LUINFTYLOMEGAP},
we use a similar argument
to conclude that
\begin{align}
\uplambda^{-1} 
	\|			
		\rgeo (\vec{\VortVort},\DivGradEnt)
	\|_{L_t^{\frac{q}{2}} L_u^{\infty} L_{\upomega}^p(\widetilde{\mathcal{M}})}
& \lesssim
\uplambda^{-1} 
\uplambda^{(1 - 8 \upepsilon_0) \frac{2}{q}}
\| \rgeo^{1/2} \|_{L^{\infty}(\widetilde{\mathcal{M}})}
\|
	\rgeo^{1/2} (\vec{\VortVort},\DivGradEnt)
\|_{L_t^{\infty} L_u^{\infty} L_{\upomega}^p(\widetilde{\mathcal{M}})}
	\\
& 
\lesssim
\uplambda^{\frac{2}{q} - 1 - 4 \upepsilon_0 - \frac{16}{q} \upepsilon_0}
=
\uplambda^{\frac{2}{q} - 1 - 4 \upepsilon_0(\frac{4}{q} + 1)}
\notag
\end{align}
as desired.

The estimate \eqref{E:RGEOLAMBDAINVERSELINEARTERMLU2LT2LOMEGAP}
follows easily from \eqref{E:RGEOLAMBDAINVERSELINEARTERMLT2LUINFTYLOMEGAP}
and the bounds \eqref{E:RGEOANDUBOUNDS} for $u$.

To prove \eqref{E:RGEOLAMBDAINVERSEONEDERIVATIVEOFMODFLUIDLU2LT1LOMEGAP},
we use
\eqref{E:RESCALEDBOOTBOUNDS}
and
\eqref{E:ONEDERIVATIVESMOOTHFUNCTIONTIMESMODVARIABLESSIGMAT}
with $\leb := p$
to deduce that
\begin{align}
\notag
\uplambda^{-1} \| \rgeo \pmb{\partial}(\vec{\VortVort},\DivGradEnt) \|_{L_u^2 L_t^1 L_{\upomega}^p(\widetilde{\mathcal{M}})}
\lesssim
\uplambda^{-1} \| \rgeo \pmb{\partial}(\vec{\VortVort},\DivGradEnt) \|_{L_t^1 L_u^2 L_{\upomega}^p(\widetilde{\mathcal{M}})}
\lesssim
\uplambda^{-8 \upepsilon_0} \| \rgeo \pmb{\partial}(\vec{\VortVort},\DivGradEnt) \|_{L_t^{\infty} L_u^2 L_{\upomega}^p(\widetilde{\mathcal{M}})}
\lesssim 
\uplambda^{-\frac{1}{2} - 8 \upepsilon_0}
\end{align}
as desired.

To prove \eqref{E:RGEOLAMBDAINVERSEQUADTERMLU2LT1LOMEGAP},
we first use \eqref{E:RESCALEDBOOTBOUNDS}, \eqref{E:RGEOANDUBOUNDS},
and the fact that $|\vec{\GradEnt}| \lesssim 1$
to deduce that
\begin{align} \label{E:FIRSTSTEPRGEOLAMBDAINVERSEQUADTERMLU2LT1LOMEGAP}
\mbox{LHS~\eqref{E:RGEOLAMBDAINVERSEQUADTERMLU2LT1LOMEGAP}}
&
\lesssim
\uplambda^{-1/2 - 4 \upepsilon_0}
\left\|
	\rgeo 
	(\pmb{\partial} \vec{\Psi},\pmb{\partial} \vec{\vortrenormalized},\pmb{\partial} \vec{\GradEnt}) 
	\cdot 
	(\pmb{\partial} \vec{\Psi},\mytr_{\congsphere} \widetilde{\upchi}^{(Small)},\hat{\upchi},\upzeta,\rgeo^{-1})	
\right\|_{L_u^{\infty} L_t^1 L_{\upomega}^p(\widetilde{\mathcal{M}})}
	\\
&
\notag
\lesssim
\uplambda^{1/2 - 8 \upepsilon_0} 
\| 
	(\pmb{\partial} \vec{\Psi},\pmb{\partial} \vec{\vortrenormalized},\pmb{\partial} \vec{\GradEnt}) 
\|_{L_u^{\infty} L_t^2 L_{\upomega}^{\infty}(\widetilde{\mathcal{M}})}
\| 
 (\pmb{\partial} \vec{\Psi},\mytr_{\congsphere} \widetilde{\upchi}^{(Small)},\hat{\upchi},\upzeta)
\|_{L_u^{\infty} L_t^2 L_{\upomega}^p(\widetilde{\mathcal{M}})}
\\
\notag
 &
\ \ 
+
\uplambda^{- 8 \upepsilon_0} 
\| 
	(\pmb{\partial} \vec{\Psi},\pmb{\partial} \vec{\vortrenormalized},\pmb{\partial} \vec{\GradEnt}) 
\|_{L_u^{\infty} L_t^2 L_{\upomega}^p(\widetilde{\mathcal{M}})}.
\end{align}
Using \eqref{E:FREQUENCYSQUARESUMMEDSMOOTHFUNCTIONTIMESBASICVARIABLESSIMPLECONSEQUENCEOFRESCALEDSTRICHARTZ}
and the bootstrap assumptions \eqref{E:BOOTSTRAPCHIANDTORSIONALONGANYCONE},
we conclude that 
$\mbox{RHS~\eqref{E:FIRSTSTEPRGEOLAMBDAINVERSEQUADTERMLU2LT1LOMEGAP}} 
\lesssim 
\uplambda^{-\frac{1}{2} - 10 \upepsilon_0}$
as desired.	

\end{proof}

\subsection{\texorpdfstring{Control of the integral curves of $\Lunit$}{Control of the integral curves of Lunit}}
\label{SS:CONTROLOFNULLGEODESICS}
The main results of this subsection are Prop.\,\ref{P:CONTROLOFNULLGEODESICS}
and Cor.\,\ref{C:ESTIMATEFORANGLEHOLDERCONTINUITYOFPARTIALPSIPARTIALVORTANDPARTIALGRADENT}.
The proposition yields quantitative estimates showing that at fixed $u$,
the distinct integral curves of $\Lunit$ remain separated (see Footnote~\ref{FN:SEPARATEED}).
The corollary is a simple consequence of the proposition and the bootstrap assumptions.
It provides $L_t^2 L_u^{\infty} C_{\upomega}^{0,\updelta_0}$ estimates for the fluid variables.
Later, we will combine these estimates with the Schauder-type estimate
\eqref{E:ANGULARHOLDERHODGEESTIMATENODERIVATIVESONLHSANGDIVSYMMETRICTRACEFREESTUTENSORFIELD}
to obtain $L_t^q L_u^{\infty} C_{\upomega}^{0,\updelta_0}$-control of $\hat{\upchi}$ for several values of $q$;
see the proofs of 
\eqref{E:CONNECTIONCOEFFICIENTESTIMATESNEEDEDTODERIVESPATIALLYLOCALIZEDDECAYFROMCONFORMALENERGYESTIMATE}
and
\eqref{E:IMPROVEDININTERIORL2INTIMELINFINITYINSPACECONNECTIONCOFFICIENTS} for $\hat{\upchi}$.

We start with some preliminary estimates, provided by the following lemma.

\begin{lemma}[Preliminary results for controlling the integral curves of $\Lunit$]
	\label{L:LUNITIALONGCONETIPAXISISC1INANGLEVARIABLES}
	Under the assumptions of Subsect.\,\ref{SS:ASSUMPTIONS}, 
	if $\uplambda$ is sufficiently large, then the following results hold.
	
	\medskip
	
	\noindent \underline{\textbf{Results along $\Sigma_0$}}:
	For $A=1,2$ and $i=1,2,3$,
	let $\left(\frac{\partial}{\partial \upomega^A} \right)^i$ denote 
	the Cartesian components of $\frac{\partial}{\partial \upomega^A}$,
	and let $\Theta_{(A)}$ be the $S_{t,u}$-tangent vectorfield with Cartesian components
	$\Theta_{(A)}^i := \frac{1}{\rgeo} \left(\frac{\partial}{\partial \upomega^A} \right)^i$,
	as in \eqref{E:RESCALED}.
	Along $\Sigma_0$ (where $\rgeo = w = - u$), for $0 < w \leq \RescaledFoliationparameter := \frac{4}{5}\RescaledTboot$ 
	and $\upomega \in \mathbb{S}^2$,
	we view $\Theta_{(A)}^i = \Theta_{(A)}^i(0,w,\upomega)$,
	and similarly for the Cartesian spatial components $\spherenormal^i$ and $\Lunit^i$.
	Then for each $\upomega \in \mathbb{S}^2$,
		$\lim_{w \downarrow 0} \spherenormal^i(0,w,\upomega)$,
	$\lim_{w \downarrow 0} \Lunit^i(0,w,\upomega)$,
	and $\lim_{w \downarrow 0} \Theta_{(A)}^i(0,w,\upomega)$ exist,
	and we respectively denote the limits by 
	$\spherenormal^i(0,0,\upomega)$,
	$\Lunit^i(0,0,\upomega)$,
	and $\Theta_{(A)}^i(0,0,\upomega)$.
		Furthermore, for each $\upomega \in \mathbb{S}^2$,
	we have that
\begin{align}
\notag
    g_{cd}(0,0,\upomega) \Theta_{(A)}^c(0,0,\upomega)\Theta_{(B)}^d(0,0,\upomega)
	=
	\stgsphere(\upomega)\left(\frac{\partial}{\partial \upomega^A},\frac{\partial}{\partial \upomega^B}\right)
	+
	\mathcal{O}(\uplambda^{- 4 \upepsilon_0}).
\end{align}	
	
	In addition, the following estimates hold for
	$(w,\upomega) \in [0,\frac{4}{5}\RescaledTboot] \times \mathbb{S}^2$,
	where $x^i(0,w,\upomega)$ are the Cartesian spatial coordinates viewed as a function of $w,\upomega$ along $\Sigma_0$,
	and ${\bf{z}}^i$ are the Cartesian spatial coordinates of the point ${\bf{z}} \in \Sigma_0$ (see Subsect.\,\ref{SS:EIKONAL}):
	\begin{subequations}
	\begin{align}
			x^i(0,w,\upomega)
		& = {\bf{z}}^i
				+
				w \left\lbrace
					\spherenormal^i(0,0,\upomega)
					+
					\mathcal{O}(\uplambda^{-4 \upepsilon_0})
				\right\rbrace,
					\label{E:ESTIMATEFORCARTESIANCOORDINATESALONGSIGMA0}  
					\\
		\spherenormal^i(0,w,\upomega)
		& =
		\spherenormal^i(0,0,\upomega)
		+
		\mathcal{O}(\uplambda^{-4 \upepsilon_0}),
			\label{E:SPHERENORMALIALONGSIGMA0STAYSCLOSETOORIGINVALUES}  
			\\
		\Lunit^i(0,w,\upomega)
		& =
		\Lunit^i(0,0,\upomega)
		+
		\mathcal{O}(\uplambda^{- 4\upepsilon_0}),
		\label{E:LUNITIALONGSIGMA0STAYSCLOSETOORIGINVALUES}  
			\\
	\Theta_{(A)}^i(0,w,\upomega)
	& =
	\Theta_{(A)}^i(0,0,\upomega)
		+
		\mathcal{O}(\uplambda^{-4 \upepsilon_0}).
		\label{E:RESCALEDGEOMETRICCOORDINATEVECTORFIELDRELATEDTOAMEQUANTITITYATZ}
\end{align}
\end{subequations}
	
	Moreover, the following identity holds:
	\begin{align} \label{E:ANGULARDERIVATIVEOFNORMALVECTORATORIGININSIGMA0}
		\frac{\partial}{\partial \upomega^A} \spherenormal^i(0,0,\upomega) 
			= \frac{\partial}{\partial \upomega^A} \Lunit^i(0,0,\upomega) 
			& = \Theta_{(A)}^i(0,0,\upomega).
	\end{align}

	In addition, with $d_{\stgsphere}(\upomega_{(1)},\upomega_{(2)})$ denoting the distance between the points 
	$\upomega_{(1)},\upomega_{(2)} \in \mathbb{S}^2$
	with respect to the standard Euclidean round metric $\stgsphere$ on $\mathbb{S}^2$,
	we have the following estimate:
	\begin{align} \label{E:DISTANCEBETWEENNORMALCARTESIANCOMPONENTSISCOMPARABLETOS2DISTANCEBETWEENANGLES}
	\sum_{i=1}^3
	|
		\spherenormal^i(0,0,\upomega_{(1)})
		-
		\spherenormal^i(0,0,\upomega_{(2)})
	|
	=
	\sum_{i=1}^3
	|
		\Lunit^i(0,0,\upomega_{(1)})
		-
		\Lunit^i(0,0,\upomega_{(2)})
	|
	\approx
	d_{\stgsphere}(\upomega_{(1)},\upomega_{(2)}).
	\end{align}
	
	Finally, we have the following estimate, ($\alpha = 0,1,2,3$):
	\begin{align} \label{E:DATAESTIMATEFORHOLDERNORMOFLUNITI}
		\|
			\Lunit^{\alpha}
		\|_{L_u^{\infty} C_{\upomega}^{0,\updelta_0}(\widetilde{\Sigma}_0)}
		& \lesssim 1.
	\end{align}
	
	\medskip
	
	\noindent \underline{\textbf{Results along the cone-tip axis}}:
	In $\widetilde{\mathcal{M}}^{(Int)}$, let us view $\Theta_{(A)}^i = \Theta_{(A)}^i(t,u,\upomega)$,
	and similarly for the Cartesian spatial components $\spherenormal^i$ and $\Lunit^i$.
	Then for each $(u,\upomega) \in [0,\RescaledTboot] \times \mathbb{S}^2$,
	$\lim_{t \downarrow u} \Theta_{(A)}^i(t,u,\upomega)$ exists, 
	and we denote the limit by $\Theta_{(A)}^i(u,u,\upomega)$.
	Furthermore, the following estimate holds for $(t,\upomega) \in [0,\RescaledTboot] \times \mathbb{S}^2$:
	$g_{ab}(t,t,\upomega) \Theta_{(A)}^a(t,t,\upomega) \Theta_{(B)}^b(t,t,\upomega)
	=
	\stgsphere(\upomega)\left(\frac{\partial}{\partial \upomega^A},\frac{\partial}{\partial \upomega^B}\right)
	+ \mathcal{O}(\uplambda^{-\upepsilon_0})
	$,
	and within each coordinate chart on $\mathbb{S}^2$,
	for each $\upomega$ in the domain of the chart, 
	$\lbrace \Theta_{(1)}(t,t,\upomega),\Theta_{(2)}(t,t,\upomega) \rbrace$ is a linearly independent set of vectors in $\mathbb{R}^3$.
	
	Moreover, along the cone-tip axis, that is, for $t \in [0,\RescaledTboot]$ we have:
	\begin{subequations}
	\begin{align}
		\spherenormal^i(t,t,\upomega)
		& =
		\spherenormal^i(0,0,\upomega)
		+
		\mathcal{O}(\uplambda^{-8 \upepsilon_0}),
			\label{E:SPHERENORMALIALONGCONETIPAXISSTAYSCLOSETOORIGINVALUES}  
			\\
		\Lunit^i(t,t,\upomega)
		& =
		\Lunit^i(0,0,\upomega)
		+
		\mathcal{O}(\uplambda^{-8 \upepsilon_0}),
		\label{E:LUNITIALONGCONETIPAXISSTAYSCLOSETOORIGINVALUES}  
			\\
	\Theta_{(A)}^i(t,t,\upomega)
	& =
	\Theta_{(A)}^i(0,0,\upomega)
	+
	\mathcal{O}(\uplambda^{-4 \upepsilon_0}).
		\label{E:RESCALEDGEOMETRICCOORDINATEVECTORFIELDALONGCONETIPAXISSTAYSCLOSETOORIGINVALUES}
	\end{align}
	\end{subequations}
	
	In addition, for $(t,\upomega) \in [0,\RescaledTboot] \times \mathbb{S}^2$, 
	the following relations hold along the cone-tip axis, that is, for $t \in [0,\RescaledTboot]$:
	\begin{align} \label{E:ANGULARDERIVATIVEOFNORMALVECTORATORIGIN}
		\frac{\partial}{\partial \upomega^A} \spherenormal^i(t,t,\upomega) 
			= \frac{\partial}{\partial \upomega^A} \Lunit^i(t,t,\upomega) 
			& = \Theta_{(A)}^i(t,t,\upomega).
	\end{align}
	
	
	\medskip
	
	\noindent \underline{\textbf{Results in $\widetilde{\mathcal{M}}$}}:
	For $u \in [-\frac{4}{5} \RescaledTboot,\RescaledTboot]$, $t \in [[u]_+,\RescaledTboot]$, and $\upomega \in \mathbb{S}^2$,
	we have
	\begin{subequations}
	\begin{align} 
			\Lunit^i(t,u,\upomega)
			& = 
			\Lunit^i(0,0,\upomega)
			+
			\mathcal{O}(\uplambda^{-4 \upepsilon_0}),
				\label{E:QUANTITATIVECONTROLOFLUNITIALLREGIONS} 
						\\
			\Theta_{(A)}^i(t,u,\upomega)
			& = 
				\Theta_{(A)}^i(0,0,\upomega)
				+
				\mathcal{O}(\uplambda^{-4 \upepsilon_0}).
				\label{E:QUANTITATIVECONTROLOFCARTESIANCOMPONENTSRESCALEDGEOMETRICCOORDINATEANGULARDERIVATIVEVECTORVIELDINALLREGIONS}
	\end{align}
	\end{subequations}
	
\end{lemma}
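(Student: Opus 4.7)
The plan is to proceed in three stages corresponding to the three groups of assertions in the lemma: first along $\Sigma_0$, then along the cone-tip axis, and finally throughout $\widetilde{\mathcal{M}}$. In each stage, the strategy is to integrate the relevant transport equation (equation \eqref{E:LUNITIANDNORMALITRANSPORTALONGSIGMA0} and \eqref{E:EVOLUTIONEQUATIONALONGSIGMA0FORCARTESIANCOMPONENTSOFTHETAA} along $\Sigma_0$ parametrized by $w$; equation \eqref{E:LUNITIANDNORMALITRANSPORT} and \eqref{E:EVOLUTIONEQUATIONALONGINTEGRALCURVESOFLUNITFORCARTESIANCOMPONENTSOFTHETAA} along the axis parametrized by $t$) and bound the source terms using the initial-foliation estimates from Prop.\,\ref{P:INITIALFOLIATION}, the cone-tip Taylor data from Lemma~\ref{L:INITIALCONDITIONSTIEDTOEIKONAL}, and the rescaled Strichartz bounds \eqref{E:RESCALEDSTRICHARTZ}. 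Existence of the limits as $w \downarrow 0$ or $t \downarrow u$ then falls out of absolute integrability of the right-hand sides.

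For the $\Sigma_0$ step, I integrate $\frac{\partial}{\partial w} \spherenormal^i = a \gensmoothfunction_{(\vec{\Lunit})} \pmb{\partial} \vec{\Psi} + \angD a$ from $0$ to $w$; the first term is controlled by $\|\pmb{\partial}\vec{\Psi}\|_{L^\infty}$ times $\RescaledFoliationparameter \lesssim \uplambda^{1-8\upepsilon_0}\Tboot$, which combined with the $H^{\Sob}$-energy bound and Sobolev embedding gives $\mathcal{O}(\uplambda^{-4\upepsilon_0})$; the $\angD a$ term is handled by \eqref{E:INTIALDERIVATIVESOFLAPSEANDTRACEFREECHIEST}, yielding \eqref{E:SPHERENORMALIALONGSIGMA0STAYSCLOSETOORIGINVALUES}, \eqref{E:LUNITIALONGSIGMA0STAYSCLOSETOORIGINVALUES}, and by a further radial integration \eqref{E:ESTIMATEFORCARTESIANCOORDINATESALONGSIGMA0}. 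The $\Theta_{(A)}$ equation \eqref{E:EVOLUTIONEQUATIONALONGSIGMA0FORCARTESIANCOMPONENTSOFTHETAA} contains the singular coefficient $(1-a)/w$, which is integrable on $[0,\RescaledFoliationparameter]$ by the H\"older bound $\|w^{-1/2}(a-1)\|_{L^\infty} \lesssim \uplambda^{-1/2}$ in \eqref{E:INTIALLAPSEANDSPHEREVOLUMEELEMENTESTIMATE}; a Gronwall argument then produces the limit $\Theta_{(A)}^i(0,0,\upomega)$ and \eqref{E:RESCALEDGEOMETRICCOORDINATEVECTORFIELDRELATEDTOAMEQUANTITITYATZ}. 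The relation \eqref{E:ANGULARDERIVATIVEOFNORMALVECTORATORIGININSIGMA0} follows by differentiating the formula $x^i(0,w,\upomega) = {\bf{z}}^i + w\spherenormal^i(0,0,\upomega) + o(w)$ from \eqref{E:ESTIMATEFORCARTESIANCOORDINATESALONGSIGMA0} in $\upomega^A$ and recalling $(\partial/\partial\upomega^A)^i = w\Theta_{(A)}^i$. The near-orthonormality at $w=0$ follows from \eqref{E:INITIALSPHEREMETRICESTIMATE} after rescaling, and this together with \eqref{E:ANGULARDERIVATIVEOFNORMALVECTORATORIGININSIGMA0} yields the comparability \eqref{E:DISTANCEBETWEENNORMALCARTESIANCOMPONENTSISCOMPARABLETOS2DISTANCEBETWEENANGLES} by integrating along the unit-round-sphere geodesic connecting $\upomega_{(1)}$ and $\upomega_{(2)}$. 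The H\"older bound \eqref{E:DATAESTIMATEFORHOLDERNORMOFLUNITI} then follows by integrating $\partial\Lunit^i$ in the $w$ and $\upomega$ directions using the preceding bounds and \eqref{E:INTIALDERIVATIVESOFLAPSEANDTRACEFREECHIEST}, with the initial-foliation $C_{\upomega}^{0,1-2/q_*}$ control of $a$ entering the angular H\"older seminorm.

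For the cone-tip axis step, I integrate $\Lunit\spherenormal^i = \gensmoothfunction_{(\vec{\Lunit})}\pmb{\partial}\vec{\Psi}$ and $\Lunit\Lunit^i = \gensmoothfunction_{(\vec{\Lunit})}\pmb{\partial}\vec{\Psi}$ (equation \eqref{E:LUNITIANDNORMALITRANSPORT}) along the integral curve $\Tranchar_{\bf{z}}$ and estimate via \eqref{E:RESCALEDSTRICHARTZ}, using Cauchy--Schwarz in $t$ over $[0,\RescaledTboot]$ with $\RescaledTboot \leq \uplambda^{1-8\upepsilon_0}\Tboot$; the resulting bound is $\lesssim \RescaledTboot^{1/2} \uplambda^{-1/2-4\upepsilon_0} \lesssim \uplambda^{-8\upepsilon_0}$, giving \eqref{E:SPHERENORMALIALONGCONETIPAXISSTAYSCLOSETOORIGINVALUES}--\eqref{E:LUNITIALONGCONETIPAXISSTAYSCLOSETOORIGINVALUES}. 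For $\Theta_{(A)}^i$ along the axis I integrate \eqref{E:EVOLUTIONEQUATIONALONGINTEGRALCURVESOFLUNITFORCARTESIANCOMPONENTSOFTHETAA}, whose right-hand side involves the bootstrap-controlled quantities $\mytr_{\congsphere}\widetilde{\upchi}^{(Small)}$ and $\hat{\upchi}$; to apply Gronwall I use the $L_t^2 L_x^\infty$ bootstrap \eqref{E:BOOTSTRAPCHIANDTORSIONALONGANYCONE} (via Sobolev-type embedding $C_\upomega^{0,\updelta_0}\hookrightarrow L^\infty$), the initial-foliation $L^\infty$ bound \eqref{E:MODTRCHIESTIAMTESALONGINITIALFOLIATION}, and the pointwise bound on $\hat\upchi$ from \eqref{E:CHIHATLINFINITYALONGSIGMA0}, thereby establishing \eqref{E:RESCALEDGEOMETRICCOORDINATEVECTORFIELDALONGCONETIPAXISSTAYSCLOSETOORIGINVALUES} and existence of $\Theta_{(A)}^i(u,u,\upomega)$. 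The identity \eqref{E:ANGULARDERIVATIVEOFNORMALVECTORATORIGIN} is then obtained by differentiating the $\Sigma_0$-identity \eqref{E:ANGULARDERIVATIVEOFNORMALVECTORATORIGININSIGMA0} along $\Tranchar_{\bf{z}}$ using the Fermi--Walker transport of $\spherenormal_\upomega$ set up in Subsubsect.\,\ref{SSS:EIKONALINTERIOR}, which precisely commutes angular differentiation with transport at the axis.

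Finally, for the global estimates \eqref{E:QUANTITATIVECONTROLOFLUNITIALLREGIONS}--\eqref{E:QUANTITATIVECONTROLOFCARTESIANCOMPONENTSRESCALEDGEOMETRICCOORDINATEANGULARDERIVATIVEVECTORVIELDINALLREGIONS} in $\widetilde{\mathcal{M}}$, I propagate the already-established boundary values (on $\Sigma_0$ for $\widetilde{\mathcal{M}}^{(Ext)}$ and on the cone-tip axis for $\widetilde{\mathcal{M}}^{(Int)}$) along the integral curves of $\Lunit$ using \eqref{E:LUNITIANDNORMALITRANSPORT} and \eqref{E:EVOLUTIONEQUATIONALONGINTEGRALCURVESOFLUNITFORCARTESIANCOMPONENTSOFTHETAA}; the $t$-integration of $\gensmoothfunction_{(\vec{\Lunit})}\pmb{\partial}\vec{\Psi}$ is bounded by $\RescaledTboot^{1/2}\|\pmb{\partial}\vec{\Psi}\|_{L_t^2L_x^\infty}\lesssim \uplambda^{-4\upepsilon_0}$, and the $\upchi$-involving terms in the $\Theta_{(A)}$ equation are absorbed by Gronwall against the bootstrap \eqref{E:BOOTSTRAPCHIANDTORSIONALONGANYCONE}. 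The main obstacle I anticipate is coordinating the singular $(1-a)/w$ coefficient in the $\Sigma_0$-equation for $\Theta_{(A)}$ with the limited H\"older-type control \eqref{E:INTIALLAPSEANDSPHEREVOLUMEELEMENTESTIMATE} on $a$: one must verify that the prefactor is $L^1$ in $w$ uniformly in $\upomega$, which requires the $L_w^\infty C_\upomega^{0,1-2/q_*}$ bound on $w^{-1/2}(a-1)$ rather than a cruder $L^\infty$ bound. A secondary difficulty is ensuring that the $\mathcal{O}(\uplambda^{-8\upepsilon_0})$ improvement on the cone-tip axis (as opposed to merely $\mathcal{O}(\uplambda^{-4\upepsilon_0})$) is genuinely sharper than what is obtained elsewhere; this relies on the fact that along $\Tranchar_{\bf{z}}$ only the time integration occurs with no spatial variation, so the full $\uplambda^{-1/2-4\upepsilon_0}$ factor from \eqref{E:RESCALEDSTRICHARTZ} is available after the $L_t^2 \to L_t^1$ conversion.
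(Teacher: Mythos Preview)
Your three-stage structure and the general strategy of integrating transport equations and bounding source terms via Strichartz and the initial-foliation estimates match the paper's approach. However, there is a genuine conceptual error in your cone-tip axis step. The curve $\Tranchar_{\bf z}$ is the integral curve of $\Transport$, not of $\Lunit$; the equation $\Lunit\spherenormal^i = \gensmoothfunction_{(\vec{\Lunit})}\pmb{\partial}\vec{\Psi}$ from \eqref{E:LUNITIANDNORMALITRANSPORT} describes evolution along a null geodesic at \emph{fixed} $u$, so integrating it along $\Tranchar_{\bf z}$ does not compute $\frac{d}{dt}\spherenormal^i(t,t,\upomega)$. To control $\spherenormal^i(t,t,\upomega)$ as the cone-tip moves up the axis, the paper uses the Fermi--Walker transport ODE \eqref{E:FWTRANSPORT} from Subsubsect.~\ref{SSS:EIKONALINTERIOR}, written schematically as $\frac{d}{dt}\vec{\spherenormal}_{\upomega} = \gensmoothfunction(\vec{\Psi})\cdot\pmb{\partial}\vec{\Psi}\cdot\vec{\spherenormal}_{\upomega}$ along $\Tranchar_{\bf z}$; your Cauchy--Schwarz arithmetic producing the $\uplambda^{-8\upepsilon_0}$ rate is then correct, but for the wrong equation. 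The same confusion infects \eqref{E:RESCALEDGEOMETRICCOORDINATEVECTORFIELDALONGCONETIPAXISSTAYSCLOSETOORIGINVALUES}: the $\Lunit$-transport equation \eqref{E:EVOLUTIONEQUATIONALONGINTEGRALCURVESOFLUNITFORCARTESIANCOMPONENTSOFTHETAA} cannot track variation of the tip point; the paper instead differentiates the Fermi--Walker ODE with respect to $\upomega^A$ (crucially using that $\vec{\Psi}\circ\Tranchar_{\bf z}(t)$ is independent of $\upomega$) and combines this with \eqref{E:ANGULARDERIVATIVEOFNORMALVECTORATORIGIN}.

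A second gap concerns the lower bound in \eqref{E:DISTANCEBETWEENNORMALCARTESIANCOMPONENTSISCOMPARABLETOS2DISTANCEBETWEENANGLES}. Integrating $\frac{\partial}{\partial\upomega^A}\spherenormal^i$ along an $\stgsphere$-geodesic yields only the upper bound and a local lower bound near the diagonal; globally you need injectivity of the map $\mathfrak{N}:\upomega\mapsto(\spherenormal^1,\spherenormal^2,\spherenormal^3)(0,0,\upomega)$ from $\mathbb{S}^2$ into $UT_{\bf z}\Sigma_0\simeq\mathbb{S}^2$. The paper supplies this via topology: the near-orthonormality together with \eqref{E:ANGULARDERIVATIVEOFNORMALVECTORATORIGININSIGMA0} makes $d\mathfrak{N}$ everywhere injective, so $\mathfrak{N}$ is a covering map, and since the target is simply connected it is a diffeomorphism. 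Finally, for \eqref{E:DATAESTIMATEFORHOLDERNORMOFLUNITI} the paper's route is more direct than yours: it applies the Morrey embedding \eqref{E:EUCLIDEANFORMMORREYONSTU} to the scalar $\Lunit^i$, reducing to an $L_{\upomega}^p$ bound on $\rgeo\angD\Lunit^i=\gensmoothfunction_{(\vec{\Lunit})}\cdot(\rgeo\upchi,\rgeo\pmb{\partial}\vec{\Psi})$, which is controlled by \eqref{E:INTIALDERIVATIVESOFLAPSEANDTRACEFREECHIEST}, \eqref{E:MODTRCHIESTIAMTESALONGINITIALFOLIATION}, and \eqref{E:SMOOTHFUNCTIONTIMESFLUIDONEDERIVATIVESIGMAT}.
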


\begin{proof} \ \\
	\noindent \textbf{Proof of the results along $\Sigma_0$}.
	We start by showing that $\lim_{w \downarrow 0} \Theta_{(A)}^i(0,w,\upomega) := \Theta_{(A)}^i(0,0,\upomega)$ exists,
	and we exhibit the desired properties of the limit.
	We will use the evolution equation \eqref{E:EVOLUTIONEQUATIONALONGSIGMA0FORCARTESIANCOMPONENTSOFTHETAA}.
	From the bootstrap assumptions,
	the simple bound $\| \gensmoothfunction_{(\vec{\Lunit})} \|_{L^{\infty}(\widetilde{\mathcal{M}})} \lesssim 1$ implied by them,
	the estimates of Prop.\,\ref{P:INITIALFOLIATION},
	\eqref{E:EUCLIDEANFORMMORREYONSTU} with $\leb := p$,
	\eqref{E:SMOOTHFUNCTIONTIMESFLUIDONEDERIVATIVESIGMAT} and \eqref{E:ONEDERIVATIVESMOOTHFUNCTIONTIMESONEDERIVATIVETIMESFLUIDSIGMAT} along $\Sigma_0$ with $\leb := p$,
	the bound \eqref{E:RGEOANDUBOUNDS} for $\rgeo|_{\Sigma_0} = w$,
	and the estimate $\| \Theta_{(A)}^i\|_{L^{\infty}(\widetilde{\Sigma}_0)} \lesssim 1$
	implied by \eqref{E:INITIALSPHEREMETRICESTIMATE},
	we find that the first term on RHS~\eqref{E:EVOLUTIONEQUATIONALONGSIGMA0FORCARTESIANCOMPONENTSOFTHETAA}
	verifies
	\begin{align*}
	&
	\|
	a
	\cdot
	\gensmoothfunction_{(\vec{\Lunit})} 
	\cdot 
	(\pmb{\partial} \vec{\Psi},\hat{\upchi})
	\cdot
	\vec{\Theta}_{(A)}
	\|_{L_w^1 L_{\upomega}^{\infty}(\widetilde{\Sigma}_0)} 
		\\
	& \lesssim
	\uplambda^{1/2 - 4 \upepsilon_0}
	\| \rgeo \angD \pmb{\partial} \vec{\Psi} \|_{L_w^2 L_{\upomega}^p(\widetilde{\Sigma}_0)}
	+
	\uplambda^{1/2 - 4 \upepsilon_0}
	\| \pmb{\partial} \vec{\Psi} \|_{L_w^2 L_{\upomega}^2(\widetilde{\Sigma}_0)}
	+
	\uplambda^{1/2 - 4 \upepsilon_0}
	\|
		\hat{\upchi}
	\|_{L_w^2 L_{\upomega}^{\infty}(\widetilde{\Sigma}_0)} 
		\\
	& \lesssim 
	\uplambda^{-4 \upepsilon_0},
	\end{align*}
	and that the last term on RHS~\eqref{E:EVOLUTIONEQUATIONALONGSIGMA0FORCARTESIANCOMPONENTSOFTHETAA}
	verifies the same bound:
	$$
	\|
	\gensmoothfunction_{(\vec{\Lunit})}
	\cdot
	\angD a
	\cdot
	\vec{\Theta}_{(A)}
	\|_{L_w^1 L_{\upomega}^{\infty}(\widetilde{\Sigma}_0)} 
	\lesssim
	\uplambda^{1/2 - 4 \upepsilon_0}
	\|
	\angD a
	\|_{L_w^2 L_{\upomega}^{\infty}(\widetilde{\Sigma}_0)} 
	\lesssim \uplambda^{-4 \upepsilon_0}.
	$$
	
	We now integrate equation 
	\eqref{E:EVOLUTIONEQUATIONALONGSIGMA0FORCARTESIANCOMPONENTSOFTHETAA} with respect to $w$
	and use these estimates
	and the initial condition for $\vec{\Theta}_{(A)}$ 
	at the convenient value $w=1$ (which, by \eqref{E:INITIALSPHEREMETRICESTIMATE}, 
	is a value at which the vectors $\vec{\Theta}_{(1)}$ and $\vec{\Theta}_{(2)}$ are known to be finite and linearly independent)
	thereby concluding that if $\uplambda$ is sufficiently large, then
	$\lim_{w \downarrow 0} \Theta_{(A)}^i(0,w,\upomega)$ exists,
	that for $0 \leq w \leq \frac{4}{5} \RescaledTboot$ and $\upomega \in \mathbb{S}^2$ we have
	$$
	\Theta_{(A)}^i(0,w,\upomega)
	= 
	\Theta_{(A)}^i(0,1,\upomega)
	+
	\mathcal{O}(\uplambda^{-4 \upepsilon_0}),
	$$
	and that
	$$g_{cd}(0,0,\upomega) \Theta_{(A)}^c(0,0,\upomega) \Theta_{(B)}^d(0,0,\upomega)
	\approx
	\stgsphere(\upomega) \left(\frac{\partial}{\partial \upomega^A},\frac{\partial}{\partial \upomega^B}\right).$$
	Except for \eqref{E:ANGULARDERIVATIVEOFNORMALVECTORATORIGININSIGMA0}, 
	these arguments yield all desired results for $\Theta_{(A)}^i$ along $\Sigma_0$,
	including \eqref{E:RESCALEDGEOMETRICCOORDINATEVECTORFIELDRELATEDTOAMEQUANTITITYATZ}.
	To prove \eqref{E:ANGULARDERIVATIVEOFNORMALVECTORATORIGININSIGMA0},
	we contract the estimate \eqref{E:ANGULARDERIVATIVEOFSPHERENORMALATINITIALCONETIPAXISPOINT} against
	$\Theta_{(A)}^j(0,w,\upomega)$,
	use the identities 
	$w \Theta_{(A)}^j \sphereproject_j^c \partial_c \spherenormal^i = \frac{\partial}{\partial \upomega^A} \spherenormal^i$,
	$\Theta_{(A)}^j \sphereproject_j^i = \Theta_{(A)}^i$,
	and $\Lunit^i = \Transport^i + \spherenormal^i$,
	use that $\Transport^i(0,0,\upomega) = \Transport^i|_{{\bf{z}}}$ is independent of $\upomega$,
	and use the previous results proved in this paragraph.

	The results for $\Lunit^i$ and $\spherenormal^i$ along $\Sigma_0$ stated in the lemma,
	including \eqref{E:SPHERENORMALIALONGSIGMA0STAYSCLOSETOORIGINVALUES} and \eqref{E:LUNITIALONGSIGMA0STAYSCLOSETOORIGINVALUES},
	can be obtained from similar reasoning
	based on the evolution equations in \eqref{E:LUNITIANDNORMALITRANSPORTALONGSIGMA0},
	and we omit the details.
	
	Next, we consider the map $\mathfrak{N}(\upomega) 
	:= \left(\spherenormal^1(0,0,\upomega),\spherenormal^2(0,0,\upomega),\spherenormal^3(0,0,\upomega)\right)$
	from the domain $\mathbb{S}^2$ to the target
	$$UT_{\bf{z}} \Sigma_0 := \lbrace V \in T_{\bf{z}} \Sigma_0 \ | \ g_{cd}|_{\bf{z}} V^c V^d = 1 \rbrace \simeq \mathbb{S}^2.$$
	The results from the first paragraph of this proof, including \eqref{E:ANGULARDERIVATIVEOFNORMALVECTORATORIGININSIGMA0},
	yield that the differential of $\mathfrak{N}$ with respect to $\upomega$ is injective.
	Thus, $\mathfrak{N}$ is a differentiable open map from $\mathbb{S}^2$ to $\mathbb{S}^2$,
	and it is a standard result of differential topology that $\mathfrak{N}$ must be a covering map (in particular, it is onto).
	Thus, taking into account the quantitative bounds for the differential of $\mathfrak{N}$ with respect to $\upomega$ proved 
	above, we conclude that there exists a uniform constant $0 < \upbeta < \pi$
	such that if $\uplambda$ is sufficiently large, then
	\eqref{E:DISTANCEBETWEENNORMALCARTESIANCOMPONENTSISCOMPARABLETOS2DISTANCEBETWEENANGLES}
	holds (with bounded implicit constants)
	for all pairs $\upomega_{(1)},\upomega_{(2)} \in \mathbb{S}^2$
	such that
	$d_{\stgsphere}(\upomega_{(1)},\upomega_{(2)}) < \upbeta$.
	Moreover, since the domain $\mathbb{S}^2$ is path-connected and the target $UT_{\bf{z}}\Sigma_0 \simeq \mathbb{S}^2$ 
	is simply connected, 
	it is a standard result in algebraic topology that
	$\mathfrak{N}$ is in fact a diffeomorphism 
	(see \cite{jM2000}*{Theorem 54.4} and note that
	$UT_{\bf{z}} \Sigma_0 \simeq \mathbb{S}^2$ has a trivial fundamental group since it is simply connected).
	In particular, $\mathfrak{N}$ is globally injective.
	This fact yields \eqref{E:DISTANCEBETWEENNORMALCARTESIANCOMPONENTSISCOMPARABLETOS2DISTANCEBETWEENANGLES}
	(again, with bounded implicit constants)
	for all $\upomega_{(1)},\upomega_{(2)} \in \mathbb{S}^2$ 
	with $\upbeta \leq d_{\stgsphere}(\upomega_{(1)},\upomega_{(2)}) \leq \pi$.
	
	To prove \eqref{E:ESTIMATEFORCARTESIANCOORDINATESALONGSIGMA0}, we first use 
	\eqref{E:PARTIALPARTIALWALONGSIGMA0} to deduce
	$
	\frac{\partial}{\partial w} x^i(0,w,\upomega) = [a \spherenormal^i](0,w,\upomega)
	$.
	Also using \eqref{E:INTIALLAPSEANDSPHEREVOLUMEELEMENTESTIMATE} and \eqref{E:SPHERENORMALIALONGSIGMA0STAYSCLOSETOORIGINVALUES},
	we see that $\frac{\partial}{\partial w} x^i(0,w,\upomega) = \spherenormal^i(0,0,\upomega) + \mathcal{O}(\uplambda^{-4 \upepsilon_0})$.
	Integrating this estimate with respect to $w$ starting from the value $w=0$, and using the initial condition
	$x^i(0,w,\upomega) = {\bf{z}^i}$, we conclude \eqref{E:ESTIMATEFORCARTESIANCOORDINATESALONGSIGMA0}.
	
	We now show that for each $(u,\upomega) \in [-\frac{4}{5} \RescaledTboot,0) \times \mathbb{S}^2$,
\begin{align}
\notag
\lim_{t \downarrow 0} \Theta_{(A)}^i(t,u,\upomega) = \Theta_{(A)}^i(0,u,\upomega)
\end{align}
	and 
\begin{align}
\notag
g_{cd}(0,u,\upomega) \Theta_{(A)}^c(0,u,\upomega)\Theta_{(B)}^d(0,u,\upomega)
	=
	\stgsphere(\upomega)\left(\frac{\partial}{\partial \upomega^A},\frac{\partial}{\partial \upomega^B}\right)
	+
	\mathcal{O}(\uplambda^{-\upepsilon_0}).
\end{align}
	The desired results can be obtained by using arguments similar to the ones given in the first paragraph of this proof,
	based on the evolution equation \eqref{E:EVOLUTIONEQUATIONALONGINTEGRALCURVESOFLUNITFORCARTESIANCOMPONENTSOFTHETAA}
	and the bootstrap assumptions, 
	including
	\eqref{E:RESCALEDBOOTBOUNDS},
	\eqref{E:RESCALEDSTRICHARTZ},
	\eqref{E:BOOTSTRAPMETRICAPPROXIMATELYROUND},
	and
	\eqref{E:LT2LINFINITYBOOTSTRAPCHIANDZETAININTERIORREGION}.
	
	Finally, we prove \eqref{E:DATAESTIMATEFORHOLDERNORMOFLUNITI}. The result is trivial for $\Lunit^0$ since this component is constantly unity.
	Next, we note the schematic identity $\angD \Lunit^i = \gensmoothfunction_{(\vec{\Lunit})} \upchi + \gensmoothfunction_{(\vec{\Lunit})} \cdot \pmb{\partial} \vec{\Psi}$,
	where on the LHS, 
	we are viewing $\angD \Lunit^i$ to be the angular gradient of the scalar function $\Lunit^i$.
	Hence, applying \eqref{E:EUCLIDEANFORMMORREYONSTU} with $\leb := p$
	and with the scalar function $\Lunit^i$ in the role of $\upxi$, 
	and using the simple bound $\| \gensmoothfunction_{(\vec{\Lunit})} \|_{L^{\infty}(\widetilde{\mathcal{M}})} \lesssim 1$
	implied by the bootstrap assumptions, we find that for $u \in [-\frac{4}{5} \RescaledTboot,0]$, we have
	$$\| \Lunit^i \|_{C_{\upomega}^{0,1 - \frac{2}{p}}(S_{0,u})}
		\lesssim 
		\| \rgeo \upchi \|_{L_{\upomega}^p(S_{0,u})}
		+
		\| \rgeo \pmb{\partial} \vec{\Psi} \|_{L_{\upomega}^p(S_{0,u})}
		+
		1.
		$$
		Also using the first identities in
		\eqref{E:CONNECTIONCOEFFICIENT}
		and
		\eqref{E:DETAILEDDECOMPOFNULLSECONDFUNDFORMSINTOTRACEANDTRACEFREE},
		\eqref{E:MODTRICHISMALL},
		the schematic identity $k_{AB} = \gensmoothfunction_{(\vec{\Lunit})} \cdot \pmb{\partial} \vec{\Psi}$,
		and the parameter relation \eqref{E:BOUNDSONLEBESGUEEXPONENTP},
		we find that
			$$\| \Lunit^i \|_{L_u^{\infty} C_{\upomega}^{0,\updelta_0}(\widetilde{\Sigma}_0)}
		\lesssim 
		\| \rgeo \mytr_{\congsphere} \widetilde{\upchi}^{(Small)} \|_{L_u^{\infty}L_{\upomega}^p(\widetilde{\Sigma}_0)}
		+
		\| \rgeo \hat{\spheresecondfund} \|_{L_u^{\infty}L_{\upomega}^p(\widetilde{\Sigma}_0)}
		+
		\| \rgeo \pmb{\partial} \vec{\Psi} \|_{L_u^{\infty}L_{\upomega}^p(\widetilde{\Sigma}_0)}
		+
		1.
		$$
		From \eqref{E:INTIALDERIVATIVESOFLAPSEANDTRACEFREECHIEST} with $q_* := p$,
		\eqref{E:MODTRCHIESTIAMTESALONGINITIALFOLIATION},
		\eqref{E:SMOOTHFUNCTIONTIMESFLUIDONEDERIVATIVESIGMAT},
		and \eqref{E:RGEOANDUBOUNDS} for $u|_{\Sigma_0} = - w$,
		we conclude that the RHS of the previous estimate is $\lesssim 1$,
		which yields \eqref{E:DATAESTIMATEFORHOLDERNORMOFLUNITI}.
		
	\medskip
	
	\noindent \textbf{Proof of the results along the cone-tip axis}.
	The ODE \eqref{E:FWTRANSPORT} can be expressed in the schematic form
	$
		\frac{d}{dt} \vec{\spherenormal}_{\upomega}
		=
		\gensmoothfunction(\vec{\Psi}) \cdot \pmb{\partial} \vec{\Psi}
		\cdot
		\vec{\spherenormal}_{\upomega}
	$,
	Here, $\vec{\spherenormal}_{\upomega} = \vec{\spherenormal}_{\upomega}(t)$
	denotes the array of Cartesian spatial components of the unit outward normal vector $\spherenormal$ 
	(corresponding to the parameter $\upomega \in \mathbb{S}^2$)
	along the cone-tip axis $\Tranchar_{\bf{z}}(t)$. That is, if $\vec{\spherenormal}(t,u,\upomega)$ denotes the
	array of Cartesian spatial components of $\spherenormal$ viewed as a function of the geometric coordinates $(t,u,\upomega)$,
	then $\vec{\spherenormal}_{\upomega}(t) := \vec{\spherenormal}(t,t,\upomega)$.
	Moreover, in the previous expressions, we have abbreviated
	$\vec{\Psi} = \vec{\Psi} \circ \Tranchar_{\bf{z}}(t)$
	and $\pmb{\partial} \vec{\Psi} = [\pmb{\partial} \vec{\Psi}] \circ \Tranchar_{\bf{z}}(t)$.
	Integrating the ODE in time
	and using the bootstrap assumptions,
	we deduce that
	$
	|
		\vec{\spherenormal}_{\upomega}(t)
		-
		\vec{\spherenormal}_{\upomega}(0)
	|
	\lesssim
	\int_0^t
		\| \pmb{\partial} \vec{\Psi} \|_{L^{\infty}(\Sigma_{\uptau})}
	\, d \uptau
	$.
	From this estimate, \eqref{E:RESCALEDBOOTBOUNDS}, and \eqref{E:RESCALEDSTRICHARTZ}, 
	we arrive at the desired bound
	\eqref{E:SPHERENORMALIALONGCONETIPAXISSTAYSCLOSETOORIGINVALUES}.
	The desired bound for \eqref{E:LUNITIALONGCONETIPAXISSTAYSCLOSETOORIGINVALUES} 
	follows from \eqref{E:SPHERENORMALIALONGCONETIPAXISSTAYSCLOSETOORIGINVALUES},
	the identity $\Lunit = \Transport + \spherenormal$,
	and the estimate 
	$
	|
		\Transport^{\alpha}(t,t,\upomega)
		-
		\Transport^{\alpha}(0,0,\upomega)
	|
	\lesssim \uplambda^{-8 \upepsilon_0}
	$, 
	which follows from integrating the estimate 
	$|\Transport \Transport^{\alpha}|(\uptau,\uptau,\upomega) \lesssim \| \pmb{\partial} \vec{\Psi} \|_{L^{\infty}(\Sigma_{\uptau})}$
	(valid since $\Transport^{\alpha} = \Transport^{\alpha}(\vec{\Psi})$) with respect to $\uptau$
	and using \eqref{E:RESCALEDBOOTBOUNDS} and \eqref{E:RESCALEDSTRICHARTZ}.
	
	We now show that for each $(u,\upomega) \in [0,\RescaledTboot] \times \mathbb{S}^2$,
	$\lim_{t \downarrow u} \Theta_{(A)}^i(t,u,\upomega) := \Theta_{(A)}^i(u,u,\upomega)$ exists
	and that
	$$g_{cd}(u,u,\upomega) \Theta_{(A)}^c(u,u,\upomega)\Theta_{(B)}^d(u,u,\upomega)
	=
	\stgsphere(\upomega)\left(\frac{\partial}{\partial \upomega^A},\frac{\partial}{\partial \upomega^B}\right)
	+
	\mathcal{O}(\uplambda^{- \upepsilon_0}).
	$$
	The desired results can be obtained by using arguments similar to the ones given in the first paragraph of this proof,
	based on the evolution equation \eqref{E:EVOLUTIONEQUATIONALONGINTEGRALCURVESOFLUNITFORCARTESIANCOMPONENTSOFTHETAA}
	and the bootstrap assumptions, 
	including
	\eqref{E:RESCALEDBOOTBOUNDS},
	\eqref{E:RESCALEDSTRICHARTZ},
	\eqref{E:BOOTSTRAPMETRICAPPROXIMATELYROUND},
	and
	\eqref{E:LT2LINFINITYBOOTSTRAPCHIANDZETAININTERIORREGION};
	we omit the details.
	
	We now prove \eqref{E:ANGULARDERIVATIVEOFNORMALVECTORATORIGIN}.
	From the identity $\Lunit = \Transport + \spherenormal$
	and the fact that $\frac{\partial}{\partial \upomega^A} \Transport^{\alpha}(t,t,\upomega)
	= 
	\frac{\partial}{\partial \upomega^A}
	[\Transport^{\alpha}
	\circ
	\vec{\Psi} 
	\circ \Tranchar_{\bf{z}}(t) 
	]
	= 0$,
	we find that
	$\frac{\partial}{\partial \upomega^A} \spherenormal^i(t,t,\upomega) 
		= \frac{\partial}{\partial \upomega^A} \Lunit^i(t,t,\upomega)$,
	as is stated in \eqref{E:ANGULARDERIVATIVEOFNORMALVECTORATORIGIN}.
	From the fact that $\lim_{t \downarrow u} \Theta_{(A)}^i(t,u,\upomega) = \Theta_{(A)}^i(u,u,\upomega)$
	and the asymptotic initial condition \eqref{E:CONNECTIONCOEFFICIENTS0LIMITSALONGTIP}
	for $|\rgeo \sphereproject_j^a \partial_a \Lunit^i - \sphereproject_j^i|$,
	we find that 
	$\frac{\partial}{\partial \upomega^A} \spherenormal^i(t,t,\upomega) 
		= \Theta_{(A)}^i(t,t,\upomega)$,
	which finishes the proof of \eqref{E:ANGULARDERIVATIVEOFNORMALVECTORATORIGIN}.
	
	We now prove \eqref{E:RESCALEDGEOMETRICCOORDINATEVECTORFIELDALONGCONETIPAXISSTAYSCLOSETOORIGINVALUES}.
	We differentiate the ODE \eqref{E:FWTRANSPORT}
	with respect to the parameter $\upomega^A$ (that is, with the operator $\frac{\partial}{\partial \upomega^A}$),
	use the fact that
	$
	\frac{\partial}{\partial \upomega^A} [\vec{\Psi} \circ \Tranchar_{\bf{z}}(t)]
	=
	\frac{\partial}{\partial \upomega^A} \left([\pmb{\partial} \Psi] \circ \Tranchar_{\bf{z}}(t) \right)
	=
	0
	$,
	integrate the resulting ODE in time,
	and use the bootstrap assumptions,
	thereby deducing that
	\begin{align} \label{E:FIRSTSTEPRESCALEDGEOMETRICCOORDINATEVECTORFIELDALONGCONETIPAXISSTAYSCLOSETOORIGINVALUES}
	\left|
		\frac{\partial}{\partial \upomega^A} \vec{\spherenormal}_{\upomega}(t)
		-
		\frac{\partial}{\partial \upomega^A} \vec{\spherenormal}_{\upomega}(0)
	\right|
	& \lesssim
	\int_0^t
		\| \pmb{\partial} \vec{\Psi} \|_{L^{\infty}(\Sigma_{\uptau})}
		\left|
			\frac{\partial}{\partial \upomega^A} \vec{\spherenormal}_{\upomega}(t)
			-
			\frac{\partial}{\partial \upomega^A} \vec{\spherenormal}_{\upomega}(0)
		\right|
	\, d \uptau
		\\
	& \ \
	+
	\int_0^t
		\| \pmb{\partial} \vec{\Psi} \|_{L^{\infty}(\Sigma_{\uptau})}
		\left|
			\frac{\partial}{\partial \upomega^A} \vec{\spherenormal}_{\upomega}(0)
		\right|
	\, d \uptau.
	\notag
	\end{align}
	From \eqref{E:FIRSTSTEPRESCALEDGEOMETRICCOORDINATEVECTORFIELDALONGCONETIPAXISSTAYSCLOSETOORIGINVALUES}, 
	\eqref{E:RESCALEDBOOTBOUNDS}, \eqref{E:RESCALEDSTRICHARTZ}, 
	\eqref{E:ANGULARDERIVATIVEOFSPHERENORMALATINITIALCONETIPAXISPOINT} 
	(which, in view of \eqref{E:INITIALSPHEREMETRICESTIMATE},
	implies that $\left|
		\frac{\partial}{\partial \upomega^A} \vec{\spherenormal}_{\upomega}(0)
	\right| \lesssim 1$),
	and Gr\"{o}nwall's inequality,
	we find that $\frac{\partial}{\partial \upomega^A} \spherenormal^i(t,t,\upomega) = \frac{\partial}{\partial \upomega^A} \spherenormal^i(0,0,\upomega)
	+ \mathcal{O}(\uplambda^{-8 \upepsilon_0})
	$.
	From this estimate and \eqref{E:ANGULARDERIVATIVEOFNORMALVECTORATORIGININSIGMA0}, we 
	deduce that
	$
	\frac{\partial}{\partial \upomega^A} \spherenormal^i(t,t,\upomega) 
	= \Theta_{(A)}^i(0,0,\upomega)
	+
	\mathcal{O}(\uplambda^{-4 \upepsilon_0})
	$.
	Finally, from this bound and \eqref{E:ANGULARDERIVATIVEOFNORMALVECTORATORIGIN}, 
	we conclude \eqref{E:RESCALEDGEOMETRICCOORDINATEVECTORFIELDALONGCONETIPAXISSTAYSCLOSETOORIGINVALUES}.
	
	
	
	\medskip
	\noindent \textbf{Proof of the results in $\widetilde{\mathcal{M}}$}.
	We now show that \eqref{E:QUANTITATIVECONTROLOFCARTESIANCOMPONENTSRESCALEDGEOMETRICCOORDINATEANGULARDERIVATIVEVECTORVIELDINALLREGIONS} holds.
	This estimate can be obtained by using arguments similar to the ones given in the first paragraph of this proof,
	based on the evolution equation \eqref{E:EVOLUTIONEQUATIONALONGINTEGRALCURVESOFLUNITFORCARTESIANCOMPONENTSOFTHETAA}
	and the bootstrap assumptions, 
	including
	\eqref{E:RESCALEDBOOTBOUNDS},
	\eqref{E:RESCALEDSTRICHARTZ},
	\eqref{E:BOOTSTRAPMETRICAPPROXIMATELYROUND},
	and
	\eqref{E:LT2LINFINITYBOOTSTRAPCHIANDZETAININTERIORREGION}.
	The initial conditions for $\Theta_{(A)}^i$ on $\Sigma_0$ (which are relevant for the region $\widetilde{\mathcal{M}}^{(Ext)}$)
	can be related back to $\Theta_{(A)}^i(0,0,\upomega)$ via the already proven estimate \eqref{E:RESCALEDGEOMETRICCOORDINATEVECTORFIELDRELATEDTOAMEQUANTITITYATZ},
	while the initial conditions for $\Theta_{(A)}^i$ on the cone-tip axis (which are relevant for the region $\widetilde{\mathcal{M}}^{(Ext)}$)
	can be related back to $\Theta_{(A)}^i(0,0,\upomega)$
	via \eqref{E:RESCALEDGEOMETRICCOORDINATEVECTORFIELDALONGCONETIPAXISSTAYSCLOSETOORIGINVALUES};
	we omit the details.
	
	The estimate \eqref{E:QUANTITATIVECONTROLOFLUNITIALLREGIONS} can be obtained in a similar fashion
	based on the evolution equation for $\Lunit^i$ stated in \eqref{E:LUNITIANDNORMALITRANSPORT},
	the bootstrap assumptions,
	\eqref{E:RESCALEDBOOTBOUNDS},
	\eqref{E:RESCALEDSTRICHARTZ},
	and the already proven estimates
	\eqref{E:LUNITIALONGSIGMA0STAYSCLOSETOORIGINVALUES}
	and
	\eqref{E:LUNITIALONGCONETIPAXISSTAYSCLOSETOORIGINVALUES}; we omit the details.

\end{proof}

We now derive quantitative control of the integral curves of $\Lunit$ in $\widetilde{\mathcal{M}}$.

\begin{proposition}[Control of the integral curves of $\Lunit$ in $\widetilde{\mathcal{M}}$]
\label{P:CONTROLOFNULLGEODESICS}
Let $\Upsilon_{u;\upomega}(t)$ be the family of null geodesic curves
from Subsubsects.\,\ref{SSS:EIKONALINTERIOR} and \ref{SSS:EIKONALEXTERIOR},
which depend on the parameters $(u,\upomega) \in [-\frac{4}{5} \RescaledTboot,\RescaledTboot] \times \mathbb{S}^2$
and are parameterized by $t \in [[u]_+,\RescaledTboot]$
and normalized by $\Upsilon_{u;\upomega}^0(t) = t$.
Let $\upomega_{(1)},\upomega_{(2)} \in \mathbb{S}^2$,
and let $d_{\stgsphere}(\upomega_{(1)},\upomega_{(2)})$ denote their distance
with respect to the standard Euclidean round metric $\stgsphere$.
Under the assumptions of Subsect.\,\ref{SS:ASSUMPTIONS},
the following estimate for the Cartesian components $\Upsilon_{u;\upomega}^{\alpha}(t)$
(which can be identified with the Cartesian coordinate functions $x^{\alpha}$, viewed as a function of $(t,u,\upomega)$)
holds for $u \in  [-\frac{4}{5} \RescaledTboot,\RescaledTboot]$ and $t \in [[u]_+,\RescaledTboot]$:
\begin{align} \label{E:CONTROLOFNULLGEODESICS}
	\sum_{\alpha=0}^3
	|
	\Upsilon_{u;\upomega_{(1)}}^{\alpha}(t)
	-
	\Upsilon_{u;\upomega_{(2)}}^{\alpha}(t)
	|
	\approx
	\rgeo d_{\stgsphere}(\upomega_{(1)},\upomega_{(2)}).
\end{align}
\end{proposition}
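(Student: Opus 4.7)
The $\alpha = 0$ component is trivial since $\Upsilon_{u;\upomega}^0(t) = t$, so I only need to bound the Euclidean spatial differences. The plan is to express the difference $\Upsilon_{u;\upomega_{(1)}}^i(t) - \Upsilon_{u;\upomega_{(2)}}^i(t)$ as a line integral on $\mathbb{S}^2$ with respect to $\upomega$ at fixed $(t,u)$, and then to exploit that, to leading order in $\uplambda$, this integral is controlled by an analogous $\upomega$-integral of angular derivatives of $\spherenormal^i(0,0,\cdot)$, for which the two-sided bound \eqref{E:DISTANCEBETWEENNORMALCARTESIANCOMPONENTSISCOMPARABLETOS2DISTANCEBETWEENANGLES} is already available.

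More precisely, identifying $\Upsilon_{u;\upomega}^i(t)$ with the Cartesian coordinate function $x^i(t,u,\upomega)$, the very definition \eqref{E:RESCALED} of $\Theta_{(A)}$ gives $\frac{\partial x^i}{\partial \upomega^A}(t,u,\upomega) = \rgeo(t,u) \Theta_{(A)}^i(t,u,\upomega)$. Picking any path $\gamma \subset \mathbb{S}^2$ of length $\lesssim d_{\stgsphere}(\upomega_{(1)},\upomega_{(2)})$ joining $\upomega_{(2)}$ to $\upomega_{(1)}$ (a minimizing great-circle arc when $d_{\stgsphere}(\upomega_{(1)},\upomega_{(2)}) < \pi$, patched through finitely many charts otherwise, and split into two halves if $d_{\stgsphere} = \pi$), and noting that $\rgeo$ is independent of $\upomega$, the fundamental theorem of calculus yields
\begin{align*}
\Upsilon_{u;\upomega_{(1)}}^i(t) - \Upsilon_{u;\upomega_{(2)}}^i(t)
= \rgeo(t,u) \int_{\gamma} \Theta_{(A)}^i(t,u,\upomega) \, d\upomega^A.
\end{align*}
By the uniform approximation \eqref{E:QUANTITATIVECONTROLOFCARTESIANCOMPONENTSRESCALEDGEOMETRICCOORDINATEANGULARDERIVATIVEVECTORVIELDINALLREGIONS}, the integrand equals $\Theta_{(A)}^i(0,0,\upomega) + \mathcal{O}(\uplambda^{-4\upepsilon_0})$, and by \eqref{E:ANGULARDERIVATIVEOFNORMALVECTORATORIGININSIGMA0} we have $\Theta_{(A)}^i(0,0,\upomega) = \frac{\partial}{\partial \upomega^A} \spherenormal^i(0,0,\upomega)$, so the line integral of the leading part collapses to $\spherenormal^i(0,0,\upomega_{(1)}) - \spherenormal^i(0,0,\upomega_{(2)})$. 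Summing over $i$, applying \eqref{E:DISTANCEBETWEENNORMALCARTESIANCOMPONENTSISCOMPARABLETOS2DISTANCEBETWEENANGLES}, and bounding the error by $\mathcal{O}(\uplambda^{-4\upepsilon_0}) \cdot d_{\stgsphere}(\upomega_{(1)},\upomega_{(2)})$ (since the arclength of $\gamma$ is comparable to $d_{\stgsphere}(\upomega_{(1)},\upomega_{(2)})$), we obtain
\begin{align*}
\sum_{i=1}^3 |\Upsilon_{u;\upomega_{(1)}}^i(t) - \Upsilon_{u;\upomega_{(2)}}^i(t)|
= \rgeo(t,u) \bigl\lbrace [1 + \mathcal{O}(\uplambda^{-4\upepsilon_0})] d_{\stgsphere}(\upomega_{(1)},\upomega_{(2)}) \bigr\rbrace,
\end{align*}
which for $\uplambda \geq \Lambda_0$ sufficiently large yields \eqref{E:CONTROLOFNULLGEODESICS}.

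The main subtlety I expect is bookkeeping rather than analysis: one must verify that the path integral is genuinely path-independent (which it is, because $x^i(t,u,\cdot)$ is a well-defined function on $\mathbb{S}^2$), that the transition between coordinate charts on $\mathbb{S}^2$ does not introduce boundary contributions (handled by the tensorial transformation of $\frac{\partial}{\partial \upomega^A}$), and that the antipodal case $d_{\stgsphere}(\upomega_{(1)},\upomega_{(2)}) = \pi$ is accommodated by inserting an auxiliary point and applying the estimate twice. The quantitative smallness of the error term is guaranteed by \eqref{E:QUANTITATIVECONTROLOFCARTESIANCOMPONENTSRESCALEDGEOMETRICCOORDINATEANGULARDERIVATIVEVECTORVIELDINALLREGIONS}, which in turn was proved in Lemma \ref{L:LUNITIALONGCONETIPAXISISC1INANGLEVARIABLES} precisely so that this kind of reduction to the initial fibre at $\bf{z}$ works uniformly throughout $\widetilde{\mathcal{M}}$.
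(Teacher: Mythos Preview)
Your argument is correct and is actually a genuinely different---and in some ways cleaner---route than the paper's. The paper splits the proof into two regimes: for $d_{\stgsphere}(\upomega_{(1)},\upomega_{(2)}) < \upbeta$ (some fixed $\upbeta < \pi$), it appeals to injectivity of the differential of $\upomega \mapsto (\Upsilon_{u;\upomega}^1,\Upsilon_{u;\upomega}^2,\Upsilon_{u;\upomega}^3)$, via \eqref{E:QUANTITATIVECONTROLOFCARTESIANCOMPONENTSRESCALEDGEOMETRICCOORDINATEANGULARDERIVATIVEVECTORVIELDINALLREGIONS}, to conclude \eqref{E:CONTROLOFNULLGEODESICS} locally; for $d_{\stgsphere} \geq \upbeta$, it instead integrates in $t$ to obtain $\Upsilon_{u;\upomega}^i(t) = \Upsilon_{u;\upomega}^i([u]_+) + (t-[u]_+)\{\Lunit^i(0,0,\upomega) + \mathcal{O}(\uplambda^{-4\upepsilon_0})\}$ and then, in the exterior region, separately invokes \eqref{E:ESTIMATEFORCARTESIANCOORDINATESALONGSIGMA0} to handle the initial-position contribution (so that the factor $t + |u| = \rgeo$ appears). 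Your line-integral-in-$\upomega$ approach sidesteps both the small/large-angle split and the interior/exterior case distinction: the factor $\rgeo$ comes out directly from $\partial x^i/\partial \upomega^A = \rgeo\,\Theta_{(A)}^i$, and the reduction to $\spherenormal^i(0,0,\cdot)$ is global. The paper's two-step approach does have the advantage of making explicit that the map $\upomega \mapsto \Upsilon_{u;\upomega}(t)$ is a diffeomorphism onto $S_{t,u}$ (an ingredient used elsewhere), but as far as proving \eqref{E:CONTROLOFNULLGEODESICS} itself, your argument is more direct.

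One small presentational point: your final display, written as an equality with the factor $[1 + \mathcal{O}(\uplambda^{-4\upepsilon_0})]$, is a slight abuse. What you actually obtain is
\[
\sum_{i=1}^3 \Bigl|\Upsilon_{u;\upomega_{(1)}}^i(t) - \Upsilon_{u;\upomega_{(2)}}^i(t)\Bigr|
= \rgeo(t,u)\Bigl\{\sum_{i=1}^3\bigl|\spherenormal^i(0,0,\upomega_{(1)}) - \spherenormal^i(0,0,\upomega_{(2)})\bigr| + \mathcal{O}(\uplambda^{-4\upepsilon_0})\,d_{\stgsphere}(\upomega_{(1)},\upomega_{(2)})\Bigr\},
\]
and then \eqref{E:DISTANCEBETWEENNORMALCARTESIANCOMPONENTSISCOMPARABLETOS2DISTANCEBETWEENANGLES} turns this into the two-sided bound; the implicit constants in \eqref{E:DISTANCEBETWEENNORMALCARTESIANCOMPONENTSISCOMPARABLETOS2DISTANCEBETWEENANGLES} are not literally $1$, so the ``$[1 + \mathcal{O}]$'' shorthand is not quite accurate. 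This does not affect the validity of the proof.
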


\begin{proof}
At the end of the proof, we will show that the following two estimates hold for 
$u \in [-\frac{4}{5} \RescaledTboot,\RescaledTboot]$,
$t \in [[u]_+,\RescaledTboot]$,
and
$\upomega \in \mathbb{S}^2$, 
($A=1,2$ and $i=1,2,3)$:
\begin{align} 
\Upsilon_{u;\upomega}^i(t)
	& 
	=
	\Upsilon_{u;\upomega}^i([u]_+)
	+
	(t - [u]_+)
	\left\lbrace
		\Lunit^i(0,0,\upomega)
		+
		\mathcal{O}(\uplambda^{-4 \upepsilon_0})
	\right\rbrace,
	\label{E:INTEGRALCURVESOFLUNITSEPARATEDESTIMATEFORLARGEANGLES}
		\\
	\frac{\partial}{\partial \upomega^A} \Upsilon_{u;\upomega}^i(t)
	& = \rgeo
			\left\lbrace
				\Theta_{(A)}^i(0,0,\upomega)
				+
				\mathcal{O}(\uplambda^{-4 \upepsilon_0})
			\right\rbrace.
\label{E:INTEGRALCURVESOFLUNITSEPARATEDESTIMATEFORSMALLANGLES}
\end{align}
From \eqref{E:INTEGRALCURVESOFLUNITSEPARATEDESTIMATEFORSMALLANGLES}
and the properties of the (linearly independent) set
$\lbrace \Theta_{(1)}(0,0,\upomega), \Theta_{(2)}(0,0,\upomega) \rbrace$
shown in Lemma\allowbreak~\ref{L:LUNITIALONGCONETIPAXISISC1INANGLEVARIABLES},
it follows that the map 
$\upomega \rightarrow \left(\Upsilon_{u;\upomega}^1(t),\Upsilon_{u;\upomega}^2(t),\Upsilon_{u;\upomega}^3(t)\right)$
has an injective differential and, in particular, there exists $0 < \upbeta < \pi$ such that
if $\uplambda$ is sufficiently large, then
\eqref{E:CONTROLOFNULLGEODESICS} holds whenever 
$d_{\stgsphere}(\upomega_{(1)},\upomega_{(2)}) < \upbeta$.
From \eqref{E:DISTANCEBETWEENNORMALCARTESIANCOMPONENTSISCOMPARABLETOS2DISTANCEBETWEENANGLES},
\eqref{E:INTEGRALCURVESOFLUNITSEPARATEDESTIMATEFORLARGEANGLES},
and the fact that $\Upsilon_{u;\upomega}^i(u)$ is independent of $\upomega$ when $u \in [0,\RescaledTboot]$,
it follows that for this fixed value of $\upbeta$, 
if $\uplambda > 0$ is sufficiently large,
then \eqref{E:CONTROLOFNULLGEODESICS}
holds whenever $\upbeta \leq d_{\stgsphere}(\upomega_{(1)},\upomega_{(2)}) \leq \pi$,
$u \in [0,\RescaledTboot]$, and $t \in [u,\RescaledTboot]$.
\eqref{E:CONTROLOFNULLGEODESICS} can be proved in 
the remaining case, in which
$\upbeta \leq d_{\stgsphere}(\upomega_{(1)},\upomega_{(2)}) \leq \pi$,
$u \in [-\frac{4}{5} \RescaledTboot,0]$, and $t \in [0,\RescaledTboot]$,
via a similar argument that also takes into account
the estimate \eqref{E:ESTIMATEFORCARTESIANCOORDINATESALONGSIGMA0}, as we now explain. 
\eqref{E:ESTIMATEFORCARTESIANCOORDINATESALONGSIGMA0} is
relevant in that 
the identity $\Lunit^i = \Transport^i + \spherenormal^i$,
the fact that $\Transport^i(0,0,\upomega)$ is independent of $\upomega$, 
and the estimates
\eqref{E:ESTIMATEFORCARTESIANCOORDINATESALONGSIGMA0} and \eqref{E:INTEGRALCURVESOFLUNITSEPARATEDESTIMATEFORLARGEANGLES}
collectively 
imply that for $u \in [-\frac{4}{5} \RescaledTboot,0]$,
$t \in [0,\RescaledTboot]$,
and $\upomega_{(1)},\upomega_{(2)} \in \mathbb{S}^2$, 
we have
$$
\sum_{\alpha=0}^3
	|
	\Upsilon_{u;\upomega_{(1)}}^{\alpha}(t)
	-
	\Upsilon_{u;\upomega_{(2)}}^{\alpha}(t)
	|
	=
	(|u| + t)
	\left\lbrace
	\sum_{i=1}^3
		|
		\Lunit^i(0,0,\upomega_{(1)})
		-
		\Lunit^i(0,0,\upomega_{(2)})
		|
		+
	\mathcal{O}(\uplambda^{-4 \upepsilon_0})
	\right\rbrace.
$$ In view of \eqref{E:DISTANCEBETWEENNORMALCARTESIANCOMPONENTSISCOMPARABLETOS2DISTANCEBETWEENANGLES}
and the assumption $\upbeta \leq d_{\stgsphere}(\upomega_{(1)},\upomega_{(2)})$,
we see that for $\uplambda$ sufficiently large, the
$\mathcal{O}(\uplambda^{-4 \upepsilon_0})$ term is negligible.
Since $\rgeo = |u| + t$ when $u \leq 0$,
we have completed the proof of \eqref{E:CONTROLOFNULLGEODESICS}.

It remains for us to prove \eqref{E:INTEGRALCURVESOFLUNITSEPARATEDESTIMATEFORLARGEANGLES}--\eqref{E:INTEGRALCURVESOFLUNITSEPARATEDESTIMATEFORSMALLANGLES}.
The estimate \eqref{E:INTEGRALCURVESOFLUNITSEPARATEDESTIMATEFORSMALLANGLES} 
follows directly from multiplying
\eqref{E:QUANTITATIVECONTROLOFCARTESIANCOMPONENTSRESCALEDGEOMETRICCOORDINATEANGULARDERIVATIVEVECTORVIELDINALLREGIONS} by $\rgeo$
and considering the definitions of $\Theta_{(A)}^i$ and $\Upsilon_{u;\upomega}^i(t)$.
To derive \eqref{E:INTEGRALCURVESOFLUNITSEPARATEDESTIMATEFORLARGEANGLES}, 
we first use \eqref{E:QUANTITATIVECONTROLOFLUNITIALLREGIONS}  to deduce that
$\frac{\partial}{\partial t} \Upsilon_{u;\upomega}^i(t) = \Lunit^i(t,u,\upomega) = \Lunit^i(0,0,\upomega) 
+ \mathcal{O}(\uplambda^{-4 \upepsilon_0})$.
Integrating this estimate with respect to time starting from the time value $[u]_+$,
we conclude \eqref{E:INTEGRALCURVESOFLUNITSEPARATEDESTIMATEFORLARGEANGLES}.
\end{proof}

We now derive the main consequence of Prop.\,\ref{P:CONTROLOFNULLGEODESICS}:
a corollary that yields
$L_t^2 L_u^{\infty} C_{\upomega}^{0,\updelta_0}(\widetilde{\mathcal{M}})$ estimates
for various fluid variables.

\begin{corollary}[$L_t^2 L_u^{\infty} C_{\upomega}^{0,\updelta_0}(\widetilde{\mathcal{M}})$ estimates]
	\label{C:ESTIMATEFORANGLEHOLDERCONTINUITYOFPARTIALPSIPARTIALVORTANDPARTIALGRADENT}
	Under the assumptions of Subsect.\,\ref{SS:ASSUMPTIONS},
	we have the following estimates:
	\begin{align} \label{E:ESTIMATEFORANGLEHOLDERCONTINUITYOFPARTIALPSIPARTIALVORTANDPARTIALGRADENT}
		\|
			\pmb{\partial} \vec{\Psi}
		\|_{L_t^2 L_u^{\infty} C_{\upomega}^{0,\updelta_0}(\widetilde{\mathcal{M}})},
			\,
		\|
			(\partial \vec{\vortrenormalized},\partial \vec{\GradEnt}) 
		\|_{L_t^2 L_u^{\infty} C_{\upomega}^{0,\updelta_0}(\widetilde{\mathcal{M}})},
			\,
		\|
			 (\vec{\VortVort},\DivGradEnt) 
		\|_{L_t^2 L_u^{\infty} C_{\upomega}^{0,\updelta_0}(\widetilde{\mathcal{M}})}
		& \lesssim 
			\uplambda^{-1/2 - 3 \upepsilon_0}.
\end{align}
Moreover,
\begin{align} \label{E:ESTIMATEFORANGLEHOLDERCONTINUITYOFPSIVORTANDGRADENT}
	\|
		(\vec{\Psi},\vec{\vortrenormalized},\vec{\GradEnt}) 
	\|_{L_t^{\infty} L_u^{\infty} C_{\upomega}^{0,\updelta_0}(\widetilde{\mathcal{M}})}
	& \lesssim 1.
\end{align}
\end{corollary}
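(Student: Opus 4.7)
The central idea is to pull Cartesian H\"older information back to the angular variables using the already-established control of the integral curves of $\Lunit$. Fix $(t,u)$ and two angles $\upomega_{(1)},\upomega_{(2)}\in\mathbb{S}^2$, and let $x_{(i)}$ denote the spatial Cartesian coordinates of $\Upsilon_{u;\upomega_{(i)}}(t)$. Proposition~\ref{P:CONTROLOFNULLGEODESICS} (applied at a common value of $u$) gives $|x_{(1)}-x_{(2)}|\lesssim \rgeo(t,u)\,d_{\stgsphere}(\upomega_{(1)},\upomega_{(2)})$. Since all quantities appearing in \eqref{E:ESTIMATEFORANGLEHOLDERCONTINUITYOFPARTIALPSIPARTIALVORTANDPARTIALGRADENT} are scalar components (so the weight $\rgeo^{m-n}$ in definition~\eqref{E:HOMOGENEOUSHOLDERNORMSINGEOMETRICANGULARVARIABLES} equals $1$), this conversion will reduce each estimate to a fixed-time Cartesian H\"older bound times a small power of $\rgeo$.

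For \eqref{E:ESTIMATEFORANGLEHOLDERCONTINUITYOFPARTIALPSIPARTIALVORTANDPARTIALGRADENT}, let $F$ denote any scalar component of $\pmb{\partial}\vec{\Psi}$, $\partial\vec{\vortrenormalized}$, $\partial\vec{\GradEnt}$, $\vec{\VortVort}$, or $\DivGradEnt$. Combining the trivial bound $\|F\|_{L_\upomega^\infty(S_{t,u})}\le \|F\|_{L_x^\infty(\Sigma_t)}$ with the pointwise estimate
\begin{align*}
\frac{|F(t,u,\upomega_{(1)})-F(t,u,\upomega_{(2)})|}{d_{\stgsphere}(\upomega_{(1)},\upomega_{(2)})^{\updelta_0}}
\lesssim \rgeo(t,u)^{\updelta_0}\|F\|_{\dot{C}_x^{0,\updelta_0}(\Sigma_t)}
\end{align*}
yields
\begin{align*}
\|F\|_{C_\upomega^{0,\updelta_0}(S_{t,u})}
\lesssim
\|F\|_{L_x^\infty(\Sigma_t)}+\rgeo(t,u)^{\updelta_0}\|F\|_{C_x^{0,\updelta_0}(\Sigma_t)}.
\end{align*}
Taking the supremum in $u$ and then the $L^2$ norm in $t$, I will control $\|F\|_{L_t^2 L_x^\infty}$ by the Strichartz-type bootstrap assumptions \eqref{E:RESCALEDSTRICHARTZ}--\eqref{E:RESCALEDBOOTL2LINFINITYFIRSTDERIVATIVESOFVORTICITYBOOTANDNENTROPYGRADIENT} and (for $\vec{\VortVort}$, $\DivGradEnt$) the consequence \eqref{E:FREQUENCYSQUARESUMMEDSMOOTHFUNCTIONTIMESBASICVARIABLESSIMPLECONSEQUENCEOFRESCALEDSTRICHARTZ}, and the Cartesian-H\"older factor $\|F\|_{L_t^2 C_x^{0,\updelta_0}}$ by \eqref{E:RESCALEDSOLUTIONHOLDERESTIMATE}; each of these delivers $\lesssim \uplambda^{-1/2-4\upepsilon_0}$. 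The key bookkeeping step, and the main technical obstacle, is to absorb the Cartesian-to-angular loss: since \eqref{E:RGEOANDUBOUNDS} gives $\|\rgeo^{\updelta_0}\|_{L^\infty}\lesssim \uplambda^{\updelta_0(1-8\upepsilon_0)}\Tboot^{\updelta_0}$, and \eqref{E:DELTA0DEF} forces $\updelta_0\le\upepsilon_0^2<\upepsilon_0$, this loss is strictly smaller than $\uplambda^{\upepsilon_0}$. The net exponent is $-1/2-4\upepsilon_0+\upepsilon_0=-1/2-3\upepsilon_0$, as required.

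The bound \eqref{E:ESTIMATEFORANGLEHOLDERCONTINUITYOFPSIVORTANDGRADENT} is not suited to this Cartesian pull-back, since we do not have a direct $L_t^\infty C_x^{0,\updelta_0}$ bound on $(\vec{\Psi},\vec{\vortrenormalized},\vec{\GradEnt})$ in the rescaled variables. Instead I will use the angular Morrey inequality \eqref{E:EUCLIDEANFORMMORREYONSTU} on each sphere $S_{t,u}$ with $\leb:=p$, which gives
\begin{align*}
\|(\vec{\Psi},\vec{\vortrenormalized},\vec{\GradEnt})\|_{C_\upomega^{0,1-2/p}(S_{t,u})}
\lesssim
\|\rgeo\,\pmb{\partial}(\vec{\Psi},\vec{\vortrenormalized},\vec{\GradEnt})\|_{L_\upomega^p(S_{t,u})}
+
\|(\vec{\Psi},\vec{\vortrenormalized},\vec{\GradEnt})\|_{L_\upomega^2(S_{t,u})}.
\end{align*}
The second term is $\lesssim 1$ by the bootstrap assumption \eqref{E:BOOTSOLUTIONDOESNOTESCAPEREGIMEOFHYPERBOLICITY}. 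For the first, H\"older's inequality on the finite-measure space $(\mathbb{S}^2, d\flatspherevolarg{\upomega})$ embeds $L_\upomega^{2p}\hookrightarrow L_\upomega^p$, and the second estimate in \eqref{E:SMOOTHFUNCTIONTIMESFLUIDONEDERIVATIVESIGMAT} with $\leb:=p$ gives
\begin{align*}
\|\rgeo\,\pmb{\partial}(\vec{\Psi},\vec{\vortrenormalized},\vec{\GradEnt})\|_{L_\upomega^p(S_{t,u})}
\lesssim
\rgeo^{1/2}\|\rgeo^{1/2}\,\pmb{\partial}(\vec{\Psi},\vec{\vortrenormalized},\vec{\GradEnt})\|_{L_\upomega^{2p}(S_{t,u})}
\lesssim
\rgeo^{1/2}\uplambda^{-1/2}
\lesssim \Tboot^{1/2}\uplambda^{-4\upepsilon_0}
\lesssim 1.
\end{align*}
Since $\updelta_0<1-2/p$ by \eqref{E:BOUNDSONLEBESGUEEXPONENTP} and $\mathbb{S}^2$ has bounded diameter, the embedding $C_\upomega^{0,1-2/p}\hookrightarrow C_\upomega^{0,\updelta_0}$ is uniform, and taking the supremum in $(t,u)$ yields \eqref{E:ESTIMATEFORANGLEHOLDERCONTINUITYOFPSIVORTANDGRADENT}. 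The overall argument is short, but the hidden difficulty lies entirely in the parameter accounting: the factor of $\rgeo$ (which can be as large as $\uplambda^{1-8\upepsilon_0}\Tboot$) competes against the Strichartz smallness in $\uplambda^{-\upepsilon_0}$, and the estimate only closes because \eqref{E:DELTA0DEF} was engineered with $\updelta_0\ll\upepsilon_0$ in mind.
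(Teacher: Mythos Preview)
Your argument for \eqref{E:ESTIMATEFORANGLEHOLDERCONTINUITYOFPARTIALPSIPARTIALVORTANDPARTIALGRADENT} is correct and essentially identical to the paper's: both convert angular H\"older to Cartesian H\"older via Proposition~\ref{P:CONTROLOFNULLGEODESICS}, absorb the $\rgeo^{\updelta_0}$ loss using $\updelta_0<\upepsilon_0$, and feed in \eqref{E:RESCALEDSOLUTIONHOLDERESTIMATE}.

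For \eqref{E:ESTIMATEFORANGLEHOLDERCONTINUITYOFPSIVORTANDGRADENT} you take a genuinely different route. Your claim that the Cartesian pull-back is ``not suited'' here is mistaken: the paper does exactly that. It observes that the \emph{non-rescaled} variables lie in $L_t^\infty C_x^{0,\updelta_0}$ by Proposition~\ref{P:TOPORDERENERGYESTIMATES} and the Sobolev embedding $H^{\Sob}(\Sigma_t)\hookrightarrow C_x^{0,\updelta_0}$; under rescaling the $L^\infty$ part is unchanged while the Cartesian H\"older seminorm picks up a factor $\uplambda^{-\updelta_0}$, which then cancels the $\rgeo^{\updelta_0}\lesssim\uplambda^{(1-8\upepsilon_0)\updelta_0}$ loss in the angular conversion. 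Your alternative via the angular Morrey inequality \eqref{E:EUCLIDEANFORMMORREYONSTU} and the fluid estimate \eqref{E:SMOOTHFUNCTIONTIMESFLUIDONEDERIVATIVESIGMAT} is nonetheless valid (using $|\angD f|_{\gsphere}\lesssim|\partial f|$ for scalars, which holds since the orthonormal frame components $e_A^i$ are bounded). The paper's route is slightly more self-contained, relying only on the energy estimates and the geodesic control already used in the first half; yours trades the Sobolev embedding step for the sphere Morrey inequality and a fluid estimate from Proposition~\ref{P:ESTIMATESFORFLUIDVARIABLES}, which gives a cleaner bookkeeping of the $\rgeo$-power but draws on a slightly larger toolbox.
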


\begin{proof}
	We prove \eqref{E:ESTIMATEFORANGLEHOLDERCONTINUITYOFPARTIALPSIPARTIALVORTANDPARTIALGRADENT} only for
	the first term on the LHS; the remaining terms on LHS~\eqref{E:ESTIMATEFORANGLEHOLDERCONTINUITYOFPARTIALPSIPARTIALVORTANDPARTIALGRADENT} 
	can be bounded using the same arguments.
	To proceed, we first use \eqref{E:CONTROLOFNULLGEODESICS}
	to deduce that 
	$$
	\frac{|\pmb{\partial} \vec{\Psi}(t,u,\upomega_{(1)}) - \pmb{\partial} \vec{\Psi}(t,u,\upomega_{(2)})|}
			{[\rgeo d_{\stgsphere}(\upomega_{(1)},\upomega_{(2)})]^{\updelta_0}}
	\lesssim 
	\|
		\pmb{\partial} \vec{\Psi}
	\|_{C_x^{0,\updelta_0}(\widetilde{\Sigma}_t)}.
$$
From this bound, the estimate \eqref{E:RGEOANDUBOUNDS} for $\rgeo$,
	and the inequality $\uplambda^{(1 - 8 \upepsilon_0) \updelta_0} 
	\leq \uplambda^{\updelta_0} \leq \uplambda^{\upepsilon_0}$
	(see \eqref{E:EPSILON0INEQUALITY}--\eqref{E:DELTA0DEF}),
we find, considering separately the cases $0 \leq \rgeo \leq 1$ and $1 \leq \rgeo$, 
that
$
\|
	\pmb{\partial} \vec{\Psi}
\|_{C_{\upomega}^{0,\updelta_0}(S_{t,u})}
\lesssim 
\uplambda^{\upepsilon_0}
\|
	\pmb{\partial} \vec{\Psi}
\|_{C_x^{0,\updelta_0}(\widetilde{\Sigma}_t)}
$.
From this bound and \eqref{E:RESCALEDSOLUTIONHOLDERESTIMATE},
we conclude the desired estimate \eqref{E:ESTIMATEFORANGLEHOLDERCONTINUITYOFPARTIALPSIPARTIALVORTANDPARTIALGRADENT}.

	To prove \eqref{E:ESTIMATEFORANGLEHOLDERCONTINUITYOFPSIVORTANDGRADENT},
	we note that Prop.\,\ref{P:TOPORDERENERGYESTIMATES} and Sobolev embedding $H^{\Sob}(\Sigma_t) \hookrightarrow C_x^{0,\updelta_0}(\widetilde{\Sigma}_t)$
	imply that the \emph{non-rescaled} solution variables $(\vec{\Psi},\vec{\vortrenormalized},\vec{\GradEnt})$
	are bounded in the norm
	$\| \cdot \|_{L_t^{\infty} C_x^{0,\updelta_0}(\widetilde{\mathcal{M}})}$ by $\lesssim 1$. It follows that the 
	rescaled solution variables on LHS~\eqref{E:ESTIMATEFORANGLEHOLDERCONTINUITYOFPSIVORTANDGRADENT}
	(as defined in Subsect.\,\ref{SS:RESCALEDSOLUTION} and under the conventions of Subsect.\,\ref{SS:NOMORELAMBDA})
	are bounded in the norm
	$\| \cdot \|_{L^{\infty}(\widetilde{\mathcal{M}})}$ by $\lesssim 1$
	and in the norm
	$\| \cdot \|_{L_t^{\infty} \dot{C}_x^{0,\updelta_0}(\widetilde{\mathcal{M}})}$ by $\lesssim \uplambda^{- \updelta_0}$.
	From these estimates and arguments similar to the ones given in the previous paragraph,
	we conclude \eqref{E:ESTIMATEFORANGLEHOLDERCONTINUITYOFPSIVORTANDGRADENT}.
\end{proof}

\subsection{\texorpdfstring{Estimates for transport equations along the integral curves of $\Lunit$ in H\"{o}lder spaces in the angular variables $\upomega$}{Estimates for transport equations along the integral curves of Lunit in Holder spaces in the angular variables upomega}}
\label{SS:ANGULARESTIAMTESFORTRANSPORTINHOLDER}
We now derive estimates for transport equations along the integral curves of $\Lunit$
with initial data and source terms that are H\"{o}lder-class in the geometric angular
variables $\upomega$.

	\begin{lemma}[Estimates for transport equations along the integral curves of $\Lunit$ in H\"{o}lder spaces with respect to $\upomega$]
		\label{L:TRANSPORTESTIMATESINANGULARHOLDERSPACES}
		Let $\widetilde{\mathcal{C}}_u \subset \widetilde{\mathcal{M}}$.
		Let $\mathfrak{F}$ be a smooth scalar-valued function on $\widetilde{\mathcal{C}}_u$ 
		and let $\mathring{\varphi}$ be a smooth scalar-valued function on $S_{[u]_+,u}$. 
		For $(t,\upomega) \in [[u]_+,\RescaledTboot] \times \mathbb{S}^2$, 
		let the scalar-valued function $\varphi$ be a smooth solution to the following inhomogeneous transport 
		equation with data given on $S_{[u]_+,u}$:
		\begin{subequations}
		\begin{align}
			\Lunit \varphi(t,u,\upomega)
			& = \mathfrak{F}(t,u,\upomega),
				\label{E:ANGULARINHOMOGENEOUSTRANSPORT} \\
			\varphi([u]_+,u,\upomega) & = \mathring{\varphi}(\upomega).
			\label{E:ANGULARDATAINHOMOGENEOUSTRANSPORT}
		\end{align}
		\end{subequations}
		Under the assumptions of Subsect.\,\ref{SS:ASSUMPTIONS}, the following estimate holds for $t \in [[u]_+,\RescaledTboot]$:
		\begin{align} \label{E:ANGULARHOLDERESTIMATEFORSOLUTIONTOINHOMOGENEOUSTRANSPORT}
			\| \varphi \|_{C_{\upomega}^{0,\updelta_0}(S_{t,u})}
			& \lesssim 
				\| \mathring{\varphi} \|_{C_{\upomega}^{0,\updelta_0}(S_{[u]_+,u})}
				+
				\int_{[u]_+}^t
					\| \mathfrak{F} \|_{C_{\upomega}^{0,\updelta_0}(S_{\uptau,u})}
				\, d \uptau.
		\end{align}
		Moreover,
		\begin{align} \label{E:ZERODATAANGULARHOLDERESTIMATEFORSOLUTIONTOINHOMOGENEOUSTRANSPORT}
			\left\| \int_{[u]_+}^t \mathfrak{F}(\uptau,u,\upomega) \, d \uptau \right\|_{C_{\upomega}^{0,\updelta_0}(S_{t,u})}
			& \lesssim 
				\int_{[u]_+}^t
					\| \mathfrak{F} \|_{C_{\upomega}^{0,\updelta_0}(S_{\uptau,u})}
				\, d \uptau.
		\end{align}
	\end{lemma}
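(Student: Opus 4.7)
The core observation is that relative to the geometric coordinate system $(t,u,\upomega)$ constructed in Subsect.\,\ref{SS:EIKONAL}, the vectorfield $\Lunit$ coincides with $\partial/\partial t$ at fixed $(u,\upomega)$. This follows from the normalization $\Lunit t = 1$ (see \eqref{E:NULLVECTORSAPPLIEDTOT}), the fact that $\Lunit$ is tangent to the level sets of $u$ (so $\Lunit u = 0$), and the construction, in which the angular coordinates $\upomega^A$ are declared constant along the integral curves of $\Lunit$ (i.e., the null geodesics $t \mapsto \Upsilon_{u;\upomega}(t)$). Consequently, the transport equation \eqref{E:ANGULARINHOMOGENEOUSTRANSPORT} reduces, at fixed $(u,\upomega)$, to an ODE in the parameter $t$, and the initial value problem \eqref{E:ANGULARINHOMOGENEOUSTRANSPORT}-\eqref{E:ANGULARDATAINHOMOGENEOUSTRANSPORT} is solved explicitly by
\begin{align*}
	\varphi(t,u,\upomega)
	& =
	\mathring{\varphi}(\upomega)
	+
	\int_{[u]_+}^t
		\mathfrak{F}(\tau,u,\upomega)
	\, d \tau.
\end{align*}

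To derive \eqref{E:ANGULARHOLDERESTIMATEFORSOLUTIONTOINHOMOGENEOUSTRANSPORT}, I would then exploit the fact that for scalar-valued functions (which are type $\binom{0}{0}$ tensors) the parallel transport operator from Subsect.\,\ref{SS:HOLDERNORMSINGEOMETRICANGULARVARIABLES} is trivial, so that $\xi^{\parallel}(t,u,\upomega_{(2)};\upomega_{(1)}) = \varphi(t,u,\upomega_{(2)})$ and the weight $\rgeo^{(m-n)}$ in \eqref{E:HOMOGENEOUSHOLDERNORMSINGEOMETRICANGULARVARIABLES} equals unity. Subtracting the above identity evaluated at $\upomega_{(1)}$ and $\upomega_{(2)}$ and dividing by $d_{\stgsphere}^{\updelta_0}(\upomega_{(1)},\upomega_{(2)})$ gives
\begin{align*}
	\frac{|\varphi(t,u,\upomega_{(1)}) - \varphi(t,u,\upomega_{(2)})|}{d_{\stgsphere}^{\updelta_0}(\upomega_{(1)},\upomega_{(2)})}
	\leq
	\| \mathring{\varphi} \|_{\dot{C}_{\upomega}^{0,\updelta_0}(S_{[u]_+,u})}
	+
	\int_{[u]_+}^t
		\| \mathfrak{F} \|_{\dot{C}_{\upomega}^{0,\updelta_0}(S_{\tau,u})}
	\, d \tau,
\end{align*}
and taking the supremum over pairs $\upomega_{(1)},\upomega_{(2)} \in \mathbb{S}^2$ with $0 < d_{\stgsphere}(\upomega_{(1)},\upomega_{(2)}) < \pi$ controls the seminorm part of $\| \varphi \|_{C_{\upomega}^{0,\updelta_0}(S_{t,u})}$. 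The $L_{\upomega}^{\infty}$ part is handled via a simpler argument, taking absolute values in the integral representation and applying Minkowski's inequality in $t$. Summing the two bounds and using \eqref{E:HOLDERNORMSINGEOMETRICANGULARVARIABLES} yields \eqref{E:ANGULARHOLDERESTIMATEFORSOLUTIONTOINHOMOGENEOUSTRANSPORT}.

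The second estimate \eqref{E:ZERODATAANGULARHOLDERESTIMATEFORSOLUTIONTOINHOMOGENEOUSTRANSPORT} is a special case of the first, obtained by applying \eqref{E:ANGULARHOLDERESTIMATEFORSOLUTIONTOINHOMOGENEOUSTRANSPORT} to the function $\varphi(t,u,\upomega) := \int_{[u]_+}^t \mathfrak{F}(\tau,u,\upomega) \, d\tau$, which solves \eqref{E:ANGULARINHOMOGENEOUSTRANSPORT} with vanishing initial data $\mathring{\varphi} \equiv 0$. I do not anticipate any serious obstacles: unlike in Lemma~\ref{L:TRANSPORTESTIMATESINHOLDERSPACES}, where we had to contend with the distortion of the Cartesian spatial distance along the flow map of $\Transport$ (requiring the use of Lemma~\ref{L:ESTIMATESFORTRANSPORTCHARACTERISTICS} and the bootstrap assumptions), here the angular variable $\upomega$ is by construction constant along the integral curves of $\Lunit$, so the round sphere distance $d_{\stgsphere}(\upomega_{(1)},\upomega_{(2)})$ is exactly preserved and no Gronwall argument or appeal to the assumptions of Subsect.\,\ref{SS:ASSUMPTIONS} is needed. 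The only point that requires care is the degeneracy at $t = u$ when $u \geq 0$, where $S_{u,u}$ collapses to the cone-tip axis point $\Tranchar_{\bf{z}}(u)$; in that case $\mathring{\varphi}$ is just a constant and the $\mathring{\varphi}$-term in \eqref{E:ANGULARHOLDERESTIMATEFORSOLUTIONTOINHOMOGENEOUSTRANSPORT} degenerates to the absolute value of that constant, which is compatible with the stated inequality.
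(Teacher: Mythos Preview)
Your proposal is correct and takes essentially the same approach as the paper, which simply notes that the lemma is a straightforward consequence of the fundamental theorem of calculus together with the fact that the angular coordinates $\upomega^A$ are constant along the integral curves of $\Lunit = \partial/\partial t$. You have supplied more detail than the paper does, but the underlying argument is identical.
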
	
	
	\begin{proof}
		The lemma is a straightforward consequence of the fundamental theorem of calculus
		and the fact that the angular geometric coordinate functions $\lbrace \upomega^A \rbrace_{A=1,2}$
		are constant along the integral curves of $\Lunit = \frac{\partial}{\partial t}$.
	\end{proof}

\subsection{\texorpdfstring{Calderon--Zygmund- and Schauder-type Hodge estimates on $S_{t,u}$}{Calderon--Zygmund- and Schauder-type Hodge estimates on S t,u}}
\label{SS:HODGESTU}
Some of the tensorfields under study are solutions to Hodge systems on $S_{t,u}$.
To control them, we will use the Calderon--Zygmund and Schauder-type estimates provided by the 
following lemma.

\begin{lemma}[Calderon--Zygmund- and Schauder-type Hodge estimates on $S_{t,u}$]
\label{L:CALDERONZYGMUNDHODGESTU}
Under the assumptions of Subsect.\,\ref{SS:ASSUMPTIONS} and the estimates of Prop.\,\ref{P:ESTIMATESFORFLUIDVARIABLES},
if $\upxi$ is an $S_{t,u}$-tangent one-form and $2 \leq \leb \leq p$ (where $p$ is as in \eqref{E:BOUNDSONLEBESGUEEXPONENTP}),
then
\begin{align} \label{E:ONEFORMSTUCALDERONZYGMUNDHODGEESTIMATES}
	\| \angD \upxi \|_{L_{\gsphere}^{\leb}(S_{t,u})}
	+
	\| \rgeo^{-1} \upxi \|_{L_{\gsphere}^{\leb}(S_{t,u})}
	& \lesssim
	\| \angdiv \upxi \|_{L_{\gsphere}^{\leb}(S_{t,u})}
	+
	\| \angcurl \upxi \|_{L_{\gsphere}^{\leb}(S_{t,u})}.
\end{align}
Similarly, if $\upxi$ is an $S_{t,u}$-tangent type $\binom{0}{2}$ symmetric trace-free tensorfield, 
then
\begin{align} \label{E:SYMMETRICTRACEFREESTUCALDERONZYGMUNDHODGEESTIMATES}
	\| \angD \upxi \|_{L_{\gsphere}^{\leb}(S_{t,u})}
	+
	\| \rgeo^{-1} \upxi \|_{L_{\gsphere}^{\leb}(S_{t,u})}
	& \lesssim
	\| \angdiv \upxi \|_{L_{\gsphere}^{\leb}(S_{t,u})}.
\end{align}

Moreover, let $\upxi$ be an $S_{t,u}$-tangent type $\binom{0}{2}$ symmetric trace-free tensorfield,
let $\mathfrak{F}_{(1)}$ be a scalar function,
let $\mathfrak{F}_{(2)}$ be an $S_{t,u}$-tangent type $\binom{0}{2}$ symmetric tensorfield,
and let $\mathfrak{G}$ be an $S_{t,u}$-tangent one-form.
Assume that\footnote{On RHS~\eqref{E:ANGDIVSYMMETRICTRACEFREESTUTENSORFIELD}, 
we made a minor change compared to \cite{qW2017}*{Proposition~5.9}:
we allowed for the presence of the $\mathfrak{F}_{(2)}$ term,
in particular so that we can handle the second term on RHS~\eqref{E:DIVDIVTRFREECHISCHEMATIC}. 
We will now explain why the estimate \eqref{E:HODGEESTIMATENODERIVATIVESONLHSANGDIVSYMMETRICTRACEFREESTUTENSORFIELD}
holds in the presence of this new term.
First, we can split the
$S_{t,u}$-tangent type $\binom{0}{2}$ symmetric tensorfield
$\mathfrak{F}_{(2)}$ into its trace-free and pure-trace parts.
We then bring the trace-free part over to the left-hand side of the equation
(so that the new LHS is of the form $\angdiv (\upxi - \hat{\mathfrak{F}}_{(2)})$),
while we absorb the pure-trace part of $\mathfrak{F}_{(2)}$ into the
$\mathfrak{F}_{(1)}$ term. This allows one to reduce the
proof of \eqref{E:HODGEESTIMATENODERIVATIVESONLHSANGDIVSYMMETRICTRACEFREESTUTENSORFIELD}
to the case in which the $\mathfrak{F}_{(2)}$ term on RHS~\eqref{E:ANGDIVSYMMETRICTRACEFREESTUTENSORFIELD}
is absent, as was assumed in \cite{qW2017}*{Proposition~5.9}.
\label{FN:CZERRORFIX}}
\begin{align} \label{E:ANGDIVSYMMETRICTRACEFREESTUTENSORFIELD}
	\angdiv \upxi
	& = 
		\angD \mathfrak{F}_{(1)}
		+
		\angdiv \mathfrak{F}_{(2)}
		+
		\mathfrak{G}.
\end{align}
Let $2 < \leb < \infty$, and let $\leb'$ be defined by $\frac{1}{2} + \frac{1}{\leb} = \frac{1}{\leb'}$.
Then the following estimate holds:
\begin{align} \label{E:HODGEESTIMATENODERIVATIVESONLHSANGDIVSYMMETRICTRACEFREESTUTENSORFIELD}
	\| \upxi \|_{L_{\gsphere}^{\leb}(S_{t,u})}
	& \lesssim
		\sum_{i=1,2}
		\| \mathfrak{F}_{(i)} \|_{L_{\gsphere}^{\leb}(S_{t,u})}
		+
		\| \mathfrak{G} \|_{L_{\gsphere}^{\leb'}(S_{t,u})}.
\end{align}
In addition, if $2 < \leb \leq p$ (where $p$ is as in Subsect.\,\ref{SS:MAINESTIMATESFOREIKONALFUNCTIONQUANTITIES}),
then
\begin{align}	 \label{E:ANGULARHOLDERHODGEESTIMATENODERIVATIVESONLHSANGDIVSYMMETRICTRACEFREESTUTENSORFIELD}
	\| \upxi \|_{C_{\upomega}^{0,1 - \frac{2}{\leb}}(S_{t,u})}
	& \lesssim
		\sum_{i=1,2}
		\| \mathfrak{F}_{(i)} \|_{C_{\upomega}^{0,1 - \frac{2}{\leb}}}
		+
		\| \rgeo \mathfrak{G} \|_{L_{\upomega}^{\leb}(S_{t,u})}.
\end{align}

Similarly, assume that $\upxi$, $\mathfrak{F}_{(1)}$, and $\mathfrak{F}_{(2)}$ are $S_{t,u}$-tangent one-forms
and $\mathfrak{G}_{(1)}$, and $\mathfrak{G}_{(2)}$
are scalar functions such that $\upxi$ satisfies the following Hodge system:
\begin{subequations}
\begin{align} \label{E:ANGDIVSTUONEFORM}
	\angdiv \upxi
	& = \angdiv \mathfrak{F}_{(1)}
		+
		\mathfrak{G}_{(1)},
			\\
	\angcurl \upxi
	& = \angcurl \mathfrak{F}_{(2)}
		+
		\mathfrak{G}_{(2)}.
		\label{E:ANGCURLSTUONEFORM}
\end{align}
\end{subequations}
Then under the same assumptions on $\leb$ and $\leb'$ stated in the previous paragraph,
$\upxi$ satisfies the estimates 
\eqref{E:HODGEESTIMATENODERIVATIVESONLHSANGDIVSYMMETRICTRACEFREESTUTENSORFIELD}--\eqref{E:ANGULARHOLDERHODGEESTIMATENODERIVATIVESONLHSANGDIVSYMMETRICTRACEFREESTUTENSORFIELD}
with $\mathfrak{G} := (\mathfrak{G}_{(1)}, \mathfrak{G}_{(2)})$.

Finally, assume that 
$\upxi$,
$\mathfrak{F}=(\mathfrak{F}_{(1)}, \mathfrak{F}_{(2)})$,
and
$\mathfrak{G}$
are $S_{t,u}$ tensorfields of the type from the previous two paragraphs
(in particular satisfying \eqref{E:ANGDIVSYMMETRICTRACEFREESTUTENSORFIELD} 
or
\eqref{E:ANGDIVSTUONEFORM}--\eqref{E:ANGCURLSTUONEFORM}).
Assume that $\mathfrak{F}$ is the $S_{t,u}$-projection of a spacetime tensorfield $\widetilde{\mathfrak{F}}$ 
or is a contraction of a spacetime tensorfield $\widetilde{\mathfrak{F}}$ against $\Lunit$, $\uLunit$, or $\spherenormal$.
If $\leb > 2$, 
$1 \leq m < \infty$,
and $\updelta' > 0$ is sufficiently small, 
then the following estimates hold,
where $\vec{\widetilde{\mathfrak{F}}}$ denotes the array of (scalar) Cartesian component functions
of $\widetilde{\mathfrak{F}}$:
\begin{align} \label{E:LINFTYHODGEESTIMATENODERIVATIVESONLHSINVOLVINGCARTESIANCOMPONENTSONRHS}
	\| \upxi \|_{L_{\upomega}^{\infty}(S_{t,u})}
	& \lesssim
		\left\| 
			\upnu^{\updelta'} P_{\upnu} \vec{\widetilde{\mathfrak{F}}} 
		\right\|_{\ell_{\upnu}^m L_{\upomega}^{\infty}(S_{t,u})}
		+
		\left\| 
			\vec{\widetilde{\mathfrak{F}}} 
		\right\|_{L_{\upomega}^{\infty}(S_{t,u})}
		+
		\| \rgeo^{1 - \frac{2}{\leb}} \mathfrak{G} \|_{L_{\gsphere}^{\leb}(S_{t,u})}.
\end{align}

\end{lemma}

\begin{proof}[Discussion of proof]
		Aside from \eqref{E:ANGULARHOLDERHODGEESTIMATENODERIVATIVESONLHSANGDIVSYMMETRICTRACEFREESTUTENSORFIELD},
		these estimates are a restatement of 
		\cite{qW2017}*{Lemma~5.8},
		\cite{qW2017}*{Proposition~5.9},
		and
		\cite{qW2017}*{Proposition~5.10}.
		Thanks to the bootstrap assumptions and the estimates of Prop.\,\ref{P:ESTIMATESFORFLUIDVARIABLES}, 
		the estimates can be proved using the same arguments given in
		\cite{qW2014}*{Lemma~2.18},
		\cite{sKiR2005c}*{Proposition~6.20},
		and
		\cite{qW2014}*{Proposition~3.5}.
		
		The elliptic Schauder-type estimate 
		\eqref{E:ANGULARHOLDERHODGEESTIMATENODERIVATIVESONLHSANGDIVSYMMETRICTRACEFREESTUTENSORFIELD}
		for Hodge systems
		can be proved using a perturbative argument, 
		that is, using the (standard) fact that
		it holds on $\mathbb{S}^2$ equipped with the standard round metric $\stgsphere$,
		and then obtaining the desired estimate perturbatively,
		with the help of the bootstrap assumptions
		\eqref{E:BOOTSTRAPMETRICAPPROXIMATELYROUND}--\eqref{E:DERIVATIVESBOOTSTRAPMETRICAPPROXIMATELYROUND}
		(which imply that $\rgeo^{-2} \gsphere$ is close to $\stgsphere$)
		and the Morrey-type estimate \eqref{E:EUCLIDEANFORMMORREYONSTU}.
		Here we will give a detailed proof of 
		\eqref{E:ANGULARHOLDERHODGEESTIMATENODERIVATIVESONLHSANGDIVSYMMETRICTRACEFREESTUTENSORFIELD}
		for one-forms $\upxi$ that solve the system
		\eqref{E:ANGDIVSTUONEFORM}--\eqref{E:ANGCURLSTUONEFORM}.
		The estimate \eqref{E:ANGULARHOLDERHODGEESTIMATENODERIVATIVESONLHSANGDIVSYMMETRICTRACEFREESTUTENSORFIELD}
		for $S_{t,u}$-tangent type $\binom{0}{2}$ symmetric trace-free tensorfields $\upxi$
		that solve \eqref{E:ANGDIVSYMMETRICTRACEFREESTUTENSORFIELD}
		can be proved using similar arguments, and we omit those details.

		To proceed, we let $\angD$, $\gsphere$, $\Gamma$, 
		${^{(0)}\angD}$, $\stgsphere$, and ${^{(0)}\Gamma}$
		be as in our proof of \eqref{E:EUCLIDEANFORMMORREYONSTU}.
		Let $\angdiv \upxi$ denote the divergence of $\upxi$
		with respect to $\gsphere$, and let
		${^{(0)}\mbox{\upshape{div} $\mkern-17mu /$\,} \upxi}$
		denote the divergence of $\upxi$ with respect to $\stgsphere$.
		Let $\upepsilon \mkern-9mu /\,$
		denote the type $\binom{0}{2}$ volume form of $\gsphere$,
		let ${^{(0)}\upepsilon \mkern-9mu /\,}$
		denote the type $\binom{0}{2}$ volume form of $\stgsphere$,
		and let ${\upepsilon \mkern-9mu /\,}^{\# \#}$
		denote the type $\binom{2}{0}$ volume form of $\gsphere$,
		i.e., the dual of 
		${\upepsilon \mkern-9mu /\,}$ with respect to $\gsphere$.
		Then by \eqref{E:ANGDIVSTUONEFORM}--\eqref{E:ANGCURLSTUONEFORM},
		$\upxi$ satisfies the following equations, 
		schematically depicted relative to the geometric
		angular coordinates, where
		$\mbox{Id}$ denotes the type $\binom{1}{1}$ identity
		tensorfield,
		$[\stgsphere \cdot (\rgeo^2 \gsphere^{-1})]_{\ B}^A := \stgsphere_{BC} (\rgeo^2 \gsphere^{-1})^{AC}$,
		and
		$[{^{(0)}\upepsilon \mkern-9mu /\,}
			\cdot
			(\rgeo^2 {\upepsilon \mkern-9mu /\,}^{\# \#})]_{\ B}^A 
			:= 
			{^{(0)}\upepsilon \mkern-9mu /\,}_{BC} (\rgeo^2 {\upepsilon \mkern-9mu /\,}^{\# \#})^{AC}$
		(and note that if $\gsphere = \rgeo^2 \stgsphere$, 
		then
		$\stgsphere \cdot (\rgeo^2 \gsphere^{-1}) = \mbox{Id}$
		and
		${^{(0)}\upepsilon \mkern-9mu /\,}
			\cdot
			(\rgeo^2 {\upepsilon \mkern-9mu /\,}^{\# \#})
			=
			- \mbox{Id}
		$):
		\begin{align} \label{E:ROUNDSPHEREANGDIVSTUONEFORM}
		{^{(0)}\mbox{\upshape{div} $\mkern-17mu /$\,} \upxi}
		& = 
		{^{(0)}\mbox{\upshape{div} $\mkern-17mu /$\,}}
		\left\lbrace
			[\mbox{Id} - \stgsphere \cdot (\rgeo^2 \gsphere^{-1})]
			\cdot
			\upxi
		\right\rbrace
		+
		{^{(0)}\mbox{\upshape{div} $\mkern-17mu /$\,}}
		\left\lbrace
			\stgsphere 
			\cdot 
			(\rgeo^2 \gsphere^{-1})
			\cdot
			\mathfrak{F}_{(1)}
		\right\rbrace
				\\
		& \ \
			+
			\rgeo^2 \mathfrak{G}_{(1)}
			+
			({^{(0)}\Gamma} - \Gamma)
			\cdot
			(\rgeo^2 \gsphere^{-1})
			\cdot
			\mathfrak{F}_{(1)}
			+
			({^{(0)}\Gamma} - \Gamma)
			\cdot
			(\rgeo^2 \gsphere^{-1})
			\cdot
			\upxi,
			\notag 
			\\
	{^{(0)}\mbox{\upshape{curl} $\mkern-17mu /$\,} \upxi}
	& = 
		{^{(0)}\mbox{\upshape{curl} $\mkern-17mu /$\,}}
		\left\lbrace
			[\mbox{Id} 
			+ 
			{ ^{(0)}\upepsilon \mkern-9mu /\,}
			\cdot
			(\rgeo^2 {\upepsilon \mkern-9mu /\,}^{\# \#})]
			\cdot
			\upxi
		\right\rbrace
			-
			{^{(0)}\mbox{\upshape{curl} $\mkern-17mu /$\,}}
			\left\lbrace
			{ ^{(0)} \upepsilon \mkern-9mu /\,}
			\cdot
			(\rgeo^2 {\upepsilon \mkern-9mu /\,}^{\# \#})
			\cdot
			\mathfrak{F}_{(2)}
		\right\rbrace
		\label{E:ROUNDSPHEREANGCURLSTUONEFORM} \\
	& \ \
		+
		\rgeo^2 \mathfrak{G}_{(2)}
			+
			({^{(0)}\Gamma} - \Gamma)
			\cdot
			(\rgeo^2  {\upepsilon \mkern-9mu /\,}^{\# \#})
			\cdot
			\mathfrak{F}_{(2)}
			+
			({^{(0)}\Gamma} - \Gamma)
			\cdot
			(\rgeo^2 {\upepsilon \mkern-9mu /\,}^{\# \#})
			\cdot
			\upxi.
		\notag
\end{align}
	We view \eqref{E:ROUNDSPHEREANGDIVSTUONEFORM}--\eqref{E:ROUNDSPHEREANGCURLSTUONEFORM}
	as a div-curl system on the standard round sphere.
	To control the solutions, we will use the following simple product-type estimate, 
which can easily be seen to be valid for $S_{t,u}$-tangent tensorfields $\upxi_{(1)}$ and $\upxi_{(2)}$,
where ``$\cdot$'' schematically denotes tensor products and natural contractions:
\begin{align} \label{E:SIMPLEHOLDERPRODUCESTIMATEONSPHERES}
\| \upxi_{(1)} \cdot \upxi_{(2)}\|_{C_{\upomega}^{0,\updelta_0}(S_{t,u})}
& \lesssim
\| \upxi_{(1)} \|_{L_{\upomega}^{\infty}(S_{t,u})}
\| \upxi_{(2)} \|_{C_{\upomega}^{0,\updelta_0}(S_{t,u})}
+
\| \upxi_{(2)} \|_{L_{\upomega}^{\infty}(S_{t,u})}
\| \upxi_{(1)} \|_{C_{\upomega}^{0,\updelta_0}(S_{t,u})}.
\end{align}
	From \eqref{E:ROUNDSPHEREANGDIVSTUONEFORM}--\eqref{E:ROUNDSPHEREANGCURLSTUONEFORM}
	and the fact that
		the analog of \eqref{E:ANGULARHOLDERHODGEESTIMATENODERIVATIVESONLHSANGDIVSYMMETRICTRACEFREESTUTENSORFIELD}
		holds on the standard round sphere,
		we have, in view of Def.\,\ref{D:HOLDERNORMSINGEOMETRICANGULARVARIABLES} and \eqref{E:STNORMVSGSPHERENORMCOMPARISONWITHPOWERSOFR},
		\eqref{E:SIMPLEHOLDERPRODUCESTIMATEONSPHERES},
		the Morrey estimate \eqref{E:EUCLIDEANFORMMORREYONSTU} for the standard round sphere,
		and \eqref{E:BOOTSTRAPMETRICAPPROXIMATELYROUND},
		the following estimate:
		\begin{align}	 \label{E:ROUNDSPHEREANGULARHOLDERHODGEESTIMATENODERIVATIVESONLHSANGDIVSYMMETRICTRACEFREESTUTENSORFIELD}
			\| \rgeo \upxi \|_{C_{\upomega}^{0,1 - \frac{2}{\leb}}(S_{t,u})}
			& \lesssim
				\sum_{A,B,C=1,2}
				\left\| 
				\frac{\partial}{\partial \upomega^C}
					\left[\mbox{Id}_{\ B}^A - \stgsphere_{BD} \cdot (\rgeo^2 \gsphere^{-1})^{AD} \right]
				\right\|_{L_{\upomega}^{\leb}(S_{t,u})}
				\| \rgeo \upxi \|_{C_{\upomega}^{0,1 - \frac{2}{\leb}}(S_{t,u})}
				\\
		& \ \
				+
				\sum_{A,B=1,2}
				\left\| 
					\mbox{Id}_{\ B}^A - \stgsphere_{BC} \cdot (\rgeo^2 \gsphere^{-1})^{AC} 
				\right\|_{L_{\upomega}^{\infty}(S_{t,u})}
				\| \rgeo \upxi \|_{C_{\upomega}^{0,1 - \frac{2}{\leb}}(S_{t,u})}
				\notag \\
		& \ \
				+
				\sum_{A,B,C=1,2}
				\left\| 
				\frac{\partial}{\partial \upomega^C}
					\left[
						\mbox{Id}_{\ B}^A 
					+ 
					{ ^{(0)}\upepsilon \mkern-9mu /\,}_{BD} 
					(\rgeo^2 {\upepsilon \mkern-9mu /\,}^{\# \#})^{AD} 
					\right]
				\right\|_{L_{\upomega}^{\leb}(S_{t,u})}
				\| \rgeo \upxi \|_{C_{\upomega}^{0,1 - \frac{2}{\leb}}(S_{t,u})}
				\notag \\
		& \ \
				+
				\sum_{A,B=1,2}
				\left\| 
					\mbox{Id}_{\ B}^A 
					+ 
					{ ^{(0)}\upepsilon \mkern-9mu /\,}_{BC} 
					(\rgeo^2 {\upepsilon \mkern-9mu /\,}^{\# \#})^{AC} 
				\right\|_{L_{\upomega}^{\infty}(S_{t,u})}
				\| \rgeo \upxi \|_{C_{\upomega}^{0,1 - \frac{2}{\leb}}(S_{t,u})}
				\notag \\
		& \ \
		+
		\sum_{A,B,C=1,2}
		\left\| 
			{^{(0)}\Gamma}_{A \ B}^{\ C}
			-
			\Gamma_{A \ B}^{\ C} 
		\right\|_{L_{\upomega}^{\leb}(S_{t,u})}
		\| |\rgeo^2 \gsphere^{-1}|_{\stgsphere}  \|_{L_{\upomega}^{\infty}(S_{t,u})}
		\| \rgeo \upxi \|_{C_{\upomega}^{0,1 - \frac{2}{\leb}}(S_{t,u})}
			\notag \\
		& \ \
		+
		\sum_{A,B,C=1,2}
		\left\| 
			{^{(0)}\Gamma}_{A \ B}^{\ C}
			-
			\Gamma_{A \ B}^{\ C} 
		\right\|_{L_{\upomega}^{\leb}(S_{t,u})}
		\| |\rgeo^2 {\upepsilon \mkern-9mu /\,}^{\# \#}|_{\stgsphere}  \|_{L_{\upomega}^{\infty}(S_{t,u})}
		\| \rgeo \upxi \|_{C_{\upomega}^{0,1 - \frac{2}{\leb}}(S_{t,u})}
		\notag \\
		& \ \
			+
			\| |\rgeo^2 \gsphere^{-1}|_{\stgsphere}  \|_{L_{\upomega}^{\infty}(S_{t,u})}
			\| \rgeo \mathfrak{F}_{(1)} \|_{C_{\upomega}^{0,1 - \frac{2}{\leb}}(S_{t,u})}
			+
			\| |\rgeo^2 {\upepsilon \mkern-9mu /\,}^{\# \#}|_{\stgsphere}  \|_{L_{\upomega}^{\infty}(S_{t,u})}
			\| \rgeo \mathfrak{F}_{(2)} \|_{C_{\upomega}^{0,1 - \frac{2}{\leb}}(S_{t,u})}
				\notag \\
		& \ \
		+
		\sum_{A,B,C=1,2}
		\left\| 
			{^{(0)}\Gamma}_{A \ B}^{\ C}
			-
			\Gamma_{A \ B}^{\ C}  
		\right\|_{L_{\upomega}^{\leb}(S_{t,u})}
		\| |\rgeo^2 \gsphere^{-1}|_{\stgsphere}  \|_{L_{\upomega}^{\infty}(S_{t,u})}
		\| \rgeo \mathfrak{F}_{(1)} \|_{C_{\upomega}^{0,1 - \frac{2}{\leb}}(S_{t,u})}
			\notag \\
		& \ \
		+
		\sum_{A,B,C=1,2}
		\left\| 
			{^{(0)}\Gamma}_{A \ B}^{\ C}
			-
			\Gamma_{A \ B}^{\ C} 
		\right\|_{L_{\upomega}^{\leb}(S_{t,u})}
		\| |\rgeo^2 {\upepsilon \mkern-9mu /\,}^{\# \#}|_{\stgsphere}  \|_{L_{\upomega}^{\infty}(S_{t,u})}
		\| \rgeo \mathfrak{F}_{(2)} \|_{C_{\upomega}^{0,1 - \frac{2}{\leb}}(S_{t,u})}
		\notag
			\\
		& \ \
			+
			\sum_{i=1,2}
			\| \rgeo^2 \mathfrak{G}_{(i)} \|_{L_{\upomega}^{\leb}(S_{t,u})}.
			\notag
		\end{align}
		Using 
		\eqref{E:ROUNDSPHEREANGULARHOLDERHODGEESTIMATENODERIVATIVESONLHSANGDIVSYMMETRICTRACEFREESTUTENSORFIELD}
		and
		\eqref{E:BOOTSTRAPMETRICAPPROXIMATELYROUND}--\eqref{E:DERIVATIVESBOOTSTRAPMETRICAPPROXIMATELYROUND},
		we deduce that
		\begin{align}	 
		\label{E:SECONDROUNDSPHEREANGULARHOLDERHODGEESTIMATENODERIVATIVESONLHSANGDIVSYMMETRICTRACEFREESTUTENSORFIELD}
			\| \rgeo \upxi \|_{C_{\upomega}^{0,1 - \frac{2}{\leb}}(S_{t,u})}
			& \lesssim
				\uplambda^{- \upepsilon_0}
				\| \rgeo \upxi \|_{C_{\upomega}^{0,1 - \frac{2}{\leb}}(S_{t,u})}
			+
			\sum_{i=1,2}
			\| \rgeo \mathfrak{F}_{(i)} \|_{C_{\upomega}^{0,1 - \frac{2}{\leb}}(S_{t,u})}
				+
			\sum_{i=1,2}
			\| \rgeo^2 \mathfrak{G}_{(i)} \|_{L_{\upomega}^{\leb}(S_{t,u})}.
		\end{align}
		From \eqref{E:SECONDROUNDSPHEREANGULARHOLDERHODGEESTIMATENODERIVATIVESONLHSANGDIVSYMMETRICTRACEFREESTUTENSORFIELD}, 
		we see that if $\uplambda$ is sufficiently large,
		then we can absorb the first term on 
		RHS~\eqref{E:SECONDROUNDSPHEREANGULARHOLDERHODGEESTIMATENODERIVATIVESONLHSANGDIVSYMMETRICTRACEFREESTUTENSORFIELD}
		back into the left, at the expense of doubling the (implicit) constants on the RHS.
		We have therefore proved \eqref{E:ANGULARHOLDERHODGEESTIMATENODERIVATIVESONLHSANGDIVSYMMETRICTRACEFREESTUTENSORFIELD}
		for one-forms $\upxi$ that solve the system
		\eqref{E:ANGDIVSTUONEFORM}--\eqref{E:ANGCURLSTUONEFORM}.

		\end{proof}

\subsection{\texorpdfstring{Proof of Proposition~\ref{P:MAINESTIMATESFOREIKONALFUNCTIONQUANTITIES}}{Proof of Proposition ref P:MAINESTIMATESFOREIKONALFUNCTIONQUANTITIES}}
\label{SS:PROOFOFPROPMAINESTIMATESFOREIKONALFUNCTIONQUANTITIES}
Armed with the previous results of Sect.\,\ref{S:ESTIMATESFOREIKONALFUNCTION},
we are now ready to prove Prop.\,\ref{P:MAINESTIMATESFOREIKONALFUNCTIONQUANTITIES}.
Let us make some preliminary remarks.
We mainly focus on estimating the terms that are new compared to \cite{qW2017},
typically referring the reader to the relevant spots in \cite{qW2017}
for terms that have already been handled. When we refer to \cite{qW2017} for proof details,
we implicitly mean that those details can involve the
results of Prop.\,\ref{P:INITIALFOLIATION},
Lemma~\ref{L:INITIALCONDITIONSTIEDTOEIKONAL},
the inequalities proved in Subsect.\,\ref{SS:ANALYTICTOOLS}, 
and Prop.\,\ref{P:ESTIMATESFORFLUIDVARIABLES},
which subsume results derived in \cite{qW2017}.
The arguments given in \cite{qW2017} often also involve
the bootstrap assumptions of Subsect.\,\ref{SS:ASSUMPTIONS},
which subsume the bootstrap assumptions made in \cite{qW2017}.
We sometimes silently use the results of Prop.\,\ref{P:INITIALFOLIATION}
and
Lemma~\ref{L:INITIALCONDITIONSTIEDTOEIKONAL},
which concern estimates for the initial data of various quantities.
We also stress that the order in which we derive the estimates is important, 
though we do not always make this explicit.
Moreover, throughout the proof, we silently use the simple bound $\rgeo(\uptau,u)/\rgeo(t,u) \lesssim 1$ for $\uptau \leq t$.
Finally, we highlight that the factors of $\gensmoothfunction_{(\vec{\Lunit})}$ appearing on the RHSs
of the equations of Prop.\,\ref{P:PDESMODIFIEDACOUSTICALQUANTITIES} are, 
by virtue of the bootstrap assumptions, bounded in magnitude by $\lesssim 1$.
Therefore, these factors of $\gensmoothfunction_{(\vec{\Lunit})}$ are not important for the overwhelming majority of our estimates, 
and we typically do not even mention them in our discussion below.

\begin{remark}
	In the PDEs that we estimate below,
	all of the terms that are new compared to \cite{qW2017}
	are easy to identify: they all are multiplied by $\uplambda^{-1}$.
\end{remark}

\subsubsection{Proof of \eqref{E:STUVOLUMEFORMCOMPARISONWITHUNITROUNDMETRICVOLUMEFORM}--\eqref{E:NULLLAPSECLOSETOUNITY}}
Based on the transport equations \eqref{E:EVOLUTIONNULLAPSEUSEEULER} and \eqref{E:LUNITVOLUMEFORMRGEOTOMINUSTWORESCALED}
and Lemma~\ref{L:TRANSPORT}, the proof of \cite{qW2017}*{Lemma~5.4} goes through verbatim.

\subsubsection{Proof of \eqref{E:LINFINITYESTIMATESFORRECTANGULARSPATIALCOMPONENTSOFL} and \eqref{E:HOLDERESTIMATESFORF1TYPEFACTOR}}
\label{SSS:INTERIORREGEIONHOLDERESTIMATESFORF1TYPEFACTOR}
Throughout, we will use the simple product-type estimate
\eqref{E:SIMPLEHOLDERPRODUCESTIMATEONSPHERES}.
We will also use the simple estimate
$\| \gensmoothfunction \circ \vec{\varphi} \|_{C_{\upomega}^{0,\updelta_0}(S_{t,u})}
\lesssim
1
+
\| \vec{\varphi} \|_{C_{\upomega}^{0,\updelta_0}(S_{t,u})}
$,
which is valid for scalar functions $\gensmoothfunction$ of array-valued functions $\vec{\varphi}$ on $S_{t,u}$
whenever $\gensmoothfunction$ is smooth on an open set containing
the image set $\vec{\varphi}(S_{t,u})$.

We first prove \eqref{E:HOLDERESTIMATESFORF1TYPEFACTOR}. To proceed,
we note that from the bootstrap assumptions, it easily follows that 
$\| \gensmoothfunction_{(\vec{\Lunit})} \|_{L^{\infty}(\widetilde{\mathcal{M}})} \lesssim 1$,
$\| \vec{\Psi} \|_{L^{\infty}(\widetilde{\mathcal{M}})} \lesssim 1$,
and $\| \vec{\Lunit} \|_{L^{\infty}(\widetilde{\mathcal{M}})} \lesssim 1$.
From these bounds, the estimates mentioned in the previous paragraph,
and \eqref{E:ESTIMATEFORANGLEHOLDERCONTINUITYOFPSIVORTANDGRADENT},
we see that
$\| \gensmoothfunction_{(\vec{\Lunit})} \|_{C_{\upomega}^{0,\updelta_0}(S_{t,u})}
\lesssim
1
+
\| \vec{\Psi} \|_{C_{\upomega}^{0,\updelta_0}(S_{t,u})}
+
\| \vec{\Lunit} \|_{C_{\upomega}^{0,\updelta_0}(S_{t,u})}
\lesssim
1
+
\| \vec{\Lunit} \|_{C_{\upomega}^{0,\updelta_0}(S_{t,u})}
$.
Thus, to prove \eqref{E:HOLDERESTIMATESFORF1TYPEFACTOR},
it suffices to show that for $u \in [-\frac{4}{5}\RescaledTboot,\RescaledTboot]$ and $t \in [[u]_+,\RescaledTboot]$, 
we have $\| \vec{\Lunit} \|_{C_{\upomega}^{0,\updelta_0}(S_{t,u})} \lesssim 1$.
To this end, we first note that Lemma~\ref{L:LUNITIALONGCONETIPAXISISC1INANGLEVARIABLES}
and the bootstrap assumptions imply that for $u \in [-\frac{4}{5}\RescaledTboot,\RescaledTboot]$, 
we have the following estimate:
$
\|
	\vec{\Lunit}
\|_{C_{\upomega}^{0,\updelta_0}(S_{[u]_+,u})}
\lesssim 1
$
(in fact, \eqref{E:RESCALEDGEOMETRICCOORDINATEVECTORFIELDALONGCONETIPAXISSTAYSCLOSETOORIGINVALUES}--\eqref{E:ANGULARDERIVATIVEOFNORMALVECTORATORIGIN} 
imply the stronger bound 
$
\|
	\vec{\Lunit}
\|_{C_{\upomega}^{0,1}(S_{u,u})}
\lesssim 1
$ for $u \in [0,\RescaledTboot]$, whose full strength we do not need here).
From this ``initial data bound,'' 
the first transport equation in \eqref{E:LUNITIANDNORMALITRANSPORT},
the estimates mentioned in the previous paragraph,
the estimate $\| \gensmoothfunction_{(\vec{\Lunit})} \|_{C_{\upomega}^{0,\updelta_0}(S_{t,u})} 
\lesssim
1
+
\| \vec{\Lunit} \|_{C_{\upomega}^{0,\updelta_0}(S_{t,u})}$,
the estimate
$
\int_{[u]_+}^t
	\|
		\pmb{\partial} \vec{\Psi}
	\|_{C_{\upomega}^{0,\updelta_0}(S_{\uptau,u})}
\, d \uptau
\lesssim \uplambda^{-7 \upepsilon_0}
$
(which follows from \eqref{E:RESCALEDBOOTBOUNDS} and \eqref{E:ESTIMATEFORANGLEHOLDERCONTINUITYOFPARTIALPSIPARTIALVORTANDPARTIALGRADENT}),
and inequality \eqref{E:ANGULARHOLDERESTIMATEFORSOLUTIONTOINHOMOGENEOUSTRANSPORT},
we deduce that the following bound holds for $u \in [-\frac{4}{5}\RescaledTboot,\RescaledTboot]$ and $t \in [[u]_+,\RescaledTboot]$,
where we recall that $[u]_+ = \max \lbrace 0,u \rbrace$ is the minimum value of $t$
along $\widetilde{\mathcal{C}}_u$:
\begin{align} \label{E:GRONWALLREADYINTERIORREGEIONHOLDERESTIMATESFORF1TYPEFACTOR}
\|
	\vec{\Lunit}
\|_{C_{\upomega}^{0,\updelta_0}(S_{t,u})}
& \lesssim	
\|
	\vec{\Lunit}
\|_{C_{\upomega}^{0,\updelta_0}(S_{[u]_+,u})}
+
\int_{[u]_+}^t
	\|
		\gensmoothfunction_{(\vec{\Lunit})} \cdot \pmb{\partial} \vec{\Psi}
	\|_{C_{\upomega}^{0,\updelta_0}(S_{\uptau,u})}
\, d \uptau
	\\
&
\lesssim	
1
+
\|
	\vec{\Lunit}
\|_{C_{\upomega}^{0,\updelta_0}(S_{[u]_+,u})}
+
\int_{[u]_+}^t
	\|
		\pmb{\partial} \vec{\Psi}
	\|_{C_{\upomega}^{0,\updelta_0}(S_{\uptau,u})}
\, d \uptau
+
\int_{[u]_+}^t
	\|
		\pmb{\partial} \vec{\Psi}
	\|_{C_{\upomega}^{0,\updelta_0}(S_{\uptau,u})}
	\|
		\vec{\Lunit}
	\|_{C_{\upomega}^{0,\updelta_0}(S_{\uptau,u})}
\, d \uptau
	\notag
		\\
&
\lesssim	
1
+
\int_{[u]_+}^t
	\|
		\pmb{\partial} \vec{\Psi}
	\|_{C_{\upomega}^{0,\updelta_0}(S_{\uptau,u})}
	\|
		\vec{\Lunit}
	\|_{C_{\upomega}^{0,\updelta_0}(S_{\uptau,u})}
\, d \uptau.
	\notag
\end{align}
From \eqref{E:GRONWALLREADYINTERIORREGEIONHOLDERESTIMATESFORF1TYPEFACTOR},
the estimate
$
\int_{[u]_+}^t
	\|
		\pmb{\partial} \vec{\Psi}
	\|_{C_{\upomega}^{0,\updelta_0}(S_{\uptau,u})}
\, d \uptau
\lesssim \uplambda^{-7 \upepsilon_0}
$
noted above,
and Gr\"{o}nwall's inequality, we deduce that
$
\|
	\vec{\Lunit}
\|_{C_{\upomega}^{0,\updelta_0}(S_{t,u})}
\lesssim 1
$,
thereby completing the proof of \eqref{E:HOLDERESTIMATESFORF1TYPEFACTOR}.

We will now prove \eqref{E:LINFINITYESTIMATESFORRECTANGULARSPATIALCOMPONENTSOFL}.
To proceed, we again use the first transport equation in \eqref{E:LUNITIANDNORMALITRANSPORT},
the fundamental theorem of calculus,
and the bootstrap assumptions and argue as above to deduce
$
|
	\Lunit^i(t,u,\upomega)
		-
	\Lunit^i([u]_+,u,\upomega)
|
\lesssim
\int_{[u]_+}^t
	\|
		\pmb{\partial} \vec{\Psi}
	\|_{L^{\infty}(\widetilde{\Sigma}_{\uptau})}
\, d \uptau
\lesssim \uplambda^{-7 \upepsilon_0}
$.
From this estimate and the data bounds
\eqref{E:LUNITIALONGSIGMA0STAYSCLOSETOORIGINVALUES}
and
\eqref{E:LUNITIALONGCONETIPAXISSTAYSCLOSETOORIGINVALUES},
we conclude the desired estimate \eqref{E:LINFINITYESTIMATESFORRECTANGULARSPATIALCOMPONENTSOFL}.

\subsubsection{Proof of \eqref{E:ACOUSTICALLT2LOMEGAPANDLDERIVATIVESALONGCONES}--\eqref{E:ACOUSTICALLTINFTYLOMEGAPALONGCONES}
for $\hat{\upchi}$, $\angprojDarg{\Lunit} \hat{\upchi}$, $\upzeta$, and $\angprojDarg{\Lunit} \upzeta$}
We first prove \eqref{E:ACOUSTICALLT2LOMEGAPANDLDERIVATIVESALONGCONES} for
$\| \hat{\upchi} \|_{L_t^2 L_{\upomega}^p(\widetilde{\mathcal{C}}_u)}$.
From the transport equation \eqref{E:LDERIVATIVECHIHATAFTERUSINGEULER},
\eqref{E:MAIDENTITYTRANSPORTLEMMA},
and \eqref{E:STUVOLUMEFORMCOMPARISONWITHUNITROUNDMETRICVOLUMEFORM},
we deduce
\begin{align} \label{E:CHIHATTRANSPORTINEQUALITY}
	|
		\rgeo^2 \hat{\upchi}
	|_{\gsphere}(t,u,\upomega)
	& 
	\lesssim
	\lim_{\uptau \downarrow [u]_+}
		|
			\rgeo^2 \hat{\upchi}
		|_{\gsphere}(\uptau,u,\upomega)
	+
	\uplambda^{-1} 
	\int_{[u]_+}^t
			|
				\rgeo^2 (\vec{\VortVort},\DivGradEnt)
			|(\uptau,u,\upomega)
	\, d \uptau
		\\
	& \ \
			+
		\int_{[u]_+}^t
			|
			\rgeo^2
			(\angD,\angprojDarg{\Lunit}) \upxi
			|_{\gsphere}(\uptau,u,\upomega)
	\, d \uptau
	+
	\int_{[u]_+}^t
		\left|
				\rgeo^2 
				(\pmb{\partial} \vec{\Psi},\mytr_{\congsphere} \widetilde{\upchi}^{(Small)},\hat{\upchi},\rgeo^{-1})		
				\cdot
				\pmb{\partial} \vec{\Psi}
	\right|_{\gsphere}(\uptau,u,\upomega)
	\, d \uptau,
\notag
\end{align}
where the correction mentioned in Footnote~\ref{FN:CORRECTIONOFTYPOS}
leads to $m=1$ in \eqref{E:MAIDENTITYTRANSPORTLEMMA},
thus correcting the value $m=\frac{1}{2}$ appearing \cite{qW2017}*{Equation~(5.69)}.
We now divide \eqref{E:CHIHATTRANSPORTINEQUALITY} by $\rgeo^2(t,u)$ and take the norm
$\| \cdot \|_{L_t^2 L_{\upomega}^p(\widetilde{\mathcal{C}}_u)}$.
The arguments given just below \cite{qW2017}*{Equation~(5.68)}
yield that the norms of all terms on RHS~\eqref{E:CHIHATTRANSPORTINEQUALITY} are $\lesssim \uplambda^{-1/2}$ 
(the correction of the value of $m$ mentioned above does not substantially affect the arguments given there),
except the term multiplied by $\uplambda^{-1}$ was not present in \cite{qW2017}.
To handle the remaining term, we use \eqref{E:ANOTHERLAMBDAINVERSELINEARTERMLT2LUPOMEAGINFTY}.
We clarify that to handle the case in which $u \leq 0$,
this argument relies on the initial data bound
$\| w^{1/2} \hat{\upchi} \|_{L_w^{\infty}L_{\upomega}^p(\Sigma_0^{\RescaledFoliationparameter})}
\lesssim 
\uplambda^{-1/2}
$,
which follows from 
\textbf{i)}
using the
first equation in \eqref{E:CONNECTIONCOEFFICIENT}
to express $\hat{\upchi}$
in terms of $\hat{\spheresecondfund}$
and $\hat{k}$;
\textbf{ii)} bounding $\hat{\spheresecondfund}$
in the norm 
$\| w^{1/2} \cdot \|_{L_w^{\infty}L_{\upomega}^p(\Sigma_0^{\RescaledFoliationparameter})}$
by using the estimate 
\eqref{E:INTIALDERIVATIVESOFLAPSEANDTRACEFREECHIEST};
and \textbf{iii)} bounding  $\hat{k}$ in the norm
$\| w^{1/2} \cdot \|_{L_w^{\infty}L_{\upomega}^p(\Sigma_0^{\RescaledFoliationparameter})}$
by using the schematic identity $\hat{k}_{AB} = \gensmoothfunction_{(\vec{\Lunit})} \cdot \pmb{\partial} \vec{\Psi}$,
the estimate \eqref{E:HOLDERESTIMATESFORF1TYPEFACTOR},
and the estimate \eqref{E:FLUIDONEDERIVATIVESIGMAT} for $\rgeo^{1/2} \pmb{\partial} \vec{\Psi}$;
in total, this allows one to deduce (recalling that $w = - u|_{\Sigma_0} \geq 0$ 
and that $\rgeo(\uptau,u) = \uptau-u$) the estimate
$
\| \frac{u^2}{\rgeo^2} \hat{\upchi}(0,u,\upomega) \|_{L_t^2 L_{\upomega}^p(\widetilde{\mathcal{C}}_u)}
\lesssim
\| |u|^{1/2} \hat{\upchi}(0,u,\upomega) \|_{L_u^{\infty} L_{\upomega}^p}
=
\| w^{1/2} \hat{\upchi} \|_{L_w^{\infty}L_{\upomega}^p(\Sigma_0^{\RescaledFoliationparameter})}
\lesssim 
\uplambda^{-1/2}
$,
which is needed to control
the term generated by the first term on RHS~\eqref{E:CHIHATTRANSPORTINEQUALITY} when $u \leq 0$.

To prove the estimate for \eqref{E:ACOUSTICALLTINFTYLOMEGAPALONGCONES} for
$\| \rgeo^{1/2} \hat{\upchi} \|_{L_t^{\infty} L_{\upomega}^p(\widetilde{\mathcal{C}}_u)}$,
we note that all terms on RHS~\eqref{E:CHIHATTRANSPORTINEQUALITY}
can, after being divided by $\rgeo^{3/2}$, be handled using similar arguments 
(see just below \cite{qW2017}*{Equation~(5.73)}, where we again note that 
the correction of the powers of $\rgeo$ mentioned above does not substantially affect the arguments),
but the term multiplied by $\uplambda^{-1}$ was not present in \cite{qW2017}.
To handle this remaining term, we use \eqref{E:SECONDLAMBDAINVERSELINEARTERMTIMEINTEGRALLUINFTIYLTINFTYLOMEGAP}. 

We now prove \eqref{E:ACOUSTICALLT2LOMEGAPANDLDERIVATIVESALONGCONES} for
$\| \rgeo \angprojDarg{\Lunit} \hat{\upchi} \|_{L_t^2 L_{\upomega}^p(\widetilde{\mathcal{C}}_u)}$.
We use the transport equation \eqref{E:LDERIVATIVECHIHATAFTERUSINGEULER} to solve for $\angprojDarg{\Lunit} \hat{\upchi}$,
multiply the resulting identity by $\rgeo$, and then take the norm $\| \cdot \|_{L_t^2 L_{\upomega}^p(\widetilde{\mathcal{C}}_u)}$.
Thanks to the already proven bound \eqref{E:ACOUSTICALLT2LOMEGAPANDLDERIVATIVESALONGCONES} for
$\| \hat{\upchi} \|_{L_t^2 L_{\upomega}^p(\widetilde{\mathcal{C}}_u)}$,
the same arguments given in the paragraph below \cite{qW2017}*{Equation~(5.73)}
imply that all terms satisfy the desired estimate
(where the correction mentioned in Footnote~\ref{FN:CORRECTIONOFTYPOS} is not important for this argument), 
except the following term was not present there:
$
\uplambda^{-1} \rgeo \gensmoothfunction_{(\vec{\Lunit})} \cdot (\vec{\VortVort},\DivGradEnt)
$.
To handle this remaining term, we use \eqref{E:RGEOLAMBDAINVERSELINEARTERMLT2LUINFTYLOMEGAP}.

The estimates \eqref{E:ACOUSTICALLT2LOMEGAPANDLDERIVATIVESALONGCONES} and 
\eqref{E:ACOUSTICALLTINFTYLOMEGAPALONGCONES} for
$\upzeta$ and $\angprojDarg{\Lunit} \upzeta$
follow from a similar argument since, by \eqref{E:LDERIVATIVETORSIONAFTERUSINGEULER},
$\upzeta$ satisfies a transport equation that is schematically similar to the one that $\hat{\upchi}$ satisfies,
except it features the additional source term
$\upzeta \cdot \hat{\upchi}$,
which can be handled with the bootstrap assumptions \eqref{E:BOOTSTRAPCHIANDTORSIONALONGANYCONE};
we omit the details.
We clarify that, in view of the second term on LHS~\eqref{E:LDERIVATIVETORSIONAFTERUSINGEULER},
the correct power of $\rgeo$ in the analog of inequality \eqref{E:CHIHATTRANSPORTINEQUALITY} for $\upzeta$
is $\rgeo$. Thus, to handle the $\uplambda^{-1}$-multiplied terms,
we use the estimates
\eqref{E:LAMBDAINVERSELINEARTERMTIMEINTEGRALLT2LUINFTIYLOMEGAINFTY}
and
\eqref{E:LAMBDAINVERSELINEARTERMTIMEINTEGRALLUINFTIYLTINFTYLOMEGAP}
in place of the estimates
\eqref{E:ANOTHERLAMBDAINVERSELINEARTERMLT2LUPOMEAGINFTY}
and
\eqref{E:SECONDLAMBDAINVERSELINEARTERMTIMEINTEGRALLUINFTIYLTINFTYLOMEGAP} we used to handle $\hat{\upchi}$.

\subsubsection{Proof of \eqref{E:ACOUSTICALLOMEGA2PLTINFTYALONGCONES} for $\hat{\upchi}$ and $\upzeta$}
These estimates follow from \eqref{E:LOMEGA2PLTINFTYTRACEINEQUALITYNEEDEDFORCHIHAT} with $(\hat{\upchi},\upzeta)$ in the role of $\upxi$,
the already proven estimates 
\eqref{E:ACOUSTICALLT2LOMEGAPANDLDERIVATIVESALONGCONES} for $(\hat{\upchi},\upzeta)$ and $(\angprojDarg{\Lunit} \hat{\upchi},\angprojDarg{\Lunit} \upzeta)$,
and the bootstrap assumptions \eqref{E:LT2LINFINITYBOOTSTRAPCHIANDZETAININTERIORREGION} for $(\hat{\upchi},\upzeta)$.

\subsubsection{Proof of 
\eqref{E:ACOUSTICALLT2LOMEGAPANDLDERIVATIVESALONGCONES},
\eqref{E:ACOUSTICALLTINFTYLOMEGAPALONGCONES},
\eqref{E:ACOUSTICALLOMEGA2PLTINFTYALONGCONES},
\eqref{E:TRCHILINFINITYESTIMATES}, 
\eqref{E:TRCHIMODLINFINITYESTIMATES},
and
\eqref{E:TRICHIMODSMALLTRFREECHIANDTORSIONL2INTIMEALONGSOUNDCONES}
for 
$\mytr_{\congsphere} \widetilde{\upchi}$,
$\mytr_{\congsphere} \widetilde{\upchi}^{(Small)}$,
and $\angprojDarg{\Lunit} \mytr_{\congsphere} \widetilde{\upchi}^{(Small)}$}
To prove \eqref{E:TRCHILINFINITYESTIMATES},
we note that the definition \eqref{E:MODTRICHISMALL}
of $\mytr_{\congsphere} \widetilde{\upchi}^{(Small)}$ implies that
it suffices to prove the pointwise bound
$|\rgeo \mytr_{\congsphere} \widetilde{\upchi}^{(Small)}| \lesssim \lambda^{-4 \upepsilon_0}$.
To this end, we first use the transport equation \eqref{E:MODIFIEDRAYCHAUDHURI}
and \eqref{E:MAIDRGEOINEQUALITYTRANSPORT} with $\mathfrak{G}:=0$
to deduce
\begin{align} \label{E:TRCHISMALLTRANSPORTINEQUALITY}
	|
		\rgeo^2 \mytr_{\congsphere} \widetilde{\upchi}^{(Small)}
	|(t,u,\upomega)
	& 
	\lesssim
	\lim_{\uptau \downarrow [u]_+}
		|
			\rgeo^2 \mytr_{\congsphere} \widetilde{\upchi}^{(Small)}
		|(\uptau,u,\upomega)
	+
	\uplambda^{-1} 
	\int_{[u]_+}^t
			|
				\rgeo^2 (\vec{\VortVort},\DivGradEnt)
			|(\uptau,u,\upomega)
	\, d \uptau
		\\
	& \ \
			+
		\int_{[u]_+}^t
			\left|
			\rgeo^2
			(\pmb{\partial} \vec{\Psi},\mytr_{\congsphere} \widetilde{\upchi}^{(Small)},\rgeo^{-1})
			\cdot
			\pmb{\partial} \vec{\Psi}
			\right|_{\gsphere}(\uptau,u,\upomega)
	\, d \uptau
	+
	\int_{[u]_+}^t
			|
				\rgeo^2 \hat{\upchi} \cdot \hat{\upchi}
			|_{\gsphere}(\uptau,u,\upomega)
	\, d \uptau
		\notag \\
& \ \
	+
	\int_{[u]_+}^t
			|
				\rgeo^2 
				\mytr_{\congsphere} \widetilde{\upchi}^{(Small)} 
				\cdot
				\mytr_{\congsphere} \widetilde{\upchi}^{(Small)}
			|(\uptau,u,\upomega)
	\, d \uptau.
	\notag
\end{align}
We now divide \eqref{E:TRCHISMALLTRANSPORTINEQUALITY} by $\rgeo(t,u)$.
To handle the term on RHS~\eqref{E:TRCHISMALLTRANSPORTINEQUALITY}
that is multiplied by $\uplambda^{-1}$, we use \eqref{E:LAMBDAINVERSELINEARTERMLINFTYWHOLESPACETIME}.
The remaining terms were suitably bounded in the arguments given just below \cite{qW2017}*{Equation~(5.78)}.
We have thus proved \eqref{E:TRCHILINFINITYESTIMATES}.
The estimate \eqref{E:TRCHIMODLINFINITYESTIMATES} follows from nearly identical arguments,
where one uses \eqref{E:ASECONDLAMBDAINVERSELINEARTERMLINFTYWHOLESPACETIME} to handle the
$\uplambda^{-1}$-multiplied term; we omit the details.

The estimates \eqref{E:ACOUSTICALLTINFTYLOMEGAPALONGCONES} and \eqref{E:ACOUSTICALLOMEGA2PLTINFTYALONGCONES}
for $\mytr_{\congsphere} \widetilde{\upchi}^{(Small)}$ then follow as straightforward consequences of \eqref{E:TRCHIMODLINFINITYESTIMATES}.
 
We now prove the estimate \eqref{E:TRICHIMODSMALLTRFREECHIANDTORSIONL2INTIMEALONGSOUNDCONES} for $\mytr_{\congsphere} \widetilde{\upchi}^{(Small)}$.
First, using the transport equation \eqref{E:MODIFIEDRAYCHAUDHURI},
we deduce that
\begin{align} \label{E:REWRITEENRSQUAREDTRANSPORTTRICHIMOD}
\Lunit (\rgeo^2 \mytr_{\congsphere} \widetilde{\upchi}^{(Small)})
= 
\mathfrak{F}
& :=
			\uplambda^{-1} \rgeo^2 \gensmoothfunction_{(\vec{\Lunit})} \cdot (\vec{\VortVort},\DivGradEnt)
			+
			\rgeo^2
			\gensmoothfunction_{(\vec{\Lunit})}
			\cdot
			(\pmb{\partial} \vec{\Psi},\mytr_{\congsphere} \widetilde{\upchi}^{(Small)},\rgeo^{-1})
			\cdot
			\pmb{\partial} \vec{\Psi}
				\\
	& \ \
			+
			\rgeo^2
			\gensmoothfunction_{(\vec{\Lunit})}
			\hat{\upchi}
			\cdot
			\hat{\upchi}
			+
			\rgeo^2
			\mytr_{\congsphere} \widetilde{\upchi}^{(Small)} 
			\cdot
			\mytr_{\congsphere} \widetilde{\upchi}^{(Small)}.
			\notag
\end{align}
From \eqref{E:REWRITEENRSQUAREDTRANSPORTTRICHIMOD}
and the vanishing initial condition (along the cone-tip axis) for
$\rgeo^2 \mytr_{\congsphere} \widetilde{\upchi}^{(Small)}$ guaranteed by \eqref{E:CONNECTIONCOEFFICIENTS0LIMITSALONGTIP},
we find, with $[u]_- := |\min \lbrace u,0 \rbrace|$ and $[u]_+ := \max \lbrace u,0 \rbrace$, that
\begin{align} \label{E:INTEGRALEQUATIONFORTRCHISMALL}
\mytr_{\congsphere} \widetilde{\upchi}^{(Small)}(t,u,\upomega)
= 
\frac{[u]_-^2}{(t + [u]_-)^2}
\mytr_{\congsphere} \widetilde{\upchi}^{(Small)}([u]_+,u,\upomega)
+
\frac{1}{\rgeo^2(t,u)}
\int_{[u]_+}^t
	\mathfrak{F}(\uptau,u,\upomega)
\, d \uptau.
\end{align}
Using \eqref{E:INTEGRALEQUATIONFORTRCHISMALL},
applying the product-type estimate 
\eqref{E:SIMPLEHOLDERPRODUCESTIMATEONSPHERES}
to the term $\mathfrak{F}$ in \eqref{E:REWRITEENRSQUAREDTRANSPORTTRICHIMOD},
using the already proven estimate \eqref{E:HOLDERESTIMATESFORF1TYPEFACTOR} for $\gensmoothfunction_{(\vec{\Lunit})}$,
and using \eqref{E:ZERODATAANGULARHOLDERESTIMATEFORSOLUTIONTOINHOMOGENEOUSTRANSPORT},
we find,
in view of the definition \eqref{E:MAXFUNCTIONDEF} of the Hardy--Littlewood maximal function,  
that for $u \in [-\frac{4}{5}\RescaledTboot,\RescaledTboot]$ and $t \in [[u]_+,\RescaledTboot]$, we have
\begin{align} \label{E:FIRSTSTEPINCONEONLYTRCHISMALLCOMEGADELTAESTIMATE}
	\|
		\mytr_{\congsphere} \widetilde{\upchi}^{(Small)}
	\|_{C_{\upomega}^{0,\updelta_0}(S_{t,u})}
	& \lesssim
	\frac{[u]_-^{3/2}}{(t + [u]_-)^2}
	\left\| 
		|u|^{1/2} \mytr_{\congsphere} \widetilde{\upchi}^{(Small)} 
	\right \|_{L_u^{\infty} C_{\upomega}^{0,\updelta_0}(\widetilde{\Sigma}_0)}
		\\
	&  \ \
	+
	\uplambda^{-1}
	\| (\vec{\VortVort},\DivGradEnt) \|_{L_t^1 C_{\upomega}^{0,\updelta_0}(\widetilde{\mathcal{C}}_u)}
	+
	\| (\pmb{\partial} \vec{\Psi},\mytr_{\congsphere} \widetilde{\upchi}^{(Small)},\hat{\upchi}) \|_{L_t^2 C_{\upomega}^{0,\updelta_0}(\widetilde{\mathcal{C}}_u)}^2
	\notag	\\
& \ \
	+
	\mathcal{M}\left(
		\| \pmb{\partial} \vec{\Psi} \|_{L_u^{\infty} C_{\upomega}^{0,\updelta_0}(\widetilde{\Sigma}_t)}
	\right).
	\notag
\end{align}
From \eqref{E:FIRSTSTEPINCONEONLYTRCHISMALLCOMEGADELTAESTIMATE},
the last estimate in \eqref{E:MODTRCHIESTIAMTESALONGINITIALFOLIATION},
the parameter relation \eqref{E:BOUNDSONLEBESGUEEXPONENTP},
\eqref{E:RESCALEDBOOTBOUNDS},
\eqref{E:BOOTSTRAPCHIANDTORSIONALONGANYCONE},
and
\eqref{E:ESTIMATEFORANGLEHOLDERCONTINUITYOFPARTIALPSIPARTIALVORTANDPARTIALGRADENT},
we find that 
$
\|
	\mytr_{\congsphere} \widetilde{\upchi}^{(Small)}
\|_{C_{\upomega}^{0,\updelta_0}(S_{t,u})}
\lesssim
\frac{[u]_-^{3/2}}{(t + [u]_-)^2}
\uplambda^{-1/2}
+
\uplambda^{-1 + 4 \upepsilon_0}
+
	\mathcal{M}\left(
		\| \pmb{\partial} \vec{\Psi} \|_{L_u^{\infty} C_{\upomega}^{0,\updelta_0}(\widetilde{\Sigma}_t)}
	\right)
$.
Taking the norm $\| \cdot \|_{L_t^2([[u]_+,\RescaledTboot])}$ of this inequality
and using \eqref{E:RESCALEDBOOTBOUNDS},
\eqref{E:STANDARDMAXIMALFUNCTIONLQESTIMATE} with $\leb := 2$,
and
\eqref{E:ESTIMATEFORANGLEHOLDERCONTINUITYOFPARTIALPSIPARTIALVORTANDPARTIALGRADENT},
we conclude the desired bound
\eqref{E:TRICHIMODSMALLTRFREECHIANDTORSIONL2INTIMEALONGSOUNDCONES} for $\mytr_{\congsphere} \widetilde{\upchi}^{(Small)}$.

The estimate \eqref{E:ACOUSTICALLT2LOMEGAPANDLDERIVATIVESALONGCONES} for
$\| \mytr_{\congsphere} \widetilde{\upchi}^{(Small)} \|_{L_t^2 L_{\upomega}^p(\widetilde{\mathcal{C}}_u)}$
then follows as a straightforward consequence of the estimate 
\eqref{E:TRICHIMODSMALLTRFREECHIANDTORSIONL2INTIMEALONGSOUNDCONES}
for $\mytr_{\congsphere} \widetilde{\upchi}^{(Small)}$.

We now prove the estimate \eqref{E:ACOUSTICALLT2LOMEGAPANDLDERIVATIVESALONGCONES} for
$\| \rgeo \angprojDarg{\Lunit} \mytr_{\congsphere} \widetilde{\upchi}^{(Small)} \|_{L_t^2 L_{\upomega}^p(\widetilde{\mathcal{C}}_u)}$
by using the transport equation \eqref{E:MODIFIEDRAYCHAUDHURI} to algebraically solve for $\rgeo \angprojDarg{\Lunit} \mytr_{\congsphere} \widetilde{\upchi}^{(Small)}$.
Thanks to the bound \eqref{E:RGEOANDUBOUNDS} for $\rgeo$,
the bootstrap assumptions,
and the already proven bounds
\eqref{E:ACOUSTICALLT2LOMEGAPANDLDERIVATIVESALONGCONES}
and
\eqref{E:ACOUSTICALLTINFTYLOMEGAPALONGCONES}
for
$\hat{\upchi}$ and $\mytr_{\congsphere} \widetilde{\upchi}^{(Small)}$,
the same arguments given just below \cite{qW2017}*{Equation~(5.80)}
imply that all terms on RHS~\eqref{E:MODIFIEDRAYCHAUDHURI}
and the term
$
\frac{2}{\rgeo}
		\mytr_{\congsphere} \widetilde{\upchi}^{(Small)}
$
on LHS~\eqref{E:MODIFIEDRAYCHAUDHURI}
satisfy (upon being multiplied by $\rgeo$) 
the desired estimate, except the following term was not present in \cite{qW2017}:
$
\uplambda^{-1} \rgeo \gensmoothfunction_{(\vec{\Lunit})} \cdot (\vec{\VortVort},\DivGradEnt)
$.
To bound this remaining term, we use \eqref{E:RGEOLAMBDAINVERSELINEARTERMLT2LUINFTYLOMEGAP}.

\subsubsection{Proof of \eqref{E:ASECONDACOUSTICALLTINFTYLOMEGAPALONGCONES}}
\eqref{E:ASECONDACOUSTICALLTINFTYLOMEGAPALONGCONES} follows from the already proven estimate
\eqref{E:ACOUSTICALLTINFTYLOMEGAPALONGCONES} and the bound \eqref{E:RGEOANDUBOUNDS} for $\rgeo$.

\subsubsection{Proof of \eqref{E:ONEANGULARDERIVATIVEOFTRCHIMODLINFINITYESTIMATES} 
and \eqref{E:ONEANGULARDERIVATIVEOFTRCHIMODANDTRFREECHIL2INTIMEESTIMATES}}
To prove \eqref{E:ONEANGULARDERIVATIVEOFTRCHIMODANDTRFREECHIL2INTIMEESTIMATES},
we first note the following bound for some factors in the next-to-last product on RHS~\eqref{E:ANGDCOMMUTEDMODIFIEDRAYCHAUDHURI},
which follows from  
\eqref{E:RESCALEDBOOTBOUNDS},
\eqref{E:RESCALEDSTRICHARTZ},
and \eqref{E:BOOTSTRAPCHIANDTORSIONALONGANYCONE}:
$
\|\gensmoothfunction_{(\vec{\Lunit})}\cdot(\pmb{\partial} \vec{\Psi},\mytr_{\congsphere} \widetilde{\upchi}^{(Small)},\hat{\upchi}) \|_{L_{\upomega}^{\infty} L_t^1(\widetilde{\mathcal{C}}_u)}
\lesssim 
\uplambda^{-2 \upepsilon_0}
\leq 1
$.
From this bound,
the transport equation \eqref{E:ANGDCOMMUTEDMODIFIEDRAYCHAUDHURI},
and \eqref{E:MAIDRGEOINEQUALITYTRANSPORT}
with 
$
\mathfrak{G} := \gensmoothfunction_{(\vec{\Lunit})} \cdot (\pmb{\partial} \vec{\Psi},\mytr_{\congsphere} \widetilde{\upchi}^{(Small)},\hat{\upchi})
$,
we deduce
\begin{align} \label{E:ANGDMODTRCHISMALLTRANSPORTINEQUALITY}
	|
		\rgeo^3 \angD \mytr_{\congsphere} \widetilde{\upchi}^{(Small)}
	|_{\gsphere}(t,u,\upomega)
	& 
	\lesssim
	\lim_{\uptau \downarrow [u]_+}
		|
			\rgeo^3 \angD \mytr_{\congsphere} \widetilde{\upchi}^{(Small)}
		|_{\gsphere}(\uptau,u,\upomega)
	+
	\uplambda^{-1} 
	\int_{[u]_+}^t
			|
				\rgeo^3 \angD (\vec{\VortVort},\DivGradEnt)
			|_{\gsphere}(\uptau,u,\upomega)
	\, d \uptau
		\\
	& \ \
			+
		\uplambda^{-1}
		\int_{[u]_+}^t
				\left|
				\rgeo^3
				(\vec{\GradEnt} \cdot \pmb{\partial} \vec{\Psi},\pmb{\partial} \vec{\Psi},\pmb{\partial} \vec{\vortrenormalized},\pmb{\partial} \vec{\GradEnt}) 
				\cdot 
				(\pmb{\partial} \vec{\Psi},\mytr_{\congsphere} \widetilde{\upchi}^{(Small)},\hat{\upchi},\rgeo^{-1})	
			\right|_{\gsphere}(\uptau,u,\upomega)
	\, d \uptau
		\notag \\
	& \ \
	+
	\int_{[u]_+}^t
			\left|
				\rgeo^3
				\angD \pmb{\partial} \vec{\Psi}
				\cdot
				(\pmb{\partial} \vec{\Psi},\mytr_{\congsphere} \widetilde{\upchi}^{(Small)}, \rgeo^{-1})
			\right|_{\gsphere}(\uptau,u,\upomega)
	\, d \uptau
\notag
\\
&
\ \ 
	+
	\int_{[u]_+}^t
			|
				\rgeo^3
				\angD \hat{\upchi} \cdot \hat{\upchi}
			|_{\gsphere}(\uptau,u,\upomega)
	\, d \uptau
\notag
	\\
	& \ \
		+
		\int_{[u]_+}^t
			\left|
				\rgeo^3
				(\pmb{\partial} \vec{\Psi},\mytr_{\congsphere} \widetilde{\upchi}^{(Small)},\hat{\upchi},\rgeo^{-1})		
				\cdot
				(\pmb{\partial} \vec{\Psi},\mytr_{\congsphere} \widetilde{\upchi}^{(Small)},\rgeo^{-1})	
				\cdot
				\pmb{\partial} \vec{\Psi}
			\right|_{\gsphere}(\uptau,u,\upomega)
	\, d \uptau.
	\notag
\end{align}
In the arguments given in the paragraph below \cite{qW2017}*{Equation~(5.81)},
with the help of the bootstrap assumptions,
all terms on RHS~\eqref{E:ANGDMODTRCHISMALLTRANSPORTINEQUALITY} 
were shown, after dividing by $\rgeo^2(t,u)$,
to be bounded in the norm $\| \cdot \|_{L_t^2 L_{\upomega}^p (\widetilde{\mathcal{C}}_u)}$
by $\lesssim \uplambda^{-1/2} + \uplambda^{-2 \upepsilon_0} 
\| 
		\rgeo \angD \hat{\upchi} 
	\|_{L_t^2 L_{\upomega}^p (\widetilde{\mathcal{C}}_u)}$, except that the
two terms multiplied by $\uplambda^{-1}$ were not present there.
To handle these remaining terms,
we use \eqref{E:ASECONDLAMBDAINVERSESECONDANGULARDERIVATIVELINEARTERMTIMEINTEGRALLTINFTTYLUINFTYLOMEGAP}
and \eqref{E:ASECONDLAMBDAINVERSEQUADRATICTERMTIMEINTEGRALLTINFTTYLUINFTYLOMEGAP},
which in total yields
\begin{align} \label{E:TRANSPORTSTEPONEANGULARDERIVATIVEOFTRCHIMODANDTRFREECHIL2INTIMEESTIMATES}
	\| 
		\rgeo \angD \mytr_{\congsphere} \widetilde{\upchi}^{(Small)} 
	\|_{L_t^2 L_{\upomega}^p (\widetilde{\mathcal{C}}_u)}
	&
	\lesssim \uplambda^{-1/2} 
		+ 
	\uplambda^{-2 \upepsilon_0} 
	\| 
		\rgeo \angD \hat{\upchi} 
	\|_{L_t^2 L_{\upomega}^p (\widetilde{\mathcal{C}}_u)}.
\end{align}
Next, we note that the divergence equation \eqref{E:DIVDIVTRFREECHISCHEMATIC},
the Hodge estimate \eqref{E:SYMMETRICTRACEFREESTUCALDERONZYGMUNDHODGEESTIMATES} with $\leb :=p$,
and the same arguments given in the paragraph below \cite{qW2017}*{Equation~(5.82)}
yield
\begin{align} \label{E:HODGEESTIMATESFORPROOFOFONEANGULARDERIVATIVEOFTRCHIMODANDTRFREECHIL2INTIMEESTIMATES}
	\| 
		\rgeo \angD \hat{\upchi} 
	\|_{L_t^2 L_{\upomega}^p (\widetilde{\mathcal{C}}_u)}
	& \lesssim
		\| 
			\rgeo \angD \mytr_{\congsphere} \widetilde{\upchi}^{(Small)} 
		\|_{L_t^2 L_{\upomega}^p (\widetilde{\mathcal{C}}_u)}
		+
		\uplambda^{-1/2}.
\end{align}
From \eqref{E:TRANSPORTSTEPONEANGULARDERIVATIVEOFTRCHIMODANDTRFREECHIL2INTIMEESTIMATES} 
and \eqref{E:HODGEESTIMATESFORPROOFOFONEANGULARDERIVATIVEOFTRCHIMODANDTRFREECHIL2INTIMEESTIMATES}, 
we conclude 
(when $\uplambda$ is sufficiently large)
the desired bounds in \eqref{E:ONEANGULARDERIVATIVEOFTRCHIMODANDTRFREECHIL2INTIMEESTIMATES}.

As is noted just below \cite{qW2017}*{Equation~(5.84)}, 
the estimate \eqref{E:ONEANGULARDERIVATIVEOFTRCHIMODLINFINITYESTIMATES} can be proved using a similar argument,
based on dividing \eqref{E:ANGDMODTRCHISMALLTRANSPORTINEQUALITY} by $\rgeo^{3/2}(t,u)$,
where we use
\eqref{E:LAMBDAINVERSESECONDANGULARDERIVATIVELINEARTERMTIMEINTEGRALLTINFTTYLUINFTYLOMEGAP}
and \eqref{E:LAMBDAINVERSEQUADRATICTERMTIMEINTEGRALLTINFTTYLUINFTYLOMEGAP}
to handle the two $\uplambda^{-1}$-multiplied terms on RHS~\eqref{E:ANGDMODTRCHISMALLTRANSPORTINEQUALITY}.

\subsubsection{Proof of \eqref{E:IMPROVEDININTERIORL2INTIMELINFINITYINSPACECONNECTIONCOFFICIENTS} for $\mytr_{\congsphere} \widetilde{\upchi}^{(Small)}$
and $\mytr_{\gsphere} \upchi - \frac{2}{\rgeo}$}
\label{SSS:ANNOYINGTRCHITRANSPORTHOLDERESTIMATE}
We first prove \eqref{E:IMPROVEDININTERIORL2INTIMELINFINITYINSPACECONNECTIONCOFFICIENTS} for $\mytr_{\congsphere} \widetilde{\upchi}^{(Small)}$.
A slight modification of the proof of \eqref{E:FIRSTSTEPINCONEONLYTRCHISMALLCOMEGADELTAESTIMATE} yields the following bound:
\begin{align} \label{E:SECONDSTEPINTRCHISMALLCOMEGADELTAESTIMATE}
	\|
		\mytr_{\congsphere} \widetilde{\upchi}^{(Small)}
	\|_{L_u^{\infty} C_{\upomega}^{0,\updelta_0}(\widetilde{\Sigma}_t^{(Int)})}
	& \lesssim
	\uplambda^{-1}
	\|  (\vec{\VortVort},\DivGradEnt) \|_{L_t^1 L_u^{\infty} C_{\upomega}^{0,\updelta_0}(\widetilde{\mathcal{M}}^{(Int)})}
	+
	\mathcal{M}\left(
		\| \pmb{\partial} \vec{\Psi} \|_{L_u^{\infty} C_{\upomega}^{0,\updelta_0}(\widetilde{\Sigma}_t^{(Int)})}
	\right)
		\\
& \ \
	+
	\| (\pmb{\partial} \vec{\Psi},\mytr_{\congsphere} \widetilde{\upchi}^{(Small)},\hat{\upchi}) \|_{L_t^2 L_u^{\infty} C_{\upomega}^{0,\updelta_0}(\widetilde{\mathcal{M}}^{(Int)})}^2.
	\notag
\end{align}
From \eqref{E:SECONDSTEPINTRCHISMALLCOMEGADELTAESTIMATE},
\eqref{E:RESCALEDBOOTBOUNDS},
\eqref{E:LT2LINFINITYBOOTSTRAPCHIANDZETAININTERIORREGION},
and
\eqref{E:ESTIMATEFORANGLEHOLDERCONTINUITYOFPARTIALPSIPARTIALVORTANDPARTIALGRADENT},
we deduce 
\begin{align} \label{E:THIRDSTEPINTRCHISMALLCOMEGADELTAESTIMATE}
\|
		\mytr_{\congsphere} \widetilde{\upchi}^{(Small)}
\|_{L_u^{\infty} C_{\upomega}^{0,\updelta_0}(\widetilde{\Sigma}_t^{(Int)})}
\lesssim
\uplambda^{-1}
+
	\mathcal{M}\left(
		\| \pmb{\partial} \vec{\Psi} \|_{L_u^{\infty} C_{\upomega}^{0,\updelta_0}(\widetilde{\Sigma}_t^{(Int)})}
	\right).
\end{align}
Taking the norm $\| \cdot \|_{L_t^2([0,\RescaledTboot])}$ of \eqref{E:THIRDSTEPINTRCHISMALLCOMEGADELTAESTIMATE}
and using \eqref{E:RESCALEDBOOTBOUNDS},
\eqref{E:STANDARDMAXIMALFUNCTIONLQESTIMATE} with $\leb := 2$,
and
\eqref{E:ESTIMATEFORANGLEHOLDERCONTINUITYOFPARTIALPSIPARTIALVORTANDPARTIALGRADENT},
we conclude the desired bound
\eqref{E:IMPROVEDININTERIORL2INTIMELINFINITYINSPACECONNECTIONCOFFICIENTS} for $\mytr_{\congsphere} \widetilde{\upchi}^{(Small)}$.

The estimate \eqref{E:IMPROVEDININTERIORL2INTIMELINFINITYINSPACECONNECTIONCOFFICIENTS} 
for $\mytr_{\gsphere} \upchi - \frac{2}{\rgeo}$
then follows from the identity $\mytr_{\gsphere} \upchi - \frac{2}{\rgeo} = \mytr_{\congsphere} \widetilde{\upchi}^{(Small)} - \Chfour_{\Lunit}$,
the schematic relation 
$\Chfour_{\Lunit} = \gensmoothfunction_{(\vec{\Lunit})} \cdot \pmb{\partial} \vec{\Psi}$,
the already proven estimate \eqref{E:IMPROVEDININTERIORL2INTIMELINFINITYINSPACECONNECTIONCOFFICIENTS} for $\mytr_{\congsphere} \widetilde{\upchi}^{(Small)}$,
the product-type estimate
\eqref{E:SIMPLEHOLDERPRODUCESTIMATEONSPHERES},
\eqref{E:HOLDERESTIMATESFORF1TYPEFACTOR}, 
and \eqref{E:ESTIMATEFORANGLEHOLDERCONTINUITYOFPARTIALPSIPARTIALVORTANDPARTIALGRADENT}.

\subsubsection{Proof of \eqref{E:IMPROVEDININTERIORL2INTIMELINFINITYINSPACECONNECTIONCOFFICIENTS} for $\hat{\upchi}$}
We first use equation \eqref{E:DIVDIVTRFREECHISCHEMATIC},
the estimate \eqref{E:ANGULARHOLDERHODGEESTIMATENODERIVATIVESONLHSANGDIVSYMMETRICTRACEFREESTUTENSORFIELD} with $\leb := p$,
the parameter relation \eqref{E:BOUNDSONLEBESGUEEXPONENTP},
the product-type estimate
\eqref{E:SIMPLEHOLDERPRODUCESTIMATEONSPHERES},
\eqref{E:HOLDERESTIMATESFORF1TYPEFACTOR},
and H\"{o}lder's inequality
to deduce that 
\begin{align} \label{E:FIRSTSTEPINHATCHIIMPROVEDININTERIORL2INTIMELUINFINITYHOLDERINANGLESCHIHAT}
		\|
			\hat{\upchi}
		\|_{L_t^2 L_u^{\infty} C_{\upomega}^{0,\updelta_0}(\widetilde{\mathcal{M}}^{(Int)})}
		& \lesssim 
			\|
				\mytr_{\congsphere} \widetilde{\upchi}^{(Small)}
			\|_{L_t^2 L_u^{\infty} C_{\upomega}^{0,\updelta_0}(\widetilde{\mathcal{M}}^{(Int)})}
			+
			\|
				\pmb{\partial} \vec{\Psi}
			\|_{L_t^2 L_u^{\infty} C_{\upomega}^{0,\updelta_0}(\widetilde{\mathcal{M}}^{(Int)})}
			\\
		& \ \
			+
			\| \rgeo^{1/2} \|_{L^{\infty}(\widetilde{\mathcal{M}})}
			\| 
				\pmb{\partial} \vec{\Psi} 
			\|_{L_t^2 L_x^{\infty}(\widetilde{\mathcal{M}})}
			\left\|	
				\rgeo^{1/2}(\pmb{\partial} \vec{\Psi},\mytr_{\congsphere} \widetilde{\upchi}^{(Small)},\hat{\upchi})
			\right\|_{L_t^{\infty} L_u^{\infty} L_{\upomega}^p(\widetilde{\mathcal{M}})}.
			\notag
	\end{align}
	Using 
	the bound \eqref{E:RGEOANDUBOUNDS} for $\rgeo$,
	the estimate \eqref{E:ACOUSTICALLTINFTYLOMEGAPALONGCONES} for $\mytr_{\congsphere} \widetilde{\upchi}^{(Small)}$ and $\hat{\upchi}$,
	the estimate \eqref{E:FLUIDONEDERIVATIVESIGMAT} for $\pmb{\partial} \vec{\Psi}$,
	the already proven estimate \eqref{E:IMPROVEDININTERIORL2INTIMELINFINITYINSPACECONNECTIONCOFFICIENTS} for $\mytr_{\congsphere} \widetilde{\upchi}^{(Small)}$,
	\eqref{E:RESCALEDSTRICHARTZ} for $\pmb{\partial} \vec{\Psi}$,
	and \eqref{E:ESTIMATEFORANGLEHOLDERCONTINUITYOFPARTIALPSIPARTIALVORTANDPARTIALGRADENT} for $\pmb{\partial} \vec{\Psi}$,
	we conclude that 
	$\mbox{RHS~\eqref{E:FIRSTSTEPINHATCHIIMPROVEDININTERIORL2INTIMELUINFINITYHOLDERINANGLESCHIHAT}} \lesssim \uplambda^{-1/2 - 3 \upepsilon_0}$
	as desired.

\subsubsection{Proof of \eqref{E:CONNECTIONCOEFFICIENTESTIMATESNEEDEDTODERIVESPATIALLYLOCALIZEDDECAYFROMCONFORMALENERGYESTIMATE} for
$\mytr_{\congsphere} \widetilde{\upchi}^{(Small)}$ and $\mytr_{\gsphere} \upchi - \frac{2}{\rgeo}$}
We first bound
$\| \mytr_{\congsphere} \widetilde{\upchi}^{(Small)} \|_{L_t^{\frac{q}{2}} L_u^{\infty} C_{\upomega}^{0,\updelta_0}(\widetilde{\mathcal{M}})}$.
We start by noting the following estimate, which is a simple consequence of the estimate 
proved just below \eqref{E:FIRSTSTEPINCONEONLYTRCHISMALLCOMEGADELTAESTIMATE},
and which holds for $t \in [0,\RescaledTboot]$:
\begin{align} \label{E:ASECONDSTEPCONNECTIONCOEFFICIENTESTIMATESNEEDEDTODERIVESPATIALLYLOCALIZEDDECAYFROMCONFORMALENERGYESTIMATE}
\|
	\mytr_{\congsphere} \widetilde{\upchi}^{(Small)}
\|_{L_u^{\infty} C_{\upomega}^{0,\updelta_0}(\widetilde{\Sigma}_t)}
\lesssim
t^{-1/2}
\uplambda^{-1/2}
+
\uplambda^{-1 + 4 \upepsilon_0}
+
	\mathcal{M}\left(
		\| \pmb{\partial} \vec{\Psi} \|_{L_u^{\infty} C_{\upomega}^{0,\updelta_0}(\widetilde{\Sigma}_t)}
	\right).
\end{align}
Taking the norm $\| \cdot \|_{L_t^{\frac{q}{2}} ([0,\RescaledTboot])}$ of 
\eqref{E:ASECONDSTEPCONNECTIONCOEFFICIENTESTIMATESNEEDEDTODERIVESPATIALLYLOCALIZEDDECAYFROMCONFORMALENERGYESTIMATE}
and using \eqref{E:RESCALEDBOOTBOUNDS},
\eqref{E:STANDARDMAXIMALFUNCTIONLQESTIMATE} with $\leb := \frac{q}{2}$,
and
\eqref{E:ESTIMATEFORANGLEHOLDERCONTINUITYOFPARTIALPSIPARTIALVORTANDPARTIALGRADENT},
we conclude that if $q > 2$ is sufficiently close to $2$, then the desired estimate
\eqref{E:CONNECTIONCOEFFICIENTESTIMATESNEEDEDTODERIVESPATIALLYLOCALIZEDDECAYFROMCONFORMALENERGYESTIMATE} for $\mytr_{\congsphere} \widetilde{\upchi}^{(Small)}$
holds.

To prove \eqref{E:CONNECTIONCOEFFICIENTESTIMATESNEEDEDTODERIVESPATIALLYLOCALIZEDDECAYFROMCONFORMALENERGYESTIMATE} for $\mytr_{\gsphere} \upchi - \frac{2}{\rgeo}$,
we use \eqref{E:MODTRICHISMALL},
the schematic relation $\Chfour_{\Lunit} = \gensmoothfunction_{(\vec{\Lunit})} \cdot \pmb{\partial} \vec{\Psi}$,
the product-type estimate 
\eqref{E:SIMPLEHOLDERPRODUCESTIMATEONSPHERES},
\eqref{E:RESCALEDBOOTBOUNDS},
\eqref{E:ESTIMATEFORANGLEHOLDERCONTINUITYOFPARTIALPSIPARTIALVORTANDPARTIALGRADENT},
\eqref{E:HOLDERESTIMATESFORF1TYPEFACTOR},
the already proven estimate
\eqref{E:CONNECTIONCOEFFICIENTESTIMATESNEEDEDTODERIVESPATIALLYLOCALIZEDDECAYFROMCONFORMALENERGYESTIMATE} 
for $\| \mytr_{\congsphere} \widetilde{\upchi}^{(Small)} \|_{L_t^{\frac{q}{2}} L_u^{\infty} C_{\upomega}^{0,\updelta_0}(\widetilde{\mathcal{M}})}$,
to conclude that if $q > 2$ is sufficiently close to $2$, then
\begin{align*}
\| 
	\mytr_{\gsphere} \upchi - \frac{2}{\rgeo}
\|_{L_t^{\frac{q}{2}} L_u^{\infty} C_{\upomega}^{0,\updelta_0}(\widetilde{\mathcal{M}})}
& \lesssim
\| \mytr_{\congsphere} \widetilde{\upchi}^{(Small)} \|_{L_t^{\frac{q}{2}} L_u^{\infty} C_{\upomega}^{0,\updelta_0}(\widetilde{\mathcal{M}})}
+
\| \pmb{\partial} \vec{\Psi} \|_{L_t^{\frac{q}{2}} L_u^{\infty} C_{\upomega}^{0,\updelta_0}(\widetilde{\mathcal{M}})}
	\\
&
\lesssim \uplambda^{\frac{2}{q} - 1 - 4 \upepsilon_0(\frac{4}{q} - 1)}
+
(\uplambda^{1 - 8 \upepsilon_0})^{(\frac{2}{q} - \frac{1}{2})}
\cdot
\uplambda^{-1/2 - 3 \upepsilon_0}
\lesssim
\uplambda^{\frac{2}{q} - 1 - 4 \upepsilon_0 (\frac{4}{q} - 1)}
\end{align*}
as desired.

\subsubsection{Proof of \eqref{E:CONNECTIONCOEFFICIENTESTIMATESNEEDEDTODERIVESPATIALLYLOCALIZEDDECAYFROMCONFORMALENERGYESTIMATE} for
$\hat{\upchi}$}
A slight modification of the proof of \eqref{E:FIRSTSTEPINHATCHIIMPROVEDININTERIORL2INTIMELUINFINITYHOLDERINANGLESCHIHAT}
yields that
\begin{align} \label{E:FIRSTSTEPCONNECTIONCOEFFICIENTESTIMATESNEEDEDTODERIVESPATIALLYLOCALIZEDDECAYFROMCONFORMALENERGYESTIMATE}
		\|
			\hat{\upchi}
		\|_{L_t^{\frac{q}{2}} L_u^{\infty} C_{\upomega}^{0,\updelta_0}(\widetilde{\mathcal{M}})}
		& \lesssim 
			\|
				\mytr_{\congsphere} \widetilde{\upchi}^{(Small)}
			\|_{L_t^{\frac{q}{2}} L_u^{\infty} C_{\upomega}^{0,\updelta_0}(\widetilde{\mathcal{M}})}
			+
			\|
				\pmb{\partial} \vec{\Psi}
			\|_{L_t^{\frac{q}{2}} L_u^{\infty} C_{\upomega}^{0,\updelta_0}(\widetilde{\mathcal{M}})}
			\\
		& \ \
			+
			\| \rgeo^{1/2} \|_{L^{\infty}(\widetilde{\mathcal{M}})}
			\| 
				\pmb{\partial} \vec{\Psi} 
			\|_{L_t^{\frac{q}{2}} L_x^{\infty}(\widetilde{\mathcal{M}})}
			\left\|	
				\rgeo^{1/2}(\pmb{\partial} \vec{\Psi},\mytr_{\congsphere} \widetilde{\upchi}^{(Small)},\hat{\upchi})
			\right\|_{L_t^{\infty} L_u^{\infty} L_{\upomega}^p(\widetilde{\mathcal{M}})}.
			\notag
	\end{align}
	From 
	\eqref{E:FIRSTSTEPCONNECTIONCOEFFICIENTESTIMATESNEEDEDTODERIVESPATIALLYLOCALIZEDDECAYFROMCONFORMALENERGYESTIMATE},
	\eqref{E:RESCALEDBOOTBOUNDS},
	\eqref{E:RGEOANDUBOUNDS},
	\eqref{E:ACOUSTICALLTINFTYLOMEGAPALONGCONES}, the already proven bound 
	\eqref{E:CONNECTIONCOEFFICIENTESTIMATESNEEDEDTODERIVESPATIALLYLOCALIZEDDECAYFROMCONFORMALENERGYESTIMATE} 
	for $\mytr_{\congsphere} \widetilde{\upchi}^{(Small)}$,
	\eqref{E:SMOOTHFUNCTIONTIMESFLUIDONEDERIVATIVESIGMAT},
	and \eqref{E:ESTIMATEFORANGLEHOLDERCONTINUITYOFPARTIALPSIPARTIALVORTANDPARTIALGRADENT},
	we conclude that if $q > 2$ is sufficiently close to $2$, 
	then
	$
	\|
			\hat{\upchi}
		\|_{L_t^{\frac{q}{2}} L_u^{\infty} C_{\upomega}^{0,\updelta_0}(\widetilde{\mathcal{M}})}
	\lesssim
	\uplambda^{\frac{2}{q} - 1 - 4 \upepsilon_0(\frac{4}{q} - 1)}
	$
	as desired.

\subsubsection{Proof of \eqref{E:TRICHIMODSMALLTRFREECHIANDTORSIONL2INTIMEALONGSOUNDCONES} for $\hat{\upchi}$}
Using \eqref{E:EUCLIDEANFORMMORREYONSTU} with $\leb := p$
and taking into account \eqref{E:BOUNDSONLEBESGUEEXPONENTP},
we find that
$\| \hat{\upchi} \|_{L_t^2 C_{\upomega}^{0,\updelta_0}(\widetilde{\mathcal{C}}_u)}
\lesssim
\| \rgeo \angD \hat{\upchi} \|_{L_t^2 L_{\upomega}^p(\widetilde{\mathcal{C}}_u)}
+
\| \upchi \|_{L_t^2 L_{\upomega}^2(\widetilde{\mathcal{C}}_u)}
$.
Using the already proven estimates \eqref{E:ONEANGULARDERIVATIVEOFTRCHIMODANDTRFREECHIL2INTIMEESTIMATES} for
$\| \rgeo \angD \hat{\upchi} \|_{L_t^2 L_{\upomega}^p (\widetilde{\mathcal{C}}_u)}$
and \eqref{E:ACOUSTICALLT2LOMEGAPANDLDERIVATIVESALONGCONES}
for $\| \upchi \|_{L_t^2 L_{\upomega}^p(\widetilde{\mathcal{C}}_u)}$,
we conclude that the RHS of the previous expression is $\lesssim \uplambda^{-\frac{1}{2}}$ as desired.

\subsubsection{Proof of \eqref{E:COMPARISONWITHROUNDMETRICLINFINITYBOUNDS}--\eqref{E:ONEANGULARDERIVATIVECOMPARISONWITHROUNDMETRICLPINANGLESLINFINITYINTIMEBOUNDS}}
Based on the transport equation \eqref{E:EVOLUTIONEQUATIONFORANGULARCOORDINATECOMPONENTSOFGSPHEREMINUSEUCLIDEAN},
Lemma~\ref{L:TRANSPORT},
\eqref{E:SPHEREFINITELIMITSALONGTIP}--\eqref{E:ANGULARDERIVATIVESSPHEREFINITELIMITSALONGTIP},
\eqref{E:INITIALSPHEREMETRICESTIMATE}--\eqref{E:INITIALSPHEREMETRICFIRSTCOORDINATEANGULARDERIVATIVESESTIMATE},
\eqref{E:RESCALEDBOOTBOUNDS},
the bootstrap assumptions,
Prop.\,\ref{P:ESTIMATESFORFLUIDVARIABLES},
and the previously proven estimates 
\eqref{E:ACOUSTICALLT2LOMEGAPANDLDERIVATIVESALONGCONES},
\eqref{E:ONEANGULARDERIVATIVEOFTRCHIMODANDTRFREECHIL2INTIMEESTIMATES},
\eqref{E:TRICHIMODSMALLTRFREECHIANDTORSIONL2INTIMEALONGSOUNDCONES},
and
\eqref{E:IMPROVEDININTERIORL2INTIMELINFINITYINSPACECONNECTIONCOFFICIENTS},
the proof given in \cite{qW2017}*{Subsubsection~5.2.2} goes through verbatim.

\subsubsection{Proof of \eqref{E:L2INTIMEESTIMATESFORNULLLAPSEALONGCONES} and \eqref{E:IMPROVEDINTERIORL2INTIMEESTIMATESFORNULLLAPSEALONGCONES}}
Based on the transport equation \eqref{E:EVOLUTIONNULLAPSEUSEEULER},
the bootstrap assumptions,
and the previously proven estimates \eqref{E:NULLLAPSECLOSETOUNITY} and \eqref{E:ACOUSTICALLT2LOMEGAPANDLDERIVATIVESALONGCONES},
the arguments given in the discussion surrounding \cite{qW2017}*{Equation~(5.90)} go through verbatim.

\subsubsection{Proof of \eqref{E:CONESRATIOOFSPHEREVOLUMEFORMTORGEOSQUAREDTIMESROUNDVOLUMEFORML2INTIMELPINOMEGA}}
Based on the evolution equations \eqref{E:LUNITVOLUMEFORMRGEOTOMINUSTWORESCALED}--\eqref{E:LUNITANGDCOMMUTEDVOLUMEFORMRGEOTOMINUSTWORESCALED},
Lemma~\ref{L:TRANSPORT},
the bootstrap assumptions,
and the previously proven estimate \eqref{E:ONEANGULARDERIVATIVEOFTRCHIMODANDTRFREECHIL2INTIMEESTIMATES},
 the proof of \cite{qW2017}*{Lemma~5.15} 
(in which $\ln\left(\rgeo^{-2} \volrat \right)$ was denoted by ``$\varphi$'')
goes through verbatim.

\subsubsection{Proof of \eqref{E:TRICHIMODSMALLTRFREECHIANDTORSIONL2INTIMEALONGSOUNDCONES} 
for $\| \upzeta \|_{L_t^2 C_{\upomega}^{0,\updelta_0}(\widetilde{\mathcal{C}}_u)}$
and \eqref{E:CONESRWEIGHTEDMASSASPECTANDANGULARDERIVATIVESOFTORSIONL2INTIMELPINOMEGA}}
We first simultaneously prove 
\eqref{E:TRICHIMODSMALLTRFREECHIANDTORSIONL2INTIMEALONGSOUNDCONES}
for $\| \upzeta \|_{L_t^2 C_{\upomega}^{0,\updelta_0}(\widetilde{\mathcal{C}}_u)}$
and
\eqref{E:CONESRWEIGHTEDMASSASPECTANDANGULARDERIVATIVESOFTORSIONL2INTIMELPINOMEGA} for
$\| \rgeo \angD \upzeta \|_{L_t^2 L_{\upomega}^p(\widetilde{\mathcal{C}}_u)}$
with the help of the Hodge system \eqref{E:TORSIONDIV}--\eqref{E:TORSIONCURL}.
We now define the following two scalar functions:
$\mathfrak{F} := \mbox{RHS}~\eqref{E:TORSIONDIV}$,
$\mathfrak{G} := \mbox{RHS}~\eqref{E:TORSIONCURL}$.
From the Calderon--Zygmund estimate \eqref{E:ONEFORMSTUCALDERONZYGMUNDHODGEESTIMATES}
with $\leb := p$,
we deduce that 
for each fixed $u \in [-\frac{4}{5} \RescaledTboot, \RescaledTboot]$, 
we have
$\| \rgeo \angD \upzeta \|_{L_t^2 L_{\upomega}^p(\widetilde{\mathcal{C}}_u)}
\lesssim 
\| \rgeo (\mathfrak{F},\mathfrak{G}) \|_{L_t^2 L_{\upomega}^p(\widetilde{\mathcal{C}}_u)}
$.
In the arguments given just below \cite{qW2017}*{Equation~(5.97)},
based on the bootstrap assumptions,
\eqref{E:RESCALEDBOOTBOUNDS}, 
\eqref{E:RGEOANDUBOUNDS},
and the previously proven estimates
\eqref{E:ACOUSTICALLTINFTYLOMEGAPALONGCONES},
\eqref{E:TRICHIMODSMALLTRFREECHIANDTORSIONL2INTIMEALONGSOUNDCONES} for $\mytr_{\congsphere} \widetilde{\upchi}^{(Small)}$ and $\hat{\upchi}$,
and
\eqref{E:CONESRATIOOFSPHEREVOLUMEFORMTORGEOSQUAREDTIMESROUNDVOLUMEFORML2INTIMELPINOMEGA},
all terms on RHSs~\eqref{E:TORSIONDIV}--\eqref{E:TORSIONCURL} were shown to be bounded in the norm
$\| \rgeo \cdot \|_{L_t^2 L_{\upomega}^p(\widetilde{\mathcal{C}}_u)}$
by $\lesssim \uplambda^{-1/2} + \uplambda^{-4 \upepsilon_0}  \| \upzeta \|_{L_t^2 L_{\upomega}^{\infty}(\widetilde{\mathcal{C}}_u)}$,
except that the terms on RHS~\eqref{E:TORSIONDIV}
that are multiplied by $\uplambda^{-1}$ were not present there.
To handle these remaining terms, we use \eqref{E:RGEOLAMBDAINVERSELINEARTERMLT2LUINFTYLOMEGAP}.
We have thus shown that
$\| \rgeo \angD \upzeta \|_{L_t^2 L_{\upomega}^p(\widetilde{\mathcal{C}}_u)} 
\lesssim \uplambda^{-1/2} + \uplambda^{-4 \upepsilon_0}  \| \upzeta \|_{L_t^2 L_{\upomega}^{\infty}(\widetilde{\mathcal{C}}_u)}$.
Moreover, using \eqref{E:EUCLIDEANFORMMORREYONSTU} with $\leb := p$,
the parameter relation \eqref{E:BOUNDSONLEBESGUEEXPONENTP}, 
and \eqref{E:ACOUSTICALLT2LOMEGAPANDLDERIVATIVESALONGCONES}
(which implies that $\| \upzeta \|_{L_t^2 L_{\upomega}^2(\widetilde{\mathcal{C}}_u)} 
\lesssim 
\| \upzeta \|_{L_t^2 L_{\upomega}^p(\widetilde{\mathcal{C}}_u)}
\lesssim \uplambda^{-1/2}
$),
we find that
$\| \upzeta \|_{L_t^2 C_{\upomega}^{0,\updelta_0}(\widetilde{\mathcal{C}}_u)}
\lesssim
\| \rgeo \angD \upzeta \|_{L_t^2 L_{\upomega}^p(\widetilde{\mathcal{C}}_u)}
+
\| \upzeta \|_{L_t^2 L_{\upomega}^2(\widetilde{\mathcal{C}}_u)}
\lesssim
\| \rgeo \angD \upzeta \|_{L_t^2 L_{\upomega}^p(\widetilde{\mathcal{C}}_u)}
+
\uplambda^{-1/2}
$.
Combining the above estimates, we
find that
$\| \rgeo \angD \upzeta \|_{L_t^2 L_{\upomega}^p(\widetilde{\mathcal{C}}_u)} 
\lesssim \uplambda^{-1/2} + 
\uplambda^{-4 \upepsilon_0} \| \rgeo \angD \upzeta \|_{L_t^2 L_{\upomega}^p(\widetilde{\mathcal{C}}_u)}
$,
from which we readily conclude 
(when $\uplambda$ is sufficiently large)
the desired bound
\eqref{E:CONESRWEIGHTEDMASSASPECTANDANGULARDERIVATIVESOFTORSIONL2INTIMELPINOMEGA} for
$\| \rgeo \angD \upzeta \|_{L_t^2 L_{\upomega}^p(\widetilde{\mathcal{C}}_u)}$
and the desired bound \eqref{E:TRICHIMODSMALLTRFREECHIANDTORSIONL2INTIMEALONGSOUNDCONES}
for $\| \upzeta \|_{L_t^2 C_{\upomega}^{0,\updelta_0}(\widetilde{\mathcal{C}}_u)}$.

To prove \eqref{E:CONESRWEIGHTEDMASSASPECTANDANGULARDERIVATIVESOFTORSIONL2INTIMELPINOMEGA} for
$\| \rgeo \upmu  \|_{L_t^2 L_{\upomega}^p(\widetilde{\mathcal{C}}_u)}$,
we must show that 
$\| \rgeo \times \mbox{RHS~\eqref{E:MASSASPECTDECOMP}}  \|_{L_t^2 L_{\upomega}^p(\widetilde{\mathcal{C}}_u)}
\lesssim \uplambda^{-1/2}
$.
In the arguments given just below \cite{qW2017}*{Equation~(5.101)},
based on the bootstrap assumptions,
\eqref{E:RESCALEDBOOTBOUNDS},
\eqref{E:RGEOANDUBOUNDS}, 
and the previously proven estimates
\eqref{E:ACOUSTICALLTINFTYLOMEGAPALONGCONES},
\eqref{E:TRICHIMODSMALLTRFREECHIANDTORSIONL2INTIMEALONGSOUNDCONES},
and
\eqref{E:CONESRATIOOFSPHEREVOLUMEFORMTORGEOSQUAREDTIMESROUNDVOLUMEFORML2INTIMELPINOMEGA},
all terms on RHS~\eqref{E:MASSASPECTDECOMP} were shown to satisfy the desired bound,
except the term on RHS~\eqref{E:MASSASPECTDECOMP} that is multiplied by $\uplambda^{-1}$ was not present there.
To handle this remaining term, we use \eqref{E:RGEOLAMBDAINVERSELINEARTERMLT2LUINFTYLOMEGAP}.

\subsubsection{Proof of \eqref{E:IMPROVEDININTERIORL2INTIMELINFINITYINSPACECONNECTIONCOFFICIENTS} for $\upzeta$ and 
\eqref{E:CONNECTIONCOEFFICIENTESTIMATESNEEDEDTODERIVESPATIALLYLOCALIZEDDECAYFROMCONFORMALENERGYESTIMATE}
for $\upzeta$}
To prove \eqref{E:IMPROVEDININTERIORL2INTIMELINFINITYINSPACECONNECTIONCOFFICIENTS} for $\upzeta$,
we will use the Hodge system \eqref{E:TORSIONDIV}--\eqref{E:TORSIONCURL}. 
From these equations and 
the Calderon--Zygmund estimate
\eqref{E:LINFTYHODGEESTIMATENODERIVATIVESONLHSINVOLVINGCARTESIANCOMPONENTSONRHS} 
with $\upzeta$ in the role of $\upxi$, 
with $\leb :=p$ and $m := 2$, 
with $\gensmoothfunction_{(\vec{\Lunit})} \cdot \pmb{\partial} \vec{\Psi}$ in the role of $\mathfrak{F}$
(where $\mathfrak{F}$ represents the second terms on RHSs \eqref{E:TORSIONDIV} and \eqref{E:TORSIONCURL}),
with $\gensmoothfunction(\vec{\Psi}) \cdot \pmb{\partial} \vec{\Psi}$ in the role of $\vec{\widetilde{\mathfrak{F}}}$
and with $\updelta' > 0$ chosen to be sufficiently small,
we find (where the implicit constants can depend on $\updelta'$) that
\begin{align} \label{E:FIRSTSTEPFORTORSIONESTIMATEL2INTIMELINFINITYINSPACECONNECTIONCOFFICIENTS}
	\|
		\upzeta
	\|_{L_t^2 L_x^{\infty}(\widetilde{\mathcal{M}}^{(Int)})}
	& 
	\lesssim
	\left\| 
		\rgeo 
		(\pmb{\partial} \vec{\Psi},\mytr_{\congsphere} \widetilde{\upchi}^{(Small)},\hat{\upchi},\upzeta,\rgeo^{-1})
		\cdot
		(\pmb{\partial} \vec{\Psi},\hat{\upchi},\upzeta)
	\right\|_{L_t^2 L_u^{\infty} L_{\upomega}^p(\widetilde{\mathcal{M}}^{(Int)})}
	 \\
& \ \
+
\uplambda^{-1} 
\|
	\rgeo (\vec{\VortVort},\DivGradEnt)
\|_{L_t^2 L_u^{\infty} L_{\upomega}^p(\widetilde{\mathcal{M}})}
	+
	\left\|
		\rgeo
			\angD \ln\left( \rgeo^{-2} \volrat \right)
				\cdot
				(\pmb{\partial} \vec{\Psi},\upzeta)
	\right\|_{L_t^2 L_u^{\infty} L_{\upomega}^p(\widetilde{\mathcal{M}})}
	\notag \\
& \ \
	+
	\left\| 
		\upnu^{\updelta'} P_{\upnu} 
		\left(
			\gensmoothfunction_{(\vec{\Lunit})} \cdot \pmb{\partial} \vec{\Psi}
		\right)
	\right\|_{L_t^2 \ell_{\upnu}^2 L_x^{\infty}(\widetilde{\mathcal{M}})}
	+
	\| \pmb{\partial} \vec{\Psi}\|_{L_t^2 L_x^{\infty}(\widetilde{\mathcal{M}})}.
	\notag
\end{align}
Assuming that $\updelta' > 0$ is chosen to be sufficiently small
(in particular, at least as small as the parameter $\updelta_0$ in \eqref{E:RESCALEDBOOTL2LINFINITYFIRSTDERIVATIVESOFVORTICITYBOOTANDNENTROPYGRADIENT}),
the arguments given on \cite{qW2017}*{page~52}
show that, 
thanks to the bootstrap assumptions,
\eqref{E:RESCALEDBOOTBOUNDS},
\eqref{E:RGEOANDUBOUNDS}, 
and the already proven estimates
\eqref{E:ACOUSTICALLTINFTYLOMEGAPALONGCONES},
\eqref{E:TRICHIMODSMALLTRFREECHIANDTORSIONL2INTIMEALONGSOUNDCONES},
\eqref{E:IMPROVEDININTERIORL2INTIMELINFINITYINSPACECONNECTIONCOFFICIENTS}
for $\mytr_{\congsphere} \widetilde{\upchi}^{(Small)}$ and $\hat{\upchi}$,
and
\eqref{E:CONESRATIOOFSPHEREVOLUMEFORMTORGEOSQUAREDTIMESROUNDVOLUMEFORML2INTIMELPINOMEGA},
all terms on RHS~\eqref{E:FIRSTSTEPFORTORSIONESTIMATEL2INTIMELINFINITYINSPACECONNECTIONCOFFICIENTS}
are
$\lesssim \uplambda^{-\frac{1}{2} - 3 \upepsilon_0} 
+ 
 \uplambda^{- 4 \upepsilon_0} 
\|
		\upzeta
\|_{L_t^2 L_x^{\infty}(\widetilde{\mathcal{M}}^{(Int)})}
$,
except that the term on RHS~\eqref{E:FIRSTSTEPFORTORSIONESTIMATEL2INTIMELINFINITYINSPACECONNECTIONCOFFICIENTS} that
is multiplied by $\uplambda^{-1}$ was not present in \cite{qW2017}. 
To handle this remaining term,
we use \eqref{E:RGEOLAMBDAINVERSELINEARTERMLT2LUINFTYLOMEGAP}.
This shows that
$
\|
	\upzeta
\|_{L_t^2 L_x^{\infty}(\widetilde{\mathcal{M}}^{(Int)})}
\lesssim \uplambda^{-\frac{1}{2} - 3 \upepsilon_0} 
+ 
 \uplambda^{- 4 \upepsilon_0} 
\|
	\upzeta
\|_{L_t^2 L_x^{\infty}(\widetilde{\mathcal{M}}^{(Int)})}
$
which, when $\uplambda$ is sufficiently large,
yields the desired bound \eqref{E:IMPROVEDININTERIORL2INTIMELINFINITYINSPACECONNECTIONCOFFICIENTS} for $\upzeta$.

Similarly, 
based on the Hodge system \eqref{E:TORSIONDIV}--\eqref{E:TORSIONCURL}, 
the Calderon--Zygmund estimate
\eqref{E:LINFTYHODGEESTIMATENODERIVATIVESONLHSINVOLVINGCARTESIANCOMPONENTSONRHS} with $\leb :=p$, 
the bootstrap assumptions,
\eqref{E:RESCALEDBOOTBOUNDS},
\eqref{E:RGEOANDUBOUNDS}, 
the already proven estimates
\eqref{E:ACOUSTICALLTINFTYLOMEGAPALONGCONES}
and
\eqref{E:CONESRATIOOFSPHEREVOLUMEFORMTORGEOSQUAREDTIMESROUNDVOLUMEFORML2INTIMELPINOMEGA}
and the already proven estimate
\eqref{E:CONNECTIONCOEFFICIENTESTIMATESNEEDEDTODERIVESPATIALLYLOCALIZEDDECAYFROMCONFORMALENERGYESTIMATE}
for $\mytr_{\congsphere} \widetilde{\upchi}^{(Small)}$ and $\hat{\upchi}$,
the arguments given on \cite{qW2017}*{page~52} yield the desired estimate
\eqref{E:CONNECTIONCOEFFICIENTESTIMATESNEEDEDTODERIVESPATIALLYLOCALIZEDDECAYFROMCONFORMALENERGYESTIMATE}
for $\upzeta$, where we use \eqref{E:RGEOLAMBDAINVERSELINEARTERMLTQOVER2LUINFTYLOMEGAP} to handle the
$\uplambda^{-1}$-multiplied terms on RHSs~\eqref{E:TORSIONDIV}--\eqref{E:TORSIONCURL}.

\subsubsection{Proof of \eqref{E:CONEFIRSTDERIVATIVEBOUNDSFORCONFORMALFACTOR}--\eqref{E:RWEIGHTEDLINFTYBOUNDSFORCONFORMALFACTOR}}
Based on \eqref{E:SIGMAEVOLUTION}--\eqref{E:SIGMADATA},
Lemma~\ref{L:TRANSPORT}, 
the bootstrap assumptions,
and the previously proven estimate \eqref{E:IMPROVEDININTERIORL2INTIMELINFINITYINSPACECONNECTIONCOFFICIENTS},
the proof of these estimates given in \cite{qW2017}*{Lemma~6.1} goes through verbatim,
except for the estimate \eqref{E:CONEFIRSTDERIVATIVEBOUNDSFORCONFORMALFACTOR}
for $\| \rgeo^{\frac{1}{2}} \angD \upsigma \|_{L_{\upomega}^p L_t^{\infty}(\widetilde{\mathcal{C}}_u)}$.
To bound this remaining term,
we first use \eqref{E:STUVOLUMEFORMCOMPARISONWITHUNITROUNDMETRICVOLUMEFORM}
to deduce
(noting that $u \geq 0$ since, by assumption, we have $\widetilde{\mathcal{C}}_u \subset \widetilde{\mathcal{M}}^{(Int)}$)
\begin{align} \label{E:FIRSTSTEPINPROOFOFEXTRATERMBOUNDCONEFIRSTDERIVATIVEBOUNDSFORCONFORMALFACTOR}
\| \rgeo^{\frac{1}{2}} \angD \upsigma \|_{L_{\upomega}^p L_t^{\infty}(\widetilde{\mathcal{C}}_u)}^p
& 
\lesssim
\int_{\mathbb{S}^2}
\mbox{ess sup}_{t \in [u,\RescaledTboot]} 
|\rgeo^{\frac{1}{2}} \angD \upsigma |_{\gsphere}^p(t,u,\upomega)
\, d \flatspherevolarg{\upomega}	
	\\
& 
\lesssim
\int_{\mathbb{S}^2}
\mbox{ess sup}_{t \in [u,\RescaledTboot]} 
\left\lbrace
\volrat(t,u,\upomega)
| 
	\rgeo^{\frac{1}{2} - \frac{2}{p}} \angD \upsigma 
|_{\gsphere}^p(t,u,\upomega)
\right\rbrace
\, d \flatspherevolarg{\upomega}
	\notag
		\\
& :=
	\| \rgeo^{\frac{1}{2} - \frac{2}{p}} \angD \upsigma \|_{L_{\gsphere}^p L_t^{\infty}(\widetilde{\mathcal{C}}_u)}^p.
\notag
\end{align}
From \eqref{E:FIRSTSTEPINPROOFOFEXTRATERMBOUNDCONEFIRSTDERIVATIVEBOUNDSFORCONFORMALFACTOR} and the already proven bound
\eqref{E:CONEFIRSTDERIVATIVEBOUNDSFORCONFORMALFACTOR} for 
$\| \rgeo^{\frac{1}{2} - \frac{2}{p}} \angD \upsigma \|_{L_{\gsphere}^p L_t^{\infty}(\widetilde{\mathcal{C}}_u)}$,
we conclude that 
$\mbox{RHS}~\eqref{E:FIRSTSTEPINPROOFOFEXTRATERMBOUNDCONEFIRSTDERIVATIVEBOUNDSFORCONFORMALFACTOR}
\lesssim
\uplambda^{-p/2}
$ 
as desired.

\subsubsection{Proof of \eqref{E:SPACETIMEL2INUANDTLPINOMEGAFORANGDSIGMAANDRGEOWEIGHTEDMODIFIEDMASSASPECTANDANGDMODIFIEDTORSION}--\eqref{E:SPACETIMEL2INULINFINTYINTLPINOMEGAFORRGEOTHREEHAVESWEIGHTEDMODIFIEDMASSASPECT}}
We make the bootstrap assumption 
$\| \angD \upsigma \|_{L_u^2 L_t^2 L_{\upomega}^{\infty} (\widetilde{\mathcal{M}}^{(Int)})} \leq 1$;
this is viable because \eqref{E:SPACETIMEL2INUANDTLPINOMEGAFORANGDSIGMAANDRGEOWEIGHTEDMODIFIEDMASSASPECTANDANGDMODIFIEDTORSION}
yields an improvement of this bootstrap assumption.

We start by deriving a preliminary estimate for 
$\| \rgeo \angD \widetilde{\upzeta} \|_{L_u^2 L_t^2 L_{\upomega}^p (\widetilde{\mathcal{M}}^{(Int)})}$
using Hodge system \eqref{E:MODIFIEDTORSIONDIV}--\eqref{E:MODIFIEDTORSIONCURL}.
To proceed, we define the following two scalar functions:
$\mathfrak{F} := \mbox{RHS}~\eqref{E:MODIFIEDTORSIONDIV}$,
$\mathfrak{G} := \mbox{RHS}~\eqref{E:MODIFIEDTORSIONCURL}$.
From these equations and the Calderon--Zygmund estimate \eqref{E:ONEFORMSTUCALDERONZYGMUNDHODGEESTIMATES},
we deduce
\begin{align}
\notag
\| \rgeo \angD \widetilde{\upzeta} \|_{L_u^2 L_t^2 L_{\upomega}^p (\widetilde{\mathcal{M}}^{(Int)})}
\lesssim
\| \rgeo \mathfrak{F} \|_{L_u^2 L_t^2 L_{\upomega}^p (\widetilde{\mathcal{M}}^{(Int)})}
+
\| \rgeo \mathfrak{G} \|_{L_u^2 L_t^2 L_{\upomega}^p (\widetilde{\mathcal{M}}^{(Int)})}
+
\| \rgeo \check{\upmu} \|_{L_u^2 L_t^2 L_{\upomega}^p (\widetilde{\mathcal{M}}^{(Int)})}.
\end{align}
In the last two paragraphs of the proof of \cite{qW2017}*{Proposition~6.3},
based on the bootstrap assumptions,
\eqref{E:RESCALEDBOOTBOUNDS},
\eqref{E:RGEOANDUBOUNDS}, 
and the previously proven estimates
\eqref{E:ACOUSTICALLT2LOMEGAPANDLDERIVATIVESALONGCONES}
and
\eqref{E:IMPROVEDININTERIORL2INTIMELINFINITYINSPACECONNECTIONCOFFICIENTS},
the author showed that all terms on RHSs~\eqref{E:MODIFIEDTORSIONDIV}--\eqref{E:MODIFIEDTORSIONCURL}
are bounded in the norm
$\| \rgeo \cdot \|_{L_u^2 L_t^2 L_{\upomega}^p (\widetilde{\mathcal{M}}^{(Int)})}$
by $\lesssim \uplambda^{- 4 \upepsilon_0}$,
except that the terms on RHS~\eqref{E:MODIFIEDTORSIONDIV}
that are multiplied by $\uplambda^{-1}$ were not present there. To handle these remaining terms, 
we use \eqref{E:RGEOLAMBDAINVERSELINEARTERMLU2LT2LOMEGAP},
which in total yields the desired preliminary estimate
$
\| \rgeo \angD \widetilde{\upzeta} \|_{L_u^2 L_t^2 L_{\upomega}^p (\widetilde{\mathcal{M}}^{(Int)})}
\lesssim \uplambda^{- 4 \upepsilon_0}
+
\| \rgeo \check{\upmu} \|_{L_u^2 L_t^2 L_{\upomega}^p (\widetilde{\mathcal{M}}^{(Int)})}$.

We now derive estimates for $\check{\upmu}$.
Using the transport equation \eqref{E:MODIFIEDMASSASPECTEVOLUTIONEQUATION},
the identity \eqref{E:MAIDENTITYTRANSPORTLEMMA},
the vanishing initial conditions for $\rgeo^2 \check{\upmu}$
along the cone-tip axis guaranteed by \eqref{E:CONNECTIONCOEFFICIENTS0LIMITSALONGTIP},
and \eqref{E:STUVOLUMEFORMCOMPARISONWITHUNITROUNDMETRICVOLUMEFORM},
we see that in $\widetilde{\mathcal{M}}^{(Int)}$, we have
\begin{align} \label{E:CHECKMUTRANSPORTINTEGRATED}
	| 
		\rgeo^2 \check{\upmu}
	|(t,u,\upomega)
	& \lesssim
		\int_u^t
			\rgeo^2
			\left\lbrace
				|
				\mathfrak{I}_{(1)}
				+
				\mathfrak{I}_{(2)}
			|
			\right\rbrace
			(\uptau,u,\upomega)
		\, d \uptau,
\end{align}
where $\mathfrak{I}_{(1)}$ and $\mathfrak{I}_{(2)}$ 
are defined in
\eqref{E:FIRSTINHOMTERMMODIFIEDMASSASPECTEVOLUTIONEQUATION}--\eqref{E:SECONDINHOMTERMMODIFIEDMASSASPECTEVOLUTIONEQUATION}.
We now divide \eqref{E:CHECKMUTRANSPORTINTEGRATED} by $\rgeo(t,u)$ and take the norm
$\| \cdot \|_{L_u^2 L_t^2 L_{\upomega}^p (\widetilde{\mathcal{M}}^{(Int)})}$.
In the proof of \cite{qW2017}*{Proposition~6.3}, 
the author derived estimates for the terms on RHS~\eqref{E:CHECKMUTRANSPORTINTEGRATED} that imply,
based on the bootstrap assumptions,
\eqref{E:RESCALEDBOOTBOUNDS},
\eqref{E:RGEOANDUBOUNDS}, 
and the previously proven estimates
\eqref{E:ACOUSTICALLT2LOMEGAPANDLDERIVATIVESALONGCONES},
\eqref{E:ASECONDACOUSTICALLTINFTYLOMEGAPALONGCONES},
\eqref{E:ONEANGULARDERIVATIVEOFTRCHIMODLINFINITYESTIMATES},
\eqref{E:ONEANGULARDERIVATIVEOFTRCHIMODANDTRFREECHIL2INTIMEESTIMATES},
and
\eqref{E:IMPROVEDININTERIORL2INTIMELINFINITYINSPACECONNECTIONCOFFICIENTS},
that
$\| \rgeo \check{\upmu} \|_{L_u^2 L_t^2 L_{\upomega}^p (\widetilde{\mathcal{M}}^{(Int)})}
\lesssim 
\uplambda^{- 4 \upepsilon_0}
+
\uplambda^{- 8 \upepsilon_0}
\| \rgeo \check{\upmu} \|_{L_u^2 L_t^2 L_{\upomega}^p (\widetilde{\mathcal{M}}^{(Int)})}
+
\uplambda^{- 4 \upepsilon_0}
\| \rgeo \angD \widetilde{\upzeta} \|_{L_u^2 L_t^2 L_{\upomega}^p (\widetilde{\mathcal{M}}^{(Int)})}
$,
except that the terms on RHS~\eqref{E:SECONDINHOMTERMMODIFIEDMASSASPECTEVOLUTIONEQUATION} 
that are multiplied by $\uplambda^{-1}$ were not present there. To handle these remaining terms, we use
\eqref{E:LAMBDAINVERSEQUADRATICTERMTIMEINTEGRALLT2LU2LOMEGAP}
and
\eqref{E:LAMBDAINVERSETIMEINTEGRALLT2LU2LOMEGAP}.
Considering also the preliminary estimate for
$
\| \rgeo \angD \widetilde{\upzeta} \|_{L_u^2 L_t^2 L_{\upomega}^p (\widetilde{\mathcal{M}}^{(Int)})}
$ derived in the previous paragraph,
we deduce
$\| \rgeo \check{\upmu} \|_{L_u^2 L_t^2 L_{\upomega}^p (\widetilde{\mathcal{M}}^{(Int)})}
\lesssim \uplambda^{- 4 \upepsilon_0}
+
\uplambda^{- 8 \upepsilon_0}
\| \rgeo \check{\upmu} \|_{L_u^2 L_t^2 L_{\upomega}^p (\widetilde{\mathcal{M}}^{(Int)})}
$.
Thus, when $\uplambda$ is sufficiently large, we conclude the desired bound
\eqref{E:SPACETIMEL2INUANDTLPINOMEGAFORANGDSIGMAANDRGEOWEIGHTEDMODIFIEDMASSASPECTANDANGDMODIFIEDTORSION} for
$\| \rgeo \check{\upmu} \|_{L_u^2 L_t^2 L_{\upomega}^p (\widetilde{\mathcal{M}}^{(Int)})}$.
Inserting this bound into the preliminary estimate for
$
\| \rgeo \angD \widetilde{\upzeta} \|_{L_u^2 L_t^2 L_{\upomega}^p (\widetilde{\mathcal{M}}^{(Int)})}
$ derived in the previous paragraph,
we also conclude the desired bound \eqref{E:SPACETIMEL2INUANDTLPINOMEGAFORANGDSIGMAANDRGEOWEIGHTEDMODIFIEDMASSASPECTANDANGDMODIFIEDTORSION} for
$\| \rgeo \angD \widetilde{\upzeta} \|_{L_u^2 L_t^2 L_{\upomega}^p (\widetilde{\mathcal{M}}^{(Int)})}$.

A similar argument yields \eqref{E:SPACETIMEL2INULINFINTYINTLPINOMEGAFORRGEOTHREEHAVESWEIGHTEDMODIFIEDMASSASPECT}, 
where we divide \eqref{E:CHECKMUTRANSPORTINTEGRATED} by $\rgeo^{1/2}(t,u)$,
and to handle the terms on RHS~\eqref{E:SECONDINHOMTERMMODIFIEDMASSASPECTEVOLUTIONEQUATION} 
that are multiplied by $\uplambda^{-1}$,
we use 
\eqref{E:LAMBDAINVERSEQUADRATICTERMTIMEINTEGRALLU2LTINFTYLOMEGAP}
and
\eqref{E:LAMBDAINVERSETIMEINTEGRALLU2LTINFTYLOMEGAP}; we omit the details.

It remains for us to prove the estimate \eqref{E:SPACETIMEL2INUANDTLPINOMEGAFORANGDSIGMAANDRGEOWEIGHTEDMODIFIEDMASSASPECTANDANGDMODIFIEDTORSION} for 
$\| \angD \upsigma \|_{L_u^2 L_t^2 C_{\upomega}^{0,\updelta_0} (\widetilde{\mathcal{M}}^{(Int)})}$.
First, using definition \eqref{E:MODTORSION},
\eqref{E:EUCLIDEANFORMMORREYONSTU} with $\leb := p$,
and the parameter relation \eqref{E:BOUNDSONLEBESGUEEXPONENTP}, 
we see that 
\begin{align}
\notag
\| \angD \upsigma \|_{L_u^2 L_t^2 C_{\upomega}^{0,\updelta_0} (\widetilde{\mathcal{M}}^{(Int)})}
		\lesssim 
		\| \rgeo \angD (\widetilde{\upzeta},\upzeta) \|_{L_u^2 L_t^2 L_{\upomega}^p (\widetilde{\mathcal{M}}^{(Int)})}
		+
		\| \angD \upsigma \|_{L_u^2 L_t^2 L_{\upomega}^2 (\widetilde{\mathcal{M}}^{(Int)})}.
\end{align}
We have already shown that $\| \rgeo \angD \widetilde{\upzeta} \|_{L_u^2 L_t^2 L_{\upomega}^p (\widetilde{\mathcal{M}}^{(Int)})} \lesssim \uplambda^{- 4 \upepsilon_0}$.
To bound $\| \rgeo \angD \upzeta \|_{L_u^2 L_t^2 L_{\upomega}^p (\widetilde{\mathcal{M}}^{(Int)})}$
by $\lesssim \uplambda^{- 4 \upepsilon_0}$,
we square the already proven estimate \eqref{E:CONESRWEIGHTEDMASSASPECTANDANGULARDERIVATIVESOFTORSIONL2INTIMELPINOMEGA}
for $\| \rgeo \angD \upzeta  \|_{L_t^2 L_{\upomega}^p(\widetilde{\mathcal{C}}_u)}$,
integrate with respect to $u$ over $u \in [0,\RescaledTboot]$, and use the bound \eqref{E:RGEOANDUBOUNDS} for $u$.
Finally, to obtain the bound
$\| \angD \upsigma \|_{L_u^2 L_t^2 L_{\upomega}^2 (\widetilde{\mathcal{M}}^{(Int)})} \lesssim \uplambda^{- 4 \upepsilon_0}$,
we square the already proven estimate \eqref{E:CONEFIRSTDERIVATIVEBOUNDSFORCONFORMALFACTOR}
for $\| \angD \upsigma \|_{L_t^2 L_{\upomega}^p(\widetilde{\mathcal{C}}_u)}$,
integrate with respect to $u$ over $u \in [0,\RescaledTboot]$, 
and use the bound \eqref{E:RGEOANDUBOUNDS} for $u$. Combining these estimates,
we conclude that $\| \angD \upsigma \|_{L_u^2 L_t^2 C_{\upomega}^{0,\updelta_0} (\widetilde{\mathcal{M}}^{(Int)})} \lesssim \uplambda^{- 4 \upepsilon_0}$ 
as desired.

\begin{remark}
Throughout the rest of the proof of Prop.\,\ref{P:MAINESTIMATESFOREIKONALFUNCTIONQUANTITIES},
we silently use the following estimates, 
valid for $1 \leq \leb \leq \infty$,
which are simple consequences of \eqref{E:STUVOLUMEFORMCOMPARISONWITHUNITROUNDMETRICVOLUMEFORM}:
$|\overline{f}(t,u)| \lesssim \| f \|_{L_{\upomega}^{\leb}(S_{t,u})}$
and
$\| \overline{f} \|_{L_{\upomega}^{\leb}(S_{t,u})} \lesssim \| f \|_{L_{\upomega}^{\leb}(S_{t,u})}$
(see \eqref{E:AVERAGEVALUEOFSCALARFUNCTION} regarding the ``overline'' notation).
\end{remark}

\subsubsection{Proof of \eqref{E:PRELIMINARYANGUPMUSPACETIMEBOUNDS}}
Using the Hodge system \eqref{E:FURTHERMODOFMASSASPECT},
\eqref{E:EUCLIDEANFORMMORREYONSTU} with $\leb := p$,
and
\eqref{E:ONEFORMSTUCALDERONZYGMUNDHODGEESTIMATES} with $\leb := p$,
we find that
\begin{align} \label{E:FIRSTSTEPPRELIMINARYANGUPMUSPACETIMEBOUNDS}
		\| (\rgeo \angD \angupmu, \angupmu) \|_{L_{\upomega}^p(S_{t,u})},
			\,
		\| \angupmu\|_{L_{\upomega}^{\infty}(S_{t,u})}
		& \lesssim
		\| \rgeo(\check{\upmu} - \overline{\check{\upmu}}) \|_{L_{\upomega}^p(S_{t,u})}.
\end{align}
Taking the norm $\| \cdot \|_{L_t^2 L_u^2}$ of \eqref{E:FIRSTSTEPPRELIMINARYANGUPMUSPACETIMEBOUNDS}
over the range of $(t,u)$-values corresponding to $\widetilde{\mathcal{M}}^{(Int)}$
and using the already proven estimate \eqref{E:SPACETIMEL2INUANDTLPINOMEGAFORANGDSIGMAANDRGEOWEIGHTEDMODIFIEDMASSASPECTANDANGDMODIFIEDTORSION}
for $\| \rgeo \check{\upmu} \|_{L_u^2 L_t^2 L_{\upomega}^p (\widetilde{\mathcal{M}}^{(Int)})}$,
we arrive at the desired bound \eqref{E:PRELIMINARYANGUPMUSPACETIMEBOUNDS}.

\subsubsection{Proof of \eqref{E:KEYANGCONFORMALFACTORALGEBRAICDECOMPOSITION}--\eqref{E:ANGDUPSIGMAL2ULINFINITYCONEESTIMATE}}
We first note that the decomposition \eqref{E:KEYANGCONFORMALFACTORALGEBRAICDECOMPOSITION} follows from the definitions of the quantities involved.

Throughout the rest of proof, $\mathscr{D}^{-1}(\mathfrak{F},\mathfrak{G})$ will denote the solution $\upxi$
the following Hodge system on $S_{t,u}$: $\angdiv \upxi = \mathfrak{F}$, $\angcurl \upxi = \mathfrak{G}$.
In our applications, $\upxi$ will be a one-form or a symmetric trace-free type $\binom{0}{2}$ tensor
(where in the latter case, one can show that the one-forms $\mathfrak{F}$ and $\mathfrak{G}$ are constrained by
the relation $\mathfrak{G}_A = \upepsilon_{AB} \mathfrak{F}_A$, where $\upepsilon_{AB}$ is the antisymmetric symbol 
with $\upepsilon_{12} = 1$ relative to a $\gsphere$-orthonormal frame on $S_{t,u}$).

We start by proving \eqref{E:ANGDUPSIGMAL2TLINFINITYSPACEPARTESTIMATE}
for the term $\| \widetilde{\upzeta} - \angupmu \|_{L_t^2 L_x^{\infty}(\widetilde{\mathcal{M}}^{(Int)})}$
on the LHS. We will use the Hodge system 
\eqref{E:COMBBINEDMODIFIEDTORSIONMINUSANGMODMASSASPECTDIV}--\eqref{E:COMBBINEDMODIFIEDTORSIONMINUSANGMODMASSASPECTCURL}.
Note that we can split $\widetilde{\upzeta} - \angupmu = \mathscr{D}^{-1}(\angdiv \upxi,\angcurl \upxi)
+
\mathscr{D}^{-1}(\cdots,\cdots)
$,
where $\angdiv \upxi$ is the first term on RHS~\eqref{E:COMBBINEDMODIFIEDTORSIONMINUSANGMODMASSASPECTDIV},
$\angcurl \upxi$ is the first term on RHS~\eqref{E:COMBBINEDMODIFIEDTORSIONMINUSANGMODMASSASPECTCURL},
and $(\cdots,\cdots)$ denotes the remaining terms on
RHSs~\eqref{E:COMBBINEDMODIFIEDTORSIONMINUSANGMODMASSASPECTDIV}--\eqref{E:COMBBINEDMODIFIEDTORSIONMINUSANGMODMASSASPECTCURL}.
Recall that the $S_{t,u}$-tangent tensorfields denoted here by $\upxi$ have Cartesian component functions 
of the form $\gensmoothfunction_{(\vec{\Lunit})} \cdot \pmb{\partial} \vec{\Psi}$
(and thus $\upxi$ satisfies the hypotheses needed to apply the estimate \eqref{E:LINFTYHODGEESTIMATENODERIVATIVESONLHSINVOLVINGCARTESIANCOMPONENTSONRHS}
with $\gensmoothfunction_{(\vec{\Lunit})} \cdot \pmb{\partial} \vec{\Psi}$ in the role of $\mathfrak{F}$ 
and $\gensmoothfunction(\vec{\Psi}) \cdot \pmb{\partial} \vec{\Psi}$ in the role of $\vec{\widetilde{\mathfrak{F}}}$).
Therefore, using \eqref{E:LINFTYHODGEESTIMATENODERIVATIVESONLHSINVOLVINGCARTESIANCOMPONENTSONRHS}
with $\updelta' > 0$ chosen to be sufficiently small 
(at least as small as the parameter $\updelta_0 > 0$ in \eqref{E:RESCALEDBOOTL2LINFINITYFIRSTDERIVATIVESOFVORTICITYBOOTANDNENTROPYGRADIENT}) and $m := 2$
to handle the term 
$\mathscr{D}^{-1}(\angdiv \upxi,\angcurl \upxi)$,
and
\eqref{E:EUCLIDEANFORMMORREYONSTU} 
and
\eqref{E:ONEFORMSTUCALDERONZYGMUNDHODGEESTIMATES} with $\leb := p$
to handle the term $\mathscr{D}^{-1}(\cdots,\cdots)$, 
we deduce that
\begin{align}
\| \widetilde{\upzeta} - \angupmu \|_{L_t^2 L_x^{\infty}(\widetilde{\mathcal{M}}^{(Int)})}
& 
\lesssim
\| 
	\rgeo 
	(\pmb{\partial} \vec{\Psi},\mytr_{\congsphere} \widetilde{\upchi}^{(Small)},\hat{\upchi},\upzeta,\rgeo^{-1})
	\cdot
	(\pmb{\partial} \vec{\Psi},\hat{\upchi},\upzeta)
\|_{L_t^2 L_u^{\infty} L_{\upomega}^p(\widetilde{\mathcal{M}}^{(Int)})}
	\label{E:MODTORSIONMINUSUPSIGMAKEYSPACETIMEBOUND} \\
& \ \
+
\uplambda^{-1} 
\|
	\rgeo (\vec{\VortVort},\DivGradEnt)
\|_{L_t^2 L_u^{\infty} L_{\upomega}^p(\widetilde{\mathcal{M}})}
	\notag \\
& \ \
	+
	\left\| 
		\upnu^{\updelta'} P_{\upnu} 
		\left(
			\gensmoothfunction(\vec{\Psi}) \cdot \pmb{\partial} \vec{\Psi} 
		\right)
	\right\|_{L_t^2 \ell_{\upnu}^2 L_x^{\infty}(\widetilde{\mathcal{M}})}
	+
	\| \pmb{\partial} \vec{\Psi}\|_{L_t^2 L_x^{\infty}(\widetilde{\mathcal{M}})}.
	\notag
\end{align}
At the very end of the proof of \cite{qW2017}*{Proposition~6.4} 
(in which the author derived bounds for the second piece of a quantity denoted by ``$\textbf{A}^{\dagger}$,'' which was split into two pieces there),
the author gave arguments showing that,
thanks to the bootstrap assumptions,
\eqref{E:RGEOANDUBOUNDS}, 
and the previously proven estimates
\eqref{E:ASECONDACOUSTICALLTINFTYLOMEGAPALONGCONES}
and
\eqref{E:IMPROVEDININTERIORL2INTIMELINFINITYINSPACECONNECTIONCOFFICIENTS},
all terms on RHS~\eqref{E:MODTORSIONMINUSUPSIGMAKEYSPACETIMEBOUND}
are $\lesssim \uplambda^{-\frac{1}{2} - 3 \upepsilon_0}$,
except that the term
$\uplambda^{-1} 
\|
	\rgeo (\vec{\VortVort},\DivGradEnt)
\|_{L_t^2 L_u^{\infty} L_{\upomega}^p(\widetilde{\mathcal{M}})}$
was not present in \cite{qW2017}.
To handle this remaining term, we use \eqref{E:RGEOLAMBDAINVERSELINEARTERMLT2LUINFTYLOMEGAP}.
We have therefore proved \eqref{E:ANGDUPSIGMAL2TLINFINITYSPACEPARTESTIMATE}
for $\| \widetilde{\upzeta} - \angupmu \|_{L_t^2 L_x^{\infty}(\widetilde{\mathcal{M}}^{(Int)})}$.

To prove \eqref{E:ANGUPMU1AND2CONETIPCONDITIONS}, we first note that in view of \eqref{E:CONETIPINITIALCONDITIONSFORANGMUSLASHI1AND2},
it suffices to show that
$
\rgeo \angupmu(t,u,\upomega)
=
\mathcal{O}(\rgeo) \mbox{as } t \downarrow u
$.
The desired bound follows from 
applying the Calderon--Zygmund estimate \eqref{E:LINFTYHODGEESTIMATENODERIVATIVESONLHSINVOLVINGCARTESIANCOMPONENTSONRHS}
with $\mathfrak{F} = 0$ to the Hodge system \eqref{E:FURTHERMODOFMASSASPECT}
and using the asymptotic estimate \eqref{E:CONNECTIONCOEFFICIENTS0LIMITSALONGTIP}
for $\check{\upmu}$.

We now prove the estimate \eqref{E:ANGDUPSIGMAL2TLINFINITYSPACEPARTESTIMATE}
for the remaining term $\| \angupmu_{(1)} \|_{L_t^2 L_x^{\infty}(\widetilde{\mathcal{M}}^{(Int)})}$
on the LHS. Note that $\angupmu_{(1)}$ solves the Hodge-transport system \eqref{E:ANGDIVLDERIVATIVEANGMU1}--\eqref{E:ANGCURLLDERIVATIVEANGMU1},
where the inhomogeneous term $\mathfrak{I}_{(1)} - \overline{\mathfrak{I}_{(1)}}$
is defined by \eqref{E:FIRSTINHOMTERMMODIFIEDMASSASPECTEVOLUTIONEQUATION}.
From 
\eqref{E:STUVOLUMEFORMCOMPARISONWITHUNITROUNDMETRICVOLUMEFORM},
\eqref{E:MAIDENTITYTRANSPORTLEMMA}, 
and the initial condition \eqref{E:ANGUPMU1AND2CONETIPCONDITIONS}, 
we deduce the pointwise identity
\begin{align} \label{E:ANGUPMU1TRANSPORTID}
	\angupmu_{(1)}(t,u,\upomega)
	& 
	= 
	\volrat^{-\frac{1}{2}}(t,u,\upomega)
	\int_u^t
		\left[\volrat^{\frac{1}{2}} \mathscr{D}^{-1}(\mathfrak{I}_{(1)} - \overline{\mathfrak{I}_{(1)}},0) \right](\uptau,u,\upomega)
	\, d \uptau.
\end{align}
The term $\mathfrak{I}_{(1)} - \overline{\mathfrak{I}_{(1)}}$ on RHS~\eqref{E:ANGUPMU1TRANSPORTID}
is the same term appearing in \cite{qW2017}.
At the start of the last paragraph in the proof of \cite{qW2017}*{Proposition~6.4} 
(in which the author derived bounds for the first piece of quantity denoted by $\textbf{A}^{\dagger}$, which was split into two pieces),
the author derived estimates for RHS~\eqref{E:ANGUPMU1TRANSPORTID} showing
that $\| \angupmu_{(1)} \|_{L_t^2 L_x^{\infty}(\widetilde{\mathcal{M}}^{(Int)})} \lesssim \uplambda^{- \frac{1}{2} - 4 \upepsilon_0}$,
which is in fact slightly better than the bound stated in \eqref{E:ANGDUPSIGMAL2TLINFINITYSPACEPARTESTIMATE}.

Finally, we prove the estimate 
\eqref{E:ANGDUPSIGMAL2ULINFINITYCONEESTIMATE} for $\angupmu_{(2)}$ 
using the Hodge-transport system \eqref{E:ANGDIVLDERIVATIVEANGMU2}--\eqref{E:ANGCURLLDERIVATIVEANGMU2}.
We first define the following two scalar functions:
$\mathfrak{F} := \mbox{RHS}~\eqref{E:ANGDIVLDERIVATIVEANGMU2}$,
$\mathfrak{G} := \mbox{RHS}~\eqref{E:ANGCURLLDERIVATIVEANGMU2}$.
From 
\eqref{E:ANGDIVLDERIVATIVEANGMU2}--\eqref{E:ANGCURLLDERIVATIVEANGMU2},
\eqref{E:STUVOLUMEFORMCOMPARISONWITHUNITROUNDMETRICVOLUMEFORM},
\eqref{E:MAIDENTITYTRANSPORTLEMMA}, 
and the initial condition \eqref{E:ANGUPMU1AND2CONETIPCONDITIONS},
we deduce the pointwise identity
\begin{align} \label{E:ANGUPMU2TRANSPORTID}
	\angupmu_{(2)}(t,u,\upomega)
	& 
	= 
	\volrat^{-\frac{1}{2}}(t,u,\upomega)
	\int_u^t
		\left[\volrat^{\frac{1}{2}} \mathscr{D}^{-1}(\mathfrak{F},\mathfrak{G}) \right](\uptau,u,\upomega)
	\, d \uptau.
\end{align}
From \eqref{E:EUCLIDEANFORMMORREYONSTU} with $\leb := p$
and
\eqref{E:ONEFORMSTUCALDERONZYGMUNDHODGEESTIMATES},
we find that
$\| \mathscr{D}^{-1}(\mathfrak{F},\mathfrak{G}) \|_{L_{\upomega}^{\infty}(S_{t,u})}
\lesssim 
\| \rgeo (\mathfrak{F},\mathfrak{G}) \|_{L_{\upomega}^p(S_{t,u})}
$.
From this estimate, 
\eqref{E:ANGUPMU2TRANSPORTID}, 
\eqref{E:STUVOLUMEFORMCOMPARISONWITHUNITROUNDMETRICVOLUMEFORM},
and the simple bound $\rgeo(\uptau,u)/\rgeo(t,u) \lesssim 1$ for $\uptau \leq t$,
we deduce that
\begin{align}
\notag
\| \angupmu_{(2)} \|_{L_u^2 L_t^{\infty} L_{\upomega}^{\infty}(\widetilde{\mathcal{M}}^{(Int)})}
\lesssim 
\| \rgeo (\mathfrak{F},\mathfrak{G}) \|_{L_u^2 L_t^1 L_{\upomega}^p(\widetilde{\mathcal{M}}^{(Int)})}.
\end{align}
In \cite{qW2017}*{Equation~(6.37)} and the discussion below that equation, 
based on 
the bootstrap assumptions,
\eqref{E:RESCALEDBOOTBOUNDS},
\eqref{E:RGEOANDUBOUNDS},
and the already proven estimates 
\eqref{E:ACOUSTICALLT2LOMEGAPANDLDERIVATIVESALONGCONES},
\eqref{E:ASECONDACOUSTICALLTINFTYLOMEGAPALONGCONES},
\eqref{E:ONEANGULARDERIVATIVEOFTRCHIMODANDTRFREECHIL2INTIMEESTIMATES},
\eqref{E:IMPROVEDININTERIORL2INTIMELINFINITYINSPACECONNECTIONCOFFICIENTS},
\eqref{E:SPACETIMEL2INUANDTLPINOMEGAFORANGDSIGMAANDRGEOWEIGHTEDMODIFIEDMASSASPECTANDANGDMODIFIEDTORSION},
and \eqref{E:PRELIMINARYANGUPMUSPACETIMEBOUNDS},
the author gave arguments that imply that
$\| \rgeo (\mathfrak{F},\mathfrak{G}) \|_{L_u^2 L_t^1 L_{\upomega}^p(\widetilde{\mathcal{M}}^{(Int)})} \lesssim \uplambda^{-\frac{1}{2} - 4 \upepsilon_0}$
as desired,
except that the terms in $\mathfrak{I}_{(2)} - \overline{\mathfrak{I}_{(2)}}$
(i.e., the first term RHS~\eqref{E:ANGDIVLDERIVATIVEANGMU2})
generated by the two terms on RHS~\eqref{E:SECONDINHOMTERMMODIFIEDMASSASPECTEVOLUTIONEQUATION}
with the coefficient $\uplambda^{-1}$ were not present in \cite{qW2017}.
To handle these new terms, we use
\eqref{E:RGEOLAMBDAINVERSEONEDERIVATIVEOFMODFLUIDLU2LT1LOMEGAP}--\eqref{E:RGEOLAMBDAINVERSEQUADTERMLU2LT1LOMEGAP}.
We have therefore proved \eqref{E:ANGDUPSIGMAL2ULINFINITYCONEESTIMATE},
which completes the proof of Prop.\,\ref{P:MAINESTIMATESFOREIKONALFUNCTIONQUANTITIES}.

\section{Summary of the reductions of the proof of the Strichartz estimate of Theorem~\ref{T:FREQUENCYLOCALIZEDSTRICHARTZ}}
\label{S:REDUCTIONSOFSTRICHARTZ}
In this section, we outline how the Strichartz estimate of Theorem~\ref{T:FREQUENCYLOCALIZEDSTRICHARTZ}
follows as a consequence of the estimates for the eikonal function that we derived in Sect.\,\ref{S:ESTIMATESFOREIKONALFUNCTION}.
We only sketch the arguments since,
given the estimates that we derived in Sect.\,\ref{S:ESTIMATESFOREIKONALFUNCTION}, 
the proof of Theorem~\ref{T:FREQUENCYLOCALIZEDSTRICHARTZ} 
follows from the same arguments given in \cite{qW2017}.
For the reader's convenience, we note that the 
flow of the logic can be summarized as follows, although in 
Subsects.\,\ref{SS:RESCALEDVERSIONOFTHEOREMFREQUENCYLOCALIZEDSTRICHARTZ}--\ref{SS:DISCUSSIONOFPROOFOFPROPSPATIALLYLOCALIZEDREDUCTIONOFPROOFOFTHEOREMDECAYESTIMATE}, 
we will discuss the steps in the reverse order:
\begin{enumerate}
	\item Estimates for the eikonal function, connection coefficients, and conformal factor $\upsigma$ obtained in Sect.\,\ref{S:ESTIMATESFOREIKONALFUNCTION}
	\item $\implies$ Estimates for a conformal energy for solutions $\varphi$ to the linear wave equation $\square_{\gfour(\vec{\Psi})} \varphi = 0$ 
	\item $\implies$ Dispersive-type decay estimate for the linear wave equation solution $\varphi$
	\item $\implies$ Rescaled version of the desired Strichartz estimates 
	\item $\implies$ Theorem~\ref{T:FREQUENCYLOCALIZEDSTRICHARTZ}.
\end{enumerate}

We remind the reader that the completion of the proof of Theorem~\ref{T:FREQUENCYLOCALIZEDSTRICHARTZ}
closes the bootstrap argument initiated in Subsect.\,\ref{SS:BOOT},
thereby justifying the estimate \eqref{E:MIXEDSPACETIMEESTIMATENEEDEDFORMAINTHEOREM}
and completing the proof of Theorem~\ref{T:MAINTHEOREMROUGHVERSION}.

\subsection{Rescaled version of Theorem~\ref{T:FREQUENCYLOCALIZEDSTRICHARTZ}}
\label{SS:RESCALEDVERSIONOFTHEOREMFREQUENCYLOCALIZEDSTRICHARTZ}
From standard scaling considerations, one can easily show that Theorem~\ref{T:FREQUENCYLOCALIZEDSTRICHARTZ}
(where in \eqref{E:LINEARWAVEFORFREQUENCYLOCALIZEDSTRICHARTZ}, $\vec{\Psi}$ denotes the non-rescaled wave variables)
would follow\footnote{More precisely, the analog of Theorem~\ref{T:FREQUENCYLOCALIZEDSTRICHARTZ} in \cite{qW2017},
namely \cite{qW2017}*{Theorem~3.2}, was stated only in the special case $\uptau = t_k$, where $\uptau$ is
as in the statement of Theorem~\ref{T:FREQUENCYLOCALIZEDSTRICHARTZ}. However, the case of
a general $\uptau \in [t_k,t_{k+1}]$ follows from the same arguments.} 
from a rescaled version of it, which we state as Theorem~\ref{T:RESCALEDVERSIONOFTHEOREMFREQUENCYLOCALIZEDSTRICHARTZ}.
Here we do not provide the simple proof that Theorem~\ref{T:FREQUENCYLOCALIZEDSTRICHARTZ} follows from
Theorem~\ref{T:RESCALEDVERSIONOFTHEOREMFREQUENCYLOCALIZEDSTRICHARTZ}; we refer readers to
\cite{qW2017}*{Section~3.1} and \cite{qW2017}*{Theorem~3.3} for further discussion.

\begin{theorem}[Rescaled version of Theorem~\ref{T:FREQUENCYLOCALIZEDSTRICHARTZ}]
	\label{T:RESCALEDVERSIONOFTHEOREMFREQUENCYLOCALIZEDSTRICHARTZ}
	Let $P$ denote the Littlewood--Paley projection onto frequencies $\xi$ with $\frac{1}{2} \leq |\xi| \leq 2$.
	Under the assumptions of Subsect.\,\ref{SS:ASSUMPTIONS},
	there is a $\Lambda_0 > 0$ such that for every $\uplambda \geq \Lambda_0$,
	every $q > 2$ that is sufficiently close to $2$,
	and every solution $\varphi$ to the homogeneous linear wave equation 
	\begin{align} \label{E:BOUNDEDNESSOFCONFORMALENERGY}
	\square_{\gfour(\vec{\Psi})} 
	\varphi & = 0
\end{align}
	on the slab $[0,\RescaledTboot] \times \mathbb{R}^3$,
	the following mixed space-time estimate holds:
	\begin{align} \label{E:RESCALEDVERSIONOFTHEOREMFREQUENCYLOCALIZEDSTRICHARTZ}
		\| P \pmb{\partial} \varphi \|_{L^q([0,\RescaledTboot]) L_x^{\infty}}
		& \lesssim \| \pmb{\partial} \varphi \|_{L^2(\Sigma_0)}.
	\end{align}
	We clarify that in \eqref{E:BOUNDEDNESSOFCONFORMALENERGY}, 
	the argument ``$\vec{\Psi}$'' in $\gfour(\vec{\Psi})$ denotes the rescaled solution, as
	in Subsect.\,\ref{SS:RESCALEDSOLUTION} and Subsect.\,\ref{SS:NOMORELAMBDA}.
\end{theorem}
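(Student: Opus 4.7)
The plan is to follow the well-established chain of reductions, used for quasilinear wave equations in \cite{qW2017} and outlined in Subsection~\ref{SS:GEOMETRYANDDISPERSION}, namely: acoustic geometry estimates $\Longrightarrow$ weighted conformal energy estimate for solutions of \eqref{E:BOUNDEDNESSOFCONFORMALENERGY} $\Longrightarrow$ dispersive decay estimate for $P\pmb{\partial}\varphi$ $\Longrightarrow$ (via a $\mathcal{T}\mathcal{T}^*$ argument) the linear Strichartz bound \eqref{E:RESCALEDVERSIONOFTHEOREMFREQUENCYLOCALIZEDSTRICHARTZ}. As a first reduction, I would use a smooth partition of unity on $\Sigma_0$ subordinate to a covering by Euclidean balls of radius $R$ (as defined in Subsection~\ref{SS:RESCALEDSOLUTION}), decompose the initial data accordingly, and prove \eqref{E:RESCALEDVERSIONOFTHEOREMFREQUENCYLOCALIZEDSTRICHARTZ} separately for each piece, reassembling via $\ell^2$-orthogonality of $L^2$-based energies. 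This localization is what permits one to work with a single outgoing eikonal function $u$ whose cone-tip axis emanates from the center ${\bf z}$ of the support of the corresponding piece, as constructed in Section~\ref{S:SETUPCONSTRUCTIONOFEIKONAL}, so that the geometric estimates of Proposition~\ref{P:MAINESTIMATESFOREIKONALFUNCTIONQUANTITIES} are applicable.

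The core analytic step is a \emph{weighted conformal energy estimate} for $\varphi$. Following the framework of \cite{qW2017}, I would introduce vectorfield multipliers $\Transport$, $f(\rgeo)\spherenormal$, and $\rgeo^m \Lunit$ (together with suitable zeroth-order corrections) to construct an energy split into a contribution from the interior region $\widetilde{\mathcal{M}}^{(Int)}$, controlled by $\Transport$ and $f(\rgeo)\spherenormal$, and a contribution from the exterior region $\widetilde{\mathcal{M}}^{(Ext)}$, controlled by $\rgeo^m \Lunit$. Because these multipliers are adapted to the (quasilinear) acoustic geometry rather than the flat one, their deformation tensors generate spacetime error integrals whose integrands are precisely the connection coefficients $\mytr_{\congsphere}\widetilde{\upchi}^{(Small)}$, $\hat{\upchi}$, $\upzeta$, $\check{\upmu}$, $\angupmu$, and $\angD\upsigma$. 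The identification $\widetilde{\gfour} := e^{2\upsigma}\gfour$ with its conformal null trace $\mytr_{\congsphere}\widetilde{\upchi}$ is crucial here, since the modified quantity $\mytr_{\congsphere}\widetilde{\upchi}^{(Small)}$ is the one satisfying the favorable transport equation \eqref{E:MODIFIEDRAYCHAUDHURI} and enjoys the sharp bounds of Proposition~\ref{P:MAINESTIMATESFOREIKONALFUNCTIONQUANTITIES}. The interior/exterior boundary integrals arising from integration by parts in the spatial variables are absorbed using the $L^2_u L^2_t$ and $L^\infty_t L^p_\upomega$ bounds on the connection coefficients, while the angular integrability of $\angupmu$ is handled by the decomposition \eqref{E:KEYANGCONFORMALFACTORALGEBRAICDECOMPOSITION} into $\widetilde{\upzeta}-\angupmu$, $\angupmu_{(1)}$, and $\angupmu_{(2)}$, bounded respectively by \eqref{E:ANGDUPSIGMAL2TLINFINITYSPACEPARTESTIMATE}--\eqref{E:ANGDUPSIGMAL2ULINFINITYCONEESTIMATE}.

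Once the conformal energy is bounded, Bernstein's inequality (using that $P$ is localized to frequencies $\sim 1$) upgrades the $L^2_x$ control of $P\Transport\varphi$ to an $L^\infty_x$ bound, yielding a dispersive estimate of the form \eqref{E:DISPERSIVEESTIMATESKETCHARGUMENT} with a ``quasilinear correction'' $d(t)$ that is integrable on $[0, \RescaledTboot]$ in a Lebesgue exponent slightly above $1$. The $\mathcal{T}\mathcal{T}^*$ argument of \cite{qW2017}*{Appendix B} then converts this time-integrable decay into the Strichartz bound \eqref{E:RESCALEDVERSIONOFTHEOREMFREQUENCYLOCALIZEDSTRICHARTZ} for all $q$ sufficiently close to $2$.

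The hardest step will be verifying that the error integrals generated in the conformal energy identity can be absorbed using the precise $\uplambda^{-4\upepsilon_0}$, $\uplambda^{-1/2}$, and $\uplambda^{-1/2 - 3\upepsilon_0}$ gains supplied by Proposition~\ref{P:MAINESTIMATESFOREIKONALFUNCTIONQUANTITIES}, and in particular that the \emph{new} $\uplambda^{-1}$-weighted vorticity/entropy terms appearing throughout Proposition~\ref{P:PDESMODIFIEDACOUSTICALQUANTITIES} (e.g., in $\mathfrak{I}_{(2)}$ of \eqref{E:SECONDINHOMTERMMODIFIEDMASSASPECTEVOLUTIONEQUATION} and in Raychaudhuri's equation \eqref{E:MODIFIEDRAYCHAUDHURI}) do not disrupt the integrability of $d(t)$. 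The crucial input for controlling these contributions along sound cones is the estimate \eqref{E:SOUNDCONEENERGYESTIMATESFORMODIFIEDVARIABLES} from Proposition~\ref{P:ENERGYESTIMATESALONGNULLHYPERSURFACES}, which precisely captures the top-order $L^2$-regularity of $(\vec{\VortVort}, \DivGradEnt)$ along $\widetilde{\mathcal{C}}_u$ --- regularity that is unavailable for generic derivatives of $\vortrenormalized$ and $\GradEnt$. Provided these absorptions succeed (as the prefactor $\uplambda^{-1}$ combined with the already-proven estimates suggests), the remaining structure of the argument is essentially the wave-equation machinery of \cite{qW2017}*{Sections~7--9}, and the theorem will close for $\Lambda_0$ chosen sufficiently large.
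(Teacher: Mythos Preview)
Your proposal is correct and follows essentially the same chain of reductions as the paper: acoustic geometry estimates (Proposition~\ref{P:MAINESTIMATESFOREIKONALFUNCTIONQUANTITIES}) $\Rightarrow$ conformal energy estimate (Theorem~\ref{T:BOUNDEDNESSOFCONFORMALENERGY}) $\Rightarrow$ dispersive decay (Theorem~\ref{T:DECAYESTIMATE}) $\Rightarrow$ Strichartz via the $\mathcal{T}\mathcal{T}^*$ argument of \cite{qW2017}*{Appendix~B}. One small discrepancy: the paper localizes the data on $\Sigma_1$ (see Proposition~\ref{P:SPATIALLYLOCALIZEDREDUCTIONOFPROOFOFTHEOREMDECAYESTIMATE}) rather than $\Sigma_0$, so that the solution is supported in $\widetilde{\mathcal{M}}_1^{(Int)}$ away from the cone tip, and the new $\uplambda^{-1}$ vorticity/entropy terms have already been absorbed into the connection-coefficient bounds of Corollary~\ref{C:EIKONAFUNCTIONESTIMATESFORPROOFOFCONFORMALENERGYESTIMATE}, after which the conformal energy argument is verbatim that of \cite{qW2017}*{Section~7}.
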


\subsection{Dispersive-type decay estimate}
\label{SS:DECAYESTIMATE}
As we discussed in Subsect.\,\ref{SS:RESCALEDVERSIONOFTHEOREMFREQUENCYLOCALIZEDSTRICHARTZ},
to prove Theorem~\ref{T:FREQUENCYLOCALIZEDSTRICHARTZ},
it suffices to prove Theorem~\ref{T:RESCALEDVERSIONOFTHEOREMFREQUENCYLOCALIZEDSTRICHARTZ}.
Theorem~\ref{T:RESCALEDVERSIONOFTHEOREMFREQUENCYLOCALIZEDSTRICHARTZ}
can be shown, via a technical-but-by-now-standard $\mathcal{T} \mathcal{T}^*$ argument,
to follow as a consequence of the dispersive-type decay estimate
provided by Theorem~\ref{T:DECAYESTIMATE}. 
See \cite{qW2017}*{Appendix~B}
for a proof that
Theorem~\ref{T:RESCALEDVERSIONOFTHEOREMFREQUENCYLOCALIZEDSTRICHARTZ}
follows from
Theorem~\ref{T:DECAYESTIMATE}.
We remark that the proof given in
\cite{qW2017}*{Appendix~B} goes through almost verbatim,
with only minor changes needed to handle the fact that
the future-directed unit normal to $\Sigma_t$ is $\Transport$ in the present article
(and thus the $\Transport$-differentiation occurs on LHS~\eqref{E:DECAYESTIMATE}),
while in \cite{qW2017}, the future-directed unit normal to $\Sigma_t$ is $\partial_t$.

We now state Theorem~\ref{T:DECAYESTIMATE}. 
In Subsect.\,\ref{SS:SPATIALLYLOCALIZEDREDUCTIONOFPROOFOFTHEOREMDECAYESTIMATE},
we will discuss its proof.\footnote{The presence of up to three derivatives of $\varphi$ on 
RHS~\eqref{E:DECAYESTIMATE} is not problematic because in practice, 
the estimate \eqref{E:DECAYESTIMATE} is only used on functions
supported near unit frequencies in Fourier space (and thus the functions' 
derivatives can be controlled in terms of the function itself, by Bernstein's inequality).
See \cite{qW2017}*{Appendix B}, especially the first estimate
on page 105.}

\begin{theorem}[Dispersive-type decay estimate]
	\label{T:DECAYESTIMATE}
	Let $P$ denote the Littlewood-Paley projection onto frequencies $\xi$ with $\frac{1}{2} \leq |\xi| \leq 2$.
	Under the conventions of Subsect.\,\ref{SS:NOMORELAMBDA}
	and the assumptions of Subsect.\,\ref{SS:ASSUMPTIONS},
	there exists a large $\Lambda_0 > 0$ 
	and a function $d(t) \geq 0$
	such that if $\uplambda \geq \Lambda_0$
	and if $q > 2$ is sufficiently close to $2$,
	then 
	\begin{align} \label{E:LEBESGUENORMOFDECAYERRORTERM}
		\| d \|_{L^{\frac{q}{2}}([0,\RescaledTboot])}
		& \lesssim 1,
	\end{align}
	and for every solution $\varphi$ to the homogeneous linear wave equation \eqref{E:BOUNDEDNESSOFCONFORMALENERGY}
	on the slab $[0,\RescaledTboot] \times \mathbb{R}^3$,
	the following decay estimate holds for $t \in [0,\RescaledTboot]$:
	\begin{align} \label{E:DECAYESTIMATE}
		\| P \Transport \varphi \|_{L^{\infty}(\Sigma_t)}
		& \lesssim 
		\left\lbrace
			\frac{1}{(1 + t)^{\frac{2}{q}}}
			+
			d(t)
		\right\rbrace
		\left\lbrace
			\sum_{m=0}^3 \| \partial^m \varphi \|_{L^1(\Sigma_0)}
			+
			\sum_{m=0}^2 \| \partial^m \partial_t \varphi \|_{L^1(\Sigma_0)}
		\right\rbrace.
	\end{align}
\end{theorem}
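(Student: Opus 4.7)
The plan is to reduce \eqref{E:DECAYESTIMATE} to a bounded conformal energy estimate for $\varphi$, and then to convert that $L^2$-type bound into the pointwise decay via Bernstein's inequality. First, I will carry out a standard spatial localization: using finite speed of propagation for $\square_{\gfour(\vec{\Psi})}$ and a smooth partition of unity on $\Sigma_1$ (or rather on $\Sigma_0$ after a short-time propagation), I would reduce the proof to establishing \eqref{E:DECAYESTIMATE} for data supported in a ball $B_R({\bf z})$ with $R$ as in \eqref{E:RSMALLERTHANONE}. This is the point at which the freedom to vary ${\bf z}$ is used, and the key containment \eqref{E:ALONGSIAMGA1EUCLIDEANBALLCONTAINEDINMETRICBALL} guarantees that such a ball lies in the interior region $\widetilde{\mathcal{M}}^{(Int)}$ associated to the acoustic eikonal function constructed from ${\bf z}$. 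This places us within the geometric framework of Sect.\,\ref{S:SETUPCONSTRUCTIONOFEIKONAL} and makes all of the quantitative estimates of Prop.\,\ref{P:MAINESTIMATESFOREIKONALFUNCTIONQUANTITIES} available.

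Second, I would derive a \emph{weighted conformal energy estimate} for $\varphi$ using the vectorfield multiplier method with three multipliers adapted to the acoustic geometry: the material derivative $\Transport$ (yielding the standard $L^2(\Sigma_t)$ energy via \eqref{E:BASICENERGYINEQUALITYFORWAVEEQUATIONS}), a Morawetz-type $\Sigma_t$-tangent spacelike multiplier of the form $f(\rgeo)\spherenormal$ (producing a coercive spacetime integral controlling the conformal energy in the interior), and an outgoing weighted null multiplier $\rgeo^m \Lunit$ (controlling the exterior piece and producing the $\rgeo$-weighted hypersurface integrals along the sound cones). The deformation tensors of these vectorfields, when contracted against the energy-momentum tensor $\enmomem[\varphi]$, generate error terms that are linear combinations of the connection coefficients $\mytr_{\congsphere} \widetilde{\upchi}^{(Small)}$, $\hat{\upchi}$, $\upzeta$, $\nulllapse-1$, and first derivatives of the conformal factor $\upsigma$, times appropriate derivatives of $\varphi$. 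Working with the conformally rescaled metric $\widetilde{\gfour} = e^{2\upsigma}\gfour$ (see Def.\,\ref{D:CONFORMALSTUFF}) is essential: it exchanges the bad factor $\mytr_{\gsphere} \upchi$ for the much better $\mytr_{\congsphere} \widetilde{\upchi}^{(Small)}$ via the identity \eqref{E:TRACEDRELATIONBETWEENNULLSECONDFUNDANDCONFORMALNULLSECONDFUND}, which is critical for closing the argument. All integrable error terms are absorbed by Gronwall using the $L_t^1$-in-time control (or better) coming from \eqref{E:ACOUSTICALLT2LOMEGAPANDLDERIVATIVESALONGCONES}, \eqref{E:IMPROVEDININTERIORL2INTIMELINFINITYINSPACECONNECTIONCOFFICIENTS}, \eqref{E:CONEFIRSTDERIVATIVEBOUNDSFORCONFORMALFACTOR}, \eqref{E:ANGDUPSIGMAL2TLINFINITYSPACEPARTESTIMATE}, and \eqref{E:ANGDUPSIGMAL2ULINFINITYCONEESTIMATE}, while the remaining non-integrable parts generate the function $d(t)$ on RHS~\eqref{E:DECAYESTIMATE}. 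The bound \eqref{E:LEBESGUENORMOFDECAYERRORTERM} for $\|d\|_{L^{q/2}}$ is precisely what the estimates \eqref{E:CONNECTIONCOEFFICIENTESTIMATESNEEDEDTODERIVESPATIALLYLOCALIZEDDECAYFROMCONFORMALENERGYESTIMATE} are designed to supply, matching the Lebesgue exponent $q/2$ on both sides.

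Third, I would convert the bounded conformal energy into the pointwise decay \eqref{E:DECAYESTIMATE}. The conformal energy, by construction, dominates an $L^2(\Sigma_t)$ quantity weighted by $\rgeo^2$ applied to tangential derivatives of $\varphi$ (in particular to $\Transport \varphi$). Since the data is compactly supported in $B_R({\bf z})$ and the geometry is close to Minkowskian (by the containments in Prop.\,\ref{P:MAINESTIMATESFOREIKONALFUNCTIONQUANTITIES}, especially \eqref{E:COMPARISONWITHROUNDMETRICLINFINITYBOUNDS} and \eqref{E:NULLLAPSECLOSETOUNITY}), the essential support of $\varphi(t,\cdot)$ lies in a thin neighborhood of an outgoing cone of width $O(1)$, so that the $L^2$-mass is distributed over a set of spatial volume comparable to $(1+t)^2$. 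Combining this with Bernstein's inequality for the Littlewood--Paley projector $P$ (which is admissible because $P$ localizes to near-unit frequencies so that $\|P f\|_{L^\infty} \lesssim \|P f\|_{L^2}$ up to universal constants), and recalling $\frac{2}{q} < 1$, one recovers the factor $(1+t)^{-2/q}$ on RHS \eqref{E:DECAYESTIMATE}. The data norms on the far RHS come from controlling the initial conformal energy by $\sum_{m=0}^3 \|\partial^m \varphi\|_{L^1(\Sigma_0)} + \sum_{m=0}^2 \|\partial^m \partial_t \varphi\|_{L^1(\Sigma_0)}$ through the compact support reduction and Cauchy--Schwarz.

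The main obstacle, and the step that requires the full strength of Prop.\,\ref{P:MAINESTIMATESFOREIKONALFUNCTIONQUANTITIES}, is closing the conformal energy estimate in the exterior region. There the error terms generated by the deformation tensor of $\rgeo^m \Lunit$ involve $\angD \upsigma$, which only barely fails to be integrable in $t$ on its own. The remedy, originating in the works on the Einstein equations and refined in \cite{qW2017}, is to exploit the delicate algebraic decomposition $\angD \upsigma = -\upzeta + (\widetilde{\upzeta} - \angupmu) + \angupmu_{(1)} + \angupmu_{(2)}$ from \eqref{E:KEYANGCONFORMALFACTORALGEBRAICDECOMPOSITION}. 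Each of the four pieces has a different favorable norm: $\upzeta$ and $\widetilde{\upzeta} - \angupmu$ and $\angupmu_{(1)}$ are controlled in $L_t^2 L_x^\infty$ by \eqref{E:IMPROVEDININTERIORL2INTIMELINFINITYINSPACECONNECTIONCOFFICIENTS} and \eqref{E:ANGDUPSIGMAL2TLINFINITYSPACEPARTESTIMATE}, whereas $\angupmu_{(2)}$ is controlled only in the cone-integrated norm $L_u^2 L_t^\infty L_\upomega^\infty$ from \eqref{E:ANGDUPSIGMAL2ULINFINITYCONEESTIMATE}. Matching each piece to a corresponding integration-by-parts structure in the weighted energy identity (so that $\angupmu_{(2)}$ is integrated first in $u$ along null cones rather than in $t$ along constant-time slices) is exactly what makes the argument close. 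The vorticity and entropy enter through the $\uplambda^{-1}$-multiplied terms in the curvature decompositions of Lemma~\ref{L:CURVATUREDECOMPOSITIONS}, which propagate into these very connection coefficient estimates via the modified Raychaudhuri equation and its companions in Prop.\,\ref{P:PDESMODIFIEDACOUSTICALQUANTITIES}; this is precisely where the new ingredients of the present paper, beyond \cite{qW2017}, are absorbed into a framework that otherwise carries over verbatim.
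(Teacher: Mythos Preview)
Your proposal is correct and follows essentially the same route as the paper: reduce to compactly supported data via a partition of unity and finite speed of propagation (the paper's Prop.\,\ref{P:SPATIALLYLOCALIZEDREDUCTIONOFPROOFOFTHEOREMDECAYESTIMATE}), establish the conformal energy bound using the multipliers $\Transport$, $f(\rgeo)\spherenormal$, and $\rgeo^m\Lunit$ together with the decomposition \eqref{E:KEYANGCONFORMALFACTORALGEBRAICDECOMPOSITION} of $\angD\upsigma$ (the paper's Theorem~\ref{T:BOUNDEDNESSOFCONFORMALENERGY}), and then convert to pointwise decay via Bernstein's inequality using the $L_t^{q/2}$ bounds \eqref{E:CONNECTIONCOEFFICIENTESTIMATESNEEDEDTODERIVESPATIALLYLOCALIZEDDECAYFROMCONFORMALENERGYESTIMATE}. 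The only minor discrepancy is that the paper localizes on $\Sigma_1$ rather than $\Sigma_0$ (with data supported in $B_R(\Tranchar_{\bf z}(1))$), but this is inessential.
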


\subsection{Reduction of the proof of Theorem~\ref{T:DECAYESTIMATE} to the case of compactly supported data}
\label{SS:SPATIALLYLOCALIZEDREDUCTIONOFPROOFOFTHEOREMDECAYESTIMATE}
It is convenient to reduce the proof of Theorem~\ref{T:DECAYESTIMATE}
to a spatially localized version in which the $L^1$ norms on the RHSs 
of the estimates are replaced with terms involving $L^2$ norms, which 
are more natural (in view of their connection to energy estimates).
More precisely, the same arguments given in \cite{qW2017}*{Section~4}
yield that Theorem~\ref{T:DECAYESTIMATE} follows as a consequence of Prop.\,\ref{P:SPATIALLYLOCALIZEDREDUCTIONOFPROOFOFTHEOREMDECAYESTIMATE},
which is an analog of \cite{qW2017}*{Proposition~4.1},
and Lemma~\ref{L:STANDARDENERGYESTIMATEFORLINEARWAVEQUATIONWITHRESCALEDBACKGROUND}, 
which is an analog of \cite{qW2017}*{Lemma~4.2}.
We will discuss the proof of Prop.\,\ref{P:SPATIALLYLOCALIZEDREDUCTIONOFPROOFOFTHEOREMDECAYESTIMATE}
in Subsect.\,\ref{SS:DISCUSSIONOFPROOFOFPROPSPATIALLYLOCALIZEDREDUCTIONOFPROOFOFTHEOREMDECAYESTIMATE},
while we provide the simple proof of Lemma~\ref{L:STANDARDENERGYESTIMATEFORLINEARWAVEQUATIONWITHRESCALEDBACKGROUND}
in this subsection.

\begin{proposition}[Spatially localized version of Theorem~\ref{T:DECAYESTIMATE}]
\label{P:SPATIALLYLOCALIZEDREDUCTIONOFPROOFOFTHEOREMDECAYESTIMATE}
Let $R > 0$ be as in Subsect.\,\ref{SS:RESCALEDSOLUTION},
fix any\footnote{As we highlighted in Remark~\ref{R:REMARKSONRESCALING}, the hypersurface that we denote by ``$\Sigma_0$''
in this proposition
corresponds to the hypersurface that we denoted by 
``$\Sigma_{t_k}$'' in Sects.\,\ref{S:DATAANDBOOTSTRAPASSUMPTION}--\ref{S:ELLIPTICESTIMATESINHOLDERSPACES}.
Similar remarks apply for the other constant-time hypersurfaces appearing in this proposition.
\label{FN:CAREFULABOUTHYPERSURFACES}} 
${\bf{z}} \in \Sigma_0$,
and let $\Tranchar_{\bf{z}}(1)$ be the unique point on the cone-tip axis in $\Sigma_1$ 
(see Subsubsect.\,\ref{SSS:EIKONALINTERIOR} for the definition of the cone-tip axis).
Let $P$ denote the Littlewood--Paley projection onto frequencies $\xi$ with $\frac{1}{2} \leq |\xi| \leq 2$.
	Under the assumptions of Subsect.\,\ref{SS:ASSUMPTIONS},
	there exists a large $\Lambda_0 > 0$ 
	and a function $d(t) \geq 0$
	such that if $\uplambda \geq \Lambda_0$
	and if $q > 2$ is sufficiently close to $2$,
	then 
	\begin{align} \label{E:SPATIALLYLOCALIZEDLEBESGUENORMOFDECAYERRORTERM}
		\| d \|_{L^{\frac{q}{2}}([0,\RescaledTboot])}
		& \lesssim 1,
	\end{align}
	and for every solution $\varphi$ to the homogeneous linear wave equation \eqref{E:BOUNDEDNESSOFCONFORMALENERGY}
	on the slab $[0,\RescaledTboot] \times \mathbb{R}^3$
	whose data on $\Sigma_1$
	are supported in the Euclidean ball $B_R(\Tranchar_{\bf{z}}(1))$ of radius $R$ centered at $\Tranchar_{\bf{z}}(1)$,
	the following decay estimate holds for $t \in [1,\RescaledTboot]$:
	\begin{align} \label{E:SPATIALLYLOCALIZEDDECAYESTIMATE}
		\| P \Transport \varphi \|_{L^{\infty}(\Sigma_t)}
		& \lesssim 
		\left\lbrace
			\frac{1}{(1 + |t-1|)^{\frac{2}{q}}}
			+
			d(t)
		\right\rbrace
		\left\lbrace
			\| \pmb{\partial} \varphi \|_{L^2(\Sigma_1)}
			+
			\| \varphi \|_{L^2(\Sigma_1)}
		\right\rbrace.
	\end{align}

\end{proposition}

\begin{remark}[$\varphi$ vanishes in {$\left([1,\RescaledTboot] \times \mathbb{R}^3 \right) \backslash \widetilde{\mathcal{M}}_1^{(Int)}$}]
	\label{R:PHIVANISHESINM1INTCOMPLEMENT}
	From the definition \eqref{E:TRUNCATEDSOLIDCONE} of $\widetilde{\mathcal{M}}_1^{(Int)}$,
	\eqref{E:INTERIORANDEXTERIOREGIONSAREUNIONSOFSPHERES},
	\eqref{E:SIGMATREGIONINTERIORISUNIONSOFSPHERES},
	\eqref{E:ALONGSIAMGA1EUCLIDEANBALLCONTAINEDINMETRICBALL},
	and standard domain of dependence considerations,
	it follows that the solution $\varphi$ from Prop.\,\ref{P:SPATIALLYLOCALIZEDREDUCTIONOFPROOFOFTHEOREMDECAYESTIMATE}
	satisfies $\varphi \equiv 0$ in 
	$\left([1,\RescaledTboot] \times \mathbb{R}^3 \right) \backslash \widetilde{\mathcal{M}}_1^{(Int)}$.
\end{remark}

\begin{lemma}[Standard energy estimate for the wave equation]
\label{L:STANDARDENERGYESTIMATEFORLINEARWAVEQUATIONWITHRESCALEDBACKGROUND}
Under the bootstrap assumption \eqref{E:RESCALEDSTRICHARTZ} for the first term on the LHS,
there exists a large $\Lambda_0 > 0$ 
such that if $\uplambda \geq \Lambda_0$,
then solutions $\varphi$ to the homogeneous linear wave equation \eqref{E:BOUNDEDNESSOFCONFORMALENERGY}
(where in \eqref{E:BOUNDEDNESSOFCONFORMALENERGY}, $\gfour = \gfour(\vec{\Psi})$, with $\vec{\Psi}$ the rescaled solution)
verify the following estimate for $t \in [0,\RescaledTboot]$:
\begin{align} \label{E:STANDARDENERGYESTIMATEFORLINEARWAVEQUATIONWITHRESCALEDBACKGROUND}
	\| \pmb{\partial} \varphi \|_{L^2(\Sigma_t)}
	& \lesssim \| \pmb{\partial} \varphi \|_{L^2(\Sigma_0)}.
\end{align}
Moreover, for $0 \leq t \leq 1$, we have
\begin{align} \label{E:L2NORMSTANDARDENERGYESTIMATEFORLINEARWAVEQUATIONWITHRESCALEDBACKGROUND}
	\| \varphi \|_{L^2(\Sigma_t)}
	& \lesssim \| \pmb{\partial} \varphi \|_{L^2(\Sigma_0)}
		+
		\| \varphi \|_{L^2(\Sigma_0)}.
\end{align}

\end{lemma}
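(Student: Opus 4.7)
The plan is to adapt the vectorfield multiplier argument of Lemma~\ref{L:BASICENERGYINEQUALITYFORWAVEEQUATIONS} to the rescaled setting of Subsect.\,\ref{SS:RESCALEDSOLUTION} and then close the estimate with Gronwall, exploiting the smallness that the rescaled Strichartz bootstrap assumption \eqref{E:RESCALEDSTRICHARTZ} provides on the slab $[0,\RescaledTboot]$. More precisely, I would use the vectorfield $\mathbf{X} := \Transport$ as a multiplier against $\square_{\gfour(\vec\Psi)} \varphi = 0$, form the energy current $\Jen{\Transport}$ with respect to the rescaled acoustical metric, and integrate the resulting divergence identity \eqref{E:DIVERGENCEOFENERGYCURRENT} over the slab $[0,t] \times \mathbb{R}^3$. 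Since $\Transport$ is $\gfour$-timelike and $\gfour$-orthogonal to $\Sigma_\tau$ (with $\gfour(\Transport,\Transport) = -1$), the resulting boundary integrals along $\Sigma_\tau$ are coercive and comparable to $\|\pmb{\partial}\varphi\|_{L^2(\Sigma_\tau)}^2$, exactly as in Lemma~\ref{L:CORECIVENESSOFBASICENERGY}; this step works uniformly in $\uplambda$ because the bootstrap assumption \eqref{E:BOOTSOLUTIONDOESNOTESCAPEREGIMEOFHYPERBOLICITY} transfers to the rescaled fluid variables.

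The key step is then to bound the bulk error term arising from the deformation tensor $\deform{\Transport}$. As in the proof of Lemma~\ref{L:BASICENERGYINEQUALITYFORWAVEEQUATIONS}, this error is of the schematic form $\mathcal{O}(|\pmb{\partial}\vec\Psi| + 1)|\pmb{\partial}\varphi|^2$, so that after an application of Cauchy--Schwarz one obtains, for any $t \in [0,\RescaledTboot]$,
\begin{align*}
\|\pmb{\partial}\varphi\|_{L^2(\Sigma_t)}^2
& \lesssim \|\pmb{\partial}\varphi\|_{L^2(\Sigma_0)}^2
+ \int_0^t \left(\|\pmb{\partial}\vec\Psi\|_{L^\infty(\Sigma_\tau)} + 1\right) \|\pmb{\partial}\varphi\|_{L^2(\Sigma_\tau)}^2 \, d\tau.
\end{align*}
To control the integrating factor, I would invoke the Cauchy--Schwarz inequality in time together with the rescaled bootstrap bound \eqref{E:RESCALEDSTRICHARTZ} and the interval-length bound \eqref{E:RESCALEDBOOTBOUNDS}, giving
\begin{align*}
\int_0^{\RescaledTboot} \left(\|\pmb{\partial}\vec\Psi\|_{L^\infty(\Sigma_\tau)} + 1\right) d\tau
& \leq \RescaledTboot^{1/2} \|\pmb{\partial}\vec\Psi\|_{L^2_t L^\infty_x} + \RescaledTboot
\lesssim \Tboot^{1/2}\uplambda^{-8\upepsilon_0} + \uplambda^{1-8\upepsilon_0}\Tboot,
\end{align*}
which is uniformly bounded once $\Lambda_0$ is taken large (with $\Tboot$ already fixed small in Subsect.\,\ref{SS:PARAMETERS}, and remembering that the hypothesis $\uplambda \geq \Lambda_0$ of Theorem~\ref{T:FREQUENCYLOCALIZEDSTRICHARTZ} operates in the regime where this combination is $\lesssim 1$). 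Gronwall's inequality then yields \eqref{E:STANDARDENERGYESTIMATEFORLINEARWAVEQUATIONWITHRESCALEDBACKGROUND}.

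For the second estimate \eqref{E:L2NORMSTANDARDENERGYESTIMATEFORLINEARWAVEQUATIONWITHRESCALEDBACKGROUND}, I would simply use the fundamental theorem of calculus in $t$: writing $\varphi(t,x) = \varphi(0,x) + \int_0^t \partial_t \varphi(\tau,x)\, d\tau$, taking the $L^2(\Sigma_t)$ norm, applying Minkowski's integral inequality, and then invoking the already established estimate \eqref{E:STANDARDENERGYESTIMATEFORLINEARWAVEQUATIONWITHRESCALEDBACKGROUND} to bound $\|\partial_t\varphi\|_{L^2(\Sigma_\tau)} \lesssim \|\pmb{\partial}\varphi\|_{L^2(\Sigma_0)}$, one obtains the desired bound for $t \in [0,1]$ (the restriction $t \leq 1$ is what prevents the integral in $\tau$ from contributing an unfavorable factor). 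Since the whole argument is essentially the standard coercive energy method applied in the rescaled frame, there is no serious obstacle; the only thing one must verify carefully is that the rescaled fluid variables satisfy the regime-of-hyperbolicity condition so that Lemma~\ref{L:CORECIVENESSOFBASICENERGY}'s coercivity survives the rescaling, which follows from the bootstrap assumption \eqref{E:BOOTSOLUTIONDOESNOTESCAPEREGIMEOFHYPERBOLICITY} and the invariance of $\mathfrak{K}$ under the spacetime rescaling $(t,x) \mapsto (t_k + \uplambda^{-1}t, \uplambda^{-1}x)$.
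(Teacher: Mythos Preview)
Your overall strategy matches the paper's exactly: use the $\Transport$-multiplier, obtain a Gronwall-ready energy inequality, close with the rescaled Strichartz bootstrap \eqref{E:RESCALEDSTRICHARTZ}, and then get \eqref{E:L2NORMSTANDARDENERGYESTIMATEFORLINEARWAVEQUATIONWITHRESCALEDBACKGROUND} from the fundamental theorem of calculus. The second estimate is handled correctly.

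There is, however, a real gap in your treatment of the integrating factor. You keep the ``$+1$'' in the bulk term and then try to bound
\[
\int_0^{\RescaledTboot} 1\, d\tau \;=\; \RescaledTboot \;\leq\; \uplambda^{1-8\upepsilon_0}\Tboot,
\]
claiming this is uniformly bounded once $\Lambda_0$ is large. That is false: since $8\upepsilon_0 < 1$ (cf.\ \eqref{E:EPSILON0INEQUALITY}), the quantity $\uplambda^{1-8\upepsilon_0}$ diverges as $\uplambda\to\infty$, and $\Tboot$ is fixed independently of $\uplambda$. If the ``$+1$'' were genuinely present, Gronwall would produce a factor $\exp(C\RescaledTboot)$, destroying the uniform bound.

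The fix is to observe that the ``$+1$'' never actually arises once you drop the lower-order $\varphi^2$ piece from the energy \eqref{E:BASICENERGYDEF}. With only the first-order current $\Jen{\Transport}[\varphi]$, the sole bulk term is $\tfrac12\enmomem^{\kappa\lambda}\deformarg{\Transport}{\kappa}{\lambda}$, and since both $\Transport^\alpha$ and $\gfour_{\alpha\beta}$ are smooth functions of $\vec\Psi$ alone, one has $|\deform{\Transport}|\lesssim |\pmb{\partial}\vec\Psi|$ pointwise, with no additive constant. The resulting inequality is
\[
\|\pmb{\partial}\varphi\|_{L^2(\Sigma_t)}^2 \lesssim \|\pmb{\partial}\varphi\|_{L^2(\Sigma_0)}^2 + \int_0^t \|\pmb{\partial}\vec\Psi\|_{L^\infty(\Sigma_\tau)}\,\|\pmb{\partial}\varphi\|_{L^2(\Sigma_\tau)}^2\, d\tau,
\]
and now Gronwall closes uniformly via $\|\pmb{\partial}\vec\Psi\|_{L^1([0,\RescaledTboot])L^\infty_x}\lesssim \RescaledTboot^{1/2}\uplambda^{-1/2-4\upepsilon_0}\lesssim \uplambda^{-8\upepsilon_0}$ from \eqref{E:RESCALEDBOOTBOUNDS} and \eqref{E:RESCALEDSTRICHARTZ}. (The ``$+1$'' in the statement of Lemma~\ref{L:BASICENERGYINEQUALITYFORWAVEEQUATIONS} is an artifact of the $\varphi\Transport\varphi$ cross term there; it disappears here precisely because the paper instructs you to omit the $\varphi^2$ piece.)
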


\begin{proof}
	Reasoning as in our proof of \eqref{E:BASICENERGYINEQUALITYFORWAVEEQUATIONS},
	but omitting the $\varphi^2$ term in the analog of the 
	energy \eqref{E:BASICENERGYDEF}
	and the energy identity \eqref{E:WAVEEQUATIONENERGYIDUSEDINPROOF},
	we find that
	\[
	\| \pmb{\partial} \varphi \|_{L^2(\Sigma_t)}^2
	\lesssim
	\| \pmb{\partial} \varphi \|_{L^2(\Sigma_0)}^2
	+
				\int_0^t
						\|
							\pmb{\partial} \vec{\Psi} 
						\|_{L^{\infty}(\Sigma_{\uptau})}
						\| \pmb{\partial} \varphi \|_{L^2(\Sigma_{\uptau})}^2
				\, d \uptau.
	\]
	\eqref{E:STANDARDENERGYESTIMATEFORLINEARWAVEQUATIONWITHRESCALEDBACKGROUND} now follows from this estimate,
	Gr\"{o}nwall's inequality, 
	and the estimate
	$\| \pmb{\partial} \vec{\Psi} \|_{L^1([0,\RescaledTboot]) L_x^{\infty}} \lesssim \uplambda^{-8 \upepsilon_0} \leq 1$,
	which is a simple consequence of
	\eqref{E:RESCALEDBOOTBOUNDS}
	and
	\eqref{E:RESCALEDSTRICHARTZ}.
	
	\eqref{E:L2NORMSTANDARDENERGYESTIMATEFORLINEARWAVEQUATIONWITHRESCALEDBACKGROUND}
	then follows from
	\eqref{E:STANDARDENERGYESTIMATEFORLINEARWAVEQUATIONWITHRESCALEDBACKGROUND} and the fundamental theorem of calculus.
\end{proof}

\subsection{Mild growth rate for a conformal energy}
\label{SS:MILDGROWTHCONFORMALENERGY}
The proof of Prop.~\ref{P:SPATIALLYLOCALIZEDREDUCTIONOFPROOFOFTHEOREMDECAYESTIMATE}
fundamentally relies on deriving estimates for a conformal energy,
which we define in this subsection.
We stress that our definition coincides with the definition of the conformal energy
given in \cite{qW2017}*{Definition~4.4}.

\subsubsection{Definition of the conformal energy}
\label{SSS:DEFINITIONOFCONFORMAL}
We start by fixing two smooth, non-negative cut-off functions of $(t,u)$, denoted by $\weight$ and $\uweight$
and satisfying $0 \leq \weight(t,u) \leq 1$, $0 \leq \uweight(t,u) \leq 1$, such that the following
properties hold for $t > 0$:
\begin{subequations}
\begin{align} \label{E:CONFORMALENERGYCUTOFFS}
	\weight(t,u)
	& = 
	\begin{cases}
		1 & \mbox{if } \frac{u}{t} \in [0,1/2],
			\\
		0 & \mbox{if } \frac{u}{t} \in (-\infty,-1/4] \cup [3/4,1], 
	\end{cases}
	& 
	\uweight(t,u)
	& = 
	\begin{cases}
		1 & \mbox{if } \frac{u}{t} \in [0,1],
			\\
		0 & \mbox{if } \frac{u}{t} \in (-\infty,-1/4], 
	\end{cases}
		\\
	\weight(t,u)
	& = \uweight(t,u)
	&& 
	\mbox{if }
	t \in [1,\RescaledTboot] \mbox{ and } \frac{u}{t} \in [-1/4,0].
	\label{E:AGREEMENTCONFORMALENERGYCUTOFFS}
\end{align}
\end{subequations}
See Fig.\,\ref{F:REGIONSCUTOFF} for a schematic depiction of the regions in the case ${\bf{z}} := 0$,
where for convenience, we have suppressed the ``quasilinear nature'' of the geometry by
depicting it as flat.

\begin{figure}[h!]
\centering
\includegraphics[scale=.4]{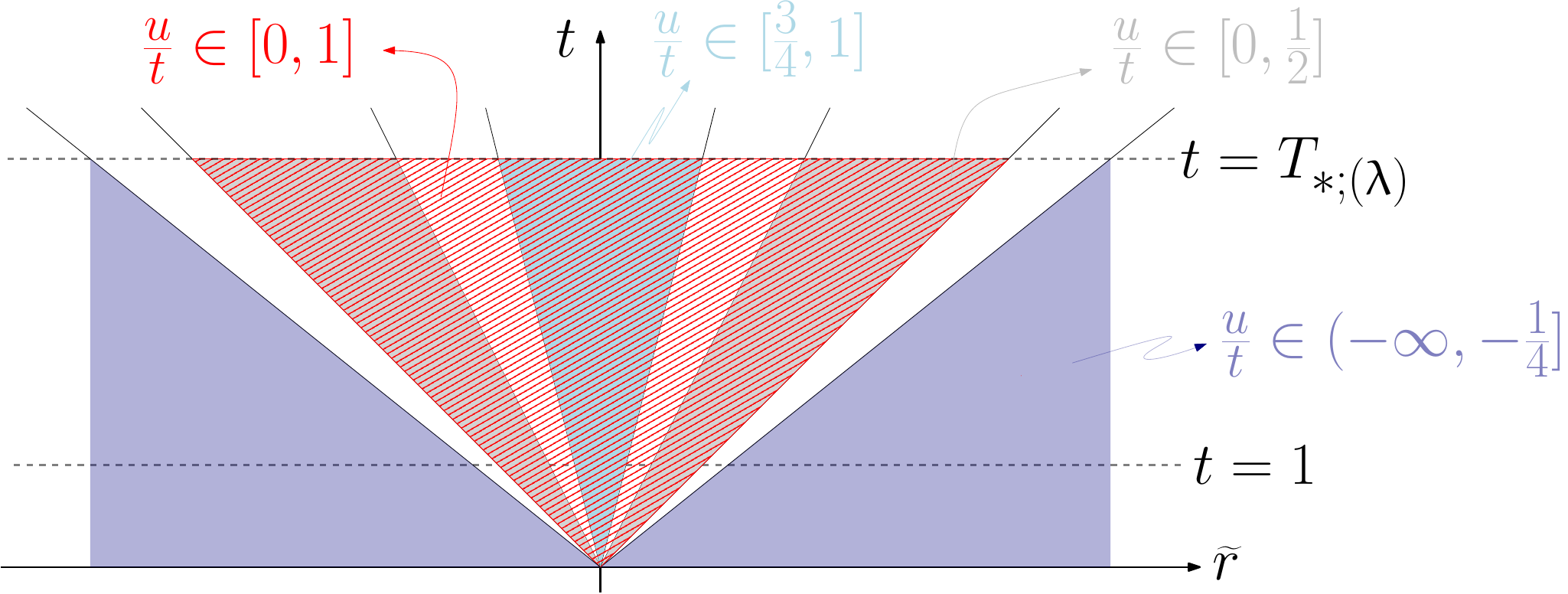}
\caption{Schematic illustration of the regions appearing in definition \eqref{E:CONFORMALENERGYCUTOFFS} in the case ${\bf{z}} := 0$.}
\label{F:REGIONSCUTOFF}
\end{figure}

\begin{definition}[Conformal energy]
\label{D:CONFORMALENERGY}
For scalar functions $\varphi$ 
that vanish outside of $\widetilde{\mathcal{M}}_1^{(Int)}$ 
(see definition \eqref{E:TRUNCATEDSOLIDCONE} and Remark~\ref{R:PHIVANISHESINM1INTCOMPLEMENT}),
we define the conformal energy $\mathscr{C}[\varphi]$ as follows:
\begin{align} \label{E:CONFORMALENERGY}
	\mathscr{C}[\varphi](t)
	& := 
		\int_{\widetilde{\Sigma}_t^{(Int)}}
				(\uweight - \weight) t^2
				\left\lbrace
					|\Dfour \varphi|^2
					+
					|\rgeo^{-1} \varphi|^2
				\right\rbrace
			\, d \varpi_g
			+
			\int_{\widetilde{\Sigma}_t^{(Int)}}
				\weight
				\left\lbrace
					|\rgeo \Dfour_{\Lunit} \varphi|^2
					+
					|\rgeo \angD \varphi|_{\gsphere}^2
					+
					|\varphi|^2
				\right\rbrace
			\, d \varpi_g.
\end{align}
\end{definition}

\subsubsection{The precise eikonal function and connection coefficient estimates needed for the proof of the conformal energy estimate}
\label{SSS:EIKONAFUNCTIONESTIMATESFORPROOFOFCONFORMALENERGYESTIMATE}
The following corollary is a routine consequence of Prop.\,\ref{P:MAINESTIMATESFOREIKONALFUNCTIONQUANTITIES}.
It provides all of the estimates for the eikonal function and connection coefficients
that are needed to prove Theorem~\ref{T:BOUNDEDNESSOFCONFORMALENERGY},
which in turn
provides the main estimates needed to prove Prop.\,\ref{P:SPATIALLYLOCALIZEDREDUCTIONOFPROOFOFTHEOREMDECAYESTIMATE}.
Some statements in the corollary are redundant in the sense that they already appeared in
Prop.\,\ref{P:MAINESTIMATESFOREIKONALFUNCTIONQUANTITIES}. For the reader's convenience,
we have allowed for redundancies; having all needed estimates
in the same corollary will facilitate our discussion of the proof of
Theorem~\ref{T:BOUNDEDNESSOFCONFORMALENERGY}.

\begin{corollary}[The precise estimates needed for the proof of the conformal energy estimate]
	\label{C:EIKONAFUNCTIONESTIMATESFORPROOFOFCONFORMALENERGYESTIMATE}
Let
\begin{align} \label{E:LOTSOFEIKONALFUNCTIONQUANTITIES} 
	\mathbf{A}
	& :=
	\gensmoothfunction_{(\vec{\Lunit})} \cdot
	\left(
			\mytr_{\congsphere} \widetilde{\upchi}^{(Small)}, 
			\hat{\upchi}, 
			\mytr_{\gsphere} \upchi - \frac{2}{\rgeo}, 
			\upzeta, 
			 \pmb{\partial} \vec{\Psi},
			\vec{\VortVort},
			\DivGradEnt,
			\frac{\nulllapse^{-1} - 1}{\rgeo},
			k,
			\Lunit \ln \nulllapse,
			\hat{\spheresecondfund},
			\spheresecondfund - \frac{2}{\rgeo}
		\right),
\end{align}
where $\gensmoothfunction_{(\vec{\Lunit})}$ is any smooth function of the type described in Subsubsect.\,\ref{SSS:ADDITIONALSCHEMATIC}.

Under the assumptions of Subsect.\,\ref{SS:ASSUMPTIONS}, the following estimates hold:
\begin{subequations}
\begin{align}
	\RescaledTboot & \leq \uplambda^{1-8 \upepsilon_0} \Tboot,
	&
	0 \leq \rgeo 
	& < 2 \RescaledTboot,
	&&
	\label{E:RESTATEMENTTANDRBOUNDS}	\\
	\|
		\nulllapse - 1
	\|_{L^{\infty}(\widetilde{\mathcal{M}})}
	& \lesssim \uplambda^{- \upepsilon_0} 
	\leq \frac{1}{4},
	&
	\volrat
	&
	\approx \rgeo^2,
	&
	\rgeo \mytr_{\congsphere} \widetilde{\upchi}
	&
	\approx 1.
	\label{E:RESTATEMENTNULLLAPSEANDVOLFORMBOUNDS} 
\end{align}
\end{subequations}

Moreover, we have the following estimates,\footnote{Our estimates 
\eqref{E:AIMPROVEDINTERIORREGEIONLT2LXINFINITYBONDS}
and
\eqref{E:SECONDCRUCIALPRODUCTESTIMATE}
feature the power $- 1/2 - 3 \upepsilon_0$ on the RHS, as opposed to the power
$- 1/2 - 4 \upepsilon_0$ that appeared in the analogous estimates of \cite{qW2017}.
This minor change has no substantial effect on the main results.
\label{FN:SMALLCORRECTIONTOLAMBDAPOWER}}
where the norms are defined in Subsects.\,\ref{SS:GEOMETRICNORMS} and \ref{SS:HOLDERNORMSINGEOMETRICANGULARVARIABLES},
the corresponding spacetime regions such as $\widetilde{\mathcal{C}}_u \subset \widetilde{\mathcal{M}}$
are defined in Subsect.\,\ref{SS:GEOMETRICSPACETIMESUBSETS} (see especially \eqref{E:TRUNCATEDSIGMASANDCONES}),
and $p$ is as in \eqref{E:BOUNDSONLEBESGUEEXPONENTP}:
\begin{subequations}
\begin{align}
	\| 
		\mathbf{A}
	\|_{L_t^2 L_x^{\infty}(\widetilde{\mathcal{M}}^{(Int)})}
	& \lesssim \uplambda^{- 1/2 - 3 \upepsilon_0},
		\label{E:AIMPROVEDINTERIORREGEIONLT2LXINFINITYBONDS} 
\end{align}
\begin{align}
	\|
		\rgeo
		(\angD,\angprojDarg{\Lunit}) \mathbf{A}
	\|_{L_t^2 L_{\upomega}^p(\widetilde{\mathcal{C}}_u)},
		\,
	\|
		\mathbf{A}
	\|_{L_t^2 L_{\upomega}^p(\widetilde{\mathcal{C}}_u)},
		\,
	\|
		\rgeo^{1/2} \mathbf{A} 
	\|_{L_t^{\infty} L_{\upomega}^{2p}(\widetilde{\mathcal{C}}_u)}
	\lesssim \uplambda^{-1/2},
	\label{E:LAMBDATOMINUSONEHALFAANDUNDERLINEABOUNDS} 
\end{align}
\end{subequations}

	\begin{subequations}
	\begin{align}
		\| \rgeo^{\frac{1}{2}} \Lunit \upsigma \|_{L_t^{\infty} L_{\upomega}^{2p}(\widetilde{\mathcal{C}}_u)},
			\,
		\| \rgeo^{\frac{1}{2} - \frac{2}{p}} \angD \upsigma \|_{L_{\gsphere}^p L_t^{\infty}(\widetilde{\mathcal{C}}_u)},
			\,
		\| \rgeo^{\frac{1}{2}} \angD \upsigma \|_{L_{\upomega}^p L_t^{\infty}(\widetilde{\mathcal{C}}_u)},
			\,
		\| \angD \upsigma \|_{L_t^2 L_{\upomega}^p(\widetilde{\mathcal{C}}_u)}
		& \lesssim \uplambda^{-1/2},
			&&
			\mbox{if } \widetilde{\mathcal{C}}_u \subset \widetilde{\mathcal{M}}^{(Int)}, 
			\label{E:RESTATEMENTCONEFIRSTDERIVATIVEBOUNDSFORCONFORMALFACTOR} \\
		\| \upsigma \|_{L^{\infty}(\widetilde{\mathcal{M}}^{(Int)})}
		& \lesssim \uplambda^{-8 \upepsilon_0},
			 &&
			\label{E:RESTATEMENTLINFTYBOUNDSFORCONFORMALFACTOR}
				\\
		\| \rgeo^{-1/2} \upsigma \|_{L^{\infty}(\widetilde{\mathcal{M}}^{(Int)})}
		& \lesssim \uplambda^{-\frac{1}{2}-4 \upepsilon_0},
			\label{E:RESTATEMENTRWEIGHTEDLINFTYBOUNDSFORCONFORMALFACTOR}
	\end{align}
	\end{subequations}

\begin{align} \label{E:ADDITIONALCONNECTIONCOEFFICIENTSLAMBDATOMINUSONEHALFAANDUNDERLINEABOUNDS}
	\left\|
		\rgeo^{1/2}
		\left(
		\frac{\nulllapse^{-1} - 1}{\rgeo},\mytr_{\gsphere} \upchi - \frac{2}{\rgeo},\mytr_{\gsphere} \underline{\upchi} 
		+ 
		\frac{2}{\rgeo}, k_{\spherenormal \spherenormal}
		\right) 
	\right\|_{L_t^{\infty} L_{\upomega}^2(\widetilde{\mathcal{C}}_u)} 
	& \lesssim \uplambda^{-1/2},
\end{align}

\begin{subequations}
\begin{align}
\| \angupmu_{(2)} \|_{L_u^2 L_t^{\infty} L_{\upomega}^{\infty}(\widetilde{\mathcal{M}}^{(Int)})}
\cdot
\|
	\rgeo^{1/2} \angD \upsigma
\|_{L_u^{\infty} L_t^{\infty} L_{\upomega}^p(\widetilde{\mathcal{M}}^{(Int)})}
& 
\lesssim \uplambda^{- 1 - 4 \upepsilon_0},
	\label{E:FIRSTCRUCIALPRODUCTESTIMATE} \\
\| (\upzeta,\widetilde{\upzeta} - \angupmu,\angupmu_{(1)}) \|_{L_t^2 L_x^{\infty}(\widetilde{\mathcal{M}}^{(Int)})}
\cdot
\|
	\rgeo^{1/2} \angD \upsigma
\|_{L_t^{\infty} L_u^{\infty} L_{\upomega}^p(\widetilde{\mathcal{M}}^{(Int)})}
& 
\lesssim \uplambda^{- 1 - 3 \upepsilon_0},
	\label{E:SECONDCRUCIALPRODUCTESTIMATE} \\
\|
	\rgeo^{3/2} 
	(\check{\upmu},
	\mytr_{\gsphere} \upchi \cdot \Chfour_{\uLunit}
	)
\|_{L_u^2 L_t^{\infty} L_{\upomega}^p(\widetilde{\mathcal{M}}^{(Int)})}
& \lesssim
	\uplambda^{- 4 \upepsilon_0},
		\label{E:CRUCIALCHECKMUANDRELATEDTERMSESTIMATE} \\
\|
	\rgeo \angD \upsigma
\|_{L_u^{\infty} L_t^{\infty} L_{\upomega}^p(\widetilde{\mathcal{M}}^{(Int)})}
& \lesssim 
	\uplambda^{- 4 \upepsilon_0},
		\label{E:CRUCIALRGEOANGDUPSIGMAESTIMATE} 
			\\
\left\|
	\rgeo^{-1/2}
	\left\lbrace
		\Lunit 
		\left(
			\frac{1}{2}
			\mytr_{\congsphere} \widetilde{\upchi}
			\volrat
		\right)
		-
		\frac{1}{4}
		\left(
			\mytr_{\gsphere} \upchi
		\right)^2
		\volrat
		+
		\frac{1}{2}
		\left\lbrace
			\Lunit \ln \nulllapse
		\right\rbrace
		\mytr_{\congsphere} \widetilde{\upchi}
		\volrat
		-
		|\angD \upsigma|_{\gsphere}^2
		\volrat
	\right\rbrace
\right\|_{L_u^{\infty} L_t^{\infty} L_{\upomega}^{\frac{p}{2}}(\widetilde{\mathcal{M}}^{(Int)})}
& \lesssim
\uplambda^{- \frac{1}{2}}.
\label{E:CRUCIALREORMALIZEDTRCHIESTIMATEAPPEARINGINCONFORMALENERGY}
\end{align}
\end{subequations}

\end{corollary}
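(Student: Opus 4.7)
The plan is to derive the corollary by unpacking the schematic list $\mathbf{A}$ entry by entry, reducing each component to a quantity already controlled by Proposition~\ref{P:MAINESTIMATESFOREIKONALFUNCTIONQUANTITIES}, and then assembling the product estimates via H\"{o}lder's inequality. The basic bounds \eqref{E:RESTATEMENTTANDRBOUNDS}--\eqref{E:RESTATEMENTNULLLAPSEANDVOLFORMBOUNDS} are immediate restatements of \eqref{E:RESCALEDBOOTBOUNDS}, \eqref{E:RGEOANDUBOUNDS}, \eqref{E:NULLLAPSECLOSETOUNITY}, \eqref{E:STUVOLUMEFORMCOMPARISONWITHUNITROUNDMETRICVOLUMEFORM}, and \eqref{E:TRCHILINFINITYESTIMATES}, and the conformal factor bounds \eqref{E:RESTATEMENTCONEFIRSTDERIVATIVEBOUNDSFORCONFORMALFACTOR}--\eqref{E:RESTATEMENTRWEIGHTEDLINFTYBOUNDSFORCONFORMALFACTOR} reproduce \eqref{E:CONEFIRSTDERIVATIVEBOUNDSFORCONFORMALFACTOR}--\eqref{E:RWEIGHTEDLINFTYBOUNDSFORCONFORMALFACTOR} verbatim.

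To handle \eqref{E:AIMPROVEDINTERIORREGEIONLT2LXINFINITYBONDS}--\eqref{E:LAMBDATOMINUSONEHALFAANDUNDERLINEABOUNDS}, I would dispatch the entries of $\mathbf{A}$ in \eqref{E:LOTSOFEIKONALFUNCTIONQUANTITIES} as follows. The triple $(\mytr_{\congsphere} \widetilde{\upchi}^{(Small)}, \hat{\upchi}, \upzeta)$ is controlled directly by \eqref{E:IMPROVEDININTERIORL2INTIMELINFINITYINSPACECONNECTIONCOFFICIENTS}, \eqref{E:ACOUSTICALLT2LOMEGAPANDLDERIVATIVESALONGCONES}, \eqref{E:ACOUSTICALLTINFTYLOMEGAPALONGCONES}, and \eqref{E:TRICHIMODSMALLTRFREECHIANDTORSIONL2INTIMEALONGSOUNDCONES}. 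The combination $\mytr_{\gsphere} \upchi - 2/\rgeo = \mytr_{\congsphere} \widetilde{\upchi}^{(Small)} - \Chfour_{\Lunit}$ (see \eqref{E:MODTRICHISMALL}) reduces to the previous entry plus $\Chfour_{\Lunit} = \gensmoothfunction_{(\vec{\Lunit})} \cdot \pmb{\partial} \vec{\Psi}$; the contributions of $\pmb{\partial}\vec{\Psi}$, $\vec{\VortVort}$, and $\DivGradEnt$ are supplied by \eqref{E:RESCALEDSTRICHARTZ}--\eqref{E:RESCALEDBOOTL2LINFINITYFIRSTDERIVATIVESOFVORTICITYBOOTANDNENTROPYGRADIENT} together with \eqref{E:FLUIDONEDERIVATIVESIGMAT}, \eqref{E:MODFLUIDSIGMAT}, and related bounds in Proposition~\ref{P:ESTIMATESFORFLUIDVARIABLES}; $(\nulllapse^{-1}-1)/\rgeo$ is handled by \eqref{E:L2INTIMEESTIMATESFORNULLLAPSEALONGCONES}--\eqref{E:IMPROVEDINTERIORL2INTIMEESTIMATESFORNULLLAPSEALONGCONES}; the schematic identity $k = \gensmoothfunction(\vec{\Psi}) \cdot \pmb{\partial}\vec{\Psi}$ coming from \eqref{E:SECONDFORMLIEDIFFERENTIATIONDEF}, the transport equation \eqref{E:EVOLUTIONNULLAPSEUSEEULER} giving $\Lunit\ln\nulllapse = \gensmoothfunction_{(\vec{\Lunit})}\cdot\pmb{\partial}\vec{\Psi}$, and the defining relation \eqref{E:SIGMAEVOLUTION} giving $\Lunit\upsigma = \tfrac12\Chfour_{\Lunit}$ all reduce the remaining scalar entries to the already-bounded wave-variable derivative $\pmb{\partial}\vec{\Psi}$. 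Finally, the second-fundamental-form entries decompose via \eqref{E:CONNECTIONCOEFFICIENT} as $\spheresecondfund - 2/\rgeo = (\mytr_{\gsphere}\upchi - 2/\rgeo)\tfrac12\gsphere + \hat{\upchi} + k$ and $\hat{\spheresecondfund} = \hat{\upchi} + \hat{k}$, which are already controlled. The bound \eqref{E:ADDITIONALCONNECTIONCOEFFICIENTSLAMBDATOMINUSONEHALFAANDUNDERLINEABOUNDS} is the only entry requiring a genuine new transport argument: the quantity $\mytr_{\gsphere}\underline{\upchi} + 2/\rgeo$ must be controlled via \eqref{E:LUNITTRACEUCHI}, using $\Lunit\rgeo = 1$, the curvature decomposition \eqref{E:RIEMALUNDERLINELBCONTRACTEDDECOMPINVOLVINGVORTANDENT}, and the already-established bounds on $\mytr_{\congsphere}\widetilde{\upchi}^{(Small)}$, $\hat{\upchi}$, $\upzeta$, and $k_{\spherenormal\spherenormal}$, along with the vorticity/entropy time integrals of Lemma~\ref{L:NEWESTIMATESFORPROPMAINESTIMATESFOREIKONALFUNCTIONQUANTITIES}.

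For the product estimates \eqref{E:FIRSTCRUCIALPRODUCTESTIMATE}--\eqref{E:CRUCIALRGEOANGDUPSIGMAESTIMATE}, the idea is to apply H\"{o}lder with respect to the natural sequence of variables (typically $L_t$, then $L_u$, then $L_{\upomega}$). Specifically, \eqref{E:FIRSTCRUCIALPRODUCTESTIMATE} combines \eqref{E:ANGDUPSIGMAL2ULINFINITYCONEESTIMATE} with the $L_u^{\infty} L_t^{\infty} L_{\upomega}^p$-bound for $\rgeo^{1/2}\angD\upsigma$ implied by \eqref{E:CONEFIRSTDERIVATIVEBOUNDSFORCONFORMALFACTOR}; \eqref{E:SECONDCRUCIALPRODUCTESTIMATE} combines \eqref{E:IMPROVEDININTERIORL2INTIMELINFINITYINSPACECONNECTIONCOFFICIENTS} and \eqref{E:ANGDUPSIGMAL2TLINFINITYSPACEPARTESTIMATE} with the same $L_x^{\infty}$ bound on $\rgeo^{1/2}\angD\upsigma$; \eqref{E:CRUCIALCHECKMUANDRELATEDTERMSESTIMATE} follows from \eqref{E:SPACETIMEL2INULINFINTYINTLPINOMEGAFORRGEOTHREEHAVESWEIGHTEDMODIFIEDMASSASPECT} for the $\check{\upmu}$ piece and from decomposing $\mytr_{\gsphere}\upchi = \mytr_{\congsphere}\widetilde{\upchi}^{(Small)} + 2/\rgeo - \Chfour_{\Lunit}$, then using $\rgeo$-weighted H\"{o}lder estimates against $\Chfour_{\uLunit} = \gensmoothfunction_{(\vec{\Lunit})}\cdot\pmb{\partial}\vec{\Psi}$; \eqref{E:CRUCIALRGEOANGDUPSIGMAESTIMATE} is an immediate consequence of \eqref{E:CONEFIRSTDERIVATIVEBOUNDSFORCONFORMALFACTOR} together with the bound $\rgeo \lesssim \RescaledTboot \lesssim \uplambda^{1-8\upepsilon_0}$ from \eqref{E:RGEOANDUBOUNDS}.

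The main obstacle, and the only estimate that is not purely a repackaging exercise, is \eqref{E:CRUCIALREORMALIZEDTRCHIESTIMATEAPPEARINGINCONFORMALENERGY}. Here the strategy is to invoke the exact algebraic identity \eqref{E:ANNOYINGERRORTERMALGEBRAICEXPRESSION} from Proposition~\ref{P:PDESMODIFIEDACOUSTICALQUANTITIES}, which rewrites the LHS as a sum of a $\uplambda^{-1}$-weighted $(\vec{\VortVort},\DivGradEnt)$ term, a schematic product $\gensmoothfunction_{(\vec{\Lunit})}\cdot(\pmb{\partial}\vec{\Psi},\mytr_{\congsphere}\widetilde{\upchi}^{(Small)},\rgeo^{-1})\cdot\pmb{\partial}\vec{\Psi}\cdot\volrat$, and the pointwise terms $|\hat{\upchi}|_{\gsphere}^2\volrat$ and $|\angD\upsigma|_{\gsphere}^2\volrat$. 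One then multiplies by $\rgeo^{-1/2}$, uses $\volrat\approx\rgeo^2$, and estimates in $L_u^{\infty}L_t^{\infty}L_{\upomega}^{p/2}$ by pairing the squared terms via H\"{o}lder: $|\hat{\upchi}|_{\gsphere}^2$ and $|\angD\upsigma|_{\gsphere}^2$ are bounded using $\| \rgeo^{1/2}\hat{\upchi}\|_{L_t^{\infty}L_{\upomega}^{2p}}^2$ from \eqref{E:ACOUSTICALLOMEGA2PLTINFTYALONGCONES} and $\| \rgeo^{1/2}\angD\upsigma\|_{L_{\upomega}^{p}L_t^{\infty}}^2$ from \eqref{E:RESTATEMENTCONEFIRSTDERIVATIVEBOUNDSFORCONFORMALFACTOR}; the $\uplambda^{-1}(\vec{\VortVort},\DivGradEnt)$ term is controlled by \eqref{E:RGEOLAMBDAINVERSELINEARTERMLU2LT2LOMEGAP}; and the mixed product involving $\pmb{\partial}\vec{\Psi}$ is handled by Proposition~\ref{P:ESTIMATESFORFLUIDVARIABLES} together with the connection-coefficient bounds already established. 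Gathering the powers of $\uplambda$ yields the stated bound $\uplambda^{-1/2}$, which is the worst rate among the contributing terms and explains why no improvement beyond $\uplambda^{-1/2}$ is claimed.
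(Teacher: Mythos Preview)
Your proposal is essentially correct and follows the same route as the paper, but two details deserve correction.

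First, your claim that \eqref{E:ADDITIONALCONNECTIONCOEFFICIENTSLAMBDATOMINUSONEHALFAANDUNDERLINEABOUNDS} ``requires a genuine new transport argument'' for $\mytr_{\gsphere}\underline{\upchi} + 2/\rgeo$ via \eqref{E:LUNITTRACEUCHI} is unnecessary. From \eqref{E:CONNECTIONCOEFFICIENT} one has $\underline{\upchi}_{AB} = -\spheresecondfund_{AB} - k_{AB}$ and $\upchi_{AB} = \spheresecondfund_{AB} - k_{AB}$, so that tracing gives the purely algebraic identity $\mytr_{\gsphere}\underline{\upchi} + 2/\rgeo = -(\mytr_{\gsphere}\upchi - 2/\rgeo) - 2\mytr_{\gsphere}k = -\mytr_{\congsphere}\widetilde{\upchi}^{(Small)} + \gensmoothfunction_{(\vec{\Lunit})}\cdot\pmb{\partial}\vec{\Psi}$. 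This reduces \eqref{E:ADDITIONALCONNECTIONCOEFFICIENTSLAMBDATOMINUSONEHALFAANDUNDERLINEABOUNDS} immediately to \eqref{E:LAMBDATOMINUSONEHALFAANDUNDERLINEABOUNDS}, exactly as the paper does; no transport equation needs to be integrated.

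Second, in your treatment of \eqref{E:CRUCIALREORMALIZEDTRCHIESTIMATEAPPEARINGINCONFORMALENERGY}, the estimate \eqref{E:RGEOLAMBDAINVERSELINEARTERMLU2LT2LOMEGAP} that you cite for the $\uplambda^{-1}(\vec{\VortVort},\DivGradEnt)$ term is in $L_u^2 L_t^2 L_{\upomega}^p$, whereas the target norm is $L_u^{\infty} L_t^{\infty} L_{\upomega}^{p/2}$. The paper instead uses $\uplambda^{-1}\rgeo \lesssim 1$ (from \eqref{E:RGEOANDUBOUNDS}) to reduce the term to $\rgeo^{1/2}(\vec{\VortVort},\DivGradEnt)$, which is then bounded by \eqref{E:MODFLUIDSIGMAT} (or equivalently \eqref{E:SMOOTHFUNCTIONTIMESMODVARIABLESCONE}). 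With this substitution your argument goes through.
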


\begin{proof}
The bootstrap assumptions imply that $\| \gensmoothfunction_{(\vec{\Lunit})} \|_{L^{\infty}(\widetilde{\mathcal{M}}^{(Int)})} \lesssim 1$;
thus, we can ignore $\gensmoothfunction_{(\vec{\Lunit})}$ throughout the rest of this proof.
The estimates \eqref{E:RESTATEMENTTANDRBOUNDS}, 
\eqref{E:RESTATEMENTNULLLAPSEANDVOLFORMBOUNDS},
\eqref{E:AIMPROVEDINTERIORREGEIONLT2LXINFINITYBONDS},
\eqref{E:LAMBDATOMINUSONEHALFAANDUNDERLINEABOUNDS},
\eqref{E:RESTATEMENTCONEFIRSTDERIVATIVEBOUNDSFORCONFORMALFACTOR},
\eqref{E:RESTATEMENTLINFTYBOUNDSFORCONFORMALFACTOR},
and \eqref{E:RESTATEMENTRWEIGHTEDLINFTYBOUNDSFORCONFORMALFACTOR}
are restatements of 
\eqref{E:RESCALEDBOOTBOUNDS},
\eqref{E:RGEOANDUBOUNDS},
\eqref{E:FREQUENCYSQUARESUMMEDSMOOTHFUNCTIONTIMESBASICVARIABLESSIMPLECONSEQUENCEOFRESCALEDSTRICHARTZ},
and of estimates derived in Props.\,\ref{P:MAINESTIMATESFOREIKONALFUNCTIONQUANTITIES} and \ref{P:ESTIMATESFORFLUIDVARIABLES},
combined with the schematic relations
$\Lunit \ln \nulllapse = \gensmoothfunction_{(\vec{\Lunit})} \cdot \pmb{\partial} \vec{\Psi}$,
$\Lunit \upsigma = \gensmoothfunction_{(\vec{\Lunit})} \cdot \pmb{\partial} \vec{\Psi}$,
$k = \gensmoothfunction(\vec{\Psi}) \cdot \pmb{\partial} \vec{\Psi}$,
$\hat{\spheresecondfund} = \hat{\upchi} + \gensmoothfunction_{(\vec{\Lunit})} \cdot \pmb{\partial} \vec{\Psi}$,
and
$\spheresecondfund - \frac{2}{\rgeo} 
= 
\mytr_{\congsphere} \widetilde{\upchi}^{(Small)} 
+ \gensmoothfunction_{(\vec{\Lunit})} \cdot \pmb{\partial} \vec{\Psi}$
(see \eqref{E:SECONDFORMLIEDIFFERENTIATIONDEF},
\eqref{E:SIGMAEVOLUTION},
\eqref{E:EVOLUTIONNULLAPSEUSEEULER},
\eqref{E:CONNECTIONCOEFFICIENT},
and \eqref{E:MODTRICHISMALL}).
Here we clarify that although the proof of the estimate
\eqref{E:IMPROVEDININTERIORL2INTIMELINFINITYINSPACECONNECTIONCOFFICIENTS}
relied on Schauder-type estimates for $\hat{\upchi}$ that forced us to obtain control of
$	\| 
		(
			\mytr_{\congsphere} \widetilde{\upchi}^{(Small)}, \mytr_{\gsphere} \upchi - \frac{2}{\rgeo}, \hat{\upchi}
		)
	\|_{L_t^2 L_u^{\infty} C_{\upomega}^{0,\updelta_0}(\widetilde{\mathcal{M}}^{(Int)})}
$,
we have stated the estimate \eqref{E:AIMPROVEDINTERIORREGEIONLT2LXINFINITYBONDS} in terms of the weaker
norm $\| \cdot \|_{L_t^2 L_x^{\infty}(\widetilde{\mathcal{M}}^{(Int)})}$;
control of this weaker norm is sufficient for the proof of Theorem~\ref{T:BOUNDEDNESSOFCONFORMALENERGY}.

\eqref{E:ADDITIONALCONNECTIONCOEFFICIENTSLAMBDATOMINUSONEHALFAANDUNDERLINEABOUNDS}
follows from
\eqref{E:LAMBDATOMINUSONEHALFAANDUNDERLINEABOUNDS}
and the schematic relations
$\mytr_{\gsphere} \upchi - \frac{2}{\rgeo}
=
\mytr_{\congsphere} \widetilde{\upchi}^{(Small)} 
+ \gensmoothfunction_{(\vec{\Lunit})} \cdot \pmb{\partial} \vec{\Psi}$,
$\mytr_{\gsphere} \underline{\upchi} + \frac{2}{\rgeo}
=
- \mytr_{\congsphere} \widetilde{\upchi}^{(Small)} 
+ \gensmoothfunction_{(\vec{\Lunit})} \cdot \pmb{\partial} \vec{\Psi}$, 
and
$k_{\spherenormal \spherenormal} = \gensmoothfunction_{(\vec{\Lunit})} \cdot \pmb{\partial} \vec{\Psi}$
(see \eqref{E:SECONDFORMLIEDIFFERENTIATIONDEF},
\eqref{E:CONNECTIONCOEFFICIENT},
and \eqref{E:MODTRICHISMALL}).

\eqref{E:FIRSTCRUCIALPRODUCTESTIMATE}
follows from
\eqref{E:CONEFIRSTDERIVATIVEBOUNDSFORCONFORMALFACTOR}
and
\eqref{E:ANGDUPSIGMAL2ULINFINITYCONEESTIMATE}.

\eqref{E:SECONDCRUCIALPRODUCTESTIMATE}
follows from
\eqref{E:CONEFIRSTDERIVATIVEBOUNDSFORCONFORMALFACTOR},
\eqref{E:ANGDUPSIGMAL2TLINFINITYSPACEPARTESTIMATE},
and
\eqref{E:AIMPROVEDINTERIORREGEIONLT2LXINFINITYBONDS} for $\upzeta$.

\eqref{E:CRUCIALCHECKMUANDRELATEDTERMSESTIMATE}
follows from
\eqref{E:TRCHILINFINITYESTIMATES},
\eqref{E:SPACETIMEL2INULINFINTYINTLPINOMEGAFORRGEOTHREEHAVESWEIGHTEDMODIFIEDMASSASPECT},
\eqref{E:FLUIDONEDERIVATIVESPACETIME},
the estimate $\| \rgeo^{1/2} \|_{L^{\infty}(\widetilde{\mathcal{M}})} \lesssim \uplambda^{1/2 - 4 \upepsilon_0}$
guaranteed by \eqref{E:RESTATEMENTTANDRBOUNDS},
\eqref{E:SMOOTHFUNCTIONTIMESFLUIDONEDERIVATIVESIGMAT} for the second term on the LHS,
and the schematic relations 
$\Chfour_{\uLunit} = \gensmoothfunction_{(\vec{\Lunit})} \cdot \pmb{\partial} \vec{\Psi}$
and
$\mytr_{\congsphere} \widetilde{\upchi}
= \mytr_{\gsphere} \upchi
+
\gensmoothfunction_{(\vec{\Lunit})} \cdot \pmb{\partial} \vec{\Psi}
$.

\eqref{E:CRUCIALRGEOANGDUPSIGMAESTIMATE} follows from the bound \eqref{E:RESTATEMENTCONEFIRSTDERIVATIVEBOUNDSFORCONFORMALFACTOR} for
$\| \rgeo^{\frac{1}{2}} \angD \upsigma \|_{L_{\upomega}^p L_t^{\infty}(\widetilde{\mathcal{C}}_u)}$
and the estimate $\| \rgeo^{1/2} \|_{L^{\infty}(\widetilde{\mathcal{M}})} \lesssim \uplambda^{1/2 - 4 \upepsilon_0}$
guaranteed by \eqref{E:RESTATEMENTTANDRBOUNDS}.

To obtain \eqref{E:CRUCIALREORMALIZEDTRCHIESTIMATEAPPEARINGINCONFORMALENERGY},
we first use \eqref{E:ANNOYINGERRORTERMALGEBRAICEXPRESSION},
the estimate \eqref{E:RESTATEMENTNULLLAPSEANDVOLFORMBOUNDS} for $\volrat$,
and the aforementioned estimate
\begin{align} \notag
\| \rgeo^{1/2} \|_{L^{\infty}(\widetilde{\mathcal{M}})} \lesssim \uplambda^{1/2 - 4 \upepsilon_0}
\end{align}
to deduce that
\begin{align} \label{E:FIRSTSTEPCRUCIALREORMALIZEDTRCHIESTIMATEAPPEARINGINCONFORMALENERGY}
	\mbox{LHS~\eqref{E:CRUCIALREORMALIZEDTRCHIESTIMATEAPPEARINGINCONFORMALENERGY}}
	& \lesssim
		\|
			\rgeo^{1/2} (\pmb{\partial} \vec{\Psi},\vec{\VortVort},\DivGradEnt)
		\|_{L_t^{\infty} L_u^{\infty} L_{\upomega}^p(\widetilde{\mathcal{M}})}
			\\
	& \ \
		+
		\uplambda^{1/2 -  4 \upepsilon_0}
		\left\|
			\rgeo^{1/2} (\pmb{\partial} \vec{\Psi},\mytr_{\congsphere} \widetilde{\upchi}^{(Small)},\hat{\upchi},\angD \upsigma)
		\right\|_{L_u^{\infty} L_t^{\infty} L_{\upomega}^p(\widetilde{\mathcal{M}}^{(Int)})}^2.
		\notag
\end{align}
From the estimate \eqref{E:MODFLUIDSIGMAT},
the estimate \eqref{E:LAMBDATOMINUSONEHALFAANDUNDERLINEABOUNDS} for 
$
\|
		\rgeo^{1/2}
		\mathbf{A}
\|_{L_t^{\infty} L_{\upomega}^p(\widetilde{\mathcal{C}}_u)}
$,
and the estimate
\eqref{E:RESTATEMENTCONEFIRSTDERIVATIVEBOUNDSFORCONFORMALFACTOR}
for
$\| \rgeo^{\frac{1}{2}} \angD \upsigma \|_{L_{\upomega}^p L_t^{\infty}(\widetilde{\mathcal{C}}_u)}$,
we conclude 
that
$\mbox{RHS}~\eqref{E:FIRSTSTEPCRUCIALREORMALIZEDTRCHIESTIMATEAPPEARINGINCONFORMALENERGY} \lesssim \uplambda^{-1/2}$
as desired.

\end{proof}

\subsubsection{Mild growth estimate for the conformal energy}
\label{SS:CONFORMALENERGYGROWTHESTIMATE}
The main estimate needed to prove Prop.~\ref{P:SPATIALLYLOCALIZEDREDUCTIONOFPROOFOFTHEOREMDECAYESTIMATE}
is provided by the following theorem. The proof of the theorem is fundamentally based on the
estimates for the acoustic geometry provided by Cor.\,\ref{C:EIKONAFUNCTIONESTIMATESFORPROOFOFCONFORMALENERGYESTIMATE}. 

\begin{theorem}[Mild growth estimate for the conformal energy]
\label{T:BOUNDEDNESSOFCONFORMALENERGY}
Let $R > 0$ be as in Subsect.\,\ref{SS:RESCALEDSOLUTION}
and let $\Tranchar_{\bf{z}}(1)$ be the unique point $\Tranchar_{\bf{z}}(1)$ on the cone-tip axis in $\Sigma_1$ (see Subsubsect.\,\ref{SSS:EIKONALINTERIOR}).
Let $\varphi$ be any solution to the covariant linear wave equation
\eqref{E:BOUNDEDNESSOFCONFORMALENERGY} on the slab $[0,\RescaledTboot] \times \mathbb{R}^3$ such that 
$(\varphi|_{\Sigma_1},\partial_t \varphi|_{\Sigma_1})$ 
is supported in the Euclidean ball of radius $R$ centered at the point $\Tranchar_{\bf{z}}(1)$ in $\Sigma_1$
(and thus Remark~\ref{R:PHIVANISHESINM1INTCOMPLEMENT} applies).

Then under the assumptions of Subsect.\,\ref{SS:ASSUMPTIONS},
for any $\varepsilon > 0$, there exists a constant $C_{\varepsilon} > 0$ (which can blow up as $\varepsilon \downarrow 0$)
such that the conformal energy of $\varphi$ (which is defined in \eqref{E:CONFORMALENERGY})
satisfies the following estimate for $t \in [1,\RescaledTboot]$:
\begin{align} \label{E:CONFORMALENERGYBOUND}
	\mathscr{C}[\varphi](t)
	& \leq C_{\varepsilon} (1 + t)^{2 \varepsilon}
		\left\lbrace
			\| \pmb{\partial} \varphi \|_{L^2(\Sigma_1)}^2
			+
			\| \varphi \|_{L^2(\Sigma_1)}^2
		\right\rbrace.
\end{align}

\end{theorem}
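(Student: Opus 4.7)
The plan is to derive a Gronwall-type differential inequality for $\mathscr{C}[\varphi](t)$ using the vectorfield multiplier method adapted to the acoustic geometry, and then integrate. Following the general framework described in Subsect.\,\ref{SS:GEOMETRYANDDISPERSION}, I would apply the multiplier method with two distinct multiplier vectorfields in the two regions distinguished by the cutoffs $\weight$ and $\uweight$: in the interior region $\{\frac{u}{t} \leq 1/2\}$ where $\weight = \uweight = 1$, I would use $\Transport$ together with a Morawetz-type multiplier adapted to $\spherenormal$, while in the exterior region $\{\frac{u}{t} > 1/2\}$ I would use an $\rgeo$-weighted multiplier of the form $\rgeo^m \Lunit$ together with the standard $\Transport$-energy. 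The cutoffs $\weight$, $\uweight$ are arranged so that the contributions transition smoothly. Because the multipliers are built from $\Lunit$, $\uLunit$, $\spherenormal$, and $\rgeo$, all of which are determined by the acoustic geometry, the deformation tensors that arise upon integration by parts will involve precisely the connection coefficients from the array $\mathbf{A}$ of \eqref{E:LOTSOFEIKONALFUNCTIONQUANTITIES}, together with the conformal factor $\upsigma$.

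Concretely, I would compute $\frac{d}{dt}\mathscr{C}[\varphi](t)$ by differentiating each integrand of \eqref{E:CONFORMALENERGY} under the integral sign, converting time derivatives into geometric derivatives via $\Transport$, $\Lunit$, and $\uLunit$, and using the wave equation $\square_{\gfour}\varphi = 0$ to eliminate second-order normal derivatives. The key identity, essentially the one appearing in \cite{qW2017}*{Section 7}, decomposes $\frac{d}{dt}\mathscr{C}[\varphi](t)$ into a main coercive boundary-type contribution (which bounds $\mathscr{C}[\varphi](t)$ up to constants) plus error terms of two types: bulk interior error terms $\int \mathfrak{E} |\pmb{\partial}\varphi|^2 \, d\varpi_g$ where $\mathfrak{E}$ is a linear expression in $\mathbf{A}$, $\angD\upsigma$, $\check{\upmu}$, and similar quantities; and boundary-type transition error terms supported near $\frac{u}{t} \in [1/2, 3/4]$ coming from the derivatives of $\weight$ and $\uweight$. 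The transition terms produce integrals on bounded subsets of $\widetilde{\Sigma}_t^{(Int)}$ and are easy to absorb thanks to the $L^\infty$ bound on $\upsigma$ from \eqref{E:RESTATEMENTLINFTYBOUNDSFORCONFORMALFACTOR}.

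The main obstacle, and the technical heart of the proof, will be estimating the bulk error terms, particularly those involving $\angD\upsigma$ quadratically against $|\pmb{\partial}\varphi|^2$ and those involving $\check{\upmu}$ linearly against $\varphi \cdot \pmb{\partial}\varphi$. For the $\angD\upsigma$-quadratic term, I would exploit the decomposition \eqref{E:KEYANGCONFORMALFACTORALGEBRAICDECOMPOSITION} of $\angD\upsigma$ into $-\upzeta + (\widetilde{\upzeta} - \angupmu) + \angupmu_{(1)} + \angupmu_{(2)}$, handling the first three pieces via their $L_t^2 L_x^\infty$ bounds from \eqref{E:AIMPROVEDINTERIORREGEIONLT2LXINFINITYBONDS} and \eqref{E:ANGDUPSIGMAL2TLINFINITYSPACEPARTESTIMATE} combined with a standard Gronwall argument in $t$, and handling the remaining piece $\angupmu_{(2)}$ via the crucial product estimate \eqref{E:FIRSTCRUCIALPRODUCTESTIMATE} and Sobolev embedding on $S_{t,u}$ from \eqref{E:L2SOBOLEVONSTU} after pairing it with one factor of $\angD\upsigma$ and using the $L_u^{\infty} L_t^{\infty} L_{\upomega}^p$ bound \eqref{E:CRUCIALRGEOANGDUPSIGMAESTIMATE}. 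For the $\check{\upmu}$-involving terms, I would use the trace-type inequality \eqref{E:L2SOBOLEVONSTU} to trade a factor of $\varphi/\rgeo^{1/2}$ for its $H^1$ norm along $\Sigma_t$, then pair the resulting factor of $\rgeo^{3/2}\check{\upmu}$ against \eqref{E:CRUCIALCHECKMUANDRELATEDTERMSESTIMATE}.

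Combining all the bounds and writing $\frac{d}{dt}\mathscr{C}[\varphi](t) \leq \mathfrak{b}(t) \mathscr{C}[\varphi](t)$, where $\mathfrak{b}(t)$ is a sum of $L^1_t$ functions with small norm (controlled by $\uplambda^{-c\upepsilon_0}$) plus logarithmically divergent contributions from the borderline estimates, Gronwall's inequality yields $\mathscr{C}[\varphi](t) \lesssim \exp\bigl(\int_1^t \mathfrak{b}(\tau)\,d\tau\bigr) \mathscr{C}[\varphi](1)$, with $\int_1^t \mathfrak{b}(\tau)\,d\tau \leq C + 2\varepsilon \ln(1+t)$ for any fixed $\varepsilon > 0$ provided $\uplambda$ is sufficiently large, producing the desired growth rate $(1+t)^{2\varepsilon}$. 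The initial value $\mathscr{C}[\varphi](1)$ is comparable to $\|\pmb{\partial}\varphi\|_{L^2(\Sigma_1)}^2 + \|\varphi\|_{L^2(\Sigma_1)}^2$ by the compact support assumption and the fact that $\rgeo \approx 1$ on the support at $t = 1$. Since essentially all the delicate structural work has already been carried out in Prop.\,\ref{P:MAINESTIMATESFOREIKONALFUNCTIONQUANTITIES} and repackaged in Cor.\,\ref{C:EIKONAFUNCTIONESTIMATESFORPROOFOFCONFORMALENERGYESTIMATE}, and since the new $\uplambda^{-1}$-multiplied vorticity/entropy terms already appearing in those estimates were absorbed there, the proof reduces to applying the framework of \cite{qW2017}*{Section 7} with only cosmetic modifications.
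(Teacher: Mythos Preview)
Your proposal is essentially correct in its conclusion---the proof indeed reduces to the framework of \cite{qW2017}*{Section~7} with the geometric estimates of Cor.\,\ref{C:EIKONAFUNCTIONESTIMATESFORPROOFOFCONFORMALENERGYESTIMATE} already in hand, and the $\uplambda^{-1}$-multiplied vorticity/entropy contributions have already been absorbed at the level of the connection-coefficient estimates---but two aspects of your description are inaccurate and worth flagging.

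First, you have the roles of the regions reversed. The region $\{u/t \leq 1/2\}$ is where $\rgeo = t-u$ is large, and this is precisely where the $\rgeo^m \Lunit$-weighted multipliers (Step~5 in the paper's outline, controlling the second integral in \eqref{E:CONFORMALENERGY} with cutoff $\weight$) are applied. The region $\{u/t \geq 1/2\}$ near the cone-tip axis, where $(\uweight - \weight)$ is active, is instead controlled by the $(1+t)^{-2}$ decay of the \emph{standard} energy (Step~6); the Morawetz estimate (Step~2) produces a spacetime integral weighted by \emph{negative} powers of $\rgeo$ and is useful near the tip, not far from it.

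Second, the proof in \cite{qW2017}*{Section~7} is not a single Gronwall inequality for $\mathscr{C}[\varphi](t)$; it is a six-step argument (basic energy, Morawetz, $\varphi$-control via the fundamental theorem of calculus, comparison with the conformal quantities $\widetilde{\gfour}$ and $\widetilde{\varphi} := e^{-\upsigma}\varphi$, $\rgeo^m$-weighted energy estimates for $\square_{\widetilde{\gfour}}\widetilde{\varphi}$, and standard energy decay) whose pieces are assembled at the end. In particular, the $(1+t)^{2\varepsilon}$ growth arises not from a logarithmic Gronwall but from a pigeonhole/dyadic decomposition of the time interval in the decay argument of Step~6. Your treatment of the $\angD\upsigma$ and $\check{\upmu}$ error terms via \eqref{E:FIRSTCRUCIALPRODUCTESTIMATE}--\eqref{E:CRUCIALCHECKMUANDRELATEDTERMSESTIMATE} is, however, exactly what the paper does inside Step~5.
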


\begin{proof}[Discussion of proof]
Given the estimates that we have already derived,
the proof of Theorem~\ref{T:BOUNDEDNESSOFCONFORMALENERGY}
is the same as the proof of \cite{qW2017}*{Theorem~4.5}
given in \cite{qW2017}*{Section~7.6}.
Thus, here we only clarify which estimates are needed to apply the preliminary arguments
given in \cite{qW2017}*{Section~7}, which are used in \cite{qW2017}*{Section~7.6} to prove
Theorem~\ref{T:BOUNDEDNESSOFCONFORMALENERGY}. 

The proof of \eqref{E:CONFORMALENERGYBOUND} given in \cite{qW2017}*{Section~7}
is carried out via a bootstrap argument, wherein one needs to establish
\cite{qW2017}*{Equations (7.61)--(7.63)} to close the bootstrap;
see \cite{qW2017}*{Section~7.4.1}.
For the reader's convenience, we first list the main steps given in \cite{qW2017}*{Section~7},
which lead to the proof of Theorem~\ref{T:BOUNDEDNESSOFCONFORMALENERGY}.
They are a collection estimates for the linear solution $\varphi$ in the statement of the theorem:
\begin{enumerate}
	\item The most basic ingredient in the proof is that one needs a uniform bound, in terms of the data, 
			for a standard non-weighted energy of $\varphi$ along a portion of
			the constant-time hypersurfaces $\Sigma_t$ and null cones $\widetilde{\mathcal{C}}_u$; see \cite{qW2017}*{Lemma~7.1}.
	\item A Morawetz-type energy estimate, which, when combined with Step 1,
			yields preliminary control 
			of a coercive spacetime integral 
			of $|\pmb{\partial} \varphi|^2$ and $\varphi^2$
			near the cone-tip axis. The integral involves
			weights with negative powers of $\rgeo$,
			and it is bounded by the data plus some 
			error terms that are controlled later in the argument.
	\item In this step, one makes preliminary progress in controlling the yet-to-be-controlled error terms
		mentioned above in Step 2. Specifically, one derives estimates showing that $\rgeo$-weighted versions 
		of $\varphi$ can be controlled in $L^2$ along a portion of $\Sigma_t$ 
		in terms of a weighted spacetime integral involving the square of its outgoing null derivative
		and an integral of $\varphi^2$ along a portion of a sound cone.
	\item Comparison results for various norms and energies, some of which
				involve the conformal metric $\widetilde{\gfour}$ from Subsubsect.\,\ref{SSS:CONFORMALMETRIC}
				and a corresponding conformally rescaled solution variable $\widetilde{\varphi} := e^{-\upsigma} \varphi$.
	\item Weighted energy estimates for the wave equation $\square_{\widetilde{\gfour}} \widetilde{\varphi} = \cdots$,
			where the energies control the $\Lunit$ and $\angD$ derivatives of $\widetilde{\varphi}$ along portions of $\Sigma_t$
			with weights involving $\volrat$ (see \eqref{E:RATIOOFVOLUMEFORMS}) and positive powers of $\rgeo$.
			These are obtained by multiplying the wave equation $\square_{\widetilde{\gfour}} \widetilde{\varphi} = \cdots$ by
			$(\Lunit \widetilde{\varphi} + \frac{1}{2} \mytr_{\congsphere} \widetilde{\upchi}) \rgeo^m$ for appropriate
			choices of $m \geq 0$, and integrating by parts. 
				Ultimately, when combined with the results from the previous steps,
				this allows one to bound the conformal energy (i.e., the terms on on RHS~\eqref{E:CONFORMALENERGY})
				in the region $\lbrace u \leq \frac{3t}{4} \rbrace \cap \widetilde{\mathcal{M}}^{(Int)}$;
				see \cite{qW2017}*{Section~7.6}, 
				in particular \cite{qW2017}*{Equation~(7.94)} and \cite{qW2017}*{Equation~(7.95)}.
	\item A decay estimate for the standard non-weighted energy along $\Sigma_t$, showing in particular that it decays like
				$(1+t)^{-2}$; see \cite{qW2017}*{Equation~(7.93)}.
				Ultimately, when combined with the preliminary estimates for $\varphi$ provided by Step 3, 
				this yields the desired control of the conformal energy (i.e., the terms on on RHS~\eqref{E:CONFORMALENERGY})
				in the region $\lbrace u \geq \frac{t}{2} \rbrace \cap \widetilde{\mathcal{M}}^{(Int)}$;
				see \cite{qW2017}*{Section~7.6}.
\end{enumerate}

We now discuss precisely which of the estimates we have already derived are needed to repeat the arguments of \cite{qW2017}*{Section~7} 
and to carry out the above steps. We will not fully describe all of the analysis in \cite{qW2017}*{Section~7}; 
rather, we will describe only the part of the analysis that relies on the estimates we have derived.
Specifically, we focus primarily on arguments that rely on estimates for the acoustic
geometry. We again emphasize that although we have derived the same 
estimates for the acoustic geometry as in \cite{qW2017},
our proof of the estimates (derived in Sect.\,\ref{S:ESTIMATESFOREIKONALFUNCTION})
required substantial additional arguments 
because we had to control new source terms coming from the entropy and vorticity.
The remaining arguments, not discussed here, needed
to close the bootstrap -- and hence establish
Theorem~\ref{T:BOUNDEDNESSOFCONFORMALENERGY} -- are the same as in 
\cite{qW2017}*{Section~7}, to which we refer the reader for more details.
We start by noting that the basic estimates
\eqref{E:RESTATEMENTTANDRBOUNDS},
\eqref{E:RESTATEMENTNULLLAPSEANDVOLFORMBOUNDS},
and \eqref{E:RESTATEMENTLINFTYBOUNDSFORCONFORMALFACTOR}
are used throughout \cite{qW2017}*{Section~7}. 
We also refer readers to Footnote~\ref{FN:SMALLCORRECTIONTOLAMBDAPOWER}
regarding a minor discrepancy between the estimates we derived here
and corresponding estimates in \cite{qW2017};
we will not comment further on these issues.

Step 1 (see \cite{qW2017}*{Lemma~7.1})
is essentially equivalent to the basic energy estimates for the wave
equations derived in the proofs of 
Props.\,\ref{P:PRELIMINARYENERGYANDELLIPTICESTIMATES} and \ref{P:ENERGYESTIMATESALONGNULLHYPERSURFACES},
differing only in that the needed estimates are spatially localized.
For the proof,
one needs only the bound
\begin{align} \label{E:SIMPLEL1TLINFTYXDEFORMATIONTENSORCOMPONENTBOUND}
\|
	\deformarg{\Transport}{\alpha}{\beta}
\|_{L_t^1 L_x^{\infty}}
& \lesssim
\uplambda^{-8 \upepsilon_0},
\end{align}
where $\deformarg{\Transport}{\alpha}{\beta}$ are the Cartesian components of the deformation tensor of
$\Transport$. Recalling that each Cartesian component $\Transport^{\alpha}$
satisfies $\Transport^{\alpha} = \gensmoothfunction(\vec{\Psi})$ 
for some smooth function $\gensmoothfunction$ (where $\vec{\Psi}$ is the rescaled solution),
we see that the Cartesian components $\deformarg{\Transport}{\alpha}{\beta}$
satisfy
$\deformarg{\Transport}{\alpha}{\beta} = \gensmoothfunction(\vec{\Psi}) \cdot \pmb{\partial} \Psi$
(for some other smooth function $\gensmoothfunction$). Hence,
the desired bound \eqref{E:SIMPLEL1TLINFTYXDEFORMATIONTENSORCOMPONENTBOUND} follows from \eqref{E:RESCALEDBOOTBOUNDS},
\eqref{E:RESCALEDSOLUTIONHOLDERESTIMATE}, 
and H\"{o}lder's inequality.

The Morawetz estimate from Step 2 is provided in \cite{qW2017}*{Lemma~7.4} and \cite{qW2017}*{Lemma~7.5}.
The proof relies on applying the divergence theorem (the geometric version, with respect to the rescaled metric $\gfour$)
on an appropriate spacetime region 
to the vectorfield
$
\Jenarg{\mathbf{X}}{\alpha}[\varphi]
	:= \enmomem^{\alpha \beta}[\varphi] 
		\mathbf{X}_{\beta}
		-
		\frac{1}{2}
		\left\lbrace
			(\gfour^{-1})^{\alpha \beta} \partial_{\beta} \Theta
		\right\rbrace
		\varphi^2
		+
		\frac{1}{2}
		\Theta
		(\gfour^{-1})^{\alpha \beta} \partial_{\beta} 
		(\varphi^2)
$,
where $\enmomem^{\alpha \beta}[\varphi]$ is defined in \eqref{E:ENMOMENTUMTENSOR},
$\mathbf{X} := f \spherenormal$,
$\spherenormal$ is the outward $g$-unit normal to $S_{t,u}$ in $\Sigma_t$ (see \eqref{E:SPHEREOUTERNORMAL}),
$f := \upepsilon_0^{-1} - \frac{\upepsilon_0^{-1}}{(1 + \rgeo)^{2 \upepsilon_0}}$,
and $\Theta := \rgeo^{-1} f$. The error terms involve various geometric derivatives of $\spherenormal$
that can be expressed in terms of connection coefficients of the null frame and their first derivatives.
For the proof of \cite{qW2017}*{Lemma~7.4} and \cite{qW2017}*{Lemma~7.5} to go through verbatim,
one needs only the estimates 
\eqref{E:AIMPROVEDINTERIORREGEIONLT2LXINFINITYBONDS}
and 
\eqref{E:LAMBDATOMINUSONEHALFAANDUNDERLINEABOUNDS};
see just below \cite{qW2017}*{Equation~(7.18)}.

In obtaining estimates for $\rgeo$-weighted versions $\varphi^2$ in Step 3,
in the sub-step provided by \cite{qW2017}*{Lemma~7.6},
one needs the estimate \eqref{E:AIMPROVEDINTERIORREGEIONLT2LXINFINITYBONDS};
see below \cite{qW2017}*{Equation~(7.34)}.

For the comparison results from Step 4, 
in the sub-step provided by \cite{qW2017}*{Proposition~7.10},
one needs the estimates
\eqref{E:LAMBDATOMINUSONEHALFAANDUNDERLINEABOUNDS}
and
\eqref{E:RESTATEMENTCONEFIRSTDERIVATIVEBOUNDSFORCONFORMALFACTOR};
see below 
\cite{qW2017}*{Equation~(7.43)}
and \cite{qW2017}*{Equation~(7.45)}.
In the sub-step provided by \cite{qW2017}*{Lemma~7.11},
one needs the estimates
\eqref{E:LAMBDATOMINUSONEHALFAANDUNDERLINEABOUNDS}
and \eqref{E:RESTATEMENTCONEFIRSTDERIVATIVEBOUNDSFORCONFORMALFACTOR}.

In deriving the weighted energy estimate from Step 5,
in the sub-step provided by \cite{qW2017}*{Lemma~7.15}, one needs the estimate \eqref{E:AIMPROVEDINTERIORREGEIONLT2LXINFINITYBONDS};
see the first line of the proof. Then, in the same proof,
to bound the error terms denoted on \cite{qW2017}*{page~87} by ``$\mathcal{A}_i$'', $(i=1,2,3)$, 
one needs, respectively, the estimates
\eqref{E:FIRSTCRUCIALPRODUCTESTIMATE},
\eqref{E:SECONDCRUCIALPRODUCTESTIMATE},
and \eqref{E:CRUCIALCHECKMUANDRELATEDTERMSESTIMATE};
see the analysis just below \cite{qW2017}*{Equation~(7.72)}.

For the energy-decay estimate provided by Step 6,
in the sub-step provided by \cite{qW2017}*{Proposition~7.22},
the estimates \eqref{E:CRUCIALRGEOANGDUPSIGMAESTIMATE}--\eqref{E:CRUCIALREORMALIZEDTRCHIESTIMATEAPPEARINGINCONFORMALENERGY}
are ingredients needed to help bound the term denoted by
``$\mathcal{I}$'' on \cite{qW2017}*{page~94};
see \cite{qW2017}*{page~95} for the role that \eqref{E:CRUCIALRGEOANGDUPSIGMAESTIMATE}--\eqref{E:CRUCIALREORMALIZEDTRCHIESTIMATEAPPEARINGINCONFORMALENERGY} play.
One also needs 
\eqref{E:RESTATEMENTCONEFIRSTDERIVATIVEBOUNDSFORCONFORMALFACTOR}
(see the bottom of \cite{qW2017}*{page~95})
and \eqref{E:AIMPROVEDINTERIORREGEIONLT2LXINFINITYBONDS} 
(see the top of \cite{qW2017}*{page~96}).

\end{proof}

\subsection{Discussion of the proof of Proposition~\ref{P:SPATIALLYLOCALIZEDREDUCTIONOFPROOFOFTHEOREMDECAYESTIMATE}}
\label{SS:DISCUSSIONOFPROOFOFPROPSPATIALLYLOCALIZEDREDUCTIONOFPROOFOFTHEOREMDECAYESTIMATE}
Thanks to the assumptions of Subsect.\,\ref{SS:ASSUMPTIONS}
and the estimates for the acoustic geometry that we obtained in 
\eqref{E:CONNECTIONCOEFFICIENTESTIMATESNEEDEDTODERIVESPATIALLYLOCALIZEDDECAYFROMCONFORMALENERGYESTIMATE},
Prop.\,\ref{P:SPATIALLYLOCALIZEDREDUCTIONOFPROOFOFTHEOREMDECAYESTIMATE}
follows as a consequence of Theorem~\ref{T:BOUNDEDNESSOFCONFORMALENERGY}
and the same arguments given in \cite{qW2017}*{Section~4.1}
(see in particular \cite{qW2017}*{Proposition~4.1})
and Lemma~\ref{L:STANDARDENERGYESTIMATEFORLINEARWAVEQUATIONWITHRESCALEDBACKGROUND}.

\section*{Acknowledgments}
We are grateful to Qian Wang and the anonymous referees for offering enlightening comments and insights,
and for suggestions that have helped improve the exposition.

\bibliographystyle{amsalpha}
\bibliography{JBib} 

\end{document}